\providecommand{\printnomenclature}{\printglossary}
\providecommand{\makenomenclature}{\makeglossary}
\newcommand{\lyxmathsym}[1]{\ifmmode\begingroup\def\b@ld{bold}
  \text{\ifx\math@version\b@ld\bfseries\fi#1}\endgroup\else#1\fi}
\providecommand{\tabularnewline}{\\}
\def\RSsubtxt{section~}\newref{sub}{name = \RSsubtxt}}
\def\RSthmtxt{theorem~}\newref{thm}{name = \RSthmtxt}}
\def\RSlemtxt{lemma~}\newref{lem}{name = \RSlemtxt}}
\numberwithin{section}{chapter}
\numberwithin{equation}{section}
\numberwithin{figure}{section}
\numberwithin{table}{section}
\theoremstyle{plain}
\newtheorem{thm}{\protect\theoremname}[section]
  \theoremstyle{definition}
  \newtheorem{defn}[thm]{\protect\definitionname}
  \theoremstyle{plain}
  \newtheorem{lem}[thm]{\protect\lemmaname}
  \theoremstyle{plain}
  \newtheorem{cor}[thm]{\protect\corollaryname}
  \theoremstyle{remark}
  \newtheorem{rem}[thm]{\protect\remarkname}
  \theoremstyle{definition}
  \newtheorem{example}[thm]{\protect\examplename}
  \theoremstyle{plain}
  \newtheorem{fact}[thm]{\protect\factname}
  \theoremstyle{plain}
  \newtheorem*{question*}{\protect\questionname}
  \theoremstyle{plain}
  \newtheorem{prop}[thm]{\protect\propositionname}
  \theoremstyle{remark}
  \newtheorem*{rem*}{\protect\remarkname}
  \theoremstyle{remark}
  \newtheorem{summary}[thm]{\protect\summaryname}
  \theoremstyle{plain}
  \newtheorem{question}[thm]{\protect\questionname}
\def\l@figure#1#2{\@tocline{0}{3pt plus2pt}{0pt}{2.5pc}{}{#1}{#2}}
\def\l@table#1#2{\@tocline{0}{3pt plus2pt}{0pt}{2.5pc}{}{#1}{#2}}
\providecommand{\MR}[1]{}
\theoremstyle{definition}
\renewcommand{\tabref}{\Tabref}
\renewcommand{\figref}{\Figref}
\newcommand\restr[2]{{
  \left.\kern-\nulldelimiterspace 
  #1 
  \vphantom{\big|} 
  \right|_{#2} 
  }}
  \providecommand{\corollaryname}{Corollary}
  \providecommand{\definitionname}{Definition}
  \providecommand{\examplename}{Example}
  \providecommand{\factname}{Fact}
  \providecommand{\lemmaname}{Lemma}
  \providecommand{\propositionname}{Proposition}
  \providecommand{\questionname}{Question}
  \providecommand{\remarkname}{Remark}
  \providecommand{\summaryname}{Summary}
\providecommand{\theoremname}{Theorem}
\begin{document}

\title{Harmonic analysis of a class of reproducing kernel Hilbert spaces
arising from groups}

\author{Palle Jorgensen, Steen Pedersen, and Feng Tian}

\date{01/19/14}

\address{(Palle E.T. Jorgensen) Department of Mathematics, The University
of Iowa, Iowa City, IA 52242-1419, U.S.A. }

\email{palle-jorgensen@uiowa.edu}

\urladdr{http://www.math.uiowa.edu/\textasciitilde{}jorgen/}

\address{(Steen Pedersen) Department of Mathematics, Wright State University,
Dayton, OH 45435, U.S.A. }

\email{steen@math.wright.edu }

\urladdr{http://www.wright.edu/\textasciitilde{}steen.pedersen/}

\address{(Feng Tian) Department of Mathematics, Wright State University, Dayton,
OH 45435, U.S.A.}

\email{feng.tian@wright.edu}

\urladdr{http://www.wright.edu/\textasciitilde{}feng.tian/}

\thanks{The co-authors thank the following for enlightening discussions:
Professors Sergii Bezuglyi, Dorin Dutkay, Paul Muhly, Myung-Sin Song,
Wayne Polyzou, Gestur Olafsson, Robert Niedzialomski, and members
in the Math Physics seminar at the University of Iowa.}

\subjclass[2000]{Primary 47L60, 46N30, 46N50, 42C15, 65R10; Secondary 46N20, 22E70,
31A15, 58J65, 81S25}

\keywords{Unbounded operators, deficiency-indices, Hilbert space, reproducing
kernels, boundary values, unitary one-parameter group, convex, harmonic
decompositions, stochastic processes, representations of Lie groups,
potentials, quantum measurement, renormalization, partial differential
operators, rank-one perturbation, Friedrichs extension, Green\textquoteright s
function.}

\maketitle
\pagestyle{myheadings}
\markright{REPRODUCING KERNEL HILBERT SPACES AND THEIR HARMONIC ANALYSIS}
\begin{abstract}
We study two extension problems, and their interconnections: (i) extension
of positive definite (p.d.) continuous functions defined on subsets
in locally compact groups $G$; and (ii) (in case of Lie groups $G$)
representations of the associated Lie algebras $La\left(G\right)$,
i.e., representations of $La\left(G\right)$ by unbounded skew-Hermitian
operators acting in a reproducing kernel Hilbert space $\mathscr{H}_{F}$
(RKHS). Our analysis is non-trivial even if $G=\mathbb{R}^{n}$, and
even if $n=1$. If $G=\mathbb{R}^{n}$, (ii), we are concerned with
finding systems of strongly commuting selfadjoint operators $\left\{ T_{i}\right\} $
extending a system of commuting Hermitian operators with common dense
domain in $\mathscr{H}_{F}$. 

Specifically, we consider partially defined positive definite (p.d.)
continuous functions $F$ on a fixed group. From $F$ we then build
a reproducing kernel Hilbert space $\mathscr{H}_{F}$, and the operator
extension problem is concerned with operators acting in $\mathscr{H}_{F}$,
and with unitary representations of $G$ acting on $\mathscr{H}_{F}$.
Our emphasis is on the interplay between the two problems, and on
the harmonic analysis of our RKHSs $\mathscr{H}_{F}$.

In the cases of $G=\mathbb{R}^{n}$, and $G=\mathbb{T}^{n}=\mathbb{R}^{n}/\mathbb{Z}^{n}$,
and generally for locally compact Abelian groups, we establish a new
Fourier duality theory; including for $G=\mathbb{R}^{n}$ a time/frequency
duality, where the extension questions (i) are in time domain, and
extensions from (ii) in frequency domain. Specializing to $n=1$,
we arrive of a spectral theoretic characterization of all skew-Hermitian
operators with dense domain in a separable Hilbert space, having deficiency-indices
$\left(1,1\right)$.

Our general results include non-compact and non-Abelian Lie groups,
where the study of unitary representations in $\mathscr{H}_{F}$ is
subtle.
\end{abstract}
\tableofcontents{}

\chapter{\label{chap:intro}Introduction}

In this Memoir, we study two extension problems, and their interconnections.
The first class of extension problems concerns (i) positive definite
(p.d.) continuous functions on Lie groups $G$, and the second deals
with (ii) Lie algebras of unbounded skew-Hermitian operators in a
certain family of reproducing kernel Hilbert spaces (RKHS). Our analysis
is non-trivial even if $G=\mathbb{R}^{n}$, and even if $n=1$. If
$G=\mathbb{R}^{n}$, we are concerned in (ii) with the study of systems
of $n$ skew-Hermitian operators $\left\{ S_{i}\right\} $ on a common
dense domain in Hilbert space, and in deciding whether it is possible
to find a corresponding system of strongly commuting selfadjoint operators
$\left\{ T_{i}\right\} $ such that, for each value of $i$, the operator
$T_{i}$ extends $S_{i}$.

\index{extension problem}

\index{operator!selfadjoint}

\index{group!Lie}

\index{operator!skew-Hermitian}

\index{RKHS}

\index{positive definite}

The version of this for non-commutative Lie groups $G$ will be stated
in the language of unitary representations of $G$, and corresponding
representations of the Lie algebra $La\left(G\right)$ by skew-Hermitian
unbounded operators.

In summary, for (i) we are concerned with partially defined positive
definite (p.d.) continuous functions $F$ on a Lie group; i.e., at
the outset, such a function $F$ will only be defined on a connected
proper subset in $G$. From this partially defined p.d. function $F$
we then build a reproducing kernel Hilbert space $\mathscr{H}_{F}$,
and the operator extension problem (ii) is concerned with operators
acting on $\mathscr{H}_{F}$, as well as with unitary representations
of $G$ acting on $\mathscr{H}_{F}$. If the Lie group $G$ is not
simply connected, this adds a complication, and we are then making
use of the associated simply connected covering group. For an overview
of highpoints in our Memoir, see sections \ref{sec:Organization}
and \ref{sec:introRKHS} below.

By a theorem of Kolmogorov, every Hilbert space may be realized as
a (Gaussian) reproducing kernel Hilbert space (RKHS), see e.g., \cite{PaSc75,IM65,NF10}.
Since this setting is too general for many applications, it is useful
to restrict the very general framework for RKHSs to concrete cases
in the study of particular spectral theoretic problems; positive definite
functions on groups is a case in point. Such specific issues arise
in physics (see e.g., \cite{Fal74,Jor07}) where one is faced with
extending positive definite functions $F$ which are only defined
on a subset of a given group, say $G$. For a given such $F$, we
introduce an associated RKHS $\mathscr{H}_{F}$ constructed directly
from the given, partially defined, p.d. function $F$.

The axioms of quantum physics (see e.g., \cite{BoMc13,OdHo13,KS02,CKS79,AAR13,Fan10,Maa10,Par09}
for relevant recent papers), are based on Hilbert space, and selfadjoint
operators. 

A brief sketch: A quantum mechanical observable is a Hermitian (selfadjoint)
linear operator mapping a Hilbert space, the space of states, into
itself. The values obtained in a physical measurement are in general
described by a probability distribution; and the distribution represents
a suitable ``average'' (or ``expectation'') in a measurement of
values of some quantum observable in a state of some prepared system.
The states are (up to phase) unit vectors in the Hilbert space, and
a measurement corresponds to a probability distribution (derived from
a projection-valued spectral measure). The particular probability
distribution used depends on both the state and the selfadjoint operator.
The associated spectral type may be continuous (such as position and
momentum; both unbounded) or discrete (such as spin); this depends
on the physical quantity being measured.

Since the spectral theorem serves as the central tool in our study
of measurements, we must be precise about the distinction between
linear operators with dense domain which are only Hermitian as opposed
to selfadjoint. This distinction is accounted for by von Neumann\textquoteright s
theory of deficiency indices \cite{AG93,DS88b}.

(Starting with \cite{vN32a,vN32b,vN32c}, J. von Neumann and M. Stone
did pioneering work in the 1930s on spectral theory for unbounded
operators in Hilbert space; much of it in private correspondence.
The first named author has from conversations with M. Stone, that
the notions \textquotedblleft deficiency-index,\textquotedblright{}
and \textquotedblleft deficiency space\textquotedblright{} are due
to them; suggested by MS to vN as means of translating more classical
notions of \textquotedblleft boundary values\textquotedblright{} into
rigorous tools in abstract Hilbert space: closed subspaces, projections,
and dimension count.)

\index{deficiency indices}

\index{von Neumann}

\index{operator!unbounded}

\index{unitary representation}

\index{group!non-commutative}

\index{group!simply connected}

\index{Kolmogorov}

\index{group!covering}

\section{\label{sec:2ext}Two Extension Problems}

While each of the two extension problems\index{extension problem}
has received a considerable amount of attention in the literature,
our emphasis here will be the interplay between the two problems:
Our aim is a duality theory; and, in the case $G=\mathbb{R}^{n}$,
and $G=\mathbb{T}^{n}=\mathbb{R}^{n}/\mathbb{Z}^{n}$, we will state
our theorems in the language of Fourier duality \index{Fourier duality}
of abelian groups: With the time frequency duality formulation of
Fourier duality for $G=\mathbb{R}^{n}$ we have that both the time
domain and the frequency domain constitute a copy of $\mathbb{R}^{n}$.
We then arrive at a setup such that our extension questions (i) are
in time domain, and extensions from (ii) are in frequency domain.
Moreover we show that each of the extensions from (i) has a variant
in (ii). Specializing to $n=1$, we arrive of a spectral theoretic
characterization of all skew-Hermitian operators with dense domain
in a separable Hilbert space, having deficiency-indices $\left(1,1\right)$. 

A systematic study of densely defined Hermitian operators with deficiency
indices $\left(1,1\right)$, and later $\left(d,d\right)$, was initiated
by M. Krein\index{Krein} \cite{Kre46}, and is also part of de Branges\textquoteright{}
model theory; see \cite{deB68,BrRo66}. The direct connection between
this theme and the problem of extending continuous positive definite
(p.d.) functions $F$ when they are only defined on a fixed open subset
to $\mathbb{R}^{n}$ was one of our motivations. One desires continuous
p.d. extensions to $\mathbb{R}^{n}$. 

If $F$ is given, we denote the set of such extensions $Ext\left(F\right)$.
If $n=1$, $Ext\left(F\right)$ is always non-empty, but for $n=2$,
Rudin gave examples in \cite{Ru70,Ru63} when $Ext\left(F\right)$
may be empty. Here we extend these results, and we also cover a number
of classes of positive definite functions on locally compact groups
in general; so cases when $\mathbb{R}^{n}$ is replaced with other
groups, both Abelian and non-abelian.

Our results in the framework of locally compact Abelian groups are
more complete than their counterparts for non-Abelian\index{group!non-Abelian}
Lie groups, one reason is the availability of Bochner\textquoteright s
duality theorem for locally compact Abelian groups; -- not available
for non-Abelian Lie groups.

\section{Stochastic Processes}

The interest in positive definite functions has at least three roots:
(i) Fourier analysis, and harmonic analysis more generally, including
the non-commutative variant where we study unitary representations
of groups; (ii) optimization and approximation problems, involving
for example spline\index{spline} approximations as envisioned by
I. Schöenberg\index{Schöenberg}; and (iii) the study of stochastic
(random) processes. 

Below, we sketch a few details regarding (iii). A stochastic process
is an indexed family of random variables based on a fixed probability
space; in our present analysis, our processes will be indexed by some
group $G$; for example $G=\mathbb{R}$, or $G=\mathbb{Z}$ correspond
to processes indexed by real time, respectively discrete time. A main
tool in the analysis of stochastic processes is an associated covariance
function, see (\ref{eq:stat1}).

A process $\left\{ X_{g}\:\big|\: g\in G\right\} $ is called Gaussian
if each random variable $X_{g}$ is Gaussian, i.e., its distribution
is Gaussian. For Gaussian processes\index{Gaussian processes} we
only need two moments. So if we normalize, setting the mean equal
to $0$, then the process is determined by the covariance function.
In general the covariance function is a function on $G\times G$,
or on a subset, but if the process is stationary, the covariance function
will in fact be a positive definite\index{positive definite} function
defined on $G$, or a subset of $G$. We will be using three stochastic
processes in the Memoir, Brownian motion, Brownian Bridge, and the
Ornstein-Uhlenbeck\index{Ornstein-Uhlenbeck} process, all Gaussian,
or Ito integrals.

We outline a brief sketch of these facts below.

Let $G$ be a locally compact group, and let $\left(\Omega,\mathscr{F},\mathbb{P}\right)$
be a probability space, $\mathscr{F}$ a sigma-algebra, and $\mathbb{P}$
a probability measure \index{measure!probability} defined on $\mathscr{F}$.
A stochastic $L^{2}$-process is a system of random variables $\left\{ X_{g}\right\} _{g\in G}$,
$X_{g}\in L^{2}\left(\Omega,\mathscr{F},\mathbb{P}\right)$. The covariance\index{covariance}
function $c_{X}$ of the process is the function $G\times G\rightarrow\mathbb{C}$
given by 
\begin{equation}
c_{X}\left(g_{1},g_{2}\right)=\mathbb{E}\left(\overline{X}_{g_{1}}X_{g_{2}}\right),\;\forall\left(g_{1},g_{2}\right)\in G\times G.\label{eq:stat1}
\end{equation}
To simplify will assume that the mean $\mathbb{E}\left(X_{g}\right)=\int_{\Omega}X_{g}d\mathbb{P}\left(\omega\right)=0$
for all $g\in G$. 

We say that $\left(X_{g}\right)$ is stationary iff 
\begin{equation}
c_{X}\left(hg_{1},hg_{2}\right)=c_{X}\left(g_{1},g_{2}\right),\;\forall h\in G.\label{eq:stat2}
\end{equation}
In this case $c_{X}$ is a function of $g_{1}^{-1}g_{2}$, i.e., 
\begin{equation}
\mathbb{E}\left(X_{g_{1},}X_{g_{2}}\right)=c_{X}\left(g_{1}^{-1}g_{2}\right),\;\forall g_{1},g_{2}\in G.\label{eq:stat3}
\end{equation}
(Just take $h=g_{1}^{-1}$ in (\ref{eq:stat2}).)

We now recall the following theorem of Kolmogorov\index{Kolmogorov}
(see \cite{PaSc75}). One direction is easy, and the other is the
deep part:
\begin{defn}
A function $c$ defined on a subset of $G$ is said to be \emph{\uline{positive
definite}} iff
\[
\sum_{i}\sum_{j}\overline{\lambda_{i}}\lambda_{j}c\left(g_{i}^{-1}g_{j}\right)\geq0
\]
for all finite summation, where $x_{i}\in\mathbb{C}$ and $g_{i}^{-1}g_{j}$
in the domain of $c$. \end{defn}
\begin{thm}[Kolmogorov]
 A function $c:G\rightarrow\mathbb{C}$ is positive definite if and
only if there is a stationary Gaussian process $\left(\Omega,\mathscr{F},\mathbb{P},X\right)$
with mean zero, such that $c=c_{X}$. \end{thm}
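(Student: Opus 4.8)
The plan is to prove the two implications separately; the forward direction (``$c=c_X\Rightarrow c$ positive definite'') is a one-line computation, and essentially all of the substance sits in the converse, where one must manufacture a Gaussian process out of the combinatorial data of $c$. For the forward direction, suppose $c=c_X$ for a mean-zero stationary $L^2$-process $\left\{ X_g\right\} _{g\in G}$. Fix $g_1,\dots,g_n\in G$ and $\lambda_1,\dots,\lambda_n\in\mathbb{C}$. Using linearity of the expectation and the stationarity identity (\ref{eq:stat3}),
\[
\sum_{i}\sum_{j}\overline{\lambda_i}\lambda_j\,c\left(g_i^{-1}g_j\right)=\sum_i\sum_j\overline{\lambda_i}\lambda_j\,\mathbb{E}\!\left(\overline{X}_{g_i}X_{g_j}\right)=\mathbb{E}\!\left(\left|\sum_j\lambda_jX_{g_j}\right|^2\right)\ge 0,
\]
so $c$ is positive definite; note that Gaussianity plays no role in this half.

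For the converse, assume $c$ is positive definite. I would build the process in two steps. \emph{Step 1 (a Hilbert space attached to $c$).} On the complex vector space of finitely supported functions $\xi\colon G\to\mathbb{C}$ put $\langle\xi,\eta\rangle_c=\sum_{g,h}\overline{\xi\left(g\right)}\,\eta\left(h\right)\,c\left(g^{-1}h\right)$. Positive definiteness of $c$ says exactly that this sesquilinear form is positive semidefinite (and Hermitian symmetry, i.e.\ $c\left(g^{-1}\right)=\overline{c\left(g\right)}$, follows from the definition of p.d.); passing to the quotient by $\left\{ \xi:\langle\xi,\xi\rangle_c=0\right\} $ and completing yields a Hilbert space $\mathscr{H}_c$ containing vectors $v_g$, the classes of the indicator functions $\delta_g$, with $\langle v_{g_1},v_{g_2}\rangle_c=c\left(g_1^{-1}g_2\right)$ and with dense span. \emph{Step 2 (Gaussianization).} Now invoke the standard Gaussian-Hilbert-space (second quantization) construction: for any complex Hilbert space $\mathscr{H}$ there is a probability space $\left(\Omega,\mathscr{F},\mathbb{P}\right)$ and a map $v\mapsto W\left(v\right)\in L^2\left(\Omega,\mathscr{F},\mathbb{P}\right)$ with each $W\left(v\right)$ a mean-zero complex Gaussian and $\mathbb{E}\left(\overline{W\left(u\right)}W\left(v\right)\right)=\langle u,v\rangle_{\mathscr{H}}$. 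Concretely, on each finite-dimensional subspace one realizes the Hermitian positive semidefinite Gram matrix $\left(c\left(g_i^{-1}g_j\right)\right)$ as the covariance of a circularly symmetric complex Gaussian vector, checks that these laws form a consistent projective family, and applies Kolmogorov's extension theorem (alternatively one cites Wiener--It\^o chaos). Setting $X_g:=W\left(v_g\right)$ then gives $X_g\in L^2\left(\Omega,\mathscr{F},\mathbb{P}\right)$, $\mathbb{E}\left(X_g\right)=0$, each $X_g$ Gaussian, and
\[
c_X\left(g_1,g_2\right)=\mathbb{E}\!\left(\overline{X}_{g_1}X_{g_2}\right)=\langle v_{g_1},v_{g_2}\rangle_c=c\left(g_1^{-1}g_2\right).
\]

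Stationarity is then automatic from the last display: $c_X\left(hg_1,hg_2\right)=c\bigl(\left(hg_1\right)^{-1}\left(hg_2\right)\bigr)=c\left(g_1^{-1}g_2\right)=c_X\left(g_1,g_2\right)$, which is exactly (\ref{eq:stat2}); and since a mean-zero Gaussian process is determined by its covariance, this upgrades to invariance of the full joint law of $\left\{ X_g\right\} $ under left translation by $G$, so the process is stationary in the strict sense as well. The only place where genuine work is required is Step 2: producing the Gaussian measure and keeping the bookkeeping straight — representing a Hermitian positive semidefinite matrix as the covariance of a (circularly symmetric) complex Gaussian vector, and verifying the consistency of the finite-dimensional marginals needed for Kolmogorov's extension theorem. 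That is precisely the ``deep'' half of the theorem; Step 1 and the stationarity check are formal.
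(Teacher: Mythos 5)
Your forward direction is exactly the computation the paper gives, and it is correct. The difference is in the converse: the paper deliberately proves \emph{only} the easy half and refers to \cite{PaSc75} for the non-trivial direction, whereas you supply a sketch of that direction yourself. Your sketch is the standard and correct one: a GNS-type construction turning the positive definite function $c$ into a Hilbert space $\mathscr{H}_{c}$ with distinguished vectors $v_{g}$ satisfying $\left\langle v_{g_{1}},v_{g_{2}}\right\rangle _{c}=c\left(g_{1}^{-1}g_{2}\right)$, followed by Gaussianization (realizing each Hermitian positive semidefinite Gram matrix as the covariance of a circularly symmetric complex Gaussian vector, checking projective consistency, and invoking Kolmogorov's extension theorem), and finally reading off stationarity from the identity $c_{X}\left(g_{1},g_{2}\right)=c\left(g_{1}^{-1}g_{2}\right)$. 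This buys a self-contained proof where the paper settles for a citation; the cost is only that the details of Step 2 (the zero-mean/pseudo-covariance bookkeeping for complex Gaussian vectors, and the consistency of the finite-dimensional marginals) are asserted rather than carried out, which is acceptable at the level of detail the Memoir itself adopts. One small point worth making explicit if you expand Step 2: ``Gaussian'' for the complex-valued variables $W\left(v\right)$ should mean that real and imaginary parts are jointly Gaussian with the circular symmetry normalization, so that the covariance $\mathbb{E}\bigl(\overline{W\left(u\right)}W\left(v\right)\bigr)$ alone determines the joint law; this is what lets you upgrade covariance-stationarity to strict stationarity in the last step.
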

\begin{proof}
To stress the idea, we include the easy part of the theorem, and we
refer to \cite{PaSc75} for the non-trivial direction: 

Let $\lambda_{1},\lambda_{2},\ldots,\lambda_{n}\in\mathbb{C}$, and
$\left\{ g_{i}\right\} _{i=1}^{N}\subset G$, then for all finite
summations, we have:
\[
\sum_{i}\sum_{j}\overline{\lambda_{i}}\lambda_{j}c_{X}\left(g_{i}^{-1}g_{j}\right)=\mathbb{E}\left(\left|\sum_{i=1}^{N}\lambda_{i}X_{g_{i}}\right|^{2}\right)\geq0.
\]

\end{proof}

\section{Earlier Papers}

Below we mention some earlier papers dealing with one or the other
of the two extension problems (i) or (ii) in \secref{2ext}. To begin
with, there is a rich literature on (i), a little on (ii), but comparatively
much less is known about their interconnections.

As for positive definite functions, their use and applications are
extensive and includes such areas as stochastic processes, \index{stochastic processes}
see e.g., \cite{JorPea13,AJSV13,JP12,AJ12}; harmonic analysis (see
\cite{BCR84,JO00,JO98}) , and the references there); potential theory
\cite{Fu74b,KL14}; operators in Hilbert space \cite{ADLR10,Al92,AD86};
and spectral theory \cite{AH13,Nus75,Dev72,Dev59}. We stress that
the literature is vast, and the above list is only a small sample.

Extensions of positive definite\index{positive definite} (p.d.) continuous
functions defined on subsets of Lie groups $G$ was studied in \cite{Jor91}.
In our present analysis of connections between extensions of positive
definite (p.d.) continuous functions and extension questions for associated
operators in Hilbert space, we will be making use of tools from spectral
theory, and from the theory of reproducing kernel-Hilbert spaces,
such as can be found in e.g., \cite{Ne69,Jor81,ABDS93,Aro50}.

There is a different kind of notion of positivity involving reflections,
restrictions, and extensions. It comes up in physics and in stochastic
processes, and is somewhat related to our present theme. While they
have several names, \textquotedblleft refection positivity\textquotedblright{}
is a popular term.

In broad terms, the issue is about realizing geometric reflections
as \textquotedblleft conjugations\textquotedblright{} in Hilbert space.
When the program is successful, for a given unitary representation
$U$ of a Lie group $G$, for example $G=\mathbb{R}$, it is possible
to renormalize the Hilbert space on which $U$ is acting.

Now the Bochner\index{Bochner} transform $F$ of a probability measure
(e.g., the distribution of a stochastic process) which further satisfies
refection positivity, has two positivity properties: one (i) because
$F$ is the transform of a positive measure, so $F$ is positive definite;
and in addition the other, (ii) because of refection symmetry. We
have not followed up below with structural characterizations of this
family of positive definite functions, but readers interested in the
theme, will find details in \cite{JO00,JO98,Ar86,OS73}, and in the
references given there.

\section{\label{sec:Organization}Organization}

The Memoir is organized around the following themes, some involving
dichotomies; e.g., 
\begin{enumerate}
\item \label{enu:o-1}abelian vs non-abelian; 
\item \label{enu:o-2}simply connected vs quotients;
\item \label{enu:o-3}spectral theoretic vs geometric; 
\item \label{enu:o-4}extending of positive definite functions vs extensions
of systems of operators; and
\item \label{enu:o-5}existence vs computation and classification.
\end{enumerate}
Item (\ref{enu:o-1}) refers to the group $G$ under consideration.
In order to get started, we will need $G$ to be locally compact so
it comes with Haar measure, but it may be non-abelian. It may be a
Lie group, or it may be non-locally Euclidean. In the other end of
this dichotomy, we look at $G=\mathbb{R}$, the real line. In all
cases, in the study of the themes from (\ref{enu:o-1}) it is important
whether the group is simply connected or not.

In order to quickly get to concrete examples, we begin the real line
$G=\mathbb{R}$, and $G=\mathbb{R}^{n}$, $n>1$ (\subref{euclid});
and the circle group, $G=\mathbb{T}=\mathbb{R}/\mathbb{Z}$ (\subref{G=00003DT}).

Of the other groups, we offer a systematic treatment of the classes
when $G$ is locally compact Abelian (\subref{lcg}), and the case
of Lie groups (\subref{lie}).

We note that the subdivision into classes of groups is necessary as
the theorems we prove in the case of $G=\mathbb{R}$ have a lot more
specificity than their counterparts do, for the more general classes
of groups. One reason for this is that our harmonic analysis relies
on unitary representations, and the non-commutative theory for unitary
representations is much more subtle than is the Abelian counterpart.

Taking a choice of group $G$ as our starting point, we then study
continuous positive definite\index{positive definite} functions $F$
defined on certain subsets in $G$. In the case of $G=\mathbb{R}$,
our choice of subset will be a finite open interval centered at $x=0$.

Our next step is to introduce a reproducing kernel Hilbert space (RKHS)\index{RKHS}
$\mathscr{H}_{F}$ that captures the properties of the given p.d.
function $F$. The nature and the harmonic analysis of this particular
RKHS are of independent interest; see sections \ref{sec:introRKHS},
\ref{sec:embedding}, \ref{sub:euclid}, and \ref{sec:mercer}.

In \secref{mercer}, we study a certain trace class integral operator
(the Mercer operator). A mercer operator $T_{F}$ is naturally associated
to a given a continuous and positive definite function $F$ defined
on the open interval $\left(-1,1\right)$. We use $T_{F}$ in order
to identify natural Bessel frame in the RKHS $\mathscr{H}_{F}$ corresponding
to $F$. We then introduce a notion of Shannon sampling of finite
Borel measures on $\mathbb{R}$, sampling from integer points in $\mathbb{R}$.
In Corollary \ref{cor:shan} we then use this to give a necessary
and sufficient condition for a given finite Borel measure $\mu$ to
fall in the convex set $Ext\left(F\right)$: The measures in $Ext\left(F\right)$
are precisely those whose Shannon sampling recover the given p.d.
function $F$ on the interval $\left(-1,1\right)$.

The questions we address (in the general case for $G$) are as follows:

(a) What (if any) are the continuous positive definite functions on
$G$ which extend $F$? (See \secref{embedding}, \subref{exp(-|x|)},
\chapref{types}, and \chapref{Ext1}.) Denoting the set of these
extensions $Ext\left(F\right)$, then $Ext\left(F\right)$ is a compact
convex set. Our next questions are:

(b) What are the parameters for $Ext\left(F\right)$? (See \subref{exp(-|x|)},
\chapref{types}, and \chapref{question}.) 

And (c) How can we understand $Ext\left(F\right)$ from a (generally
non-commutative) extension problem\index{extension problem} for operators
in $\mathscr{H}_{F}$? (See especially \secref{mercer}.)

We are further concerned with (d) applications to scattering theory
(e.g., Theorem \ref{thm:R^n-spect}), and to commutative and non-commutative
harmonic analysis.

The unbounded operators we consider are defined naturally from given
p.d. function $F$, and they have a common dense domain in the RKHS
$\mathscr{H}_{F}$. In studying possible selfadjoint operator extensions
in $\mathscr{H}_{F}$ we make use of von Neumann\textquoteright s
theory of deficiency indices\index{deficiency indices}.

For concrete cases, (e) we must then find the deficiency indices;
they must be equal, but whether they are $\left(0,0\right)$, $\left(1,1\right)$,
$\left(d,d\right)$, $d>1$, is of great significance to the answers
to the questions from (a)\textendash (d).

Finally, (f): What is the relevance of the solutions in (a) and (b)
for the theory of operators in Hilbert space and their harmonic (and
spectral) analysis? (Sections \ref{sub:euclid}, \ref{sec:index 11},
\ref{sec:index (d,d)}, and \chapref{question}.)

\section{Notation and Preliminaries}

In our theorems and proofs, we shall make use of reproducing kernel
Hilbert spaces (RKHS), but the particular RKHSs we need here will
have additional properties (as compared to a general framework); which
allow us to give explicit formulas for our solutions. In a general
setup, reproducing kernel Hilbert spaces were pioneered by Aronszajn
in the 1950s \cite{Aro50}; and subsequently they have been used in
a host of applications; e.g., \cite{SZ09,SZ07}. \index{RKHS}

The key idea of Aronszajn is that a RKHS is a Hilbert space $\mathscr{H}_{K}$
of functions $f$ on a set such that the values $f(x)$ are \textquotedblleft reproduced\textquotedblright{}
from $f$ in $\mathscr{H}_{K}$ and a vector $K_{x}$ in RKHS, in
such a way that the inner product $\left\langle K_{x},K_{y}\right\rangle =:K\left(x,y\right)$
is a positive definite kernel. Our present setting is more restrictive
in two ways: (\emph{i}) we study groups $G,$ and translation-invariant
kernels, and (\emph{ii}) we further impose continuity. By \textquotedblleft translation\textquotedblright{}
we mean relative to the operation in the particular group under discussion.
Our presentation below begins with the special case when $G$ is the
circle group $\mathbb{T}(:=\mathbb{R}/\mathbb{Z}),$ or the real line
$\mathbb{R}.$

\subsection{Reproducing Kernel Hilbert spaces}

For simplicity we focus on the case $G=\mathbb{R},$ indicating the
changes needed for $G=\mathbb{T}.$ Modifications, if any, necessitated
by considering other groups $G$ will be described in the body of
the Memoir. 
\begin{defn}
\label{def:pdf}Fix $0<a,$ let $\Omega$ be an open interval of length
$a,$ then $\Omega-\Omega=(-a,a).$ A function 
\begin{equation}
F:\Omega-\Omega\rightarrow\mathbb{C}\label{eq:F}
\end{equation}
be continuous, bounded, and defined on $\Omega-\Omega.$ $F$ is \emph{positive
definite}\index{positive definite}\emph{ (p.d.)} if 
\begin{equation}
\sum_{i}\sum_{j}\overline{c_{i}}c_{j}F(x_{i}-x_{j})\geq0,\text{ for all finite sums with }c_{i}\in\mathbb{C},\mbox{ and all }x_{i}\in\Omega.\label{eq:def-pd}
\end{equation}
Hence, $F$ is positive definite iff the $N\times N$ matrices $\left(F(x_{i}-x_{j})\right)_{i,j=1}^{N}$
are positive definite for all $x_{1},\ldots,x_{N}$ in $\Omega$ and
all $N.$ \end{defn}
\begin{lem}
\label{lem:Fdef}$F$ is p.d. iff\textup{ 
\[
\int_{\Omega}\int_{\Omega}\overline{\varphi(x)}\varphi(y)F(x-y)dxdy\geq0\text{ for all }\varphi\in C_{c}^{\infty}(\Omega).
\]
}\end{lem}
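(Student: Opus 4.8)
The plan is to prove the two implications separately, with the forward direction (p.d. $\Rightarrow$ the double-integral inequality) being a routine approximation argument, and the converse being the one requiring a limiting argument with approximate identities.

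\medskip

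\noindent\emph{Forward direction.} Assume $F$ is positive definite in the sense of \eqref{eq:def-pd}. Fix $\varphi\in C_{c}^{\infty}(\Omega)$. The plan is to recognize
\[
\int_{\Omega}\int_{\Omega}\overline{\varphi(x)}\varphi(y)F(x-y)\,dx\,dy
\]
as a limit of Riemann sums. Since $\varphi$ has compact support in $\Omega$ and $F$ is continuous (hence uniformly continuous on the compact set $\overline{\Omega-\Omega}$ after noting $F$ is bounded and continuous), I would partition a compact interval containing $\mathrm{supp}\,\varphi$ into $N$ pieces with nodes $x_{i}^{(N)}$ and mesh $\delta_N\to 0$, and form
\[
S_N=\sum_{i}\sum_{j}\overline{\varphi(x_i^{(N)})}\,\varphi(x_j^{(N)})\,F(x_i^{(N)}-x_j^{(N)})\,\delta_N^2.
\]
Each $S_N\ge 0$ by \eqref{eq:def-pd} applied with coefficients $c_i=\varphi(x_i^{(N)})\delta_N$ (all $x_i^{(N)}\in\Omega$). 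Uniform continuity of the integrand $(x,y)\mapsto\overline{\varphi(x)}\varphi(y)F(x-y)$ on $\Omega\times\Omega$ gives $S_N\to\int\!\int$, so the integral is $\ge 0$.

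\medskip

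\noindent\emph{Converse direction.} Assume the double-integral inequality holds for all $\varphi\in C_{c}^{\infty}(\Omega)$. I want to deduce \eqref{eq:def-pd}. Fix $x_1,\dots,x_N\in\Omega$ and $c_1,\dots,c_N\in\mathbb{C}$. Choose $\varepsilon>0$ small enough that $x_k+\mathrm{supp}\,\psi_\varepsilon\subset\Omega$ for each $k$, where $\psi_\varepsilon$ is a standard mollifier ($\psi_\varepsilon\ge 0$, $\int\psi_\varepsilon=1$, $\mathrm{supp}\,\psi_\varepsilon\subset(-\varepsilon,\varepsilon)$), and set
\[
\varphi_\varepsilon(x)=\sum_{k=1}^{N} c_k\,\psi_\varepsilon(x-x_k)\in C_c^\infty(\Omega).
\]
Plugging $\varphi_\varepsilon$ into the hypothesis and expanding gives
\[
0\le \sum_{i}\sum_{j}\overline{c_i}c_j\iint \psi_\varepsilon(x-x_i)\psi_\varepsilon(y-x_j)F(x-y)\,dx\,dy
=\sum_{i}\sum_{j}\overline{c_i}c_j\,(F*\psi_\varepsilon*\widetilde{\psi_\varepsilon})(x_i-x_j),
\]
where $\widetilde{\psi_\varepsilon}(t)=\psi_\varepsilon(-t)$ and the convolution is taken in a neighborhood where $F$ makes sense. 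As $\varepsilon\to 0$, continuity of $F$ makes $(F*\psi_\varepsilon*\widetilde{\psi_\varepsilon})(x_i-x_j)\to F(x_i-x_j)$ uniformly on the finite set of differences, so the limit yields \eqref{eq:def-pd}.

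\medskip

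\noindent The main obstacle is purely bookkeeping about domains: $F$ is only defined on $\Omega-\Omega$, not on all of $\mathbb{R}$, so one must be careful that all translates and convolution arguments stay inside $\Omega-\Omega$. This is handled by choosing the mollifier scale $\varepsilon$ (resp.\ the Riemann mesh) small relative to the distance from $\{x_k\}$ (resp.\ $\mathrm{supp}\,\varphi$) to $\partial\Omega$; no substantive analytic difficulty remains. The only mild subtlety is justifying the convolution identity for $F$ supported on an interval, which follows by restricting all integrations to the relevant compact subsets of $\Omega$.
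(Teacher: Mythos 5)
Your proof is correct and is precisely the standard mollifier/Riemann-sum argument that the paper invokes when it writes only ``Standard'' for this lemma; both directions are handled properly, and your care with keeping all translates inside $\Omega$ (so that $F(x-y)$ is only ever evaluated on $\Omega-\Omega$) addresses the one genuine subtlety. Nothing further is needed.
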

\begin{proof}
Standard.
\end{proof}
\noindent Consider a continuous positive definite function so $F$
is defined on $\Omega-\Omega$. Let $\mathscr{H}_{F}$ be the \emph{reproducing
kernel Hilbert space (RKHS)}\index{RKHS}, which is the completion
of 
\begin{equation}
\sum_{\mbox{finite}}c_{j}F\left(\cdot-x_{j}\right):c_{j}\in\mathbb{C}\label{eq:H1}
\end{equation}
with respect to the inner product
\[
\left\langle F\left(\cdot-x\right),F\left(\cdot-y\right)\right\rangle _{\mathscr{H}_{F}}=F\left(x-y\right),\;\forall x,y\in\Omega
\]
and
\begin{equation}
\left\langle \sum_{i}c_{i}F\left(\cdot-x_{i}\right),\sum_{j}d_{j}F\left(\cdot-y_{j}\right)\right\rangle _{\mathscr{H}_{F}}=\sum_{i}\sum_{j}\overline{c_{i}}d_{j}F\left(x_{i}-y_{j}\right),\label{eq:ip-discrete}
\end{equation}
Throughout, we use the convention that the inner product is conjugate
linear in the first variable, and linear in the second variable. When
more than one inner product is used, subscripts will make reference
to the Hilbert space. 
\begin{lem}
\label{lem:RKHS-def-by-integral}The RKHS, $\mathscr{H}_{F}$, is
the Hilbert completion of the functions 
\begin{equation}
F_{\varphi}\left(x\right)=\int_{\Omega}\varphi\left(y\right)F\left(x-y\right)dy,\;\forall\varphi\in C_{c}^{\infty}\left(\Omega\right),x\in\Omega\label{eq:H2}
\end{equation}
with respect to the inner product
\begin{equation}
\left\langle F_{\varphi},F_{\psi}\right\rangle _{\mathscr{H}_{F}}=\int_{\Omega}\int_{\Omega}\overline{\varphi\left(x\right)}\psi\left(y\right)F\left(x-y\right)dxdy,\;\forall\varphi,\psi\in C_{c}^{\infty}\left(\Omega\right).\label{eq:hi2}
\end{equation}
In particular, 
\begin{equation}
\left\Vert F_{\varphi}\right\Vert _{\mathscr{H}_{F}}^{2}=\int_{\Omega}\int_{\Omega}\overline{\varphi\left(x\right)}\varphi\left(y\right)F\left(x-y\right)dxdy,\;\forall\varphi\in C_{c}^{\infty}\left(\Omega\right)\label{eq:hn2}
\end{equation}
and 
\begin{equation}
\left\langle F_{\varphi},F_{\psi}\right\rangle _{\mathscr{H}_{F}}=\int_{\Omega}\overline{\varphi\left(x\right)}F_{\psi}\left(x\right)dx,\;\forall\phi,\psi\in C_{c}^{\infty}(\Omega).
\end{equation}

\end{lem}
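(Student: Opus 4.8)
The plan is to establish \lemref{RKHS-def-by-integral} by showing that the "test-function" construction \eqref{H2}--\eqref{hi2} produces a Hilbert space that is canonically isometrically isomorphic to the RKHS $\mathscr{H}_{F}$ defined via \eqref{H1}--\eqref{ip-discrete}. The natural route is to identify each $F_{\varphi}$ with an element of $\mathscr{H}_{F}$ by viewing \eqref{H2} as a (vector-valued Bochner/Riemann) integral of the elementary kernel sections $F(\cdot-y)$, namely $F_{\varphi}=\int_{\Omega}\varphi(y)\,F(\cdot-y)\,dy$ as an $\mathscr{H}_{F}$-valued integral. The continuity of $F$ on the compact set $\overline{\Omega-\Omega}$ makes $y\mapsto F(\cdot-y)$ a continuous (hence uniformly continuous) map from $\overline{\Omega}$ into $\mathscr{H}_{F}$, so for $\varphi\in C_{c}^{\infty}(\Omega)$ the integral converges in $\mathscr{H}_{F}$ (approximate by Riemann sums $\sum_j \varphi(y_j)\,F(\cdot-y_j)\,\Delta y_j$, which are exactly the finite linear combinations spanning $\mathscr{H}_{F}$ in \eqref{H1}).

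First I would verify the inner product formula \eqref{hi2}. By the reproducing property $\langle F(\cdot-x),F(\cdot-y)\rangle_{\mathscr{H}_{F}}=F(x-y)$ together with sesquilinearity and the continuity of the inner product, passing the Riemann-sum limit through $\langle\cdot,\cdot\rangle_{\mathscr{H}_{F}}$ gives
\[
\langle F_{\varphi},F_{\psi}\rangle_{\mathscr{H}_{F}}
=\int_{\Omega}\int_{\Omega}\overline{\varphi(x)}\,\psi(y)\,\langle F(\cdot-x),F(\cdot-y)\rangle_{\mathscr{H}_{F}}\,dx\,dy
=\int_{\Omega}\int_{\Omega}\overline{\varphi(x)}\,\psi(y)\,F(x-y)\,dx\,dy,
\]
which is \eqref{hi2}; specializing $\psi=\varphi$ yields \eqref{hn2}. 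Next I would check that $F_{\varphi}$, as an abstract vector, really is the function $x\mapsto\int_{\Omega}\varphi(y)F(x-y)\,dy$: by the reproducing property, for the point-evaluation vector $F(\cdot-x)=K_x\in\mathscr{H}_F$ we have $F_{\varphi}(x)=\langle F(\cdot-x),F_{\varphi}\rangle_{\mathscr{H}_{F}}=\int_{\Omega}\varphi(y)F(x-y)\,dy$, again moving the limit past the inner product. The last displayed identity in the statement, $\langle F_{\varphi},F_{\psi}\rangle_{\mathscr{H}_{F}}=\int_{\Omega}\overline{\varphi(x)}\,F_{\psi}(x)\,dx$, is then just Fubini applied to \eqref{hi2} together with the formula just derived for $F_{\psi}(x)$.

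Finally I would argue that $\{F_{\varphi}:\varphi\in C_{c}^{\infty}(\Omega)\}$ is dense in $\mathscr{H}_{F}$, so that its Hilbert completion is all of $\mathscr{H}_{F}$. For this, pick an approximate identity $\{\varphi_{\varepsilon}\}\subset C_{c}^{\infty}$; then $F_{\varphi_{\varepsilon}(\cdot-x)}\to F(\cdot-x)$ in $\mathscr{H}_{F}$ as $\varepsilon\to0$ (estimate $\|F_{\varphi_{\varepsilon}(\cdot-x)}-F(\cdot-x)\|_{\mathscr{H}_{F}}^{2}$ by expanding via \eqref{hn2} and the reproducing property, using uniform continuity of $F$), so the closure of $\{F_{\varphi}\}$ contains every $F(\cdot-x)$ and hence, by \eqref{H1}, all of $\mathscr{H}_{F}$. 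I expect the main obstacle to be purely technical: justifying the interchange of the vector-valued integral with the inner product and with point evaluations, and verifying convergence of the Riemann sums in the $\mathscr{H}_{F}$-norm — all of which rest on the uniform continuity of $F$ on the compact set $\overline{\Omega-\Omega}$ and the boundedness hypothesis in \defref{pdf}, plus \lemref{Fdef} to guarantee \eqref{hn2} is a genuine (semi-)norm. No genuinely deep point arises; the content is that the two standard presentations of the RKHS of a continuous p.d. kernel coincide.
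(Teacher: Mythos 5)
Your argument is correct and is essentially the route the paper takes: the paper states this lemma without proof (treating the inner-product identities as standard RKHS facts following from the reproducing property $\langle F(\cdot-x),F(\cdot-y)\rangle_{\mathscr{H}_F}=F(x-y)$ and sesquilinearity), and the density step you give via approximate identities is precisely the content of Lemma \ref{lem:dense}, proved the same way. No gap; the technical points you flag (vector-valued integral, interchange of limit and inner product, uniform continuity of $F$ on $\overline{\Omega-\Omega}$) are exactly the right ones and are handled correctly.
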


\section{\label{sec:introRKHS}Overview of Applications of RKHSs}

Subsequently, we shall be revisiting a number of specific instances
of these Reproducing kernel Hilbert spaces (RKHSs). \index{RKHS}Our
use of them ranges from the most general case, when a continuous positive
definite\index{positive definite} (p.d.) function $F$ is defined
on an open subset of a locally compact group; the RKHS will be denoted
$\mathscr{H}_{F}$. However, we stress that the associated RKHS will
depend on both the function $F$, and on the subset of $G$ where
$F$ is defined; hence on occasion, to be specific about the subset,
we shall index the RKHS by the pair $\left(\Omega,F\right)$. If the
choice of subset is implicit in the context, we shall write simply
$\mathscr{H}_{F}$. Depending on the context, a particular RKHS typically
will have any number of concrete, hands-on realizations, allowing
us thereby to remove the otherwise obtuse abstraction entailed in
its initial definition.

A glance at the Table of Contents indicates a large variety of the
classes of groups, and locally defined p.d. functions we consider,
the subsets. In each case, both the specific continuous, locally defined
p.d. function considered, and its domain are important. Each of the
separate cases has definite applications. The most explicit computations
work best in the case when $G=\mathbb{R}$; and we offer a number
of applications in three areas: applications to stochastic processes
(sect \ref{sec:Polya}-\ref{sec:hdim}), to harmonic analysis (sections
\ref{sec:R^1}-\ref{sec:index11}), and to operator/spectral theory
(\secref{mercer}.).

\chapter{Extensions of Continuous Positive Definite Functions}

Our main theme is the interconnection between (i) the study of extensions
of locally defined continuous and positive definite (p.d.) functions
$F$ on groups on the one hand, and, on the other, (ii) the question
of extensions for an associated system of unbounded Hermitian operators
with dense domain in a reproducing kernel Hilbert space (RKHS) $\mathscr{H}_{F}$
associated to $F$.

Because of the role of positive definite functions in harmonic analysis,
in statistics, and in physics, the connections in both directions
is of interest, i.e., from (i) to (ii), and vice versa. This means
that the notion of \textquotedblleft extension\textquotedblright{}
for question (ii) must be inclusive enough in order to encompass all
the extensions encountered in (i). For this reason enlargement of
the initial Hilbert space $\mathscr{H}_{F}$ are needed. In other
words, it is necessary to consider also operator extensions which
are realized in a dilation-Hilbert space; a new Hilbert space containing
$\mathscr{H}_{F}$ isometrically, and with the isometry intertwining
the respective operators.

\index{intertwining} \index{RKHS}\index{type 1}\index{type 2}

\section{\label{sec:embedding}Enlarging the Hilbert Space}

The purpose of this section is to describe this in detail, and to
prove some lemmas which will then be used in \chapref{types}, below.
In \chapref{types}, we identify extensions of the initial p.d. function
$F$ which are associated with operator extensions in $\mathscr{H}_{F}$
(type 1), and those which require an enlargement of $\mathscr{H}_{F}$,
type 2.

Let $F:\Omega-\Omega\to\mathbb{C}$ be a continuous p.d. function.
Let $\mathscr{H}_{F}$ be the corresponding RKHS and $\xi_{x}:=F(x-\cdot)\in\mathscr{H}_{F}.$
\begin{equation}
\left\langle \xi_{x},\xi_{y}\right\rangle _{\mathscr{H}_{F}}=F(x-y),\forall x,y\in\Omega.\label{eq:e.1}
\end{equation}
As usual $F_{\varphi}=\varphi*F,$ $\varphi\in C_{c}^{\infty}(\Omega).$
Then 
\begin{align*}
\left\langle F_{\varphi},F_{\psi}\right\rangle _{\mathscr{H}_{F}} & =\left\langle \xi_{0},\pi\left(\varphi^{\#}*\psi\right)\xi_{0}\right\rangle \\
 & =\left\langle \pi\left(\phi\right)\xi_{0},\pi\left(\psi\right)\xi_{0}\right\rangle 
\end{align*}
where $\pi\left(\varphi\right)\xi_{0}=F_{\varphi}.$ The following
lemma also holds in $\mathbb{R}^{n}$ with $n>1,$ but we state it
for $n=1$ to illustrate the ``enlargement'' of $\mathscr{H}_{F}$
question. 
\begin{thm}
\label{thm:pd-extension-bigger-H-space}The following two conditions
are equivalent:

(i) F is extendable to a continuous p.d. function $\widetilde{F}$
defined on $\mathbb{R},$ i.e., $\widetilde{F}$ is a continuous p.d.
function defined on $\mathbb{R}$ and $F(x)=\widetilde{F}(x)$ for
all $x$ in $\Omega-\Omega.$ 

(ii) There is a Hilbert space $\mathscr{K},$ an isometry\index{isometry}
$W:\mathscr{H}_{F}\to\mathscr{K},$ and a strongly continuous unitary
group $U_{t}:\mathscr{K}\to\mathscr{K},$ $t\in\mathbb{R}$ such that,
if $A$ is the skew-adjoint generator of $U_{t},$ i.e., 
\begin{equation}
\tfrac{1}{t}\left(U_{t}k-k\right)\to Ak,\forall k\in\mathrm{dom}(A),\label{eq:e.3a}
\end{equation}
then 
\begin{equation}
WF_{\varphi}\in\mathrm{domain}\left(A\right),\forall\varphi\in C_{c}^{\infty}(\Omega)\label{eq:e.3b}
\end{equation}
and 
\begin{equation}
AWF_{\varphi}=WF_{\varphi'},\forall\varphi\in C_{c}^{\infty}\left(\Omega\right).\label{eq:e.4}
\end{equation}

\end{thm}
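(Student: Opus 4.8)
The plan is to prove the two implications separately, with the direction (ii)$\Rightarrow$(i) being the routine one and (i)$\Rightarrow$(ii) carrying the real content.

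For (ii)$\Rightarrow$(i): Given the dilation data $(\mathscr{K},W,U_t)$, I would set $\widetilde{F}(t):=\langle W\xi_0,U_tW\xi_0\rangle_{\mathscr{K}}$ for $t\in\mathbb{R}$, where $\xi_0=F(0-\cdot)\in\mathscr{H}_F$. This is continuous because $U_t$ is strongly continuous, and it is positive definite by the standard computation $\sum_{i,j}\overline{c_i}c_j\widetilde{F}(t_i-t_j)=\|\sum_i c_i U_{t_i}W\xi_0\|^2\ge 0$ (using the group law $U_{t_i}^*U_{t_j}=U_{t_j-t_i}$ and unitarity). The only thing to check is that $\widetilde{F}$ agrees with $F$ on $\Omega-\Omega$; for this I would show that the infinitesimal conditions \eqref{eq:e.3b}--\eqref{eq:e.4}, together with $WF_\varphi=\pi(\varphi)W\xi_0$ in the appropriate sense and density of $\{F_\varphi\}$ in $\mathscr{H}_F$, force $\frac{d}{dt}\langle WF_\varphi,U_tWF_\psi\rangle = \langle WF_\varphi, U_tWF_{\psi'}\rangle$, and then identify $\langle WF_\varphi,U_tWF_\psi\rangle$ with the expression $\int\int\overline{\varphi(x)}\psi(y)\widetilde F(x-y+t)\,dx\,dy$ coming from \eqref{eq:hi2} applied to a translate; matching at $t=0$ against \eqref{eq:hi2} and using that $F_{\varphi}$ only sees values of $F$ on $\Omega-\Omega$ pins down $\widetilde F|_{\Omega-\Omega}=F$.

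For (i)$\Rightarrow$(ii): Given a continuous p.d. extension $\widetilde F$ on $\mathbb{R}$, by Bochner's theorem write $\widetilde F(t)=\int_{\mathbb{R}}e^{i\lambda t}\,d\mu(\lambda)$ for a finite positive Borel measure $\mu$. Take $\mathscr{K}=L^2(\mu)$ and let $U_t$ be multiplication by $e^{i\lambda t}$ — a strongly continuous unitary one-parameter group whose skew-adjoint generator $A$ is multiplication by $i\lambda$ on its natural maximal domain. Define $W$ on the dense span of $\{\xi_x=F(x-\cdot):x\in\Omega\}$ by $W\xi_x:=e^{i\lambda x}$ (equivalently $e_{-x}$, depending on sign convention); this is isometric because $\langle W\xi_x,W\xi_y\rangle_{L^2(\mu)}=\int e^{i\lambda(y-x)}d\mu=\widetilde F(y-x)=F(y-x)=\langle\xi_x,\xi_y\rangle_{\mathscr{H}_F}$, using that $x-y\in\Omega-\Omega$ where $\widetilde F=F$. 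Extend $W$ to all of $\mathscr{H}_F$ by continuity. Then $WF_\varphi = W\int_\Omega\varphi(y)\xi_y\,dy=\int_\Omega\varphi(y)e^{i\lambda y}\,dy=\widehat{\varphi}(-\lambda)$ (a Schwartz function of $\lambda$ restricted to the support of $\mu$), which lies in the domain of multiplication by $i\lambda$ since $\widehat\varphi$ decays rapidly and $\mu$ is finite; this gives \eqref{eq:e.3b}. Finally $A\,WF_\varphi = i\lambda\,\widehat\varphi(-\lambda)$, and since $\widehat{\varphi'}(-\lambda) = -i\lambda\,\widehat\varphi(-\lambda)\cdot(-1)=i\lambda\widehat\varphi(-\lambda)$ up to the bookkeeping of the Fourier-transform sign convention, this equals $WF_{\varphi'}$, establishing \eqref{eq:e.4}.

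The main obstacle, and the step I would be most careful about, is the bookkeeping around sign conventions and the precise meaning of "$\pi(\varphi)\xi_0=F_\varphi$" as an operator-valued integral: one must make sure the vector-valued integral $\int_\Omega\varphi(y)\xi_y\,dy$ genuinely converges in $\mathscr{H}_F$ (it does, since $y\mapsto\xi_y$ is continuous into $\mathscr{H}_F$ by continuity of $F$, and $\Omega$ is a bounded interval), that $W$ intertwines it with the corresponding integral in $L^2(\mu)$, and that the derivative identity \eqref{eq:e.4} is exactly the statement that differentiation on the $\mathbb{R}$-side corresponds to multiplication by (a unit times) $\lambda$ on the Bochner/spectral side. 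A secondary point worth a sentence is that in (ii)$\Rightarrow$(i) we do not get to assume $W$ has dense range, so $\widetilde F$ is reconstructed purely from the cyclic-type vector $W\xi_0$ and its $U_t$-orbit, which is why the argument goes through without any minimality hypothesis on the dilation.
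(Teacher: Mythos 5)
Your proposal is correct and follows essentially the same route as the paper: for (i)$\Rightarrow$(ii) you build $\mathscr{K}=L^{2}(\mu)$ from the Bochner measure of $\widetilde F$, let $U_t$ be multiplication by $e^{i\lambda t}$, and check that $W$ sends $F_\varphi$ to (a version of) $\widehat\varphi$, exactly as in the paper; for (ii)$\Rightarrow$(i) you define $\widetilde F(t)=\langle W\xi_0,U_tW\xi_0\rangle$ and use the generator identities \eqref{eq:e.3b}--\eqref{eq:e.4} plus an approximate identity to force $\widetilde F=F$ on $\Omega-\Omega$, which is the content of the paper's Lemma \ref{lem:abc} (there carried out via the observation that $\tfrac{d}{ds}U_{t-s}WF_{\varphi_s}=0$, yielding $U_tWF_\varphi=WF_{\varphi_t}$). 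The only place where your sketch is thinner than the paper is exactly that covariance step, but the differential-equation mechanism you describe is the same one the paper uses to close it.
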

The rest of this section is devoted to the proof of Theorem \ref{thm:pd-extension-bigger-H-space}.
\begin{proof}
$\underline{\Uparrow}:$ First, assume there exists $\mathscr{K},$
$W,$ $U_{t},$ and $A$ as in (\emph{ii}). Set 
\begin{equation}
\widetilde{F}(t)=\left\langle W\xi_{0},U_{t}W\xi_{0}\right\rangle ,t\in\mathbb{R}.\label{eq:e.5}
\end{equation}
Then, if $U_{t}=\int_{\mathbb{R}}e_{t}(\lambda)P(d\lambda),$ set
\[
d\mu(\lambda)=\left\Vert P(d\lambda)W\xi_{0}\right\Vert _{\mathscr{K}}^{2}=\left\langle W\xi_{0},P(d\lambda)W\xi_{0}\right\rangle _{\mathscr{K}}
\]
and $\widetilde{F}=\widehat{d\mu}$ is the Bochner transform. \index{Bochner}
\begin{lem}
\label{lem:abc}If $s,t\in\Omega,$ and $\varphi\in C_{c}^{\infty}(\Omega),$
then 
\begin{equation}
\left\langle WF_{\varphi},U_{t}WF_{\varphi}\right\rangle =\left\langle \xi_{0},\pi\left(\varphi^{\#}*\varphi_{t}\right)\xi_{0}\right\rangle _{\mathscr{H}_{F}},\label{eq:e.6}
\end{equation}
where $\varphi_{t}(\cdot)=\varphi(t-\cdot).$\index{approximate identity}
\end{lem}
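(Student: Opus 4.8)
\textbf{Proof proposal for Lemma \ref{lem:abc}.}

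The plan is to unwind both sides of \eqref{eq:e.6} through the defining relations from Theorem \ref{thm:pd-extension-bigger-H-space}(ii) and reduce everything to an identity about the one-parameter group $U_t$ applied to the vectors $WF_\varphi$. First I would record the two ingredients we have available: that $W$ is an isometry intertwining the derivative operator on $\mathscr{H}_F$ (in the sense of \eqref{eq:e.3b}--\eqref{eq:e.4}, i.e. $AWF_\varphi = WF_{\varphi'}$), and that $\pi(\varphi)\xi_0 = F_\varphi$ with the convolution identity $\left\langle F_\varphi, F_\psi\right\rangle_{\mathscr{H}_F} = \left\langle \xi_0, \pi(\varphi^{\#}*\psi)\xi_0\right\rangle$ already noted just before the theorem. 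The natural strategy is to show that both sides of \eqref{eq:e.6}, viewed as functions of $t\in\Omega$, are smooth, agree at $t=0$, and have the same derivative in $t$; since $\Omega$ is an interval containing $0$ this forces equality. The key computation on the left is that $\frac{d}{dt}\left\langle WF_\varphi, U_t WF_\varphi\right\rangle = \left\langle WF_\varphi, U_t A WF_\varphi\right\rangle = \left\langle WF_\varphi, U_t WF_{\varphi'}\right\rangle$, using \eqref{eq:e.3a} and \eqref{eq:e.4}; and more generally one gets mixed derivatives landing on $\varphi'$ in either slot. On the right, differentiating $t\mapsto\left\langle\xi_0,\pi(\varphi^{\#}*\varphi_t)\xi_0\right\rangle$ reduces to the elementary fact that $\frac{d}{dt}\varphi_t = -(\varphi')_t = (\varphi')_t^{\vee}$-type identity — concretely $\partial_t[\varphi(t-\cdot)] = \varphi'(t-\cdot)$ — so the $t$-derivative pulls a prime onto $\varphi$ inside the convolution in a way that matches the left-hand side.

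Alternatively, and perhaps more cleanly, I would argue directly rather than by differentiation: expand $F_\varphi$ via \eqref{eq:H2} as a (limit of Riemann sums of) superposition of the $\xi_x$, so that $WF_\varphi = \int_\Omega \varphi(y)\, W\xi_y\, dy$ in $\mathscr{K}$, then use that \eqref{eq:e.5} gives $\left\langle W\xi_x, U_t W\xi_y\right\rangle = \widetilde{F}(y - x + t)$ once one checks $U_t W\xi_y = W\xi_{y}$ shifted appropriately — this is where the group property of $U_t$ and the intertwining relation combine. Then
\[
\left\langle WF_\varphi, U_t WF_\varphi\right\rangle = \int_\Omega\int_\Omega \overline{\varphi(x)}\,\varphi(y)\,\widetilde{F}(t + y - x)\,dx\,dy,
\]
while the right-hand side of \eqref{eq:e.6}, by \eqref{eq:ip-discrete}/\eqref{eq:hi2} applied to the function $\varphi^{\#}*\varphi_t$ and the definition of $\pi$, equals the same double integral after one unravels the convolution $\varphi^{\#}*\varphi_t$ and the substitution it induces. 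Matching the two double integrals is then a change-of-variables bookkeeping check, using $\varphi^{\#}(x) = \overline{\varphi(-x)}$ and $\varphi_t(x) = \varphi(t-x)$.

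The main obstacle I anticipate is the first approach's need to justify that $t\mapsto U_t WF_\varphi$ is differentiable with derivative $U_t WF_{\varphi'}$ for $t$ in the open interval $\Omega$ — i.e. that $WF_\varphi\in\mathrm{dom}(A)$ is enough to get a genuine $C^1$ curve, not merely a difference-quotient limit at $t=0$ — which follows from strong continuity of $U_t$ together with $U_t\,\mathrm{dom}(A)\subset\mathrm{dom}(A)$ and $AU_t = U_t A$ on $\mathrm{dom}(A)$, standard for unitary one-parameter groups but worth stating. In the second approach the analogous subtlety is the interchange of the $\mathscr{K}$-valued integral $\int\varphi(y)W\xi_y\,dy$ with the inner product and with $U_t$, which is routine given continuity of $y\mapsto W\xi_y = W\xi_0$ composed with the relevant shift and compact support of $\varphi$. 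Either way the genuinely new content is small: the lemma is essentially the observation that the isometry $W$, the group $U_t$, and the map $\pi$ are mutually compatible, so I would present whichever of the two computations is shortest and relegate the interchange-of-limits points to a remark that they are standard.
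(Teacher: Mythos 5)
Your proposal correctly isolates the two ingredients the lemma rests on (the intertwining relation $AWF_{\varphi}=WF_{\varphi'}$ of (\ref{eq:e.4}), and the convolution identity $\left\langle F_{\varphi},F_{\psi}\right\rangle _{\mathscr{H}_{F}}=\left\langle \xi_{0},\pi\left(\varphi^{\#}*\psi\right)\xi_{0}\right\rangle $ recorded just before the theorem), and your treatment of the right-hand side of (\ref{eq:e.6}) is fine. But neither of your two routes actually closes, and they stall at the same place. In the first route, agreeing at $t=0$ and ``having the same derivative'' only forces equality if you can show $L'(t)=R'(t)$ for \emph{all} $t$; computing these gives $L'(t)=\left\langle WF_{\varphi},U_{t}WF_{\varphi'}\right\rangle $ versus $R'(t)=\left\langle \xi_{0},\pi\left(\varphi^{\#}*(\varphi')_{t}\right)\xi_{0}\right\rangle $, which is the polarized form of the lemma itself with $\varphi'$ in one slot --- an infinite regress, with no closed ODE or Gronwall bound available to break it. In the second route, the formula $\left\langle W\xi_{x},U_{t}W\xi_{y}\right\rangle =\widetilde{F}(t+y-x)$ presupposes that $U_{t}$ shifts the vectors $W\xi_{y}$; but condition (ii) of Theorem \ref{thm:pd-extension-bigger-H-space} only constrains $A$ on the smoothed vectors $WF_{\varphi}$ (the $\xi_{y}$ need not lie in $\mathrm{dom}(A)$ at all), so this shift property is precisely the content of the lemma and cannot be ``checked'' from the group property and the intertwining relation without the argument you are trying to avoid.

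The missing device is a two-parameter interpolation at the vector level: fix $t$ and set $g(s):=U_{t-s}WF_{\varphi_{s}}$ for $0\leq s\leq t$, as in (\ref{eq:e.10})--(\ref{eq:e.11}). Differentiating in $s$ produces two terms, $-U_{t-s}AWF_{\varphi_{s}}$ from the group factor and $U_{t-s}WF_{\partial_{s}\varphi_{s}}$ from the test function, and these cancel by (\ref{eq:e.4}) applied to $\varphi_{s}$, since $\partial_{s}\varphi_{s}$ coincides (up to the translation convention) with $(\varphi_{s})'$. Hence $g$ is constant, and comparing $s=0$ with $s=t$ yields the vector identity $U_{t}WF_{\varphi}=WF_{\varphi_{t}}$ of (\ref{eq:e.9}); from there (\ref{eq:e.6}) follows at once from the isometry of $W$ and the convolution identity, exactly as in your double-integral bookkeeping. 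The point your sketch misses is that the $t$-derivative of $U_{t}WF_{\varphi}$ and the $t$-derivative of $WF_{\varphi_{t}}$ are not directly comparable (one carries a $U_{t}$, the other does not); only along the diagonal path $U_{t-s}WF_{\varphi_{s}}$ do the two contributions meet and cancel.
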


Suppose first (\ref{eq:e.6}) has been checked. Let $\phi_{\epsilon}$
be an approximate identity at $x=0.$ Then 
\begin{eqnarray}
\widetilde{F}(t) & = & \left\langle W\xi_{0},U_{t}W\xi_{0}\right\rangle \nonumber \\
 & = & \lim_{\epsilon\to0}\left\langle \xi_{0},\pi\left(\left(\phi_{\epsilon}\right)_{t}\right)\xi_{0}\right\rangle _{\mathscr{H}_{F}}\label{eq:e.8}\\
 & = & \left\langle \xi_{0},\xi_{t}\right\rangle =F(t)\nonumber 
\end{eqnarray}
by (\ref{eq:e.5}), (\ref{eq:e.6}), and (\ref{eq:e.1}).
\begin{proof}[Proof of Lemma \ref{lem:abc}]
 Now (\ref{eq:e.6}) follows from 
\begin{equation}
U_{t}WF_{\varphi}=WF_{\varphi_{t}},\label{eq:e.9}
\end{equation}
$\varphi\in C_{c}^{\infty}(\Omega),t\in\Omega.$ Consider now
\begin{equation}
U_{t-s}WF_{\varphi_{s}}=\begin{cases}
U_{t}WF_{\varphi} & \text{at }s=0\\
WF_{\varphi_{t}} & \text{at }s=t
\end{cases}\label{eq:e.10}
\end{equation}
and
\begin{equation}
\int_{0}^{t}\tfrac{d}{ds}U_{t-s}WF_{\varphi_{s}}ds=WF_{\varphi_{t}}-U_{t}WF_{\varphi}.\label{eq:e.11}
\end{equation}
We claim that the left hand side of (\ref{eq:e.11}) equals zero.
By (\ref{eq:e.3a}) and (\ref{eq:e.3b}) 
\[
\tfrac{d}{ds}\left[U_{t-s}WF_{\varphi_{s}}\right]=-U_{t-s}AWF_{\varphi_{s}}+U_{t-s}WF_{\varphi_{s}'}.
\]
But, by (\ref{eq:e.4}) applied to $\varphi_{s},$ we get 
\begin{equation}
AWF_{\varphi_{s}}=WF_{\varphi_{s}'}\label{eq:e.12}
\end{equation}
and the desired conclusion (\ref{eq:e.9}) follows. 
\end{proof}

$\underline{\Downarrow}:$ Assume (\emph{i}), let $\widetilde{F}=\widehat{d\mu}$
be a p.d. extension and Bochner transform. Then $\mathscr{H}_{\widetilde{F}}\backsimeq L^{2}(\mu)$;
and for $\varphi\in C_{c}^{\infty}(\Omega),$ set 
\begin{equation}
WF_{\varphi}=\widetilde{F}_{\varphi},\label{eq:e.13}
\end{equation}
then $W:\mathscr{H}_{F}\to\mathscr{H}_{\widetilde{F}}$ is an isometry. 
\begin{proof}[Proof that (\ref{eq:e.13}) is an isometry ]
Let $\varphi\in C_{c}^{\infty}(\Omega).$ Then
\begin{align*}
\left\Vert \widetilde{F}_{\varphi}\right\Vert {}_{\mathscr{H}_{\widetilde{F}}}^{2} & =\int\int\overline{\varphi(s)}\varphi(t)\widetilde{F}(t-s)dsdt\\
 & =\left\Vert F_{\varphi}\right\Vert _{\mathscr{H}_{F}}^{2}=\int_{\mathbb{R}}\left|\widehat{\varphi}\left(\lambda\right)\right|^{2}d\mu\left(\lambda\right)
\end{align*}
since $\widetilde{F}$ is an extension of $F.$ 
\end{proof}

Now set $U_{t}:L^{2}(\mu)\to L^{2}(\mu)$ 
\[
\left(U_{t}f\right)\left(\lambda\right)=e_{t}\left(\lambda\right)f\left(\lambda\right),
\]
a unitary group acting in $\mathscr{H}_{\widetilde{F}}\backsimeq L^{2}(\mu).$
Using (\ref{eq:e.1}), we get 
\begin{equation}
\left(WF_{\varphi}\right)\left(x\right)=\int e_{x}\left(\lambda\right)\widehat{\varphi}\left(\lambda\right)d\mu\left(\lambda\right),\forall x\in\Omega,\forall\varphi\in C_{c}^{\infty}\left(\Omega\right).\label{eq:e.14}
\end{equation}
And therefore (\emph{ii}) follows. By (\ref{eq:e.14})
\begin{align*}
\left(WF_{\varphi'}\right)\left(x\right) & =\int e_{x}\left(\lambda\right)i\lambda\widehat{\varphi}\left(\lambda\right)d\mu\left(\lambda\right)\\
 & =\left.\tfrac{d}{dt}\right|_{t=0}U_{t}WF_{\varphi}\\
 & =AWF_{\varphi}
\end{align*}
as claimed. 

\end{proof}
An early instance of dilations (i.e., enlarging the Hilbert space)
is the theorem by Sz.-Nagy \cite{RSN56,Muh74} on unitary dilations\index{unitary dilations}
of strongly continuous semigroups.
\begin{thm}[Sz.-Nagy]
 Let $\left\{ S_{t},t\in\mathbb{R}_{+}\right\} $ be a strongly continuous
semigroup of contractive operator in a Hilbert space $\mathscr{H}$;
then there is
\begin{enumerate}
\item a Hilbert space $\mathscr{K}$, 
\item an isometry $V:\mathscr{H}\rightarrow\mathscr{K}$, 
\item a strongly continuous one-parameter unitary group $\left\{ U\left(t\right)\:|\: t\in\mathbb{R}\right\} $
acting on $\mathscr{K}$ such that
\begin{equation}
VS_{t}=U\left(t\right)V,\;\forall t\in\mathbb{R}_{+}\label{eq:nagy-1}
\end{equation}

\end{enumerate}
\end{thm}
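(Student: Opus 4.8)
The statement to be proved is the classical Sz.-Nagy dilation theorem for a strongly continuous contraction semigroup $\{S_t\}_{t\ge 0}$. The plan is to produce the unitary group $\{U(t)\}_{t\in\mathbb R}$ and the isometry $V$ directly, via a GNS-type construction built from the semigroup's positive definiteness, rather than appealing to a generator. First I would record the key positivity fact: the kernel $(s,t)\mapsto S_{t-s}$ (with the convention $S_r = S_{-r}^{*}$ for $r<0$, and $S_0 = I$) is operator-valued positive definite on $\mathbb R$. Concretely, for finitely many $t_1,\dots,t_n\in\mathbb R$ and vectors $h_1,\dots,h_n\in\mathscr H$ one has $\sum_{i,j}\langle h_i, S_{t_j - t_i} h_j\rangle \ge 0$; this follows from the semigroup property $S_{a+b}=S_a S_b$ for $a,b\ge 0$ together with contractivity $\|S_t\|\le 1$, by the standard two-sided splitting argument. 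This is the analogue here of Kolmogorov's theorem and of the positive-definiteness hypotheses driving the RKHS constructions earlier in the Memoir.

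Next I would form the algebraic tensor product $\mathscr D := \operatorname{span}\{\delta_t \otimes h : t\in\mathbb R,\ h\in\mathscr H\}$ and equip it with the sesquilinear form $\langle \delta_s\otimes g, \delta_t\otimes h\rangle := \langle g, S_{t-s} h\rangle_{\mathscr H}$, extended sesquilinearly. By the positive-definiteness just noted, this form is nonnegative; quotienting by its null space and completing yields a Hilbert space $\mathscr K$. Define $V:\mathscr H\to\mathscr K$ by $Vh = [\delta_0\otimes h]$; since $\langle Vg, Vh\rangle = \langle g, S_0 h\rangle = \langle g,h\rangle$, $V$ is an isometry. Define $U(t)$ on the dense set by $U(t)[\delta_s\otimes h] := [\delta_{s+t}\otimes h]$. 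Because the defining kernel depends only on differences $t-s$, $U(t)$ preserves the inner product, hence extends to an isometry on $\mathscr K$; and $U(t)U(t') = U(t+t')$, $U(0)=I$ show the family is a one-parameter group of unitaries, with $U(t)^{-1}=U(-t)$. The intertwining relation is then immediate: for $h\in\mathscr H$ and $t\ge 0$, $U(t)Vh = [\delta_t\otimes h]$, and one checks against an arbitrary $[\delta_s\otimes g]$ that $\langle [\delta_s\otimes g],[\delta_t\otimes h]\rangle = \langle g, S_{t-s}h\rangle = \langle g, S_{-s}S_t h\rangle = \langle [\delta_s\otimes g], [\delta_0\otimes S_t h]\rangle = \langle [\delta_s\otimes g], V S_t h\rangle$, so $U(t)V = V S_t$ for all $t\ge 0$, which is \eqref{eq:nagy-1}.

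The remaining point — and the one I expect to be the main obstacle — is strong continuity of $t\mapsto U(t)$ on $\mathscr K$. It suffices to prove continuity at $t=0$ on the dense set $\mathscr D$, and by the group property and a density/uniform-boundedness ($\|U(t)\|=1$) argument it in fact suffices to check that $\langle \delta_0\otimes g,\ U(t)(\delta_0\otimes h)\rangle = \langle g, S_t h\rangle \to \langle g,h\rangle$ as $t\downarrow 0$ (and the adjoint statement as $t\uparrow 0$), together with $\|U(t)(\delta_0\otimes h) - \delta_0\otimes h\|^2 = 2\langle h,h\rangle - 2\operatorname{Re}\langle h, S_t h\rangle \to 0$; both follow from the strong continuity of $\{S_t\}$ at $0$ and $S_0=I$. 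Extending from vectors of the form $\delta_0\otimes h$ to general $\delta_s\otimes h$ uses $U(t)(\delta_s\otimes h) = U(s)U(t)(\delta_0\otimes h)$ and the isometry of $U(s)$. One should also remark that $\{S_t\}$ being a $C_0$-semigroup of contractions forces $S_0=I$, which is what makes $V$ isometric rather than merely a contraction; this is the only place the $C_0$ normalization is essential. With strong continuity in hand, $\{U(t)\}$ has a skew-adjoint generator by Stone's theorem, closing the circle with the generator-based formulation used in Theorem \ref{thm:pd-extension-bigger-H-space}.
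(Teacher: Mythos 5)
Your construction of $\mathscr K$, $V$, and $U(t)$ is sound up to and including the verification that $V$ is isometric and that $\{U(t)\}_{t\in\mathbb R}$ is a strongly continuous unitary group: the positive definiteness of the kernel $(s,t)\mapsto S_{t-s}$ (with the convention $S_r:=S_{-r}^{*}$ for $r<0$) does follow from the splitting identity $\sum_{i,j}\langle h_i,S_{t_j-t_i}h_j\rangle=\|g_1\|^2+\sum_{k\geq 2}\bigl(\|g_k\|^2-\|S_{t_k-t_{k-1}}g_k\|^2\bigr)$, where $g_k=\sum_{j\geq k}S_{t_j-t_k}h_j$ and $t_1<\dots<t_N$; and the GNS completion and the continuity argument are standard. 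The gap is in the final intertwining computation. You assert $\langle g,S_{t-s}h\rangle=\langle g,S_{-s}S_th\rangle$ for arbitrary $s$ and $t\geq 0$. This is correct for $s\leq 0$, but for $0<s\leq t$ the right-hand side is $\langle g,S_s^{*}S_th\rangle=\langle g,S_s^{*}S_sS_{t-s}h\rangle$, and $S_s^{*}S_s\neq I$ unless $S_s$ is an isometry. Indeed, a direct computation in $\mathscr K$ gives $\bigl\Vert[\delta_t\otimes h]-[\delta_0\otimes S_th]\bigr\Vert^{2}=\|h\|^{2}-\|S_th\|^{2}$, so $U(t)Vh=VS_th$ holds if and only if $\|S_th\|=\|h\|$. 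This is not a repairable slip: the relation $VS_t=U(t)V$ with $V$ isometric and $U(t)$ unitary forces $\|S_th\|=\|U(t)Vh\|=\|h\|$ for every $h$, i.e., each $S_t$ must be an isometry; for a genuinely contractive semigroup no triple $(\mathscr K,V,U)$ satisfying the displayed identity can exist.

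What your construction actually establishes is the compression form of the Sz.-Nagy theorem, namely $V^{*}U(t)V=S_t$ for all $t\geq 0$, equivalently $\langle Vg,U(t)Vh\rangle_{\mathscr K}=\langle g,S_th\rangle_{\mathscr H}$; this is the classical statement, and it is all that is used downstream in the Memoir (it yields $F(t)=\langle k_0,U(t)k_0\rangle_{\mathscr K}$ with $k_0=Vf_0$ from $F(t)=\langle f_0,S_tf_0\rangle_{\mathscr H}$). The literal intertwining $VS_t=U(t)V$ is the theorem on unitary \emph{extensions} of semigroups of isometries (Cooper, It\^{o}), not of contractions. You should therefore either weaken the conclusion to the compression identity (which your argument proves) or strengthen the hypothesis to a semigroup of isometries (in which case your intertwining computation becomes valid, since then $S_s^{*}S_s=I$).
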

We mention this result here to stress that positive definite functions
on $\mathbb{R}$ (and subsets of $\mathbb{R}$) often arise from contraction
semigroups: Sz.-Nagy proved the following:
\begin{thm}[Sz.-Nagy]
Let $\left(S_{t},\mathscr{H}\right)$ be a contraction semigroup,
$t\geq0$, (such that $S_{0}=I_{\mathscr{H}}$;) and let $f_{0}\in\mathscr{H}\backslash\left\{ 0\right\} $;
then the following function $F$ on $\mathbb{R}$ is positive definite\index{positive definite}:
\begin{equation}
F\left(t\right)=\begin{cases}
\left\langle f_{0},S_{t}f_{0}\right\rangle _{\mathscr{H}} & \mbox{if }t\geq0,\\
\left\langle f_{0},S_{-t}^{*}f_{0}\right\rangle _{\mathscr{H}} & \mbox{if }t<0.
\end{cases}\label{eq:nagy-2}
\end{equation}
\end{thm}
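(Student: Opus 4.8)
The plan is to read the asserted function $F$ as a single diagonal matrix coefficient of a unitary representation of $\mathbb{R}$, and then invoke the standard fact that such coefficients are positive definite. The bridge is the unitary dilation theorem stated immediately above (the first Sz.-Nagy theorem). So the first step is to apply that theorem to the given contraction semigroup $\left(S_{t},\mathscr{H}\right)_{t\geq0}$ (with $S_{0}=I_{\mathscr{H}}$): it produces a Hilbert space $\mathscr{K}$, an isometry $V:\mathscr{H}\rightarrow\mathscr{K}$, and a strongly continuous one-parameter unitary group $\left\{ U\left(t\right)\:|\:t\in\mathbb{R}\right\}$ on $\mathscr{K}$ such that $VS_{t}=U\left(t\right)V$ for all $t\geq0$. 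Set $\xi:=Vf_{0}\in\mathscr{K}$; note $\xi\neq0$ since $V$ is isometric and $f_{0}\neq0$.

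Next I would verify the single identity
\[
F\left(t\right)=\left\langle \xi,U\left(t\right)\xi\right\rangle _{\mathscr{K}},\qquad\forall t\in\mathbb{R}.
\]
For $t\geq0$ this is immediate from the intertwining relation and isometry of $V$: $\left\langle \xi,U\left(t\right)\xi\right\rangle _{\mathscr{K}}=\left\langle Vf_{0},VS_{t}f_{0}\right\rangle _{\mathscr{K}}=\left\langle f_{0},S_{t}f_{0}\right\rangle _{\mathscr{H}}=F\left(t\right)$. For $t<0$, write $s=-t>0$; by the definition of $F$ and the fact that the inner product is conjugate linear in the first slot, $F\left(t\right)=\left\langle f_{0},S_{s}^{*}f_{0}\right\rangle _{\mathscr{H}}=\overline{\left\langle S_{s}^{*}f_{0},f_{0}\right\rangle _{\mathscr{H}}}=\overline{\left\langle f_{0},S_{s}f_{0}\right\rangle _{\mathscr{H}}}=\overline{F\left(s\right)}$. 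Applying the case already settled to $s>0$, this equals $\overline{\left\langle \xi,U\left(s\right)\xi\right\rangle _{\mathscr{K}}}=\left\langle U\left(s\right)\xi,\xi\right\rangle _{\mathscr{K}}=\left\langle \xi,U\left(s\right)^{*}\xi\right\rangle _{\mathscr{K}}=\left\langle \xi,U\left(-s\right)\xi\right\rangle _{\mathscr{K}}=\left\langle \xi,U\left(t\right)\xi\right\rangle _{\mathscr{K}}$, where I used $U\left(s\right)^{*}=U\left(s\right)^{-1}=U\left(-s\right)$ because $U$ is a unitary group. (As a byproduct, $F$ is continuous, since $t\mapsto U\left(t\right)\xi$ is norm-continuous by strong continuity of $U$.)

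With this representation of $F$, positive definiteness is the classical computation. Given $t_{1},\dots,t_{N}\in\mathbb{R}$ and $c_{1},\dots,c_{N}\in\mathbb{C}$, use the group law $U\left(t_{i}-t_{j}\right)=U\left(t_{i}\right)U\left(t_{j}\right)^{*}$ to write $F\left(t_{i}-t_{j}\right)=\left\langle \xi,U\left(t_{i}\right)U\left(t_{j}\right)^{*}\xi\right\rangle _{\mathscr{K}}=\left\langle U\left(-t_{i}\right)\xi,U\left(-t_{j}\right)\xi\right\rangle _{\mathscr{K}}$, hence
\[
\sum_{i}\sum_{j}\overline{c_{i}}c_{j}F\left(t_{i}-t_{j}\right)=\Big\langle \sum_{i=1}^{N}c_{i}U\left(-t_{i}\right)\xi,\;\sum_{j=1}^{N}c_{j}U\left(-t_{j}\right)\xi\Big\rangle _{\mathscr{K}}=\Big\Vert \sum_{i=1}^{N}c_{i}U\left(-t_{i}\right)\xi\Big\Vert _{\mathscr{K}}^{2}\geq0,
\]
which is precisely the definition of $F$ being p.d. on $\mathbb{R}$.

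The one place I expect to need genuine care, as opposed to bookkeeping, is the passage to negative $t$: the dilation intertwining $VS_{t}=U\left(t\right)V$ holds only for $t\geq0$, so recognizing $F$ as a matrix coefficient on all of $\mathbb{R}$ really does rely on combining the Hermitian symmetry $F\left(-s\right)=\overline{F\left(s\right)}$ that is built into the definition of $F$ (via the $S_{-t}^{*}$ in the formula) with the group identity $U\left(s\right)^{*}=U\left(-s\right)$. An alternative, dilation-free route would be to order the $t_{i}$ and expand $\sum_{i,j}\overline{c_{i}}c_{j}F\left(t_{i}-t_{j}\right)$ directly using $S_{s+t}=S_{s}S_{t}$ together with $\left\Vert S_{t}\right\Vert \leq1$; this is feasible but messier, and amounts to re-deriving the Sz.-Nagy dilation by hand, so I would prefer the route above.
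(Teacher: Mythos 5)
Your proof is correct. The paper itself offers no proof of this statement (it is quoted as a classical theorem of Sz.-Nagy), but the route you take is exactly the one the surrounding text points to: apply the dilation theorem stated immediately above, check that $F\left(t\right)=\left\langle Vf_{0},U\left(t\right)Vf_{0}\right\rangle _{\mathscr{K}}$ for all $t\in\mathbb{R}$ (with the Hermitian symmetry $F\left(-s\right)=\overline{F\left(s\right)}$ and $U\left(s\right)^{*}=U\left(-s\right)$ handling $t<0$, which you correctly flag as the one step needing care), and finish with the squared-norm computation. As a bonus, your intermediate identity is precisely the corollary that follows the theorem in the paper, so your argument proves both at once. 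One small point worth acknowledging: the dilation theorem as stated in the paper assumes the semigroup is \emph{strongly continuous}, whereas the theorem you are proving does not say so explicitly; if one does not want to read that hypothesis into the statement, the dilation-free expansion you sketch at the end (or the purely algebraic form of the Sz.-Nagy dilation) closes that gap, since positive definiteness itself requires no continuity.
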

\begin{cor}
Every p.d. function as in (\ref{eq:nagy-2}) has the form:
\begin{equation}
F\left(t\right):=\left\langle k_{0},U\left(t\right)k_{0}\right\rangle _{\mathscr{K}},\; t\in\mathbb{R}\label{eq:nagy-3}
\end{equation}
where $\left(U\left(t\right),\mathscr{K}\right)$ is a unitary representation
of $\mathbb{R}$.\index{unitary representation}
\end{cor}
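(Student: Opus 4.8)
The plan is to apply the Sz.-Nagy dilation theorem recalled just above to the contraction semigroup $\left(S_{t},\mathscr{H}\right)$ out of which $F$ in (\ref{eq:nagy-2}) is manufactured. This yields a Hilbert space $\mathscr{K}$, an isometry $V:\mathscr{H}\to\mathscr{K}$, and a strongly continuous one-parameter unitary group $\left\{ U(t)\mid t\in\mathbb{R}\right\} $ on $\mathscr{K}$ with $VS_{t}=U(t)V$ for all $t\geq0$. Setting $k_{0}:=Vf_{0}$, I claim that (\ref{eq:nagy-3}) holds with this $\mathscr{K}$, this $U$, and this $k_{0}$.

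The verification splits according to the sign of $t$. For $t\geq0$, using $V^{*}V=I_{\mathscr{H}}$ (isometry) together with the intertwining identity $U(t)V=VS_{t}$,
\begin{align*}
\left\langle k_{0},U(t)k_{0}\right\rangle _{\mathscr{K}} & =\left\langle Vf_{0},U(t)Vf_{0}\right\rangle _{\mathscr{K}}=\left\langle Vf_{0},VS_{t}f_{0}\right\rangle _{\mathscr{K}}\\
 & =\left\langle f_{0},S_{t}f_{0}\right\rangle _{\mathscr{H}}=F(t),
\end{align*}
which matches the first branch of (\ref{eq:nagy-2}). For $t<0$, I would use that $U$ is a group, so $U(t)=U(-t)^{*}$, together with $-t>0$:
\begin{align*}
\left\langle k_{0},U(t)k_{0}\right\rangle _{\mathscr{K}} & =\overline{\left\langle k_{0},U(-t)k_{0}\right\rangle }_{\mathscr{K}}=\overline{F(-t)}\\
 & =\overline{\left\langle f_{0},S_{-t}f_{0}\right\rangle }_{\mathscr{H}}=\left\langle f_{0},S_{-t}^{*}f_{0}\right\rangle _{\mathscr{H}},
\end{align*}
which is the second branch of (\ref{eq:nagy-2}). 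Combining the two cases gives $F(t)=\left\langle k_{0},U(t)k_{0}\right\rangle _{\mathscr{K}}$ for all $t\in\mathbb{R}$, and $\left(U(t),\mathscr{K}\right)$ is by construction a unitary representation of $\mathbb{R}$, which proves the corollary.

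There is no real obstacle here: the corollary is essentially a repackaging of the Sz.-Nagy dilation already quoted. The only point demanding a little care is the bookkeeping in the $t<0$ branch, where one must check that the adjoint $S_{-t}^{*}$ built into the definition (\ref{eq:nagy-2}) is exactly what is produced by $U(t)=U(-t)^{*}$ under the convention that $\left\langle \cdot,\cdot\right\rangle $ is conjugate-linear in the first slot; equivalently, one may observe that any function of the form (\ref{eq:nagy-3}) automatically satisfies the Hermitian symmetry $F(-t)=\overline{F(t)}$, so that the two branches of (\ref{eq:nagy-2}) are nothing but the two halves of a single Hermitian-symmetric p.d. function, and (\ref{eq:nagy-3}) reproduces both at once.
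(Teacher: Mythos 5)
Your argument is correct and is exactly the intended one: the paper states this corollary without proof, immediately after the Sz.-Nagy dilation theorem, precisely because it follows by applying the dilation to $\left(S_{t},\mathscr{H}\right)$ and setting $k_{0}=Vf_{0}$, as you do. Your case analysis for $t<0$ via $U(t)=U(-t)^{*}$ and the Hermitian symmetry $F(-t)=\overline{F(t)}$ correctly recovers the second branch of (\ref{eq:nagy-2}), so nothing is missing.
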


\section{\label{sub:ExtSpace}$Ext_{1}(F)$ and $Ext_{2}(F)$}
\begin{defn}
Let $G$ be a locally compact group, and let $\Omega$ be an open
connected subset of $G$. Let $F:\Omega^{-1}\cdot\Omega\rightarrow\mathbb{C}$
be a continuous positive definite\index{positive definite} function. 

Consider a strongly continuous unitary representation $U$ of $G$
acting in some Hilbert space $\mathscr{K}$, containing the RKHS $\mathscr{H}_{F}$.
We say that $\left(U,\mathscr{K}\right)\in Ext\left(F\right)$ iff
there is a vector $k_{0}\in\mathscr{K}$ such that\index{unitary representation}
\begin{equation}
F\left(g\right)=\left\langle k_{0},U\left(g\right)k_{0}\right\rangle _{\mathscr{K}},\;\forall g\in\Omega^{-1}\cdot\Omega.\label{eq:ext-1-1}
\end{equation}

\begin{enumerate}[leftmargin=*,label=\Roman{enumi}.]
\item The subset of $Ext\left(F\right)$ consisting of $\left(U,\mathscr{H}_{F},k_{0}=F_{e}\right)$
with 
\begin{equation}
F\left(g\right)=\left\langle F_{e},U\left(g\right)F_{e}\right\rangle _{\mathscr{H}_{F}},\;\forall g\in\Omega^{-1}\cdot\Omega\label{eq:ext-1-2}
\end{equation}
is denoted $Ext_{1}\left(F\right)$; and we set 
\[
Ext_{2}\left(F\right):=Ext\left(F\right)\backslash Ext_{1}\left(F\right);
\]
i.e., $Ext_{2}\left(F\right)$ consists of the solutions to problem
(\ref{eq:ext-1-1}) for which $\mathscr{K}\supsetneqq\mathscr{H}_{F}$,
i.e., unitary representations realized in an enlargement Hilbert space.
\\
(We write $F_{e}\in\mathscr{H}_{F}$ for the vector satisfying $\left\langle F_{e},\xi\right\rangle _{\mathscr{H}_{F}}=\xi\left(e\right)$,
$\forall\xi\in\mathscr{H}_{F}$, where $e$ is the neutral (unit)
element in $G$, i.e., $e\, g=g$, $\forall g\in G$.)
\item In the special case, where $G=\mathbb{R}^{n}$, and $\Omega\subset\mathbb{R}^{n}$
is open and connected, we consider 
\[
F:\Omega-\Omega\rightarrow\mathbb{C}
\]
continuous and positive definite. In this case,
\begin{align}
Ext\left(F\right)= & \Bigl\{\mu\in\mathscr{M}_{+}\left(\mathbb{R}^{n}\right)\:\big|\:\widehat{\mu}\left(x\right)=\int_{\mathbb{R}^{n}}e^{i\lambda\cdot x}d\mu\left(\lambda\right)\label{eq:ext-1-4}\\
 & \mbox{ is a p.d. extensiont of \ensuremath{F}}\Bigr\}.\nonumber 
\end{align}

\end{enumerate}
\end{defn}
\begin{rem}
Note that (\ref{eq:ext-1-4}) is consistent with (\ref{eq:ext-1-1}):
For if $\left(U,\mathscr{K},k_{0}\right)$ is a unitary representation
of $G=\mathbb{R}^{n}$, such that (\ref{eq:ext-1-1}) holds; then,
by a theorem of Stone, there is a projection-valued measure (PVM)
$P_{U}\left(\cdot\right)$, defined on the Borel subsets of $\mathbb{R}^{n}$
s.t. 
\begin{equation}
U\left(x\right)=\int_{\mathbb{R}^{n}}e^{i\lambda\cdot x}P_{U}\left(d\lambda\right),\; x\in\mathbb{R}^{n}.\label{eq:ex-1-5}
\end{equation}
Setting 
\begin{equation}
d\mu\left(\lambda\right):=\left\Vert P_{U}\left(d\lambda\right)k_{0}\right\Vert _{\mathscr{K}}^{2},\label{eq:ext-1-6-7}
\end{equation}
it is then immediate that we have: $\mu\in\mathscr{M}_{+}\left(\mathbb{R}^{n}\right)$,
and that the finite measure $\mu$ satisfies 
\begin{equation}
\widehat{\mu}\left(x\right)=F\left(x\right),\;\forall x\in\Omega-\Omega.\label{eq:ext-1-6}
\end{equation}

\end{rem}
Set $n=1$: Start with a local p.d. continuous function $F$, and
let $\mathscr{H}_{F}$ be the corresponding RKHS.\index{RKHS} Let
$Ext(F)$ be the compact convex set of probability measures on $\mathbb{R}$
defining extensions of $F$.
\begin{defn}
For $\varphi\in C_{c}^{\infty}\left(\Omega\right)$, set 
\begin{eqnarray}
F_{\varphi}\left(x\right) & := & \int_{\Omega}\varphi\left(y\right)F\left(x-y\right)dy,\mbox{ and}\label{eq:D-3-1}\\
D^{\left(F\right)}\left(F_{\varphi}\right) & := & F_{\frac{d\varphi}{dx};}\label{eq:D-3-2}
\end{eqnarray}
then $D^{\left(F\right)}$ defines a skew-Hermitian\index{operator!skew-Hermitian}
operator with dense domain in $\mathscr{H}_{F}$.
\end{defn}
The role of deficiency indices\index{deficiency indices} (computed
in $\mathscr{H}_{F}$) for the canonical skew-Hermitian operator in
the RKHS $\mathscr{H}_{F}$ is as follows: the deficiency indices
can be only $\left(0,0\right)$ or $\left(1,1\right)$.

We now divide $Ext(F)$ into two parts, say $Ext_{1}(F)$ and $Ext_{2}(F)$. 

Recall that all continuous p.d. extensions of $F$ come from strongly
continuous unitary representations. So in the case of 1D, from unitary
one-parameter groups of course, say $U(t)$.\index{operator!unitary one-parameter group}

Further recall from \secref{embedding}, that some of the p.d. extensions
of $F$ may entail a bigger Hilbert space, say $\mathscr{K}$. By
this we mean that $\mathscr{K}$ creates a dilation (enlargement)
of $\mathscr{H}_{F}$ in the sense that $\mathscr{H}_{F}$ is isometrically
embedded in $\mathscr{K}$. Via the embedding we may therefore view
$\mathscr{H}_{F}$ as a closed subspace in $\mathscr{K}$.

Now let $Ext_{1}(F)$ be the subset of $Ext(F)$ corresponding to
extensions when the unitary representation $U(t)$ acts in $\mathscr{H}_{F}$
(internal extensions), and $Ext_{2}(F)$ denote the part of $Ext(F)$
associated to unitary representations $U(t)$ acting in a proper enlargement
Hilbert space $\mathscr{K}$ (if any), i.e., acting in a Hilbert space
$\mathscr{K}$ corresponding to a proper dilation. The Polya extensions,
see \chapref{types}, account for a part of $Ext_{2}(F)$. We have
the following:\index{Polya}\index{deficiency indices}
\begin{thm}
The deficiency indices computed in $\mathscr{H}_{F}$ are $\left(0,0\right)$
if and only if $Ext_{1}(F)$ is a singleton. \end{thm}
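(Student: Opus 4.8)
The plan is to convert the statement into a count of skew-adjoint extensions of the skew-Hermitian operator $D^{(F)}$ in $\mathscr{H}_F$, exploiting the correspondence between \emph{internal} positive definite extensions of $F$ and such operator extensions, and then invoke von Neumann's deficiency-index theory together with the fact already recorded in the text that the indices here are either $(0,0)$ or $(1,1)$.

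First I would record that $D^{(F)}$ is skew-symmetric on its dense domain $\mathrm{span}\{F_\varphi:\varphi\in C_c^\infty(\Omega)\}$, hence closable with closure $\overline{D^{(F)}}$; by von Neumann, if the deficiency indices are $(0,0)$ then $\overline{D^{(F)}}$ is skew-adjoint and is its only skew-adjoint extension, while if they are $(1,1)$ the skew-adjoint extensions of $\overline{D^{(F)}}$ form a circle $\{A_\theta\}_{\theta\in\mathbb{T}}$ (Cayley transforms of the unitaries between the two one-dimensional deficiency spaces). Next I would nail the bridge to $Ext_1(F)$ in both directions. Given $(U,\mathscr{H}_F,F_e)\in Ext_1(F)$: applying the computation in the proof of Theorem \ref{thm:pd-extension-bigger-H-space} with $\mathscr{K}=\mathscr{H}_F$ and $W=I$, the skew-adjoint generator $A$ of $U$ satisfies $AF_\varphi=F_{\varphi'}$ for all $\varphi\in C_c^\infty(\Omega)$, i.e.\ $A$ is a skew-adjoint extension of $D^{(F)}$. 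Conversely, for a skew-adjoint extension $A$ of $D^{(F)}$, set $U_A(t)=e^{tA}$, $\widetilde F_A(t)=\langle F_e,U_A(t)F_e\rangle_{\mathscr{H}_F}$, and $d\mu_A(\lambda)=\|P_A(d\lambda)F_e\|_{\mathscr{H}_F}^2$; rerunning the $\Uparrow$-argument of that same proof (again with $W=I$) shows $\widetilde F_A=\widehat{\mu_A}$ is a continuous p.d.\ function on $\mathbb{R}$ restricting to $F$ on $\Omega-\Omega$, so $\widetilde F_A\in Ext_1(F)$. In particular $Ext_1(F)$ is non-empty, and $Ext_1(F)=\{\widetilde F_A: A\text{ a skew-adjoint extension of }\overline{D^{(F)}}\}$.

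The forward implication is then immediate: indices $(0,0)$ give a single skew-adjoint extension, so $Ext_1(F)$ is the singleton $\{\widetilde F_{\overline{D^{(F)}}}\}$. For the converse I would argue contrapositively: if the indices are not $(0,0)$ they are $(1,1)$, and I must produce two distinct points of $Ext_1(F)$ from two distinct skew-adjoint extensions $A_\theta\neq A_{\theta'}$. This is the one genuinely non-formal step, because distinct measures in $Ext(F)$ may still agree with $F$ on $\Omega-\Omega$, so distinctness of the operators does not by itself yield distinctness of the measures $\mu_{A_\theta}$. I would resolve it by observing that $F_e$ is cyclic for each $U_{A_\theta}$: the short-time translates $U_{A_\theta}(s)F_e$, $|s|$ small, are the reproducing kernels prescribed by $F$ on $\Omega-\Omega$ alone (read off from the $W=I$ case of (\ref{eq:e.9}) and (\ref{eq:e.1})), and, together with the dense domain of $D^{(F)}$, they span a dense subspace of $\mathscr{H}_F$; hence each triple $(U_{A_\theta},\mathscr{H}_F,F_e)$ is determined up to unitary equivalence by $\mu_{A_\theta}$. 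Since the local vectors $U_{A_\theta}(s)F_e$ are the \emph{same} for every $\theta$, an equality $\mu_{A_\theta}=\mu_{A_{\theta'}}$ would force the implementing unitary to fix a dense set, hence $U_{A_\theta}=U_{A_{\theta'}}$ and $A_\theta=A_{\theta'}$, a contradiction. (Alternatively one may invoke Krein's resolvent formula: the Herglotz functions $z\mapsto\langle F_e,(A_\theta-z)^{-1}F_e\rangle$ are Möbius-linear and non-constant in $\tan\theta$, so the $\mu_{A_\theta}$ are pairwise distinct.) Thus $Ext_1(F)$ is not a singleton. The main obstacle is exactly this distinctness assertion — passing from "the skew-adjoint extensions differ as operators" to "the induced measures in $Ext(F)$ differ" — where the cyclicity of $F_e$ and the extension-independence of the short-time translates $U(s)F_e$ do the real work; everything else is bookkeeping with von Neumann's theory and with Theorem \ref{thm:pd-extension-bigger-H-space}.
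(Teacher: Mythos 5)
Your argument is correct, and in fact the paper states this theorem without any proof, so your write-up supplies exactly what is missing: the identification of $Ext_{1}(F)$ with the skew-adjoint extensions of $D^{(F)}$ in $\mathscr{H}_{F}$, von Neumann's dichotomy (one extension for indices $(0,0)$, a circle of them for $(1,1)$), and — the genuinely non-formal step, which you correctly isolate — the injectivity of $A\mapsto\mu_{A}$, proved via cyclicity of $F_{e}$ and the fact that the local translates $U_{A}(s)F_{e}=F_{s}$, $s\in\Omega$, are extension-independent and span a dense subspace. The only caveat worth recording is definitional: the bare condition $F(g)=\left\langle F_{e},U(g)F_{e}\right\rangle _{\mathscr{H}_{F}}$ does not by itself force the generator of $U$ to extend $D^{(F)}$ (one can transplant any $\mu\in Ext(F)$ into $\mathscr{H}_{F}$ by an arbitrary isometry sending $\mathbf{1}\mapsto F_{e}$), so $Ext_{1}(F)$ must be read — as you implicitly do, and as the surrounding text intends — as the set of measures arising from unitary groups on $\mathscr{H}_{F}$ whose generators extend $D^{(F)}$; with that reading your proof is complete.
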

\begin{rem}
Even if $Ext_{1}(F)$ is a singleton, we can still have non-empty
$Ext_{2}(F)$. 
\end{rem}
In \chapref{types}, we include a host of examples, including one
with a Polya extension where $\mathscr{K}$ is infinite dimensional,
while $\mathscr{H}_{F}$ is 2 dimensional. (If $\mathscr{H}_{F}$
is 2 dimensional, then obviously we must have deficiency indices $\left(0,0\right)$.)
In other examples we have $\mathscr{H}_{F}$ infinite dimensional,
non-trivial Polya extensions and deficiency indices $\left(0,0\right)$.
\begin{defn}
Let $K_{i}$, $i=1,2$, be two positive definite kernels defined on
some product $S\times S$ where $S$ is a set. We say that $K_{1}\ll K_{2}$
iff there is a finite constant $A$ such that 
\begin{equation}
\sum_{i}\sum_{j}\overline{c_{i}}c_{j}K_{1}\left(s_{i},s_{j}\right)\leq A\sum_{i}\sum_{j}\overline{c_{i}}c_{j}K_{2}\left(s_{i},s_{j}\right)\label{eq:o-2-1}
\end{equation}
for all finite systems $\left\{ c_{i}\right\} $ of complex numbers.

If $F_{i}$, $i=1,2$, are positive definite functions defined on
a subset of a group then we say that $F_{1}\ll F_{2}$ iff the two
kernels 
\[
K_{i}\left(x,y\right):=K_{F_{i}}\left(x,y\right)=F_{i}\left(x^{-1}y\right),\; i=1,2
\]
satisfies the condition in (\ref{eq:o-2-1}).\end{defn}
\begin{lem}
\label{lem:li-meas}Let $\mu_{i}\in\mathscr{M}_{+}\left(\mathbb{R}^{n}\right)$,
$i=1,2$, i.e., two finite positive Borel measures \index{measure!Borel}on
$\mathbb{R}^{n}$, and let $F_{i}:=\widehat{d\mu_{i}}$ be the corresponding
Bochner transforms. Then the following two conditions are equivalent:
\begin{enumerate}
\item \label{enu:order1}$\mu_{1}\ll\mu_{2}$ (meaning absolutely continuous\index{absolutely continuous})
with $\frac{d\mu_{1}}{d\mu_{2}}\in L^{1}\left(\mu_{2}\right)\cap L^{\infty}\left(\mu_{2}\right)$. 
\item \label{enu:order2}$F_{1}\ll F_{2}$, referring to the order of positive
definite functions on $\mathbb{R}^{n}$.
\end{enumerate}
\end{lem}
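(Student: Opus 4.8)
The plan is to pass through the two naturally associated Hilbert spaces $\mathscr{H}_{F_1}$ and $\mathscr{H}_{F_2}$ and identify the order relation $F_1 \ll F_2$ with a bounded inclusion-type operator, then translate that operator back into a statement about the Radon--Nikodym derivative. First I would recall that by Bochner's theorem $F_i = \widehat{d\mu_i}$ for finite positive measures $\mu_i$ on $\mathbb{R}^n$, and that $\mathscr{H}_{F_i} \simeq L^2(\mu_i)$ via the unitary sending $F_{i,\varphi} \mapsto \widehat{\varphi}$; this is exactly the identification used in the $\underline{\Downarrow}$ direction of the proof of Theorem \ref{thm:pd-extension-bigger-H-space}, where $\|F_\varphi\|_{\mathscr{H}_F}^2 = \int_{\mathbb{R}^n} |\widehat{\varphi}(\lambda)|^2 d\mu(\lambda)$. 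Under this picture the quadratic form appearing in \eqref{o-2-1}, namely $\sum_i\sum_j \overline{c_i}c_j F_i(x_i - x_j)$, is exactly $\|\sum_i c_i e_{x_i}\|^2$ computed in $L^2(\mu_i)$, where $e_x(\lambda) = e^{i\lambda\cdot x}$; so $F_1 \ll F_2$ says precisely that the densely defined map $e_x \mapsto e_x$ (on finite span of exponentials) extends to a bounded operator $L^2(\mu_2) \to L^2(\mu_1)$ with $\|\cdot\| \le \sqrt{A}$.

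Next I would argue the equivalence with (\ref{enu:order1}) directly. For $(\ref{enu:order1}) \Rightarrow (\ref{enu:order2})$: if $\mu_1 \ll \mu_2$ with $h := \frac{d\mu_1}{d\mu_2} \in L^\infty(\mu_2)$, then for any trigonometric polynomial $p = \sum_i c_i e_{x_i}$ we have
\begin{equation}
\int |p|^2 \, d\mu_1 = \int |p|^2 h \, d\mu_2 \le \|h\|_{L^\infty(\mu_2)} \int |p|^2 \, d\mu_2,
\end{equation}
which is exactly \eqref{o-2-1} with $A = \|h\|_{\infty}$; the $L^1(\mu_2)$ hypothesis is automatic since $\mu_1$ is finite. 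For the converse $(\ref{enu:order2}) \Rightarrow (\ref{enu:order1})$: the estimate $\int |p|^2 d\mu_1 \le A \int |p|^2 d\mu_2$ for all trigonometric polynomials $p$ first shows, by a density/Stone--Weierstrass argument (exponentials are dense in $C_0$ or, after compactification, in $C$ of the relevant space, and both measures are finite Borel), that $\int g \, d\mu_1 \le A \int g \, d\mu_2$ for all $g \ge 0$ continuous, hence for all nonnegative Borel $g$ by monotone convergence; in particular $\mu_2(E) = 0 \Rightarrow \mu_1(E) = 0$, so $\mu_1 \ll \mu_2$ and $h = \frac{d\mu_1}{d\mu_2}$ exists. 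Testing $\int_E h \, d\mu_2 = \mu_1(E) \le A \mu_2(E)$ over all Borel $E$ then forces $h \le A$ $\mu_2$-a.e., i.e., $h \in L^\infty(\mu_2)$, and $h \in L^1(\mu_2)$ because $\mu_1(\mathbb{R}^n) < \infty$.

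The main obstacle I anticipate is the density step in the converse: one must be careful that the finite linear span of $\{e_x : x \in \mathbb{R}^n\}$ is rich enough to control both $\mu_1$ and $\mu_2$ simultaneously as measures on $\mathbb{R}^n$ (not merely on a common compact set). The clean way around this is to note that $\mu_1 + \mu_2$ is a finite positive Borel measure on $\mathbb{R}^n$, so it is inner regular and concentrated, up to arbitrarily small mass, on compact sets; on each such compact set the Stone--Weierstrass theorem applies to the algebra generated by exponentials, and a standard truncation/exhaustion argument upgrades the polynomial inequality to the inequality for all bounded continuous $g$ and then all bounded Borel $g$. Once the measure inequality $\mu_1(E) \le A\,\mu_2(E)$ is in hand, the Radon--Nikodym conclusion and the $L^\infty$ bound are routine. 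I would also remark that the constant $A$ in \eqref{o-2-1} and $\|d\mu_1/d\mu_2\|_{L^\infty(\mu_2)}$ coincide, which records the quantitative content of the equivalence.
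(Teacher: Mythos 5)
Your forward implication is exactly the paper's argument: write the quadratic form as $\int_{\mathbb{R}^{n}}\bigl|\sum_{j}c_{j}e^{ix_{j}\cdot\lambda}\bigr|^{2}d\mu_{1}\left(\lambda\right)$, insert $d\mu_{1}=h\,d\mu_{2}$, and bound $h$ by its $L^{\infty}\left(\mu_{2}\right)$-norm; and your observation that the $L^{1}\left(\mu_{2}\right)$ condition is automatic from finiteness of $\mu_{1}$ is correct. The gap is in the converse, at the step where you pass from $\int\left|p\right|^{2}d\mu_{1}\leq A\int\left|p\right|^{2}d\mu_{2}$ for trigonometric polynomials $p$ to $\int g\,d\mu_{1}\leq A\int g\,d\mu_{2}$ for all continuous $g\geq0$. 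Trigonometric polynomials are not dense in $C_{0}\left(\mathbb{R}^{n}\right)$ (they do not vanish at infinity), so the first route you name is unavailable; and the compact-exhaustion version does not close either. Stone--Weierstrass produces a $p$ with $\left|p\right|^{2}$ uniformly close to $g$ on a compact $K$ carrying most of the mass of $\mu_{1}+\mu_{2}$, but the only global bound on $p$ is $\sup_{\mathbb{R}^{n}}\left|p\right|\leq\sum_{j}\left|c_{j}\right|$, which depends on $p$ and hence on $K$; the error term $A\left(\sup\left|p\right|\right)^{2}\mu_{2}\left(\mathbb{R}^{n}\backslash K\right)$ therefore cannot be made small, since shrinking $\mu_{2}\left(\mathbb{R}^{n}\backslash K\right)$ forces a new $K$ and a new $p$. (The Bohr compactification does repair this, because there the approximation is uniform on the whole, now compact, space; but one must then also check that Borel subsets of $\mathbb{R}^{n}$ remain Borel in the compactification to pull the setwise domination back, and you do not carry this out.)

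The clean repair is shorter than either route: the hypothesis $F_{1}\ll F_{2}$ with constant $A$ says precisely that $AF_{2}-F_{1}$ is a continuous positive definite function on $\mathbb{R}^{n}$; by Bochner's theorem it equals $\widehat{d\rho}$ for a finite positive measure $\rho$, and by uniqueness of Fourier transforms of finite signed measures $A\mu_{2}-\mu_{1}=\rho\geq0$. This yields $\mu_{1}\left(E\right)\leq A\mu_{2}\left(E\right)$ for every Borel set $E$ at once, after which your Radon--Nikodym conclusions ($\mu_{1}\ll\mu_{2}$ and $0\leq d\mu_{1}/d\mu_{2}\leq A$ a.e.) follow exactly as you state, with the quantitative identification of $A$ and $\left\Vert d\mu_{1}/d\mu_{2}\right\Vert _{L^{\infty}\left(\mu_{2}\right)}$. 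For comparison, the paper avoids the density problem differently: it first converts the discrete inequality into the integrated form $\int\left|\widehat{\varphi}\right|^{2}d\mu_{1}\leq A\int\left|\widehat{\varphi}\right|^{2}d\mu_{2}$ for $\varphi\in C_{c}\left(\mathbb{R}^{n}\right)$; the test functions $\left|\widehat{\varphi}\right|^{2}$ lie in $C_{0}\left(\mathbb{R}^{n}\right)$, which is what makes a density argument viable on the noncompact space where the one for bare exponentials is not.
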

\begin{proof}
$\Downarrow$ If $ $(\ref{enu:o-1}) holds, then there is a Radon-Nikodym\index{Radon-Nikodym}
derivative $g\in L_{+}^{2}\left(\mathbb{R}^{n},\mu_{2}\right)$ s.t.
\begin{equation}
d\mu_{1}=gd\mu_{2}.\label{eq:o-1-1}
\end{equation}
Let $\left\{ c_{i}\right\} _{1}^{N}$, $\left\{ x_{i}\right\} _{1}^{N}$
be given: $c_{i}\in\mathbb{C}$, $x_{i}\in\mathbb{R}^{n}$; then 
\begin{eqnarray*}
\sum_{j}\sum_{k}\overline{c_{j}}c_{k}F_{1}\left(x_{j}-x_{k}\right) & = & \int_{\mathbb{R}^{n}}\Bigl|\sum_{j}c_{j}e^{ix_{j}\lambda}\Bigr|^{2}d\mu_{1}\left(\lambda\right)\\
 & = & \int_{\mathbb{R}^{n}}\Bigl|\sum_{j}c_{j}e^{ix_{j}\lambda}\Bigr|^{2}g\left(\lambda\right)d\mu_{2}\left(\lambda\right)\quad\mbox{\ensuremath{\left(\text{by }\left(\ref{eq:o-1-1}\right)\&\left(\ref{enu:order1}\right)\right)} }\\
 & \overset{\text{(by \ensuremath{\left(\ref{enu:order1}\right)})}}{\leq} & \left\Vert g\right\Vert _{L^{\infty}\left(\mu_{2}\right)}\int_{\mathbb{R}^{n}}\Bigl|\sum_{j}c_{j}e^{ix_{j}\lambda}\Bigr|^{2}d\mu_{2}\left(\lambda\right)\\
 & = & \left\Vert g\right\Vert _{L^{\infty}\left(\mu_{2}\right)}\sum_{j}\sum_{k}\overline{c_{j}}c_{k}F_{2}\left(x_{j}-x_{k}\right).
\end{eqnarray*}

$\Uparrow$ If (\ref{enu:order2}) holds, $\exists A<\infty$, s.t.
\begin{equation}
\iint\overline{\varphi\left(x\right)}\varphi\left(y\right)F_{1}\left(x-y\right)dxdy\leq A\iint\overline{\varphi\left(x\right)}\varphi\left(y\right)F_{2}\left(x-y\right),\;\forall\varphi\in C_{c}\left(\mathbb{R}\right).\label{eq:o-1-2}
\end{equation}
Using that $F_{i}=\widehat{d\mu_{i}}$, eq. (\ref{eq:o-1-2}) is equivalent
to 
\begin{equation}
\int_{\mathbb{R}^{n}}\left|\widehat{\varphi}\left(\lambda\right)\right|^{2}d\mu_{1}\left(\lambda\right)\leq A\int_{\mathbb{R}^{n}}\left|\widehat{\varphi}\left(\lambda\right)\right|^{2}d\mu_{2}\left(\lambda\right).\label{eq:o-1-3}
\end{equation}

Now for the functions 
\[
\left|\widehat{\varphi}\left(\lambda\right)\right|^{2}=\widehat{\varphi\ast\varphi^{\#}}\left(\lambda\right),\;\lambda\in\mathbb{R}^{n},
\]
we have that $\left\{ \widehat{\psi}\left(\cdot\right)\:\big|\:\psi\in C_{c}\left(\mathbb{R}^{n}\right)\right\} \cap L^{1}\left(\mathbb{R}^{n},\mu\right)$
is dense in $L^{1}\left(\mathbb{R}^{n},\mu\right)$ for all $\mu\in\mathscr{M}_{+}\left(\mathbb{R}^{n}\right)$. 

It follows that $\mu_{1}\ll\mu_{2}$, and $g=\frac{d\mu_{1}}{d\mu_{2}}\in L_{+}^{1}\left(\mathbb{R}^{n},\mu_{2}\right)\cap L^{\infty}\left(\mathbb{R}^{n},\mu_{2}\right)$
by the argument in the first half of the proof.
\end{proof}

\section{Preliminaries}

In the preliminary discussion below, we begin with the special case
when $G=\mathbb{R}$, and when $\Omega$  is a bounded open interval.
\begin{lem}
\label{lem:dense}Fix $\Omega=(\alpha,\beta),$ let $a=\beta-\alpha$.
Let $\alpha<x<\beta$ and let $\varphi_{n,x}\left(t\right)=n\varphi\left(n\left(t-x\right)\right)$,
where $\varphi$ satisfies 
\begin{enumerate}
\item $\mathrm{supp}\left(\varphi\right)\subset\left(-a,a\right)$;
\item $\varphi\in C_{c}^{\infty}$, $\varphi\geq0$;
\item $\int\varphi\left(t\right)dt=1$. Note that $\lim_{n\rightarrow\infty}\varphi_{n,x}=\delta_{x}$,
the Dirac measure\index{measure!Dirac} at $x$. 
\end{enumerate}
\begin{flushleft}
Then 
\begin{equation}
\left\Vert F_{\varphi_{n,x}}-F\left(\cdot-x\right)\right\Vert _{\mathscr{H}_{F}}\rightarrow0,\;\mbox{as }n\rightarrow\infty.\label{eq:approx}
\end{equation}
Hence $\left\{ F_{\varphi}\right\} _{\varphi\in C_{c}^{\infty}\left(\Omega\right)}$
spans a dense subspace in $\mathscr{H}_{F}$. 
\par\end{flushleft}
\end{lem}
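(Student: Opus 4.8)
The plan is to prove the convergence \eqref{eq:approx} by expanding the squared norm $\left\Vert F_{\varphi_{n,x}}-F(\cdot-x)\right\Vert_{\mathscr{H}_F}^2$ using the two compatible descriptions of the inner product on $\mathscr{H}_F$ from Lemma \ref{lem:RKHS-def-by-integral} and the reproducing identity \eqref{eq:e.1}. Writing $\xi_x=F(\cdot-x)$ and $F_{\varphi_{n,x}}=\int_\Omega \varphi_{n,x}(y)\,\xi_y\,dy$, one has
\[
\left\Vert F_{\varphi_{n,x}}-\xi_x\right\Vert_{\mathscr{H}_F}^2
=\left\Vert F_{\varphi_{n,x}}\right\Vert_{\mathscr{H}_F}^2
-2\,\mathrm{Re}\left\langle F_{\varphi_{n,x}},\xi_x\right\rangle_{\mathscr{H}_F}
+\left\Vert \xi_x\right\Vert_{\mathscr{H}_F}^2.
\]
The middle term equals $\int_\Omega \varphi_{n,x}(y)\,F(x-y)\,dy$ by the reproducing property $\left\langle \xi_y,\xi_x\right\rangle_{\mathscr{H}_F}=F(y-x)$ together with continuity of $F$, which lets us pass the inner product through the integral; the first term equals $\int_\Omega\int_\Omega \varphi_{n,x}(s)\varphi_{n,x}(t)F(s-t)\,ds\,dt$ by \eqref{eq:hn2}; and the last term is $F(0)$.

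Next I would note that all three expressions are integrals of the continuous function $F$ against the probability measures $\varphi_{n,x}(y)\,dy$ (in one or two variables), and that these measures converge weakly to $\delta_x$ (respectively $\delta_x\otimes\delta_x$) as $n\to\infty$, since $\varphi_{n,x}$ is a standard approximate identity concentrating at $x$. The support condition (1), namely $\mathrm{supp}(\varphi)\subset(-a,a)$ with $a=\beta-\alpha$, is exactly what guarantees that for $x\in(\alpha,\beta)$ and $n$ large enough the rescaled bumps $\varphi_{n,x}$ still have support inside $\Omega$ — actually one needs $\mathrm{supp}(\varphi_{n,x})\subset\Omega$, so a cleaner route is to observe that $\mathrm{supp}(\varphi_{n,x})\subset(x-a/n,\,x+a/n)\subset\Omega$ once $n$ is large, so that $\varphi_{n,x}\in C_c^\infty(\Omega)$ and $F_{\varphi_{n,x}}$ is well defined; in the two-variable term, $s-t$ ranges in $(-2a/n,2a/n)\subset(-a,a)=\Omega-\Omega$, so $F(s-t)$ is always evaluated where $F$ is defined. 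Then by uniform continuity of $F$ on compact subsets of $\Omega-\Omega$ (it is continuous and bounded there), each of the three terms converges to $F(0)$: $\int\int \varphi_{n,x}\varphi_{n,x}F(s-t)\to F(0)$, $\int\varphi_{n,x}(y)F(x-y)\,dy\to F(0)$, and the constant term is $F(0)$. Hence the whole expression tends to $F(0)-2F(0)+F(0)=0$, proving \eqref{eq:approx}.

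For the final assertion, since each $\xi_x=F(\cdot-x)$, $x\in\Omega$, lies in the $\mathscr{H}_F$-closure of $\{F_\varphi:\varphi\in C_c^\infty(\Omega)\}$ by \eqref{eq:approx}, and since the vectors $\{\xi_x:x\in\Omega\}$ span a dense subspace of $\mathscr{H}_F$ by the very construction \eqref{eq:H1} of the RKHS, it follows that $\{F_\varphi\}_{\varphi\in C_c^\infty(\Omega)}$ spans a dense subspace as well.

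The main obstacle, such as it is, is the bookkeeping around supports: one must be careful that $\varphi_{n,x}$ genuinely lands in $C_c^\infty(\Omega)$ and that all evaluations of $F$ occur within its domain $\Omega-\Omega$, which is why hypothesis (1) is stated with the particular constant $a=\beta-\alpha$. Once that is in place, the convergence is a routine approximate-identity argument using only continuity and boundedness of $F$; no positive-definiteness beyond what is needed to make $\mathscr{H}_F$ a Hilbert space is used here.
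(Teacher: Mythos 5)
Your proof is correct, and it is the natural argument; the paper in fact states Lemma \ref{lem:dense} without supplying any proof, so your write-up fills that gap rather than diverging from it. The expansion of $\left\Vert F_{\varphi_{n,x}}-\xi_{x}\right\Vert _{\mathscr{H}_{F}}^{2}$ into the three terms via \eqref{eq:hn2} and the reproducing property, the observation that each term is an integral of $F$ against measures concentrating at $x$ (so that continuity of $F$ at $0$ gives $F(0)-2F(0)+F(0)=0$), and your care that $\mathrm{supp}(\varphi_{n,x})\subset\Omega$ only for $n$ large and that $s-t$ stays in $(-a,a)$, are exactly what is needed; the density conclusion then follows since the kernels $\xi_{x}$ are total in $\mathscr{H}_{F}$ by construction \eqref{eq:H1}.
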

\begin{figure}
\includegraphics[scale=0.5]{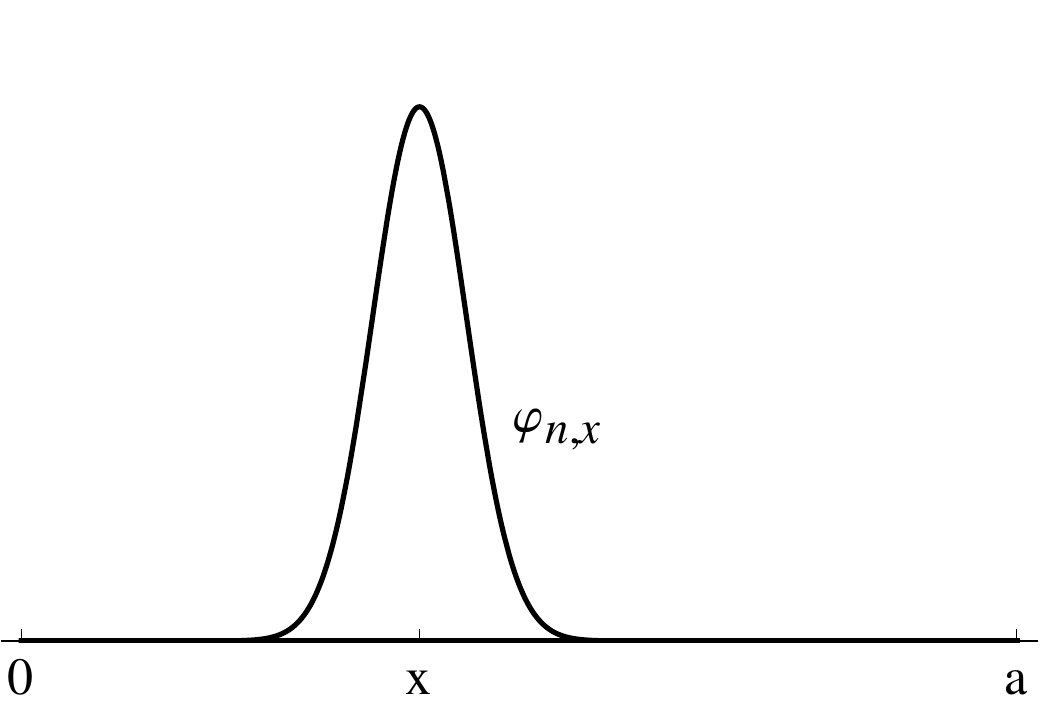}

\protect\caption{The approximate identity $\varphi_{n,x}\left(\cdot\right)$}
\end{figure}

Recall, the following facts about $\mathscr{H}_{F},$ which follow
from the general theory \cite{Aro50} of RKHS:\index{RKHS}
\begin{itemize}
\item $F(0)>0,$ so we can always arrange $F(0)=1.$
\item $F(-x)=\overline{F(x)}$
\item $\mathscr{H}_{F}$ consists of continuous functions $\xi:\Omega-\Omega\rightarrow\mathbb{C}.$
\item The reproducing property:
\[
\left\langle F\left(\cdot-x\right),\xi\right\rangle _{\mathscr{H}_{F}}=\xi\left(x\right),\;\forall\xi\in\mathscr{H}_{F},\forall x\in\Omega,
\]
is a direct consequence of (\ref{eq:ip-discrete}).\end{itemize}
\begin{rem}
It follows from the reproducing property that if $F_{\phi_{n}}\to\xi$
in $\mathscr{H}_{F},$ then $F_{\phi_{n}}$ converges uniformly to
$\xi$ in $\Omega.$ In fact 
\begin{align*}
\left|F_{\phi_{n}}\left(x\right)-\xi\left(x\right)\right| & =\left|\left\langle F\left(\cdot-x\right),F_{\phi_{n}}-\xi\right\rangle _{\mathscr{H}_{F}}\right|\\
 & \leq\left\Vert F\left(\cdot-x\right)\right\Vert _{\mathscr{H}_{F}}\left\Vert F_{\phi_{n}}-\xi\right\Vert _{\mathscr{H}_{F}}\\
 & =F\left(0\right)\left\Vert F_{\phi_{n}}-\xi\right\Vert _{\mathscr{H}_{F}}.
\end{align*}
\end{rem}
\begin{thm}
\label{thm:HF}A continuous function $\xi:\Omega\rightarrow\mathbb{C}$
is in $\mathscr{H}_{F}$ if and only if there exists $A_{0}>0$, such
that
\begin{equation}
\sum_{i}\sum_{j}\overline{c_{i}}c_{j}\overline{\xi\left(x_{i}\right)}\xi\left(x_{j}\right)\leq A_{0}\sum_{i}\sum_{j}\overline{c_{i}}c_{j}F\left(x_{i}-x_{j}\right)\label{eq:bdd}
\end{equation}
for all finite system $\left\{ c_{i}\right\} \subset\mathbb{C}$ and
$\left\{ x_{i}\right\} \subset\Omega$.

Equivalently, for all $\psi\in C_{c}^{\infty}\left(\Omega\right)$,
\begin{eqnarray}
\left|\int_{\Omega}\psi\left(y\right)\xi\left(y\right)dy\right|^{2} & \leq & A_{0}\int_{\Omega}\int_{\Omega}\overline{\psi\left(x\right)}\psi\left(y\right)F\left(x-y\right)dxdy\label{eq:bdd2}
\end{eqnarray}
Note that, if $\xi\in\mathscr{H}_{F}$, then the LHS of (\ref{eq:bdd2})
is $\left|\left\langle F_{\psi},\xi\right\rangle _{\mathscr{H}_{F}}\right|^{2}$. 
\end{thm}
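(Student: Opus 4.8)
The statement is the standard characterization of membership in a reproducing kernel Hilbert space: $\xi\in\mathscr{H}_{F}$ exactly when the rank-one positive definite kernel it generates is dominated (in the order introduced before Lemma~\ref{lem:li-meas}) by $(x,y)\mapsto F(x-y)$. The plan is to apply the Riesz representation theorem on the dense subspace $\mathscr{D}:=\{F_{\varphi}:\varphi\in C_{c}^{\infty}(\Omega)\}\subset\mathscr{H}_{F}$ -- which is already a linear subspace since $F_{\varphi}+F_{\psi}=F_{\varphi+\psi}$, and is dense by Lemma~\ref{lem:dense}. First I would record that (\ref{eq:bdd}) and (\ref{eq:bdd2}) are equivalent, with the same constant $A_{0}$: inserting $\psi=\sum_{i}c_{i}\varphi_{n,x_{i}}$ -- the mollifiers of Lemma~\ref{lem:dense} -- into (\ref{eq:bdd2}) and letting $n\to\infty$ yields (\ref{eq:bdd}), using $\int_{\Omega}\varphi_{n,x}(y)\xi(y)\,dy\to\xi(x)$ (continuity of $\xi$) and $F_{\varphi_{n,x}}\to F(\cdot-x)$ in $\mathscr{H}_{F}$ by (\ref{eq:approx}); conversely, approximating $\psi(y)\,dy$ by Riemann sums of point masses passes from (\ref{eq:bdd}) back to (\ref{eq:bdd2}). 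So it is enough to prove that $\xi\in\mathscr{H}_{F}$ iff (\ref{eq:bdd2}) holds.

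For the ($\Rightarrow$) direction I would use the last identity of Lemma~\ref{lem:RKHS-def-by-integral}, extended from $\mathscr{D}$ to all of $\mathscr{H}_{F}$ (by density together with the uniform-convergence estimate in the Remark following Lemma~\ref{lem:dense}), which gives $\langle F_{\psi},\xi\rangle_{\mathscr{H}_{F}}=\int_{\Omega}\overline{\psi(y)}\,\xi(y)\,dy$; since the right-hand side of (\ref{eq:bdd2}) equals $A_{0}\|F_{\psi}\|_{\mathscr{H}_{F}}^{2}$ by (\ref{eq:hn2}), Cauchy--Schwarz immediately yields (\ref{eq:bdd2}) with $A_{0}=\|\xi\|_{\mathscr{H}_{F}}^{2}$, and the same computation proves the concluding Note. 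For the ($\Leftarrow$) direction, the idea is to define $\Lambda$ on $\mathscr{D}$ by $\Lambda(F_{\psi}):=\int_{\Omega}\overline{\psi(y)}\,\xi(y)\,dy$, extended conjugate-linearly. Inequality (\ref{eq:bdd2}), combined with (\ref{eq:hn2}), gives $|\Lambda(\eta)|^{2}\le A_{0}\|\eta\|_{\mathscr{H}_{F}}^{2}$ for all $\eta\in\mathscr{D}$; this shows at once that $\Lambda$ is \emph{well defined} -- its value depends only on the vector $\eta$, not on a choice of $\psi$ with $\eta=F_{\psi}$ -- and bounded, so $\Lambda$ extends to a bounded conjugate-linear functional on $\mathscr{H}_{F}$, and by the Riesz representation theorem there is $\zeta\in\mathscr{H}_{F}$ with $\Lambda(\eta)=\langle\eta,\zeta\rangle_{\mathscr{H}_{F}}$ for all $\eta$. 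Taking $\eta=F_{\varphi_{n,x}}$ and letting $n\to\infty$, the left side tends to $\xi(x)$ while, by (\ref{eq:approx}) and the reproducing property, the right side tends to $\langle F(\cdot-x),\zeta\rangle_{\mathscr{H}_{F}}=\zeta(x)$; hence $\zeta(x)=\xi(x)$ for every $x\in\Omega$, so $\xi=\zeta\in\mathscr{H}_{F}$.

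The main obstacle is the well-definedness (equivalently, boundedness) of $\Lambda$ in the converse direction: a vector of $\mathscr{D}$ may be written as $F_{\psi}$ for several $\psi\in C_{c}^{\infty}(\Omega)$, and $\int_{\Omega}\overline{\psi}\,\xi$ must not depend on the choice -- i.e. $F_{\psi}=0$ in $\mathscr{H}_{F}$ must force $\int_{\Omega}\overline{\psi}\,\xi=0$ -- which is precisely what (\ref{eq:bdd2}) (equivalently (\ref{eq:bdd})) delivers when applied to that $\psi$. Everything else -- the equivalence of the discrete and integral forms, the extension of the reproducing identity to all of $\mathscr{H}_{F}$, and the limit along the approximate identities $\varphi_{n,x}$ -- is routine, relying only on what is already assembled in Lemmas~\ref{lem:dense} and~\ref{lem:RKHS-def-by-integral}; the only thing requiring care is the bookkeeping forced by the convention that the inner product is conjugate-linear in its first variable.
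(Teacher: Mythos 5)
Your proof is correct and is the standard Aronszajn-type argument; the paper itself does not prove Theorem \ref{thm:HF} (the analogous Lemma \ref{lem:lcg-bdd} is simply referred to \cite{Aro50}), and your argument --- Cauchy--Schwarz for necessity, and for sufficiency a bounded conjugate-linear functional on the dense span $\left\{ F_{\psi}\right\}$, represented via Riesz and identified pointwise with $\xi$ using the approximate identities of Lemma \ref{lem:dense} --- is precisely the proof being invoked. Your treatment of the two delicate points, the well-definedness of $\Lambda$ (forced exactly by (\ref{eq:bdd2}) applied to any $\psi$ with $F_{\psi}=0$) and the conjugation bookkeeping (the displayed LHS of (\ref{eq:bdd2}) is really $\left|\left\langle F_{\overline{\psi}},\xi\right\rangle _{\mathscr{H}_{F}}\right|^{2}$, harmless since the condition is quantified over all $\psi$), is also right.
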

These two conditions (\ref{eq:bdd})($\Leftrightarrow$(\ref{eq:bdd2}))
are the best way to characterize elements in the Hilbert space $\mathscr{H}_{F}$,
where $F$ is a continuous positive definite function on $\Omega-\Omega$.

We will be using this when considering for example the deficiency-subspaces
for skew-symmetric operators with dense domain in $\mathscr{H}_{F}$.
\begin{example}
Let $G=\mathbb{T}=\mathbb{R}/\mathbb{Z}$, e.g., represented as $\left(-\frac{1}{2},\frac{1}{2}\right]$.
Fix $0<a<\frac{1}{2}$, then $\Omega-\Omega=\left(-a,a\right)$ mod
$\mathbb{Z}$. So, for example, (\ref{eq:F}) takes the form $F:(-a,a)\text{ mod }\mathbb{Z}\to\mathbb{C}.$
See Figure \ref{fig:Two-versions-of-T}. 
\end{example}
\begin{figure}[H]
\begin{tabular}{cc}
\includegraphics[scale=0.4]{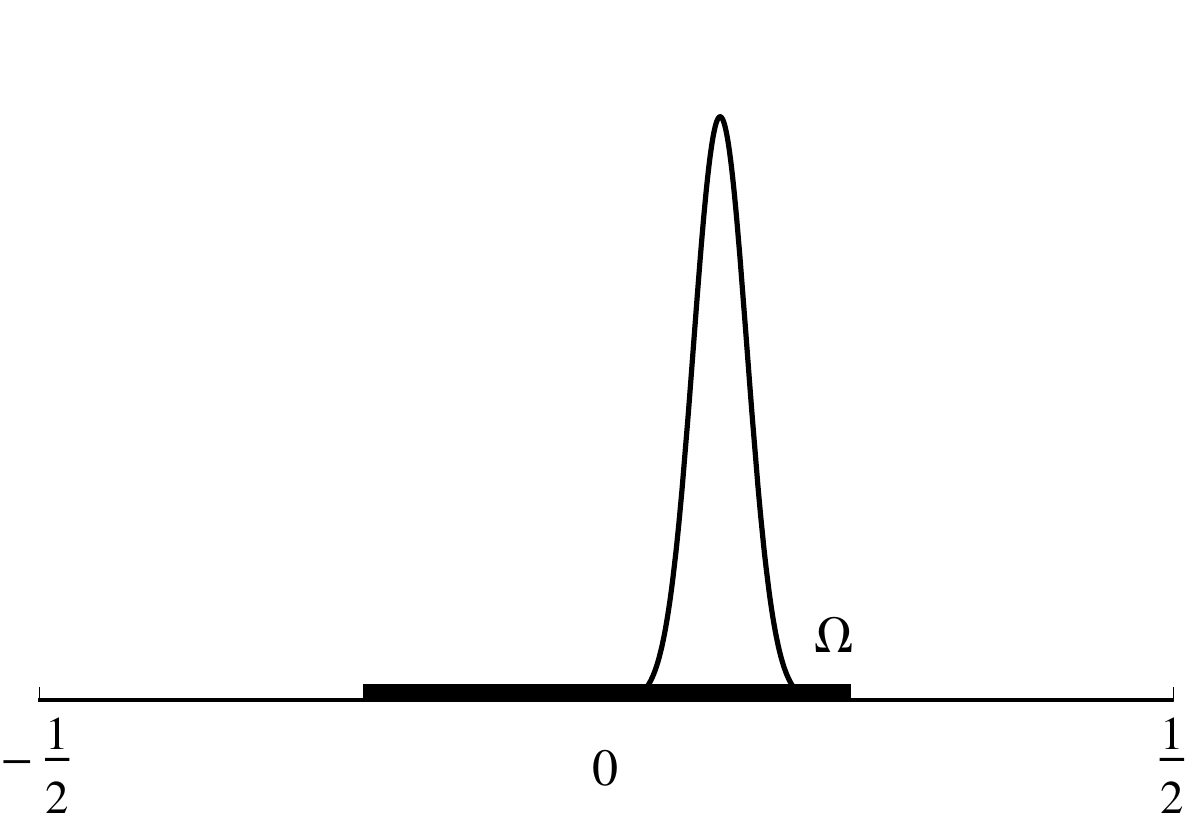}\hspace{1cm} & \includegraphics[scale=0.4]{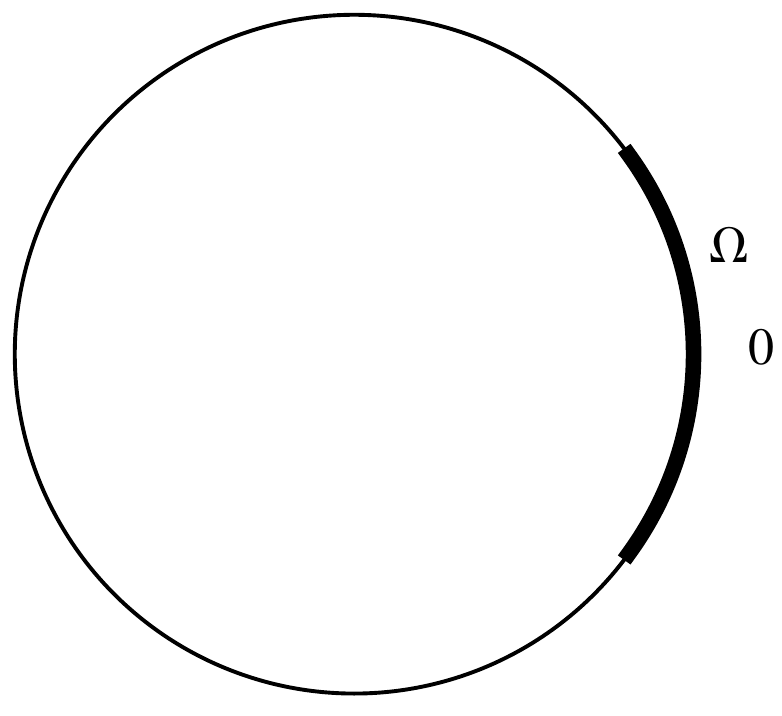}\tabularnewline
 & \tabularnewline
\end{tabular}

\protect\caption{\label{fig:Omega}\label{fig:Two-versions-of-T}Two versions of $\Omega=\left(0,a\right)\subset\mathbb{T}^{1}$ }
\end{figure}

\subsection{The Operator $D^{(F)}$}

Fix $0<a$ and a continuous positive definite\index{positive definite}
function $F$ defined on $\Omega-\Omega,$ where $\Omega=(0,a)$ as
above. Let $\mathscr{H}_{F}$ be the corresponding RKHS as in (\ref{eq:H1}). 
\begin{defn}
\label{def:D}Let $D^{\left(F\right)}F_{\psi}=F_{\psi'}$, for all
$\psi\in C_{c}^{\infty}\left(\Omega\right)$, where $\psi'=\frac{d\psi}{dt}$
and $F_{\psi}$ is as in (\ref{eq:H2}). In particular, the domain
$\mathrm{dom}\left(D^{\left(F\right)}\right)$ of $D^{\left(F\right)}$
is the set of all $F_{\psi},$ $\psi\in C_{c}^{\infty}\left(\Omega\right).$
\end{defn}
Note that the recipe for $D^{\left(F\right)}$ yields a well-defined
operator with dense domain in $\mathscr{H}_{F}$. To see this, use
Schwarz\textquoteright{} lemma to show that if $F_{\psi}=0$ in $\mathscr{H}_{F}$,
then it follows that the vector $F_{\psi'}\in\mathscr{H}_{F}$ is
0 as well.
\begin{lem}
\label{lem:DF}The operator $D^{\left(F\right)}$ is skew-symmetric
and densely defined in $\mathscr{H}_{F}$. \end{lem}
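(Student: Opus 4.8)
The plan is to verify the two defining properties of a skew-symmetric densely defined operator: density of the domain, and the identity $\langle D^{(F)}f, g\rangle_{\mathscr{H}_F} = -\langle f, D^{(F)}g\rangle_{\mathscr{H}_F}$ for all $f,g$ in $\mathrm{dom}(D^{(F)})$. Density is already handled: by Lemma \ref{lem:dense}, the vectors $\{F_\varphi\}_{\varphi\in C_c^\infty(\Omega)}$ span a dense subspace of $\mathscr{H}_F$, and this set is exactly $\mathrm{dom}(D^{(F)})$ by Definition \ref{def:D}. So the remaining work is the skew-symmetry identity.

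For skew-symmetry, I would take $f = F_\varphi$ and $g = F_\psi$ with $\varphi, \psi \in C_c^\infty(\Omega)$, so that $D^{(F)}f = F_{\varphi'}$ and $D^{(F)}g = F_{\psi'}$. Using the integral formula \eqref{eq:hi2} for the inner product in $\mathscr{H}_F$, I would write
\[
\left\langle D^{(F)}F_\varphi, F_\psi\right\rangle_{\mathscr{H}_F} = \int_\Omega\int_\Omega \overline{\varphi'(x)}\,\psi(y)\,F(x-y)\,dx\,dy.
\]
Then I would integrate by parts in the $x$-variable. Since $\varphi \in C_c^\infty(\Omega)$ has compact support strictly inside the open interval $\Omega$, the boundary terms vanish, and $\partial_x F(x-y) = -\partial_y F(x-y)$ lets me move the derivative onto the $F$ factor and then onto $\psi$ via a second integration by parts in $y$ (again no boundary terms, since $\psi \in C_c^\infty(\Omega)$). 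The net effect is
\[
\int_\Omega\int_\Omega \overline{\varphi'(x)}\,\psi(y)\,F(x-y)\,dx\,dy = -\int_\Omega\int_\Omega \overline{\varphi(x)}\,\psi'(y)\,F(x-y)\,dx\,dy = -\left\langle F_\varphi, D^{(F)}F_\psi\right\rangle_{\mathscr{H}_F},
\]
which is exactly the skew-symmetry relation. (One should note at the start that $D^{(F)}$ is well-defined, as already remarked after Definition \ref{def:D}: if $F_\psi = 0$ in $\mathscr{H}_F$ then $\|F_{\psi'}\|_{\mathscr{H}_F}^2$ can be controlled using the Cauchy–Schwarz bound of Theorem \ref{thm:HF}, so $F_{\psi'} = 0$ too.)

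The only mild technical point — and the closest thing to an obstacle — is justifying the differentiation-under-the-integral / integration-by-parts steps rigorously, i.e., making sure the double integral is absolutely convergent and Fubini applies. This is immediate here because $F$ is continuous and bounded on $\Omega - \Omega$ (Definition \ref{def:pdf}) and $\varphi, \psi$ are smooth with compact support, so all integrands are continuous and compactly supported on $\Omega\times\Omega$; the integration by parts is then just the classical one-variable formula applied in each slice, with vanishing boundary contributions guaranteed by $\mathrm{supp}(\varphi), \mathrm{supp}(\psi) \Subset \Omega$. Alternatively, one can phrase the whole computation in terms of $\langle F_\varphi, F_\psi\rangle = \int_\Omega \overline{\varphi(x)} F_\psi(x)\, dx$ and differentiate $F_\psi$ directly; either route is routine. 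Thus the proof is short, and I would present it essentially as the two-line integration-by-parts computation above together with the citation of Lemma \ref{lem:dense} for density.
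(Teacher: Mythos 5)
Your argument is correct, but it reaches skew-symmetry by a different route than the paper. You move the derivative from $\varphi$ to $\psi$ by integration by parts in the double integral \eqref{eq:hi2}; the paper instead exploits translation invariance of the kernel: since $F(x-y)$ depends only on $x-y$, one has $\left\Vert F_{\psi(\cdot+t)}\right\Vert _{\mathscr{H}_{F}}^{2}=\left\Vert F_{\psi}\right\Vert _{\mathscr{H}_{F}}^{2}$ for $\left|t\right|$ small, and differentiating at $t=0$ yields $\left\langle D^{(F)}F_{\psi},F_{\psi}\right\rangle +\left\langle F_{\psi},D^{(F)}F_{\psi}\right\rangle =0$. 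The two computations encode the same fact (the $t$-derivative of the invariance identity \emph{is} your integration by parts), but the paper's version transports verbatim to Haar measure on a Lie group, which is why it is the one invoked later for the operators $S_{X}^{(F)}$; yours is the more classical and self-contained one-variable argument. One technical caution about your primary route: $F$ is only assumed continuous, so the intermediate identity $\partial_{x}F(x-y)=-\partial_{y}F(x-y)$ is not literally available — the derivative must never land on $F$ itself. Your parenthetical alternative is in fact the clean fix: since $F_{\psi}(x)=\int\psi(x-u)F(u)\,du$, one gets $(F_{\psi})'=F_{\psi'}$ classically without differentiating $F$, and then $\int\overline{\varphi'}\,F_{\psi}=-\int\overline{\varphi}\,F_{\psi'}$ is ordinary one-variable integration by parts with vanishing boundary terms. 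Finally, note that well-definedness of $D^{(F)}$ (the implication $F_{\psi}=0\Rightarrow F_{\psi'}=0$) follows from the very identity you prove — take the bilinear relation $\left\langle F_{\varphi},F_{\psi'}\right\rangle +\left\langle F_{\varphi'},F_{\psi}\right\rangle =0$, set $F_{\psi}=0$, and choose $\varphi=\psi'$ — rather than from Cauchy--Schwarz alone, which is how the paper closes that loop.
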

\begin{proof}
By Lemma \ref{lem:RKHS-def-by-integral} $\mathrm{dom}\left(D^{(F)}\right)$
is dense in $\mathscr{H}_{F}.$ If $\psi\in C_{c}^{\infty}\left(0,a\right)$
and $\left|t\right|<\mathrm{dist}\left(\mathrm{supp}\left(\psi\right),\mbox{endpoints}\right)$,
then
\begin{equation}
\left\Vert F_{\psi\left(\cdot+t\right)}\right\Vert _{\mathscr{H}_{F}}^{2}=\left\Vert F_{\psi}\right\Vert _{\mathscr{H}_{F}}^{2}=\int_{0}^{a}\int_{0}^{a}\overline{\psi\left(x\right)}\psi\left(y\right)F\left(x-y\right)dxdy
\end{equation}
see (\ref{eq:hn2}), so 
\[
\frac{d}{dt}\left\Vert F_{\psi\left(\cdot+t\right)}\right\Vert _{\mathscr{H}_{F}}^{2}=0
\]
which is equivalent to 
\begin{equation}
\left\langle D^{\left(F\right)}F_{\psi},F_{\psi}\right\rangle _{\mathscr{H}_{F}}+\left\langle F_{\psi},D^{\left(F\right)}F_{\psi}\right\rangle _{\mathscr{H}_{F}}=0.
\end{equation}
It follows that $D^{\left(F\right)}$ is skew-symmetric. 

To show that $D^{\left(F\right)}F_{\psi}=F_{\psi'}$ is a well-defined
operator on is dense domain in $\mathscr{H}_{F}$, we proceed as follows:
\begin{lem}
The following implication holds:
\begin{eqnarray}
 & \left[\psi\in C_{c}^{\infty}\left(\Omega\right),\: F_{\psi}=0\mbox{ in }\mathscr{H}_{F}\right]\label{eq:H-1-1}\\
 & \Downarrow\nonumber \\
 & \left[F_{\psi'}=0\mbox{ in }\mathscr{H}_{F}\right]\label{eq:H-1-2}
\end{eqnarray}
\end{lem}
\begin{proof}
Substituting (\ref{eq:H-1-1}) into 
\[
\left\langle F_{\varphi},F_{\psi'}\right\rangle _{\mathscr{H}_{F}}+\left\langle F_{\varphi'},F_{\psi}\right\rangle _{\mathscr{H}_{F}}=0
\]
we get 
\[
\left\langle F_{\varphi},F_{\psi'}\right\rangle _{\mathscr{H}_{F}}=0,\;\forall\varphi\in C_{c}^{\infty}\left(\Omega\right).
\]
Taking $\varphi=\psi'$, yields
\[
\left\langle F_{\psi'},F_{\psi'}\right\rangle =\bigl\Vert F_{\psi'}\bigr\Vert_{\mathscr{H}_{F}}^{2}=0
\]
 which is the desired conclusion (\ref{eq:H-1-2}).
\end{proof}

This finishes the proof of Lemma \ref{lem:DF}.

\end{proof}
\begin{lem}
\label{lem:Conjugation-Operator} Let $\Omega=(\alpha,\beta).$ Suppose
$F$ is a real valued positive definite function defined on $\Omega-\Omega.$
The operator $J$ on $\mathscr{H}_{F}$ determined by
\[
JF_{\varphi}=\overline{F_{\varphi(\alpha+\beta-x)}},\varphi\in C_{c}^{\infty}(\Omega)
\]
is a conjugation, i.e., $J$ is conjugate-linear, $J^{2}$ is the
identity operator, and 
\begin{equation}
\left\langle JF_{\phi},JF_{\psi}\right\rangle _{\mathscr{H}_{F}}=\left\langle F_{\psi},F_{\phi,}\right\rangle _{\mathscr{H}_{F}}.\label{eq:ConjugationIdentity}
\end{equation}
Moreover, 
\begin{equation}
D^{\left(F\right)}J=-JD^{\left(F\right)}.\label{eq:Conjugation}
\end{equation}
\end{lem}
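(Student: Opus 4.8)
The statement bundles two things: (a) that $J$ is a well-defined conjugation on $\mathscr{H}_F$, and (b) the anticommutation relation $D^{(F)}J=-JD^{(F)}$. I would handle them in that order. For (a), the first and most important issue is that $J$ is defined by a formula on the generators $F_\varphi$, so I must show it is well-defined, i.e., that $F_\varphi=0$ in $\mathscr{H}_F$ forces $\overline{F_{\varphi(\alpha+\beta-\cdot)}}=0$. The clean way is to verify the isometry-type identity \eqref{eq:ConjugationIdentity} directly on generators first: compute $\langle JF_\phi,JF_\psi\rangle_{\mathscr{H}_F}$ using \eqref{eq:hi2}, substitute $u=\alpha+\beta-x$, $v=\alpha+\beta-y$ (which maps $\Omega$ onto $\Omega$), use that $F$ is real-valued so $F(x-y)=\overline{F(y-x)}=F(y-x)$, and conjugate-linearity of $J$ in the right place; this should collapse to $\int\!\int\overline{\psi(v)}\phi(u)F(u-v)\,du\,dv=\langle F_\psi,F_\phi\rangle_{\mathscr{H}_F}$. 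Once this identity holds on the dense span, well-definedness is automatic (if $\|F_\varphi\|=0$ then $\|JF_\varphi\|^2=\langle JF_\varphi,JF_\varphi\rangle=\langle F_\varphi,F_\varphi\rangle=0$), $J$ extends by conjugate-linearity and continuity to all of $\mathscr{H}_F$, and \eqref{eq:ConjugationIdentity} persists by density (Lemma \ref{lem:dense}).

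Next I would check $J^2=\mathrm{id}$: on a generator, $J^2F_\varphi=J\big(\overline{F_{\varphi(\alpha+\beta-\cdot)}}\big)$; since the map $x\mapsto\alpha+\beta-x$ is an involution and the two conjugations cancel, this returns $F_\varphi$, and then $J^2=\mathrm{id}$ on all of $\mathscr{H}_F$ by density. That $J$ is conjugate-linear is immediate from the defining formula (complex scalars come out conjugated through the overline). Together these give that $J$ is a conjugation.

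For part (b), I would again argue on the dense domain $\mathrm{dom}(D^{(F)})=\{F_\psi:\psi\in C_c^\infty(\Omega)\}$, which by the well-definedness of $J$ on generators is invariant under $J$. Let $\rho_\psi(x):=\psi(\alpha+\beta-x)$. Then $JF_\psi=\overline{F_{\rho_\psi}}$, so $D^{(F)}JF_\psi=D^{(F)}\overline{F_{\rho_\psi}}=\overline{F_{(\rho_\psi)'}}$ (the operator $D^{(F)}$ differentiates the convolution variable; complex conjugation of a real-kernel convolution commutes with this since $F$ is real). By the chain rule $(\rho_\psi)'(x)=-\psi'(\alpha+\beta-x)=-(\psi')(\alpha+\beta-x)=-\rho_{\psi'}(x)$, hence $D^{(F)}JF_\psi=-\overline{F_{\rho_{\psi'}}}=-JF_{\psi'}=-JD^{(F)}F_\psi$. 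Extending this from the dense domain gives \eqref{eq:Conjugation}.

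The main obstacle is really the bookkeeping in part (a): getting all the conjugations, the order of arguments in $F$, and the change of variables to line up so that \eqref{eq:ConjugationIdentity} comes out with the slots swapped (rather than, say, an unswapped or mis-conjugated version). The hypothesis that $F$ is \emph{real-valued} is exactly what is needed — it lets one replace $F(x-y)$ by $F(y-x)$ after the substitution without picking up a conjugate — so I would be careful to use it explicitly at that point and nowhere spuriously elsewhere. Everything after that identity is formal density/continuity argument plus a one-line chain-rule computation.
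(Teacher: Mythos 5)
Your proposal is correct and follows essentially the same route as the paper: the change of variables $x\mapsto\alpha+\beta-x$ combined with the real-valuedness of $F$ to establish the conjugation identity on the dense span of generators, followed by the one-line chain-rule computation for $D^{(F)}J=-JD^{(F)}$. The only (harmless) difference is that you verify $\left\langle JF_{\phi},JF_{\psi}\right\rangle =\left\langle F_{\psi},F_{\phi}\right\rangle$ directly and spell out well-definedness and $J^{2}=I$, whereas the paper establishes the equivalent symmetric form $\left\langle JF_{\phi},F_{\psi}\right\rangle =\left\langle JF_{\psi},F_{\phi}\right\rangle$ and leaves those two points implicit; your version is, if anything, slightly more careful.
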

\begin{proof}
Let $a:=\alpha+\beta$ and $\phi\in C_{c}^{\infty}(\Omega).$ Since
$F$ is real valued 
\begin{align*}
JF_{\phi}(x) & =\int_{\alpha}^{\beta}\overline{\phi(a-y)}\,\overline{F(x-y)}dy\\
 & =\int_{\alpha}^{\beta}\psi(y)\, F(x-y)dy
\end{align*}
where $\psi(y):=\overline{\phi(a-y)}$ is in $C_{c}^{\infty}(\Omega).$
It follows that $J$ maps the domain $\mathrm{dom}\left(D^{(F)}\right)$
of $D^{(F)}$ onto itself. For $\phi,\psi\in C_{c}^{\infty}(\Omega),$
\begin{align*}
\left\langle JF_{\phi},F_{\psi}\right\rangle _{\mathscr{H}_{F}} & =\int_{\alpha}^{\beta}F_{\phi(a-\cdot)}(x)\psi(x)dx\\
 & =\int_{\alpha}^{\beta}\int_{\alpha}^{\beta}\phi(a-y)F(x-y)\psi(x)dydx.
\end{align*}
Making the change of variables $(x,y)\to(a-x,a-y)$ and interchanging
the order of integration we see that
\begin{align*}
\left\langle JF_{\phi},F_{\psi}\right\rangle _{\mathscr{H}_{F}} & =\int_{\alpha}^{\beta}\int_{\alpha}^{\beta}\phi(y)F(y-x)\psi(a-x)dydx\\
 & =\int_{\alpha}^{\beta}\phi(y)F_{\psi(a-\cdot)}(y)dy\\
 & =\left\langle JF_{\psi},F_{\phi}\right\rangle _{\mathscr{H}_{F}},
\end{align*}
establishing (\ref{eq:ConjugationIdentity}). For $\phi\in C_{c}^{\infty}(\Omega),$
\[
JD^{(F)}F_{\phi}=\overline{F_{\phi'(a-\cdot)}}=-\overline{F_{\tfrac{d}{dx}\left(\phi\left(a-\cdot\right)\right)}}=-D^{(F)}JF_{\phi},
\]
hence (\ref{eq:Conjugation}) holds. \end{proof}
\begin{defn}
Let $\left(D^{\left(F\right)}\right)^{*}$ be the adjoint of $D^{\left(F\right)}$.
The deficiency spaces $DEF^{\pm}$ consists of $\xi_{\pm}\in\mathscr{H}_{F}$,
such that $\left(D^{\left(F\right)}\right)^{*}\xi_{\pm}=\pm\xi_{\pm}$.
That is,
\[
DEF^{\pm}=\left\{ \xi_{\pm}\in\mathscr{H}_{F}:\left\langle F_{\psi'},\xi_{\pm}\right\rangle _{\mathscr{H}_{F}}=\left\langle F_{\psi},\pm\xi_{\pm}\right\rangle _{\mathscr{H}_{F}},\forall\psi\in C_{c}^{\infty}\left(\Omega\right)\right\} .
\]
\end{defn}
\begin{cor}
If $F$ is real valued, then $DEF^{+}$ and $DEF^{-}$ have the same
dimension.\end{cor}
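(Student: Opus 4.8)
The statement to prove is the Corollary: if $F$ is real valued, then $DEF^+$ and $DEF^-$ have the same dimension. The plan is to exploit the conjugation operator $J$ from Lemma~\ref{lem:Conjugation-Operator} as an explicit bijection between the two deficiency spaces. The key algebraic fact is the anticommutation relation $D^{(F)}J = -JD^{(F)}$ established in (\ref{eq:Conjugation}); I expect this, together with the fact that $J$ is a conjugation (conjugate-linear, $J^2 = I$, and norm-preserving in the sense of (\ref{eq:ConjugationIdentity})), to do essentially all the work.

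First I would pass the relation (\ref{eq:Conjugation}) to adjoints: since $J$ is a bounded (indeed isometric, conjugate-linear) involution, one gets $\left(D^{(F)}\right)^* J = -J\left(D^{(F)}\right)^*$ on the appropriate domains — i.e. $J$ maps $\mathrm{dom}\bigl(\left(D^{(F)}\right)^*\bigr)$ onto itself and intertwines $\left(D^{(F)}\right)^*$ with $-\left(D^{(F)}\right)^*$. Here one has to be a little careful because $J$ is conjugate-linear, so I would verify the adjoint relation directly from the definition: for $\xi \in \mathrm{dom}\bigl(\left(D^{(F)}\right)^*\bigr)$ and $\varphi \in C_c^\infty(\Omega)$, compute $\langle F_{\psi'}, J\xi\rangle$ using (\ref{eq:ConjugationIdentity}) to move $J$ across, then use that $J$ swaps $D^{(F)} F_\psi = F_{\psi'}$ with $-D^{(F)}$ of the reflected mollifier (as in the last display of the proof of Lemma~\ref{lem:Conjugation-Operator}), and read off that $J\xi \in \mathrm{dom}\bigl(\left(D^{(F)}\right)^*\bigr)$ with $\left(D^{(F)}\right)^* J\xi = -J\left(D^{(F)}\right)^*\xi$.

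Next, suppose $\xi_+ \in DEF^+$, so $\left(D^{(F)}\right)^*\xi_+ = +\xi_+$. Applying the intertwining relation, $\left(D^{(F)}\right)^*(J\xi_+) = -J\left(D^{(F)}\right)^*\xi_+ = -J\xi_+$, so $J\xi_+ \in DEF^-$. Symmetrically $J$ maps $DEF^-$ into $DEF^+$, and since $J^2 = I$ these maps are mutually inverse. Thus $J$ restricts to a conjugate-linear bijection $DEF^+ \to DEF^-$. Because $J$ is conjugate-linear it is not a unitary isomorphism of the complex Hilbert spaces, but a conjugate-linear bijection between complex vector spaces still preserves (complex) linear dimension — a complex basis maps to a complex basis — so $\dim DEF^+ = \dim DEF^-$.

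The only mild obstacle is the care needed in handling the conjugate-linearity of $J$ when passing to adjoints and when concluding equality of dimensions; everything else is a direct application of Lemma~\ref{lem:Conjugation-Operator}. (One could alternatively quote von Neumann's general principle that a densely defined symmetric — here skew-symmetric — operator commuting with a conjugation has equal deficiency indices; the argument above is just the explicit version of that principle in this setting, with $D^{(F)}$ skew-symmetric so that the roles of $\pm i$ in the usual statement are played by $\pm 1$.)
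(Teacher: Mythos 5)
Your proposal is correct and is essentially the paper's own argument: the paper's proof simply invokes Lemma \ref{lem:Conjugation-Operator} together with the standard von Neumann conjugation principle (citing \cite{AG93}, \cite{DS88b}), and you have written out that principle explicitly — passing the anticommutation relation $D^{(F)}J=-JD^{(F)}$ to adjoints and using the conjugate-linear involution $J$ as a bijection $DEF^{+}\to DEF^{-}$. The care you take with conjugate-linearity is exactly what the cited references supply.
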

\begin{proof}
This follows from Lemma \ref{lem:Conjugation-Operator}, see e.g,
\cite{AG93} or \cite{DS88b}. \end{proof}
\begin{lem}
If $\xi\in DEF^{\pm}$ then $\xi(y)=\mathrm{constant}\, e^{\mp y}.$\end{lem}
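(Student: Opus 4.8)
The plan is to characterize the deficiency vectors by unwinding the defining identity $\langle F_{\psi'},\xi\rangle_{\mathscr{H}_F}=\pm\langle F_{\psi},\xi\rangle_{\mathscr{H}_F}$ for all $\psi\in C_c^\infty(\Omega)$ into a distributional differential equation for $\xi$ as a function on $\Omega$. First I would use the reproducing property together with Lemma \ref{lem:RKHS-def-by-integral}: since $\langle F_\psi,\xi\rangle_{\mathscr{H}_F}=\int_\Omega\overline{\psi(x)}\,\xi(x)\,dx$ (using that $\xi$ is a continuous function on $\Omega$, as recorded in the bulleted facts about $\mathscr{H}_F$), the condition $\xi\in DEF^{\pm}$ becomes
\[
\int_\Omega \overline{\psi'(x)}\,\xi(x)\,dx=\pm\int_\Omega\overline{\psi(x)}\,\xi(x)\,dx,\qquad\forall\psi\in C_c^\infty(\Omega).
\]
Taking complex conjugates (and replacing $\psi$ by $\overline{\psi}$, which is legitimate since $C_c^\infty(\Omega)$ is closed under conjugation) this says $-\int \psi'\xi = \pm\int\psi\xi$, i.e. the distributional derivative of $\xi$ on $\Omega$ equals $\mp\xi$ — wait, I must be careful with the sign: $\int\psi'\xi=-\int\psi\,\xi'$ in the distribution sense, so the identity reads $\xi'=\pm\xi$ as distributions on the open interval $\Omega$.

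Next I would invoke elliptic regularity in its most elementary one-dimensional form: a distribution $\xi$ on an interval satisfying $\xi'=c\,\xi$ is automatically smooth and hence a classical solution, so $\xi(y)=\text{const}\cdot e^{cy}$ on $\Omega$. With $c=\pm1$ this gives $\xi(y)=\text{const}\cdot e^{\pm y}$ for $\xi\in DEF^{\pm}$. I should double-check the sign convention against the adjoint: $D^{(F)}$ sends $F_\psi$ to $F_{\psi'}$, so $\langle D^{(F)}F_\psi,\xi\rangle=\langle F_{\psi'},\xi\rangle$ must equal $\langle F_\psi,(D^{(F)})^*\xi\rangle=\langle F_\psi,\pm\xi\rangle$ for $\xi\in DEF^\pm$, and the bookkeeping above then forces $\xi(y)=\text{const}\cdot e^{\mp y}$ — matching the stated conclusion $\xi(y)=\text{const}\,e^{\mp y}$. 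I would present the argument so that this sign falls out transparently rather than asserting it.

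The final point to address — and the only genuine subtlety — is that $\xi'=\mp\xi$ as a distribution on $\Omega$ is deduced only from pairings against test functions supported in $\Omega$, which a priori controls $\xi$ only on $\Omega$; but elements of $\mathscr{H}_F$ are functions on $\Omega-\Omega$ (or on $\Omega$, depending on the realization), and one wants the exponential formula wherever $\xi$ lives. This is a non-issue for the dimension count that follows (the space of such $\xi$ is at most one-dimensional regardless), but for cleanliness I would note that once $\xi$ is known to be smooth on $\Omega$ and to satisfy the ODE there, its values are determined by a single constant, which is all that is needed. I expect the main obstacle to be purely notational: making the integration-by-parts and the conjugation bookkeeping airtight so that the $\mp$ versus $\pm$ comes out correctly, and making sure Lemma \ref{lem:RKHS-def-by-integral}'s identity $\langle F_\psi,\xi\rangle_{\mathscr{H}_F}=\int_\Omega\overline{\psi(x)}\xi(x)\,dx$ is applied in the right generality (it is stated for $\xi=F_\varphi$ but extends to all $\xi\in\mathscr{H}_F$ by density and the continuity estimate in the Remark following Lemma \ref{lem:dense}).
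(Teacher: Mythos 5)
Your proposal is correct and follows essentially the same route as the paper: both convert the defining adjoint relation, via the identity $\left\langle F_{\psi},\xi\right\rangle _{\mathscr{H}_{F}}=\int_{\Omega}\overline{\psi\left(x\right)}\xi\left(x\right)dx$, into the statement that $\xi$ is a weak solution of $\xi'=\mp\xi$ on $\Omega$, and then upgrade to a classical solution to get $\xi\left(y\right)=\mathrm{const}\cdot e^{\mp y}$. Your extra care with the conjugation and sign bookkeeping, and with the density argument justifying the integral formula for general $\xi\in\mathscr{H}_{F}$, only makes explicit what the paper leaves implicit.
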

\begin{proof}
Specifically, $\xi\in DEF^{+}$ if and only if 
\[
\int_{0}^{a}\psi'\left(y\right)\xi\left(y\right)dy=\int_{0}^{a}\psi\left(y\right)\xi\left(y\right)dy,\:\forall\psi\in C_{c}^{\infty}\left(0,a\right).
\]
Equivalently, $y\mapsto\xi\left(y\right)$ is a weak solution to 
\[
-\xi'=\xi.
\]
i.e., a strong solution in $C^{1}$. Thus, $\xi\left(y\right)=\mbox{constant }e^{-y}$.
The $DEF^{-}$ case is similar. \end{proof}
\begin{cor}
Suppose $F$ is real valued. Let $\xi_{\pm}(y):=e^{\mp y},$ for $y\in\Omega.$
Then $\xi_{+}\in\mathscr{H}_{F}$ iff $\xi_{-}\in\mathscr{H}_{F}.$
In the affirmative case $\left\Vert \xi_{-}\right\Vert _{\mathscr{H}_{F}}=e^{a}\left\Vert \xi_{+}\right\Vert _{\mathscr{H}_{F}}.$\end{cor}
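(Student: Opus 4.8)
The plan is to recognize that the desired statement is almost immediate once we make the conjugation $J$ of Lemma~\ref{lem:Conjugation-Operator} explicit. Here $\Omega=(0,a)$, so $\alpha+\beta=a$, and I claim that the conjugation $J$ on $\mathscr{H}_{F}$ furnished by that lemma is given by the simple pointwise formula $(J\xi)(x)=\overline{\xi(a-x)}$ for all $\xi\in\mathscr{H}_{F}$ and $x\in\Omega$ (note $a-x\in\Omega$). Granting this, since $\xi_{+}$ is real valued we get $(J\xi_{+})(x)=\overline{e^{-(a-x)}}=e^{-a}e^{x}=e^{-a}\xi_{-}(x)$ and likewise $(J\xi_{-})(x)=e^{a}\xi_{+}(x)$; thus $\xi_{-}=e^{a}J\xi_{+}$ and $\xi_{+}=e^{-a}J\xi_{-}$, and the biconditional together with the norm identity fall out of the fact that $J$ is a norm-preserving conjugate-linear involution of $\mathscr{H}_{F}$.

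To establish the formula for $J$, I would first check it on the dense subspace $\{F_{\varphi}:\varphi\in C_{c}^{\infty}(\Omega)\}$ (dense by Lemma~\ref{lem:dense}). Starting from $JF_{\varphi}=\overline{F_{\varphi(a-\cdot)}}$ and the definition $F_{\varphi(a-\cdot)}(x)=\int_{0}^{a}\varphi(a-y)F(x-y)\,dy$, one uses that $F$ is real valued, hence $\overline{F(t)}=F(t)$, and even, $F(-t)=F(t)$ (this follows from $F(-t)=\overline{F(t)}$ recorded earlier), together with the change of variable $u=a-y$, to arrive at $(JF_{\varphi})(x)=\overline{F_{\varphi}(a-x)}$ — this is precisely the computation already carried out in the proof of Lemma~\ref{lem:Conjugation-Operator}. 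Then I would pass to the completion: if $F_{\varphi_{n}}\to\xi$ in $\mathscr{H}_{F}$, then $JF_{\varphi_{n}}\to J\xi$ in $\mathscr{H}_{F}$ because $J$ is isometric (Lemma~\ref{lem:Conjugation-Operator}), hence $JF_{\varphi_{n}}(x)\to (J\xi)(x)$ for each $x\in\Omega$; on the other hand $JF_{\varphi_{n}}(x)=\overline{F_{\varphi_{n}}(a-x)}\to\overline{\xi(a-x)}$ because $\mathscr{H}_{F}$-norm convergence forces uniform convergence on $\Omega$ (the reproducing-kernel estimate recorded in the remark after Lemma~\ref{lem:dense}). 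Comparing the two limits yields $(J\xi)(x)=\overline{\xi(a-x)}$ on all of $\mathscr{H}_{F}$.

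Finally I would assemble the conclusion: if $\xi_{+}\in\mathscr{H}_{F}$ then $J\xi_{+}\in\mathscr{H}_{F}$, and from $J\xi_{+}=e^{-a}\xi_{-}$ we get $\xi_{-}=e^{a}J\xi_{+}\in\mathscr{H}_{F}$ with $\|\xi_{-}\|_{\mathscr{H}_{F}}=e^{a}\|J\xi_{+}\|_{\mathscr{H}_{F}}=e^{a}\|\xi_{+}\|_{\mathscr{H}_{F}}$; the converse is the mirror-image computation using $J\xi_{-}=e^{a}\xi_{+}$, and if neither $\xi_{+}$ nor $\xi_{-}$ belongs to $\mathscr{H}_{F}$ there is nothing to prove. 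The one place where genuine work is hidden is the second step — showing the pointwise description of $J$ survives the completion — which relies on the (already established) fact that convergence in $\mathscr{H}_{F}$ implies uniform convergence on $\Omega$; the remaining manipulations are just the change of variables $x\mapsto a-x$ and the evenness of $F$. A variant argument routing through the deficiency spaces, using $J(DEF^{+})=DEF^{-}$ and $DEF^{\pm}=\mathbb{C}\,\xi_{\pm}\cap\mathscr{H}_{F}$, is possible but still needs the value of $J\xi_{+}$ at a point, so it does not shorten the proof.
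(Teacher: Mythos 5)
Your proof is correct and follows essentially the same route as the paper: both hinge on identifying the explicit pointwise action $(J\xi)(x)=\overline{\xi(a-x)}$ of the conjugation from Lemma \ref{lem:Conjugation-Operator} and then applying it to $\xi_{\pm}$, giving $J\xi_{-}=e^{a}\xi_{+}$ and the norm identity from anti-isometry. The only inessential difference is how that pointwise formula is obtained — you verify it on the dense set $\{F_{\varphi}\}$ and pass to the limit via uniform convergence, while the paper gets it in one line from $\left\langle J\xi,F_{\phi}\right\rangle _{\mathscr{H}_{F}}=\left\langle JF_{\phi},\xi\right\rangle _{\mathscr{H}_{F}}$ together with the reproducing property.
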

\begin{proof}
Let $J$ be the conjugation from Lemma \ref{lem:Conjugation-Operator}.
A short calculation:
\begin{align*}
\left\langle J\xi,F_{\phi}\right\rangle _{\mathscr{H_{F}}} & =\left\langle F_{\overline{\phi(a-\cdot)}},\xi\right\rangle _{\mathscr{H_{F}}}=\int\phi(a-x)\xi(x)dx\\
 & =\int\phi(x)\xi(a-x)dx=\left\langle \overline{\xi(a-\cdot)},F_{\phi}\right\rangle _{\mathscr{H_{F}}}
\end{align*}
shows that $\left(J\xi\right)(x)=\overline{\xi(a-x)},$ for $\xi\in\mathscr{H}_{F}.$
In particular, $J\xi_{-}=e^{a}\xi_{+}.$ Since, $\left\Vert J\xi_{-}\right\Vert _{\mathscr{H}_{F}}=\left\Vert \xi_{-}\right\Vert _{\mathscr{H}_{F}},$
the proof is easily completed.\end{proof}
\begin{cor}
\label{cor:DF}The deficiency indices of $D^{\left(F\right)}$, with
its dense domain in $\mathscr{H}_{F}$ are $\left(0,0\right)$, $(0,1),$
$(1,0),$ or $\left(1,1\right)$. 
\end{cor}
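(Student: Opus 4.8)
The plan is to read the statement off directly from the two lemmas immediately preceding it. First I would recall that, by definition, the deficiency indices of the skew-symmetric operator $D^{(F)}$ are the pair $\left(\dim DEF^{+},\dim DEF^{-}\right)$, where $DEF^{\pm}=\ker\left(\left(D^{(F)}\right)^{*}\mp I\right)$. Hence it suffices to show that each of $\dim DEF^{+}$ and $\dim DEF^{-}$ is at most $1$.

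Next I would invoke the lemma just proved: every $\xi\in DEF^{\pm}$ is, as a function on $\Omega$, of the form $\xi(y)=c\,e^{\mp y}$ for some constant $c$, since membership in $DEF^{+}$ (resp. $DEF^{-}$) forces $\xi$ to be a weak --- hence strong, $C^{1}$ --- solution of the first-order linear ODE $-\xi'=\xi$ (resp. $\xi'=\xi$) on the interval $\Omega$. The solution space of such an ODE is one-dimensional, spanned by $e^{-y}$ (resp. $e^{y}$). Consequently $DEF^{+}\subseteq\mathbb{C}\,e^{-y}$ and $DEF^{-}\subseteq\mathbb{C}\,e^{y}$, so $\dim DEF^{\pm}\in\{0,1\}$.

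Combining these two observations, the deficiency indices $\left(\dim DEF^{+},\dim DEF^{-}\right)$ lie in $\{0,1\}\times\{0,1\}$, i.e., they equal one of $(0,0)$, $(0,1)$, $(1,0)$, $(1,1)$, as claimed. The only point requiring a little care is that whether $DEF^{\pm}$ is $\{0\}$ or one-dimensional depends on whether the candidate function $e^{\mp y}$ actually belongs to $\mathscr{H}_{F}$; this is genuinely a property of the given p.d. function $F$, and is precisely what is decided by the criterion in Theorem \ref{thm:HF}. I do not expect any real obstacle here --- the corollary is an immediate consequence of the explicit description of the deficiency vectors. (When $F$ is real valued, the earlier corollary gives $\dim DEF^{+}=\dim DEF^{-}$, so the mixed cases $(0,1)$ and $(1,0)$ are then excluded; but in general all four are a priori possible.)
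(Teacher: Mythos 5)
Your argument is correct and is exactly the one the paper intends: the preceding lemma forces every vector in $DEF^{\pm}$ to be a constant multiple of $e^{\mp y}$, so each deficiency space has dimension at most one, and the four listed pairs are the only possibilities. Your closing remarks — that membership of $e^{\mp y}$ in $\mathscr{H}_{F}$ is decided by the criterion of Theorem \ref{thm:HF}, and that the real-valued case excludes $(0,1)$ and $(1,0)$ — also match the paper's surrounding discussion.
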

The second case in the above corollary happens precisely when $y\mapsto e^{-y}\in\mathscr{H}_{F}$.
We can decide this with the use of (\ref{eq:bdd})($\Leftrightarrow$(\ref{eq:bdd2})). 

In \chapref{CompareFK} we will give some \emph{a priori} estimates,
which enable us to strengthen Corollary \ref{cor:DF} above. For this,
see Corollary \ref{cor:RI}.
\begin{rem}
Note that deficiency indices $\left(1,1\right)$ is equivalent to
\begin{eqnarray}
\sum_{i}\sum_{j}\overline{c_{i}}c_{j}e^{-\left(x_{i}+x_{j}\right)} & \leq & A_{0}\sum_{i}\sum_{j}\overline{c_{i}}c_{j}F\left(x_{i}-x_{j}\right)\nonumber \\
 & \Updownarrow\label{eq:HF}\\
\left|\int_{0}^{a}\psi\left(y\right)e^{-y}dy\right|^{2} & \leq & A_{0}\int_{0}^{a}\int_{0}^{a}\overline{\psi\left(x\right)}\psi\left(y\right)F\left(x-y\right)dxdy\nonumber 
\end{eqnarray}
But it depends on $F$ (given on $\left(-a,a\right)$).\end{rem}
\begin{lem}
On $\mathbb{R}\times\mathbb{R}$, define the following kernel $K_{+}\left(x,y\right)=e^{-\left|x+y\right|}$,
$\left(x,y\right)\in\mathbb{R}\times\mathbb{R}$; then this is a positive
definite kernel on $\mathbb{R}_{+}\times\mathbb{R}_{+}$; (see \cite{Aro50}
for details on positive definite kernels.)\end{lem}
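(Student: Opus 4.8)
The plan is to exploit the fact that once both arguments are restricted to $\mathbb{R}_{+}$, the absolute value in the exponent disappears, which collapses $K_{+}$ to a rank-one (Gram) kernel. First I would observe that for $x,y\in\mathbb{R}_{+}$ one has $x+y\geq 0$, hence $\left|x+y\right|=x+y$, so that
\[
K_{+}(x,y)=e^{-(x+y)}=e^{-x}\,e^{-y}=\overline{f(x)}\,f(y),\qquad f(t):=e^{-t}.
\]
Thus on $\mathbb{R}_{+}\times\mathbb{R}_{+}$ the kernel $K_{+}$ is nothing but the Gram kernel of the single (real-valued, bounded) function $f$.

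Next I would verify positive definiteness directly from the definition: for every finite system $\left\{x_{i}\right\}\subset\mathbb{R}_{+}$ and scalars $\left\{c_{i}\right\}\subset\mathbb{C}$,
\[
\sum_{i}\sum_{j}\overline{c_{i}}c_{j}\,K_{+}(x_{i},x_{j})=\sum_{i}\sum_{j}\overline{c_{i}}c_{j}\,e^{-x_{i}}e^{-x_{j}}=\Bigl|\sum_{i}c_{i}e^{-x_{i}}\Bigr|^{2}\geq 0,
\]
which is precisely the statement that $K_{+}$ is a positive definite kernel on $\mathbb{R}_{+}\times\mathbb{R}_{+}$.

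There is essentially no obstacle here; the one point worth flagging is the necessity of the restriction to $\mathbb{R}_{+}$: the kernel $(x,y)\mapsto e^{-\left|x+y\right|}$ is \emph{not} positive definite on all of $\mathbb{R}\times\mathbb{R}$ (for instance, the $2\times 2$ matrix built from $x_{1}=1$ and $x_{2}=-1$ equals $\left(\begin{smallmatrix}e^{-2}&1\\ 1&e^{-2}\end{smallmatrix}\right)$, which has negative determinant), so the factorization $e^{-\left|x+y\right|}=e^{-x}e^{-y}$, and with it the computation above, genuinely uses $x,y\geq 0$. If one prefers an abstract phrasing: on $\mathbb{R}_{+}$ the kernel $K_{+}$ is the pullback of the trivially positive definite kernel $(s,t)\mapsto \overline{s}\,t$ (here real) under the map $x\mapsto e^{-x}\in(0,1]$, and positive definiteness of kernels is preserved under such pullbacks (cf.\ \cite{Aro50}).
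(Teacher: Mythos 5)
Your proof is correct and is essentially the same as the paper's: both reduce the quadratic form to $\bigl|\sum_{j}c_{j}e^{-x_{j}}\bigr|^{2}\geq0$ via the factorization $e^{-\left|x+y\right|}=e^{-x}e^{-y}$ on $\mathbb{R}_{+}\times\mathbb{R}_{+}$. Your added remark that positivity fails on all of $\mathbb{R}\times\mathbb{R}$ is a nice (and correct) sanity check, but not part of the claim.
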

\begin{proof}
Let $\left\{ c_{j}\right\} \subset\mathbb{C}^{N}$ be a finite system
of numbers, and let $\left\{ x_{j}\right\} \subset\mathbb{R}_{+}^{N}$.
Then 
\[
\sum_{j}\sum_{k}\overline{c_{j}}c_{k}e^{-\left(x_{j}+x_{k}\right)}=\left|\sum_{j}c_{j}e^{-x_{j}}\right|^{2}\geq0.
\]
\end{proof}
\begin{cor}
Let $F$, $\mathscr{H}_{F}$, and $D^{\left(F\right)}$ be as in Corollary
\ref{cor:DF}; then $D^{\left(F\right)}$ has deficiency indices $\left(1,1\right)$
if and only if the kernel $K_{+}\left(x,y\right)=e^{-\left|x+y\right|}$
is dominated by $K_{F}\left(x,y\right):=F\left(x-y\right)$ on $\left(0,a\right)\times\left(0,a\right)$,
i.e., there is a finite positive constant $A_{0}$ such that $A_{0}K_{F}-K_{+}$
is positive definite on $\left(0,a\right)\times\left(0,a\right)$. \end{cor}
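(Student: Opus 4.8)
The plan is to string together a short chain of equivalences, every link of which is already available above. To begin, I would invoke the hypothesis that $F$ is real valued: by the Corollary asserting $\dim DEF^{+}=\dim DEF^{-}$ for real-valued $F$ (proved via the conjugation $J$ of Lemma \ref{lem:Conjugation-Operator}), the unbalanced cases $(0,1)$ and $(1,0)$ of Corollary \ref{cor:DF} are excluded, so the deficiency indices of $D^{\left(F\right)}$ are either $(0,0)$ or $(1,1)$. Hence $D^{\left(F\right)}$ has deficiency indices $(1,1)$ if and only if $DEF^{+}\neq\{0\}$. By the Lemma identifying deficiency vectors (every $\xi\in DEF^{+}$ is a scalar multiple of $\xi_{+}(y)=e^{-y}$ on $\Omega=(0,a)$), this is in turn equivalent to $\xi_{+}\in\mathscr{H}_{F}$.

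Next I would test membership of the continuous function $\xi_{+}$ in $\mathscr{H}_{F}$ by Theorem \ref{thm:HF}: $\xi_{+}\in\mathscr{H}_{F}$ if and only if there is a finite constant $A_{0}>0$ with
\[
\sum_{i}\sum_{j}\overline{c_{i}}c_{j}\,\overline{\xi_{+}(x_{i})}\,\xi_{+}(x_{j})\leq A_{0}\sum_{i}\sum_{j}\overline{c_{i}}c_{j}F(x_{i}-x_{j})
\]
for all finite systems $\{c_{i}\}\subset\mathbb{C}$ and $\{x_{i}\}\subset(0,a)$; this is precisely the content of the Remark preceding the present Corollary. Since the points $x_{i}$ lie in $(0,a)\subset\mathbb{R}_{+}$, one has $\overline{\xi_{+}(x_{i})}\,\xi_{+}(x_{j})=e^{-x_{i}}e^{-x_{j}}=e^{-(x_{i}+x_{j})}=e^{-|x_{i}+x_{j}|}=K_{+}(x_{i},x_{j})$, while $F(x_{i}-x_{j})=K_{F}(x_{i},x_{j})$ by definition of $K_{F}$.

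Substituting these identities, the displayed inequality becomes $\sum_{i}\sum_{j}\overline{c_{i}}c_{j}K_{+}(x_{i},x_{j})\leq A_{0}\sum_{i}\sum_{j}\overline{c_{i}}c_{j}K_{F}(x_{i},x_{j})$, which — recalling that $K_{+}$ is itself a positive definite kernel on $(0,a)\times(0,a)$ by the preceding Lemma — is exactly the statement $K_{+}\ll K_{F}$ on $(0,a)\times(0,a)$, equivalently that $A_{0}K_{F}-K_{+}$ is positive definite there. Chaining these equivalences gives the Corollary. I do not expect a genuine obstacle here: the argument is bookkeeping of results already in hand. The one step that repays a moment's care is the very first — confirming that real-valuedness of $F$ forbids the unbalanced indices $(0,1),(1,0)$ of Corollary \ref{cor:DF} — after which the remainder is a direct substitution and the identification of the kernel order $\ll$.
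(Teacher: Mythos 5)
Your proof is correct and follows essentially the same route as the paper's: the paper's own proof simply cites the lemma on positive definiteness of $K_{+}$ together with the equivalence (\ref{eq:HF}), which is exactly the chain you reconstruct — reduction to $\xi_{+}=e^{-y}\in\mathscr{H}_{F}$ via the $DEF^{\pm}$ characterization, then the membership criterion of Theorem \ref{thm:HF}, then the identification of the resulting inequality with $A_{0}K_{F}-K_{+}$ being positive definite. Your explicit remark that real-valuedness of $F$ (via the conjugation $J$) is what rules out the unbalanced indices $(0,1)$ and $(1,0)$ is a point the paper leaves implicit in the Remark containing (\ref{eq:HF}), so your write-up is if anything slightly more careful than the original.
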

\begin{proof}
This is immediate from the lemma and (\ref{eq:HF}) above.
\end{proof}
By Corollary \ref{cor:DF}, we conclude that there exists skew-adjoint
extension $A^{\left(F\right)}\supset D^{\left(F\right)}$ in $\mathscr{H}_{F}$.
That is, $\left(A^{\left(F\right)}\right)^{*}=-A^{\left(F\right)}$,
and $\left\{ F_{\psi}\right\} _{\psi\in C_{c}^{\infty}\left(0,a\right)}\subset\mathrm{dom}\left(A^{\left(F\right)}\right)\subset\mathscr{H}_{F}$. 

Hence, set $U\left(t\right)=e^{tA^{\left(F\right)}}:\mathscr{H}_{F}\rightarrow\mathscr{H}_{F}$,
and get the unitary one-parameter group 
\[
\left\{ U\left(t\right):t\in\mathbb{R}\right\} ,\; U\left(s+t\right)=U\left(s\right)U\left(t\right),\:\forall s,t\in\mathbb{R};
\]
and if 
\[
\xi\in dom\left(A^{\left(F\right)}\right)=\left\{ \xi\in\mathscr{H}_{F}:\: s.t.\lim_{t\rightarrow0}\frac{U\left(t\right)\xi-\xi}{t}\:\mbox{exists}\right\} 
\]
then 
\begin{equation}
A^{\left(F\right)}\xi=\lim_{t\rightarrow0}\frac{U\left(t\right)\xi-\xi}{t}.
\end{equation}

Now use $F_{x}(\cdot)=F\left(x-\cdot\right)$ defined in $\left(0,a\right)$;
and set 
\begin{equation}
F_{A}\left(t\right):=\left\langle F_{0},U\left(t\right)F_{0}\right\rangle _{\mathscr{H}_{F}},\;\forall t\in\mathbb{R}\label{eq:Fext}
\end{equation}
then using (\ref{eq:approx}), we see that $F_{A}$ is a continuous
positive definite extension of $F$ on $\left(-a,a\right)$, i.e.,
a continuous positive definite function on $\mathbb{R}$, and if $x\in\left(0,a\right)$,
then we get the following conclusion:\index{operator!unitary one-parameter group}
\begin{lem}
$F_{A}$ is a continuous bounded positive definite function of $\mathbb{R}$
and
\begin{equation}
F_{A}\left(t\right)=F\left(t\right).
\end{equation}
for $t\in\left(-a,a\right)$.\end{lem}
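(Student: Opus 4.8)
**Proof proposal for the Lemma asserting that $F_A$ is a continuous bounded positive definite function on $\mathbb{R}$ agreeing with $F$ on $(-a,a)$.**

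The plan is to verify the three claims in turn — positive definiteness, boundedness/continuity, and the agreement $F_A(t)=F(t)$ on $(-a,a)$ — using only the structure already assembled: a skew-adjoint extension $A^{(F)}\supset D^{(F)}$ (which exists by Corollary~\ref{cor:DF}), its unitary one-parameter group $U(t)=e^{tA^{(F)}}$, the cyclic-type vector $F_0=F(\cdot)\in\mathscr{H}_F$ (here $F_0 = \xi_0$ in the notation of Section~\ref{sec:embedding}), and the approximation Lemma~\ref{lem:dense}. First I would record that positive definiteness is automatic: for any finite system $\{c_j\}\subset\mathbb{C}$, $\{t_j\}\subset\mathbb{R}$,
\[
\sum_{j}\sum_{k}\overline{c_j}c_k\,F_A(t_k-t_j)=\sum_j\sum_k\overline{c_j}c_k\langle U(t_j)F_0,U(t_k)F_0\rangle_{\mathscr{H}_F}=\Bigl\|\sum_j c_j\,U(t_j)F_0\Bigr\|_{\mathscr{H}_F}^2\ge 0,
\]
where I used $U(s)^*=U(-s)$ and the group law $U(t_j)^*U(t_k)=U(t_k-t_j)$. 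Boundedness is immediate from Cauchy–Schwarz, $|F_A(t)|\le\|F_0\|_{\mathscr{H}_F}^2=F(0)$, and continuity of $t\mapsto F_A(t)$ follows from strong continuity of $U(\cdot)$ on $\mathscr{H}_F$ (a one-parameter unitary group generated by a skew-adjoint operator via Stone's theorem is strongly continuous), since $t\mapsto\langle F_0,U(t)F_0\rangle$ is then continuous.

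The substantive point is the agreement $F_A(t)=F(t)$ for $t\in(-a,a)$. Here I would adapt the argument already used in the ``$\Uparrow$'' direction of Theorem~\ref{thm:pd-extension-bigger-H-space}, but now with the dilation being trivial ($\mathscr{K}=\mathscr{H}_F$, $W=\mathrm{id}$). The key intermediate identity is $U(t)F_\varphi=F_{\varphi_t}$ for $\varphi\in C_c^\infty(\Omega)$ and $t$ small enough that $\varphi_t(\cdot)=\varphi(t-\cdot)$ still has support in $\Omega$; this is exactly the content of Lemma~\ref{lem:abc} and its proof (equations~(\ref{eq:e.9})--(\ref{eq:e.12})), which goes through verbatim because $AWF_\varphi=WF_{\varphi'}$ is here just $A^{(F)}F_\varphi=F_{\varphi'}$, the defining property of $D^{(F)}$ extended by $A^{(F)}$. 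Granting this, for a fixed $x\in(0,a)$ and an approximate identity $\varphi_{n,x}\to\delta_x$ as in Lemma~\ref{lem:dense}, I would write
\[
F_A(t)=\langle F_0,U(t)F_0\rangle_{\mathscr{H}_F}
=\lim_{n\to\infty}\langle F_0,U(t)F_{\varphi_{n,0}}\rangle_{\mathscr{H}_F}
=\lim_{n\to\infty}\langle F_0,F_{(\varphi_{n,0})_t}\rangle_{\mathscr{H}_F}
=\lim_{n\to\infty}\langle F_0,F_{\varphi_{n,t}}\rangle_{\mathscr{H}_F},
\]
using $F_{\varphi_{n,0}}\to F_0$ in $\mathscr{H}_F$ (Lemma~\ref{lem:dense} with $x=0$), continuity of $U(t)$, and $(\varphi_{n,0})_t=\varphi_{n,t}$. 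By Lemma~\ref{lem:dense} again, $F_{\varphi_{n,t}}\to F(\cdot-t)=\xi_t$ in $\mathscr{H}_F$ provided $t\in(0,a)$ (so that $\varphi_{n,t}$ is eventually supported in $\Omega$), and then the reproducing property gives $\langle F_0,\xi_t\rangle_{\mathscr{H}_F}=\xi_t(0)=F(-t)$ — or, more cleanly, $\langle\xi_0,\xi_t\rangle_{\mathscr{H}_F}=F(0-t)$ wait, let me instead use $\langle\xi_t,\xi_0\rangle=F(t)$ together with Hermitian symmetry $F_A(-t)=\overline{F_A(t)}$ to cover the full interval $t\in(-a,a)$.

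The main obstacle — and the one place genuine care is needed — is the support/domain bookkeeping: the identity $U(t)F_\varphi=F_{\varphi_t}$ is only valid when $\varphi_s$ stays supported in $\Omega=(0,a)$ for all $s$ between $0$ and $t$, which is why one localizes the approximate identity and lets $n\to\infty$ along a sequence whose supports shrink to a point strictly inside $\Omega$; one must check that for each fixed $t\in(0,a)$ there is $\varepsilon>0$ and $N$ so that $\operatorname{supp}(\varphi_{n,s})\subset(0,a)$ for all $s\in[0,t]$ and $n\ge N$, which holds because $\operatorname{supp}(\varphi_{n,s})\subset(s-\tfrac{a}{n},s+\tfrac{a}{n})$ shrinks to $\{s\}\subset(0,a)$ — wait, here I should be slightly careful since $s$ ranges over $[0,t]$ and $s=0$ is an endpoint; the cleanest fix is to note $F_A$ is determined by its values on a neighborhood of each interior point together with continuity, or to observe that the telescoping identity~(\ref{eq:e.11}) only requires the path $s\mapsto U_{t-s}F_{\varphi_s}$ to lie in $\operatorname{dom}(A^{(F)})$, which it does for $s\in[0,t]$ once $n$ is large since then each $\varphi_{n,s}\in C_c^\infty(0,a)$. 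Everything else is a routine limiting argument combining Lemma~\ref{lem:dense}, the reproducing property, and strong continuity of $U(\cdot)$.
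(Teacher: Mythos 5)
Your proposal is correct and follows essentially the same route as the paper: the paper's own proof gives exactly the Cauchy--Schwarz bound $|F_A(t)|\le\|F_0\|_{\mathscr{H}_F}^2=F(0)$ and strong continuity of $U(\cdot)$, and then delegates the agreement $F_A=F$ on $(-a,a)$ to the approximation identity (\ref{eq:approx}) of Lemma \ref{lem:dense} together with the argument in the proof of Theorem \ref{thm:pd-extension-bigger-H-space} --- which is precisely the combination you spell out (including the correct caution about supports of $\varphi_{n,s}$ staying inside $\Omega$ and the endpoint $s=0$ being handled by a one-sided approximate identity, as the paper itself does in Lemma \ref{lem:exp-1}). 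No gaps.
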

\begin{proof}
But $\mathbb{R}\ni t\mapsto F_{A}\left(t\right)$ is bounded and continuous,
since $\left\{ U\left(t\right)\right\} $ is a strongly continuous
unitary group acting on $\mathscr{H}_{F}$, and 
\[
\left|F_{A}\left(t\right)\right|=\left|\left\langle F_{0},U\left(t\right)F_{0}\right\rangle \right|\leq\left\Vert F_{0}\right\Vert _{\mathscr{H}_{F}}\left\Vert U\left(t\right)F_{0}\right\Vert _{\mathscr{H}_{F}}=\left\Vert F_{0}\right\Vert _{\mathscr{H}_{F}}^{2}
\]
where $\left|\left\langle F_{0},U\left(t\right)F_{0}\right\rangle \right|\leq\left\Vert F_{0}\right\Vert _{\mathscr{H}_{F}}^{2}=F\left(0\right)$,
see (\ref{eq:exp-4}). See the proof of Theorem \ref{thm:pd-extension-bigger-H-space}
and \cite{MR1004167,MR1069255,MR1104120} for the remaining details.\end{proof}
\begin{rem}
In the circle case $\mathbb{T}$ the function $F_{A}$ in (\ref{eq:Fext})
needs not be $\mathbb{Z}$-periodic, and we want a positive definite
continuous function on $\mathbb{T}$; hence in this case we are not
yet done. 

$F$ can be normalized by $F\left(0\right)=1$. Recall that $F$ is
defined on $\left(-a,a\right)=\Omega-\Omega$ if $\Omega=\left(0,a\right)$. 
\end{rem}
Consider the spectral representation:
\begin{equation}
U\left(t\right)=\int_{-\infty}^{\infty}e_{t}\left(\lambda\right)P\left(d\lambda\right)\label{eq:Ut}
\end{equation}
where $e_{t}\left(\lambda\right)=e^{i2\pi\lambda t}$; and $P\left(\cdot\right)$
is a projection-valued measure\index{measure!projection-valued} on
$\mathbb{R}$, $P\left(B\right):\mathscr{H}_{F}\rightarrow\mathscr{H}_{F}$,
$\forall B\in Borel\left(\mathbb{R}\right)$. Then 
\[
d\mu\left(\lambda\right)=\left\Vert P\left(d\lambda\right)F_{0}\right\Vert _{\mathscr{H}_{F}}^{2}
\]
satisfies 
\begin{equation}
F_{A}\left(t\right)=\int_{-\infty}^{\infty}e_{t}\left(\lambda\right)d\mu\left(\lambda\right),\:\forall t\in\mathbb{R}.\label{eq:Fext1}
\end{equation}

\textbf{Conclusion.} The extension $F_{A}$ from (\ref{eq:Fext})
has nice transform properties, and via (\ref{eq:Fext1}) we get 
\[
\mathscr{H}_{F_{A}}\simeq L^{2}\left(\mathbb{R},\mu\right)
\]
where $\mathscr{H}_{F_{A}}$ is the RKHS of $F_{A}$.

\section{\label{sub:lcg}The Case of Locally Compact Abelian Groups}

We are concerned with extensions of locally defined continuous and
positive definite (p.d.) functions $F$ on Lie groups, say $G$, but
some results apply to locally compact groups \index{group!locally compact abelian}as
well. However in the case of locally compact Abelian groups, we have
stronger theorems, due to the powerful Fourier analysis theory for
locally compact Abelian groups.

We must fix notations:
\begin{itemize}
\item $G$: a given locally compact abelian group, write the operation in
$G$ additively; 
\item $dx$: denotes the Haar measure \index{measure!Haar}of $G$ (unique
up to a scalar multiple.)
\item $\widehat{G}$: the dual group\index{group!dual}, i.e., $\widehat{G}$
consists of all continuous homomorphisms: $\lambda:G\rightarrow\mathbb{T}$,
$\lambda\left(x+y\right)=\lambda\left(x\right)\lambda\left(y\right)$,
$\forall x,y\in G$; $\lambda\left(-x\right)=\overline{\lambda\left(x\right)}$,
$\forall x\in G$. Occasionally, we shall write $\left\langle \lambda,x\right\rangle $
for $\lambda\left(x\right)$. Note that $\widehat{G}$ also has its
Haar measure. \end{itemize}
\begin{thm}[Pontryagin \cite{Ru90}]
\label{thm:lcg-dualG}$\widehat{\widehat{G}}\simeq G$, and we have
the following:
\[
\left[G\mbox{ is compact}\right]\Longleftrightarrow\left[\widehat{G}\mbox{ is discrete}\right]
\]

\end{thm}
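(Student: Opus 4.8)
The plan is to prove the Pontryagin duality statement in two independent pieces. The isomorphism $\widehat{\widehat{G}}\simeq G$ is the classical Pontryagin--van Kampen duality theorem; I would not reprove it from scratch but would recall the canonical map and sketch why it is the relevant one. Specifically, define the evaluation map $\alpha\colon G\to\widehat{\widehat{G}}$ by $\alpha(x)(\lambda)=\lambda(x)$ for $x\in G$, $\lambda\in\widehat{G}$. One checks immediately that $\alpha(x)$ is a continuous homomorphism $\widehat{G}\to\mathbb{T}$, hence an element of $\widehat{\widehat{G}}$, and that $\alpha$ itself is a continuous homomorphism. The content of the theorem is that $\alpha$ is a topological isomorphism onto $\widehat{\widehat{G}}$; injectivity amounts to the statement that the characters of $G$ separate points (a consequence of the Gelfand theory of the group $C^{*}$-algebra, or of the Peter--Weyl/Stone--Weierstrass circle of ideas applied via Bochner's theorem), surjectivity and the homeomorphism property are the harder analytic core, proved e.g. in \cite{Ru90}. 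For the purposes of this Memoir I would simply cite \cite{Ru90} for the full strength and record the canonical identification, since all later use of duality only needs the identification to be natural.

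Second, I would prove the compact/discrete dichotomy, which is elementary given the definitions. For the implication $[G\text{ compact}]\Rightarrow[\widehat{G}\text{ discrete}]$: normalize Haar measure on the compact group $G$ so that $\int_{G}dx=1$. For a character $\lambda\in\widehat{G}$, consider $\int_{G}\lambda(x)\,dx$. By translation invariance, $\lambda(y)\int_{G}\lambda(x)\,dx=\int_{G}\lambda(x+y)\,dx=\int_{G}\lambda(x)\,dx$ for all $y$, so either $\lambda$ is the trivial character or the integral vanishes. Hence
\[
\int_{G}\lambda(x)\,dx=\begin{cases}1 & \lambda=\mathbf 1\\ 0 & \lambda\neq\mathbf 1.\end{cases}
\]
Now the set $U=\{\lambda\in\widehat{G}:\ \mathrm{Re}\int_{G}\lambda(x)\,dx>1/2\}$ is open in $\widehat{G}$ (the integral is continuous in $\lambda$ for the topology of uniform convergence on compacta, and $G$ itself is compact), and by the computation above $U=\{\mathbf 1\}$. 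A topological group with an open singleton is discrete, so $\widehat{G}$ is discrete.

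For the converse $[\widehat{G}\text{ discrete}]\Rightarrow[G\text{ compact}]$, I would apply the already-proved direction to the group $\widehat{G}$ in place of $G$: if $\widehat{G}$ is discrete then trivially it is locally compact abelian, and one shows directly that the dual of a discrete group is compact — the dual $\widehat{H}$ of a discrete abelian group $H$ is a closed subgroup of the compact product $\mathbb{T}^{H}$ (with the product topology), since $\mathrm{Hom}(H,\mathbb{T})\subset\mathbb{T}^{H}$ is cut out by the closed conditions $\lambda(h_{1}+h_{2})=\lambda(h_{1})\lambda(h_{2})$, and for discrete $H$ the compact-open topology on $\widehat H$ coincides with the product topology; by Tychonoff $\mathbb{T}^{H}$ is compact, so $\widehat{H}$ is compact. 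Applying this with $H=\widehat{G}$ gives that $\widehat{\widehat{G}}$ is compact, and then the isomorphism $G\simeq\widehat{\widehat{G}}$ from the first part transports compactness back to $G$.

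The main obstacle is really just deciding how much of Pontryagin--van Kampen duality to import as a black box versus prove; I would import the isomorphism $\widehat{\widehat G}\simeq G$ wholesale from \cite{Ru90} and give full, self-contained arguments only for the dichotomy, since those arguments are short, use nothing beyond Haar measure and Tychonoff, and illustrate exactly the time/frequency interchange (compact configuration space $\leftrightarrow$ discrete spectrum) that motivates the $G=\mathbb{T}$ versus $G=\mathbb{R}$ cases treated elsewhere in the Memoir.
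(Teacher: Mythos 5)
Your proposal is correct. Note that the paper itself offers no proof of this theorem at all --- it is stated as a classical result with a citation to \cite{Ru90} --- so there is nothing to compare against; like the paper, you import the hard analytic core (that the evaluation map $\alpha\colon G\to\widehat{\widehat{G}}$ is a topological isomorphism) as a black box from \cite{Ru90}. The part you do prove, the compact/discrete dichotomy, is the standard argument and is sound: the orthogonality relation $\int_{G}\lambda(x)\,dx=0$ for $\lambda\neq\mathbf{1}$ via translation invariance, the openness of $\{\lambda:\Re\int_{G}\lambda>1/2\}$ in the compact-open topology (which does use compactness of $G$, as you note), and for the converse the realization of the dual of a discrete group as a closed subgroup of the Tychonoff-compact product $\mathbb{T}^{H}$, transported back through $G\simeq\widehat{\widehat{G}}$. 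The one place to be slightly careful is your remark that the compact-open and product topologies agree on $\widehat{H}$ for discrete $H$; this holds because compact subsets of a discrete group are finite, and you correctly flag it. Your closing observation that the dichotomy is exactly the time/frequency interchange underlying the $\mathbb{T}$ versus $\mathbb{R}$ cases is consistent with how the theorem is used in the Memoir.
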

Let $\phi\neq\Omega\subset G$ be an open connected subset, and let
$F:\Omega-\Omega\rightarrow\mathbb{C}$ be a fixed continuous positive
definite\index{positive definite} (p.d.) function. We choose the
normalization $F\left(0\right)=1$. 

Given $F$, we now introduce the corresponding reproducing kernel
Hilbert space, \index{RKHS}(RKHS) for short:
\begin{lem}
\label{lem:lcg-F_varphi}For $\varphi\in C_{c}\left(\Omega\right)$,
set 
\begin{equation}
F_{\varphi}\left(\cdot\right)=\int_{\Omega}\varphi\left(y\right)F\left(\cdot-y\right)dy,\label{eq:lcg-1}
\end{equation}
then $\mathscr{H}_{F}$ is the Hilbert completion of $\left\{ F_{\varphi}\:\big|\:\varphi\in C_{c}\left(\Omega\right)\right\} $
in the inner product:
\begin{equation}
\left\langle F_{\varphi},F_{\psi}\right\rangle _{\mathscr{H}_{F}}=\int_{\Omega}\int_{\Omega}\overline{\varphi\left(x\right)}\psi\left(y\right)F\left(x-y\right)dxdy.\label{eq:lcg-2}
\end{equation}
Here $C_{c}\left(\Omega\right):=$ all continuous compactly supported
functions in $\Omega$.
\end{lem}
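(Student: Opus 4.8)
This is the locally compact abelian analogue of Lemma~\ref{lem:RKHS-def-by-integral}, and the plan is to transcribe that proof, using approximate identities in $C_c(G)$ in place of the explicit mollifiers $\varphi_{n,x}$ of Lemma~\ref{lem:dense}. Since $F$ is positive definite on $\Omega$ in the sense of Definition~\ref{def:pdf}, the kernel $K(x,y):=F(x-y)$ is a positive definite kernel on $\Omega\times\Omega$, so by the general theory of Aronszajn \cite{Aro50} there is an associated RKHS $\mathscr{H}_K$ of functions on $\Omega$, with kernel vectors $K_y:=F(\cdot-y)$ satisfying $\langle K_x,K_y\rangle_{\mathscr{H}_K}=F(x-y)$ and the reproducing identity $\langle K_x,\xi\rangle_{\mathscr{H}_K}=\xi(x)$ for all $\xi\in\mathscr{H}_K$, and with $\mathrm{span}\{K_y:y\in\Omega\}$ dense in $\mathscr{H}_K$. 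The lemma amounts to the assertion that $\varphi\mapsto F_\varphi$ realizes this $\mathscr{H}_K$ as the Hilbert completion of $\{F_\varphi:\varphi\in C_c(\Omega)\}$ under (\ref{eq:lcg-2}); so the two things to show are that each $F_\varphi$ lies in $\mathscr{H}_K$ with the inner products prescribed by (\ref{eq:lcg-2}), and that the $F_\varphi$ are dense in $\mathscr{H}_K$.

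The first step: $y\mapsto K_y$ is continuous from $\Omega$ into $\mathscr{H}_K$ in norm, since $\|K_y-K_{y'}\|_{\mathscr{H}_K}^2=2F(0)-2\,\mathrm{Re}\,F(y-y')\to 0$ as $y'\to y$, by continuity of $F$. Hence for $\varphi\in C_c(\Omega)$ the $\mathscr{H}_K$-valued (Bochner) integral $V_\varphi:=\int_\Omega\varphi(y)K_y\,dy$ exists, and by the reproducing identity the function it represents is $x\mapsto\int_\Omega\varphi(y)\langle K_x,K_y\rangle_{\mathscr{H}_K}\,dy=\int_\Omega\varphi(y)F(x-y)\,dy=F_\varphi(x)$; thus $F_\varphi\in\mathscr{H}_K$. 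By sesquilinearity of the inner product, $\langle V_\varphi,V_\psi\rangle_{\mathscr{H}_K}=\iint\overline{\varphi(x)}\psi(y)\langle K_x,K_y\rangle_{\mathscr{H}_K}\,dx\,dy=\iint\overline{\varphi(x)}\psi(y)F(x-y)\,dx\,dy$, which is exactly the right-hand side of (\ref{eq:lcg-2}); in particular that expression is $\geq 0$ on the diagonal, so it is a positive semidefinite form and the completion in the statement makes sense, while $\varphi\mapsto V_\varphi$ descends to an isometry from that completion into $\mathscr{H}_K$.

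The second step (density): it suffices to approximate each $K_x$, $x\in\Omega$, by functions $F_\varphi$, since $\mathrm{span}\{K_x\}$ is dense in $\mathscr{H}_K$. Because $\Omega$ is open and $G$ is locally compact, for each small enough neighborhood $V$ of $0$ in $G$ one can pick $\varphi_V\in C_c(\Omega)$ with $\varphi_V\geq 0$, $\mathrm{supp}(\varphi_V)\subset x+V$, and $\int_G\varphi_V=1$ (Urysohn). Then, using $\|K_x\|_{\mathscr{H}_K}^2=F(0)$ and the reproducing identity,
\[
\|F_{\varphi_V}-K_x\|_{\mathscr{H}_K}^2=\iint\varphi_V(s)\varphi_V(t)F(s-t)\,ds\,dt-2\,\mathrm{Re}\!\int\varphi_V(s)F(s-x)\,ds+F(0),
\]
and as $V$ shrinks to $\{0\}$ each of the three terms tends to $F(0)$ by continuity of $F$ at $0$ (in the first term $s-t\to 0$, in the second $s-x\to 0$), so the left-hand side tends to $0$. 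Hence the image of the completion is dense in $\mathscr{H}_K$, the isometry is onto, and $\mathscr{H}_F$ as defined by the completion is precisely $\mathscr{H}_K$, the RKHS with kernel $F(x-y)$.

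The step I expect to be the main obstacle is the density argument: one must make sure the mollifiers $\varphi_V$ can genuinely be taken supported inside $\Omega$ uniformly near an arbitrary interior point of $\Omega$ — the only place the geometry of $\Omega$ near its boundary enters — and one must justify the interchanges of limit and integral (existence of the Bochner integral, convergence of the $F(s-t)$ integrals) through uniform continuity of $F$ on the relevant compact subsets of $\Omega-\Omega$. Everything else is routine bookkeeping parallel to the case $G=\mathbb{R}$ treated in Lemmas~\ref{lem:RKHS-def-by-integral} and~\ref{lem:dense}.
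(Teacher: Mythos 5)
Your argument is correct and is exactly the standard Aronszajn-type construction that the paper itself does not spell out: Lemma \ref{lem:lcg-F_varphi} is stated without proof, and the neighboring Lemma \ref{lem:lcg-bdd} simply defers to the basics of RKHS theory in \cite{Aro50}. Both halves of your proof check out — the Bochner-integral identification $F_\varphi \leftrightarrow V_\varphi=\int_\Omega\varphi(y)K_y\,dy$ gives (\ref{eq:lcg-2}) by sesquilinearity, and the approximate-identity computation $\left\Vert F_{\varphi_V}-K_x\right\Vert^2\to F(0)-2F(0)+F(0)=0$ (valid since $x\in\Omega$ is interior, so $\mathrm{supp}(\varphi_V)\subset x+V\subset\Omega$ for small $V$) gives density, which is the group analogue of Lemma \ref{lem:dense}.
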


\begin{lem}
\label{lem:lcg-bdd}The Hilbert space $\mathscr{H}_{F}$ is also a
Hilbert space of continuous functions on $\Omega$ as follows:

If $\xi:\Omega\rightarrow\mathbb{C}$ is a fixed continuous function,
then $\xi\in\mathscr{H}_{F}$ if and only if $\exists$ $K=K_{\xi}<\infty$
such that
\begin{equation}
\left|\int_{\Omega}\overline{\xi\left(x\right)}\varphi\left(x\right)dx\right|^{2}\leq K\int_{\Omega}\int_{\Omega}\overline{\varphi\left(y_{1}\right)}\varphi\left(y_{2}\right)F\left(y_{1}-y_{2}\right)dy_{1}dy_{2}.\label{eq:lcg-3}
\end{equation}
When (\ref{eq:lcg-3}) holds, then 
\[
\left\langle \xi,F_{\varphi}\right\rangle _{\mathscr{H}_{F}}=\int_{\Omega}\overline{\xi\left(x\right)}\varphi\left(x\right)dx,\;\mbox{for all }\varphi\in C_{c}\left(\Omega\right).
\]
\end{lem}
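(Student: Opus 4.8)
The plan is to establish this as a routine consequence of the general RKHS characterization of membership, adapted from the discrete form already recorded in Theorem \ref{thm:HF} to the integrated form suited to locally compact groups. First I would observe that the forward implication is immediate: if $\xi \in \mathscr{H}_{F}$, then by the reproducing property (which holds here exactly as in the $G = \mathbb{R}$ case, since $\mathscr{H}_{F}$ is built from the same completion procedure in Lemma \ref{lem:lcg-F_varphi}), one has
\[
\left\langle \xi, F_{\varphi}\right\rangle _{\mathscr{H}_{F}} = \int_{\Omega} \overline{\xi(x)}\,\varphi(x)\,dx
\]
for all $\varphi \in C_{c}(\Omega)$; this identity itself follows by writing $F_{\varphi} = \int_{\Omega} \varphi(y)\, F(\cdot - y)\, dy$ as a (Bochner/Riemann) integral of the reproducing vectors $F(\cdot - y)$ in $\mathscr{H}_{F}$ and pulling the inner product inside. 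Applying Cauchy–Schwarz then gives
\[
\left|\int_{\Omega} \overline{\xi(x)}\,\varphi(x)\,dx\right|^{2} \leq \left\Vert \xi\right\Vert _{\mathscr{H}_{F}}^{2}\,\left\Vert F_{\varphi}\right\Vert _{\mathscr{H}_{F}}^{2} = \left\Vert \xi\right\Vert _{\mathscr{H}_{F}}^{2} \int_{\Omega}\int_{\Omega} \overline{\varphi(y_{1})}\varphi(y_{2}) F(y_{1}-y_{2})\, dy_{1} dy_{2},
\]
using (\ref{eq:lcg-2}); so (\ref{eq:lcg-3}) holds with $K = \left\Vert \xi\right\Vert_{\mathscr{H}_{F}}^{2}$, and moreover the displayed inner-product formula is exactly the one claimed at the end of the statement.

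For the converse, suppose $\xi : \Omega \to \mathbb{C}$ is continuous and (\ref{eq:lcg-3}) holds with constant $K$. The idea is to define a bounded linear functional on the dense subspace $\{F_{\varphi} : \varphi \in C_{c}(\Omega)\}$ of $\mathscr{H}_{F}$ by $L(F_{\varphi}) := \int_{\Omega} \overline{\xi(x)}\varphi(x)\, dx$. Here I must first check this is \emph{well defined}: if $F_{\varphi} = 0$ in $\mathscr{H}_{F}$, then $\left\Vert F_{\varphi}\right\Vert_{\mathscr{H}_{F}}^{2} = 0$, so the right side of (\ref{eq:lcg-3}) vanishes, forcing $\int_{\Omega} \overline{\xi(x)}\varphi(x)\, dx = 0$; hence $L$ depends only on the vector $F_{\varphi}$, not on the representative $\varphi$. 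Inequality (\ref{eq:lcg-3}) says precisely that $|L(F_{\varphi})|^{2} \leq K \left\Vert F_{\varphi}\right\Vert_{\mathscr{H}_{F}}^{2}$, so $L$ is bounded with norm at most $\sqrt{K}$ on a dense subspace, hence extends uniquely to a bounded conjugate-linear... — more precisely, $L$ is linear in $\varphi$, and since the inner product is conjugate-linear in the first slot, I would instead consider the functional $\varphi \mapsto \overline{L(F_{\varphi})}$ or simply invoke the Riesz representation theorem in the appropriate form: there exists a unique $\eta \in \mathscr{H}_{F}$ with $\left\langle \eta, F_{\varphi}\right\rangle_{\mathscr{H}_{F}} = \int_{\Omega} \overline{\xi(x)}\varphi(x)\, dx$ for all $\varphi \in C_{c}(\Omega)$.

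It remains to identify $\eta$ with $\xi$ as functions on $\Omega$. Since $\eta \in \mathscr{H}_{F}$, it is a continuous function on $\Omega$, and by the reproducing property $\eta(x) = \left\langle F(\cdot - x), \eta\right\rangle_{\mathscr{H}_{F}} = \overline{\left\langle \eta, F(\cdot-x)\right\rangle_{\mathscr{H}_{F}}}$. To evaluate this I would approximate $F(\cdot - x)$ by $F_{\varphi_{n}}$ for an approximate identity $\varphi_{n}$ concentrating at $x$ (the analogue of Lemma \ref{lem:dense}, valid in the locally compact group setting), so that $F_{\varphi_{n}} \to F(\cdot - x)$ in $\mathscr{H}_{F}$; then
\[
\overline{\eta(x)} = \left\langle \eta, F(\cdot - x)\right\rangle_{\mathscr{H}_{F}} = \lim_{n} \left\langle \eta, F_{\varphi_{n}}\right\rangle_{\mathscr{H}_{F}} = \lim_{n} \int_{\Omega} \overline{\xi(y)}\varphi_{n}(y)\, dy = \overline{\xi(x)},
\]
using continuity of $\xi$ at $x$ in the last step. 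Hence $\eta = \xi$, so $\xi \in \mathscr{H}_{F}$, and the inner-product formula holds with $\xi$ in place of $\eta$. The main obstacle I anticipate is the well-definedness / density bookkeeping: one needs the existence of a suitable approximate identity supported in $\Omega$ for a general locally compact group (a minor point, since $\Omega$ is open and we may localize near each $x \in \Omega$ using a compactly supported bump in a neighborhood contained in $\Omega$), and one must be careful that $C_{c}(\Omega)$ — rather than $C_{c}^{\infty}$ — still suffices to span a dense subspace of $\mathscr{H}_{F}$, which is exactly the content of Lemma \ref{lem:lcg-F_varphi}.
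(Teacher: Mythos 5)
Your proposal is correct, and it is exactly the standard RKHS membership argument that the paper's own proof merely cites ("we refer to the basics on the theory of RKHSs; e.g., Aronszajn"): the forward direction by the reproducing property plus Cauchy--Schwarz with $K=\left\Vert \xi\right\Vert _{\mathscr{H}_{F}}^{2}$, and the converse by Riesz representation on the dense span of $\left\{ F_{\varphi}\right\} $ followed by an approximate-identity argument identifying the Riesz representative with $\xi$ pointwise. Since the paper supplies no details, your write-up simply fills in the omitted standard proof; no gaps.
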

\begin{proof}
We refer to the basics on the theory of RKHSs; e.g., \cite{Aro50}.\end{proof}
\begin{lem}
\label{lem:lcg-Bochner}There is a bijective correspondence between
all continuous p.d. extensions $\tilde{F}$ to $G$ of the given p.d.
function $F$ on $\Omega-\Omega$, on the one hand; and all Borel
probability measures \index{measure!probability} $\mu$ on $\widehat{G}$,
on the other, i.e., all $\mu\in\mathscr{M}(\widehat{G})$ s.t.
\begin{equation}
F\left(x\right)=\widehat{\mu}\left(x\right),\:\forall x\in\Omega-\Omega\label{eq:lcg-bochner}
\end{equation}
where
\[
\widehat{\mu}\left(x\right)=\int_{\widehat{G}}\lambda\left(x\right)d\mu\left(\lambda\right)=\int_{\widehat{G}}\left\langle \lambda,x\right\rangle d\mu\left(\lambda\right),\:\forall x\in G.
\]
\end{lem}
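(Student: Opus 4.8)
The plan is to prove Lemma \ref{lem:lcg-Bochner} by combining the enlargement-of-Hilbert-space result (Theorem \ref{thm:pd-extension-bigger-H-space}, which in the abelian setting yields unitary representations of $G$ extending $F$) with Pontryagin duality (Theorem \ref{thm:lcg-dualG}) and the standard correspondence between strongly continuous unitary representations of a locally compact abelian group $G$ and projection-valued measures on $\widehat{G}$ (the abstract Stone--SNAG theorem). The two directions require different amounts of work; the nontrivial content is really the passage from a local p.d. function to \emph{some} global p.d. extension, which we obtain from the $\Downarrow$ part of Theorem \ref{thm:pd-extension-bigger-H-space} adapted to the general locally compact abelian $G$, together with the observation that $\mathscr{H}_F$ is always isometrically contained in a dilation Hilbert space carrying such a representation.

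First I would settle the easy direction: given $\mu\in\mathscr{M}(\widehat G)$ with $\widehat\mu|_{\Omega-\Omega}=F$, set $\widetilde F:=\widehat\mu$ on all of $G$. Then $\widetilde F$ is continuous (dominated convergence, since $\lambda\mapsto\lambda(x)$ is continuous and $\mu$ is finite) and positive definite on $G$ by the familiar computation
\[
\sum_i\sum_j\overline{c_i}c_j\widetilde F(x_j-x_i)=\int_{\widehat G}\Bigl|\sum_j c_j\lambda(x_j)\Bigr|^2 d\mu(\lambda)\ge 0,
\]
using $\lambda(x_j-x_i)=\lambda(x_j)\overline{\lambda(x_i)}$. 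So $\widetilde F$ is a continuous p.d. extension of $F$. This also shows the map $\mu\mapsto\widehat\mu|_{\Omega-\Omega}$ lands where claimed.

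Next, injectivity of $\mu\mapsto\widehat\mu$: this is the uniqueness half of Bochner's theorem for $\widehat G$ — two finite Borel measures on $\widehat G$ with the same Fourier transform on all of $G$ coincide, because the characters $\{\lambda\mapsto\lambda(x):x\in G\}$ separate points of $\widehat G$ (Pontryagin) and their linear span is a point-separating, conjugation-closed, unital subalgebra of $C_0(\widehat G)$ (or of $C(\widehat G)$ when $\widehat G$ is compact), hence dense by Stone--Weierstrass, so it is determining for finite measures. Note the extension $\widetilde F$ is determined on all of $G$ by its values $\widehat\mu(x)$, so matching on $\Omega-\Omega$ does not yet give injectivity — injectivity is a statement about the map into extensions on $G$, which is exactly what Bochner uniqueness provides.

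For the surjectivity direction — every continuous p.d. extension $\widetilde F$ to $G$ arises as $\widehat\mu$ for some $\mu\in\mathscr{M}(\widehat G)$ — I would invoke classical Bochner--Weil for the locally compact abelian group $G$: a continuous p.d. function on $G$ normalized by $\widetilde F(0)=1$ is the Fourier transform of a Borel probability measure on $\widehat G$. Restricting that identity to $\Omega-\Omega$ recovers $F$, so the correspondence is onto. Finally I would remark that the link to the operator picture is consistent: by Theorem \ref{thm:pd-extension-bigger-H-space} (and its $G$-analogue via $Ext(F)$ in Section \ref{sub:ExtSpace}) each $\widetilde F$ comes from a unitary representation $(U,\mathscr K,k_0)$ with $\widetilde F(x)=\langle k_0,U(x)k_0\rangle$; Stone--SNAG gives $U(x)=\int_{\widehat G}\lambda(x)P_U(d\lambda)$, and $d\mu(\lambda):=\|P_U(d\lambda)k_0\|^2_{\mathscr K}$ is precisely the measure with $\widehat\mu=\widetilde F$, so the two constructions of $\mu$ agree. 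The main obstacle is not any single estimate but making sure the Bochner--Weil theorem is quoted in the exact normalized form needed and that ``probability measure'' (rather than merely finite) is justified — that comes from the normalization $F(0)=1$, i.e. $\mu(\widehat G)=\widetilde F(0)=1$.
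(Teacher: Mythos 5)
Your proof is correct and takes essentially the same route as the paper, whose proof of this lemma is simply a direct appeal to the Bochner--Weil theorem for locally compact abelian groups: existence of $\mu$ for a given continuous p.d.\ extension gives surjectivity, uniqueness of the Fourier--Stieltjes transform gives injectivity, and the normalization $F(0)=1$ makes $\mu$ a probability measure. One small caveat: the characters $\lambda\mapsto\left\langle \lambda,x\right\rangle $ do not lie in $C_{0}(\widehat{G})$ when $\widehat{G}$ is non-compact, so your Stone--Weierstrass justification of the uniqueness step should be replaced by the standard uniqueness theorem for Fourier--Stieltjes transforms (or run on the Bohr compactification); this does not affect the argument.
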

\begin{proof}
This is an immediate application of Bochner's characterization of
the continuous positive definite functions on locally compact abelian
groups.\end{proof}
\begin{defn}
Set 
\[
Ext\left(F\right)=\left\{ \mu\in\mathscr{M}(\widehat{G})\:\Big|\: s.t.\:(\ref{eq:lcg-bochner})\mbox{ holds}\right\} .
\]
\end{defn}
\begin{rem}
There are examples where $Ext\left(F\right)=\phi$. See \subref{G=00003DT}
where $G=\mathbb{T}$.\end{rem}
\begin{lem}
$Ext\left(F\right)$ is weak $\ast$-compact and convex.\end{lem}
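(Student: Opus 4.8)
The plan is to show the two properties separately, both reducing to basic facts about weak-$\ast$ topology on bounded sets of measures. First I would recall that by Lemma~\ref{lem:lcg-Bochner}, $Ext\left(F\right)$ sits inside $\mathscr{M}(\widehat{G})$, the set of Borel probability measures on the compact-or-locally-compact dual group $\widehat{G}$; since we have normalized $F\left(0\right)=1$, every $\mu\in Ext\left(F\right)$ satisfies $\mu(\widehat{G})=\widehat{\mu}(0)=F(0)=1$, so $Ext\left(F\right)$ is a subset of the set $\mathscr{P}(\widehat{G})$ of probability measures, which is weak-$\ast$ compact by (a suitable form of) Alaoglu's theorem together with the fact that the constant function $1$ belongs to the relevant test-function space — here one should be a little careful when $\widehat{G}$ is non-compact, and invoke that $\mathscr{P}(\widehat{G})$ is weak-$\ast$ closed in the unit ball of $C_0(\widehat{G})^\ast$ precisely because total mass is preserved in the limit (tightness is automatic since all masses equal $1$ and we are testing against $C_0$; alternatively work inside the one-point compactification).

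Next I would prove convexity: if $\mu_1,\mu_2\in Ext\left(F\right)$ and $t\in[0,1]$, then $\mu_t:=t\mu_1+(1-t)\mu_2$ is again a Borel probability measure on $\widehat{G}$, and for every $x\in\Omega-\Omega$,
\[
\widehat{\mu_t}\left(x\right)=\int_{\widehat{G}}\lambda\left(x\right)d\mu_t\left(\lambda\right)=t\,\widehat{\mu_1}\left(x\right)+(1-t)\,\widehat{\mu_2}\left(x\right)=tF\left(x\right)+(1-t)F\left(x\right)=F\left(x\right),
\]
so $\mu_t\in Ext\left(F\right)$ by the defining condition (\ref{eq:lcg-bochner}).

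For weak-$\ast$ compactness, I would argue that $Ext\left(F\right)$ is a weak-$\ast$ closed subset of the (weak-$\ast$ compact) set $\mathscr{P}(\widehat{G})$. Closedness: suppose $\mu_\alpha\to\mu$ weak-$\ast$ with each $\mu_\alpha\in Ext\left(F\right)$. Fix $x\in\Omega-\Omega$. The function $\lambda\mapsto\lambda\left(x\right)=\left\langle\lambda,x\right\rangle$ is continuous and bounded on $\widehat{G}$ (indeed $|\lambda(x)|=1$), so it is a legitimate test function against which weak-$\ast$ convergence of probability measures passes to the limit (again using that masses are uniformly $1$, or working in $C_b(\widehat{G})$ with the narrow topology); hence $\widehat{\mu}\left(x\right)=\lim_\alpha\widehat{\mu_\alpha}\left(x\right)=F\left(x\right)$. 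Since $x$ was arbitrary in $\Omega-\Omega$, we get $\mu\in Ext\left(F\right)$. Combined with the previous paragraph this finishes the proof.

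The main obstacle is the point I flagged: when $\widehat{G}$ is \emph{non-compact}, the bare weak-$\ast$ topology on $\mathscr{M}(\widehat{G})\subset C_0(\widehat{G})^\ast$ does not keep probability measures inside the probability simplex (mass can escape to infinity), and the character functions $\lambda\mapsto\lambda(x)$ are not in $C_0$. The clean fix is to note that every $\mu\in Ext\left(F\right)$ has total mass exactly $F(0)=1$, so the relevant topology is the narrow (weak) topology induced by $C_b(\widehat{G})$, under which $\mathscr{P}(\widehat{G})$ is closed, and on which tightness — automatic here since all the masses coincide and one may test against an approximate identity, or equivalently embed $\widehat{G}$ in its one-point compactification — gives compactness; the constancy of the mass is exactly what rules out loss of mass in the limit. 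Once that topological point is set up correctly, both the convexity and the closedness computations are the routine ones displayed above.
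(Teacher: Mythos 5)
Your convexity argument is correct and is the routine one. The compactness half, however, contains a genuine gap at exactly the point you flagged and then tried to dismiss: it is \emph{not} true that ``tightness is automatic since all masses equal $1$,'' nor that ``constancy of the mass rules out loss of mass in the limit.'' The probability measures $\delta_{n}$ on $\widehat{G}=\mathbb{R}$ all have total mass $1$, yet $\delta_{n}\rightarrow0$ in the weak-$\ast$ topology of $C_{0}\left(\mathbb{R}\right)^{\ast}$; so $\mathscr{P}(\widehat{G})$ is not weak-$\ast$ closed in the unit ball when $\widehat{G}$ is non-compact, and a weak-$\ast$ limit of a net in $Ext\left(F\right)$ could a priori be a sub-probability measure, against which the characters $\lambda\mapsto\left\langle \lambda,x\right\rangle $ (which are bounded continuous but not in $C_{0}$) cannot be tested. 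The one-point compactification does not fix this by itself either: the limit may charge the point at infinity, and you must still rule that out.

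What actually forces uniform tightness of $Ext\left(F\right)$ is not the normalization $\mu(\widehat{G})=1$ but the constraint that \emph{all} Bochner transforms $\widehat{\mu}$, $\mu\in Ext\left(F\right)$, coincide with the fixed continuous function $F$ on a neighborhood of $e$ in $G$, with $F\left(e\right)=1$. For $G=\mathbb{R}$ this is the standard truncation inequality from the proof of L\'evy's continuity theorem,
\[
\mu\left(\left\{ \lambda:\left|\lambda\right|\geq2/\delta\right\} \right)\leq\frac{1}{\delta}\int_{-\delta}^{\delta}\left(1-\Re\,\widehat{\mu}\left(x\right)\right)dx,
\]
whose right-hand side is small uniformly over $\mu\in Ext\left(F\right)$ once $\delta$ is chosen small by continuity of $F$ at $0$; the LCA-group version (integrating $1-\Re\,\widehat{\mu}$ over a small compact neighborhood of $e$ in $G$) works the same way. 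With uniform tightness in hand, Prokhorov (or the one-point compactification plus the observation that no mass reaches infinity) gives narrow compactness of $Ext\left(F\right)$'s closure, and your closedness computation against the bounded continuous characters then legitimately passes to the limit. So the architecture of your proof is right, but the tightness step must be supplied by this equicontinuity-at-the-origin argument rather than by the (false) mass-preservation claim. The paper itself leaves the proof to the reader with a reference to Rudin, so there is no in-text argument to compare against beyond this.
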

\begin{proof}
Left to the reader; see e.g., \cite{Rud73}.\end{proof}
\begin{thm}
\label{thm:lcg-1}~
\begin{enumerate}
\item Let $F$ and $\mathscr{H}_{F}$ be as above; and let $\mu\in\mathscr{M}(\widehat{G})$;
then there is a positive Borel function $h$ on $\widehat{G}$ s.t.
$h^{-1}\in L^{\infty}(\widehat{G})$, and $hd\mu\in Ext\left(F\right)$,
if and only if $\exists K_{\mu}<\infty$ such that
\begin{equation}
\int_{\widehat{G}}\left|\widehat{\varphi}\left(\lambda\right)\right|^{2}d\mu\left(\lambda\right)\leq K_{\mu}\int_{\Omega}\int_{\Omega}\overline{\varphi\left(y_{1}\right)}\varphi\left(y_{2}\right)F\left(y_{1}-y_{2}\right)dy_{1}dy_{2}.\label{eq:lcg-4}
\end{equation}

\item Assume $\mu\in Ext\left(F\right)$, then 
\begin{equation}
\left(fd\mu\right)^{\vee}\in\mathscr{H}_{F},\:\forall f\in L^{2}(\widehat{G},\mu).\label{eq:lcg-5}
\end{equation}

\end{enumerate}
\end{thm}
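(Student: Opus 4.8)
The plan is to handle the two parts separately, using in both cases the characterization of $\mathscr{H}_{F}$ as a space of continuous functions on $\Omega$ given in Lemma \ref{lem:lcg-bdd}, together with Bochner's theorem via Lemma \ref{lem:lcg-Bochner}. The common mechanism is the Plancherel/Parseval identity: for $\mu\in Ext(F)$ one has, for every $\varphi\in C_{c}(\Omega)$,
\[
\left\Vert F_{\varphi}\right\Vert _{\mathscr{H}_{F}}^{2}
=\int_{\Omega}\int_{\Omega}\overline{\varphi(x)}\varphi(y)F(x-y)\,dx\,dy
=\int_{\widehat{G}}\left|\widehat{\varphi}(\lambda)\right|^{2}d\mu(\lambda),
\]
which follows by substituting $F(x-y)=\widehat{\mu}(x-y)=\int_{\widehat{G}}\lambda(x)\overline{\lambda(y)}\,d\mu(\lambda)$ and applying Fubini. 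So the quadratic form defining the $\mathscr{H}_{F}$-norm is literally the $L^{2}(\widehat{G},\mu)$-norm pulled back along $\varphi\mapsto\widehat{\varphi}$.

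For part (2), fix $\mu\in Ext(F)$ and $f\in L^{2}(\widehat{G},\mu)$, and set $\xi:=(fd\mu)^{\vee}$, i.e. $\xi(x)=\int_{\widehat{G}}\lambda(x)f(\lambda)\,d\mu(\lambda)$; this is a bounded continuous function on $G$, hence on $\Omega$. To show $\xi\in\mathscr{H}_{F}$ I would verify the estimate (\ref{eq:lcg-3}) of Lemma \ref{lem:lcg-bdd}. Compute, for $\varphi\in C_{c}(\Omega)$,
\[
\int_{\Omega}\overline{\xi(x)}\varphi(x)\,dx
=\int_{\widehat{G}}\overline{f(\lambda)}\Bigl(\int_{\Omega}\lambda(-x)\varphi(x)\,dx\Bigr)d\mu(\lambda)
=\int_{\widehat{G}}\overline{f(\lambda)}\,\widehat{\varphi}(\lambda)\,d\mu(\lambda),
\]
after interchanging the order of integration (legitimate since $\varphi$ has compact support and $\mu$ is finite). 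By Cauchy--Schwarz in $L^{2}(\widehat{G},\mu)$ this is bounded by $\|f\|_{L^{2}(\mu)}\bigl(\int_{\widehat{G}}|\widehat{\varphi}|^{2}d\mu\bigr)^{1/2}$, and the right-hand integral equals $\|F_{\varphi}\|_{\mathscr{H}_{F}}^{2}$ by the Parseval identity above. Squaring gives (\ref{eq:lcg-3}) with $K_{\xi}=\|f\|_{L^{2}(\mu)}^{2}$, so $\xi\in\mathscr{H}_{F}$ and in fact $\langle \xi,F_{\varphi}\rangle_{\mathscr{H}_{F}}=\int_{\widehat{G}}\overline{f}\,\widehat{\varphi}\,d\mu$.

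For part (1), the ``only if'' direction is the easy one: if $hd\mu\in Ext(F)$ with $h^{-1}\in L^{\infty}(\widehat{G})$, then applying the Parseval identity to the measure $hd\mu$ gives $\int|\widehat{\varphi}|^{2}h\,d\mu=\|F_{\varphi}\|_{\mathscr{H}_{F}}^{2}$, whence $\int|\widehat{\varphi}|^{2}d\mu\le \|h^{-1}\|_{\infty}\int|\widehat{\varphi}|^{2}h\,d\mu=\|h^{-1}\|_{\infty}\|F_{\varphi}\|_{\mathscr{H}_{F}}^{2}$, i.e. (\ref{eq:lcg-4}) with $K_{\mu}=\|h^{-1}\|_{\infty}$. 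For the ``if'' direction, assume (\ref{eq:lcg-4}). The idea is that (\ref{eq:lcg-4}) says the map $F_{\varphi}\mapsto\widehat{\varphi}\in L^{2}(\widehat{G},\mu)$ is bounded, hence (by density of $\{F_{\varphi}\}$ in $\mathscr{H}_{F}$, an analogue of Lemma \ref{lem:dense}) extends to a bounded operator $T:\mathscr{H}_{F}\to L^{2}(\widehat{G},\mu)$; compose with the embedding $\mathscr{H}_{F}\hookrightarrow\mathscr{H}_{F_{A}}\simeq L^{2}(\widehat{G},\mu_{0})$ for any chosen $\mu_{0}\in Ext(F)$ (nonempty is not automatic, so one argues on the subspace actually generated, or uses that (\ref{eq:lcg-4}) forces $\mu$ to be ``sandwiched'' below an extension), and the resulting comparison of the $\mu$-mass of $|\widehat{\varphi}|^{2}$ with the $\mu_{0}$-mass produces, by a Radon--Nikodym argument exactly as in Lemma \ref{lem:li-meas}, a density relating $\mu$ to an element of $Ext(F)$; the function $h$ is the reciprocal of that density, and $h^{-1}\in L^{\infty}$ records the boundedness constant $K_{\mu}$. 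The main obstacle is this ``if'' direction of part (1): one must manufacture the extension $hd\mu\in Ext(F)$ from the single inequality (\ref{eq:lcg-4}), which requires (a) knowing $Ext(F)\neq\emptyset$ or else working intrinsically with the closure of $\{\widehat{\varphi}\}$ in $L^{2}(\mu)$ and its orthogonal complement, and (b) pushing the density $\{\widehat{\varphi}:\varphi\in C_{c}(\Omega)\}$ through the Fourier transform to $L^{1}(\widehat{G},\mu)$-density, the same technical point invoked in the proof of Lemma \ref{lem:li-meas}; once that density is in hand, the Radon--Nikodym extraction of $h$ and the verification that $hd\mu$ Bochner-transforms back to $F$ on $\Omega-\Omega$ are routine.
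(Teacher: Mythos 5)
Your proof of part (2) is correct and is essentially the computation the paper itself performs (it appears inside the proof of Corollary \ref{cor:lcg-isom}): the Parseval identity $\left\Vert F_{\varphi}\right\Vert _{\mathscr{H}_{F}}^{2}=\int_{\widehat{G}}|\widehat{\varphi}|^{2}d\mu$ for $\mu\in Ext(F)$, the Fubini step giving $\int_{\Omega}\overline{(fd\mu)^{\vee}(x)}\varphi(x)dx=\int_{\widehat{G}}\overline{f(\lambda)}\widehat{\varphi}(\lambda)d\mu(\lambda)$, Cauchy--Schwarz, and then Lemma \ref{lem:lcg-bdd}. The same is true of your ``only if'' half of part (1). (The paper's own proof of this theorem is a one-line appeal to Lemma \ref{lem:lcg-bdd}, with ``the remaining computations left to the reader,'' so there is little else to compare against.)

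The genuine gap is the ``if'' half of part (1), and your sketch does not close it. Follow your own route: pick $\mu_{0}\in Ext(F)$ (already an unresolved existence issue, as you note), deduce from (\ref{eq:lcg-4}) that $\int|\widehat{\varphi}|^{2}d\mu\leq K_{\mu}\int|\widehat{\varphi}|^{2}d\mu_{0}$, and extract, as in Lemma \ref{lem:li-meas}, a density $g=\frac{d\mu}{d\mu_{0}}$ with $0\leq g\leq K_{\mu}$. Setting $h=1/g$ then yields $h\,d\mu=\chi_{\{g>0\}}\,d\mu_{0}$, i.e.\ only the restriction of $\mu_{0}$ to the set where $g>0$. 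Nothing forces $g>0$ to hold $\mu_{0}$-a.e., and when it fails this restricted measure does not Bochner-transform back to $F$, so $h\,d\mu\notin Ext(F)$. A concrete instance: let $F$ be the restriction to a finite interval of $\tfrac{1}{2}\bigl(1+e^{-x^{2}/2}\bigr)$, so that $\mu_{0}=\tfrac{1}{2}\left(\delta_{0}+\nu\right)\in Ext(F)$ with $\nu$ Gaussian, and $\mathbf{1}=e_{0}\in\mathscr{H}_{F}$. Then $\mu=\delta_{0}$ satisfies (\ref{eq:lcg-4}) with $K_{\mu}=\left\Vert \mathbf{1}\right\Vert _{\mathscr{H}_{F}}^{2}$, since $\widehat{\varphi}(0)=\left\langle \mathbf{1},F_{\varphi}\right\rangle _{\mathscr{H}_{F}}$; yet $h\,d\mu$ is a point mass for every $h$, and no point mass extends the non-constant $F$. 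So the implication cannot be manufactured by the Radon--Nikodym argument alone: one needs in addition a lower bound $\int|\widehat{\varphi}|^{2}d\mu\geq k\left\Vert F_{\varphi}\right\Vert _{\mathscr{H}_{F}}^{2}$ (which forces $g$ to be bounded below), or some equivalent extra hypothesis. The real obstruction in your sketch is not the $L^{1}(\mu)$-density of $\{\widehat{\psi}\}$ that you flag, but the possible vanishing of $d\mu/d\mu_{0}$.
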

\begin{proof}
The assertion in (\ref{eq:lcg-4}) is immediate from Lemma \ref{lem:lcg-bdd}.

Our conventions for the two transforms used in (\ref{eq:lcg-4}) and
(\ref{eq:lcg-5}) are as follows:
\begin{equation}
\widehat{\varphi}\left(\lambda\right)=\int_{G}\overline{\left\langle \lambda,x\right\rangle }\varphi\left(x\right)dx;\label{eq:lcg-6}
\end{equation}
if $\varphi\in C_{c}\left(\Omega\right)$, of course $\widehat{\varphi}\left(\lambda\right)=\int_{G}\overline{\left\langle \lambda,x\right\rangle }\varphi\left(x\right)dx$.

The transform in (\ref{eq:lcg-5}) is:
\begin{equation}
\left(fd\mu\right)^{\vee}=\int_{\widehat{G}}\left\langle \lambda,x\right\rangle f\left(\lambda\right)d\mu\left(\lambda\right).\label{eq:lcg-7}
\end{equation}
The notation $\left(fd\mu\right)^{\vee}$ may be more logical. 

The remaining computations are left to the reader.\end{proof}
\begin{cor}
\label{cor:lcg-isom}~
\begin{enumerate}
\item \label{enu:1}Let $F$ be as above; then $\mu\in Ext\left(F\right)$
iff the following operator
\[
T\left(F_{\varphi}\right)=\widehat{\varphi},\:\varphi\in C_{c}\left(\Omega\right)
\]
is well-defined on $\mathscr{H}_{F}$, and bounded\index{operator!bounded}
as follows: $T:\mathscr{H}_{F}\rightarrow L^{2}(\widehat{G},\mu)$.
\item \label{enu:2}In this case, the adjoint operator $T^{*}:L^{2}(\widehat{G},\mu)\rightarrow\mathscr{H}_{F}$
is given by
\begin{equation}
T^{*}\left(f\right)=\left(fd\mu\right)^{\vee},\:\forall f\in L^{2}(\widehat{G},\mu).\label{eq:lcg-8}
\end{equation}

\end{enumerate}
\end{cor}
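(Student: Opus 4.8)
The plan is to obtain both assertions as formal consequences of Theorem~\ref{thm:lcg-1} and Lemma~\ref{lem:lcg-bdd}; the one computation that does real work is a single Fubini interchange. For $\varphi\in C_{c}(\Omega)$, expand $|\widehat{\varphi}(\lambda)|^{2}=\widehat{\varphi}(\lambda)\,\overline{\widehat{\varphi}(\lambda)}$ using the convention (\ref{eq:lcg-6}), note $\overline{\langle\lambda,y\rangle}\,\langle\lambda,x\rangle=\langle\lambda,x-y\rangle$, and integrate $d\mu$; since $\mu$ is finite and $\varphi$ has compact support the interchange is legitimate and yields
\[
\int_{\widehat{G}}\bigl|\widehat{\varphi}(\lambda)\bigr|^{2}\,d\mu(\lambda)=\int_{\Omega}\int_{\Omega}\overline{\varphi(x)}\,\varphi(y)\,\widehat{\mu}(x-y)\,dx\,dy .
\]
Everything then reduces to comparing $\widehat{\mu}$ with $F$ on $\Omega-\Omega$, which is exactly the Bochner dictionary of Lemma~\ref{lem:lcg-Bochner}.

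For (\ref{enu:1}): if $\mu\in Ext(F)$, then $\widehat{\mu}(x-y)=F(x-y)$ for all $x,y\in\Omega$, so the right-hand side of the display collapses to $\int_{\Omega}\int_{\Omega}\overline{\varphi(x)}\varphi(y)F(x-y)\,dx\,dy=\|F_{\varphi}\|_{\mathscr{H}_{F}}^{2}$ by (\ref{eq:lcg-2}); hence $\|\widehat{\varphi}\|_{L^{2}(\mu)}=\|F_{\varphi}\|_{\mathscr{H}_{F}}$ for every $\varphi\in C_{c}(\Omega)$. In particular $F_{\varphi}=0$ in $\mathscr{H}_{F}$ forces $\widehat{\varphi}=0$ in $L^{2}(\mu)$, so $T$ is well defined on the dense subspace $\{F_{\varphi}:\varphi\in C_{c}(\Omega)\}$ (dense by Lemma~\ref{lem:lcg-F_varphi}) and is isometric there, whence it extends to an isometry $T:\mathscr{H}_{F}\to L^{2}(\widehat{G},\mu)$, in particular bounded. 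Conversely, if $T$ is well defined and bounded, then the display gives $\int_{\widehat{G}}|\widehat{\varphi}|^{2}\,d\mu\le\|T\|^{2}\|F_{\varphi}\|_{\mathscr{H}_{F}}^{2}$, i.e.\ inequality (\ref{eq:lcg-4}) holds with $K_{\mu}=\|T\|^{2}$, and Theorem~\ref{thm:lcg-1}(1) then yields $\mu\in Ext(F)$. (The delicate point is that boundedness of $T$ by itself only gives the domination $\widehat{\mu}\ll F$ on $\Omega-\Omega$; promoting this to the exact equality $\widehat{\mu}=F$ there — equivalently, to isometry of $T$ — is precisely what Theorem~\ref{thm:lcg-1}(1) extracts.)

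For (\ref{enu:2}): fix $\varphi\in C_{c}(\Omega)$ and $f\in L^{2}(\widehat{G},\mu)$. Writing $\overline{\widehat{\varphi}(\lambda)}=\int_{\Omega}\langle\lambda,x\rangle\,\overline{\varphi(x)}\,dx$ from (\ref{eq:lcg-6}) and interchanging integrals (again licit: $\mu$ finite, $\varphi$ compactly supported), we get
\[
\langle TF_{\varphi},f\rangle_{L^{2}(\mu)}=\int_{\widehat{G}}\overline{\widehat{\varphi}(\lambda)}\,f(\lambda)\,d\mu(\lambda)=\int_{\Omega}\overline{\varphi(x)}\Bigl(\int_{\widehat{G}}\langle\lambda,x\rangle f(\lambda)\,d\mu(\lambda)\Bigr)dx=\int_{\Omega}\overline{\varphi(x)}\,(fd\mu)^{\vee}(x)\,dx
\]
by the definition (\ref{eq:lcg-7}) of the inverse transform. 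By Theorem~\ref{thm:lcg-1}(2), $(fd\mu)^{\vee}\in\mathscr{H}_{F}$ — this is exactly (\ref{eq:lcg-5}), and it is what makes $T^{*}$ a map \emph{into} $\mathscr{H}_{F}$ — so Lemma~\ref{lem:lcg-bdd} applied with $\xi=(fd\mu)^{\vee}$ gives $\langle F_{\varphi},(fd\mu)^{\vee}\rangle_{\mathscr{H}_{F}}=\int_{\Omega}\overline{\varphi(x)}\,(fd\mu)^{\vee}(x)\,dx$. Comparing the two displays, $\langle TF_{\varphi},f\rangle_{L^{2}(\mu)}=\langle F_{\varphi},(fd\mu)^{\vee}\rangle_{\mathscr{H}_{F}}$ for all $\varphi\in C_{c}(\Omega)$; since $\{F_{\varphi}\}$ is dense in $\mathscr{H}_{F}$ (Lemma~\ref{lem:lcg-F_varphi}), this identifies $T^{*}f=(fd\mu)^{\vee}$, i.e.\ (\ref{eq:lcg-8}).

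I expect no deep obstacle: the whole argument is Fubini plus the RKHS pairing of Lemma~\ref{lem:lcg-bdd} plus density of $\{F_{\varphi}\}$, all legitimate because $\mu$ is finite and the test functions are smooth with compact support. The only non-bookkeeping ingredients are the two facts imported from Theorem~\ref{thm:lcg-1}: that boundedness of $T$ forces $\mu\in Ext(F)$ (used in (\ref{enu:1})), and that $(fd\mu)^{\vee}\in\mathscr{H}_{F}$ for \emph{every} $f\in L^{2}(\widehat{G},\mu)$ (used in (\ref{enu:2})), the latter being exactly what guarantees that the formula $T^{*}f=(fd\mu)^{\vee}$ actually defines an operator into $\mathscr{H}_{F}$.
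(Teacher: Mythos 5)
Your forward direction of (\ref{enu:1}) and your proof of (\ref{enu:2}) are correct and essentially reproduce the paper's argument: the same Fubini interchange (the paper phrases it as $F_{\varphi}=(\widehat{\varphi}\,d\mu)^{\vee}$ on $\Omega$, you as the quadratic identity $\int_{\widehat{G}}|\widehat{\varphi}|^{2}d\mu=\iint\overline{\varphi(x)}\varphi(y)\widehat{\mu}(x-y)\,dx\,dy$), the membership $(f\,d\mu)^{\vee}\in\mathscr{H}_{F}$ (which the paper re-derives inside the proof via a Cauchy--Schwarz estimate and Lemma \ref{lem:lcg-bdd} rather than citing Theorem \ref{thm:lcg-1}, but it is the same fact), the pairing identity $\langle\widehat{\varphi},f\rangle_{L^{2}(\mu)}=\langle F_{\varphi},(f\,d\mu)^{\vee}\rangle_{\mathscr{H}_{F}}$, and density of $\{F_{\varphi}\}$. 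Your explicit remark that $T$ is isometric, not merely bounded, when $\mu\in Ext(F)$ is a sharpening the paper only makes in the theorem that follows the corollary.

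The gap is in your converse of (\ref{enu:1}). You assert that boundedness of $T$, i.e.\ estimate (\ref{eq:lcg-4}) with $K_{\mu}=\|T\|^{2}$, is upgraded to the exact equality $\widehat{\mu}=F$ on $\Omega-\Omega$ by Theorem \ref{thm:lcg-1}(1). That theorem does not say this: it says (\ref{eq:lcg-4}) is equivalent to the existence of a positive Borel $h$ with $h^{-1}\in L^{\infty}(\widehat{G})$ and $h\,d\mu\in Ext(F)$; it does not conclude $\mu\in Ext(F)$, and no argument can, since boundedness of $T$ survives replacing $\mu$ by $\mu+\nu$ for any finite $\nu\ll\mu$ with bounded density (already $2\mu$ gives a well-defined, bounded, non-isometric $T$ with $\widehat{2\mu}=2F\neq F$). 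So the equivalence in (\ref{enu:1}) only holds if ``bounded'' is read as ``isometric''; with that reading your own displayed identity closes the converse, since $\|\widehat{\varphi}\|_{L^{2}(\mu)}^{2}=\|F_{\varphi}\|_{\mathscr{H}_{F}}^{2}$ for all $\varphi\in C_{c}(\Omega)$ forces $\iint\overline{\varphi(x)}\psi(y)\bigl(\widehat{\mu}-F\bigr)(x-y)\,dx\,dy=0$ by polarization, and an approximate identity together with continuity of $\widehat{\mu}-F$ then gives $\widehat{\mu}=F$ on $\Omega-\Omega$. The paper's own proof is silent on this direction, so the defect lies partly in the statement itself; but the step as you wrote it does not follow from the cited theorem.
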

\begin{proof}
If $\mu\in Ext\left(F\right)$, then for all $\varphi\in C_{c}\left(\Omega\right)$,
and $x\in\Omega$, we have (see (\ref{eq:lcg-1}))
\begin{eqnarray*}
F_{\varphi}\left(x\right) & = & \int_{\Omega}\varphi\left(y\right)F\left(x-y\right)dy\\
 & = & \int_{\Omega}\varphi\left(y\right)\widehat{\mu}\left(x-y\right)dy\\
 & = & \int_{\Omega}\varphi\left(y\right)\left\langle \lambda,x-y\right\rangle d\mu\left(\lambda\right)dy\\
 & \overset{\small\mbox{Fubini}}{=} & \int_{\widehat{G}}\left\langle \lambda,x\right\rangle \widehat{\varphi}\left(\lambda\right)d\mu\left(\lambda\right).
\end{eqnarray*}

By Lemma \ref{lem:lcg-bdd}, we note that $\left(\widehat{\varphi}d\mu\right)^{\vee}\in\mathscr{H}_{F}$,
see (\ref{eq:lcg-7}). Hence $\exists K<\infty$ such that the estimate
(\ref{eq:lcg-4}) holds. To see that $T\left(F_{\varphi}\right)=\widehat{\varphi}$
is well-defined on $\mathscr{H}_{F}$, we must check the implication:
\[
\Bigl(F_{\varphi}=0\mbox{ in }\mathscr{H}_{F}\Bigr)\Longrightarrow\Bigl(\widehat{\varphi}=0\mbox{ in }L^{2}(\widehat{G},\mu)\Bigr)
\]
but this now follows from estimate (\ref{eq:lcg-4}).

Using the definition of the respective inner products in $\mathscr{H}_{F}$
and in $L^{2}(\widehat{G},\mu)$, we check directly that, if $\varphi\in C_{c}\left(\Omega\right)$,
and $f\in L^{2}(\widehat{G},\mu)$ then we have:
\begin{equation}
\left\langle \widehat{\varphi},f\right\rangle _{L^{2}\left(\mu\right)}=\left\langle F_{\varphi},\left(fd\mu\right)^{\vee}\right\rangle _{\mathscr{H}_{F}}.\label{eq:lcg-9}
\end{equation}

On the RHS in (\ref{eq:lcg-9}), we note that, when $\mu\in Ext\left(F\right)$,
then $\widehat{fd\mu}\in\mathscr{H}_{F}$. This last conclusion is
a consequence of Lemma \ref{lem:lcg-bdd}. Indeed, since $\mu$ is
finite, $L^{2}(\widehat{G},\mu)\subset L^{1}(\widehat{G},\mu)$, so
$\widehat{fd\mu}$ in (\ref{eq:lcg-7}) is continuous on $G$ by Riemann-Lebesgue;
and so is its restriction to $\Omega$. If $\mu$ is further assumed
absolutely continuous\index{absolutely continuous}, then $\widehat{fd\mu}\rightarrow0$
at $\infty$.

With a direct calculation, using the reproducing property in $\mathscr{H}_{F}$,
and Fubini's theorem, we check directly that the following estimate
holds:
\[
\left|\int_{\Omega}\overline{\varphi\left(x\right)}\left(fd\mu\right)^{\vee}\left(x\right)dx\right|^{2}\leq\left(\int_{\Omega}\int_{\Omega}\overline{\varphi\left(y_{1}\right)}\varphi\left(y_{2}\right)F\left(y_{1}-y_{2}\right)dy_{1}dy_{2}\right)\left\Vert f\right\Vert _{L^{2}\left(\mu\right)}^{2}
\]
and so Lemma \ref{lem:lcg-bdd} applies; we get $\left(fd\mu\right)^{\vee}\in\mathscr{H}_{F}$.

It remains to verify the formula (\ref{eq:lcg-9}) for all $\varphi\in C_{c}\left(\Omega\right)$
and all $f\in L^{2}(\widehat{G},\mu)$; but this now follows from
the reproducing property in $\mathscr{H}_{F}$, and Fubini. 

Once we have this, both assertions in (\ref{enu:1}) and (\ref{enu:2})
in the Corollary follow directly from the definition of the adjoint
operator $T^{*}$ with respect to the two Hilbert spaces in $\mathscr{H}_{F}\overset{T}{\longrightarrow}L^{2}(\widehat{G},\mu)$.
Indeed then (\ref{eq:lcg-8}) follows. 
\end{proof}
We recall a general result on continuity of positive definite (p.d.)
functions on any locally compact Lie group:
\begin{thm}
If $F$ is p.d. function on a locally compact group $G$, assumed
continuous only in a neighborhood of $e\in G$; then it is automatically
continuous everywhere on $G$.\end{thm}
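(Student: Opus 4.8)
The plan is to reduce the statement to a single Cauchy--Schwarz estimate for the positive definite kernel $(g,h)\mapsto F(g^{-1}h)$ on $G\times G$, after which continuity at the single point $e$ propagates to all of $G$ by translation. I would begin by recording the standard consequences of positive definiteness extracted from $1\times1$ and $2\times2$ principal submatrices: $F(e)\ge 0$, $F(g^{-1})=\overline{F(g)}$, and $|F(g)|\le F(e)$ for every $g\in G$. If $F(e)=0$ these force $F\equiv 0$, and there is nothing to prove, so we may assume $F(e)>0$.

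The heart of the argument is the inequality
\[
\bigl|F(ga)-F(g)\bigr|^{2}\ \le\ 2F(e)\bigl(F(e)-\mathrm{Re}\,F(a)\bigr),\qquad \forall\,g,a\in G .
\]
To prove it I would apply the Cauchy--Schwarz inequality $|u^{*}Mv|^{2}\le (u^{*}Mu)(v^{*}Mv)$ to the Hermitian positive semidefinite $3\times3$ Gram matrix $M=\bigl(F(g_i^{-1}g_j)\bigr)_{i,j=1}^{3}$ built from the three points $g_1=e$, $g_2=g$, $g_3=ga$, taking $u=e_1$ and $v=e_3-e_2$ (with $e_i$ the standard basis vectors of $\mathbb{C}^{3}$). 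A direct computation of the relevant entries gives $u^{*}Mv=F(ga)-F(g)$, $u^{*}Mu=F(e)$, and $v^{*}Mv=2F(e)-2\,\mathrm{Re}\,F(a)$, which is exactly the claimed bound. (Equivalently, this is the Schwarz inequality $|\langle \xi_{e},\xi_{ga}-\xi_{g}\rangle_{\mathscr{H}_{F}}|\le \|\xi_{e}\|\,\|\xi_{ga}-\xi_{g}\|$ inside the RKHS $\mathscr{H}_{F}$ attached to $F$; note that the construction of $\mathscr{H}_{F}$ from the p.d. function $F$ is purely algebraic and uses no continuity, so there is no circularity.)

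To finish, I would fix $g\in G$ and $\varepsilon>0$. By hypothesis $F$ is continuous at $e$, and $F(e)$ is real, so there is a neighbourhood $V$ of $e$ with $|F(a)-F(e)|<\varepsilon^{2}/(2F(e))$ for all $a\in V$; since $0\le F(e)-\mathrm{Re}\,F(a)\le |F(a)-F(e)|$, the key inequality yields $|F(ga)-F(g)|<\varepsilon$ for every $a\in V$, i.e. $F(gV)$ lies in the $\varepsilon$-ball about $F(g)$. As $\{gV\}$ runs over a neighbourhood base of $g$ in the topological group $G$, this proves continuity of $F$ at $g$; since $g$ was arbitrary, $F$ is continuous on all of $G$.

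The only step with any substance is the key inequality, and even that is merely Cauchy--Schwarz for a positive semidefinite matrix; the one point demanding care is the bookkeeping of left versus right translates, so that the perturbation lands at $a=g^{-1}g'$ near $e$, which is precisely where continuity is available. Note that neither local compactness nor Lie structure is actually used: the same proof works verbatim for an arbitrary topological group.
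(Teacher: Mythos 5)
Your proof is correct. The paper disposes of this theorem in two lines: it applies the GNS construction to get a cyclic unitary representation $\left(U,\mathscr{H},v\right)$ with $F\left(g\right)=\left\langle v,U\left(g\right)v\right\rangle $, and then cites the fact that a unitary representation which is (weakly/strongly) continuous near $e$ is continuous everywhere. Your argument is the direct, representation-free version of the same mechanism: the quantity $v^{*}Mv=2F\left(e\right)-2\,\mathrm{Re}\, F\left(a\right)$ in your $3\times3$ Cauchy--Schwarz estimate is exactly $\left\Vert U\left(ga\right)v-U\left(g\right)v\right\Vert ^{2}=\left\Vert U\left(a\right)v-v\right\Vert ^{2}$ in the GNS space, which is what one would use to verify the ``easy to verify'' step the paper leaves to the reader. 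What your route buys is self-containedness and generality: everything reduces to positive semidefiniteness of one $3\times3$ principal submatrix, no completion or representation theory is needed, and, as you correctly observe, neither local compactness nor any Lie structure enters --- the statement and proof hold for an arbitrary topological group. What the paper's route buys is brevity and a conceptual framing consistent with the rest of the Memoir, where the GNS/RKHS dictionary is the organizing principle. Your bookkeeping is right: with $g_{1}=e$, $g_{2}=g$, $g_{3}=ga$ one gets $u^{*}Mv=F\left(ga\right)-F\left(g\right)$, $u^{*}Mu=F\left(e\right)$, $v^{*}Mv=2F\left(e\right)-2\,\mathrm{Re}\, F\left(a\right)$, and the degenerate case $F\left(e\right)=0$ is correctly handled separately. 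No gaps.
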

\begin{proof}
Since $F$ is positive definite, we may apply the Gelfand-Naimark-Segal
(GNS) theorem to get a cyclic unitary representation $\left(U,\mathscr{H},v\right)$,
$v$ denoting the cyclic vector, such that $F\left(g\right)=\left\langle v,U\left(g\right)v\right\rangle $,
$g\in G$. The stated assertion about continuity for unitary representations
is easy to verify; and so it follows for $F$.
\end{proof}
\index{GNS}

\index{unitary representation}

\textbf{Question. }Suppose $Ext\left(F\right)\neq\phi$, then what
are its extreme points? Equivalently, characterize $ext\left(Ext\left(F\right)\right)$. 

Let $\Omega\subset G$, $\Omega\neq\phi$, $\Omega$ open and connected,
and let ``$\widehat{\cdot}$'' denote the Fourier transform in the
given locally compact Abelian group $G$. Set 
\[
K_{\Omega}\left(\lambda\right)=\widehat{\chi_{\Omega}\left(\lambda\right)},\;\forall\lambda\in\widehat{G},
\]
where $\widehat{G}$ is the dual character group\index{group!character}
(see Theorem \ref{thm:lcg-dualG}).
\begin{thm}
Let $F:\Omega-\Omega\rightarrow\mathbb{C}$ be continuous, and positive
definite on $\Omega-\Omega$; and assume $Ext\left(F\right)\neq\phi$.
Let $\mu\in Ext\left(F\right)$, and let $T_{\mu}\left(F_{\phi}\right):=\widehat{\varphi}$,
defined initially only for $\varphi\in C_{c}\left(\Omega\right)$,
be the isometry $T_{\mu}:\mathscr{H}_{F}\rightarrow L^{2}\left(\mu\right)=L^{2}(\widehat{G},\mu)$.
Then $Q_{\mu}:=T_{\mu}T_{\mu}^{*}$ is a projection in \textup{$L^{2}\left(\mu\right)$
with $K_{\Omega}\left(\cdot\right)$ as kernel:
\begin{equation}
\left(Q_{\mu}f\right)\left(\lambda\right)=\int_{\widehat{G}}K_{\Omega}\left(\lambda-\xi\right)f\left(\xi\right)d\mu\left(\xi\right),\;\forall f\in L^{2}(\widehat{G},\mu),\forall\lambda\in\widehat{G}.\label{eq:lcg-10}
\end{equation}
}\end{thm}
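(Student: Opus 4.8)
The plan is to separate the two assertions in the statement — that $Q_{\mu}$ is a self\mbox{-}adjoint projection, and that it is the integral operator on $L^{2}(\widehat G,\mu)$ with kernel $K_{\Omega}$ — and to handle them with different tools.

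For the projection part, I would first record that $T_{\mu}$ is isometric, not merely bounded. Since $\mu\in Ext(F)$ we have $\widehat{\mu}=F$ on $\Omega-\Omega$ (Lemma~\ref{lem:lcg-Bochner}), so for $\varphi\in C_{c}(\Omega)$
\[
\|\widehat{\varphi}\|_{L^{2}(\mu)}^{2}=\int_{\widehat{G}}|\widehat{\varphi}(\lambda)|^{2}\,d\mu(\lambda)=\iint_{\Omega\times\Omega}\varphi(x)\overline{\varphi(y)}\,\widehat{\mu}(y-x)\,dx\,dy=\|F_{\varphi}\|_{\mathscr{H}_{F}}^{2},
\]
and $\{F_{\varphi}\}$ is dense in $\mathscr{H}_{F}$ (Lemma~\ref{lem:dense}). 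Hence $T_{\mu}^{*}T_{\mu}=I_{\mathscr{H}_{F}}$, so $Q_{\mu}^{*}=Q_{\mu}$ and $Q_{\mu}^{2}=T_{\mu}(T_{\mu}^{*}T_{\mu})T_{\mu}^{*}=Q_{\mu}$; thus $Q_{\mu}$ is the orthogonal projection of $L^{2}(\widehat{G},\mu)$ onto the closed subspace $\mathscr{M}:=\operatorname{ran}(T_{\mu})=\overline{\{\widehat{\varphi}:\varphi\in C_{c}(\Omega)\}}$. Feeding in the approximate identities $\varphi_{n,x}$ of Lemma~\ref{lem:dense} gives $\widehat{\varphi_{n,x}}\to\overline{\langle\cdot,x\rangle}$ in $L^{2}(\mu)$, so $\mathscr{M}=\overline{\operatorname{span}}\{\overline{\langle\cdot,x\rangle}:x\in\Omega\}$, and consequently $Q_{\mu}f$ is the unique element of $\mathscr{M}$ with $((Q_{\mu}f)\,d\mu)^{\vee}=(f\,d\mu)^{\vee}$ on $\Omega$.

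For the kernel I would start from $T_{\mu}^{*}f=(f\,d\mu)^{\vee}$ (Corollary~\ref{cor:lcg-isom}(2)), so $Q_{\mu}f=T_{\mu}\big((f\,d\mu)^{\vee}\big)$. Expanding $K_{\Omega}(\lambda-\xi)=\int_{\Omega}\overline{\langle\lambda,x\rangle}\langle\xi,x\rangle\,dx$ and interchanging integrations — legitimate since $K_{\Omega}$ is bounded continuous, $\mu$ is finite, and $\chi_{\Omega}\in L^{1}(G)$ — shows that the candidate operator $R$, $(Rf)(\lambda):=\int_{\widehat{G}}K_{\Omega}(\lambda-\xi)f(\xi)\,d\mu(\xi)$, rewrites as $(Rf)(\lambda)=\int_{\Omega}\overline{\langle\lambda,x\rangle}\,(f\,d\mu)^{\vee}(x)\,dx$. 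Two checks then give $R=Q_{\mu}$. First, $R$ kills $\mathscr{M}^{\perp}=\ker T_{\mu}^{*}$: if $(g\,d\mu)^{\vee}=0$ in $\mathscr{H}_{F}$ then, elements of $\mathscr{H}_{F}$ being continuous functions on $\Omega$, $(g\,d\mu)^{\vee}$ vanishes identically on $\Omega$, whence $(Rg)(\lambda)\equiv 0$. Second, $R$ carries $L^{2}(\mu)$ into $\mathscr{M}$ — it realizes the weakly convergent superposition $\int_{\Omega}(f\,d\mu)^{\vee}(x)\,\overline{\langle\cdot,x\rangle}\,dx$ of the generators of $\mathscr{M}$ — and a Bochner/Fubini computation using $\widehat{\mu}=F$ on $\Omega-\Omega$ gives $((Rf)\,d\mu)^{\vee}(x)=\int_{\Omega}F(x-y)(f\,d\mu)^{\vee}(y)\,dy$ for $x\in\Omega$; in view of the characterization of $Q_{\mu}f$ above, it then remains to see that this right-hand side equals $(f\,d\mu)^{\vee}(x)$ on $\Omega$, i.e.\ that convolution over $\Omega$ against $F=\widehat{\mu}$ fixes the functions $(f\,d\mu)^{\vee}\in\mathscr{H}_{F}$. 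For $f=\widehat{\varphi}$ this is the Fourier-inversion identity $\int_{\widehat G}K_{\Omega}(\lambda-\eta)\widehat{\varphi}(\eta)\,d\mu(\eta)=\widehat{\varphi}(\lambda)$, which I would obtain by rewriting the left side as $\int_{\Omega}\overline{\langle\lambda,x\rangle}F_{\varphi}(x)\,dx=\widehat{\,\chi_{\Omega}\!\cdot\!(\widehat{\varphi}\,d\mu)^{\vee}\,}(\lambda)$ and invoking $\varphi=\chi_{\Omega}\varphi$; density and boundedness propagate it from $\{\widehat{\varphi}\}$ to all of $\mathscr{M}$, giving $R=Q_{\mu}$.

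The genuinely delicate point — and the one I expect to be the main obstacle — is this last step: that convolution against $K_{\Omega}=\widehat{\chi_{\Omega}}$ reproduces each element of $\operatorname{ran}(T_{\mu})$ \emph{exactly}. This is a Fourier-inversion statement for the ``band\mbox{-}limited'' subspace $\mathscr{M}$, and it is here that one must be careful with normalizations: the Haar measure on $G$ (hence the scaling of $\chi_{\Omega}$ and of $K_{\Omega}$) has to be the one compatible with Fourier inversion against the measures $\mu\in Ext(F)$, and the conventions (\ref{eq:lcg-6})--(\ref{eq:lcg-7}) for the two transforms must be tracked throughout. The remaining ingredients are routine: justifying the Fubini interchanges, and checking that each $L^{2}(\mu)$-class appearing in a pointwise identity is represented by the stated bounded continuous function.
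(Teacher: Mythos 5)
Your projection argument and your Fubini rewriting of the candidate operator, $(Rf)(\lambda)=\int_{\Omega}\overline{\langle\lambda,x\rangle}\,(f\,d\mu)^{\vee}(x)\,dx$, are both correct, and up to that point you are doing essentially what the paper does. The genuine gap is the step you yourself flag as delicate, and it is not merely delicate --- it fails. You need the operator $R$ with kernel $K_{\Omega}$ to fix $\mathscr{M}=\mathrm{ran}(T_{\mu})$, i.e.\ $\int_{\widehat{G}}K_{\Omega}(\lambda-\eta)\widehat{\varphi}(\eta)\,d\mu(\eta)=\widehat{\varphi}(\lambda)$ for $\mu$-a.e.\ $\lambda$. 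The left-hand side equals $\widehat{\chi_{\Omega}F_{\varphi}}(\lambda)$, the transform of the restriction to $\Omega$ of $F_{\varphi}=\varphi\ast F$, \emph{not} of $\varphi$; the observation $\varphi=\chi_{\Omega}\varphi$ does not bridge this, because $F_{\varphi}\neq\varphi$. Equivalently, your argument needs $T_{\mu}\xi=\widehat{\chi_{\Omega}\xi}$ for $\xi\in\mathscr{H}_{F}$, which already fails on the generators: $T_{\mu}F_{\varphi}=\widehat{\varphi}$, whereas $\widehat{\chi_{\Omega}F_{\varphi}}-\widehat{\varphi}=\widehat{\chi_{\Omega}(F_{\varphi}-\varphi)}$ is the transform of a (generically nonzero) $L^{1}$-function and cannot vanish $\mu$-a.e.\ for every $\varphi$ unless $\mu$ is very special. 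The convolution identity that makes the Shannon/Paley--Wiener case work is Plancherel for Haar measure on $\widehat{G}$; it is not available for a general finite $\mu\in Ext(F)$, and no choice of normalization repairs this.

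A two-point example makes the failure concrete. Take $G=\mathbb{R}$, $\Omega=(0,1)$, $\mu=\tfrac{1}{2}(\delta_{0}+\delta_{a})$ with $a\neq0$, and $F=\widehat{d\mu}\big|_{(-1,1)}$. Then $L^{2}(\mu)\cong\mathbb{C}^{2}$ and $\mathrm{ran}(T_{\mu})$ is all of $L^{2}(\mu)$ (the functions $1$ and $e^{-iax}$ are linearly independent on $(0,1)$), so $Q_{\mu}=I$; but $(Rf)(0)=\tfrac{1}{2}f(0)+\tfrac{1}{2}K_{\Omega}(-a)f(a)\neq f(0)$ for $f=(1,0)$, so $R\neq Q_{\mu}$ and (\ref{eq:lcg-10}) fails. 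For what it is worth, the paper's own proof opens with the unproved identity $(Q_{\mu}f)(\lambda)=\int_{\Omega}(f\,d\mu)^{\vee}(x)\langle\lambda,x\rangle\,dx$, which is the same silent identification of $T_{\mu}$ with $\xi\mapsto\widehat{\chi_{\Omega}\xi}$; so your instinct to isolate and scrutinize exactly this step was sound, but the step cannot be completed without an additional hypothesis on $\mu$ (one forcing the reproducing identity for band-limited transforms, as Haar measure on $\widehat{G}$ does in the classical sampling theorem).
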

\begin{proof}
We showed in Theorem \ref{thm:lcg-1} that $T_{\mu}:\mathscr{H}_{F}\rightarrow L^{2}\left(\mu\right)$
is isometric, and so $Q_{\mu}:=T_{\mu}T_{\mu}^{*}$ is the projection
in $L^{2}\left(\mu\right)$. For $f\in L^{2}\left(\mu\right)$, $\lambda\in\widehat{G}$,
we have the following computation, where the interchanging of integrals
is justified by Fubini's theorem:
\begin{eqnarray*}
\left(Q_{\mu}f\right)\left(\lambda\right) & = & \int_{\Omega}\left(fd\mu\right)^{\vee}\left(x\right)\left\langle \lambda,x\right\rangle dx\;(\mbox{where }dx\mbox{ is Haar measure on G})\\
 & = & \int_{\Omega}\left\langle \lambda,x\right\rangle \left(\int_{\widehat{G}}f\left(\xi\right)\overline{\left\langle \xi,x\right\rangle }d\mu\left(\xi\right)\right)dx\\
 & \overset{\mbox{\small Fubini}}{=} & \int_{\widehat{G}}K_{\Omega}\left(\lambda-\xi\right)f\left(\xi\right)d\mu\left(\xi\right)
\end{eqnarray*}
which is the desired conclusion (\ref{eq:lcg-10}).
\end{proof}

\section{\label{sub:euclid}The Case of $G=\mathbb{R}^{n}$}

Of course the case of $G=\mathbb{R}^{n}$ is a special case of the
setting of locally compact Abelian groups from above, and the results
available for $\mathbb{R}^{n}$ are more refined. We focus on this
in the present section. This is also the setting of the more classical
studies of extension questions.

Setting. Let $\Omega\subset\mathbb{R}^{n}$ be a fixed open and connected
subset; and let $F:\Omega-\Omega\rightarrow\mathbb{C}$ be a given
continuous and positive definite function defined on 
\begin{equation}
\Omega-\Omega:=\left\{ x-y\in\mathbb{R}^{n}\:\Big|\: x,y\in\Omega\right\} .\label{eq:rn1}
\end{equation}
Let $\mathscr{H}_{F}$ be the corresponding reproducing kernel Hilbert
space\index{RKHS} (RKHS). We showed that $Ext\left(F\right)\not\neq\phi$
if and only if there is a strongly continuous unitary representation
$\left\{ U\left(t\right)\right\} _{t\in\mathbb{R}^{n}}$ acting on
$\mathscr{H}_{F}$ such that 
\begin{equation}
\mathbb{R}^{n}\ni t\mapsto\left\langle F_{0},U\left(t\right)F_{0}\right\rangle _{\mathscr{H}_{F}}\label{eq:rn2}
\end{equation}
is a p.d. extension of $F$, extending from (\ref{eq:rn1}) to $\mathbb{R}^{n}$.
Finally, if $U$ is a unitary representation of $G=\mathbb{R}^{n}$
we denote by $P_{U}\left(\cdot\right)$ the associated projection
valued measure (PVM) \index{measure!PVM} on $\mathscr{B}\left(\mathbb{R}^{n}\right)$
(= the sigma--algebra of all Borel subsets in $\mathbb{R}^{n}$).

We have 
\begin{equation}
U\left(t\right)=\int_{\mathbb{R}^{n}}e^{it\cdot\lambda}P_{U}\left(d\lambda\right),\:\forall t\in\mathbb{R}^{n};\label{eq:rn3}
\end{equation}
where $t=\left(t_{1},\ldots,t_{n}\right)$, $\lambda=\left(\lambda_{1},\ldots,\lambda_{n}\right)$,
and $t\cdot\lambda=\sum_{j=1}^{n}t_{j}\lambda_{j}$. Recall, setting
\begin{equation}
d\mu\left(\cdot\right)=\left\Vert P_{U}\left(\cdot\right)F_{0}\right\Vert _{\mathscr{H}_{F}}^{2},\label{eq:rn4}
\end{equation}
then the p.d. function on RHS in (\ref{eq:rn2}) satisfies 
\begin{equation}
\mbox{RHS}_{\left(\ref{eq:rn2}\right)}=\int_{\mathbb{R}^{n}}e^{it\cdot\lambda}d\mu\left(\lambda\right),\;\forall t\in\mathbb{R}^{n}.\label{eq:rn5}
\end{equation}

The purpose of the next theorem is to give an orthogonal\index{orthogonal}
splitting of the RKHS $\mathscr{H}_{F}$ associated to a fixed $\left(\Omega,F\right)$
when it is assumed that $Ext\left(F\right)$ is non-empty. This orthogonal
splitting of $\mathscr{H}_{F}$ depends on a choice of $\mu\in Ext\left(F\right)$,
and the splitting is into three orthogonal subspaces of $\mathscr{H}_{F}$,
correspond a splitting of spectral types into atomic, \index{atom}completely
continuous (with respect to Lebesgue measure)\index{measure!Lebesgue},
and singular\index{measure!singular}.
\begin{thm}
\label{thm:R^n-spect}Let $\Omega\subset\mathbb{R}^{n}$ be given,
$\Omega\neq\phi$, open and connected. Suppose $F$ is given p.d.
and continuous on $\Omega-\Omega$, and assume $Ext\left(F\right)\neq\phi$.
Let $U$ be the corresponding unitary representations of $G=\mathbb{R}^{n}$,
and let $P_{U}\left(\cdot\right)$ be its associated PVM acting on
$\mathscr{H}_{F}$ ($=$ the RKHS of $F$.)
\begin{enumerate}
\item Then $\mathscr{H}_{F}$ splits up as an orthogonal sum of three closed
and $U\left(\cdot\right)$ invariant subspaces
\begin{equation}
\mathscr{H}_{F}=\mathscr{H}_{F}^{\left(atom\right)}\oplus\mathscr{H}_{F}^{\left(ac\right)}\oplus\mathscr{H}_{F}^{\left(sing\right)}\label{eq:rn6}
\end{equation}
with these subspaces characterized as follows:\\
The PVM $P_{U}\left(\cdot\right)$ restricted to $\mathscr{H}_{F}^{\left(atom\right)}$,
($\mathscr{H}_{F}^{\left(ac\right)}$, resp., $\mathscr{H}_{F}^{\left(sing\right)}$)
is purely atomic, is absolutely continuous\index{absolutely continuous}
with respect to Lebesgue measure $d\lambda=d\lambda_{1}\cdots d\lambda_{n}$
on $\mathbb{R}^{n}$, respectively, $P_{U}\left(\cdot\right)$ is
continuous, purely singular, when restricted to $\mathscr{H}_{F}^{\left(sing\right)}$.
\item \textbf{\label{enu:rn2}Case $\mathscr{H}_{F}^{\left(atom\right)}$.}
If $\lambda\in\mathbb{R}^{n}$ is an atom in $P_{U}\left(\cdot\right)$,
i.e., $P_{U}\left(\left\{ \lambda\right\} \right)\neq0$, where $\left\{ \lambda\right\} $
denotes the singleton with $\lambda$ fixed; then $P_{U}$$\left(\left\{ \lambda\right\} \right)\mathscr{H}_{F}$
is one-dimensional, and the function $e_{\lambda}\left(x\right):=e^{i\lambda\cdot x}$,
(complex exponential) restricted to $\Omega$, is in $\mathscr{H}_{F}$.
We have:
\begin{equation}
P_{U}\left(\left\{ \lambda\right\} \right)\mathscr{H}_{F}=\mathbb{C}e_{\lambda}\Big|_{\Omega}.\label{eq:rn7}
\end{equation}
\textbf{}\\
\textbf{Case $\mathscr{H}_{F}^{\left(ac\right)}$.} If $\xi\in\mathscr{H}_{F}^{\left(ac\right)}$,
then it is represented as a continuous function on $\Omega$, and
\begin{equation}
\left\langle \xi,F_{\varphi}\right\rangle _{\mathscr{H}_{F}}=\int_{\Omega}\overline{\xi\left(x\right)}\varphi\left(x\right)dx_{\left(\mbox{Lebesgue meas.}\right)},\;\forall\varphi\in C_{c}\left(\Omega\right).\label{eq:rn8}
\end{equation}
Moreover, there is a $f\in L^{2}\left(\mathbb{R}^{n},\mu\right)$
(where $\mu$ is given in (\ref{eq:rn4}) such that
\begin{equation}
\int_{\Omega}\overline{\xi\left(x\right)}\varphi\left(x\right)dx=\int_{\mathbb{R}^{n}}\overline{f\left(\lambda\right)}\widehat{\varphi}\left(\lambda\right)d\mu\left(\lambda\right),\;\forall\varphi\in C_{c}\left(\Omega\right);\label{eq:rn9}
\end{equation}
and 
\begin{equation}
\xi=\left(fd\mu\right)^{\vee}\Big|_{\Omega}.\label{eq:rn10}
\end{equation}
(We say that $\left(fd\mu\right)^{\vee}$ is the $\mu$-extension
of $\xi$.)\textbf{}\\
\textbf{}\\
\uline{Conclusion}.\textbf{ }Every $\mu$-extension of $\xi$ is
continuos on $\mathbb{R}^{n}$, and goes to $0$ at infinity (in $\mathbb{R}^{n}$,);
so the $\mu$-extension $\tilde{\xi}$ satisfies $\lim_{\left|x\right|\rightarrow\infty}\tilde{\xi}\left(x\right)=0$.\\
\\
\textbf{Case $\mathscr{H}_{F}^{\left(sing\right)}$.} Vectors $\xi\in\mathscr{H}_{F}^{\left(sing\right)}$
are characterized by the following property:\\
The measure
\begin{equation}
d\mu_{\xi}\left(\cdot\right):=\left\Vert P_{U}\left(\cdot\right)\xi\right\Vert _{\mathscr{H}_{F}}^{2}\label{eq:rn11}
\end{equation}
is continuous and purely singular.
\end{enumerate}
\end{thm}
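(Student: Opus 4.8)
The plan is to reduce the statement to the concrete multiplication model on $L^{2}(\mu)$ supplied by Corollary \ref{cor:lcg-isom}. Fix $\mu\in Ext(F)$ --- we take $\mu$ to be the measure (\ref{eq:rn4}) attached to the representation $U$ --- and let $T_{\mu}:\mathscr{H}_{F}\to L^{2}(\mathbb{R}^{n},\mu)$ be the isometry with $T_{\mu}F_{\varphi}=\widehat{\varphi}$ and adjoint $T_{\mu}^{*}f=(fd\mu)^{\vee}$. The first step is the intertwining identity $T_{\mu}U(t)=M_{t}T_{\mu}$ $(t\in\mathbb{R}^{n})$, where $M_{t}$ denotes multiplication by $\lambda\mapsto e^{it\cdot\lambda}$ on $L^{2}(\mu)$: on the dense domain $\{F_{\varphi}\}$ one has $T_{\mu}(A^{(F)}F_{\varphi})=T_{\mu}F_{\varphi'}=i\lambda\,\widehat{\varphi}$, which is exactly the generator of $\{M_{t}\}$ applied to $T_{\mu}F_{\varphi}$, so Stone's theorem and density propagate the identity to all $t$. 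Since the $\{e_{t}\}$ are total in $L^{2}(\mu)$, the von Neumann algebra they generate by multiplication is all of $L^{\infty}(\mu)$, so $T_{\mu}\mathscr{H}_{F}$ is invariant under every $M_{\chi_{B}}$; hence $T_{\mu}$ conjugates $P_{U}(\cdot)$ into the canonical multiplication spectral measure $B\mapsto M_{\chi_{B}}$, and for every $\xi\in\mathscr{H}_{F}$,
\[
d\mu_{\xi}(\cdot)=\|P_{U}(\cdot)\xi\|_{\mathscr{H}_{F}}^{2}=|(T_{\mu}\xi)(\cdot)|^{2}\,d\mu(\cdot).
\]
Granting this, part (1) is the standard Lebesgue decomposition of a projection-valued measure on a separable Hilbert space (\cite{DS88b,AG93}): $\mathscr{H}_{F}$ splits orthogonally into the $P_{U}$-reducing subspaces on which $\mu_{\xi}$ is purely atomic, absolutely continuous with respect to $d\lambda$, resp. continuous-singular, which is (\ref{eq:rn6}) with its stated characterization.

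For the atom case, let $P_{U}(\{\lambda\})\neq0$ and pick $\eta=P_{U}(\{\lambda\})\xi\neq0$. Then $U(t)\eta=e^{it\cdot\lambda}\eta$ for all $t$, so $(e^{it\cdot(\cdot)}-e^{it\cdot\lambda})T_{\mu}\eta=0$ in $L^{2}(\mu)$ for every $t$; letting $t$ run over a countable dense set forces $T_{\mu}\eta$ to be supported on the singleton $\{\lambda\}$, so $T_{\mu}\eta=c\,\chi_{\{\lambda\}}$ with $c\neq0$ (hence $\mu(\{\lambda\})>0$) by injectivity of $T_{\mu}$. Isometry of $T_{\mu}$ then shows $\dim P_{U}(\{\lambda\})\mathscr{H}_{F}=1$, and applying $T_{\mu}^{*}$ gives $\eta=T_{\mu}^{*}(c\,\chi_{\{\lambda\}})=c\,\mu(\{\lambda\})\,e_{\lambda}|_{\Omega}$ by (\ref{eq:lcg-7}); in particular $e_{\lambda}|_{\Omega}\in\mathscr{H}_{F}$ and $P_{U}(\{\lambda\})\mathscr{H}_{F}=\mathbb{C}\,e_{\lambda}|_{\Omega}$, which is (\ref{eq:rn7}). (Equivalently, $\lambda$ is an atom of $P_{U}$ iff $\mu(\{\lambda\})>0$ iff $e_{\lambda}|_{\Omega}\in\mathscr{H}_{F}$, the last equivalence being Lemma \ref{lem:lcg-bdd} combined with $\|F_{\varphi}\|_{\mathscr{H}_{F}}^{2}=\int|\widehat{\varphi}|^{2}d\mu$.)

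For the $\mathscr{H}_{F}^{(ac)}$ case, continuity of $\xi$ on $\Omega$ and identity (\ref{eq:rn8}) are the general RKHS facts recorded earlier (Lemma \ref{lem:lcg-bdd}, valid for every $\xi\in\mathscr{H}_{F}$). Setting $f:=T_{\mu}\xi\in L^{2}(\mu)$ and combining (\ref{eq:rn8}) with $\langle\xi,F_{\varphi}\rangle_{\mathscr{H}_{F}}=\langle f,\widehat{\varphi}\rangle_{L^{2}(\mu)}$ gives (\ref{eq:rn9}), while $\xi=T_{\mu}^{*}T_{\mu}\xi=(fd\mu)^{\vee}|_{\Omega}$ gives (\ref{eq:rn10}). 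For the Conclusion: $(fd\mu)^{\vee}$ is continuous and bounded on all of $\mathbb{R}^{n}$ since $fd\mu$ is a finite complex measure ($f\in L^{1}(\mu)$, $\mu$ finite); and $\xi\in\mathscr{H}_{F}^{(ac)}$ means $|f|^{2}d\mu\ll d\lambda$, so $f$ vanishes $\mu$-a.e. off the absolutely continuous part $\mu_{ac}$ of $\mu$, whence $fd\mu=f\,\tfrac{d\mu_{ac}}{d\lambda}\,d\lambda$ with $f\,\tfrac{d\mu_{ac}}{d\lambda}\in L^{1}(d\lambda)$ (Cauchy--Schwarz against $\mu_{ac}(\mathbb{R}^{n})^{1/2}$); Riemann--Lebesgue then yields $(fd\mu)^{\vee}\to0$ at infinity. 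Finally the $\mathscr{H}_{F}^{(sing)}$ case is just the third alternative of part (1) read through $d\mu_{\xi}=|T_{\mu}\xi|^{2}d\mu$, so nothing further is needed.

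The step I expect to be the main obstacle is the first one --- verifying $T_{\mu}U(t)=M_{t}T_{\mu}$ and, hand in hand with it, pinning down in which Hilbert space $U$ and $P_{U}$ are to be read: for $\mu\in Ext_{2}(F)$ the natural carrier of $U$ is $\mathscr{H}_{\widetilde F}\cong L^{2}(\mu)\supsetneqq\mathscr{H}_{F}$, so one must either restrict to $Ext_{1}(F)$ or describe the three spectral subspaces of $\mathscr{H}_{F}$ through the embedding $T_{\mu}$ (equivalently, check that $T_{\mu}\mathscr{H}_{F}$ is compatible with the atomic/absolutely continuous/singular splitting of $L^{2}(\mu)$). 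Once that is settled, everything else is bookkeeping with $T_{\mu}$, $T_{\mu}^{*}$, and the Lebesgue decomposition of $\mu$.
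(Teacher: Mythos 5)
Your proposal is correct, but it handles the two nontrivial points by a route that differs from the paper's. For the atomic case, the paper argues directly in $\mathscr{H}_{F}$: from $U\left(t\right)\xi=e^{it\cdot\lambda}\xi$ it deduces that $\xi$, viewed as a continuous function on $\Omega$, is a weak (hence classical) solution of the first-order elliptic system $\partial\xi/\partial x_{j}=i\lambda_{j}\xi$ on $\Omega$, so $\xi=\mathrm{const}\cdot e_{\lambda}\big|_{\Omega}$; you instead transport everything through the isometry $T_{\mu}$, show $T_{\mu}\xi$ is supported on the singleton $\left\{ \lambda\right\} $, and recover $e_{\lambda}\big|_{\Omega}$ by applying $T_{\mu}^{*}$. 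Your version buys a little more: it exhibits $\mu\left(\left\{ \lambda\right\} \right)>0$ explicitly and makes the one-dimensionality of $P_{U}\left(\left\{ \lambda\right\} \right)\mathscr{H}_{F}$ transparent, whereas the paper's ODE argument is shorter and stays entirely inside $\Omega$. Likewise, for part (1) the paper simply invokes "the previous discussion," while you make the reduction explicit via the identity $d\mu_{\xi}=\left|T_{\mu}\xi\right|^{2}d\mu$ and the Lebesgue decomposition of the multiplication PVM on $L^{2}\left(\mu\right)$; this is a legitimate and arguably cleaner packaging. The one place where your write-up is thinner than it should be is the intertwining $T_{\mu}U\left(t\right)=M_{t}T_{\mu}$: agreement of generators on $\left\{ F_{\varphi}\right\} $ does not by itself propagate to the groups, since $\left\{ F_{\varphi}\right\} $ is a core for $D^{\left(F\right)}$ but not for its skew-adjoint extension $A$; the correct propagation is the transport argument of Lemma \ref{lem:abc} (differentiate $s\mapsto M_{t-s}T_{\mu}U\left(s\right)F_{\varphi}$ for small $t$, then use the group law and density). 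You correctly flag this, and the $Ext_{1}$ versus $Ext_{2}$ ambiguity, as the delicate point; with that lemma cited, the argument closes. The remaining cases (Riemann--Lebesgue for $\mathscr{H}_{F}^{\left(ac\right)}$, and the singular case) match the paper's treatment.
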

\begin{proof}
Most of the proof details are contained in the previous discussion.

For (\ref{enu:rn2}),\textbf{ }Case $\mathscr{H}_{F}^{\left(atom\right)}$;
suppose $\lambda\in(\mathbb{R}^{n})$ is an atom, and that $\xi\in\mathscr{H}_{F}\backslash\left\{ 0\right\} $
satisfies 
\begin{equation}
P_{U}\left(\left\{ \lambda\right\} \right)\xi=\xi;\label{eq:rn12}
\end{equation}
then 
\begin{equation}
U\left(t\right)\xi=e^{it\cdot\lambda}\xi,\;\forall t\in\mathbb{R}^{n}.\label{eq:rn13}
\end{equation}
Using now (\ref{eq:rn2})-(\ref{eq:rn3}), we conclude that $\xi$
(as a continuous function on $\mathbb{R}^{n}$) is a weak solution
to the following elliptic system
\begin{equation}
\frac{\partial}{\partial x_{j}}\xi=\sqrt{-1}\lambda_{j}\xi\:\left(\mbox{on }\Omega\right),\;1\leq j\leq n.\label{eq:rn14}
\end{equation}
Hence $\xi=\mbox{const}\cdot e_{\lambda}\Big|_{\Omega}$ as asserted
in (\ref{enu:rn2}).

Case (\ref{enu:rn2}), $\mathscr{H}_{F}^{\left(ac\right)}$ follows
from (\ref{eq:rn10}) and the Riemann-Lebesgue theorem applied to
$\mathbb{R}^{n}$; and case $\mathscr{H}_{F}^{\left(sing\right)}$
is immediate.\end{proof}
\begin{example}
\label{ex:splitting}Consider the following continuous positive definite
function $F$ on $\mathbb{R}$, or on some bounded interval $\left(-a,a\right)$,
$a>0$. 
\begin{equation}
F\left(x\right)=\frac{1}{3}\left(e^{-ix}+\prod_{n=1}^{\infty}\cos\left(\frac{2\pi x}{3^{n}}\right)+e^{i3x/2}\frac{\sin\left(x/2\right)}{\left(x/2\right)}\right).\label{eq:r-2-1}
\end{equation}

\begin{enumerate}
\item This is the decomposition (\ref{eq:rn6}) of the corresponding RKHSs
$\mathscr{H}_{F}$, all three subspaces $\mathscr{H}_{F}^{\left(atom\right)}$,
$\mathscr{H}_{F}^{\left(ac\right)}$, and $\mathscr{H}_{F}^{\left(sing\right)}$
are non-zero; the first one is one-dimensional, and the other two
are infinite-dimensional.
\item The operator 
\begin{gather}
D^{\left(F\right)}\left(F_{\varphi}\right):=F_{\varphi'}\mbox{ on}\label{eq:r-2-2}\\
dom\left(D^{\left(F\right)}\right)=\left\{ F_{\varphi}\:\big|\:\varphi\in C_{c}^{\infty}\left(0,a\right)\right\} \nonumber 
\end{gather}
\[
\]
is bounded, and so extends by \uline{closure} to a skew-adjoint
operator $\overline{D^{\left(F\right)}}=-\left(D^{\left(F\right)}\right)^{*}$.\end{enumerate}
\begin{proof}
Using infinite convolutions of operators (see \chapref{conv}), and
results from \cite{DJ12}, we conclude that $F$ defined in (\ref{eq:r-2-1})
is entire analytic, and $F=\widehat{d\mu}$ (Bochner-transform) where
\index{Bochner}
\begin{equation}
d\mu\left(\lambda\right)=\frac{1}{3}\left(\delta_{-1}+\mu_{Cantor}+\chi_{\left[1,2\right]}\left(\lambda\right)d\lambda\right).\label{eq:r-2-3}
\end{equation}

The measures on the RHS in (\ref{eq:r-2-3}) are as follows:\end{proof}
\begin{itemize}
\item $\delta_{-1}$ is the Dirac mass at $-1$, i.e., $\delta\left(\lambda+1\right)$.
\index{measure!Dirac}
\item $\mu_{Cantor}=$ the middle-third Cantor measure $\mu_{c}$ determined
as the unique solution in $\mathscr{M}_{+}^{prob}\left(\mathbb{R}\right)$
to \index{measure!Cantor} 
\[
\int f\left(\lambda\right)d\mu_{c}\left(\lambda\right)=\frac{1}{2}\left(\int f\left(\frac{\lambda+1}{3}\right)d\mu_{c}\left(\lambda\right)+\int f\left(\frac{\lambda-1}{3}\right)d\mu_{c}\right)
\]
for all $f\in C_{c}\left(\mathbb{R}\right)$; and the last term
\item $\chi_{\left[1,2\right]}\left(\lambda\right)d\lambda$ is restriction
to the closed interval $\left[1,2\right]$ of Lebesgue measure.
\end{itemize}

See Figure \ref{fig:cantor}-\ref{fig:cantor2}.

It follows from the literature (e.g. \cite{DJ12}) that $\mu_{c}$
is supported in $\left[-\frac{1}{2},\frac{1}{2}\right]$; and so the
three measures on the RHS in (\ref{eq:r-2-3}) have disjoint compact
support, with the three supports positively separately.

The conclusions asserted in the example follow from this, in particular
the properties for $D^{\left(F\right)}$, in fact 
\begin{equation}
spectrum\left(D^{\left(F\right)}\right)\subseteq\left\{ -1\right\} \cup\left[-\tfrac{1}{2},\tfrac{1}{2}\right]\cup\left[1,2\right]\label{eq:r-2-5}
\end{equation}

\end{example}
\begin{figure}
\includegraphics[scale=0.4]{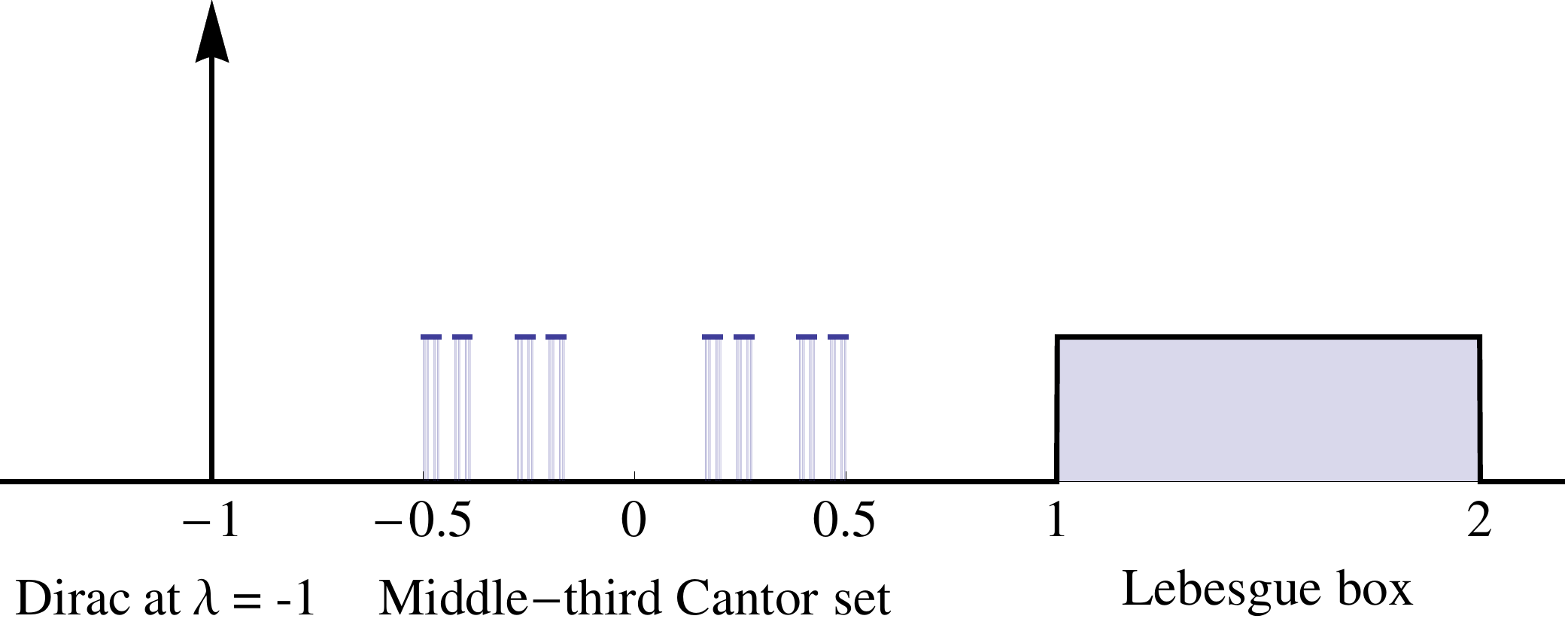}

\protect\caption{\label{fig:cantor}The measure $d\mu\left(\lambda\right)$ in example
\ref{ex:splitting}. }
\end{figure}

\begin{figure}
\includegraphics[scale=0.4]{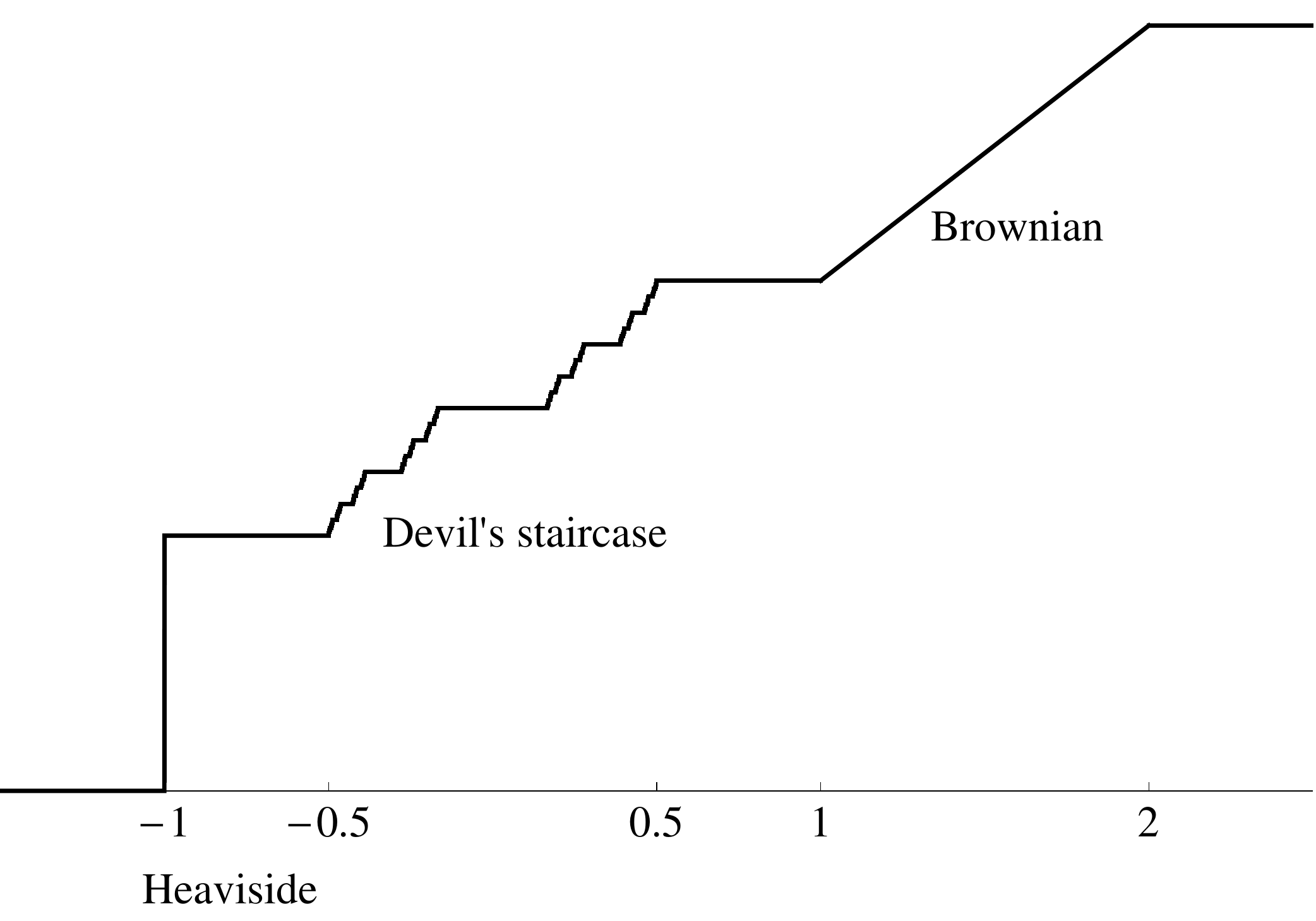}

\protect\caption{\label{fig:cantor2}Cumulative distribution $=\int_{-\infty}^{\lambda}d\mu\left(\lambda\right)$,
as in example \ref{ex:splitting}.}

\end{figure}

\begin{cor}
Let $\Omega\subset\mathbb{R}^{n}$ be non-empty, open and connected
subset. Let $F:\Omega-\Omega\rightarrow\mathbb{C}$ be a fixed continuous
and positive definite function; and let $\mathscr{H}_{F}$ be the
corresponding RKHS (of functions on $\Omega$.)
\begin{enumerate}
\item \label{enu:rn-1}If there is a compactly supported measure $\mu\in Ext\left(F\right)$,
then every function $\xi$ on $\Omega$, which is in $\mathscr{H}_{F}$,
has an entire analytic extension to $\mathbb{C}^{n}$, i.e., extension
from $\Omega\subset\mathbb{R}^{n}$ to $\mathbb{C}^{n}$. 
\item \label{enu:rn-2}If, in addition, it is assume that $\mu\ll d\lambda\left(=d\lambda_{1}\cdots d\lambda_{n}\right)=$
the Lebesgue measure on $\mathbb{R}^{n}$, where ``$\ll$'' means
``absolutely continuous;'' then the constant non-zero functions
on $\Omega$ are \uline{NOT} in $\mathscr{H}_{F}$.
\end{enumerate}
\end{cor}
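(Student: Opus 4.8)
The plan is to derive both statements from a single representation of the elements of $\mathscr{H}_{F}$ that becomes available once a measure $\mu\in Ext(F)$ is fixed. By Corollary~\ref{cor:lcg-isom}, since $\mu\in Ext(F)$ the map $T_{\mu}(F_{\varphi})=\widehat{\varphi}$ extends to an isometry $T_{\mu}:\mathscr{H}_{F}\to L^{2}(\mathbb{R}^{n},\mu)$ with adjoint $T_{\mu}^{*}f=(f\,d\mu)^{\vee}$; being isometric, $T_{\mu}^{*}T_{\mu}=I_{\mathscr{H}_{F}}$, so for every $\xi\in\mathscr{H}_{F}$, with $f:=T_{\mu}\xi\in L^{2}(\mathbb{R}^{n},\mu)$, we have $\xi=((T_{\mu}\xi)\,d\mu)^{\vee}\big|_{\Omega}$. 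Since $\mu$ is a finite (probability) measure, $f\in L^{1}(\mathbb{R}^{n},\mu)$ as well.

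For part~\ref{enu:rn-1}, assume $\mu$ is compactly supported, say $\mathrm{supp}(\mu)\subseteq K$ with $K\subset\mathbb{R}^{n}$ compact. For $z\in\mathbb{C}^{n}$ put
\[
\widetilde{\xi}(z):=\int_{K}e^{i\lambda\cdot z}f(\lambda)\,d\mu(\lambda).
\]
This integral converges absolutely, because $\left|e^{i\lambda\cdot z}\right|=e^{-\lambda\cdot\mathrm{Im}(z)}$ is uniformly bounded for $\lambda\in K$ and $f\in L^{1}(\mu)$; differentiation under the integral sign (or Morera's theorem together with Fubini) shows $\widetilde{\xi}$ is holomorphic in each variable separately, hence entire on $\mathbb{C}^{n}$ by Hartogs. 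Since $\widetilde{\xi}\big|_{\Omega}=(f\,d\mu)^{\vee}\big|_{\Omega}=\xi$, this proves~\ref{enu:rn-1}. (One even gets the quantitative bound $\left|\widetilde{\xi}(z)\right|\le\left\Vert f\right\Vert _{L^{1}(\mu)}e^{(\sup_{\lambda\in K}\left|\lambda\right|)\left|\mathrm{Im}(z)\right|}$, so $\widetilde{\xi}$ is of exponential type.)

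For part~\ref{enu:rn-2}, suppose for contradiction that some nonzero constant function $c$ on $\Omega$ lies in $\mathscr{H}_{F}$. By~\ref{enu:rn-1} it extends to an entire function $\widetilde{\xi}$ on $\mathbb{C}^{n}$ with $\widetilde{\xi}=c$ on the nonempty open set $\Omega\subset\mathbb{R}^{n}$; applying the identity theorem one coordinate at a time gives $\widetilde{\xi}\equiv c$ on all of $\mathbb{C}^{n}$, in particular on $\mathbb{R}^{n}$. On the other hand, by the construction in~\ref{enu:rn-1} the restriction of $\widetilde{\xi}$ to $\mathbb{R}^{n}$ is exactly $(f\,d\mu)^{\vee}$. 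Here the extra hypothesis $\mu\ll d\lambda$ enters: writing $d\mu=w\,d\lambda$ with $w\in L^{1}(\mathbb{R}^{n})$, we get $f\,d\mu=fw\,d\lambda$ with $fw\in L^{1}(\mathbb{R}^{n})$, since $\int\left|fw\right|d\lambda=\int\left|f\right|d\mu<\infty$; hence $(f\,d\mu)^{\vee}$ is the Fourier transform of an $L^{1}(\mathbb{R}^{n})$ function, so $\lim_{\left|x\right|\to\infty}(f\,d\mu)^{\vee}(x)=0$ by the Riemann--Lebesgue lemma. Combining the two descriptions, $c=\lim_{\left|x\right|\to\infty}\widetilde{\xi}(x)=0$, contradicting $c\neq0$; hence no nonzero constant lies in $\mathscr{H}_{F}$.

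I do not anticipate a serious obstacle: the corollary is essentially a consequence of the isometry $T_{\mu}$ and its adjoint formula (Corollary~\ref{cor:lcg-isom}) together with Theorem~\ref{thm:R^n-spect}. The two points requiring care are (i) verifying joint holomorphy of $\widetilde{\xi}$ in~\ref{enu:rn-1}, which is routine (dominated convergence plus Morera/Hartogs, using compactness of $\mathrm{supp}(\mu)$), and (ii) making sure that in~\ref{enu:rn-2} the Riemann--Lebesgue lemma is applied to a genuine $L^{1}(\mathbb{R}^{n})$ function; this is precisely where absolute continuity of $\mu$ is used, and it cannot be dropped — by Theorem~\ref{thm:R^n-spect} an atom of $\mu$ at $\lambda=0$ puts the constant function $e_{0}\big|_{\Omega}$ into $\mathscr{H}_{F}$.
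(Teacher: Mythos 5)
Your proof is correct and follows essentially the same route as the paper: part (1) is the isometry $T_{\mu}$ of Corollary \ref{cor:lcg-isom} together with the (easy direction of the) Paley--Wiener theorem, which you simply write out by hand via $\widetilde{\xi}(z)=\int_{K}e^{i\lambda\cdot z}f(\lambda)\,d\mu(\lambda)$, and part (2) is the Riemann--Lebesgue decay of $(f\,d\mu)^{\vee}$ exactly as in the $\mathscr{H}_{F}^{(ac)}$ case of Theorem \ref{thm:R^n-spect}. The one point you make explicit that the paper leaves implicit is the identity-theorem step linking the two parts --- that the entire extension of a function constant on $\Omega$ must be globally constant, so the decay at infinity forces $c=0$ --- and this is indeed needed, since decay of the $\mu$-extension alone does not immediately contradict constancy on the bounded set $\Omega$.
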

\begin{proof}
Part (\ref{enu:rn-1}). Let $\Omega$, $F$, $\mathscr{H}_{F}$ and
$\mu$ be as in the statement of the corollary. Let 
\begin{equation}
T\left(F_{\varphi}\right):=\widehat{\varphi},\;\varphi\in C_{c}\left(\Omega\right)\label{eq:rn15}
\end{equation}
be the isometry $\mathscr{H}_{F}\overset{T}{\longrightarrow}L^{2}\left(\mathbb{R}^{n},\mu\right)$
from Corollary \ref{cor:lcg-isom}. By Coroll. \ref{cor:lcg-isom}
(\ref{enu:2}); for all $f\in L^{2}\left(\mathbb{R}^{n},\mu\right)$,
we have
\begin{equation}
\left(T^{*}f\right)\left(x\right)=\int_{\mathbb{R}^{n}}e^{ix\cdot\lambda}f\left(\lambda\right)d\mu\left(\lambda\right);\; x\in\Omega;\label{eq:rn16}
\end{equation}
and further that
\begin{equation}
T^{*}\left(L^{2}\left(\mathbb{R}^{n},\mu\right)\right)=\mathscr{H}_{F}.\label{eq:rn17}
\end{equation}
Now, if $\mu$ is of compact support, then so is the complex measure
$fd\mu$. This measure is finite since $L^{2}\left(\mu\right)\subset L^{1}\left(\mu\right)$.
Hence the desired conclusion follows from (\ref{eq:rn16}), (\ref{eq:rn17}),
and the Paley-Wiener theorem; see e.g., \cite{Rud73}.
\end{proof}
We now prove the assertion in part (\ref{enu:rn-2}) of the corollary.
But this follows from Theorem \ref{thm:R^n-spect}, case $\mathscr{H}_{F}^{\left(cc\right)}$.
See also Proposition \ref{prop:exp} below. 

\index{isometry}\index{Paley-Wiener}

\section{\label{sub:lie}Lie Groups}
\begin{defn}
\label{def:li-1}Let $G$ be a Lie group. We consider the extension
problem for continuous positive definite functions \index{positive definite}\index{extension problem}
\begin{equation}
F:\Omega^{-1}\Omega\rightarrow\mathbb{C}\label{eq:li-1}
\end{equation}
where $\Omega\neq\phi$, is a connected and open subset in $G$, i.e.,
it is assumed that
\begin{equation}
\sum_{i}\sum_{j}\overline{c_{i}}c_{j}F\left(x_{i}^{-1}x_{j}\right)\geq0,\label{eq:li-2}
\end{equation}
for all finite systems $\left\{ c_{i}\right\} \subset\mathbb{C}$,
and points $\left\{ x_{i}\right\} \subset\Omega$. 

Equivalent,
\begin{equation}
\int_{\Omega}\overline{\varphi\left(x\right)}\varphi\left(y\right)F\left(x^{-1}y\right)dxdy\geq0,\label{eq:li-3}
\end{equation}
for all $\varphi\in C_{c}\left(\Omega\right)$; where $dx$ denotes
a choice of \uline{left}-invariant Haar measure \index{measure!Haar}on
$G$.\end{defn}
\begin{lem}
Let $F$ be defined as in (\ref{eq:li-1})-(\ref{eq:li-2}); and for
all $X\in La$$\left(G\right)=$ the Lie algebra of $G$, set 
\begin{equation}
\left(\tilde{X}\varphi\right)\left(g\right):=\frac{d}{dt}\varphi\left(\exp_{G}\left(-tX\right)g\right)\label{eq:li-4}
\end{equation}
for all $\varphi\in C_{c}^{\infty}\left(\Omega\right)$. Set 
\begin{equation}
F_{\varphi}\left(x\right):=\int_{\Omega}\varphi\left(y\right)F\left(x^{-1}y\right)dy;\label{eq:li-5}
\end{equation}
then 
\begin{equation}
S_{X}^{\left(F\right)}\left(F_{\varphi}\right):=F_{\tilde{X}\varphi},\;\varphi\in C_{c}^{\infty}\left(\Omega\right)\label{eq:li-6}
\end{equation}
defines a representation of the Lie algebra $La\left(G\right)$ by
skew-Hermitian\index{operator!skew-Hermitian} operators in the RKHS
$\mathscr{H}_{F}$, with the operator in (\ref{eq:li-6}) defined
on the common dense domain $\left\{ F_{\varphi}\:\big|\:\varphi\in C_{c}^{\infty}\left(\Omega\right)\right\} \subset\mathscr{H}_{F}$. \end{lem}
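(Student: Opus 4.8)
The plan is to establish four facts, in this order: (a) the common domain $\mathscr{D}:=\{F_{\varphi}\mid\varphi\in C_{c}^{\infty}(\Omega)\}$ is dense in $\mathscr{H}_{F}$; (b) for each $X\in La(G)$ and all $\varphi,\psi\in C_{c}^{\infty}(\Omega)$ the ``integration by parts'' identity $\langle F_{\tilde X\varphi},F_{\psi}\rangle_{\mathscr{H}_{F}}+\langle F_{\varphi},F_{\tilde X\psi}\rangle_{\mathscr{H}_{F}}=0$ holds; (c) hence $S_{X}^{(F)}$ is a well-defined skew-Hermitian operator on $\mathscr{D}$; and (d) $X\mapsto S_{X}^{(F)}$ is $\mathbb{R}$-linear and $[S_{X}^{(F)},S_{Y}^{(F)}]=S_{[X,Y]}^{(F)}$ on $\mathscr{D}$. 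Part (a) is essentially the construction of $\mathscr{H}_{F}$ (cf. Lemma \ref{lem:lcg-F_varphi}): the approximate-identity argument of Lemma \ref{lem:dense} goes through with translations replaced by $\exp_{G}$-shifts and Lebesgue measure by left Haar measure. The whole assertion generalizes Lemma \ref{lem:DF}, which is the case $G=\mathbb{R}$, $X=\tfrac{d}{dx}$.

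The heart of the matter is (b), and the mechanism is invariance of the defining sesquilinear form under the flow of $\tilde X$. Given $\varphi,\psi\in C_{c}^{\infty}(\Omega)$, set $\varphi_{t}(g):=\varphi(\exp_{G}(-tX)g)$ and $\psi_{t}(g):=\psi(\exp_{G}(-tX)g)$; since $\mathrm{supp}(\varphi)$ and $\mathrm{supp}(\psi)$ are compact subsets of the open set $\Omega$, there is $\varepsilon>0$ with $\varphi_{t},\psi_{t}\in C_{c}^{\infty}(\Omega)$ for $|t|<\varepsilon$. In $\langle F_{\varphi_{t}},F_{\psi_{t}}\rangle_{\mathscr{H}_{F}}=\int_{\Omega}\int_{\Omega}\overline{\varphi_{t}(x)}\psi_{t}(y)F(x^{-1}y)\,dx\,dy$ substitute $x=\exp_{G}(tX)u$ and $y=\exp_{G}(tX)v$; left-invariance of Haar measure (applied twice) together with $(\exp_{G}(tX)u)^{-1}\exp_{G}(tX)v=u^{-1}v$ show that $\langle F_{\varphi_{t}},F_{\psi_{t}}\rangle_{\mathscr{H}_{F}}=\langle F_{\varphi},F_{\psi}\rangle_{\mathscr{H}_{F}}$ for all $|t|<\varepsilon$. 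Differentiating at $t=0$ yields (b), once one checks that $\tfrac{d}{dt}\big|_{t=0}F_{\varphi_{t}}=F_{\tilde X\varphi}$ with the limit taken in $\mathscr{H}_{F}$-norm; this is because $\tfrac{1}{t}(\varphi_{t}-\varphi)\to\tilde X\varphi$ uniformly on a fixed compact set and $\|F_{\chi}\|_{\mathscr{H}_{F}}^{2}\le\|F\|_{\infty}\|\chi\|_{L^{1}}^{2}$ for $\chi\in C_{c}(\Omega)$ ($F$ being bounded, since $|F(g)|\le F(e)$).

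Steps (c) and (d) are then formal. If $F_{\varphi}=0$ in $\mathscr{H}_{F}$, then (b) gives $\langle F_{\tilde X\varphi},F_{\psi}\rangle_{\mathscr{H}_{F}}=-\langle F_{\varphi},F_{\tilde X\psi}\rangle_{\mathscr{H}_{F}}=0$ for every $\psi\in C_{c}^{\infty}(\Omega)$, whence $F_{\tilde X\varphi}=0$ by the density in (a); so $S_{X}^{(F)}$ is well defined on $\mathscr{D}$, and (b) is precisely the statement $S_{X}^{(F)}\subseteq-(S_{X}^{(F)})^{*}$, i.e. $S_{X}^{(F)}$ is skew-Hermitian (this is the argument used for $D^{(F)}$ in Lemma \ref{lem:DF}). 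For (d), $\mathbb{R}$-linearity of $X\mapsto\tilde X$, hence of $X\mapsto S_{X}^{(F)}$, is immediate from (\ref{eq:li-4}). For the bracket relation, note that $\tilde X=d\lambda(X)$ is the infinitesimal generator of the left regular representation $(\lambda(h)\varphi)(g)=\varphi(h^{-1}g)$ of $G$, so the standard fact that the derived representation of a Lie group homomorphism is a Lie algebra homomorphism gives $[\tilde X,\tilde Y]\varphi=\widetilde{[X,Y]}\varphi$ for $\varphi\in C_{c}^{\infty}(\Omega)$; since $\mathrm{supp}(\tilde Y\varphi)\subseteq\mathrm{supp}(\varphi)$ we have $\tilde Y\varphi\in C_{c}^{\infty}(\Omega)$, so $F_{\tilde Y\varphi}\in\mathscr{D}$ and $S_{X}^{(F)}S_{Y}^{(F)}F_{\varphi}=F_{\tilde X\tilde Y\varphi}$; interchanging $X$ and $Y$ and subtracting gives $[S_{X}^{(F)},S_{Y}^{(F)}]F_{\varphi}=F_{[\tilde X,\tilde Y]\varphi}=S_{[X,Y]}^{(F)}F_{\varphi}$ on $\mathscr{D}$.

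I expect the only genuine obstacle to be step (b): keeping the shifted test functions $\varphi_{t}$ inside $C_{c}^{\infty}(\Omega)$ on a common interval of $t$, carrying out both changes of variable so that the group shift cancels inside $F(x^{-1}y)$ — this is exactly where left-invariance of the chosen Haar measure enters, and why no modular-function factor appears (it would, had $\tilde X$ been built from right translations) — and upgrading $\tfrac{d}{dt}\big|_{0}F_{\varphi_{t}}=F_{\tilde X\varphi}$ from pointwise to $\mathscr{H}_{F}$-norm convergence. Everything else is routine bookkeeping.
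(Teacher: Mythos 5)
Your proposal is correct and follows essentially the same route as the paper, whose entire proof is the remark that ``the arguments follow those of Lemma \ref{lem:DF} \emph{mutatis mutandis}'': your steps (a)--(c) are exactly that translation-invariance argument (norm invariance under the flow $\varphi\mapsto\varphi_{t}$, now via left-invariance of Haar measure, differentiated at $t=0$, plus the Schwarz-type well-definedness check). Step (d), the bracket relation via the derived left regular representation, is the standard bookkeeping the paper leaves implicit, and you supply it correctly.
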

\begin{proof}
The arguments here follow those of the proof of \lemref{DF} \emph{mutatis
mutandis}.
\end{proof}
\index{unitary representation}
\begin{defn}
\label{def:li-2}~
\begin{enumerate}
\item \label{enu:li-1}We say that a continuous p.d. function $F:\Omega^{-1}\Omega\rightarrow\mathbb{C}$
is extendable iff there is a continuous p.d. function $F_{ex}:G\rightarrow G$
such that
\begin{equation}
F_{ex}\Big|_{\Omega^{-1}\Omega}=F.\label{eq:li-7}
\end{equation}

\item \label{enu:li-2}Let $U\in Rep\left(G,\mathscr{K}\right)$ be a strongly
continuous unitary representation of $G$ acting in some Hilbert space
$\mathscr{K}$. We say that $U\in Ext\left(F\right)$ iff (Def.) there
is an isometry $\mathscr{H}_{F}\hookrightarrow\mathscr{K}$ such that
the function
\begin{equation}
G\ni g\mapsto\left\langle JF_{e},U\left(g\right)JF_{e}\right\rangle _{\mathscr{K}}\label{eq:li-8}
\end{equation}
satisfies the condition in (\ref{enu:li-1}).
\end{enumerate}
\end{defn}
\begin{thm}
Every extension of some continuous p.d. function $F$ on $\Omega^{-1}\Omega$
as in (\ref{enu:li-1}) arises from a unitary representation of $G$
as specified in (\ref{enu:li-2}).\end{thm}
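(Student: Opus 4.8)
The plan is to obtain the theorem from a Gelfand--Naimark--Segal (GNS) construction applied to the \emph{global} extension, combined with the isometric-embedding device already used in the $\Downarrow$ half of Theorem~\ref{thm:pd-extension-bigger-H-space}. Fix a continuous positive definite $F_{ex}:G\to\mathbb{C}$ with $F_{ex}\big|_{\Omega^{-1}\Omega}=F$. I would first normalize so that $e\in\Omega$: for any $g_{0}\in\Omega$ one has $(g_{0}^{-1}\Omega)^{-1}(g_{0}^{-1}\Omega)=\Omega^{-1}g_{0}g_{0}^{-1}\Omega=\Omega^{-1}\Omega$, so passing from $\Omega$ to $g_{0}^{-1}\Omega$ changes neither $F$, nor the kernel $(x,y)\mapsto F(x^{-1}y)$, nor the RKHS $\mathscr{H}_{F}$ (up to the relabeling $\varphi\mapsto\varphi(g_{0}^{-1}\,\cdot\,)$ of test functions), while now $e\in g_{0}^{-1}\Omega$. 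In the abelian case this step is vacuous.

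Next, apply GNS to $F_{ex}$: there are a strongly continuous unitary representation $U\in Rep(G,\mathscr{K})$, a cyclic vector $v\in\mathscr{K}$, and the identity $F_{ex}(g)=\langle v,U(g)v\rangle_{\mathscr{K}}$ for all $g\in G$ (explicitly one may take $\mathscr{K}=\mathscr{H}_{F_{ex}}$, $U$ the left regular action, and $v$ the reproducing vector at $e$). Define $W$ on the dense domain $\{F_{\varphi}:\varphi\in C_{c}^{\infty}(\Omega)\}\subset\mathscr{H}_{F}$ by $WF_{\varphi}:=\int_{\Omega}\varphi(y)\,U(y)v\,dy$. Since $F_{ex}$ agrees with $F$ on $\Omega^{-1}\Omega$ and $x,y\in\Omega$ forces $x^{-1}y\in\Omega^{-1}\Omega$,
\[
\left\Vert WF_{\varphi}\right\Vert _{\mathscr{K}}^{2}=\int_{\Omega}\int_{\Omega}\overline{\varphi(x)}\,\varphi(y)\,F_{ex}(x^{-1}y)\,dx\,dy=\int_{\Omega}\int_{\Omega}\overline{\varphi(x)}\,\varphi(y)\,F(x^{-1}y)\,dx\,dy=\left\Vert F_{\varphi}\right\Vert _{\mathscr{H}_{F}}^{2},
\]
so $W$ is well defined and extends to an isometry $W:\mathscr{H}_{F}\hookrightarrow\mathscr{K}$; a similar computation shows that $W$ locally intertwines the canonical operators $S_{X}^{(F)}$ of the Lemma preceding Definition~\ref{def:li-2} with the infinitesimal generators of $U$.

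The crucial step is to place the cyclic vector $v$ inside $W(\mathscr{H}_{F})$ and identify its preimage. Because $e\in\Omega$, strong continuity of $U$ gives, for an approximate identity $\varphi_{n}$ at $e$ supported in $\Omega$ (in the spirit of Lemma~\ref{lem:dense}), that $WF_{\varphi_{n}}=\int_{\Omega}\varphi_{n}(y)U(y)v\,dy\to v$ in $\mathscr{K}$; as $W(\mathscr{H}_{F})$ is closed, $v\in W(\mathscr{H}_{F})$. Write $v=W\eta$, $\eta\in\mathscr{H}_{F}$; then $\eta=W^{*}v$ since $W$ is isometric, and for every $\varphi\in C_{c}^{\infty}(\Omega)$,
\[
\langle W^{*}v,F_{\varphi}\rangle_{\mathscr{H}_{F}}=\langle v,WF_{\varphi}\rangle_{\mathscr{K}}=\int_{\Omega}\varphi(y)F_{ex}(y)\,dy=\int_{\Omega}\varphi(y)F(y)\,dy=F_{\varphi}(e)=\langle F_{e},F_{\varphi}\rangle_{\mathscr{H}_{F}},
\]
so $\eta=F_{e}$, equivalently $\eta=JF_{e}$ after inserting the reflection/conjugation $J$ of Definition~\ref{def:li-2} that reconciles the $F(x^{-1}y)$ and $F(y^{-1}x)$ conventions in the non-abelian case. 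Consequently
\[
\langle JF_{e},\,U(g)\,JF_{e}\rangle_{\mathscr{K}}=\langle W\eta,\,U(g)W\eta\rangle_{\mathscr{K}}=\langle v,U(g)v\rangle_{\mathscr{K}}=F_{ex}(g),\qquad g\in G,
\]
which is exactly the assertion that $U\in Ext(F)$ realizes the given extension; since $F_{ex}$ was arbitrary, every extension arises this way.

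I expect the third paragraph to be the main obstacle: guaranteeing that $v$ itself --- not merely its orthogonal projection onto the embedded copy $W(\mathscr{H}_{F})$ --- lies in the range of $W$, and correctly identifying the preimage as $JF_{e}$. This is precisely where the normalization $e\in\Omega$ and the cyclicity of $v$ are used; without them one recovers only a ``projected'' extension. A secondary, routine technical point is the Bochner integrability and continuity in $\varphi$ of the $\mathscr{K}$-valued integral $\int_{\Omega}\varphi(y)U(y)v\,dy$, which follow from strong continuity of $U$ (itself guaranteed by the continuity theorem for positive definite functions on $G$ established above).
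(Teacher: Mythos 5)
Your proof is correct and follows essentially the same route as the paper: apply GNS to the global extension $F_{ex}$ to obtain $(U,\mathscr{K},v_{0})$ with $F_{ex}(g)=\left\langle v_{0},U(g)v_{0}\right\rangle $, and embed $\mathscr{H}_{F}$ isometrically into $\mathscr{K}$ by sending generating vectors to the corresponding orbit vectors of $v_{0}$ (the paper does this on the kernel vectors $F(\cdot\, g)\mapsto U(g^{-1})v_{0}$, you on their smeared versions $F_{\varphi}$). Your extra care in normalizing $e\in\Omega$, recovering $v_{0}$ as a limit of $WF_{\varphi_{n}}$, and identifying $W^{*}v_{0}=F_{e}$ supplies precisely the details the paper omits; the only blemish is the aside describing the $J$ of Definition \ref{def:li-2} as a ``reflection/conjugation'' --- there $J$ simply denotes the isometric embedding itself, i.e.\ your $W$.
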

\begin{proof}
First assume some unitary representation $U$ of $G$ satisfies (\ref{enu:li-2}),
then (\ref{eq:li-8}) is an extension of $F$. This follows from the
argument in our proof of \lemref{DF}. 

For the converse; assume some continuous p.d. function $F_{ex}$ on
$G$ satisfies (\ref{eq:li-7}). Now apply the GNS-theorem to $F_{ex}$;
and, as a result, we get a cyclic representation $\left(U,\mathscr{K},v_{0}\right)$
where \index{GNS}
\begin{itemize}
\item $\mathscr{K}$ is a Hilbert space;
\item $U$ is a strongly continuous unitary representation of $G$ acting
on $\mathscr{K}$; and
\item $v_{0}\in\mathscr{K}$ is a cyclic vector, $\left\Vert v_{0}\right\Vert =1$;
and
\begin{equation}
F_{ex}\left(g\right)=\left\langle v_{0},U\left(g\right)v_{0}\right\rangle ,\; g\in G.\label{eq:li-9}
\end{equation}

\end{itemize}

Defining now $J:\mathscr{H}_{F}\rightarrow\mathscr{K}$ as follows,
\[
J\left(F\left(\cdot g\right)\right):=U\bigl(g^{-1}\bigr)v_{0},\;\forall g\in\Omega;
\]
and extension by limit, we check that $J$ is isometric and satisfies
the condition from (\ref{enu:li-2}) in Definition \ref{def:li-2}.
We omit details as they parallel arguments already contained in \chapref{intro}.

\end{proof}
\begin{thm}
\label{thm:gEn}Let $\Omega$, $G$, $La\left(G\right)$, and $F:\Omega^{-1}\Omega\rightarrow\mathbb{C}$
be as in Definition \ref{def:li-1}. Let $\tilde{G}$ be the simply
connected universal covering group for $G$. Then $F$ has an extension
to a p.d. continuous function on $\tilde{G}$ iff there is a unitary
representation $U$ of $\tilde{G}$ and an isometry $\mathscr{H}_{F}\overset{J}{\longrightarrow}\mathscr{K}$
such that
\begin{equation}
JS_{X}^{\left(F\right)}=dU\left(X\right)J\label{eq:li-11}
\end{equation}
holds on $\left\{ F_{\varphi}\:\big|\:\varphi\in C_{c}^{\infty}\left(\Omega\right)\right\} $,
for all $X\in La\left(G\right)$; where 
\[
dU\left(X\right)U\left(\varphi\right)v_{0}=U\bigl(\tilde{X}\varphi\bigr)v_{0}.
\]

\end{thm}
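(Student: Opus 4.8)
The plan is to prove the two implications separately. In the direction ``$F$ extendable $\Rightarrow$ intertwining representation'' the argument is essentially the proof of the preceding theorem applied to $\tilde{G}$ in place of $G$, followed by one differentiation; the substance lies in the converse direction, where the simple connectedness of $\tilde{G}$ is what permits integrating the Lie-algebra relation (\ref{eq:li-11}) up to a statement about the function $\langle k_{0},U(\cdot)k_{0}\rangle$. Throughout I fix once and for all a lift of the connected open set $\Omega$ to $\tilde{G}$, hence a lift of $\Omega^{-1}\Omega$ containing $e$, so that ``extension of $F$ to $\tilde{G}$'' is unambiguous; this costs nothing since $\Omega$ is connected.

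($\Rightarrow$) Assume $F$ has a continuous p.d. extension $\widetilde{F}$ on $\tilde{G}$. Apply the GNS theorem to $\widetilde{F}$, producing a cyclic unitary representation $(U,\mathscr{K},v_{0})$ of $\tilde{G}$ with $\widetilde{F}(g)=\langle v_{0},U(g)v_{0}\rangle_{\mathscr{K}}$. Define $J:\mathscr{H}_{F}\to\mathscr{K}$ on the kernel functions by $J(F(\cdot\,g)):=U(g^{-1})v_{0}$ for $g$ in the chosen lift of $\Omega$, and extend by linearity and limits; the isometry of $J$ follows from $\widetilde{F}|_{\Omega^{-1}\Omega}=F$, exactly as in the verification that (\ref{eq:e.13}) is isometric. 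Then, for $X\in La(G)$ and $\varphi\in C_{c}^{\infty}(\Omega)$, the group-translation identity $U(\exp_{\tilde{G}}(-tX))JF_{\varphi}=JF_{\varphi^{X}_{t}}$ with $\varphi^{X}_{t}(g):=\varphi(\exp_{G}(-tX)g)$ is checked on the cyclic vector as in Lemma \ref{lem:abc}; differentiating at $t=0$ and using (\ref{eq:li-6}) gives $dU(X)JF_{\varphi}=JF_{\tilde{X}\varphi}=JS_{X}^{(F)}F_{\varphi}$, which is (\ref{eq:li-11}).

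($\Leftarrow$) Assume $U\in Rep(\tilde{G},\mathscr{K})$ and an isometry $J:\mathscr{H}_{F}\to\mathscr{K}$ satisfy (\ref{eq:li-11}) on $\{F_{\varphi}\mid\varphi\in C_{c}^{\infty}(\Omega)\}$ for every $X\in La(G)$. Put $k_{0}:=JF_{e}$ and define
\[
\widetilde{F}(g):=\langle k_{0},U(g)k_{0}\rangle_{\mathscr{K}},\qquad g\in\tilde{G},
\]
which is automatically continuous and positive definite on $\tilde{G}$. To identify its restriction, run the flow argument: for $X\in La(G)$ and $\varphi\in C_{c}^{\infty}(\Omega)$ with support compactly inside $\Omega$, the curve $s\mapsto U(\exp_{\tilde{G}}(-(t-s)X))JF_{\varphi^{X}_{s}}$ has vanishing $s$-derivative for $t$ small, the computation being identical to (\ref{eq:e.10})--(\ref{eq:e.12}) with $A$ replaced by $dU(X)$ and $\varphi'$ by $\tilde{X}\varphi$ (using (\ref{eq:li-11})); hence $U(\exp_{\tilde{G}}(-tX))JF_{\varphi}=JF_{\varphi^{X}_{t}}$. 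Composing such identities over a basis of $La(G)$ in canonical coordinates of the second kind yields $U(g)JF_{\varphi}=JF_{\varphi\circ L_{g^{-1}}}$ for $g$ near $e$ in $\tilde{G}$; passing to an approximate identity as in (\ref{eq:e.8}) then identifies $U(x)k_{0}$ with the kernel vector $\xi_{x}\in\mathscr{H}_{F}$ for $x$ in the lift of $\Omega$, the local identification propagating without monodromy along paths in the simply connected $\tilde{G}$. Since $\Omega^{-1}\Omega=\{x^{-1}y\mid x,y\in\Omega\}$, the defining inner product (\ref{eq:ip-discrete}) gives $\widetilde{F}(x^{-1}y)=\langle U(x)k_{0},U(y)k_{0}\rangle=\langle\xi_{x},\xi_{y}\rangle_{\mathscr{H}_{F}}=F(x^{-1}y)$, so $\widetilde{F}$ extends $F$.

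The main obstacle is this converse step: passing from (\ref{eq:li-11}), known only on the dense domain $\{F_{\varphi}\}$ and only for the finitely many directions $X\in La(G)$, to the global identity $\widetilde{F}|_{\Omega^{-1}\Omega}=F$. It breaks into (i) exponentiating each one-parameter subgroup on the domain via the ordinary differential equation argument, keeping the translated test functions supported in $\Omega$; (ii) assembling the one-parameter pieces into a neighborhood of $e$ in $\tilde{G}$ through coordinates of the second kind, again with support control; and (iii) invoking simple connectedness of $\tilde{G}$ to propagate the resulting local identity over the whole connected lift of $\Omega$ without monodromy. The remaining ingredients --- GNS, the isometry of $J$, and the differentiation producing $dU$ --- are routine and parallel material already in Chapter~\ref{chap:intro}.
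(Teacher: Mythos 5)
Your proposal is correct and follows essentially the route the paper intends: the paper's own ``proof'' of Theorem \ref{thm:gEn} is only a pointer to Sections \ref{sec:embedding}, \ref{sub:G=00003DT} and Chapter \ref{chap:types}, and the two ingredients you supply --- GNS applied to the extension plus differentiation of the translation identity for ($\Rightarrow$), and the vanishing-derivative/flow argument of (\ref{eq:e.10})--(\ref{eq:e.12}) integrated up through one-parameter subgroups for ($\Leftarrow$) --- are exactly the arguments those sections contain. Your write-up is in fact more careful than the source on the two points it glosses over (support control for the translated test functions, and the choice of lift of $\Omega$ to $\tilde{G}$).
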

\index{group!covering}\index{group!simply connected}
\begin{proof}
Details are contained in sections \ref{sec:embedding}, \ref{sub:G=00003DT},
and \chapref{types}.
\end{proof}
Assume $G$ is connected. Note that on $C_{c}^{\infty}\left(\Omega\right)$,
the Lie group $G$ acts locally, i.e., by $\varphi\mapsto\varphi_{g}$
where $\varphi_{g}$ denotes translation of $\varphi$ by some element
$g\in G$, such that $\varphi_{g}$ is also supported in $\Omega$.
Then 
\begin{equation}
\left\Vert F_{\varphi}\right\Vert _{\mathscr{H}_{F}}=\left\Vert F_{\varphi_{g}}\right\Vert _{\mathscr{H}_{F}};\label{eq:li-1-1}
\end{equation}
but only for elements $g\in G$ in a neighborhood of $e\in G$ , and
with the neighborhood depending on $\varphi$. 
\begin{cor}
Given 
\begin{equation}
F:\Omega^{-1}\cdot\Omega\rightarrow\mathbb{C}\label{eq:li-1-2}
\end{equation}
continuous and positive definite, then set 
\begin{equation}
L_{g}\left(F_{\varphi}\right):=F_{\varphi_{g}},\;\varphi\in C_{c}^{\infty}\left(\Omega\right),\label{eq:li-1-3}
\end{equation}
defining a local representation of $G$ in $\mathscr{H}_{E}$, see
\cite{Jor87,Jor86}.
\end{cor}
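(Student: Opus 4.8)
The plan is to recognize that this corollary is simply the repackaging, in the language of local representations of \cite{Jor86,Jor87}, of the invariance identity (\ref{eq:li-1-1}); the work is to pin down domains and check the local-group axioms, all of which rest on a single change of variables. First I would fix domains: for $\varphi\in C_c^\infty(\Omega)$ the support $K_\varphi:=\mathrm{supp}(\varphi)$ is a compact subset of the open set $\Omega$, so there is an open neighborhood $V_\varphi\ni e$ in $G$ with $gK_\varphi\subset\Omega$ for all $g\in V_\varphi$; for such $g$ the left translate $\varphi_g(x):=\varphi(g^{-1}x)$ lies again in $C_c^\infty(\Omega)$ (supported in $gK_\varphi$), so $L_g(F_\varphi):=F_{\varphi_g}$ is a genuine vector in $\mathscr{H}_F$. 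Writing $\mathscr{D}_K:=\mathrm{span}\{F_\varphi\colon\mathrm{supp}(\varphi)\subset K\}$ for compact $K\subset\Omega$, the dense domain $\{F_\varphi\}$ is exhausted by the $\mathscr{D}_K$, and on $\mathscr{D}_K$ the operators $L_g$ are simultaneously defined for all $g$ in a fixed neighborhood $V_K\ni e$.

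Next I would prove the isometry property, which also yields well-definedness. Using the integral form of the $\mathscr{H}_F$ inner product (the Lie-group version of (\ref{eq:hi2}), cf. (\ref{eq:li-3})), for $g\in V_\varphi\cap V_\psi$
\[
\langle F_{\varphi_g},F_{\psi_g}\rangle_{\mathscr{H}_F}=\int_\Omega\int_\Omega\overline{\varphi(g^{-1}x)}\,\psi(g^{-1}y)\,F(x^{-1}y)\,dx\,dy,
\]
and the substitution $x=gu$, $y=gv$, together with left-invariance of $dx$ and the cancellation $(gu)^{-1}(gv)=u^{-1}v$, shows this equals $\langle F_\varphi,F_\psi\rangle_{\mathscr{H}_F}$ (the integrand being supported in $u\in K_\varphi$, $v\in K_\psi$, so $u^{-1}v$ stays in the domain $\Omega^{-1}\Omega$ of $F$). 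In particular $\|F_{\varphi_g}\|_{\mathscr{H}_F}=\|F_\varphi\|_{\mathscr{H}_F}$, so $F_\varphi=0$ in $\mathscr{H}_F$ forces $F_{\varphi_g}=0$; hence $L_g$ is well-defined on $\mathscr{D}_K$, isometric there, and extends to an isometry of $\mathscr{H}_F$, still denoted $L_g$.

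Then I would verify the remaining axioms. On the dense domain $L_gL_hF_\varphi=L_gF_{\varphi_h}=F_{(\varphi_h)_g}=F_{\varphi_{gh}}=L_{gh}F_\varphi$, valid whenever $h$, $gh$ and the intermediate translates keep supports inside $\Omega$ (automatic for $g,h$ in neighborhoods of $e$ depending on $\mathrm{supp}(\varphi)$), and $L_e=I$; since $L_{g^{-1}}L_g=I=L_gL_{g^{-1}}$ then holds densely for $g$ near $e$, each such $L_g$ is unitary. Strong continuity of $g\mapsto L_gF_\varphi$ follows from $\|F_{\varphi_g}-F_{\varphi_{g'}}\|_{\mathscr{H}_F}^2=\|F_{\varphi_g-\varphi_{g'}}\|_{\mathscr{H}_F}^2\le F(e)\,\|\varphi_g-\varphi_{g'}\|_{L^1(G)}^2$ (using $|F|\le F(e)$ for p.d. $F$) and continuity of left translation in $L^1(G)$, passed to all of $\mathscr{H}_F$ via the uniform bound $\|L_g\|=1$ and an $\varepsilon/3$ argument. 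These are exactly the defining properties of a local unitary representation of $G$ on $\mathscr{H}_F$ in the sense of \cite{Jor86,Jor87}, and differentiating along $g=\exp_G(tX)$ returns the skew-Hermitian operators $S_X^{(F)}$ of (\ref{eq:li-6}).

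The hard part will not be any single estimate but the bookkeeping of domains: there is no single neighborhood of $e$ on which $L_g$ acts on the whole dense domain, so the statement must be read relative to the exhaustion $\{\mathscr{D}_K\}_{K\subset\Omega}$, and one has to confirm that the local group-law identities are compatible as $K$ grows — which they are, since left translation respects inclusion of supports. Everything else reduces to the one-line Haar-invariance change of variables displayed above.
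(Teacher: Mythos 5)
Your proposal is correct and follows exactly the route the paper intends: the key fact is the local norm-invariance $\left\Vert F_{\varphi}\right\Vert _{\mathscr{H}_{F}}=\left\Vert F_{\varphi_{g}}\right\Vert _{\mathscr{H}_{F}}$ stated in (\ref{eq:li-1-1}) immediately before the corollary, which you derive from left-invariance of Haar measure and the cancellation $\left(gu\right)^{-1}\left(gv\right)=u^{-1}v$, and the paper delegates the remaining local-representation axioms to \cite{Jor87,Jor86}. Your careful bookkeeping of the $\varphi$-dependent neighborhoods $V_{\varphi}$ and the exhaustion by the subspaces $\mathscr{D}_{K}$ supplies precisely the details the paper omits, so nothing further is needed.
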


\begin{cor}
Given $F$, positive definite and continuous, as in (\ref{eq:li-1-2}),
and let $L$ be the corresponding local representation of $G$ acting
on $\mathscr{H}_{F}$. Then $Ext\left(F\right)\neq\phi$ if and only
if the local representation (\ref{eq:li-1-3}) extends to a global
unitary representation acting in some Hilbert space $\mathscr{K}$,
containing $\mathscr{H}_{F}$ isometrically.\end{cor}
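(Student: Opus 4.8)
The statement is an equivalence, and each direction reduces to machinery already in place: the GNS construction, the isometric embedding $\mathscr{H}_{F}\hookrightarrow\mathscr{K}$ attached to a positive definite extension (as in the proof of the preceding results and of \lemref{DF}), and the density in $\mathscr{H}_{F}$ of $\left\{ F_{\varphi}:\varphi\in C_{c}^{\infty}\left(\Omega\right)\right\}$ via an approximate identity (the Lie-group analogue of \lemref{dense}). Unlike Theorem \ref{thm:gEn}, no passage to the universal cover is needed here, since $Ext\left(F\right)$ already consists of objects over $G$ itself. For the implication $Ext\left(F\right)\neq\phi\Rightarrow L$ extends globally: by Definition \ref{def:li-2}, together with the equivalence between $Ext\left(F\right)\neq\phi$ and extendability of $F$ established above, there is a continuous p.d. extension $F_{ex}$ of $F$ to $G$; apply the GNS theorem to it to obtain a cyclic strongly continuous unitary representation $\left(U,\mathscr{K},v_{0}\right)$ with $F_{ex}\left(g\right)=\left\langle v_{0},U\left(g\right)v_{0}\right\rangle _{\mathscr{K}}$. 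Define $J:\mathscr{H}_{F}\to\mathscr{K}$ on the dense span of translates by $J\bigl(F\left(\cdot\, g\right)\bigr):=U\bigl(g^{-1}\bigr)v_{0}$ and extend by continuity; the identity $\left\langle F\left(\cdot\, a\right),F\left(\cdot\, b\right)\right\rangle _{\mathscr{H}_{F}}=F\left(a^{-1}b\right)=F_{ex}\left(a^{-1}b\right)=\left\langle U\bigl(a^{-1}\bigr)v_{0},U\bigl(b^{-1}\bigr)v_{0}\right\rangle _{\mathscr{K}}$ shows $J$ is isometric (this is the embedding already used above). It then remains to check $JL_{g}F_{\varphi}=U\left(g\right)JF_{\varphi}$ for $g$ in a neighbourhood of $e$ depending on $\varphi$; this follows from $L_{g}F_{\varphi}=F_{\varphi_{g}}$, from expressing $F_{\varphi}$ as a vector-valued integral of translated reproducing vectors so that $JF_{\varphi}$ becomes the corresponding integral of $U\left(\cdot\right)v_{0}$, and from left-invariance of Haar measure together with the group law for $U$. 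Hence $U$, acting on $\mathscr{K}\supseteq J\mathscr{H}_{F}$, is the desired global extension of the local representation $L$.

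\textbf{Converse.} Conversely, suppose $\left(U,\mathscr{K}\right)$ is a strongly continuous unitary representation of $G$ with an isometry $J:\mathscr{H}_{F}\hookrightarrow\mathscr{K}$ intertwining $L$ with $U$ near $e$, i.e.\ $JL_{g}F_{\varphi}=U\left(g\right)JF_{\varphi}$ for small $g$. Put $k_{0}:=JF_{e}$, where $F_{e}\in\mathscr{H}_{F}$ is the reproducing vector at the identity, and set $F_{ex}\left(g\right):=\left\langle k_{0},U\left(g\right)k_{0}\right\rangle _{\mathscr{K}}$. Being a matrix coefficient of a strongly continuous unitary representation, $F_{ex}$ is continuous and positive definite on all of $G$. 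To see $F_{ex}\big|_{\Omega^{-1}\Omega}=F$, write $g=x^{-1}y$ with $x,y\in\Omega$; since $\Omega$ is connected, join $e$ to $x$ and to $y$ by short chains inside $\Omega$ and iterate the local intertwining relation along finitely many small steps (together with the approximate-identity density of $\left\{ F_{\varphi}\right\}$) to identify $U\bigl(x^{-1}\bigr)k_{0}=J\xi_{x}$ and $U\bigl(y^{-1}\bigr)k_{0}=J\xi_{y}$, where $\xi_{x}=F\bigl(x^{-1}\,\cdot\,\bigr)$ and $\xi_{y}=F\bigl(y^{-1}\,\cdot\,\bigr)$ lie in $\mathscr{H}_{F}$. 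Since $J$ is isometric, $F_{ex}\left(x^{-1}y\right)=\left\langle U\bigl(x^{-1}\bigr)k_{0},U\bigl(y^{-1}\bigr)k_{0}\right\rangle _{\mathscr{K}}=\left\langle \xi_{x},\xi_{y}\right\rangle _{\mathscr{H}_{F}}=F\left(x^{-1}y\right)$ by the defining inner product of $\mathscr{H}_{F}$. Thus $F_{ex}$ is a continuous p.d. extension of $F$ to $G$, so $Ext\left(F\right)\neq\phi$ by Definition \ref{def:li-2}.

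\textbf{Main obstacle.} The genuinely delicate point is the purely local nature of $L_{g}$: it is defined only for $g$ near $e$, with the neighbourhood depending on $\varphi$, so one cannot simply ``integrate'' the local intertwining relation to a global one in a single stroke. Making the chaining argument inside the connected set $\Omega$ precise --- choosing the intermediate points, controlling the supports of the translated functions $\varphi_{g}$ so that they remain in $\Omega$, and passing to the limit via the approximate identity to identify $U\bigl(x^{-1}\bigr)k_{0}$ with $J\xi_{x}$ for every $x\in\Omega$ --- is where the real content lies. By contrast, the isometry property of $J$, and the continuity and positive definiteness of $F_{ex}$, are routine given the GNS theorem, \lemref{DF}, and the general RKHS facts recorded earlier.
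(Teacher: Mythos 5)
Your argument is correct and follows the route the paper intends: the forward direction is exactly the GNS construction of $\left(U,\mathscr{K},v_{0}\right)$ together with the isometry $J\left(F\left(\cdot\, g\right)\right)=U\bigl(g^{-1}\bigr)v_{0}$ already set up in the theorem immediately preceding this corollary, and the converse recovers a continuous p.d. extension of $F$ as the matrix coefficient $g\mapsto\left\langle JF_{e},U\left(g\right)JF_{e}\right\rangle _{\mathscr{K}}$ in accordance with Definition \ref{def:li-2}. The paper gives no proof beyond citing \cite{Jor87,Jor86} and \chapref{Ext1}; the local-to-global integrability step you single out as the crux (propagating the local intertwining relation along chains of small steps in the connected open set $\Omega$, with support control on the translated test functions and an approximate-identity limit) is precisely the content of those references, so your sketch is at least as complete as what the paper itself supplies.
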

\begin{proof}
We refer to \cite{Jor87,Jor86} for details, as well as \chapref{Ext1}
below.
\end{proof}

\section{\label{sub:G=00003DT}The Circle Group $\mathbb{T}$}

While we consider extensions of locally defined continuous and positive
definite (p.d.) functions $F$ on groups, say $G$, the question of
whether $G$ is simply connected or not plays an essential role in
the analysis, and in the possible extensions. It turns out that the
geometric issues arising for general Lie groups can be illustrated
well in a simple case: To illustrate this point, we isolate below
the two groups, the circle group, and its universal cover, the real
line. We study extensions defined on a small arc in the circle group
$G=\mathbb{T}=\mathbb{R}/\mathbb{Z}$, versus extensions to the universal
covering group $\mathbb{R}$.

Let $G=\mathbb{T}=\mathbb{R}/\mathbb{Z}$ represented as $\left(-\tfrac{1}{2},\tfrac{1}{2}\right].$
Pick $0<\varepsilon<\tfrac{1}{2},$ set $\Omega=(0,\varepsilon),$
then $\Omega-\Omega=(-\varepsilon,\varepsilon)$ mod $\mathbb{Z}.$ 

\index{group!circle}
\begin{lem}
If $L$ has deficiency indices $(1,1),$ there is a selfadjoint extension
$A$ of $L$ acting in $\mathscr{H}_{F}$, such that the corresponding
p.d. extension $\widetilde{F}$ of $F$ has period one, then $\varepsilon$
is rational. \index{selfadjoint extension}\end{lem}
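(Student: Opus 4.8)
The plan is to push everything through the spectral theory of the skew‑adjoint extension $A\supseteq L=D^{(F)}$ that produces $\widetilde F$, using that periodicity of $\widetilde F$ forces $\operatorname{spec}(A)$ onto the integer lattice. Write $U(t)=e^{tA}$, so $\widetilde F(t)=\langle F_{0},U(t)F_{0}\rangle_{\mathscr H_{F}}$. First I would check that $F_{0}$ is cyclic for $\{U(t)\}_{t\in\mathbb R}$ in $\mathscr H_{F}$: the local translation formula $U(t)F_{\varphi}=F_{\varphi(\cdot+t)}$ (valid for small $|t|$, exactly as in the proof of Theorem~\ref{thm:pd-extension-bigger-H-space}) combined with a one‑sided approximate identity at $0$ gives $U(-s)F_{0}=F(\cdot-s)|_{\Omega}$ for $0<s<\varepsilon$, and these vectors are total by \lemref{dense}. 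Hence $d\mu(\lambda):=\|P_{U}(d\lambda)F_{0}\|_{\mathscr H_{F}}^{2}$ is a complete spectral measure for $U$ and $\widetilde F=\widehat{d\mu}$. Periodicity $\widetilde F(t+1)=\widetilde F(t)$ forces $(e^{2\pi i\lambda}-1)\,d\mu(\lambda)\equiv0$, i.e. $\operatorname{supp}(\mu)\subseteq\mathbb Z$, equivalently $U(1)=\operatorname{id}_{\mathscr H_{F}}$. Thus $A$ has purely atomic spectrum contained in $2\pi i\mathbb Z$ and $\mathscr H_{F}=\overline{\bigoplus_{n\in S}\ker(A-2\pi i n)}$ for some $S\subseteq\mathbb Z$; since deficiency indices $(1,1)$ force $\dim\mathscr H_{F}=\infty$, the set $S$ is infinite.

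Next I would identify the eigenspaces concretely. Because $A$ is a skew‑adjoint extension of $D^{(F)}$ we have $A\subseteq(D^{(F)})^{*}$, and $(D^{(F)})^{*}$ acts on $\mathscr H_{F}$ — a space of $C^{1}$ functions on $(0,\varepsilon)$, by the remark after \lemref{dense} — as $-\tfrac{d}{dy}$ in the weak sense; so $A\psi=2\pi i n\,\psi$ forces $\psi(y)=c\,e^{-2\pi i n y}$ on $(0,\varepsilon)$. Hence each nonzero eigenspace is one‑dimensional, spanned by $e_{n}:=e^{-2\pi i n(\cdot)}|_{(0,\varepsilon)}$, and $\{e_{n}\}_{n\in S}$ is an orthogonal total system in $\mathscr H_{F}$. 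On the other hand, the deficiency‑$(1,1)$ hypothesis means precisely that $\xi_{\pm}(y)=e^{\mp y}|_{(0,\varepsilon)}$ both lie in $\mathscr H_{F}$ (they span $DEF^{\pm}$), and that $\operatorname{dom}(A)=\operatorname{dom}(\overline{D^{(F)}})\dotplus\mathbb C\eta$ with $\eta=\alpha\xi_{+}+\beta\xi_{-}$, $\alpha\beta\neq0$, and $A\eta=L^{*}\eta=\alpha\xi_{+}-\beta\xi_{-}$. Expanding $\eta=\sum_{n\in S}b_{n}e_{n}$ and using $Ae_{n}=2\pi i n\,e_{n}$ to eliminate $A\eta$ yields, on $(0,\varepsilon)$, two expansions of $\xi_{\pm}$ in the characters $\{e^{-2\pi i n(\cdot)}\}_{n\in S}$, both convergent in $\mathscr H_{F}$; since $\sum_{n\in S}\|e_{n}\|_{\mathscr H_{F}}^{-2}=\sum_{n\in S}a_{n}<\infty$, the series are actually absolutely and uniformly convergent, so $\xi_{\pm}$ is the restriction to $(0,\varepsilon)$ of a continuous $1$‑periodic function $g_{\pm}$ on $\mathbb T$.

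The crux — and the step I expect to be the real obstacle — is to extract rationality of $\varepsilon$ from this rigidity. The picture to exploit is that $A$, being a skew‑adjoint extension of the honest derivative on functions over the interval $(0,\varepsilon)$ selected by a single unitary ``boundary'' parameter, ought to behave like $-i\tfrac{d}{dy}$ on an interval of length $\varepsilon$ with a twisted‑periodic boundary condition, so that its eigenvalues form an arithmetic progression of step $2\pi/\varepsilon$; combined with $\operatorname{spec}(A)\subseteq2\pi i\mathbb Z$ this gives $2\pi/\varepsilon\in2\pi\mathbb Z$, i.e. $\varepsilon=1/q$ for a positive integer $q$, hence $\varepsilon\in\mathbb Q$. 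Making ``arithmetic progression of step $2\pi/\varepsilon$'' rigorous is the work: one must pin down the single linear condition defining $\operatorname{dom}(A)$ as one relating the one‑sided boundary values of $\psi$ at $0$ and at $\varepsilon$, and show $e^{-2\pi i n(\cdot)}|_{(0,\varepsilon)}$ meets it only for $n$ in a progression $q_{0}+q\mathbb Z$. Concretely I would argue via the identities of the previous paragraph: the distributional derivatives $g_{+}'+g_{+}$ and $g_{-}'-g_{-}$ lie in $\mathscr H_{\widetilde F}$ (by finiteness of $\|A\eta\|^{2}$) and vanish on $(0,\varepsilon)$; translating this back through the local translation structure of $U$ on the half‑open arc $[0,\varepsilon)$ forces $U(\varepsilon)$ to act on $F_{0}$ as a pure phase, so the $U$‑orbit of $F_{0}$ already closes up projectively after time $\varepsilon$, and together with $U(1)=\operatorname{id}$ this yields $q\varepsilon\in\mathbb Z$.

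The delicate bookkeeping — tracking how the boundary phase interacts with the constraint $U(1)=\operatorname{id}$, and ruling out degenerate configurations of $S$ — is where essentially all the difficulty sits; everything preceding it is soft spectral theory together with the RKHS facts (\lemref{dense}, the description of $DEF^{\pm}$, and the identification $\mathscr H_{\widetilde F}\simeq L^{2}(\mu)$) already recorded above, and the momentum‑operator heuristic is the guide for organizing that final computation.
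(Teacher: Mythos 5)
Your setup is sound --- cyclicity of $F_{0}$ for $\{U(t)\}$, the reduction of the support of $\mu$ to $\mathbb{Z}$ via periodicity of $\widetilde{F}$, the identification of each eigenspace of $A$ with the span of a single complex exponential, and the observation that the index-$(1,1)$ hypothesis forces $\mathscr{H}_{F}$ to be infinite-dimensional and hence the eigenvalue set to be infinite --- but the argument stops exactly where the lemma's content begins, and you say so yourself: the passage from ``the eigenvalues present in $\mu$ are integers'' to ``$\varepsilon$ is rational'' is left as a heuristic. Moreover, the target you set for that step is both stronger than the lemma and not actually available. You want the spectrum of $A$ to be a \emph{full} arithmetic progression of step $1/\varepsilon$ contained in $\mathbb{Z}$, which would give $1/\varepsilon\in\mathbb{Z}$ and $\varepsilon=1/q$. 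But the spectrum of the extension $A_{\theta}$ consists only of those members $\lambda_{n}=(\theta+n)/\varepsilon$ of the progression for which $e_{\lambda_{n}}$ actually lies in $\mathscr{H}_{F}$ (membership in the RKHS is a nontrivial constraint; cf.\ Theorem \ref{thm:Eigenspaces-for-the-adjoint}), so nothing forces two \emph{consecutive} members of the progression to occur, and neither $1/\varepsilon\in\mathbb{Z}$ nor ``$U(\varepsilon)$ acts on $F_{0}$ as a pure phase'' is a legitimate intermediate goal.

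The step you are missing is short and replaces all of your third paragraph. A selfadjoint extension with indices $(1,1)$ is cut out by a single boundary condition $\xi(\varepsilon)=e^{2\pi i\theta}\xi(0)$; an eigenfunction of $A_{\theta}$, being a constant multiple of $e_{\lambda}(y)=e^{2\pi i\lambda y}$, satisfies it iff $\lambda\varepsilon-\theta\in\mathbb{Z}$, i.e.\ iff $\lambda=(\theta+n)/\varepsilon$ for some $n\in\mathbb{Z}$. Periodicity puts the support of $\mu_{\theta}$ inside $\mathbb{Z}$, and (as you correctly argue) that support contains at least two distinct points $\lambda_{n}\neq\lambda_{m}$. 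Subtracting the two membership conditions eliminates $\theta$:
\begin{equation*}
\lambda_{n}-\lambda_{m}=\frac{n-m}{\varepsilon}\in\mathbb{Z}\setminus\{0\},\qquad\text{hence}\qquad \varepsilon=\frac{n-m}{\lambda_{n}-\lambda_{m}}\in\mathbb{Q}.
\end{equation*}
No control of the whole spectrum, no expansion of the defect vectors $\xi_{\pm}$ in the eigenbasis, and none of the ``delicate bookkeeping'' you defer to is needed; a single difference quotient between any two integer eigenvalues closes the proof.
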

\begin{proof}
In the notation of Theorem \ref{thm:pd-extension-bigger-H-space}
our assumptions imply that $\mathscr{K}=\mathscr{H}_{F}.$ Suppose
the deficiency indices of $D^{(F)}=L$ are $(1,1)$, the generator
of $U_{A}(t),$ $A$ is a selfadjoint extension of $D^{(F)},$ and
\[
F_{A}(t)=\left\langle F_{0},U_{A}(t)F_{0}\right\rangle _{F}=\int_{\mathbb{R}}e_{t}(\lambda)d\mu(\lambda)
\]
has period one, where 
\[
U_{A}(t)=\int_{\mathbb{R}}e_{t}(\lambda)P_{A}(d\lambda)
\]
and 
\[
d\mu_{A}(\lambda)=\left\Vert P_{A}(d\lambda)F_{0}\right\Vert _{F}^{2}.
\]
Since the deficiency indices are $(1,1),$ the selfadjoint extensions
of $D^{(F)}$ are determined by boundary conditions of the form 
\[
\xi(\varepsilon)=e(\theta)\xi(0),
\]
where $0\leq\theta<1$ is fixed. Let $A_{\theta}$ be the s.a. corresponding
to $\theta.$ Repeating the calculation of the defect spaces and using
that eigenfunctions must satisfy the boundary condition, it follows
that the spectrum\index{spectrum} of $A_{\theta}$ the
\[
\lambda_{n}=\tfrac{\theta+n}{\varepsilon},
\]
$n\in\mathbb{Z}$ for which $e_{\lambda_{n}}$ is in $\mathscr{H}_{F}.$ 

The support of $\mu_{\theta}$ is a subset of the spectrum of $A_{\theta}.$
(The support is equal to the spectrum, if $F$ is a cyclic vector
for $U_{t}.)$ Since $F_{A}$ is has period one, the support of $\mu_{\theta}$
to consist of integers. If $\lambda_{n}$ and $\lambda_{m}$ are in
the support of $\mu_{\theta},$ then 

\[
\varepsilon=\frac{n-m}{\lambda_{n}-\lambda_{m}}.
\]
It follows that $\varepsilon$ is rational. 
\end{proof}

\section{\label{sec:expT}Example: $e^{i2\pi x}$}

This is a trivial example, but it is helpful to understand why the
extensions of positive definite\index{positive definite} functions
is a quite different question from the other extensions we looked
at before.

Here we refer to classes of unbounded Hermitian operators which arise
in scattering theory for wave equations in physics; see e.g., \cite{PeTi13,JPT12,LP89,LP85}.
However the context of these questions is different. For a discussion
of connections, see e.g., \subref{momentum}, and Lemma \ref{lem:(01)}
below.

Take our momentum operator on $L^{2}\left(-\frac{1}{2},\frac{1}{2}\right)$,
where $\left(-\frac{1}{2},\frac{1}{2}\right]\simeq\mathbb{R}/\mathbb{Z}\simeq\mathbb{T}$. 

One of the extensions is characterized by 
\begin{enumerate}
\item $\left(U\left(t\right)f\right)\left(x\right)=f\left(x+t\;\mbox{mod}\ 1\right)$,
$f\in L^{2}\left(\mathbb{R}/\mathbb{Z}\right)$; and if $\xi_{n}\left(x\right):=e^{i2\pi nx}$,
then $U\left(t\right)\xi_{n}=e_{n}\left(t\right)\xi_{n}$. 
\item Take 
\begin{equation}
F\left(\cdot\right):=e_{1}\left(\cdot\right)\Big|_{\left(-\epsilon,\epsilon\right)}\label{eq:e1}
\end{equation}
 where $0<\epsilon<\frac{1}{2}$. \end{enumerate}
\begin{lem}
The RKHS $\mathscr{H}_{F}$ of $F$ in (\ref{eq:e1}) is one-dimensional.\index{RKHS}\end{lem}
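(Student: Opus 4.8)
The plan is to exploit that the kernel of $\mathscr{H}_{F}$ is of rank one. Whenever $x-y\in(-\epsilon,\epsilon)$ we have $F(x-y)=e^{i2\pi(x-y)}=e_{1}(x)\,\overline{e_{1}(y)}$, so the reproducing kernel $K_{F}(x,y)=F(x-y)$ is the single outer product $e_{1}(x)\overline{e_{1}(y)}$. The first step is then a direct computation: for $\varphi\in C_{c}^{\infty}(\Omega)$ and $x\in\Omega$,
\[
F_{\varphi}(x)=\int_{\Omega}\varphi(y)F(x-y)\,dy=e^{i2\pi x}\int_{\Omega}\varphi(y)e^{-i2\pi y}\,dy=c_{\varphi}\,e_{1}(x),
\]
where $c_{\varphi}:=\int_{\Omega}\varphi(y)e^{-i2\pi y}\,dy$ is the Fourier coefficient $\widehat{\varphi}(1)$. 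Thus every generating vector $F_{\varphi}$ is a scalar multiple of the one fixed function $e_{1}\big|_{\Omega}$, and hence
\[
\mathrm{span}\bigl\{F_{\varphi}\;\big|\;\varphi\in C_{c}^{\infty}(\Omega)\bigr\}\subseteq\mathbb{C}\,e_{1}\big|_{\Omega}.
\]

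The second step is to observe that this inclusion is an equality, i.e.\ that $\mathscr{H}_{F}\neq\{0\}$. This is immediate: by Lemma~\ref{lem:dense} (which applies verbatim to a small arc $\Omega\subset\mathbb{T}$), $F(\cdot-x)\in\mathscr{H}_{F}$ for every $x\in\Omega$, and restricted to $\Omega$ this is the function $y\mapsto e^{i2\pi(y-x)}=e^{-i2\pi x}\,e_{1}(y)$, a nonzero multiple of $e_{1}\big|_{\Omega}$; alternatively, $F(0)=1>0$ already forces $\mathscr{H}_{F}\neq\{0\}$. Either way, $\mathrm{span}\{F_{\varphi}\}=\mathbb{C}\,e_{1}\big|_{\Omega}$, a one-dimensional space.

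Finally, by Lemma~\ref{lem:RKHS-def-by-integral} the Hilbert space $\mathscr{H}_{F}$ is the completion of $\{F_{\varphi}\}_{\varphi\in C_{c}^{\infty}(\Omega)}$; since a one-dimensional subspace of a pre-Hilbert space is closed, hence already complete, the completion adds nothing, and $\dim\mathscr{H}_{F}=1$, with $e_{1}\big|_{\Omega}$ (normalized via $\|e_{1}\|_{\mathscr{H}_{F}}^{2}=F(0)=1$) as a unit basis vector. There is no genuine obstacle in this example — it is degenerate by design, included precisely to emphasize that extending a positive definite function is a different kind of question from the operator-extension problems treated earlier; the only point deserving a word is the harmless passage to the completion in the last step, namely that a finite-dimensional subspace does not enlarge under Hilbert completion.
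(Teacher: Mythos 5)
Your argument is correct and is essentially identical to the paper's own proof: both compute $F_{\varphi}(x)=e_{1}(x)\widehat{\varphi}(1)$ directly from the factorization $F(x-y)=e_{1}(x)\overline{e_{1}(y)}$ and conclude that the span of the generating vectors is $\mathbb{C}\,e_{1}\big|_{\Omega}$, hence one-dimensional after (trivial) completion. The extra remarks on nontriviality and on finite-dimensional subspaces being complete are fine but not needed beyond what the paper states.
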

\begin{proof}
Recall that $\mathscr{H}_{F}$ is the completion of 
\[
\left\{ F_{\varphi}:\varphi\in C_{c}^{\infty}\left(0,\epsilon\right)\right\} 
\]
with 
\begin{eqnarray*}
F_{\varphi}\left(x\right) & = & \int_{0}^{\epsilon}\varphi\left(y\right)F\left(x-y\right)dy\\
 & = & e_{1}\left(x\right)\int_{0}^{\epsilon}\varphi\left(y\right)\overline{e_{1}\left(y\right)}dy\\
 & = & e_{1}\left(x\right)\widehat{\varphi}\left(1\right);
\end{eqnarray*}
and note that $span\left\{ F_{\varphi}:\varphi\in C_{c}^{\infty}\left(0,\epsilon\right)\right\} $
is one-dimensional.
\end{proof}
Recall that 
\begin{eqnarray*}
\left\langle F_{\varphi},F_{\psi}\right\rangle _{\mathscr{H}_{F}} & = & \int_{0}^{\epsilon}\int_{0}^{\epsilon}\overline{\varphi\left(x\right)}\psi\left(y\right)F\left(y-x\right)dxdy\\
 & = & \overline{\widehat{\varphi}\left(1\right)}\widehat{\psi}\left(1\right),\:\forall\varphi,\psi\in C_{c}^{\infty}\left(0,\epsilon\right);
\end{eqnarray*}
and 
\[
\left\Vert F_{\varphi}\right\Vert _{\mathscr{H}_{F}}^{2}=\left|\widehat{\varphi}\left(1\right)\right|^{2}.
\]
Now this $F$ is defined as a restriction (of $e_{1}\left(\cdot\right)$)
and so it has at least this extension $e_{1}\left(\cdot\right)$.
It is also the unique continuous positive definite extension to $\mathbb{T}$.
\begin{proof}[Proof of the uniqueness assertion:]
\textbf{Case $\mathbb{T}$.} If $F$ is a continuous p.d. extension
of $e_{1}$ to $\mathbb{T}$, then by Bochner's theorem, 
\begin{equation}
e_{1}\left(x\right)=\sum_{n\in\mathbb{Z}}e_{n}\left(x\right)\mu_{n},\;\forall x\in\left(-\epsilon,\epsilon\right)\label{eq:e-2-1}
\end{equation}
where $\mu_{n}\geq0$, and $\sum_{n\in\mathbb{Z}}\mu_{n}=1$. Now
each side of the above equation extends continuously in $x$. By uniqueness
of the Fourier expansion in (\ref{eq:e-2-1}), we get $\mu_{1}=1$,
and $\mu_{n}=0$, $n\in\mathbb{Z}\backslash\left\{ 1\right\} $. 

We argue as follows: By change of summation index in (\ref{eq:e-2-1}),
we get 
\[
1\equiv\sum_{n\in\mathbb{Z}}e_{n}\left(x\right)\mu_{n+1},\;\left|x\right|<\epsilon.
\]
Writing $e_{n}\left(x\right)=\cos\left(2\pi nx\right)+i\sin\left(2\pi nx\right)$,
we note that (\ref{eq:e-2-1}) is equivalent to a cosine, and a sine
expansion. By a change of variable, we are left with proving the following
implication:
\begin{equation}
\left[\sum_{n=1}^{\infty}k_{n}\cos\left(2\pi nx\right)\equiv0,\; k_{n}\geq0,\left|x\right|<\epsilon\right]\Longrightarrow\left[k_{1}=k_{2}=\cdots=0\right];\label{eq:e-2-2}
\end{equation}
and a similar implication for the sine expansion. Now for all $N\in\mathbb{N}$,
we then write the infinite sum in (\ref{eq:e-2-2}) as 
\[
\underset{A_{N}}{\underbrace{\sum_{n>N}k_{n}\cos\left(2\pi nx\right)}}+\underset{B_{N}}{\underbrace{\sum_{n=1}^{N}k_{n}\cos\left(2\pi nx\right)}}\equiv0\;\mbox{in}\left|x\right|<\epsilon.
\]
But if $\left|x\right|<\frac{\epsilon}{4N}$, then $\cos\left(2\pi nx\right)>0$,
$1\leq n\leq N$. Since $A_{N}\rightarrow0$ uniformly, we conclude
that $k_{1}=k_{2}=\cdots=0$. 
\end{proof}

\section{Example: $e^{-a\left|x\right|}$}

Consider $\Omega=\left(0,\epsilon\right)\subset\mathbb{R}$ v.s. $\Omega=\left(0,\epsilon\right)\subset\mathbb{T}$.
In both cases, we assume $F$ is positive definite\index{positive definite}
and continuous on $\left(-\epsilon,\epsilon\right)=\Omega-\Omega$,
but there are many more solutions $\tilde{F}$, continuous, positive
definite on $\mathbb{R}$ than there are periodic solutions $\tilde{F}_{per}$
on $\mathbb{T}=\mathbb{R}/\mathbb{Z}$. 
\begin{fact}
A solution $\tilde{F}$ to the $\mathbb{R}$-problem is 
\[
\tilde{F}\left(t\right)=\left\langle F_{0},U\left(t\right)F_{0}\right\rangle _{\mathscr{H}_{F}},\: t\in\mathbb{R}.
\]
The $\mathbb{T}$-problem is equivalent to 
\[
\tilde{F}_{per}\left(t+n\right)=\tilde{F}_{per}\left(t\right),\:\forall t\in\mathbb{R},\forall n\in\mathbb{Z}.
\]
Note that 
\begin{equation}
\tilde{F}_{per}\left(t\right)=\sum_{k\in\mathbb{Z}}e^{i2\pi kt}w_{k},\; w_{k}\geq0,\sum_{k}w_{k}<\infty
\end{equation}
i.e., a Fourier transform of a finite measure on $\mathbb{Z}$,
\begin{equation}
\mu_{\mathbb{Z}}\left(A\right)=\sum_{k\in\mathbb{Z}\cap A}w_{k},\:\sum_{k\in\mathbb{Z}}w_{k}<\infty.
\end{equation}
\end{fact}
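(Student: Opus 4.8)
The \emph{Fact} is a compilation of three assertions, and the plan is to dispatch each one using material already in place. First I would establish that the displayed formula $\tilde F(t)=\langle F_0,U(t)F_0\rangle_{\mathscr{H}_F}$ genuinely produces a solution of the $\mathbb{R}$-extension problem, for $\Omega=(0,a)$ and $F$ continuous and positive definite on $\Omega-\Omega=(-a,a)$. By Corollary \ref{cor:DF} the canonical skew-symmetric operator $D^{(F)}$ on $\mathscr{H}_F$ has deficiency indices $(0,0)$ or $(1,1)$; in either case von Neumann's theory provides a skew-adjoint extension $A^{(F)}\supset D^{(F)}$ acting in $\mathscr{H}_F$, and I set $U(t)=e^{tA^{(F)}}$, a strongly continuous unitary one-parameter group on $\mathscr{H}_F$. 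Boundedness and continuity of $\tilde F$ follow from strong continuity of $U$ together with $|\langle F_0,U(t)F_0\rangle|\le\|F_0\|_{\mathscr{H}_F}^2=F(0)$; positive definiteness on $\mathbb{R}$ is the identity $\sum_{i,j}\overline{c_i}c_j\tilde F(t_i-t_j)=\bigl\|\sum_i c_iU(t_i)F_0\bigr\|_{\mathscr{H}_F}^2\ge 0$; and $\tilde F(t)=F(t)$ for $t\in(-a,a)$ is obtained by invoking the norm estimate (\ref{eq:approx}) of Lemma \ref{lem:dense} to replace $F_0$ by the approximate-identity vectors $F_{\varphi_{n,0}}$, using $U(t)F_\varphi=F_{\varphi(\cdot-t)}$ on the common dense domain and the reproducing property. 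This is precisely the content of the Lemma following (\ref{eq:Fext}), so no fresh argument is needed; one only records that the resulting $\tilde F$ is an internal extension (an element of $Ext_1(F)$ for $G=\mathbb{R}$), the full description of $Ext(F)$ being Theorem \ref{thm:pd-extension-bigger-H-space}.

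Next I would reformulate the $\mathbb{T}$-problem. Under $\mathbb{T}=\mathbb{R}/\mathbb{Z}$, a continuous positive definite function on $\mathbb{T}$ is by definition the same object as a continuous $1$-periodic positive definite function on $\mathbb{R}$, whose restriction to the arc $\Omega-\Omega=(-\varepsilon,\varepsilon)$ is $F$; hence a solution of the $\mathbb{T}$-problem is exactly a solution $\tilde F_{per}$ of the $\mathbb{R}$-problem subject to $\tilde F_{per}(t+n)=\tilde F_{per}(t)$ for all $t\in\mathbb{R}$, $n\in\mathbb{Z}$, which is the second displayed equivalence and is immediate. (Whether such periodic solutions exist, and why the deficiency-$(1,1)$ case forces $\varepsilon$ rational, is the separate analysis of \ref{sub:G=00003DT}; the Fact only asserts the \emph{form} of any such solution.) Finally, for the Fourier expansion I would apply Bochner's theorem on the compact abelian group $\mathbb{T}$, i.e. Lemma \ref{lem:lcg-Bochner} with $G=\mathbb{T}$ and $\widehat{\mathbb{T}}=\mathbb{Z}$ (discrete by Theorem \ref{thm:lcg-dualG}): this yields a finite positive Borel measure $\mu$ on $\mathbb{Z}$ with $\tilde F_{per}=\widehat\mu$. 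Since $\mathbb{Z}$ is discrete, $\mu$ is encoded by the weights $w_k:=\mu(\{k\})\ge 0$, finiteness reads $\sum_{k\in\mathbb{Z}}w_k=\mu(\mathbb{Z})=\tilde F_{per}(0)<\infty$, and with the pairing $\langle k,t\rangle=e^{i2\pi kt}$ one gets $\tilde F_{per}(t)=\int_{\mathbb{Z}}\langle k,t\rangle\,d\mu(k)=\sum_{k\in\mathbb{Z}}e^{i2\pi kt}w_k$ and $\mu=\mu_{\mathbb{Z}}$ with $\mu_{\mathbb{Z}}(A)=\sum_{k\in\mathbb{Z}\cap A}w_k$, the stated conclusion.

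I do not expect a genuine obstacle: the statement is a bookkeeping summary of Bochner duality on $\mathbb{T}$ versus $\mathbb{R}$ together with the RKHS construction of internal extensions. If anything requires care it is the first step, namely the passage from the dense family $\{F_\varphi\}$ to the kernel vectors $F_x$ — hence the identity $\tilde F|_{(-a,a)}=F$ — which rests on the estimate (\ref{eq:approx}) and the reproducing property; the remainder is routine.
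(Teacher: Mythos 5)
Your proposal is correct and follows essentially the route the paper intends: the Memoir states this Fact without a separate proof, as a summary of the lemma following (\ref{eq:Fext}) (that $F_{A}\left(t\right)=\left\langle F_{0},U\left(t\right)F_{0}\right\rangle _{\mathscr{H}_{F}}$ is a continuous p.d. extension of $F$, built from a skew-adjoint extension guaranteed by Corollary \ref{cor:DF}) together with Bochner duality on $\mathbb{T}$ with $\widehat{\mathbb{T}}=\mathbb{Z}$ (Lemma \ref{lem:lcg-Bochner}, Theorem \ref{thm:lcg-dualG}). You assemble exactly these ingredients, so nothing further is needed.
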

\begin{example}
Fix $a>0$, $w_{k}=\frac{1}{a^{2}+k^{2}}$, then 
\begin{equation}
\tilde{F}_{per}\left(t\right)=\frac{1}{\pi}\sum_{k\in\mathbb{Z}}\frac{e^{i2\pi kt}}{a^{2}+k^{2}}=\sum_{n\in\mathbb{Z}}e^{-a\left|t-n\right|}\label{eq:poisson}
\end{equation}
by the Poisson summation formula. This is a function on $\mathbb{T}=\mathbb{R}/\mathbb{Z}$,
and the RHS of (\ref{eq:poisson}) is a positive definite continuous
on $\mathbb{R}/\mathbb{Z}$ and an extension of its restriction to
$\left(0,\epsilon\right)\subset\mathbb{T}$. \end{example}
\begin{rem}
There is a bijection between

(i) functions on $\mathbb{T}=\mathbb{R}/\mathbb{Z}$, and

(ii) $1$-periodic functions on $\mathbb{R}$. 

Note that (ii) include positive definite functions $F$ defined only
on a subset $\left(-\epsilon,\epsilon\right)\subset\mathbb{T}$. Hence,
it is tricky to get extension $\tilde{F}$ to $\mathbb{T}=\mathbb{R}/\mathbb{Z}$.

\begin{figure}
\begin{tabular}{c}
\includegraphics[scale=0.4]{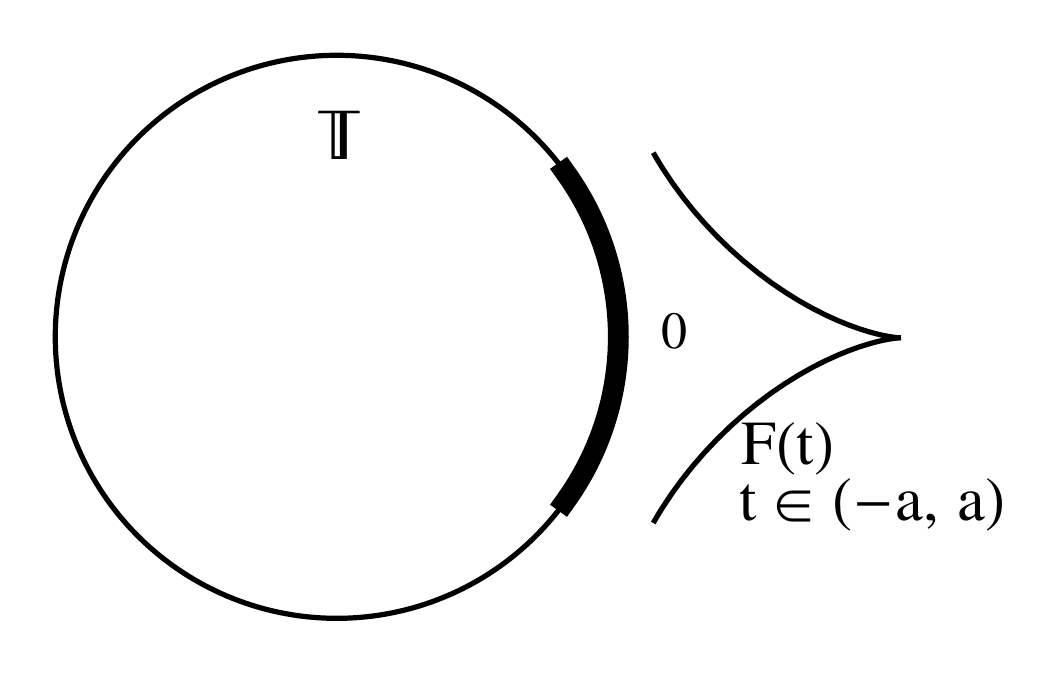}\tabularnewline
\includegraphics[scale=0.5]{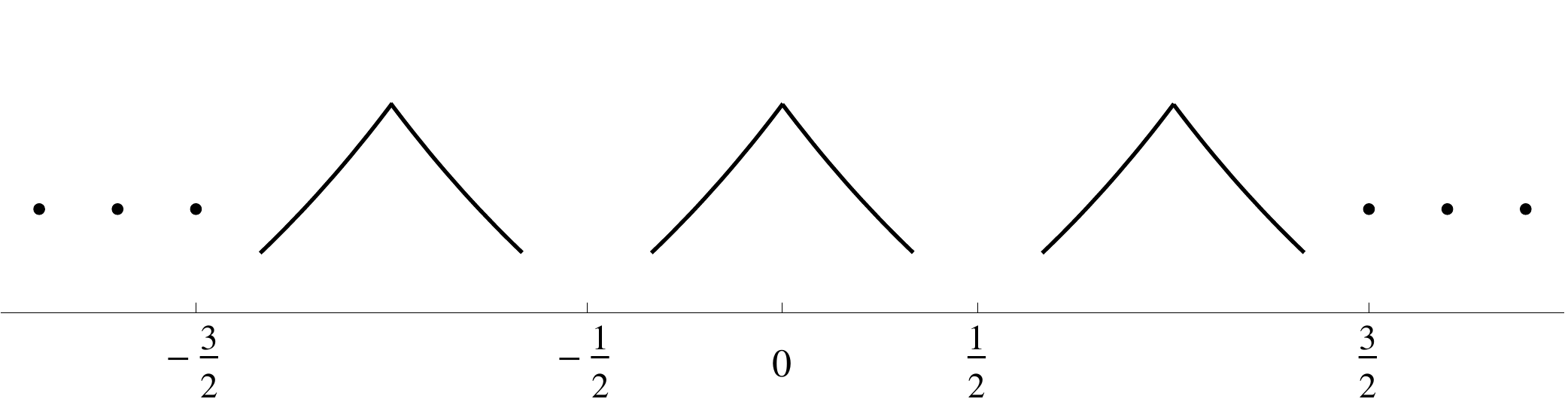}\tabularnewline
\end{tabular}

\begin{minipage}[t]{1\columnwidth}%
Note this bijection also applies if a continuous p.d. function $F$
is only defined on a subset $\left(-a,a\right)\subset\mathbb{T}$,
with $0<a<\frac{1}{2}$.%
\end{minipage}

\protect\caption{\label{fig:ext}Functions on $\mathbb{T}=\mathbb{R}/\mathbb{Z}\longleftrightarrow$
$\bigl(\mbox{1-periodic functions on \ensuremath{\mathbb{R}} }\bigr)$. }
\end{figure}

\end{rem}
\begin{figure}
\includegraphics[scale=0.7]{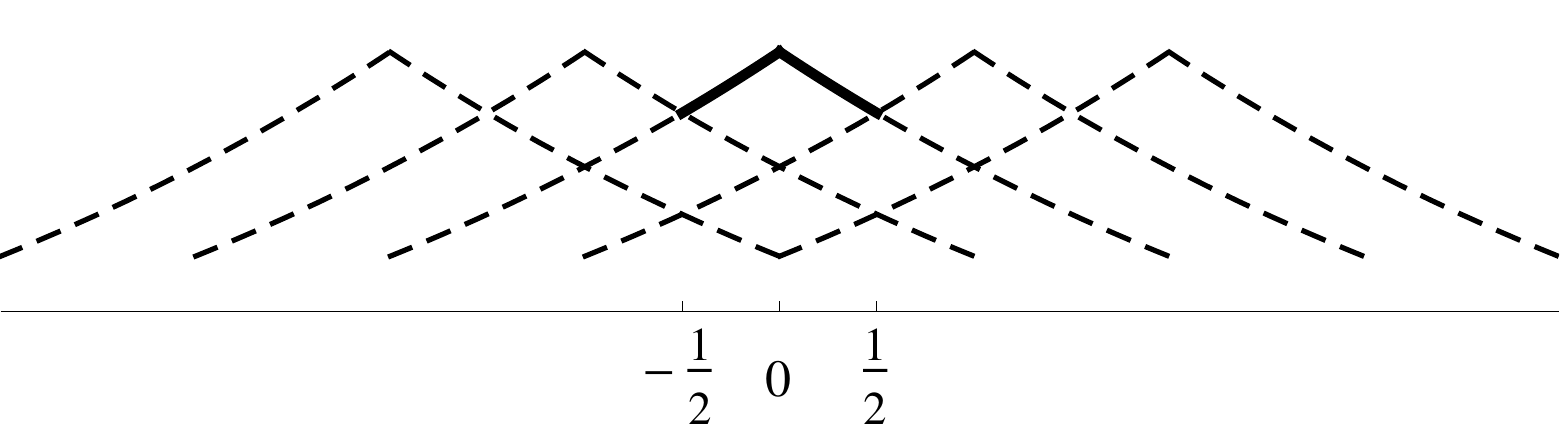}

\protect\caption{\label{fig:pext}$\tilde{F}_{per}\left(t\right)>F\left(t\right)$
on $\left[-\frac{1}{2},\frac{1}{2}\right]$}
\end{figure}

\begin{example}
Fix $a>0$. Let $F\left(t\right)=e^{-a\left|t\right|}$, for all $t\in\left(-\epsilon,\epsilon\right)$.
See Figure \ref{fig:ext}. Then $\tilde{F}\left(t\right)=e^{-a\left|t\right|}$,
$t\in\mathbb{R}$, is a continuous positive definite extension to
$\mathbb{R}$. The periodic version 
\[
\tilde{F}_{per}\left(t\right)=\sum_{n\in\mathbb{Z}}e^{-a\left|t-n\right|}
\]
is continuous, positive definite on $\mathbb{T}$; but $\tilde{F}_{per}\left(t\right)$
is not an extension of $F$. See Figure \ref{fig:pext}. Note $\tilde{F}_{per}\left(t\right)>\tilde{F}\left(t\right)=e^{-a\left|t\right|}$
on $\mathbb{R}$, since 
\[
\tilde{F}_{per}\left(t\right)=\underset{F\left(t\right)}{\underbrace{e^{-a\left|t\right|}}}+\underset{>0}{\underbrace{\sum_{n\in\mathbb{Z}\backslash\left\{ 0\right\} }e^{-a\left|t-n\right|}}}
\]
\end{example}
\begin{rem}
If $0<\epsilon<\frac{1}{2}$ is given, and if $F$ is continuous and
positive definite on $\left(-\epsilon,\epsilon\right)\subset\mathbb{T}=\mathbb{R}/\mathbb{Z}$,
then the analysis of $\mathscr{H}_{F}$ is totally independent of
considerations of periods. $\mathscr{H}_{F}$ does not see global
properties. To understand $F$ on $\left(-\epsilon,\epsilon\right)$
we only need one period interval; see Fig (ii), but when we pass to
$\mathbb{R}$, things can be complicated. See Fig (ii)-(iii).\end{rem}
\begin{question*}
In the example $F\left(t\right)=e^{-a\left|t\right|}$, $t\in\left(-\epsilon,\epsilon\right)$,
what are the deficiency indices of $D^{\left(F\right)}$, i.e., the
densely defined skew-Hermitian operator on $\mathscr{H}_{F}$, $F_{\psi}\mapsto F_{\psi'}$,
$\psi\in C_{c}^{\infty}\left(0,\epsilon\right)$, understood as an
operator in $\mathscr{H}_{F}$? 
\end{question*}
It may be $\left(0,0\right)$. We must decide if the two functions
$\xi_{\pm}\left(t\right)=e^{\pm t}$, $\left|t\right|<\epsilon$,
are in the RKHS $\mathscr{H}_{F}$. To do this we use the estimates
(\ref{eq:bdd})($\Leftrightarrow$(\ref{eq:bdd2})) above. Or we can
solve with Laplace transform. 

But we do know the RHS of (\ref{eq:poisson}) is a periodization of
a positive definite function on $\mathbb{R}$, i.e., 
\begin{equation}
t\mapsto\frac{1}{\pi}e^{-a\left|t\right|}=\int_{-\infty}^{\infty}e^{i2\pi\lambda t}\frac{d\lambda}{a^{2}+\lambda^{2}}\label{eq:Fper1}
\end{equation}
and hence (\ref{eq:poisson}) is an application of the Poisson-summation
formula.

\section{\label{sub:exp(-|x|)}The Example $e^{-\left|x\right|}$ in $\left(-a,a\right)$}

While \subref{euclid} deals with the case of $G=\mathbb{R}^{n}$,
here we set $n=1$. With this restriction we are able to answer classical
questions about positive definite functions, their harmonic analysis,
and their operator theory. As an application of this we obtain a spectral
classification of all Hermitian operators with dense domain in a separable
Hilbert space, having deficiency indices\index{deficiency indices}
$\left(1,1\right)$. Further spectral theoretic results are given
in \chapref{question}.

Here we study the following example $F\left(x\right)=e^{-\left|x\right|}$
on any fixed and finite interval, $\left|x\right|<a$ . So $a$ is
fixed; it can be any positive real number, and we study the case when
the group $G$ is $\mathbb{R}$.

When we compute the associated RKHS $\mathscr{H}_{F}$ from this $F$,
and its skew Hermitian operator in $\mathscr{H}_{F}$, we get deficiency
indices\index{deficiency indices} $\left(1,1\right)$. This is a
special example, but still enlightening in connection with deficiency
index considerations. But, for contrast, we note that, in general,
there will be a host of other interesting 1D examples of partially
defined continuous positive definite\index{positive definite} functions.
Some will have indices $\left(1,1\right)$ and others $\left(0,0\right)$.

In case $\left(1,1\right)$, the convex set $Ext\left(F\right)$ is
parameterized by $\mathbb{T}$; while, in case $\left(0,0\right)$,
$Ext(F)$ is a singleton.

Setting: Fix $a$, $0<a<\infty$. Set $\Omega=\left(0,a\right)$,
so $\Omega-\Omega=\left(-a,a\right)$; and let 
\begin{equation}
F\left(x\right):=e^{-\left|x\right|},\;\mbox{for }\left|x\right|<a.\label{eq:exp-1}
\end{equation}
Note, we do not define $F$ outside $\left(-a,a\right)$. 

We shall need the following probability measure $\mu$ \index{measure!probability}on
$\mathbb{R}$:
\begin{equation}
d\mu\left(\lambda\right)=\frac{d\lambda}{\pi\left(1+\lambda^{2}\right)},\:\lambda\in\mathbb{R}.\label{eq:exp-2}
\end{equation}
The Fourier transform on $\mathbb{R}$ will be denoted $\widehat{\cdot}$. 
\begin{lem}
~
\begin{equation}
\left|\left(\chi_{\left(0,a\right)}\left(x\right)e^{x}\right)^{\wedge}\left(\lambda\right)\right|^{2}=\frac{e^{2a}+1-2e^{a}\cos\left(\lambda a\right)}{1+\lambda^{2}}\;\left(\in L^{2}\left(\mathbb{R},d\lambda\right)\right)\label{eq:exp-3}
\end{equation}
where $d\lambda$ here means Lebesgue measure on $\mathbb{R}$.\end{lem}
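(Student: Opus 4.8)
The plan is a direct elementary computation of a Fourier transform, followed by simplification; there is essentially no obstacle beyond bookkeeping.

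First I would note that $g(x):=\chi_{(0,a)}(x)e^{x}$ is bounded and compactly supported, hence lies in $L^{1}(\mathbb{R})\cap L^{2}(\mathbb{R})$, so that its Fourier transform is the continuous function given by the absolutely convergent integral. Using the normalization consistent with (\ref{eq:exp-2}) and with the pairing $e^{i\lambda x}$ in force throughout this section, namely $\widehat{g}(\lambda)=\int_{\mathbb{R}}g(x)e^{-i\lambda x}\,dx$, one computes
\[
\widehat{g}(\lambda)=\int_{0}^{a}e^{x}e^{-i\lambda x}\,dx=\int_{0}^{a}e^{(1-i\lambda)x}\,dx=\frac{e^{(1-i\lambda)a}-1}{1-i\lambda}=\frac{e^{a}e^{-i\lambda a}-1}{1-i\lambda}.
\]

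Next I would take the squared modulus. The denominator contributes $|1-i\lambda|^{2}=1+\lambda^{2}$, and for the numerator, writing $e^{a}e^{-i\lambda a}-1=(e^{a}\cos\lambda a-1)-i\,e^{a}\sin\lambda a$ gives
\[
\left|e^{a}e^{-i\lambda a}-1\right|^{2}=(e^{a}\cos\lambda a-1)^{2}+e^{2a}\sin^{2}\lambda a=e^{2a}+1-2e^{a}\cos\lambda a,
\]
the last step being just $\cos^{2}+\sin^{2}=1$. Dividing the numerator by the denominator yields precisely the identity (\ref{eq:exp-3}).

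Finally, the parenthetical claim that the right-hand side lies in $L^{2}(\mathbb{R},d\lambda)$ follows at once from the closed form: the numerator is bounded by $(e^{a}+1)^{2}$, so the function is dominated by $(e^{a}+1)^{2}(1+\lambda^{2})^{-1}$, which is in $L^{2}(\mathbb{R})$ (indeed in $L^{1}(\mathbb{R})$); alternatively this membership is automatic from Plancherel, since $g\in L^{2}(\mathbb{R})$ forces $\widehat{g}\in L^{2}(\mathbb{R})$ and hence $|\widehat g|^{2}\in L^{1}(\mathbb{R})$. The one point that warrants care is pinning down the Fourier-transform convention before integrating: with an exponent $e^{-i2\pi\lambda x}$ one would instead obtain $1+4\pi^{2}\lambda^{2}$ in the denominator, so it is worth recording explicitly that the convention here is the one matching the measure $d\mu(\lambda)=d\lambda/(\pi(1+\lambda^{2}))$ fixed just above.
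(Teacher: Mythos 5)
Your computation is correct and is exactly the paper's proof: integrate $e^{(1-i\lambda)x}$ over $(0,a)$ to get $\bigl(e^{(1-i\lambda)a}-1\bigr)/(1-i\lambda)$ and then take the squared modulus. The extra remarks on the transform convention and on $L^{2}$ membership are fine but add nothing beyond what the paper leaves implicit.
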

\begin{proof}
We have
\[
\left(\chi_{\left(0,a\right)}\left(x\right)e^{x}\right)^{\wedge}\left(\lambda\right)=\int_{0}^{a}e^{-i\lambda x}e^{x}dx=\frac{e^{\left(1-i\lambda\right)a}-1}{1-i\lambda}
\]
and (\ref{eq:exp-3}) follows.
\end{proof}
Now return to the RKHS \index{RKHS}$\mathscr{H}_{F}$, defined from
the function $F$ in (\ref{eq:exp-1}). The set 
\[
\left\{ F_{\varphi}\:\big|\:\varphi\in C_{c}^{\infty}\left(0,a\right)\right\} 
\]
spans a dense subspace in $\mathscr{H}_{F}$ where
\begin{equation}
F_{\varphi}\left(x\right)=\int_{0}^{a}\varphi\left(y\right)e^{-\left|x-y\right|}dy\label{eq:exp-4}
\end{equation}

\begin{lem}
We have 
\begin{eqnarray}
\left\Vert F_{\varphi}\right\Vert _{\mathscr{H}_{F}}^{2} & = & \int_{0}^{a}\int_{0}^{a}\overline{\varphi\left(y\right)}\varphi\left(x\right)e^{-\left|x-y\right|}dxdy\nonumber \\
 & = & \int_{-\infty}^{\infty}\left|\widehat{\varphi}\left(\lambda\right)\right|^{2}\frac{d\lambda}{\pi\left(1+\lambda^{2}\right)}\label{eq:exp-5}
\end{eqnarray}
\end{lem}
\begin{proof}
This follows from (\ref{eq:exp-2}), and the fact that $F$ in (\ref{eq:exp-1})
is the restriction of $\left(e^{-\left|x\right|},x\in\mathbb{R}\right)$.
Hence (\ref{eq:exp-5}) follows from Parseval's identity. 
\end{proof}

We note that, if $\xi:\left(0,a\right)\rightarrow\mathbb{C}$ is any
given continuous function, then $\xi$ is in the RKHS $\mathscr{H}_{F}$
if and only if 
\begin{equation}
\left(\lambda\mapsto\left(\chi_{\left(0,a\right)}\left(x\right)\xi\left(x\right)\right)^{\wedge}\left(\lambda\right)\right)\in L^{2}\left(\mathbb{R},d\mu\left(\lambda\right)\right)\label{eq:exp-6}
\end{equation}
where $d\mu\left(\lambda\right)$ is the measure in (\ref{eq:exp-2}).

\begin{lem}
Let $\Omega=\left(0,a\right)$, $0<a<\infty$, and let $F$ be a continuous
positive definite function on $\Omega-\Omega=\left(-a,a\right)$ such
that $F\left(0\right)=1$. We know that $F$ always has an extension,
i.e., $Ext\left(F\right)\neq\phi$, so that there is a probability
measure \index{measure!probability}$\mu$ on $\mathbb{R}$ such that
\begin{equation}
F\left(x\right)=\widehat{d\mu}\left(x\right),\:\forall x\in\left(-a,a\right).\label{eq:exp-7}
\end{equation}
Suppose further that $\mu$ is absolutely continuous\index{absolutely continuous},
i.e., $\exists\Phi\in L^{1}\left(\mathbb{R},d\lambda\right)$ such
that 
\begin{equation}
d\mu\left(\lambda\right)=\Phi\left(\lambda\right)d\lambda.\label{eq:exp-8}
\end{equation}

Now consider the operator
\begin{equation}
D^{\left(F\right)}\left(F_{\varphi}\right)=F_{\varphi'}\label{eq:exp-9}
\end{equation}
on the dense domain $\left\{ F_{\varphi}\:\big|\:\varphi\in C_{c}^{\infty}\left(0,a\right)\right\} \subseteq\mathscr{H}_{F}$.
Then this operator $D^{\left(F\right)}$ in (\ref{eq:exp-9}) has
indices $\left(0,0\right)$, or $\left(1,1\right)$. 

Its indices are $\left(1,1\right)$ if and only if 
\begin{equation}
\int_{-\infty}^{\infty}\frac{e^{2a}+1-2e^{a}\cos\left(\lambda a\right)}{1+\lambda^{2}}\Phi\left(\lambda\right)d\lambda<\infty.\label{eq:exp-10}
\end{equation}
\end{lem}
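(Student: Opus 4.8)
The plan is to reduce the index computation to a membership question in $\mathscr H_F$, using the deficiency-space machinery already developed in this section, and then to settle that question on the spectral side via $F=\widehat{d\mu}$.

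First I would record, from Corollary \ref{cor:DF} and the lemmas preceding it, that the deficiency indices of $D^{(F)}$ lie in $\{(0,0),(0,1),(1,0),(1,1)\}$ and that $DEF^{\pm}$ is, when nonzero, the one-dimensional span of $\xi_{\pm}(y)=e^{\mp y}$ restricted to $\Omega=(0,a)$. Thus the indices are governed by the two statements $e^{-(\cdot)}|_{\Omega}\in\mathscr H_F$ and $e^{(\cdot)}|_{\Omega}\in\mathscr H_F$. I would then show these coincide, so that $\dim DEF^{+}=\dim DEF^{-}$ and the indices are forced to be $(0,0)$ or $(1,1)$: for real $F$ this is the corollary following Lemma \ref{lem:Conjugation-Operator}, and in general it follows from the pointwise identity $|(\chi_{(0,a)}e^{(\cdot)})^{\wedge}(\lambda)|^{2}=e^{2a}|(\chi_{(0,a)}e^{-(\cdot)})^{\wedge}(\lambda)|^{2}$, obtained by the computation behind (\ref{eq:exp-3}), which makes one of these transforms lie in $L^{2}(\mathbb R,\mu)$ exactly when the other does.

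Next I would identify $e^{(\cdot)}|_{\Omega}\in\mathscr H_F$ with condition (\ref{eq:exp-10}). Apply Theorem \ref{thm:HF} (in the form (\ref{eq:bdd2})) to $\xi=e^{(\cdot)}$: this membership holds iff there is $A_0<\infty$ with $|\int_{0}^{a}\overline{\psi(y)}\,e^{y}\,dy|^{2}\le A_0\,\|F_\psi\|_{\mathscr H_F}^{2}$ for all $\psi\in C_c^{\infty}(0,a)$. Since $F=\widehat{d\mu}$, the Parseval argument that gave (\ref{eq:exp-5}) yields $\|F_\psi\|_{\mathscr H_F}^{2}=\int_{\mathbb R}|\widehat\psi(\lambda)|^{2}\,d\mu(\lambda)=\int_{\mathbb R}|\widehat\psi(\lambda)|^{2}\Phi(\lambda)\,d\lambda$, while Plancherel turns the left-hand side into a fixed multiple of $\int_{\mathbb R}\widehat g(\lambda)\,\overline{\widehat\psi(\lambda)}\,d\lambda$ with $g:=\chi_{(0,a)}e^{(\cdot)}$. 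Because $g$ is supported in $\overline\Omega$, approximating $g$ in $L^{1}$ by functions in $C_c^{\infty}(\Omega)$ gives $\widehat\psi_n\to\widehat g$ uniformly, hence in $L^{2}(\mu)$ (as $\mu$ is finite), so $\widehat g$ lies in the closed subspace $\mathcal R$ of $L^{2}(\mu)$ spanned by $\{\widehat\psi:\psi\in C_c^{\infty}(\Omega)\}$; and the convolution formula for the projection $Q_\mu=T_\mu T_\mu^{*}$ with kernel $K_\Omega$ identifies, on $\mathcal R$, the functional $\psi\mapsto\int\widehat g\,\overline{\widehat\psi}\,d\lambda$ with the $L^{2}(\mu)$-pairing against a scalar multiple of $\widehat g$. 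Hence the a priori estimate holds iff $\widehat g\in L^{2}(\mathbb R,\mu)$ — this is the instance of (\ref{eq:exp-6}) needed here — and since $|\widehat g(\lambda)|^{2}=\frac{e^{2a}+1-2e^{a}\cos(\lambda a)}{1+\lambda^{2}}$ by the lemma proving (\ref{eq:exp-3}), that is precisely $\int_{\mathbb R}\frac{e^{2a}+1-2e^{a}\cos(\lambda a)}{1+\lambda^{2}}\Phi(\lambda)\,d\lambda<\infty$. Combining the pieces, the indices are $(1,1)$ when (\ref{eq:exp-10}) holds and $(0,0)$ otherwise.

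The routine ingredients are the deficiency-space reduction and the Parseval bookkeeping. The hard part — and the only genuinely analytic step — is the reduction in the previous paragraph from the abstract boundedness criterion of Theorem \ref{thm:HF} to the single integrability condition $\widehat g\in L^{2}(\mu)$: the naive Riesz representer $\widehat g/\Phi$ of the functional need not itself lie in $L^{2}(\mu)$, so one cannot simply invoke Cauchy-Schwarz, and what rescues the argument is that $g$ is supported in the closure of $\Omega$, which forces $\widehat g\in\mathcal R$ and makes $Q_\mu(\widehat g/\Phi)$ a scalar multiple of $\widehat g$, so that boundedness of the functional is equivalent to $\widehat g\in L^{2}(\mu)$ rather than to the stronger $\widehat g/\Phi\in L^{2}(\mu)$.
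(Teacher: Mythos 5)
The reduction of the index computation to the membership questions $e^{\pm y}\big|_{(0,a)}\in\mathscr{H}_{F}$, and the symmetry forcing the indices to be $(0,0)$ or $(1,1)$, are correct and consistent with Corollary \ref{cor:DF}, Lemma \ref{lem:Conjugation-Operator} and Corollary \ref{cor:eigen}. The gap is precisely at the step you single out as the hard part. Writing $g=\chi_{(0,a)}e^{y}$ and $\mathcal{R}$ for the closure of $\{\widehat{\psi}:\psi\in C_{c}^{\infty}(0,a)\}$ in $L^{2}(\mu)$, you claim that boundedness of $\psi\mapsto\int_{0}^{a}\psi(y)e^{y}\,dy$ relative to $\|\widehat{\psi}\|_{L^{2}(\mu)}$ is equivalent to $\widehat{g}\in L^{2}(\mu)$. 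The identity $Q_{\mu}(\widehat{g}/\Phi)=c\,\widehat{g}$ does not deliver this: it presupposes $\widehat{g}/\Phi\in L^{2}(\mu)$, and even granting boundedness of the functional, its Riesz representer is some $k\in\mathcal{R}$ characterized by $(k\,d\mu)^{\vee}=c\,g$ on $(0,a)$, which need not be a multiple of $\widehat{g}$ --- note that $(\widehat{g}\,d\mu)^{\vee}\big|_{(0,a)}=T_{F}g$, the Mercer image of $g$, not $g$ itself. The converse implication, that $\widehat{g}\in L^{2}(\mu)$ forces boundedness, is nowhere argued, and it is false.

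Indeed $|\widehat{g}(\lambda)|^{2}=\frac{e^{2a}+1-2e^{a}\cos(\lambda a)}{1+\lambda^{2}}$ is a bounded function of $\lambda$, so $\widehat{g}\in L^{2}(\mu)$ for every finite measure $\mu$, and the integral in (\ref{eq:exp-10}) converges for every $\Phi\in L^{1}\left(\mathbb{R},d\lambda\right)$. Your argument would therefore show that the indices are always $(1,1)$ whenever $\mu$ is absolutely continuous. This contradicts the example $F_{5}\left(x\right)=e^{-x^{2}/2}$ with Gaussian density $\Phi$, whose indices are $(0,0)$ by Table \ref{tab:F1-F6}; one checks directly that $e^{-y}\notin\mathscr{H}_{F_{5}}$, since every $(h\,d\mu)^{\vee}$ with $h\in L^{2}(\mu)$ extends to an entire function tending to $0$ along the real axis by Riemann--Lebesgue, while the entire continuation of $e^{-y}$ does not. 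The membership of $e^{\pm y}$ in $\mathscr{H}_{F}$ is governed by boundedness of the evaluation functional on $\mathcal{R}$ --- in the paper's own spectral model (Theorem \ref{thm:defmeas}, Table \ref{tab:meas}) by whether $\int\lambda^{2}d\mu=\infty$ --- and not by (\ref{eq:exp-10}); so the proposed route cannot be repaired without first replacing the target integrability condition.
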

\begin{proof}
This follows from the considerations above, using (\ref{eq:exp-3})
and (\ref{eq:exp-5}), but applied to (\ref{eq:exp-8}).\end{proof}
\begin{lem}
\label{lem:exp-1}Fix $a$, $0<a<\infty$, and let $F\left(\cdot\right):=e^{-\left|\cdot\right|}\big|_{\left(-a,a\right)}$
as in (\ref{eq:exp-1}). For all $0\leq x_{0}\leq a$, let 
\begin{equation}
F_{x_{0}}\left(x\right):=F\left(x-x_{0}\right)\Big|_{\left(0,a\right)}\left(\in C\left(0,a\right)\right).\label{eq:exp-11}
\end{equation}
With 
\begin{align}
DEF^{+} & =\left\{ \xi:\bigl(D^{\left(F\right)}\bigr)^{*}\xi=\xi\right\} =span\left\{ \xi_{+}\left(x\right):=e^{-x}\Big|_{\left(0,a\right)}\right\} \label{eq:exp-12}\\
DEF^{-} & =\left\{ \xi:\bigl(D^{\left(F\right)}\bigr)^{*}\xi=-\xi\right\} =span\left\{ \xi_{-}\left(x\right):=e^{-a}e^{+x}\Big|_{\left(0,a\right)}\right\} \label{eq:exp-13}
\end{align}
we get 
\begin{equation}
\left\Vert \xi_{+}\right\Vert _{\mathscr{H}_{F}}^{2}=\left\Vert \xi_{-}\right\Vert _{\mathscr{H}_{F}}^{2}=1.\label{eq:exp-14}
\end{equation}
\end{lem}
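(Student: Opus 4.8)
The plan is to identify the two candidate deficiency vectors with the \emph{endpoint} reproducing kernels and then read everything off the inner product formula \eqref{eq:ip-discrete}. On $\Omega=(0,a)$ one has, for $x\in(0,a)$, $F(x-0)=e^{-|x|}=e^{-x}=\xi_{+}(x)$ and $F(x-a)=e^{-|x-a|}=e^{x-a}=e^{-a}e^{x}=\xi_{-}(x)$; thus, as functions on $\Omega$, $\xi_{+}=F_{0}$ and $\xi_{-}=F_{a}$. The only point requiring an argument is that these boundary kernels lie in $\mathscr{H}_{F}$, since $\mathscr{H}_{F}$ is built from the functions $F(\cdot-y)$ with $y$ \emph{interior} to $\Omega$; once that is known, the norm computation and the description of $DEF^{\pm}$ are immediate.

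First I would show $\xi_{+},\xi_{-}\in\mathscr{H}_{F}$ together with $\|\xi_{+}\|_{\mathscr{H}_{F}}^{2}=\|\xi_{-}\|_{\mathscr{H}_{F}}^{2}=1$ by a Cauchy-net argument at the two endpoints of $\Omega$. For $x,x'\in(0,a)$, using \eqref{eq:ip-discrete}, $F(0)=1$, and that $F$ is real and even,
\[
\left\Vert F(\cdot-x)-F(\cdot-x')\right\Vert _{\mathscr{H}_{F}}^{2}=2F(0)-F(x-x')-F(x'-x)=2-2F(x-x').
\]
As $x,x'\to 0^{+}$ (respectively $x,x'\to a^{-}$) one has $x-x'\to 0$ and $F(x-x')\to F(0)=1$ by continuity of $F$ on $(-a,a)$; hence $\{F(\cdot-x)\}$ is Cauchy in $\mathscr{H}_{F}$ along $x\to 0^{+}$ (resp. $x\to a^{-}$), with some limit $\eta\in\mathscr{H}_{F}$. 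By the reproducing property (as in the remark following the basic facts about $\mathscr{H}_{F}$), $\mathscr{H}_{F}$-convergence forces uniform, hence pointwise, convergence on $\Omega$; but $F(y-x)\to F(y)=\xi_{+}(y)$ (resp. $F(y-x)\to F(y-a)=\xi_{-}(y)$) pointwise by continuity of $F$, so $\eta=\xi_{+}$ (resp. $\eta=\xi_{-}$). Thus $\xi_{\pm}\in\mathscr{H}_{F}$, and since $\|F(\cdot-x)\|_{\mathscr{H}_{F}}^{2}=F(0)=1$ for every interior $x$, passing to the limit gives $\|\xi_{+}\|_{\mathscr{H}_{F}}^{2}=\|\xi_{-}\|_{\mathscr{H}_{F}}^{2}=1$, which is \eqref{eq:exp-14}.

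Next I would pin down $DEF^{\pm}$, and hence \eqref{eq:exp-12}--\eqref{eq:exp-13} and the value of the deficiency indices. By the earlier lemma characterizing the defect spaces, any $\xi\in DEF^{+}$ is a weak — hence strong $C^{1}$ — solution of $-\xi'=\xi$ on $(0,a)$, so $\xi=\mathrm{const}\cdot e^{-y}$; similarly $\xi\in DEF^{-}$ forces $\xi=\mathrm{const}\cdot e^{+y}$. Therefore $DEF^{+}\subseteq\mathbb{C}\,\xi_{+}$ and $DEF^{-}\subseteq\mathbb{C}\,\xi_{-}$. Conversely, $\xi_{+}=e^{-x}\big|_{(0,a)}$ satisfies $-\xi_{+}'=\xi_{+}$ and lies in $\mathscr{H}_{F}$ by the previous step, so $\xi_{+}\in DEF^{+}$; likewise $\xi_{-}=e^{-a}e^{x}\big|_{(0,a)}$ satisfies $-\xi_{-}'=-\xi_{-}$ and lies in $\mathscr{H}_{F}$, so $\xi_{-}\in DEF^{-}$. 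Hence $DEF^{\pm}$ are exactly the one-dimensional spaces asserted in \eqref{eq:exp-12}--\eqref{eq:exp-13}, and by Corollary \ref{cor:DF} the deficiency indices of $D^{(F)}$ are $(1,1)$.

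The only non-formal step is the membership $\xi_{\pm}\in\mathscr{H}_{F}$, i.e.\ that the reproducing kernel "survives" to the boundary of $\Omega$; this works precisely because $F$ extends continuously to the \emph{closed} interval $[-a,a]$, so that $F(x-x')\to 1$ as $x,x'$ approach a common endpoint. An alternative to the whole argument is to first realize $\mathscr{H}_{F}$ concretely as $H^{1}(0,a)$ with inner product $\langle g,f\rangle_{\mathscr{H}_{F}}=\tfrac{1}{2}\bigl[\int_{0}^{a}(\overline{g'}f'+\overline{g}f)\,dx+\overline{g(0)}f(0)+\overline{g(a)}f(a)\bigr]$ — which one checks via $(-\tfrac{d^{2}}{dx^{2}}+1)e^{-|x|}=2\delta$, integration by parts, and density of $\{F_{\varphi}\}$ — and then to evaluate $\|\xi_{\pm}\|_{\mathscr{H}_{F}}^{2}$ by the one-line integral $\tfrac{1}{2}\bigl[(1-e^{-2a})+1+e^{-2a}\bigr]=1$; either route yields \eqref{eq:exp-14}.
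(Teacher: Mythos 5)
Your proof is correct, and it reaches the same underlying identification as the paper — namely that $\xi_{+}=F_{0}$ and $\xi_{-}=F_{a}$ are the reproducing kernels at the two endpoints, obtained as limits of interior objects — but the implementation differs in a way worth noting. The paper approximates $F_{a}$ by the smoothed vectors $F_{\psi_{n}}$ with $\psi_{n}\to\delta_{a}$ an approximate identity, and then evaluates $\left\Vert \xi_{\pm}\right\Vert _{\mathscr{H}_{F}}^{2}$ via Parseval, as $\lim_{n}\frac{1}{2\pi}\int\left|\widehat{\psi_{n}}\right|^{2}\widehat{F}\,dy=\frac{1}{2\pi}\int\widehat{F}\,dy=1$, which requires knowing (or computing) the Fourier transform of the truncated $F$. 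You instead take the net of interior kernels $F\left(\cdot-x\right)$ as $x$ tends to an endpoint, use $\left\Vert F\left(\cdot-x\right)-F\left(\cdot-x'\right)\right\Vert _{\mathscr{H}_{F}}^{2}=2-2F\left(x-x'\right)\to0$ together with completeness and the uniform-convergence remark to identify the limit, and then the norm identity $\left\Vert \xi_{\pm}\right\Vert _{\mathscr{H}_{F}}^{2}=F\left(0\right)=1$ falls out of \eqref{eq:ip-discrete} with no transform computation at all. Your route is more elementary and makes transparent that the only input is continuity of $F$ up to the closed interval $\left[-a,a\right]$; the paper's route has the side benefit of exhibiting $\widehat{F}$ explicitly, which it reuses immediately afterwards. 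You are also slightly more complete than the paper in closing the loop on \eqref{eq:exp-12}--\eqref{eq:exp-13}: you combine the earlier ODE characterization of $DEF^{\pm}$ (which gives the containments $DEF^{\pm}\subseteq\mathbb{C}\xi_{\pm}$) with the membership $\xi_{\pm}\in\mathscr{H}_{F}$ just established to get equality of the spans, a step the paper leaves implicit.
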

\begin{proof}
Note that $x=0$ and $x=a$ are the endpoints in the open interval
$\left(0,a\right)$: 
\begin{align*}
\xi_{+}\left(x\right) & =e^{-x}\Big|_{\left(0,a\right)}=F_{0}\left(x\right)\\
\xi_{-}\left(x\right) & =e^{-a}e^{x}\Big|_{\left(0,a\right)}=F_{a}\left(x\right).
\end{align*}
Let $\psi_{n}\in C_{c}^{\infty}\left(0,a\right)$ be an approximate
identity\index{approximate identity}, such that 
\begin{enumerate}
\item $\psi_{n}\geq0$, $\int\psi_{n}=1$; 
\item $\psi_{n}\rightarrow\delta_{a}$, as $n\rightarrow\infty$. 
\end{enumerate}

\begin{flushleft}
Then 
\[
\xi_{-}\left(x\right)=F_{a}\left(x\right)=\lim_{n\rightarrow\infty}\int_{0}^{a}\psi_{n}\left(y\right)F_{y}\left(x\right)dy.
\]
This shows that $\xi_{-}\in\mathscr{H}_{F}$. Also, 
\begin{eqnarray*}
\left\Vert \xi_{-}\right\Vert _{\mathscr{H}_{F}}^{2} & = & \left\Vert F_{a}\right\Vert _{\mathscr{H}_{F}}^{2}\\
 & = & \lim_{n\rightarrow\infty}\frac{1}{2\pi}\int_{-\infty}^{\infty}\left|\widehat{\psi_{n}}\left(y\right)\right|^{2}\widehat{F}\left(y\right)dy\\
 & = & \frac{1}{2\pi}\int_{-\infty}^{\infty}\widehat{F}\left(y\right)dy=1.
\end{eqnarray*}
Similarly, if instead, $\psi_{n}\rightarrow\delta_{0}$, then 
\[
\xi_{+}\left(x\right)=F_{0}\left(x\right)=\lim_{n\rightarrow\infty}\int_{0}^{a}\psi_{n}\left(y\right)F_{y}\left(x\right)dy
\]
and $\left\Vert \xi_{+}\right\Vert _{\mathscr{H}_{F}}^{2}=1$. 
\par\end{flushleft}

\end{proof}
\begin{lem}
Let $F$ be as in (\ref{eq:exp-1}). We have the following for its
Fourier transform:
\[
\widehat{F}\left(y\right)=\frac{2-2e^{-a}\left(\cos\left(ay\right)-y\sin\left(ay\right)\right)}{1+y^{2}}.
\]
\end{lem}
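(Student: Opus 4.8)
The plan is to compute $\widehat{F}(y)=\int_{-a}^{a}e^{-|x|}e^{-iyx}\,dx$ directly, using the Fourier convention $\widehat{g}(\lambda)=\int g(x)e^{-i\lambda x}\,dx$ already fixed in \eqref{eq:exp-3}; here $\widehat{F}$ is understood as the transform of the compactly supported function $\chi_{(-a,a)}(x)e^{-|x|}$, which is consistent with the way $\widehat{F}$ was used in the proof of Lemma \ref{lem:exp-1} (and with the normalization $\tfrac{1}{2\pi}\int_{\mathbb{R}}\widehat{F}(y)\,dy=F(0)=1$).

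First I would split the integral at the origin, $\widehat{F}(y)=\int_{-a}^{0}e^{x}e^{-iyx}\,dx+\int_{0}^{a}e^{-x}e^{-iyx}\,dx$, so that each piece is the integral of a single exponential. Evaluating antiderivatives gives $\int_{-a}^{0}e^{(1-iy)x}\,dx=\dfrac{1-e^{-a}e^{iya}}{1-iy}$ and $\int_{0}^{a}e^{-(1+iy)x}\,dx=\dfrac{1-e^{-a}e^{-iya}}{1+iy}$.

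Next I would put the two fractions over the common denominator $(1-iy)(1+iy)=1+y^{2}$ and expand the resulting numerator $(1-e^{-a}e^{iya})(1+iy)+(1-e^{-a}e^{-iya})(1-iy)$. The terms free of $e^{-a}$ sum to $2$, and the rest combine to $-e^{-a}\bigl(e^{iya}(1+iy)+e^{-iya}(1-iy)\bigr)$. Applying Euler's formulas, $e^{iya}(1+iy)+e^{-iya}(1-iy)=2\cos(ay)+iy\cdot 2i\sin(ay)=2\bigl(\cos(ay)-y\sin(ay)\bigr)$, which yields exactly $\widehat{F}(y)=\dfrac{2-2e^{-a}\bigl(\cos(ay)-y\sin(ay)\bigr)}{1+y^{2}}$.

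There is no genuine obstacle: the statement is an elementary integral evaluation. The only things to keep straight are the sign conventions in the Fourier transform (so the output matches the normalization used elsewhere in this section) and the bookkeeping when collecting real and imaginary parts in the simplification of $e^{iya}(1+iy)+e^{-iya}(1-iy)$; a quick sanity check is that setting $y=0$ gives $\widehat{F}(0)=2(1-e^{-a})=\int_{-a}^{a}e^{-|x|}\,dx$, as it must.
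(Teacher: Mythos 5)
Your computation is correct and follows essentially the same route as the paper: split the integral at the origin, evaluate the two exponential integrals, and combine over the common denominator $1+y^{2}$. The only cosmetic difference is the sign in the exponent (the paper uses $e^{+iyx}$), which is immaterial since $F$ is even, and your $y=0$ sanity check confirms the normalization.
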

\begin{proof}
Let $y\in\mathbb{R}$, then 
\begin{eqnarray*}
\widehat{F}\left(y\right) & = & \int_{-a}^{a}e^{iyx}e^{-\left|x\right|}dx\\
 & = & \int_{-a}^{0}e^{iyx}e^{x}dx+\int_{0}^{a}e^{iyx}e^{-x}dx\\
 & = & \frac{1-e^{-a(1+iy)}}{1+iy}+\frac{e^{ia(y+i)}-1}{-1+iy}\\
 & = & \frac{2-2e^{-a}\left(\cos\left(ay\right)-y\sin\left(ay\right)\right)}{1+y^{2}}
\end{eqnarray*}
which is the assertion.\end{proof}
\begin{rem}
If $\left(F,\Omega\right)$ is such that $D^{\left(F\right)}$ has
deficiency indices $\left(1,1\right)$, then by von Neumann's theory,
the family of selfadjoint extensions is characterized by \index{von Neumann}\index{selfadjoint extension}
\begin{align*}
dom\left(A_{\theta}^{\left(F\right)}\right) & =\left\{ F_{\psi}+c\left(\xi_{+}+e^{i\theta}\xi_{-}\right):\psi\in C_{c}^{\infty}\left(0,a\right),c\in\mathbb{C}\right\} \\
A_{\theta}^{\left(F\right)}: & F_{\psi}+c\left(\xi_{+}+e^{i\theta}\xi_{-}\right)\mapsto F_{i\psi'}+c\, i\left(\xi_{+}-e^{i\theta}\xi_{-}\right),\mbox{ where }i=\sqrt{-1}.
\end{align*}
\end{rem}
\begin{prop}
Fix $a>0$, and set $\Omega=\left(0,a\right)$, so $\Omega-\Omega=\left(-a,a\right)$.
Let $F:\left(-a,a\right)\rightarrow\mathbb{C}$ be positive definite
and continuous, $F\left(0\right)=1$; let $D^{\left(F\right)}$ be
the corresponding skew-Hermitian\index{operator!skew-Hermitian},
i.e., $D^{\left(F\right)}\left(F_{\varphi}\right)=F_{\varphi'}$,
for all $\varphi\in C_{c}^{\infty}\left(0,a\right)$. We now assume
that $D^{\left(F\right)}$ has a skew-adjoint extension (in the RKHS
$\mathscr{H}_{F}$) which has simple and purely atomic\index{atom}
spectrum\index{spectrum}, say $\left\{ i\lambda_{n}\right\} _{n\in\mathbb{N}}$,
$\lambda_{n}\in\mathbb{R}$. Then the complex exponentials\index{complex exponential}
\begin{equation}
e_{\lambda_{n}}\left(x\right)=e^{i\lambda_{n}x}\label{eq:exp-15}
\end{equation}
are in $\mathscr{H}_{F}$, and they are orthogonal in $\mathscr{H}_{F}$;
and total.\end{prop}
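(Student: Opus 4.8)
The plan is to feed the hypothesized skew-adjoint extension into the spectral theorem and then convert the eigenvalue equation for $\left(D^{\left(F\right)}\right)^{*}$ into the first-order constant-coefficient ODE $y'=i\lambda_{n}y$ on $\Omega$, exploiting the fact (from the general RKHS theory recalled earlier) that $\mathscr{H}_{F}$ is a space of continuous functions on $\Omega$ and that $\left\langle F_{\psi},\xi\right\rangle _{\mathscr{H}_{F}}=\int_{\Omega}\overline{\psi\left(x\right)}\xi\left(x\right)dx$ for $\xi\in\mathscr{H}_{F}$, $\psi\in C_{c}^{\infty}\left(\Omega\right)$.

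First I would fix a skew-adjoint extension $A\supseteq D^{\left(F\right)}$ with $A^{*}=-A$ as in the hypothesis, put $U\left(t\right)=e^{tA}$, and unpack the assumption that $A$ has simple and purely atomic spectrum $\left\{ i\lambda_{n}\right\} _{n\in\mathbb{N}}$: by the spectral theorem this means $\mathscr{H}_{F}=\bigoplus_{n}\mathbb{C}v_{n}$ as an orthogonal sum, where $\left\{ v_{n}\right\} $ is an orthonormal system of eigenvectors, $Av_{n}=i\lambda_{n}v_{n}$, the $\lambda_{n}$ are distinct reals, and (``purely atomic'') $\left\{ v_{n}\right\} $ is total in $\mathscr{H}_{F}$. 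So orthogonality and totality of $\left\{ v_{n}\right\} $ are already in hand; everything reduces to identifying each $v_{n}$ with a scalar multiple of $e_{\lambda_{n}}\big|_{\Omega}$.

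For that step: since $D^{\left(F\right)}\subseteq A$ and $A^{*}=-A$, taking adjoints gives $-A=A^{*}\subseteq\left(D^{\left(F\right)}\right)^{*}$, hence $v_{n}\in\mathrm{dom}\left(\left(D^{\left(F\right)}\right)^{*}\right)$ and $\left(D^{\left(F\right)}\right)^{*}v_{n}=-Av_{n}=-i\lambda_{n}v_{n}$. Now $v_{n}$ is represented by a continuous function on $\Omega$ (by Theorem \ref{thm:HF} and the fact that $\mathscr{H}_{F}$ consists of continuous functions), so substituting the pairing formula into the defining identity $\left\langle F_{\psi'},v_{n}\right\rangle _{\mathscr{H}_{F}}=\left\langle F_{\psi},-i\lambda_{n}v_{n}\right\rangle _{\mathscr{H}_{F}}$ and integrating by parts—exactly the computation used earlier to pin down the deficiency spaces $DEF^{\pm}$—shows that $v_{n}$ is a weak, hence classical $C^{1}$, solution of $y'=i\lambda_{n}y$ on $\Omega$. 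Therefore $v_{n}\left(x\right)=c_{n}e^{i\lambda_{n}x}=c_{n}e_{\lambda_{n}}\left(x\right)$ on $\Omega$, with $c_{n}\neq0$ because $v_{n}$ is a unit vector; thus $e_{\lambda_{n}}\big|_{\Omega}=c_{n}^{-1}v_{n}\in\mathscr{H}_{F}$. Since each $e_{\lambda_{n}}\big|_{\Omega}$ is a nonzero scalar multiple of $v_{n}$, the family $\left\{ e_{\lambda_{n}}\big|_{\Omega}\right\} $ inherits orthogonality and totality from $\left\{ v_{n}\right\} $, which proves all three assertions. (Alternatively, once $U$ is recognized as a strongly continuous unitary representation of $\mathbb{R}$ on $\mathscr{H}_{F}$ realizing a p.d. extension of $F$, the relation $P_{U}\left(\left\{ \lambda_{n}\right\} \right)\mathscr{H}_{F}=\mathbb{C}\,e_{\lambda_{n}}\big|_{\Omega}$ is precisely the atomic case of Theorem \ref{thm:R^n-spect}, specialized to $G=\mathbb{R}$.)

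The main obstacle is the passage from the abstract Hilbert-space identity $\left(D^{\left(F\right)}\right)^{*}v_{n}=-i\lambda_{n}v_{n}$ to the pointwise ODE on $\Omega$: this rests on $v_{n}$ genuinely being the class of a continuous function and on the pairing formula $\left\langle F_{\psi},v_{n}\right\rangle =\int_{\Omega}\overline{\psi}\,v_{n}$, both supplied by the RKHS results recalled above, together with the standard one-dimensional regularity fact that a weak solution of a linear constant-coefficient ODE is classical. Everything else—the orthonormal eigenbasis, orthogonality, density of the span—is immediate once ``simple'' and ``purely atomic'' are invoked.
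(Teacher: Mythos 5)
Your proof is correct, but the way you identify each eigenvector with a complex exponential is genuinely different from the paper's. The paper works with the unitary group: it writes $U_{A}\left(t\right)=\sum_{n}e^{it\lambda_{n}}\left|\xi_{n}\left\rangle \right\langle \xi_{n}\right|$, uses $U_{A}\left(t\right)F_{0}=F_{t}$ for $0<t<a$ to expand $F_{t}\left(\cdot\right)=\sum_{n}e^{it\lambda_{n}}\overline{\xi_{n}\left(0\right)}\xi_{n}\left(\cdot\right)$, and then pairs with $\xi_{n}$ and invokes the reproducing property to read off $\xi_{n}\left(t\right)=e^{it\lambda_{n}}\overline{\xi_{n}\left(0\right)}$ pointwise. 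You instead pass to the adjoint: from $D^{\left(F\right)}\subseteq A$ and $A^{*}=-A$ you get $-A\subseteq\bigl(D^{\left(F\right)}\bigr)^{*}$, hence $\bigl(D^{\left(F\right)}\bigr)^{*}v_{n}=-i\lambda_{n}v_{n}$, and then the pairing formula $\left\langle F_{\psi},v_{n}\right\rangle _{\mathscr{H}_{F}}=\int_{\Omega}\overline{\psi}\,v_{n}$ turns this into the weak ODE $v_{n}'=i\lambda_{n}v_{n}$, forcing $v_{n}=c_{n}e_{\lambda_{n}}$ with $c_{n}\neq0$. This is exactly the mechanism of Theorem \ref{thm:Eigenspaces-for-the-adjoint} (with $z=-i\lambda_{n}$), applied eigenvalue by eigenvalue; it is arguably more direct, needs no discussion of the translation action $U_{A}\left(t\right)F_{0}=F_{t}$, and makes the dimension-one statement for each eigenspace explicit. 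What the paper's route buys in exchange is the quantitative by-product $\left\Vert e_{\lambda_{n}}\right\Vert _{\mathscr{H}_{F}}=1/\left|\xi_{n}\left(0\right)\right|$, which falls out of its expansion of $F_{t}$ but is not visible in your argument. Your parenthetical observation that the claim is also the atomic case of Theorem \ref{thm:R^n-spect} specialized to $G=\mathbb{R}$ is likewise correct.
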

\begin{proof}
By the assumptions, we may pick some skew-adjoint extension (in $\mathscr{H}_{F}$),
say $A$, 
\begin{equation}
D^{\left(F\right)}\subseteq A\subseteq-\bigl(D^{\left(F\right)}\bigr)^{*}.\label{eq:exp-16}
\end{equation}
Set $U_{A}\left(t\right):=e^{tA}$, $t\in\mathbb{R}$, and recall
that $U_{A}\left(t\right):\mathscr{H}_{F}\rightarrow\mathscr{H}_{F}$
is a unitary representation of $\mathbb{R}$, acting on $\mathscr{H}_{F}$.
By the assumption on its spectrum, we may find an orthonormal basis
(ONB) $\left\{ \xi_{n}\right\} $ in $\mathscr{H}_{F}$, such that
\begin{equation}
U_{A}\left(t\right)=\sum_{n\in\mathbb{N}}e^{it\lambda_{n}}\left|\xi_{n}\left\rangle \right\langle \xi_{n}\right|,\; t\in\mathbb{R},\label{eq:exp-17}
\end{equation}
where $\left|\xi_{n}\left\rangle \right\langle \xi_{n}\right|$ is
Dirac's term for the rank-1 projection onto $\mathbb{C}\xi_{n}$ in
$\mathscr{H}_{F}$. 

From (\ref{eq:exp-16}), we get that if $0<t<a$, then 
\begin{equation}
U_{A}\left(t\right)F_{0}=F_{t};\label{eq:exp-18}
\end{equation}
and by (\ref{eq:exp-17}), therefore (with $0<t<a$):
\begin{equation}
F_{t}\left(\cdot\right)=\sum_{n\in\mathbb{N}}e^{it\lambda_{n}}\overline{\xi_{n}\left(0\right)}\xi_{n}\left(\cdot\right)\mbox{ on }\Omega.\label{eq:exp-19}
\end{equation}
We have used: $\left|\xi_{n}\left\rangle \right\langle \xi_{n}\right|\left(F_{0}\right)=\overline{\xi_{n}\left(0\right)}\xi_{n}\in\mathscr{H}_{F}$.
We are also using the reproducing property in $\mathscr{H}_{F}$. 

Now fix $n\in\mathbb{N}$, and take the inner-product $\left\langle \xi_{n},\cdot\right\rangle _{\mathscr{H}_{F}}$
on both sides in (\ref{eq:exp-19}). Using again the reproducing property,
we get 
\begin{equation}
\xi_{n}\left(t\right)=e^{it\lambda_{n}}\overline{\xi_{n}\left(0\right)},\; t\in\Omega;\label{eq:exp-20}
\end{equation}
which is the desired conclusion.

Note that this makes the functions $\left\{ e_{\lambda_{n}}\right\} _{n\in\mathbb{N}}$
in (\ref{eq:exp-15}) orthogonal, and total in $\mathscr{H}_{F}$;
but they are not normalized; in fact, one checks from (\ref{eq:exp-20})
that 
\[
\left\Vert e_{\lambda_{n}}\left(\cdot\right)\right\Vert _{\mathscr{H}_{F}}=\frac{1}{\left|\xi_{n}\left(0\right)\right|}.
\]
\end{proof}
\begin{prop}
\label{prop:exp}For $F\left(x\right)=e^{-\left|x\right|}\big|_{\left(-a,a\right)}$,
the complex exponentials $e_{\lambda}$ are NOT in the corresponding
RKHS $\mathscr{H}_{F}$. \index{complex exponential}\end{prop}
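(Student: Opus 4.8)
The plan is to test $e_{\lambda}$ against the Fourier--analytic membership criterion already recorded for this $F$. Write $d\mu(\xi)=\tfrac{d\xi}{\pi(1+\xi^{2})}$ for the Bochner measure of $e^{-|x|}$ (see \eqref{eq:exp-2}); by \eqref{eq:exp-6} a continuous function $\xi$ on $(0,a)$ lies in $\mathscr{H}_{F}$ exactly when its truncated Fourier transform $\bigl(\chi_{(0,a)}\xi\bigr)^{\wedge}$ is in $L^{2}(\mathbb{R},d\mu)$. Lemma \ref{lem:exp-1} already puts the real exponential directions $e^{\mp x}\big|_{(0,a)}$ (the deficiency vectors) inside $\mathscr{H}_{F}$, so the content of the proposition is that the oscillatory directions $e_{\lambda}(x)=e^{i\lambda x}$, $\lambda\in\mathbb{R}$, are excluded; fix such a $\lambda$.

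The first step is the explicit computation of the transform: for $\xi\in\mathbb{R}$,
\[
\bigl(\chi_{(0,a)}e_{\lambda}\bigr)^{\wedge}(\xi)=\int_{0}^{a}e^{i\lambda x}e^{-i\xi x}\,dx=\frac{1-e^{-i(\xi-\lambda)a}}{i(\xi-\lambda)},
\]
so that $\bigl|\bigl(\chi_{(0,a)}e_{\lambda}\bigr)^{\wedge}(\xi)\bigr|^{2}=\dfrac{2-2\cos\bigl((\xi-\lambda)a\bigr)}{(\xi-\lambda)^{2}}$. By \eqref{eq:exp-6} the proposition reduces to deciding the integrability of this expression against $d\mu$, i.e.\ to analysing
\[
\int_{\mathbb{R}}\frac{2-2\cos\bigl((\xi-\lambda)a\bigr)}{(\xi-\lambda)^{2}}\,\frac{d\xi}{\pi(1+\xi^{2})}.
\]
One then splits the line into the region near $\xi=\lambda$ (where the first factor extends continuously to $a^{2}$ and contributes nothing dangerous) and the two tails $\xi\to\pm\infty$; the whole question is whether the tails force this integral to be infinite.

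The main obstacle is precisely this tail estimate, and it is genuinely delicate. A crude majorant of the integrand, $\le a^{2}/\bigl(\pi(1+\xi^{2})\bigr)$, only yields finiteness, and the deficiency vectors $e^{\mp x}$ of Lemma \ref{lem:exp-1} have truncated transforms with the same $O(\xi^{-2})$ decay, so the exclusion of $e_{\lambda}$ cannot follow from a naive size comparison; one must instead exploit a sharper necessary condition for membership than the bare cut-off criterion provides. I would therefore expect the proof to proceed not through $\chi_{(0,a)}\xi$ alone but through the spectral picture of Theorem \ref{thm:R^n-spect}: for the absolutely continuous extension $d\mu=\tfrac{d\xi}{\pi(1+\xi^{2})}\in Ext(F)$ one has $\mathscr{H}_{F}=\mathscr{H}_{F}^{(ac)}$, and every member of $\mathscr{H}_{F}$ admits a $\mu$--extension that tends to $0$ at infinity on $\mathbb{R}$; since $x\mapsto e^{i\lambda x}$ does not vanish at infinity, $e_{\lambda}$ cannot be the $\mu$--extension of itself, and one must then rule out the existence of \emph{any} other $L^{2}(\mu)$--representative whose restriction to $(0,a)$ equals $e_{\lambda}$. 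Reconciling this spectral requirement with the transform criterion \eqref{eq:exp-6} — in particular pinning down exactly which functions on $(0,a)$ are restrictions of elements of $\mathscr{H}_{e^{-|\cdot|}}(\mathbb{R})$ — is where the substance of the proof lies, and I would expect the spectral route to be the cleaner of the two.
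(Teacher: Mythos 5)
Your computation of $\bigl(\chi_{(0,a)}e_{\lambda}\bigr)^{\wedge}$ is correct, and so is your diagnosis: the integral you are led to is \emph{finite} (the integrand is dominated by $a^{2}/\bigl(\pi(1+\xi^{2})\bigr)$, and $\mu$ is a finite measure), so the test \eqref{eq:exp-6}, read literally, puts $e_{\lambda}$ \emph{into} $\mathscr{H}_{F}$ rather than excluding it. You then reduce the proposition to the step ``rule out the existence of \emph{any} $L^{2}(\mu)$--representative whose restriction to $(0,a)$ equals $e_{\lambda}$,'' and leave that step open. That is the genuine gap in your proposal, and it cannot be closed, because such representatives exist. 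Take $u\in C_{c}^{\infty}(\mathbb{R})$ with $u(x)=e^{i\lambda x}$ on a neighborhood of $[0,a]$, and set $f(\xi):=\pi(1+\xi^{2})\,\widehat{u}(\xi)$. Since $\widehat{u}$ is Schwartz, $f\in L^{2}(\mathbb{R},\mu)$, and $(f\,d\mu)^{\vee}=u$, whose restriction to $(0,a)$ is $e_{\lambda}$; by Theorem \ref{thm:lcg-1}\,(2) and Corollary \ref{cor:lcg-isom} this restriction lies in $\mathscr{H}_{F}$. The ``vanishing at infinity'' of the $\mu$--extension is no obstruction, since only the restriction to $(0,a)$ has to agree with $e_{\lambda}$. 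One can confirm this independently: from Theorem \ref{thm:HF}, membership of $e_{\lambda}$ amounts to the bound $|\widehat{\psi}(\lambda)|^{2}\leq A\int_{\mathbb{R}}|\widehat{\psi}(\xi)|^{2}d\mu(\xi)$ for $\psi\in C_{c}^{\infty}(0,a)$, which holds with $A=O(1+\lambda^{2})$ (write $\psi=\Psi+\Psi'$ with $\Psi=\psi\ast e^{-t}\chi_{t>0}$, so $\widehat{\psi}(\lambda)=(1+i\lambda)\widehat{\Psi}(\lambda)$, $|\widehat{\Psi}(\lambda)|\leq\|\Psi\|_{1}\leq C\|\Psi\|_{2}$, and $2\|\Psi\|_{2}^{2}=\int|\widehat{\psi}|^{2}d\mu$); and it agrees with the identification of $\mathscr{H}_{F_{3}}$ in Section \ref{sub:F3} with the Sobolev space \eqref{eq:RKHS-eg-6}--\eqref{eq:RKHS-eg-7}, which visibly contains $e^{i\lambda x}$, with $\mathscr{H}_{F}$--norm of order $\sqrt{1+\lambda^{2}}$ (compare Lemma \ref{lem:exp-1}, which already places the kernel function $e^{-x}=F_{0}$ in $\mathscr{H}_{F}$).

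So the step your outline is missing is in fact false, and the proposition conflicts with other parts of the paper. The paper's own proof founders on exactly the point you isolated: it asserts that the only solution $f$ of \eqref{eq:exp-22} is the Dirac mass at $0$, but that uniqueness would require the identity $(f\,d\mu)^{\vee}\equiv1$ to hold on all of $\mathbb{R}$, whereas \eqref{eq:exp-22} only imposes it on the bounded interval $(0,a)$, where the cutoff construction above produces many $L^{2}(\mu)$ solutions. Your instinct that neither a ``naive size comparison'' nor the soft Riemann--Lebesgue argument could finish the proof was therefore sound; the correct conclusion to draw from it is not that a sharper necessary condition is needed, but that the asserted exclusion of $e_{\lambda}$ does not hold for this $F$ on a bounded interval (it is the unbounded case $\Omega=\mathbb{R}$, where $\mathscr{H}_{F}\cong H^{1}(\mathbb{R})$, in which $e_{\lambda}\notin\mathscr{H}_{F}$).
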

\begin{proof}
We check that $e_{0}\equiv\mathbf{1}$, i.e., the constant function,
is not in $\mathscr{H}_{F}$. The general case is similar. 

Suppose $\mathbf{1}\in\mathscr{H}_{F}$. Recall the isometric isomorphic
$T:\mathscr{H}_{F}\rightarrow L^{2}\left(\mathbb{R},\mu\right)$,
with 
\begin{equation}
d\mu\left(\lambda\right)=\frac{d\lambda}{\pi\left(1+\lambda^{2}\right)}\label{eq:exp-21}
\end{equation}
and $T\left(F_{\varphi}\right)=\widehat{\varphi}$, for all $\varphi\in C_{c}\left(0,a\right)$;
$T^{*}\left(f\right)=\left(fd\mu\right)^{\vee}$, for all $f\in L^{2}\left(\mu\right)$.
Also note that $R_{T^{*}}=\left(N_{T}\right)^{\perp}=\mathscr{H}_{F}$,
since $T$ is isometric. So, if $\mathbf{1}=e_{0}\in\mathscr{H}_{F}$,
then $\exists f\in L^{2}\left(\mathbb{R},\mu\right)$ s.t.
\begin{equation}
T^{*}f=\mathbf{1}\Longleftrightarrow\int_{\mathbb{R}}e^{-i\lambda x}\frac{f\left(\lambda\right)}{\pi\left(1+\lambda^{2}\right)}d\lambda\equiv1,\;\forall x\in\left(0,a\right).\label{eq:exp-22}
\end{equation}

But there is only a distribution solution $f$ to (\ref{eq:exp-22});
it is $f\left(\lambda\right):=\delta\left(\lambda-0\right)=$ Dirac
mass at $\lambda=0$, so that 
\[
\int_{\mathbb{R}}e^{i\lambda x}\frac{\delta\left(\lambda-0\right)}{\pi\left(1+\lambda^{2}\right)}d\lambda\equiv1,\ \forall x;
\]
and since $f\left(\lambda\right)=\delta\left(\lambda-0\right)$ is
not in $L^{2}\left(\mathbb{R},\mu\right)$ it follows that $\mathbf{1}\notin\mathscr{H}_{F}$.\end{proof}
\begin{thm}
\label{thm:Eigenspaces-for-the-adjoint}Fix an open interval $\Omega=(0,a)$
and a continuous positive definite\index{positive definite} function
$F$ on $\Omega-\Omega=(-a,a).$ Let $z$ be a complex number, then
$y\to e^{-zy}$ is in $\mathscr{H}_{F}$ iff $z$ is an eigenvalue
for the adjoint $\left(D^{(F)}\right)^{*}$ of $D^{(F)}.$ In the
affirmative case the corresponding eigenspace is $\mathbb{C}e_{z},$
in particular, it has dimension one. \end{thm}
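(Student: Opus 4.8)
The plan is to unwind the definition of the adjoint $\left(D^{(F)}\right)^{*}$ and reduce the eigenvalue equation $\left(D^{(F)}\right)^{*}\xi=z\xi$ to a first-order constant-coefficient ODE on the open interval $\Omega=(0,a)$, exactly parallel to the computation of the deficiency spaces $DEF^{\pm}$ carried out above (which is the special case $z=\pm1$).

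First I would translate the eigenvalue equation. By definition of the adjoint, a vector $\xi\in\mathscr{H}_{F}$ satisfies $\xi\in\mathrm{dom}\bigl(\left(D^{(F)}\right)^{*}\bigr)$ and $\left(D^{(F)}\right)^{*}\xi=z\xi$ if and only if $\left\langle D^{(F)}F_{\psi},\xi\right\rangle _{\mathscr{H}_{F}}=\left\langle F_{\psi},z\xi\right\rangle _{\mathscr{H}_{F}}$ for all $\psi\in C_{c}^{\infty}(\Omega)$; note that $z\xi\in\mathscr{H}_{F}$ automatically, so there is no further domain condition. Writing $D^{(F)}F_{\psi}=F_{\psi'}$ and using the integral representation of the inner product on $\mathscr{H}_{F}$ (Lemma~\ref{lem:RKHS-def-by-integral}), which gives $\left\langle F_{\psi},\xi\right\rangle _{\mathscr{H}_{F}}=\int_{\Omega}\overline{\psi(x)}\,\xi(x)\,dx$ for every $\xi\in\mathscr{H}_{F}$, the condition becomes
\[
\int_{\Omega}\overline{\psi'(x)}\,\xi(x)\,dx=z\int_{\Omega}\overline{\psi(x)}\,\xi(x)\,dx,\qquad\forall\psi\in C_{c}^{\infty}(\Omega).
\]
Replacing $\psi$ by $\overline{\psi}$ and using $\overline{\psi'}=(\overline{\psi})'$, this says precisely that the continuous function $\xi$ on $\Omega$ is a weak (distributional) solution of $\xi'=-z\,\xi$ on $(0,a)$.

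From here both implications are immediate. If $e_{z}(y):=e^{-zy}$ lies in $\mathscr{H}_{F}$, then, being a classical — hence weak — solution of $\xi'=-z\xi$ on $(0,a)$, it satisfies the displayed identity with $\xi=e_{z}$; therefore $e_{z}\in\mathrm{dom}\bigl(\left(D^{(F)}\right)^{*}\bigr)$ and $\left(D^{(F)}\right)^{*}e_{z}=z\,e_{z}$, so $z$ is an eigenvalue. Conversely, if $z$ is an eigenvalue with eigenvector $\xi\in\mathscr{H}_{F}\setminus\{0\}$, then by the translation above $\xi$ is a weak solution of $\xi'=-z\xi$ on $(0,a)$, and by the same weak-to-strong regularity used for $DEF^{\pm}$ (a distributional solution of a first-order linear ODE with constant coefficients is a $C^{1}$, indeed $C^{\infty}$, solution) we get $\xi(y)=c\,e^{-zy}$ on $(0,a)$ for some $c\in\mathbb{C}\setminus\{0\}$; hence $e_{z}=c^{-1}\xi\in\mathscr{H}_{F}$. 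The same computation shows that \emph{every} $\xi\in\mathscr{H}_{F}$ with $\left(D^{(F)}\right)^{*}\xi=z\xi$ is a scalar multiple of $e_{z}$, so in the affirmative case the eigenspace equals $\mathbb{C}e_{z}$ and has dimension one.

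I do not expect a genuine obstacle: the content is a routine generalization of the $DEF^{\pm}$ computation. The only points needing care are the bookkeeping with the conjugate-linearity of $\langle\cdot,\cdot\rangle_{\mathscr{H}_{F}}$ in the translation step, the invocation of weak-to-strong regularity for the ODE (already used in the preceding analysis of $DEF^{\pm}$), and the observation that everything takes place on the \emph{open} interval $(0,a)$ on which the test functions are supported, so no boundary contributions arise.
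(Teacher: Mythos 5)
Your proof is correct and follows essentially the same route as the paper's: translate the adjoint eigenvalue equation into the weak ODE $-\xi'=z\xi$ via the identity $\left\langle F_{\psi},\xi\right\rangle_{\mathscr{H}_{F}}=\int_{\Omega}\overline{\psi}\,\xi$, use weak-to-strong regularity to get $\xi=c\,e^{-zy}$ (hence the one-dimensional eigenspace), and for the converse verify domain membership by the integration-by-parts identity. The only cosmetic difference is that the paper checks $e_{z}\in\mathrm{dom}\bigl(\left(D^{(F)}\right)^{*}\bigr)$ via the Cauchy--Schwarz boundedness estimate, while you exhibit the representing vector $z\,e_{z}\in\mathscr{H}_{F}$ directly; these are equivalent.
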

\begin{proof}
If $z$ is an eigenvalue for $\left(D^{(F)}\right)^{*}$ and $\xi$
in $\mathscr{H}_{F}$ is a corresponding eigenvector, then 
\[
\left\langle D^{(F)}F_{\psi},\xi\right\rangle _{F}=\left\langle F_{\psi},z\xi\right\rangle _{F}
\]
for all $\psi$ in $C_{c}^{\infty}(\Omega).$ Hence 
\[
\int_{\Omega}\psi'(y)\xi(y)dy=\int_{\Omega}z\psi(y)\xi(y)dy
\]
for all $\psi$ in $C_{c}^{\infty}(\Omega).$ Equivalently, $\xi$
is a weak solution to 
\[
-\xi'(y)=z\xi(y)
\]
in $\Omega.$ Thus $\xi(y)=\mathrm{constant}\, e^{-zy}.$

Conversely, suppose $\xi_{z}:y\to e^{-zy}$ is in $\mathscr{H}_{F}.$
It is sufficient to show $\xi_{z}$ is in the domain of $\left(D^{(F)}\right)^{*}.$
Equivalently, we must show there is a constant $C$ such that 
\begin{equation}
\left|\left\langle D^{(F)}F_{\psi},\xi_{z}\right\rangle _{F}\right|\leq C\left\Vert F_{\psi}\right\Vert _{F}\label{eq:te-23}
\end{equation}
for all $\psi$ in $C_{c}^{\infty}(\Omega).$ But 
\begin{align*}
\left|\left\langle D^{(F)}F_{\psi},\xi_{z}\right\rangle _{F}\right| & =\left|\int_{\Omega}\overline{\psi'(y)}\xi_{z}(y)dy\right|\\
 & =\left|z\int_{\Omega}\overline{\psi(y)}\xi_{z}(y)dy\right|\\
 & =\left|z\right|\left|\left\langle F_{\psi},\xi_{z}\right\rangle _{F}\right|.
\end{align*}
Consequently, (\ref{eq:te-23}) follows from the Cauchy-Schwarz inequality. \end{proof}
\begin{cor}
\label{cor:eigen}If $\xi_{r+is}(y)=e^{(r+is)y}$ is in $\mathscr{H}_{F}$
for some real numbers $r\neq0$ and $s,$ then $\xi_{r+is}$ is in
$\mathscr{H}_{F}$ for all real numbers $r\neq0$ and $s.$ \end{cor}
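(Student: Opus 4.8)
The plan is to turn the membership $\xi_{r+is}\in\mathscr H_F$ into an eigenvalue statement for $\bigl(D^{(F)}\bigr)^*$ via Theorem~\ref{thm:Eigenspaces-for-the-adjoint}, and then to combine von Neumann's theorem on the constancy of a deficiency number over an open half-plane with a reflection symmetry of $\mathscr H_F$. Writing $w=r+is$ and $\xi_w(y)=e^{wy}=e^{-(-w)y}$, Theorem~\ref{thm:Eigenspaces-for-the-adjoint} gives $\xi_w|_\Omega\in\mathscr H_F$ if and only if $-w$ is an eigenvalue of $\bigl(D^{(F)}\bigr)^*$; put $n(w):=\dim\ker\bigl(\bigl(D^{(F)}\bigr)^*+wI\bigr)$, so that $\xi_w\in\mathscr H_F\iff n(w)\ge1$. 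The operator $iD^{(F)}$ is symmetric on the dense domain $\{F_\varphi:\varphi\in C_c^\infty(\Omega)\}$, with $\bigl(iD^{(F)}\bigr)^*=-i\bigl(D^{(F)}\bigr)^*$, and passing to the closure does not change the adjoint; hence von Neumann's deficiency theory \cite{AG93,DS88b} shows that $w\mapsto n(w)$ is constant on the half-plane $\{\operatorname{Re}w>0\}$, say with value $\nu_+$, and constant on $\{\operatorname{Re}w<0\}$, say with value $\nu_-$.

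The second ingredient is the reflection $\xi_w\in\mathscr H_F\iff\xi_{-\overline w}\in\mathscr H_F$, i.e.\ $\xi_{r+is}\in\mathscr H_F\iff\xi_{-r+is}\in\mathscr H_F$, which bridges the two half-planes. By Theorem~\ref{thm:HF}, $\xi_w\in\mathscr H_F$ iff there is $A_0>0$ with $\bigl|\sum_jc_je^{wx_j}\bigr|^2\le A_0\sum_{i,j}\overline{c_i}c_jF(x_i-x_j)$ for all finite systems $\{c_j\}\subset\mathbb C$, $\{x_j\}\subset\Omega=(0,a)$. I would apply this inequality with $c_j$ replaced by $\overline{c_j}$ and $x_j$ replaced by $a-x_j\in\Omega$: the left-hand side becomes $e^{2a\operatorname{Re}w}\bigl|\sum_jc_je^{-\overline w x_j}\bigr|^2$, while the right-hand side, using $(a-x_i)-(a-x_j)=x_j-x_i$ and then relabelling $i\leftrightarrow j$, becomes $A_0\sum_{i,j}\overline{c_i}c_jF(x_i-x_j)$; this is exactly the Theorem~\ref{thm:HF} criterion for $\xi_{-\overline w}$, and the converse follows by replacing $w$ with $-\overline w$. (When $F$ is real-valued one may instead quote the conjugation $J$ of Lemma~\ref{lem:Conjugation-Operator}, since $J\xi_w=e^{a\overline w}\,\xi_{-\overline w}$.) Consequently $n(w)=n(-\overline w)$, so $\nu_+=\nu_-$.

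To conclude, the hypothesis furnishes a point $w_0=r_0+is_0$ with $r_0\ne0$ and $n(w_0)\ge1$; since $w_0$ lies in one of the two half-planes, either $\nu_+\ge1$ or $\nu_-\ge1$, hence $\nu_+=\nu_-\ge1$. Therefore $n(w)\ge1$ for every $w=r+is$ with $r\ne0$ and $s\in\mathbb R$, i.e.\ $\xi_{r+is}\in\mathscr H_F$ for all such $r,s$, which is the assertion. I expect the only genuinely delicate point to be the conjugate bookkeeping in the reflection step --- in particular that the change of variables produces $-\overline w$ rather than $-w$ --- together with checking that von Neumann's half-plane constancy is invoked for the correct symmetric operator, namely the closure of $iD^{(F)}$; neither should present a real obstacle.
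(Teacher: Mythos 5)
Your proof is correct and follows essentially the same route as the paper's: von Neumann's half-plane constancy of $\dim\ker\bigl(\bigl(D^{(F)}\bigr)^{*}+wI\bigr)$ combined with the eigenvalue characterization of Theorem \ref{thm:Eigenspaces-for-the-adjoint}. The only place you differ is the bridge between the two half-planes: the paper simply asserts that $DEF^{+}$ and $DEF^{-}$ have the same dimension (a fact it justifies earlier only for real-valued $F$, via the conjugation $J$ of Lemma \ref{lem:Conjugation-Operator}), whereas your substitution $x_{j}\mapsto a-x_{j}$, $c_{j}\mapsto\overline{c_{j}}$ in the criterion of Theorem \ref{thm:HF} gives $\xi_{w}\in\mathscr{H}_{F}\iff\xi_{-\overline{w}}\in\mathscr{H}_{F}$ directly for arbitrary complex-valued $F$ --- a small but genuine strengthening of that step.
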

\begin{proof}
Let $r\neq0$ and $s$ be real numbers. Consider the deficiency spaces
\[
DEF^{r+is}=\left\{ \xi\in\mathscr{H}_{F}:\left\langle D^{(F)}F_{\psi},\xi\right\rangle _{F}=\left\langle F_{\psi},z\xi\right\rangle _{F}\right\} .
\]
For $r>0$ the spaces $DEF^{r+is}$ all have the same dimension as
$DEF^{+}=DEF^{1}$ and the spaces $DEF^{-r+is}$ all have the same
dimension as $DEF^{-}=DEF^{-1}.$ See, e.g., \cite{AG93} or \cite{DS88b}.
But the spaces $DEF^{\pm}$ have the same dimension. By assumption
one of the spaces $D^{r+is}$ has dimension $\geq1.$ Since the deficiency
spaces are eigenspaces for $\left(D^{(F)}\right)^{*}$ the rest follows
from Theorem \ref{thm:Eigenspaces-for-the-adjoint}.\end{proof}
\begin{thm}
Let $a\in\mathbb{R}_{+}$, and $\lambda_{1}\in\mathbb{R}$ be given.
Let $F:\left(-a,a\right)\rightarrow\mathbb{C}$ be a fixed continuous
positive function. Then there following two conditions (\ref{enu:en2-1})
and (\ref{enu:en-2-2}) are equivalent:
\begin{enumerate}
\item \label{enu:en2-1}$\exists\mu_{1}\in Ext\left(F\right)$ such that
$\mu_{1}\left(\left\{ \lambda_{1}\right\} \right)>0$; and
\item \label{enu:en-2-2}$e_{\lambda_{1}}\left(x\right):=e^{i\lambda_{1}x}\Big|_{\left[-a,a\right]}\in\mathscr{H}_{F}$. 
\end{enumerate}
\end{thm}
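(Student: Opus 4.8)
The plan is to prove the two implications separately. Both are driven by (a) the criterion for membership in $\mathscr{H}_{F}$ given in Theorem \ref{thm:HF}, (b) the correspondence in Corollary \ref{cor:lcg-isom} between $Ext(F)$ and isometries $\mathscr{H}_{F}\to L^{2}(\mu)$, and (c) the one-dimensional extension theorem, i.e.\ the fact (Krein) that for every continuous positive definite function $G$ on a bounded interval one has $Ext(G)\neq\phi$. Throughout, $\Omega$ denotes the open interval with $\Omega-\Omega=(-a,a)$; $\mathscr{H}_{F}$ is realized as a space of continuous functions on $\Omega$, and $e_{\lambda_{1}}$ is read as $e^{i\lambda_{1}\cdot}\big|_{\Omega}$ (an element of $\mathscr{H}_{F}$ has a canonical continuous extension to $\mathbb{R}$, so this matches the formulation on $[-a,a]$).

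\emph{Direction (\ref{enu:en2-1}) $\Rightarrow$ (\ref{enu:en-2-2}).} Assume $\mu_{1}\in Ext(F)$ with $c:=\mu_{1}(\{\lambda_{1}\})>0$. Since $\mu_{1}$ is a finite measure, $\chi_{\{\lambda_{1}\}}$ is a nonzero element of $L^{2}(\mathbb{R},\mu_{1})$. By Corollary \ref{cor:lcg-isom} the operator $T_{\mu_{1}}\colon\mathscr{H}_{F}\to L^{2}(\mathbb{R},\mu_{1})$, $F_{\varphi}\mapsto\widehat{\varphi}$, is a well-defined isometry with adjoint $T_{\mu_{1}}^{*}(f)=(f\,d\mu_{1})^{\vee}$; see (\ref{eq:lcg-7})--(\ref{eq:lcg-8}). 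Hence $T_{\mu_{1}}^{*}(\chi_{\{\lambda_{1}\}})\in\mathscr{H}_{F}$, and for $x\in\Omega$,
\[
T_{\mu_{1}}^{*}\bigl(\chi_{\{\lambda_{1}\}}\bigr)(x)=\int_{\mathbb{R}}e^{i\lambda x}\,\chi_{\{\lambda_{1}\}}(\lambda)\,d\mu_{1}(\lambda)=\mu_{1}(\{\lambda_{1}\})\,e^{i\lambda_{1}x}=c\,e_{\lambda_{1}}(x),
\]
so $e_{\lambda_{1}}=c^{-1}T_{\mu_{1}}^{*}(\chi_{\{\lambda_{1}\}})\in\mathscr{H}_{F}$.

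\emph{Direction (\ref{enu:en-2-2}) $\Rightarrow$ (\ref{enu:en2-1}).} Assume $e_{\lambda_{1}}\in\mathscr{H}_{F}$. Applying Theorem \ref{thm:HF} to $\xi=e_{\lambda_{1}}$ produces a constant $K>0$ for which the estimate (\ref{eq:bdd}) holds with $A_{0}=K$; rewritten, this estimate says precisely that the continuous function
\[
G(t):=K\,F(t)-e^{i\lambda_{1}t},\qquad t\in(-a,a),
\]
is positive definite on $\Omega-\Omega=(-a,a)$ (here I use the Fourier sign convention of (\ref{eq:ext-1-4}); with the opposite convention one replaces $e^{i\lambda_{1}t}$ by $e^{-i\lambda_{1}t}$ throughout, which for real-valued $F$ changes nothing). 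By the one-dimensional extension theorem there is $\sigma\in\mathscr{M}_{+}(\mathbb{R})$ with $\widehat{\sigma}=G$ on $(-a,a)$. Since $e^{i\lambda_{1}t}=\widehat{\delta_{\lambda_{1}}}(t)$, we obtain $K\,F=\widehat{\sigma}+\widehat{\delta_{\lambda_{1}}}=\widehat{\sigma+\delta_{\lambda_{1}}}$ on $(-a,a)$, so
\[
\mu_{1}:=\tfrac{1}{K}\,(\sigma+\delta_{\lambda_{1}})\in\mathscr{M}_{+}(\mathbb{R}),\qquad \widehat{\mu_{1}}=F\ \text{on}\ (-a,a),
\]
i.e.\ $\mu_{1}\in Ext(F)$ by (\ref{eq:ext-1-4}), and $\mu_{1}(\{\lambda_{1}\})\geq\tfrac{1}{K}>0$. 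This yields (\ref{enu:en2-1}).

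\emph{Where the difficulty lies.} The substantive ingredient is the one-dimensional extension theorem $Ext(G)\neq\phi$; everything else is packaging of Theorem \ref{thm:HF} and Bochner's theorem. (It is exactly this ingredient that has no analogue over $\mathbb{R}^{n}$ for $n\geq2$.) The only other point needing care is the Fourier-sign/conjugation bookkeeping, so that the produced atom sits at $\lambda_{1}$ (not $-\lambda_{1}$) and so that (\ref{eq:bdd}) turns into positive definiteness of $G$; this is routine and entirely transparent for the real-valued $F$ relevant to the present subsection. For comparison, I note a purely operator-theoretic route to (\ref{enu:en-2-2}) $\Rightarrow$ (\ref{enu:en2-1}): since $e_{\lambda_{1}}\in\mathscr{H}_{F}$ is an eigenvector of $(D^{(F)})^{*}$ with purely imaginary eigenvalue (Theorem \ref{thm:Eigenspaces-for-the-adjoint}), one would choose a skew-adjoint extension $A$ of $D^{(F)}$ with $e_{\lambda_{1}}\in\mathrm{dom}(A)$ (using the deficiency-index dichotomy of Corollary \ref{cor:DF}, and, in the $(1,1)$ case, the conjugation $J$ of Lemma \ref{lem:Conjugation-Operator} to see that the two deficiency components of $e_{\lambda_{1}}$ have equal norm), set $U(t)=e^{tA}$, and take $\mu_{1}=\|P_{A}(\cdot)F_{0}\|^{2}_{\mathscr{H}_{F}}$; then $P_{A}(\{\lambda_{1}\})e_{\lambda_{1}}=e_{\lambda_{1}}$ and $\mu_{1}(\{\lambda_{1}\})>0$ because $\langle F_{0},e_{\lambda_{1}}\rangle_{\mathscr{H}_{F}}=e_{\lambda_{1}}(0)=1\neq0$.
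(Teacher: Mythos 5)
Your proof is correct, and for the substantive direction (\ref{enu:en-2-2})$\Rightarrow$(\ref{enu:en2-1}) it takes a genuinely different route from the paper. The paper argues operator-theoretically: it dilates $D^{\left(F\right)}$ (via Krein) to a skew-adjoint generator $A$ of a unitary group $U_{A}\left(t\right)$ in a Hilbert space $\mathscr{K}\supseteq\mathscr{H}_{F}$, shows that $Je_{\lambda_{1}}$ is an eigenvector of $A$ with eigenvalue $i\lambda_{1}$ (so that $U_{A}\left(t\right)Je_{\lambda_{1}}=e^{it\lambda_{1}}Je_{\lambda_{1}}$ and $P_{A}\left(\left\{ \lambda_{1}\right\} \right)Je_{\lambda_{1}}=Je_{\lambda_{1}}$), and reads off the atom from the spectral measure of the resulting extension. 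You instead stay entirely on the positive-definite-function side: the membership criterion of Theorem \ref{thm:HF} says precisely that $KF-\widehat{\delta_{\lambda_{1}}}$ is continuous and p.d. on $\left(-a,a\right)$, Krein's one-dimensional extension theorem produces $\sigma\in\mathscr{M}_{+}\left(\mathbb{R}\right)$ extending it, and $\mu_{1}=K^{-1}\left(\sigma+\delta_{\lambda_{1}}\right)$ does the job with the explicit lower bound $\mu_{1}\left(\left\{ \lambda_{1}\right\} \right)\geq1/K$. Your argument is shorter and exhibits the extending measure concretely, at the cost of invoking the $n=1$ extension theorem as a black box; the paper's argument is the one that survives in settings where that theorem is unavailable (and it is the version that meshes with the Memoir's operator-extension theme) -- indeed your closing sketch of the operator-theoretic alternative is essentially the paper's proof. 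The converse direction (\ref{enu:en2-1})$\Rightarrow$(\ref{enu:en-2-2}) via $T_{\mu_{1}}^{*}\left(\chi_{\left\{ \lambda_{1}\right\} }\right)=\mu_{1}\left(\left\{ \lambda_{1}\right\} \right)e_{\lambda_{1}}$ is exactly what the paper means by ``contained in the previous discussion.''

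One small point you flag deserves confirmation: the conjugation bookkeeping is not vacuous here, because \eqref{eq:bdd} as printed conjugates $\xi\left(x_{i}\right)$ rather than $\xi\left(x_{j}\right)$, and applied literally to $\xi=e_{\lambda_{1}}$ it would yield positive definiteness of $KF-e^{-i\lambda_{1}\left(\cdot\right)}$, hence an atom at $-\lambda_{1}$. The convention forced by the reproducing property $\left\langle F\left(\cdot-x\right),\xi\right\rangle _{\mathscr{H}_{F}}=\xi\left(x\right)$ (inner product conjugate-linear in the first slot) gives the quadratic form $\sum_{i}\sum_{j}\overline{c_{i}}c_{j}\xi\left(x_{i}\right)\overline{\xi\left(x_{j}\right)}$ on the left, which is your version $KF-e^{i\lambda_{1}\left(\cdot\right)}$ and matches the forward direction (the sanity check $F=e^{i\lambda_{1}\left(\cdot\right)}$, $\mathscr{H}_{F}=\mathbb{C}e_{\lambda_{1}}$, $Ext\left(F\right)=\left\{ \delta_{\lambda_{1}}\right\} $ confirms it). So your formula for $G$ is the right one; just note that it rests on the reproducing-property convention rather than on the literal indices in \eqref{eq:bdd}.
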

\begin{rem}
Assertion (\ref{enu:en2-1}) is stating that $\lambda_{1}$ is an
atom for some $\mu_{1}\in Ext\left(F\right)$.\end{rem}
\begin{proof}
The implication (\ref{enu:en2-1})$\Longrightarrow$(\ref{enu:en-2-2})
is already contained in the previous discussion. 

Proof of (\ref{enu:en-2-2})$\Longrightarrow$(\ref{enu:en2-1}).
We first consider the skew-Hermitian operator $D^{\left(F\right)}\left(F_{\varphi}\right):=F_{\varphi'}$,
$\varphi\in C_{c}^{\infty}\left(0,a\right)$. Using an idea of M.
Krein\index{Krein}, (see \cite{Kre46,KL14}), we may always find
a Hilbert space $\mathscr{K}$, and an isometry $J:\mathscr{H}\rightarrow\mathscr{K}$,
and a strongly continuous unitary one-parameter group $U_{A}\left(t\right)=e^{tA}$,
$t\in\mathbb{R}$, with $A^{*}=-A$; $U_{A}\left(t\right)$ acting
in $\mathscr{K}$, such that 
\begin{equation}
JD^{\left(F\right)}=AJ\mbox{ on }\label{eq:en-4-1}
\end{equation}
\begin{equation}
dom\left(D^{\left(F\right)}\right)=\left\{ F_{\varphi}\:\big|\:\varphi\in C_{c}^{\infty}\left(0,a\right)\right\} ;\label{eq:en-4-2}
\end{equation}
see also Theorem \ref{thm:gEn}. Since 
\begin{equation}
e_{1}\left(x\right)=e^{i\lambda_{1}x}\Big|_{\left[-a,a\right]}\label{eq:en-4-3}
\end{equation}
is in $\mathscr{H}_{F}$, we can form the following measure $\mu_{1}\in\mathscr{M}_{+}\left(\mathbb{R}\right)$,
now given by 
\begin{equation}
d\mu_{1}\left(\lambda\right):=\left\Vert P_{A}\left(d\lambda\right)Je_{1}\right\Vert _{\mathscr{K}}^{2},\;\lambda\in\mathbb{R},\label{eq:en-4-4}
\end{equation}
where $P_{A}\left(\cdot\right)$ is the PVM of $U_{A}\left(t\right)$,
i.e., 
\begin{equation}
U_{A}\left(t\right)=\int_{\mathbb{R}}e^{it\lambda}P_{A}\left(d\lambda\right),\; t\in\mathbb{R}.\label{eq:en-4-5}
\end{equation}

We claim the following two assertions: 
\begin{enumerate}[label=(\roman{enumi})]
\item \label{enu:en-2-3}$\mu_{1}\in Ext\left(F\right)$; and 
\item \label{enu:en-2-4}$\lambda_{1}$ is an atom in $\mu_{1}$, i.e.,
$\mu_{1}\left(\left\{ \lambda_{1}\right\} \right)>0$. 
\end{enumerate}

This is the remaining conclusion in the theorem.

The proof of of \ref{enu:en-2-3} is immediate from the construction
above; using the intertwining\index{intertwining} isometry\index{isometry}
$J$ from (\ref{eq:en-4-1}), and formulas (\ref{eq:en-4-4})-(\ref{eq:en-4-5}).

To prove \ref{enu:en-2-4}, we need the following:
\begin{lem}
Let $F$, $\lambda_{1}$, $e_{1}$, $\mathscr{K}$, $J$ and $\left\{ U_{A}\left(t\right)\right\} _{t\in\mathbb{R}}$
be as above; then we have the identity:
\begin{equation}
\left\langle Je_{1},U_{A}\left(t\right)Je_{1}\right\rangle _{\mathscr{K}}=e^{it\lambda_{1}}\left\Vert e_{1}\right\Vert _{\mathscr{H}_{F}}^{2},\;\forall t\in\mathbb{R}.\label{eq:en-4-6}
\end{equation}
\end{lem}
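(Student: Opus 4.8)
The plan is to reduce the identity $(\ref{eq:en-4-6})$ to a single spectral fact: that $Je_{1}$ is an eigenvector of the skew-adjoint generator $A$ with eigenvalue $i\lambda_{1}$, i.e. $P_{A}\bigl(\{\lambda_{1}\}\bigr)Je_{1}=Je_{1}$. Granting that, the functional calculus applied to $U_{A}\left(t\right)=e^{tA}=\int_{\mathbb{R}}e^{it\lambda}P_{A}\left(d\lambda\right)$ (see $(\ref{eq:en-4-5})$) gives $U_{A}\left(t\right)Je_{1}=e^{it\lambda_{1}}Je_{1}$, hence
\[
\left\langle Je_{1},U_{A}\left(t\right)Je_{1}\right\rangle _{\mathscr{K}}=e^{it\lambda_{1}}\left\Vert Je_{1}\right\Vert _{\mathscr{K}}^{2}=e^{it\lambda_{1}}\left\Vert e_{1}\right\Vert _{\mathscr{H}_{F}}^{2}
\]
since $J$ is isometric; this is $(\ref{eq:en-4-6})$, and the same computation yields $d\mu_{1}=\left\Vert e_{1}\right\Vert _{\mathscr{H}_{F}}^{2}\,\delta_{\lambda_{1}}$, in particular $\mu_{1}\bigl(\{\lambda_{1}\}\bigr)>0$, which is the pending assertion \ref{enu:en-2-4}.

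To reach the spectral fact I would first note, since $e_{1}\left(y\right)=e^{i\lambda_{1}y}=e^{-\left(-i\lambda_{1}\right)y}$ is assumed to lie in $\mathscr{H}_{F}$, that Theorem \ref{thm:Eigenspaces-for-the-adjoint} (applied with $z=-i\lambda_{1}$) gives $e_{1}\in\mathrm{dom}\bigl(\bigl(D^{\left(F\right)}\bigr)^{*}\bigr)$ and $\bigl(D^{\left(F\right)}\bigr)^{*}e_{1}=-i\lambda_{1}e_{1}$. Feeding this into the intertwining relation $JD^{\left(F\right)}=AJ$ on $\mathrm{dom}\bigl(D^{\left(F\right)}\bigr)=\left\{ F_{\varphi}\:\big|\:\varphi\in C_{c}^{\infty}\left(0,a\right)\right\}$ of $(\ref{eq:en-4-1})$--$(\ref{eq:en-4-2})$, and using that $J$ is isometric, gives for every $\varphi\in C_{c}^{\infty}\left(0,a\right)$
\begin{align*}
\left\langle AJF_{\varphi},Je_{1}\right\rangle _{\mathscr{K}} & =\bigl\langle D^{\left(F\right)}F_{\varphi},e_{1}\bigr\rangle_{\mathscr{H}_{F}}=\bigl\langle F_{\varphi},\bigl(D^{\left(F\right)}\bigr)^{*}e_{1}\bigr\rangle_{\mathscr{H}_{F}}\\
 & =\bigl\langle F_{\varphi},-i\lambda_{1}e_{1}\bigr\rangle_{\mathscr{H}_{F}}=\bigl\langle JF_{\varphi},-i\lambda_{1}Je_{1}\bigr\rangle_{\mathscr{K}}.
\end{align*}
Thus $Je_{1}$ pairs against the dense family $\{JF_{\varphi}\}$ exactly as a vector of $\mathrm{dom}\left(A^{*}\right)=\mathrm{dom}\left(A\right)$ with $A^{*}Je_{1}=-i\lambda_{1}Je_{1}$ — equivalently $AJe_{1}=i\lambda_{1}Je_{1}$ — would.

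The step I expect to be the main obstacle is promoting the displayed pairing into a genuine membership $Je_{1}\in\mathrm{dom}\left(A\right)$ with the stated eigenvalue. This is not automatic from an arbitrary skew-adjoint dilation satisfying only $JD^{\left(F\right)}=AJ$: a direct sum $A=A_{0}\oplus B$ with $A_{0}$ a skew-adjoint extension of $D^{\left(F\right)}$ inside $\mathscr{H}_{F}$ whose boundary condition is \emph{not} the one solved by $e_{1}$ would still intertwine, yet leave $Je_{1}$ outside $\mathrm{dom}\left(A\right)$. One therefore has to use the specific Krein dilation of $(\ref{eq:en-4-1})$ — constructed (cf. \cite{Kre46,KL14} and Theorem \ref{thm:gEn}) precisely so that the eigenvector $e_{1}$ of $\bigl(D^{\left(F\right)}\bigr)^{*}$ at the imaginary point $-i\lambda_{1}$ is carried by $J$ onto a genuine eigenvector of $A$ at $i\lambda_{1}$; equivalently, so that $J\bigl(\mathrm{dom}\,D^{\left(F\right)}\bigr)$ sits as a core for $A$ and the pairing identity above extends from the dense family $\{JF_{\varphi}\}$ to all of $\mathrm{dom}\left(A\right)$. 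Once $Je_{1}\in\mathrm{dom}\left(A\right)$ is secured we get $AJe_{1}=i\lambda_{1}Je_{1}$, hence $Je_{1}\in\mathrm{ran}\,P_{A}\bigl(\{\lambda_{1}\}\bigr)$, and the first paragraph closes the argument.
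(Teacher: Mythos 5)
Your route is, in substance, the paper's: both reduce the identity to the single spectral fact $AJe_{1}=i\lambda_{1}Je_{1}$, obtained by pushing $\bigl(D^{(F)}\bigr)^{*}e_{1}=-i\lambda_{1}e_{1}$ (which you correctly extract from Theorem \ref{thm:Eigenspaces-for-the-adjoint} with $z=-i\lambda_{1}$) through the isometry $J$. The obstacle you single out is a genuine gap, and it is worth saying plainly that the paper's own proof does not close it either: the chain $D_{(F)}^{*}e_{1}=D_{(F)}^{*}J^{*}Je_{1}=J^{*}AJe_{1}$, like the differentiation of $s\mapsto U_{A}(t-s)Je_{1}$, tacitly assumes $Je_{1}\in\mathrm{dom}(A)$, which the bare intertwining relation (\ref{eq:en-4-1})--(\ref{eq:en-4-2}) does not provide. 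Your counterexample is correct and can even be realized with $\mathscr{K}=\mathscr{H}_{F}$ and $J=I$: take $A=A_{\theta}$ a skew-adjoint extension of $D^{(F)}$ whose von Neumann boundary condition is not the one solved by $e_{1}$ (for $L^{2}(0,1)$ and $D=d/dx$, the condition $f(1)=e^{i\theta}f(0)$ with $\theta\not\equiv\lambda_{1}$ gives spectrum $i(\theta+2\pi\mathbb{Z})\not\ni i\lambda_{1}$); then (\ref{eq:en-4-6}) fails, since its validity for all $t$ forces $P_{A}(\{\lambda_{1}\})Je_{1}=Je_{1}$.

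The missing idea that closes the gap --- absent from your proposal and from the paper --- is that because the adjoint eigenvalue $-i\lambda_{1}$ is purely imaginary, $e_{1}$ automatically lies in the domain of \emph{some} skew-adjoint extension of $D^{(F)}$ acting in $\mathscr{H}_{F}$ itself. Concretely, using $\bigl(D^{(F)}\bigr)^{*}e_{1}=-i\lambda_{1}e_{1}$ and the skew-symmetry of $D^{(F)}$, a direct computation shows that $-\bigl(D^{(F)}\bigr)^{*}$ restricted to $\mathrm{dom}\bigl(\overline{D^{(F)}}\bigr)+\mathbb{C}e_{1}$ is again skew-symmetric (all cross terms cancel because $\lambda_{1}$ is real); since the indices are $(0,0)$ or $(1,1)$ by Corollary \ref{cor:DF}, a one-dimensional skew-symmetric enlargement of the domain is automatically skew-adjoint by the von Neumann dimension count (and in the degenerate cases $e_{1}\in\mathrm{dom}\bigl(\overline{D^{(F)}}\bigr)$ or indices $(0,0)$, every extension already contains $e_{1}$). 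Choosing $A$ to be this particular extension, or any dilation of it, yields $Je_{1}\in\mathrm{dom}(A)$ with $AJe_{1}=i\lambda_{1}Je_{1}$, and your first paragraph then finishes the proof. This choice costs nothing for the theorem being proved, since the implication (\ref{enu:en-2-2})$\Longrightarrow$(\ref{enu:en2-1}) only requires exhibiting one $\mu_{1}\in Ext(F)$ with an atom at $\lambda_{1}$.
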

\begin{proof}
It is immediate from (\ref{eq:en-4-1})-(\ref{eq:en-4-4}) that (\ref{eq:en-4-6})
holds for $t=0$. To get it for all $t$, fix $t$, say $t>0$ (the
argument is the same if $t<0$); and we check that 
\begin{equation}
\frac{d}{ds}\left(\left\langle Je_{1},U_{A}\left(t-s\right)Je_{1}\right\rangle _{\mathscr{K}}-e^{i\left(t-s\right)\lambda_{1}}\left\Vert e_{1}\right\Vert _{\mathscr{H}_{F}}^{2}\right)\equiv0,\;\forall s\in\left(0,t\right).\label{eq:en-4-7}
\end{equation}
But this, in turn, follow from the assertions above: First 
\[
D_{\left(F\right)}^{*}e_{1}=D_{\left(F\right)}^{*}J^{*}Je_{1}=J^{*}AJe_{1}
\]
holds on account of (\ref{eq:en-4-1}). We get: $e_{1}\in dom\left(D_{\left(F\right)}^{*}\right)$,
and $D_{\left(F\right)}^{*}e_{1}=-i\lambda_{1}e_{1}$. 

Using this, the verification of is (\ref{eq:en-4-6}) now immediate.
\end{proof}

As a result, we get:
\[
U_{A}\left(t\right)Je_{1}=e^{it\lambda_{1}}Je_{1},\;\forall t\in\mathbb{R},
\]
and by (\ref{eq:en-4-5}):
\[
P_{A}\left(\left\{ \lambda_{1}\right\} \right)Je_{1}=Je_{1}
\]
where $\left\{ \lambda_{1}\right\} $ denotes the desired $\lambda_{1}$-atom.
Hence, by (\ref{eq:en-4-4}), $\mu_{1}\left(\left\{ \lambda_{1}\right\} \right)=\left\Vert Je_{1}\right\Vert _{\mathscr{K}}^{2}=\left\Vert e_{1}\right\Vert _{\mathscr{H}_{F}}^{2}$,
which is the desired conclusion in (\ref{enu:en-2-2}).

\end{proof}

\section{Discussion}

Now in the $\mathbb{R}$-case, the positive definite continuous extension
$\tilde{F}$ are as in (\ref{eq:lcg-bochner}), and 
\[
\tilde{F}\left(t\right)=\int_{\mathbb{R}}e_{t}\left(\lambda\right)d\mu\left(\lambda\right)
\]
where $\mu$ is a finite measure on $\mathbb{R}$. In this case, it
is much easier to specify the RKHS.
\begin{lem}
$\mathscr{H}_{\tilde{F}}\simeq L^{2}\left(\mathbb{R},\mu\right)$. \end{lem}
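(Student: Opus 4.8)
The plan is to exhibit an explicit unitary operator $W\colon\mathscr{H}_{\tilde F}\to L^{2}(\mathbb{R},\mu)$ implemented by the Fourier transform, exactly as in the proof of Theorem~\ref{thm:pd-extension-bigger-H-space} and Corollary~\ref{cor:lcg-isom}, now specialized to the case where $\tilde F$ is a \emph{globally} defined continuous positive definite function on $\mathbb{R}$. First I would recall (Lemma~\ref{lem:RKHS-def-by-integral}, applied on all of $\mathbb{R}$ rather than on a proper subinterval) that $\mathscr{H}_{\tilde F}$ is the Hilbert completion of the functions $\tilde F_{\varphi}(x)=\int_{\mathbb{R}}\varphi(y)\tilde F(x-y)\,dy$, $\varphi\in C_{c}^{\infty}(\mathbb{R})$, under
\[
\left\langle \tilde F_{\varphi},\tilde F_{\psi}\right\rangle _{\mathscr{H}_{\tilde F}}=\int_{\mathbb{R}}\int_{\mathbb{R}}\overline{\varphi(x)}\psi(y)\tilde F(x-y)\,dx\,dy .
\]

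Next I would substitute $\tilde F(x-y)=\int_{\mathbb{R}}e^{i\lambda(x-y)}\,d\mu(\lambda)$ into this inner product. Since $\mu$ is a \emph{finite} measure and $\varphi,\psi$ are continuous with compact support, Fubini's theorem applies and yields
\[
\left\langle \tilde F_{\varphi},\tilde F_{\psi}\right\rangle _{\mathscr{H}_{\tilde F}}=\int_{\mathbb{R}}\overline{\widehat{\varphi}(\lambda)}\,\widehat{\psi}(\lambda)\,d\mu(\lambda)=\left\langle \widehat{\varphi},\widehat{\psi}\right\rangle _{L^{2}(\mu)},
\]
with the convention $\widehat{\varphi}(\lambda)=\int_{\mathbb{R}}e^{-i\lambda x}\varphi(x)\,dx$. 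In particular $\|\tilde F_{\varphi}\|_{\mathscr{H}_{\tilde F}}^{2}=\int_{\mathbb{R}}|\widehat{\varphi}(\lambda)|^{2}\,d\mu(\lambda)$, so the prescription $W\tilde F_{\varphi}:=\widehat{\varphi}$ is well defined on the dense domain $\{\tilde F_{\varphi}\}$ (if $\tilde F_{\varphi}=0$ in $\mathscr{H}_{\tilde F}$, then $\widehat{\varphi}=0$ in $L^{2}(\mu)$), is isometric there, and hence extends uniquely to an isometry $W\colon\mathscr{H}_{\tilde F}\to L^{2}(\mathbb{R},\mu)$.

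Then I would prove surjectivity by showing that the range of $W$ is dense; being the range of an isometry it is automatically closed, so density forces $W$ onto. Suppose $f\in L^{2}(\mathbb{R},\mu)$ satisfies $\langle\widehat{\varphi},f\rangle_{L^{2}(\mu)}=0$ for every $\varphi\in C_{c}^{\infty}(\mathbb{R})$. Writing out $\widehat{\varphi}$ and interchanging integrals (again legitimate because $\mu$ is finite and $\varphi$ has compact support),
\[
\int_{\mathbb{R}}\varphi(x)\left(\int_{\mathbb{R}}e^{-i\lambda x}\overline{f(\lambda)}\,d\mu(\lambda)\right)dx=0\qquad\text{for all }\varphi ,
\]
so the continuous function $x\mapsto\int_{\mathbb{R}}e^{-i\lambda x}\overline{f(\lambda)}\,d\mu(\lambda)$, which is the Fourier transform of the finite complex measure $\overline{f}\,d\mu$, vanishes identically. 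By uniqueness of the Fourier transform of finite complex Borel measures on $\mathbb{R}$, $\overline{f}\,d\mu=0$, i.e.\ $f=0$ $\mu$-a.e. Hence $W(\mathscr{H}_{\tilde F})^{\perp}=\{0\}$, and $W$ is a unitary isomorphism, i.e.\ $\mathscr{H}_{\tilde F}\simeq L^{2}(\mathbb{R},\mu)$, as claimed.

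There is no genuinely hard step here; the argument is the specialization to $\mathbb{R}$ of facts already used in Sections~\ref{sec:embedding}--\ref{sub:euclid}. The only points requiring a little care are the two Fubini interchanges (handled by finiteness of $\mu$ together with compact support of the test functions) and the density of $\{\widehat{\varphi}:\varphi\in C_{c}^{\infty}(\mathbb{R})\}$ in $L^{2}(\mathbb{R},\mu)$, which reduces to the injectivity of the map sending a finite complex measure to its Fourier transform. Alternatively, one can bypass the test functions entirely and note that the complex exponentials $\{e_{t}:t\in\mathbb{R}\}$ already span a dense subspace of $L^{2}(\mathbb{R},\mu)$ (same orthogonality-plus-uniqueness argument, using $\overline{e_{t}}=e_{-t}$), while under $W$ the vector $\tilde F(\cdot-t)$ maps to $e_{t}$; this makes the identification $\mathscr{H}_{\tilde F}\simeq L^{2}(\mathbb{R},\mu)$ immediate and simultaneously identifies the translation group on $\mathscr{H}_{\tilde F}$ with multiplication by $e_{t}$ on $L^{2}(\mu)$.
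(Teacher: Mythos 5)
Your proof is correct and follows essentially the same route as the paper: substitute the Bochner representation $\tilde F=\widehat{d\mu}$ into the $\mathscr{H}_{\tilde F}$-inner product, apply Fubini, and conclude that $\tilde F_{\varphi}\mapsto\widehat{\varphi}$ is an isometry onto a dense subspace of $L^{2}(\mathbb{R},\mu)$. The only difference is that you spell out the surjectivity step (density of $\{\widehat{\varphi}\}$ in $L^{2}(\mu)$ via uniqueness of Fourier transforms of finite complex measures), which the paper leaves implicit.
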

\begin{proof}
Denote $\widehat{\varphi}$ the Fourier transform of $\varphi$, and
$\check{\varphi}$ be the inverse transform.

For all $\varphi\in C_{c}^{\infty}\left(\mathbb{R}\right)$, 
\begin{eqnarray*}
\tilde{F}_{\varphi}\left(x\right) & = & \int_{\mathbb{R}}\varphi\left(y\right)\tilde{F}\left(y-x\right)dy\\
 & = & \int_{\mathbb{R}}\varphi\left(y\right)\left(\int e_{y-x}\left(\lambda\right)d\mu\left(\lambda\right)\right)dy\\
 & = & \int\widehat{\varphi}\left(\lambda\right)\overline{e_{x}\left(\lambda\right)}d\mu\left(\lambda\right),\mbox{ and }
\end{eqnarray*}
\begin{equation}
\left\Vert \tilde{F}_{\varphi}\right\Vert _{\mathscr{H}_{\tilde{F}}}^{2}=\int\left|\widehat{\varphi}\left(\lambda\right)\right|^{2}d\mu\left(\lambda\right)\label{eq:norm}
\end{equation}
The lemma follows from the fact the $\mathscr{H}_{\tilde{F}}$ is
the completion under (\ref{eq:norm}) of the set of functions $\tilde{F}_{\varphi}$. 
\end{proof}
\textbf{Caution. }
\begin{enumerate}
\item There is \uline{NOT} a similar formula for the RKHS $\mathscr{H}_{F}$
of $F$ is positive definite, continuous, but only defined on $\left(0,\epsilon\right)$. 
\item There is natural isometry\index{isometry}
\begin{eqnarray}
\mathscr{H}_{F} & \hookrightarrow & \mathscr{H}_{\tilde{F}}\nonumber \\
\left(F_{\varphi}\right) & \overset{W}{\rightarrow} & \left(\tilde{F}_{\varphi}\right),\:\varphi\in C_{c}^{\infty}\left(0,\epsilon\right)\label{eq:iso}
\end{eqnarray}
but the isometry $W$ in (\ref{eq:iso}) may not be onto. In some
cases yes, and in other cases no. 
\end{enumerate}
We discussed the Lie group case $G$, $\Omega\subset G$ open connected,
$F$ continuous and positive definite on $\Omega\cdot\Omega^{-1}=\left\{ xy^{-1}:x,y\in\Omega\right\} $.
In this case the most easy extensions to positive definite continuous
functions $\tilde{F}$ will be to $\tilde{G}$, being the simply connected
covering group of $G$. For example, $G=\mathbb{T}$, $\tilde{G}=\mathbb{R}$. 

If $G=SL_{2}\left(\mathbb{R}\right)$, then $\tilde{G}$ is a 2-fold
cover. But $\mathbb{R}\rightarrow\mathbb{R}/\mathbb{Z}$ is an $\infty$
covering of $\mathbb{T}$.

\chapter{\label{chap:types}Type I v.s. Type II Extensions}

In this chapter, we identify extensions of the initially give p.d.
function $F$ which are associated with operator extensions in the
RKHS $\mathscr{H}_{F}$ \index{RKHS}itself (Type I), and those which
require an enlargement of $\mathscr{H}_{F}$, Type II. In the case
of $G=\mathbb{R}$ (the real line) some of these continuous p.d. extensions
arising from the second construction involve a spline\index{spline}-procedure,
and a theorem of G. Polya, which leads to p.d. extensions of $F$
that are symmetric around $x=0$, and convex on the left and right
half-lines. Further these extensions are supported in a compact interval,
symmetric around $x=0$.

\section{\label{sec:Polya}Polya Extensions}

Part of this is the construction of Polya extensions as follow: Starting
with $F$ on $\left(-a,a\right)$; we create a new $F_{ex}$, p.d.
on $\mathbb{R}$, such that $F_{ex}\big|_{\mathbb{R}_{+}}$ is convex.

\index{positive definite}

\index{Polya}

In Figure \ref{fig:spline0}, the slope of $L_{+}$ is chosen to be
$F'\left(a\right)$; and we take the slope of $L_{-}$ to be $F'\left(-a\right)=-F'\left(a\right)$.
Recall that $F$ is defined initially only on some fixed interval
$\left(-a,a\right)$. It then follows by Polya's theorem that each
of these spline extensions is continuous and positive definite. 

\begin{figure}[H]
\includegraphics[scale=0.5]{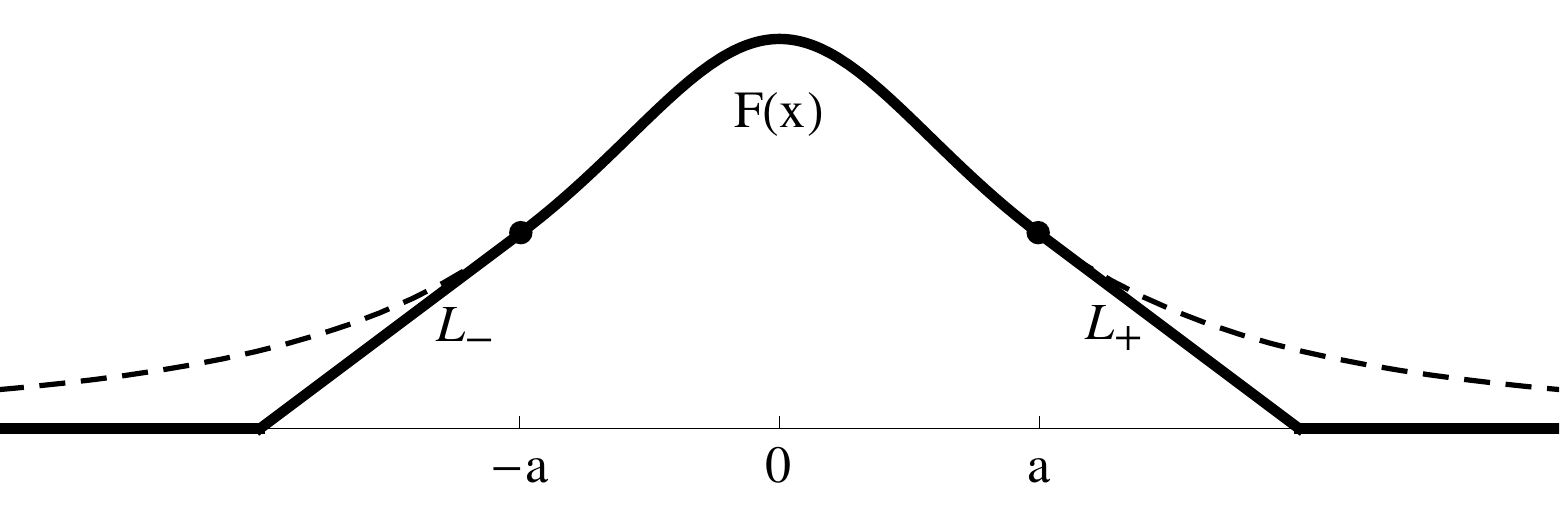}

\protect\caption{\label{fig:spline0}Spline extension of $F:\left(-a,a\right)\rightarrow\mathbb{R}$}
\end{figure}

After extending $F$ from $\left(-a,a\right)$ by adding one or more
line-segments over $\mathbb{R}_{+}$, and using symmetry by $x=0$;
the lines in the extensions will be such that there is a $c$, $0<a<c$,
and the extension $F_{Polya}\left(\cdot\right)$ satisfies $F_{Polya}\left(x\right)=0$
for all $\left|x\right|\geq c$. See Figure \ref{fig:polya} below.

\begin{figure}[H]
\includegraphics[scale=0.5]{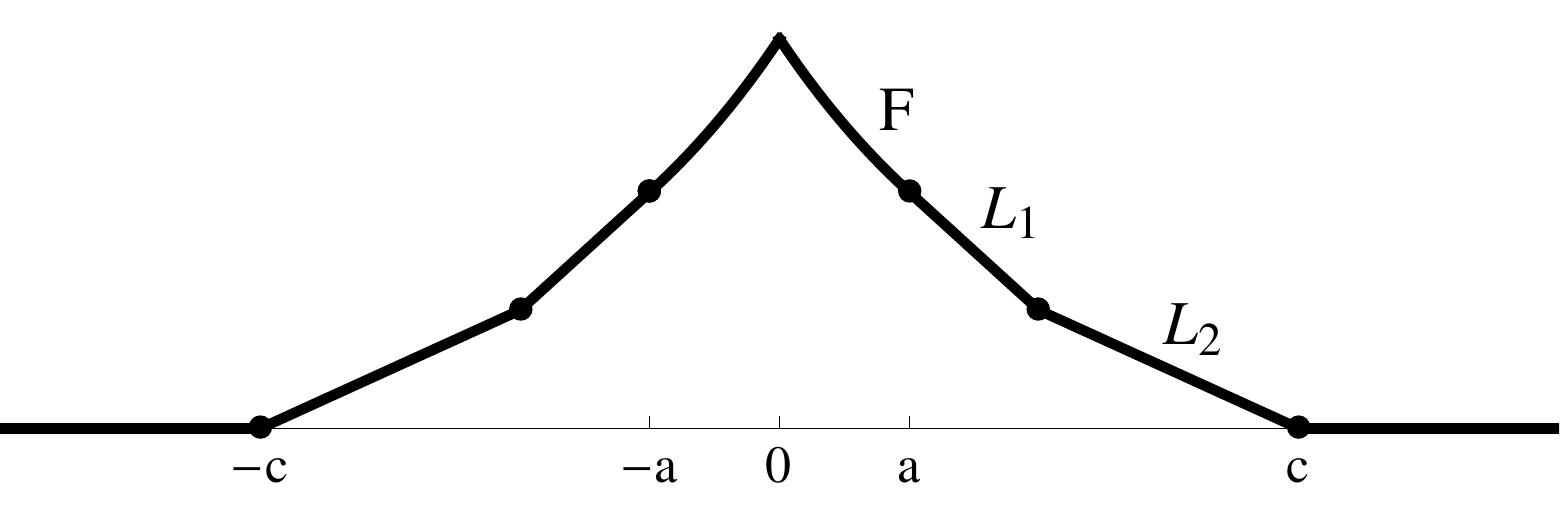}

\protect\caption{\label{fig:polya}An example of Polya extension of $F$ on $\left(-a,a\right)$.}
\end{figure}

\begin{lem}
\label{lem:distd}Consider the following two functions $F_{2}\left(x\right)=1-\left|x\right|$,
and $F_{3}\left(x\right)=e^{-\left|x\right|}$, each defined on a
finite interval $\left(-a,a\right)$, possibly a different value of
$a$ from one to the next. The distributional double derivatives are
as follows:
\begin{eqnarray*}
F_{2}'' & = & -2\delta_{0}\\
F_{3}'' & = & F_{3}-2\delta_{0}
\end{eqnarray*}
where $\delta_{0}$ is Dirac's delta function (i.e., point mass at
$x=0$, $\delta_{0}=\delta\left(x-0\right)$.)\end{lem}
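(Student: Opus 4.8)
The statement is an identity of distributions on the open interval $(-a,a)$, so the plan is to test both sides against an arbitrary $\varphi\in C_{c}^{\infty}(-a,a)$ and use the definition of distributional derivative, $\langle F_{j}'',\varphi\rangle=\langle F_{j},\varphi''\rangle=\int_{-a}^{a}F_{j}(x)\varphi''(x)\,dx$. Since $\varphi$ has compact support inside $(-a,a)$, all boundary terms at $x=\pm a$ produced by integration by parts will vanish; the only genuine contribution comes from the corner of $|x|$ at $x=0$. Accordingly, I would split each integral as $\int_{-a}^{0}+\int_{0}^{a}$ and integrate by parts twice on each piece, keeping careful track of the boundary terms at $x=0$.

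For $F_{2}(x)=1-|x|$: on $[-a,0]$ write $F_{2}(x)=1+x$, and on $[0,a]$ write $F_{2}(x)=1-x$. Two integrations by parts on $\int_{-a}^{0}(1+x)\varphi''$ give $\varphi'(0)-\varphi(0)$, and on $\int_{0}^{a}(1-x)\varphi''$ give $-\varphi'(0)-\varphi(0)$; the $\varphi'(0)$ terms cancel and the sum is $-2\varphi(0)$, i.e. $\langle F_{2}'',\varphi\rangle=-2\varphi(0)=\langle-2\delta_{0},\varphi\rangle$. For $F_{3}(x)=e^{-|x|}$: on $[-a,0]$ use $e^{x}$, on $[0,a]$ use $e^{-x}$; since $(e^{\pm x})''=e^{\pm x}$ away from $0$, integrating $\int_{-a}^{0}e^{x}\varphi''$ by parts twice yields $\varphi'(0)-\varphi(0)+\int_{-a}^{0}e^{x}\varphi$, and $\int_{0}^{a}e^{-x}\varphi''$ yields $-\varphi'(0)-\varphi(0)+\int_{0}^{a}e^{-x}\varphi$. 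Adding, the $\varphi'(0)$ terms cancel and one obtains $-2\varphi(0)+\int_{-a}^{a}e^{-|x|}\varphi(x)\,dx=\langle F_{3}-2\delta_{0},\varphi\rangle$.

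There is no real obstacle here; the content is entirely the bookkeeping of the two boundary evaluations at $x=0$ coming from the kink in $|x|$, together with the observation that the jump of the (a.e.) first derivative $F_{j}'$ across $0$ equals $F_{j}'(0^{+})-F_{j}'(0^{-})=-1-1=-2$, which is what produces the $-2\delta_{0}$ in both cases; the extra term $F_{3}$ in the second identity is simply the pointwise (classical) second derivative of $e^{-|x|}$ on $(-a,0)\cup(0,a)$. I would present it as the single short computation above, done once and noting that the two cases are identical in structure.
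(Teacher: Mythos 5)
Your computation is correct and is precisely the computation the paper alludes to but omits: the paper's proof consists only of the remark that ``the conclusion follows from a computation, making use of L. Schwartz' theory of distributions,'' and your pairing against test functions with two integrations by parts on each half-interval, tracking the jump $F'(0^{+})-F'(0^{-})=-2$ at the kink, is exactly that computation carried out. Nothing to add.
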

\begin{proof}
The conclusion follows from a computation, making use of L. Schwartz'
theory of distributions; see \cite{Tre06}.\index{Schwartz!distribution}\end{proof}
\begin{prop}
Given $F:\left(-a,a\right)\rightarrow\mathbb{C}$, and assume $F$
has a Polya extension $F_{ex}$, then the corresponding measure $\mu_{ex}\in Ext\left(F\right)$
has the following form 
\[
d\mu_{ex}\left(\lambda\right)=\Phi_{ex}\left(\lambda\right)d\lambda
\]
where
\[
\Phi_{ex}\left(\lambda\right)=\frac{1}{2\pi}\int_{-c}^{c}e^{-i\lambda y}F_{ex}\left(y\right)dy
\]
is entire analytic in $\lambda$.\end{prop}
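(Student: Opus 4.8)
The plan is to identify a Polya extension $F_{ex}$ as a continuous, \emph{compactly supported}, positive definite function on $\mathbb{R}$, and then to combine Fourier inversion with the Paley--Wiener theorem. First I would record what \secref{Polya} gives us: by construction $F_{ex}$ arises from $F$ on $(-a,a)$ by adjoining finitely many line segments over $\mathbb{R}_{+}$, reflecting through $x=0$, and then setting $F_{ex}(x)=0$ for $|x|\geq c$, where $0<a<c$. Hence $F_{ex}$ is continuous, even, supported in $[-c,c]$, and convex on each half-line, so by Polya's theorem it is positive definite on $\mathbb{R}$. In particular $F_{ex}\in C_{c}(\mathbb{R})\subset L^{1}(\mathbb{R})\cap L^{2}(\mathbb{R})$ and $F_{ex}\big|_{(-a,a)}=F$, so the Bochner measure $\mu_{ex}$ attached to $F_{ex}$ by Lemma \ref{lem:lcg-Bochner} lies in $Ext(F)$.

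Next I would prove that $\mu_{ex}$ is absolutely continuous with the stated density. Put $\Phi_{ex}(\lambda):=\tfrac{1}{2\pi}\int_{-c}^{c}e^{-i\lambda y}F_{ex}(y)\,dy=\tfrac{1}{2\pi}\widehat{F_{ex}}(\lambda)$. Since $F_{ex}\in L^{2}(\mathbb{R})$, Plancherel gives $\Phi_{ex}\in L^{2}(\mathbb{R})$, and for $\varphi\in C_{c}^{\infty}(\Omega)$ one computes, via Parseval and Fubini,
\begin{align*}
\int_{\mathbb{R}}\bigl|\widehat{\varphi}(\lambda)\bigr|^{2}\Phi_{ex}(\lambda)\,d\lambda
&=\int_{\Omega}\int_{\Omega}\overline{\varphi(x)}\varphi(y)F_{ex}(x-y)\,dx\,dy\\
&=\bigl\|F_{\varphi}\bigr\|_{\mathscr{H}_{F}}^{2}
=\int_{\mathbb{R}}\bigl|\widehat{\varphi}(\lambda)\bigr|^{2}\,d\mu_{ex}(\lambda),
\end{align*}
the last equality because $\mu_{ex}\in Ext(F)$ and $T_{\mu_{ex}}\colon\mathscr{H}_{F}\to L^{2}(\mathbb{R},\mu_{ex})$, $T_{\mu_{ex}}(F_{\varphi})=\widehat{\varphi}$, is isometric by Corollary \ref{cor:lcg-isom}. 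Since $\{\,\widehat{\varphi}\,\overline{\widehat{\varphi}}:\varphi\in C_{c}^{\infty}(\mathbb{R})\,\}$ is dense in $L^{1}$ of any finite Borel measure on $\mathbb{R}$ (the density argument already used in the proof of Lemma \ref{lem:li-meas}), it follows that $d\mu_{ex}(\lambda)=\Phi_{ex}(\lambda)\,d\lambda$; in particular $\Phi_{ex}\geq 0$ a.e. and $\Phi_{ex}\in L^{1}(\mathbb{R})$. (Equivalently one may argue directly from $F_{ex}\in L^{1}\cap L^{2}$: Fourier inversion gives $F_{ex}(x)=\int_{\mathbb{R}}e^{i\lambda x}\Phi_{ex}(\lambda)\,d\lambda$, and uniqueness of the Fourier--Stieltjes transform of the two finite measures $\mu_{ex}$ and $\Phi_{ex}\,d\lambda$ forces their equality.)

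Finally I would address the analyticity. For $z\in\mathbb{C}$ set $\Phi_{ex}(z)=\tfrac{1}{2\pi}\int_{-c}^{c}e^{-izy}F_{ex}(y)\,dy$; the integrand is jointly continuous on the compact set $[-c,c]$ and entire in $z$ for each fixed $y$, so differentiation under the integral sign (legitimate since $F_{ex}$ is bounded and the domain compact) shows $\Phi_{ex}$ is entire, with the Paley--Wiener estimate $|\Phi_{ex}(z)|\leq\tfrac{1}{2\pi}\|F_{ex}\|_{L^{1}}e^{c|\operatorname{Im}z|}$. This yields precisely the asserted form $d\mu_{ex}(\lambda)=\Phi_{ex}(\lambda)\,d\lambda$ with $\Phi_{ex}$ entire analytic. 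The only genuinely non-formal step is the passage from ``$F_{ex}$ continuous and compactly supported'' to ``$\mu_{ex}$ absolutely continuous with density $\widehat{F_{ex}}/2\pi$'' without assuming a priori that $\widehat{F_{ex}}\in L^{1}$; this is the part I expect to require care, and it is exactly what the Plancherel identity together with the $L^{1}$-density argument of Lemma \ref{lem:li-meas} supplies. Everything else is bookkeeping with the Fourier conventions.
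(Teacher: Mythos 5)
Your argument is correct and is precisely the approach the paper intends: its proof of this proposition is the one-line remark ``An application of Fourier inversion, and the Paley--Wiener theorem,'' and you have simply supplied the details of those two ingredients (compact support and continuity of $F_{ex}$, absolute continuity of $\mu_{ex}$ via Parseval/Bochner uniqueness, and entireness of $\widehat{F_{ex}}$ by differentiation under the integral). The one point worth tightening is that your Parseval/Plancherel identity chain should be stated for all $\varphi\in C_{c}^{\infty}(\mathbb{R})$, not just $\varphi\in C_{c}^{\infty}(\Omega)$, so that the $L^{1}$-density argument of Lemma \ref{lem:li-meas} applies directly; both equalities in your display do hold in that generality since $F_{ex}$ is globally defined and in $L^{1}\cap L^{2}$.
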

\begin{proof}
An application of Fourier inversion, and the Paley-Wiener theorem.
\end{proof}
For splines and positive definite functions, we refer to \cite{Sch83,GSS83}.
\begin{example}[Cauchy distribution]
\label{eg:F1} $F_{1}\left(x\right)={\displaystyle \frac{1}{1+x^{2}}}$;
$\left|x\right|<1$. $F_{1}$ is concave near $x=0$.

\begin{figure}[H]
\includegraphics[scale=0.5]{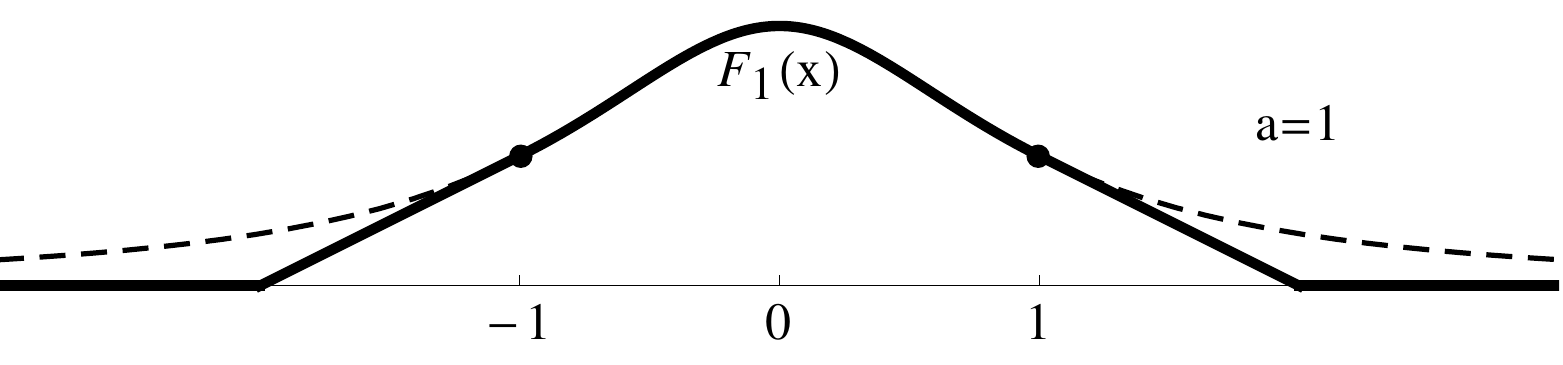}

\protect\caption{\label{fig:spline1}Extension of $F_{1}\left(x\right)={\displaystyle \tfrac{1}{1+x^{2}}}$;
$\Omega=\left(0,1\right)$}
\end{figure}

\end{example}

\begin{example}
\label{eg:F2}$F_{2}\left(x\right)=1-\left|x\right|$; $\left|x\right|<\frac{1}{2}$.
Consider the following Polya extension
\[
F\left(x\right)=\begin{cases}
1-\left|x\right| & \mbox{if }\left|x\right|<\frac{1}{2}\\
{\displaystyle \frac{1}{3}\left(2-\left|x\right|\right)} & \mbox{if }\frac{1}{2}\leq\left|x\right|<2\\
0 & \mbox{if }\left|x\right|\geq2
\end{cases}
\]

\begin{figure}[H]
\includegraphics[scale=0.5]{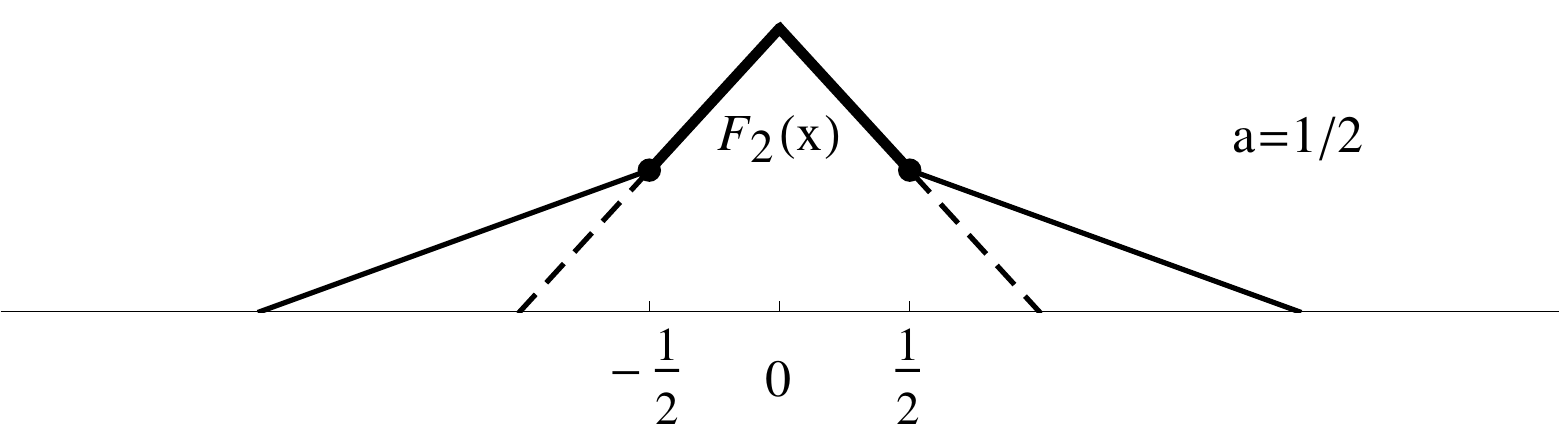}

\protect\caption{\label{fig:spline2}Extension of $F_{2}\left(x\right)=1-\left|x\right|$;
$\Omega=\left(0,\frac{1}{2}\right)$}
\end{figure}

This is a p.d. spline extension which is convex on $\mathbb{R}_{+}$.
The corresponding measure $\mu\in Ext\left(F\right)$ has the following
form $d\mu\left(\lambda\right)=\Phi\left(\lambda\right)d\lambda$
where $d\lambda=$ Lebesgue measure on $\mathbb{R}$, and where
\[
\Phi\left(\lambda\right)=\begin{cases}
\frac{3}{4\pi} & \mbox{if }\lambda=0\\
\frac{1}{3\pi\lambda^{2}}\left(3-2\cos\left(\lambda/2\right)-\cos\left(2\lambda\right)\right) & \mbox{if }\lambda\neq0.
\end{cases}
\]
This solution is in $Ext_{2}\left(F\right)$ where $F\left(x\right):=1-\left|x\right|$
for $x\in\left(-\frac{1}{2},\frac{1}{2}\right)$. By contrast, the
measure $\mu_{2}$ in Table \ref{tab:Table-3} satisfies $\mu_{2}\in Ext_{1}(F).$
See  \subref{ExtSpace}.
\end{example}

\begin{example}[Ornstein-Uhlenbeck]
\label{eg:F3}$F_{3}\left(x\right)=e^{-\left|x\right|}$; $\left|x\right|<1$.
A p.d. spline extension which is convex on $\mathbb{R}_{+}$.\index{Ornstein-Uhlenbeck}

\begin{figure}[H]
\includegraphics[scale=0.5]{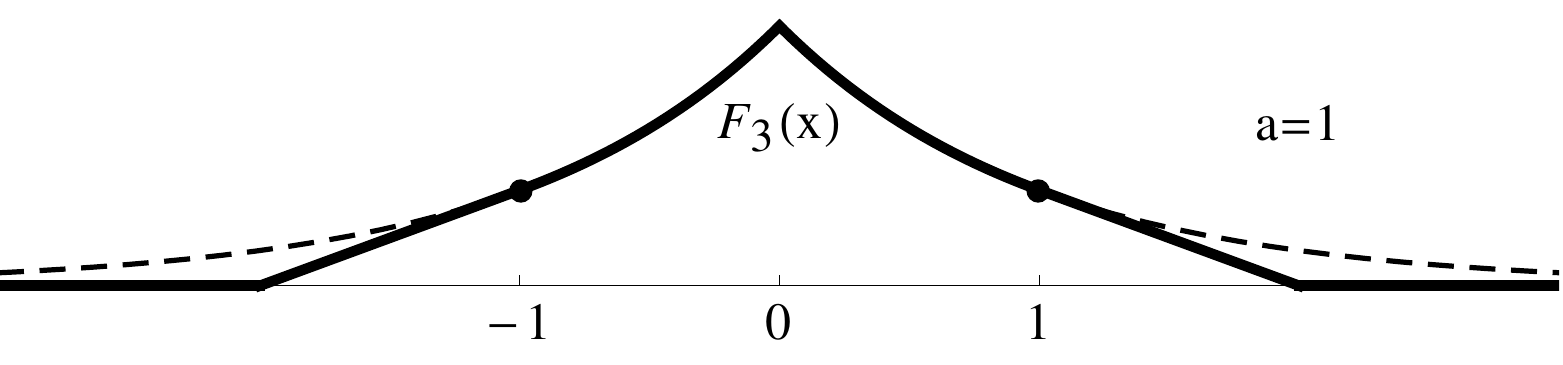}

\protect\caption{\label{fig:spline3}Extension of $F_{3}\left(x\right)=e^{-\left|x\right|}$;
$\Omega=\left(0,1\right)$}
\end{figure}

\end{example}

\begin{example}[Shannon]
\label{eg:F4}$F_{4}\left(x\right)=\left({\displaystyle \frac{\sin\pi x}{\pi x}}\right)^{2}$;
$\left|x\right|<\frac{1}{2}$. $F_{4}$ is concave near $x=0$.\index{Shannon sampling}

\begin{figure}[H]
\includegraphics[scale=0.5]{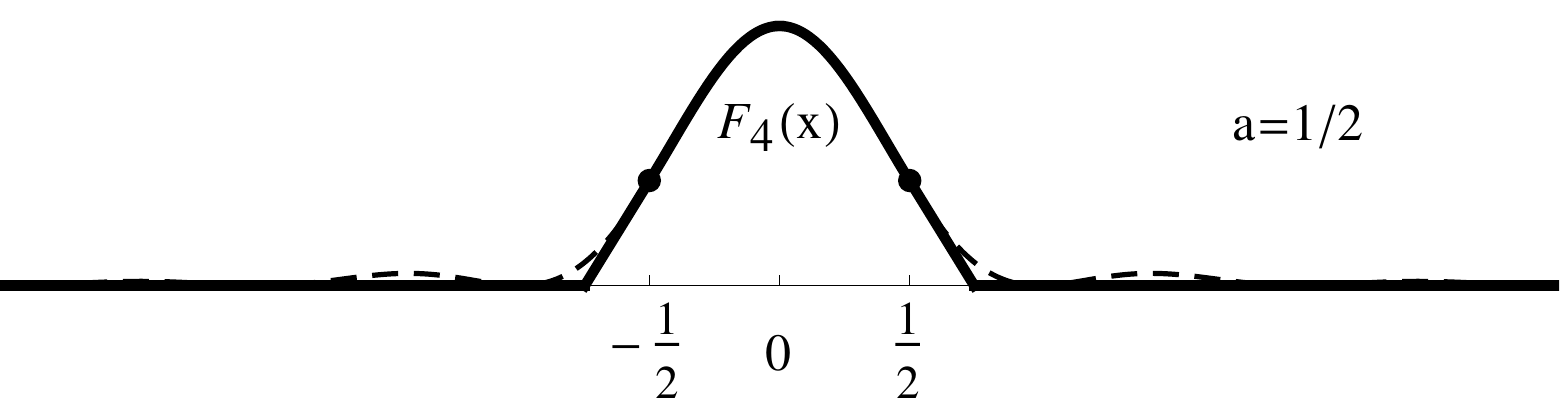}

\protect\caption{\label{fig:spline4}Extension of $F_{4}\left(x\right)=\left(\tfrac{\sin\pi x}{\pi x}\right)^{2}$;
$\Omega=\left(0,\frac{1}{2}\right)$}
\end{figure}

\end{example}

\begin{example}[Gaussian distribution]
\label{eg:F5}$F_{5}\left(x\right)=e^{-x^{2}/2}$; $\left|x\right|<1$.
$F_{5}$ is concave in $-1<x<1$. 

\begin{figure}[H]
\includegraphics[scale=0.5]{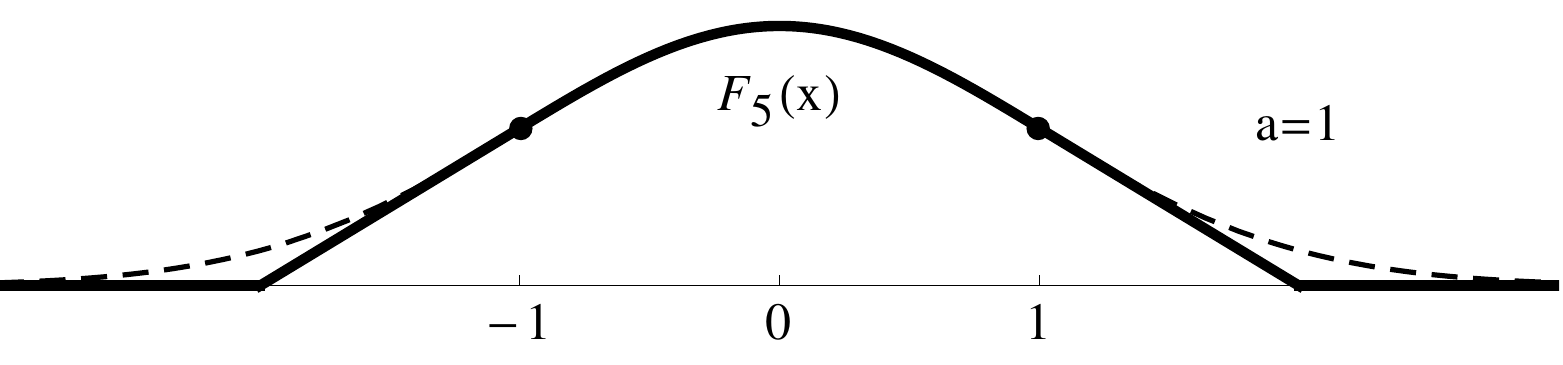}

\protect\caption{\label{fig:spline5}Extension $F_{5}\left(x\right)=e^{-x^{2}/2}$;
 $\Omega=\left(0,1\right)$}
\end{figure}
\end{example}
\begin{rem}
Some spline extensions may not be positive definite. Start with a
fixed continuous p.d. function $F$ on some finite interval $\left(-a,a\right)$,
and consider spline extensions say $F_{ex}$ such that for some $c>a$,
$F_{ex}\left(x\right)=0$, for $\left|x\right|>c$. 

In order for Polya\textquoteright s theorem to be applicable, the
spline extended function $F_{ex}$ to $\mathbb{R}$ must be convex
on $\mathbb{R}_{+}$. By construction, then our extension to $\mathbb{R}$
is by mirror symmetry around $x=0$. So if we start with a symmetric
p.d. function $F$ in $\left(-a,a\right)$ which is concave near $x=0$,
then the spline extension does not satisfy the premise in Polya\textquoteright s
theorem.

Now Polya\textquoteright s theorem of course only accounts for some
p.d. functions, not all.

But it is not surprising that the spline extension in the list of
figures \ref{fig:spline0}-\ref{fig:spline5} may not be p.d., e.g.,
Figure \ref{fig:spline1}, and \ref{fig:spline4}. Indeed, it is easy
to check that the two partially defined functions $F$ in Figure \ref{fig:spline1},
and \ref{fig:spline4} are concave near $x=0$, just calculate the
double derivative $F''$.

Of the p.d. functions $F_{1},F_{2},\ldots F_{6}$, from table \ref{tab:F1-F6},
we note that for the following $F_{1},F_{4},F_{5}$, and $F_{6}$
satisfy this: there is a $c>0$ such that the function in question
is concave in the interval $\left[0,c\right]$, the value of c varies
from one to the next.

So these four cases from table \ref{tab:F1-F6} do not yield spline
extensions $F_{ex}$ which are convex when restricted to $\mathbb{R}_{+}$.
Polya\textquoteright s theorem only applies when convexity holds on
$\mathbb{R}_{+}$. In that case, the spline extensions will be p.d..
And so Polya\textquoteright s theorem only accounts for those spline
extensions $F_{ex}$ which are convex when restricted to $\mathbb{R}_{+}$.
Now there may be p.d. spline extensions that are not convex when restricted
to $\mathbb{R}_{+}$, and we leave open the discovery of those.
\end{rem}
Above we introduced a special class of positive definite (p.d.) continuous
extensions using a spline technique based on a theorem by Polya \cite{pol49}:
More precisely, we extend from a fixed and finite open interval, extending
a given locally defined p.d. function $F$ to the whole real line.
At the endpoints of the interval we construct a linear spline extension
thereby creating a real valued p.d. extension of $F$ which is symmetric
around $x=0$, and is convex on the positive half-line. This spline
extension is supported in a larger interval on $\mathbb{R}$ which
we are free to adjust. We will refer to these p.d. extensions as Polya
extensions. 

For all of these examples we can use Polya's trick to generate families
of continuous positive definite\index{positive definite} extensions
from $\left(-a,a\right)$ to all of the real line $\mathbb{R}$. See
Figure \ref{fig:polya}, \ref{fig:spline2}, and \ref{fig:spline3}.
Some of these figures (\ref{fig:spline0}-\ref{fig:spline6}) indicate
spline extensions that are not p.d..

We get the nicest p.d. extensions if we make the derivative $F'=\frac{dF}{dx}$
a spline at the break-points. In Example \ref{eg:F1}-\ref{eg:F5},
we compute $F'\left(a\right)$. We then use symmetry for the left-hand-side
of the figure:
\begin{align*}
F_{1}'\left(1\right) & =-\frac{1}{2}\\
F_{2}'\left(1/2\right) & =-1\\
F_{3}'\left(1\right) & =-e^{-1}\\
F_{4}'\left(1/2\right) & =-16\pi^{-2}\\
F_{5}'\left(1\right) & =-e^{-1/2}
\end{align*}
So we use these slopes in making the straight line extension. 

We can use Polya's theorem on each of the five functions $F_{i}$,
$i=1,\ldots,5$, defined initially only on $\Omega-\Omega=\left(-a,a\right)$.
There we take $a=1$, or $\frac{1}{2}$, etc. 

We then get a deficiency index-problem in the RKHSs\index{RKHS} $\mathscr{H}_{F_{i}}$,
$i=1,\ldots5$, for the operator $D^{\left(F_{i}\right)}F_{\varphi}^{\left(i\right)}=F_{\varphi'}^{\left(i\right)}$,
$\forall\varphi\in C_{c}^{\infty}\left(0,a\right)$. And all the five
skew-Hermitian\index{operator!skew-Hermitian} operators in $\mathscr{H}_{Fi}$
will have deficiency indices\index{deficiency indices} $\left(1,1\right)$. 

Following is an example with deficiency indices $\left(0,0\right)$
\begin{example}
\label{eg:F6}$F_{6}\left(x\right)=\cos\left(x\right)$; $\left|x\right|<\frac{\pi}{4}$

\begin{figure}[H]
\includegraphics[scale=0.5]{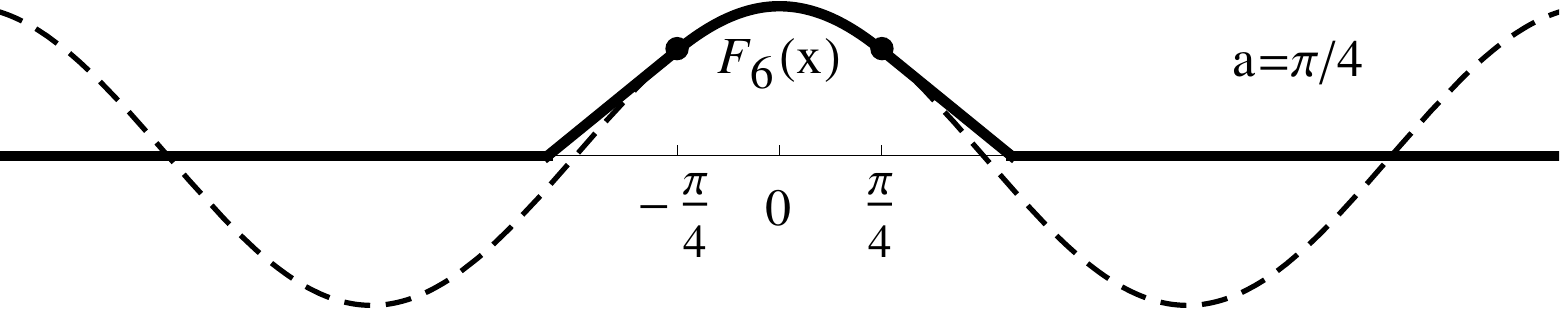}

\protect\caption{\label{fig:spline6}Extension of $F_{6}\left(x\right)=\cos\left(x\right)$;
$\Omega=\left(0,\frac{\pi}{4}\right)$}
\end{figure}
\end{example}
\begin{lem}
The function $\cos x$ is positive definite on $\mathbb{R}$. \end{lem}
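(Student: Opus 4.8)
The statement to prove is simply that $\cos x$ is positive definite on $\mathbb{R}$.

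\medskip

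The plan is to exhibit $\cos x$ as the Fourier transform (Bochner transform) of a positive measure on $\mathbb{R}$, after which positive definiteness is automatic by the easy direction of Bochner's theorem (or equivalently by the computation already used several times in the excerpt, e.g. in the proof of \lemref{li-meas}). First I would write
\[
\cos x=\frac{e^{ix}+e^{-ix}}{2}=\int_{\mathbb{R}}e^{i\lambda x}\,d\mu(\lambda),\qquad \mu:=\tfrac12\left(\delta_{1}+\delta_{-1}\right),
\]
so that $\cos(\cdot)=\widehat{d\mu}$ with $\mu$ a (probability) measure on $\mathbb{R}$. This identifies $\cos$ as a continuous p.d. function in the sense of Definition \ref{def:pdf}, indeed as an element of the form appearing in \eqref{eq:ext-1-4}.

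\medskip

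Then I would record the elementary verification directly from the definition, to keep the argument self-contained: given $c_{1},\dots,c_{N}\in\mathbb{C}$ and $x_{1},\dots,x_{N}\in\mathbb{R}$,
\begin{align*}
\sum_{j}\sum_{k}\overline{c_{j}}c_{k}\cos(x_{j}-x_{k})
&=\sum_{j}\sum_{k}\overline{c_{j}}c_{k}\int_{\mathbb{R}}e^{i\lambda(x_{k}-x_{j})}\,d\mu(\lambda)\\
&=\int_{\mathbb{R}}\Bigl|\sum_{k}c_{k}e^{i\lambda x_{k}}\Bigr|^{2}\,d\mu(\lambda)\ \geq\ 0.
\end{align*}
Since $\cos$ is also continuous and bounded on $\mathbb{R}$, this is exactly the condition \eqref{eq:def-pd}, so $\cos x$ is positive definite on $\mathbb{R}$ (hence on every interval $(-a,a)$, in particular on $(-\tfrac{\pi}{4},\tfrac{\pi}{4})$ as needed for Example \ref{eg:F6}).

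\medskip

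There is no real obstacle here: the only thing to be slightly careful about is the interchange of the finite sums with the integral, which is trivially justified because the sum is finite and $\mu$ is a finite measure. Alternatively, one could avoid even mentioning a measure and observe directly that $\sum_{j,k}\overline{c_j}c_k\cos(x_j-x_k)=\tfrac12\bigl|\sum_k c_k e^{ix_k}\bigr|^2+\tfrac12\bigl|\sum_k c_k e^{-ix_k}\bigr|^2\ge 0$ using $\cos(x_j-x_k)=\operatorname{Re}\!\bigl(e^{ix_j}\overline{e^{ix_k}}\bigr)=\tfrac12(e^{ix_j}\overline{e^{ix_k}}+\overline{e^{ix_j}}e^{ix_k})$; I would probably present this one-line identity as the proof, as it is the cleanest.
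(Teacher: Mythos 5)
Your proof is correct, and your preferred one-line identity is essentially the paper's own argument: the paper uses the addition formula to write the quadratic form as $\bigl|\sum_j c_j\cos x_j\bigr|^2+\bigl|\sum_j c_j\sin x_j\bigr|^2$, which is numerically identical to your $\tfrac12\bigl|\sum_k c_k e^{ix_k}\bigr|^2+\tfrac12\bigl|\sum_k c_k e^{-ix_k}\bigr|^2$ (the same rank-two decomposition in a different basis). The Bochner-transform framing with $\mu=\tfrac12(\delta_1+\delta_{-1})$ is a fine, equivalent packaging of the same computation.
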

\begin{proof}
For all finite system of coefficients $\left\{ c_{j}\right\} $ in
$\mathbb{C}$, we have 
\begin{eqnarray}
\sum_{j}\sum_{k}\overline{c_{j}}c_{k}\cos\left(x_{j}-x_{k}\right) & = & \sum_{j}\sum_{k}\overline{c_{j}}c_{k}\left(\cos x_{j}\cos x_{k}+\sin x_{j}\sin x_{k}\right)\nonumber \\
 & = & \Bigl|\sum_{j}c_{j}\cos x_{j}\Bigr|^{2}+\Bigl|\sum_{j}c_{j}\sin x_{j}\Bigr|^{2}\geq0.\label{eq:ext-1}
\end{eqnarray}
Hence Polya also applies here, and we get more than one continuous
positive definite extension from $\Omega-\Omega=\left(-\frac{\pi}{4},\frac{\pi}{4}\right)$
to all of $\mathbb{R}$. The other extension is $x\mapsto\cos x$,
for all $x\in\mathbb{R}$.\end{proof}
\begin{lem}
$\mathscr{H}_{F_{6}}$ is 2-dimensional.\end{lem}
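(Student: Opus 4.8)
The plan is to show that $\mathscr{H}_{F_{6}}$ is spanned by the two functions $x\mapsto\cos x$ and $x\mapsto\sin x$ restricted to $\Omega=(0,\tfrac{\pi}{4})$, which forces $\dim\mathscr{H}_{F_{6}}=2$. The starting observation is that $F_{6}(x-y)=\cos(x-y)=\cos x\cos y+\sin x\sin y$, so the reproducing kernel $K(x,y)=F_{6}(x-y)$ is a rank-two kernel: it is the kernel of the rank-two operator built from the two functions $c(x):=\cos x$ and $s(x):=\sin x$. This is exactly the situation in the proof of the preceding lemma, where the identity $\sum_{j,k}\overline{c_j}c_k\cos(x_j-x_k)=|\sum_j c_j\cos x_j|^2+|\sum_j c_j\sin x_j|^2$ was used to establish positive definiteness; here the same identity exhibits the kernel as a sum of two rank-one kernels.

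First I would invoke the description of $\mathscr{H}_{F_{6}}$ from \eqref{eq:H1}: it is the completion of $\operatorname{span}\{F_{6}(\cdot-y):y\in\Omega\}$ under the inner product \eqref{eq:ip-discrete}. But for every $y\in\Omega$ we have $F_{6}(\cdot-y)=\cos(y)\,c(\cdot)+\sin(y)\,s(\cdot)$, so each generating vector already lies in the fixed two-dimensional space $V:=\operatorname{span}_{\mathbb{C}}\{c|_{\Omega},s|_{\Omega}\}$. Hence $\mathscr{H}_{F_{6}}\subseteq V$ as a set, and in particular $\dim\mathscr{H}_{F_{6}}\le 2$. For the reverse inequality I would show both $c|_{\Omega}$ and $s|_{\Omega}$ actually belong to $\mathscr{H}_{F_{6}}$: picking $y_1,y_2\in\Omega$ with $y_1\ne y_2$ (possible since $\Omega$ is an interval of length $\tfrac{\pi}{4}$), the $2\times 2$ matrix $\begin{pmatrix}\cos y_1 & \sin y_1\\ \cos y_2 & \sin y_2\end{pmatrix}$ is invertible (its determinant is $\sin(y_2-y_1)\ne 0$), so $c|_{\Omega}$ and $s|_{\Omega}$ are linear combinations of $F_{6}(\cdot-y_1)$ and $F_{6}(\cdot-y_2)$, both of which lie in $\mathscr{H}_{F_{6}}$. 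Therefore $V\subseteq\mathscr{H}_{F_{6}}$, and combined with the previous inclusion, $\mathscr{H}_{F_{6}}=V$.

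The only remaining point is that $c|_{\Omega}$ and $s|_{\Omega}$ are genuinely linearly independent as functions on $\Omega$ (so that $\dim V=2$, not $1$), which is clear since $\tan$ is non-constant on $(0,\tfrac{\pi}{4})$; and that the $\mathscr{H}_{F_{6}}$-norm on $V$ is a genuine (positive-definite, not merely semi-definite) inner product, which follows from the positive-definiteness of $F_{6}$ already proved, together with the fact that the Gram matrix of $\{c,s\}$ can be read off from evaluating the kernel at two points. I expect no real obstacle here: the main subtlety is simply to be careful that the abstract completion in the definition of $\mathscr{H}_{F_{6}}$ does not enlarge the space beyond $V$ — and it cannot, because a finite-dimensional inner product space is already complete, so the completion is $V$ itself. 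Thus $\mathscr{H}_{F_{6}}$ is $2$-dimensional. $\blacksquare$
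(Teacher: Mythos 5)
Your proof is correct and rests on the same key identity as the paper's, namely the rank-two factorization $\cos(x-y)=\cos x\cos y+\sin x\sin y$. The difference is only in which dense subspace is used: the paper works with the smeared generators $F_{\varphi}$, $\varphi\in C_{c}^{\infty}(0,\tfrac{\pi}{4})$, and reads off $\left\Vert F_{\varphi}\right\Vert _{\mathscr{H}_{F_{6}}}^{2}=\bigl|\widehat{\varphi}^{(c)}(1)\bigr|^{2}+\bigl|\widehat{\varphi}^{(s)}(1)\bigr|^{2}$, which gives $\dim\leq2$ at once but leaves the lower bound implicit; you work with the point kernels $F_{6}(\cdot-y)$ and make the lower bound explicit via the invertibility of the matrix $\bigl(\begin{smallmatrix}\cos y_{1} & \sin y_{1}\\ \cos y_{2} & \sin y_{2}\end{smallmatrix}\bigr)$, whose determinant $\sin(y_{2}-y_{1})$ cannot vanish on an interval of length $\tfrac{\pi}{4}$. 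Your closing observation that a finite-dimensional inner-product space is already complete, so the completion cannot enlarge $V$, is exactly the right way to dispose of the last step.
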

\begin{proof}
For all $\varphi\in C_{c}^{\infty}\left(0,\frac{\pi}{4}\right)$,
we have:
\begin{align}
 & \int_{\Omega}\int_{\Omega}\overline{\varphi\left(x\right)}\varphi\left(y\right)F_{6}\left(x-y\right)dxdy\nonumber \\
= & \left|\int_{\Omega}\varphi\left(x\right)\cos xdx\right|^{2}+\left|\int_{\Omega}\varphi\left(x\right)\sin xdx\right|^{2}\nonumber \\
= & \left|\widehat{\varphi}^{\left(c\right)}\left(1\right)\right|^{2}+\left|\widehat{\varphi}^{\left(s\right)}\left(1\right)\right|^{2}\label{eq:ext-2}
\end{align}
where $\widehat{\varphi}^{\left(c\right)}=$ the cosine-transform,
and $\widehat{\varphi}^{\left(s\right)}=$ the sine-transform.
\end{proof}
So the deficiency indices only account from some of the extension
of a given positive definite function $F$ on $\Omega-\Omega$ at
the starting point. 

In all five examples above $\mathscr{H}_{F_{i}}$, ($i=1,\ldots,5$),
is infinite-dimensional; but $\mathscr{H}_{F_{6}}$ is 2-dimensional.

In the given five examples, we have p.d. continuous extensions to
$\mathbb{R}$ of the following form, $\widehat{d\mu_{i}}\left(\cdot\right)$,
$i=1,\ldots,5$, where these measures \index{measure!Lebesgue}are
as follows:
\begin{eqnarray*}
d\mu_{1}\left(\lambda\right) & = & \frac{1}{2}e^{-\left|\lambda\right|}d\lambda\\
d\mu_{2}\left(\lambda\right) & = & \left(\frac{\sin\pi\lambda}{\pi\lambda}\right)^{2}d\lambda\\
d\mu_{3}\left(\lambda\right) & = & \frac{d\lambda}{\pi\left(1+\lambda^{2}\right)}\\
d\mu_{4}\left(\lambda\right) & = & \chi_{\left(-1,1\right)}\left(\lambda\right)\left(1-\left|\lambda\right|\right)d\lambda\\
d\mu_{5}\left(\lambda\right) & = & \frac{1}{\sqrt{2\pi}}e^{-\lambda^{2}/2}d\lambda
\end{eqnarray*}
with $\lambda\in\mathbb{R}$; see Table \ref{tab:meas} and Figure
\ref{fig:meas} below.

\begin{cor}
In all five examples above, we get isometries as follows
\[
T^{\left(i\right)}:\mathscr{H}_{F_{i}}\rightarrow L^{2}\left(\mathbb{R},\mu_{i}\right)
\]
\[
T^{\left(i\right)}\left(F_{\varphi}^{\left(i\right)}\right)=\widehat{\varphi}
\]
for all $\varphi\in C_{c}^{\infty}\left(\Omega\right)$, where we
note that $\widehat{\varphi}\in L^{2}\left(\mathbb{R},\mu_{i}\right)$,
$i=1,\ldots,5$; and
\[
\left\Vert F_{\varphi}^{\left(i\right)}\right\Vert _{\mathscr{H}_{F_{i}}}^{2}=\left\Vert \widehat{\varphi}\right\Vert _{L^{2}\left(\mu\right)}^{2}=\int_{\mathbb{R}}\left|\widehat{\varphi}\right|^{2}d\mu_{i},\; i=1,\ldots,5;
\]
but note that $T^{\left(i\right)}$ is only isometric into $L^{2}\left(\mu_{i}\right)$. 

For the adjoint operator:
\[
\left(T^{\left(i\right)}\right)^{*}:L^{2}\left(\mathbb{R},\mu_{i}\right)\rightarrow\mathscr{H}_{F_{i}}
\]
we have 
\[
\left(T^{\left(i\right)}\right)^{*}f=\left(fd\mu_{i}\right)^{\vee},\:\forall f\in L^{2}\left(\mathbb{R},\mu_{i}\right).
\]

\end{cor}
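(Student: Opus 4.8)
The plan is to reduce the statement to Corollary \ref{cor:lcg-isom} together with the Bochner correspondence of Lemma \ref{lem:lcg-Bochner}. Corollary \ref{cor:lcg-isom} already says that for \emph{any} $\mu\in Ext(F)$ the assignment $F_\varphi\mapsto\widehat\varphi$ is a well-defined bounded operator $\mathscr H_F\to L^2(\widehat G,\mu)$ whose adjoint is $f\mapsto(fd\mu)^\vee$; since here $G=\widehat G=\mathbb R$, this becomes applicable the moment we know $\mu_i\in Ext(F_i)$ for $i=1,\dots,5$. So the real content of the corollary is (a) the five membership facts $\mu_i\in Ext(F_i)$, (b) the upgrade from ``bounded'' to ``isometric,'' and (c) the observation that each $T^{(i)}$ is merely into, not onto.

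For (a): by Lemma \ref{lem:lcg-Bochner} it is enough to check $\widehat{\mu_i}(x)=F_i(x)$ for $x$ in the relevant interval $\Omega-\Omega=(-a,a)$, i.e., to recognize each pair $(F_i,\mu_i)$ as a classical Bochner pair on $\mathbb R$, normalized so that $\mu_i$ is a probability measure and $F_i(0)=1$. Concretely: $\mu_1=\tfrac12 e^{-|\lambda|}d\lambda$ is the spectral measure of the Cauchy density $F_1=(1+x^2)^{-1}$; $\mu_2=(\sin\pi\lambda/\pi\lambda)^2 d\lambda$ (the Fej\'er, i.e.\ squared-sinc, density) transforms to the triangle function, which on $|x|<\tfrac12$ is $F_2=1-|x|$; $\mu_3=\pi^{-1}(1+\lambda^2)^{-1}d\lambda$ transforms to $F_3=e^{-|x|}$; $\mu_4=\chi_{(-1,1)}(\lambda)(1-|\lambda|)d\lambda$ is the triangle density, which transforms to $F_4=(\sin\pi x/\pi x)^2$ (the $(F_2,\mu_2)$ pair read backwards); and $\mu_5=(2\pi)^{-1/2}e^{-\lambda^2/2}d\lambda$ is Gaussian and self-dual, giving $F_5=e^{-x^2/2}$. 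In each case one matches the constants and the interval of validity against the transform conventions fixed in (\ref{eq:lcg-6})--(\ref{eq:lcg-7}).

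With $\mu_i\in Ext(F_i)$ in hand, the first assertion of Corollary \ref{cor:lcg-isom} gives that $T^{(i)}(F^{(i)}_\varphi):=\widehat\varphi$ is well-defined and bounded $\mathscr H_{F_i}\to L^2(\mathbb R,\mu_i)$, and the second gives $(T^{(i)})^*f=(fd\mu_i)^\vee$. To see it is isometric I would use $F_i=\widehat{\mu_i}$ on $\Omega-\Omega$ and apply Fubini to (\ref{eq:lcg-2}): for all $\varphi\in C_c^\infty(\Omega)$,
\[
\|F^{(i)}_\varphi\|_{\mathscr H_{F_i}}^2=\iint\overline{\varphi(x)}\varphi(y)\,\widehat{\mu_i}(x-y)\,dxdy=\int_{\mathbb R}\bigl|\widehat\varphi(\lambda)\bigr|^2\,d\mu_i(\lambda),
\]
which is exactly $\|T^{(i)}F^{(i)}_\varphi\|_{L^2(\mu_i)}^2=\|F^{(i)}_\varphi\|_{\mathscr H_{F_i}}^2$ on the dense domain $\{F^{(i)}_\varphi\}$; hence $T^{(i)}$ extends to an isometry (and this norm identity also re-proves well-definedness, since $F_\varphi=0$ then forces $\widehat\varphi=0$ in $L^2(\mu_i)$). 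The adjoint formula is then unchanged. Finally, $T^{(i)}$ is only into: since $Q_{\mu_i}:=T^{(i)}(T^{(i)})^*$ is the projection onto the range of $T^{(i)}$ and has the nontrivial kernel $K_\Omega=\widehat{\chi_\Omega}$ (the theorem on $Q_\mu$ above), $Q_{\mu_i}\neq I$ for the bounded interval $\Omega=(0,a)$; equivalently $\mathscr H_{F_i}\hookrightarrow\mathscr H_{\widehat{\mu_i}}\simeq L^2(\mathbb R,\mu_i)$ is proper, consistent with $Ext(F_i)$ being a nontrivial convex set (deficiency indices $(1,1)$) rather than a singleton. See also the Caution in the Discussion section above concerning the isometry $W$.

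The only genuine work is the table of five Bochner pairs, so that is where the main obstacle lies: bookkeeping of $2\pi$-factors. The triangle $\leftrightarrow$ squared-sinc pairs $(F_2,\mu_2)$ and $(F_4,\mu_4)$ and the Gaussian $(F_5,\mu_5)$ are the ones where a constant is most easily mislaid or the interval of validity mismatched, so I would compute those three explicitly from the inversion formula and lift the Cauchy / two-sided-exponential pairs $(F_1,\mu_1)$ and $(F_3,\mu_3)$ from standard tables; everything else then follows formally from Corollary \ref{cor:lcg-isom}.
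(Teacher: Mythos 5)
Your proposal is correct and follows essentially the route the paper intends: the paper states this corollary without a separate proof, treating it as an instance of Corollary \ref{cor:lcg-isom} once each pair $(F_{i},\mu_{i})$ is recognized as a Bochner pair, with the isometry coming from exactly the Fubini/Parseval identity $\left\Vert F_{\varphi}\right\Vert _{\mathscr{H}_{F}}^{2}=\int_{\mathbb{R}}|\widehat{\varphi}|^{2}d\mu$ that the paper uses elsewhere (e.g.\ (\ref{eq:exp-5})). One small correction to your closing consistency check: the deficiency indices are $\left(1,1\right)$ only for $F_{2}$ and $F_{3}$, and $\left(0,0\right)$ for $F_{1},F_{4},F_{5}$ (Table \ref{tab:F1-F6}), but this does not affect your argument that each $T^{\left(i\right)}$ is merely into, which rests on Paley--Wiener, not on the index count.
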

Here is an infinite-dimensional example as a version of $F_{6}$.
Fix some positive $p$, $0<p<1$, and set 
\[
\prod_{n=1}^{\infty}\cos\left(2\pi p^{n}x\right)=F_{p}\left(x\right)
\]
then this is a continuous positive definite function on $\mathbb{R}$,
and the law is the corresponding Bernoulli measure $\mu_{p}$ satisfying
$F_{p}=\widehat{d\mu_{p}}$. Note that some of those measures $\mu_{p}$
are fractal measures.\index{measure!Bernoulli}\index{measure!fractal}

For fixed $p\in\left(0,1\right)$, the measure $\mu_{p}$ is the law
of the following random power series
\[
X_{p}\left(w\right):=\sum_{n=1}^{\infty}\left(\pm\right)p^{n}
\]
where $w\in\prod{}_{1}^{\infty}\left\{ \pm1\right\} $ (= infinite
Cartesian product) and where the distribution of each factor is $\left\{ -\frac{1}{2},\frac{1}{2}\right\} $,
and statically independent. For relevant references on random power
series, see \cite{Neu13,Lit99}.

The extensions we generate with the application of Polya's theorem
are realized in a bigger Hilbert space. The deficiency indices are
computed for the RKHS $\mathscr{H}_{F}$, i.e., for the ``small''
p.d. function $F:\Omega-\Omega\rightarrow\mathbb{C}$. \index{RKHS}\index{deficiency indices}
\begin{example}
$F_{6}$ on $\left(-\frac{\pi}{4},\frac{\pi}{4}\right)$ has the obvious
extension $\mathbb{R}\ni x\rightarrow\cos x$, with a 2-dimensional
Hilbert space; but the other p.d. extensions (from Polya) will be
in infinite-dimensional Hilbert spaces. See Figure \ref{fig:F6}.
\end{example}
\begin{figure}
\begin{align*}
\xymatrix{ &  & \underset{2-\dim}{\mbox{RKHS}(\cos\left(x\right))}\ar[dd]\\
F_{\varphi}\in\mathscr{H}_{F\mbox{ on }\left(-\frac{\pi}{4},\frac{\pi}{4}\right)}\ar@/^{2pc}/[rru]\\
\varphi\in C_{c}^{\infty}\left(-\frac{\pi}{4},\frac{\pi}{4}\right)\ar[rr]\sp(0.4){T\left(F_{\varphi}\right)=\widehat{\varphi}} &  & \mbox{RKHS}\left(\mbox{Polya}\right)\underset{\infty-\dim}{\simeq}L^{2}\Bigl(\mathbb{R},d\mu_{\mbox{Polya}}\Bigr)
}
\end{align*}

\protect\caption{\label{fig:F6}$\dim\left(\mbox{RKHS}\left(\cos x\;\mbox{on }\mathbb{R}\right)\right)=2$;
but RKHS(Polya ext. to $\mathbb{R}$) is $\infty$-dimensional.}
\end{figure}

We must make a distinction between two classes of p.d. extensions
of $F:\Omega-\Omega\rightarrow\mathbb{C}$ to continuous p.d. functions
on $\mathbb{R}$. \index{unitary representation}

\textbf{Case 1.} There exists a unitary representation $U\left(t\right):\mathscr{H}_{F}\rightarrow\mathscr{H}_{F}$
such that
\begin{equation}
F\left(t\right)=\left\langle \xi_{0},U\left(t\right)\xi_{0}\right\rangle _{\mathscr{H}_{F}},\: t\in\Omega-\Omega\label{eq:ext-3}
\end{equation}

\textbf{Case 2.} (e.g., Polya extension) There exist a Hilbert space
$\mathscr{K}$, and an isometry\index{isometry} $J:\mathscr{H}_{F}\rightarrow\mathscr{K}$,
and a unitary representation $U\left(t\right):\mathscr{K}\rightarrow\mathscr{K}$,
such that 
\begin{equation}
F\left(t\right)=\left\langle J\xi_{0},U\left(t\right)J\xi_{0}\right\rangle _{\mathscr{K}},\: t\in\Omega-\Omega\label{eq:ext-4}
\end{equation}

In both cases, $\xi_{0}=F\left(0-\cdot\right)\in\mathscr{H}_{F}$.

In case 1, the unitary representation is realized in $\mathscr{H}_{\left(F,\Omega-\Omega\right)}$,
while, in case 2, the unitary representation $U\left(t\right)$ lives
in the expanded Hilbert space $\mathscr{K}$.

Note that the RHS in both (\ref{eq:ext-3}) and (\ref{eq:ext-4})
is defined for all $t\in\mathbb{R}$.\index{GNS}
\begin{lem}
Let $F_{ex}$ be one of the Polya extensions if any. Then by the Galfand-Naimark-Segal
(GNS) construction applied to $F_{ext}:\mathbb{R}\rightarrow\mathbb{R}$,
there is a Hilbert space $\mathscr{K}$ and a vector $v_{0}\in\mathscr{K}$
and a unitary representation $\left\{ U\left(t\right)\right\} _{t\in\mathbb{R}};$
$U\left(t\right):\mathscr{K}\rightarrow\mathscr{K}$, such that 
\begin{equation}
F_{ex}\left(t\right)=\left\langle v_{0},U\left(t\right)v_{0}\right\rangle _{\mathscr{K}},\:\forall t\in\mathbb{R}.\label{eq:ext-5}
\end{equation}

Setting $J:\mathscr{H}_{F}\rightarrow\mathscr{K}$, $J\xi_{0}=v_{0}$,
then $J$ defines (by extension) an isometry such that 
\begin{equation}
U\left(t\right)J\xi_{0}=J\left(\mbox{local translation in }\Omega\right)\label{eq:ext-6}
\end{equation}
holds locally (i.e., for $t$ sufficiently close to $0$.)

Moreover, the function 
\begin{equation}
\mathbb{R}\ni t\mapsto U\left(t\right)J\xi_{0}=U\left(t\right)v_{0}\label{eq:ext-7}
\end{equation}
is compactly supported.\end{lem}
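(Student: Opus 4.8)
The plan is to read the statement straight off the GNS construction applied to the Polya extension $F_{ex}$, and then to recognize the isometric copy of $\mathscr{H}_F$ inside the GNS Hilbert space by exactly the bookkeeping used in the proof of Theorem \ref{thm:pd-extension-bigger-H-space}; essentially all the ingredients are already present there.

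First I would fix a Polya extension $F_{ex}:\mathbb{R}\to\mathbb{R}$. By Polya's theorem it is continuous and positive definite on $\mathbb{R}$, and by the construction it is supported in a compact interval: there is $c>a$ with $F_{ex}(x)=0$ for $|x|\ge c$ (and, after normalization, $F_{ex}(0)=F(0)=1$). Applying GNS to $F_{ex}$ — equivalently, writing $F_{ex}=\widehat{d\mu_{ex}}$ by Bochner and taking $\mathscr{K}=L^{2}(\mathbb{R},\mu_{ex})$, $v_{0}=\mathbf{1}$, and $U(t)=$ multiplication by $e^{it\lambda}$ — produces a strongly continuous unitary representation $U$ of $\mathbb{R}$ on $\mathscr{K}$ and a (cyclic) vector $v_{0}$ with $\|v_{0}\|_{\mathscr{K}}^{2}=F_{ex}(0)$ and $F_{ex}(t)=\langle v_{0},U(t)v_{0}\rangle_{\mathscr{K}}$ for all $t\in\mathbb{R}$; strong continuity of $U$ follows from continuity of $F_{ex}$.

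Next I would construct $J$ and check it is isometric. For $\varphi\in C_{c}^{\infty}(\Omega)$ set $JF_{\varphi}:=(F_{ex})_{\varphi}$, regarded as an element of $\mathscr{H}_{F_{ex}}\cong\mathscr{K}$. Since $\varphi$ is supported in $\Omega$ and $F_{ex}$ agrees with $F$ on $\Omega-\Omega$, we get $\|(F_{ex})_{\varphi}\|_{\mathscr{K}}^{2}=\iint\overline{\varphi(x)}\varphi(y)F_{ex}(x-y)\,dxdy=\iint\overline{\varphi(x)}\varphi(y)F(x-y)\,dxdy=\|F_{\varphi}\|_{\mathscr{H}_{F}}^{2}$, so $J$ is well defined and isometric on the dense subspace $\{F_{\varphi}:\varphi\in C_{c}^{\infty}(\Omega)\}$ (dense by Lemma \ref{lem:dense}) and extends to an isometry $J:\mathscr{H}_{F}\to\mathscr{K}$. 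Feeding in an approximate identity $\varphi_{n,0}\to\delta_{0}$ as in Lemma \ref{lem:dense} gives $F_{\varphi_{n,0}}\to\xi_{0}$ in $\mathscr{H}_{F}$ and $(F_{ex})_{\varphi_{n,0}}\to v_{0}$ in $\mathscr{K}$, hence $J\xi_{0}=v_{0}$. For the local intertwining, use that on $\mathscr{H}_{F_{ex}}$ the group $U(t)$ is ordinary translation, so $U(t)(F_{ex})_{\varphi}=(F_{ex})_{\varphi_{t}}$ with $\varphi_{t}$ the $t$-translate of $\varphi$; if $|t|<\mathrm{dist}(\mathrm{supp}(\varphi),\partial\Omega)$ then $\varphi_{t}\in C_{c}^{\infty}(\Omega)$ and the right side equals $JF_{\varphi_{t}}$, i.e. $J$ intertwines $U(t)$ with the local translation action in $\mathscr{H}_{F}$; specializing to the approximate identity yields $U(t)J\xi_{0}=J\xi_{t}$ with $\xi_{t}=F(t-\cdot)$ for $|t|$ small, which is (\ref{eq:ext-6}).

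Finally, the compact-support assertion. Since $U$ is unitary, $\|U(t)v_{0}\|_{\mathscr{K}}=\|v_{0}\|_{\mathscr{K}}$ is constant, so the claim cannot mean that $U(t)v_{0}$ literally vanishes; it records that the matrix coefficient $t\mapsto\langle v_{0},U(t)v_{0}\rangle_{\mathscr{K}}=F_{ex}(t)$ is compactly supported, which is immediate from $\mathrm{supp}(F_{ex})\subseteq[-c,c]$. More generally, for $\eta=JF_{\varphi}$ one has $\langle\eta,U(t)v_{0}\rangle_{\mathscr{K}}=\overline{(F_{ex})_{\varphi}(t)}=\overline{(\varphi*F_{ex})(t)}$, supported in $\mathrm{supp}(\varphi)+[-c,c]$, so the orbit $t\mapsto U(t)v_{0}$ is "compactly supported" in this sense against the range of $J$. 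I expect this last point — pinning down what "compactly supported" should mean for a unitary orbit — to be the only genuine subtlety; everything else is a routine repackaging of the proof of Theorem \ref{thm:pd-extension-bigger-H-space}.
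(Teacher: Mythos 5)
Your proof is correct and follows essentially the same route as the paper, whose own argument is only a two-sentence sketch citing the GNS construction and the compact support of the spline extension. Your observation that (\ref{eq:ext-7}) cannot be meant literally---since $\left\Vert U\left(t\right)v_{0}\right\Vert _{\mathscr{K}}$ is constant in $t$---and must instead be read as a statement about the compactly supported matrix coefficients $\left\langle \eta,U\left(t\right)v_{0}\right\rangle _{\mathscr{K}}$ (e.g.\ $\eta=v_{0}$ or $\eta=JF_{\varphi}$) is exactly the right reading, and supplies a clarification that the paper's proof glosses over.
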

\begin{proof}
The existence of $\mathscr{K}$, $v_{0}$, and $\left\{ U\left(t\right)\right\} _{t\in\mathbb{R}}$
follows from the GNS-construction.

The conclusions in (\ref{eq:ext-6}) and (\ref{eq:ext-7}) follow
from the given data, i.e., $F:\Omega-\Omega\rightarrow\mathbb{R}$,
and the fact that $F_{ex}$ is a spline-extension, i.e., it is of
compact support; but by (\ref{eq:ext-5}), this means that (\ref{eq:ext-7})
is also compactly supported.
\end{proof}
Example \ref{eg:F2} gives a p.d. $F$ in $\left(-\tfrac{1}{2},\tfrac{1}{2}\right)$
with $D^{(F)}$ of index $(1,1)$ and explicit measures in $Ext_{1}(F)$
and in $Ext_{2}(F).$

We have the following:

\textbf{Deficiency $\left(0,0\right)$:} The p.d. extension of type
1\index{type 1} is unique; see (\ref{eq:ext-3}); but there may still
be p.d. extensions of type 2\index{type 2}; see (\ref{eq:ext-4}).

\textbf{Deficiency $\left(1,1\right)$: }This is a one-parameter family
of extensions of type 1; and some more p.d. extensions are type 2.

So we now divide 
\[
Ext\left(F\right)=\left\{ \mu\in\mbox{Prob}\left(\mathbb{R}\right)\:\big|\:\widehat{d\mu}\mbox{ is an extension of }F\right\} 
\]
up in subsets
\[
Ext\left(F\right)=Ext_{type1}\left(F\right)\cup Ext_{type2}\left(F\right);
\]
where $Ext_{type2}\left(F\right)$ corresponds to the Polya extensions.\index{operator!unbounded}

\renewcommand{\arraystretch}{2}

\begin{table}
\begin{tabular}{|l|c|c|}
\hline 
$F:\left(-a,a\right)\rightarrow\mathbb{C}$ & Indices & The Operator $D^{\left(F\right)}$\tabularnewline
\hline 
\hline 
$F_{1}\left(x\right)=\frac{1}{1+x^{2}}$, $\left|x\right|<1$  & $\left(0,0\right)$ & $D^{\left(F\right)}$ unbounded, skew-adjoint\tabularnewline
\hline 
$F_{2}\left(x\right)=1-\left|x\right|$, $\left|x\right|<\frac{1}{2}$ & $\left(1,1\right)$ & $D^{\left(F\right)}$ has unbounded sk. adj. extensions\tabularnewline
\hline 
$F_{3}\left(x\right)=e^{-\left|x\right|}$, $\left|x\right|<1$ & $\left(1,1\right)$ & $D^{\left(F\right)}$ has unbounded sk. adj. extensions\tabularnewline
\hline 
$F_{4}\left(x\right)=\left(\frac{\sin\pi x}{\pi x}\right)^{2}$, $\left|x\right|<\frac{1}{2}$ & $\left(0,0\right)$ & $D^{\left(F\right)}$ bounded, skew-adjoint\tabularnewline
\hline 
$F_{5}\left(x\right)=e^{-x^{2}/2}$, $\left|x\right|>1$ & $\left(0,0\right)$ & $D^{\left(F\right)}$ unbounded, skew-adjoint\tabularnewline
\hline 
$F_{6}\left(x\right)=\cos x$, $\left|x\right|<\frac{\pi}{4}$ & $\left(0,0\right)$ & $D^{\left(F\right)}$ is rank-one, $\dim\left(\mathscr{H}_{F_{6}}\right)=2$ \tabularnewline
\hline 
\end{tabular}

\bigskip{}

\protect\caption{\label{tab:F1-F6}The deficiency indices\index{deficiency indices}
of $D^{\left(F\right)}:F_{\varphi}\mapsto F_{\varphi'}$ in examples
\ref{eg:F1}-\ref{eg:F6}}
\end{table}

\renewcommand{\arraystretch}{1}

\renewcommand{\arraystretch}{2}

\begin{table}
\begin{tabular}{cc}
\includegraphics[scale=0.35]{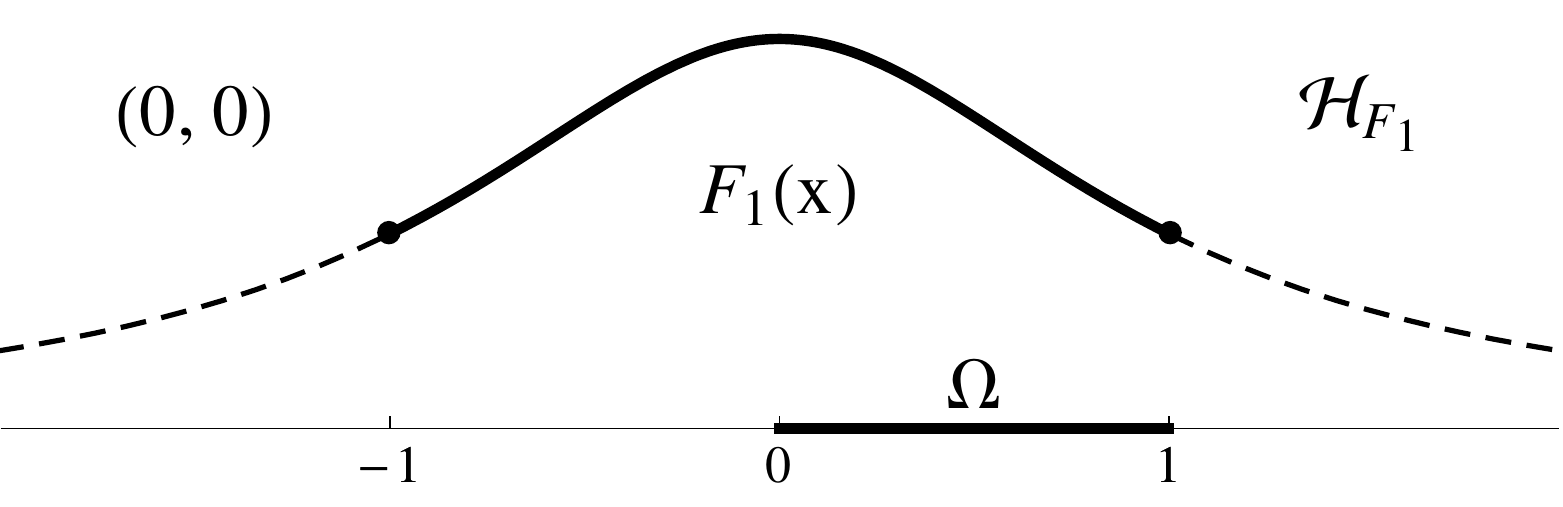} & \includegraphics[scale=0.35]{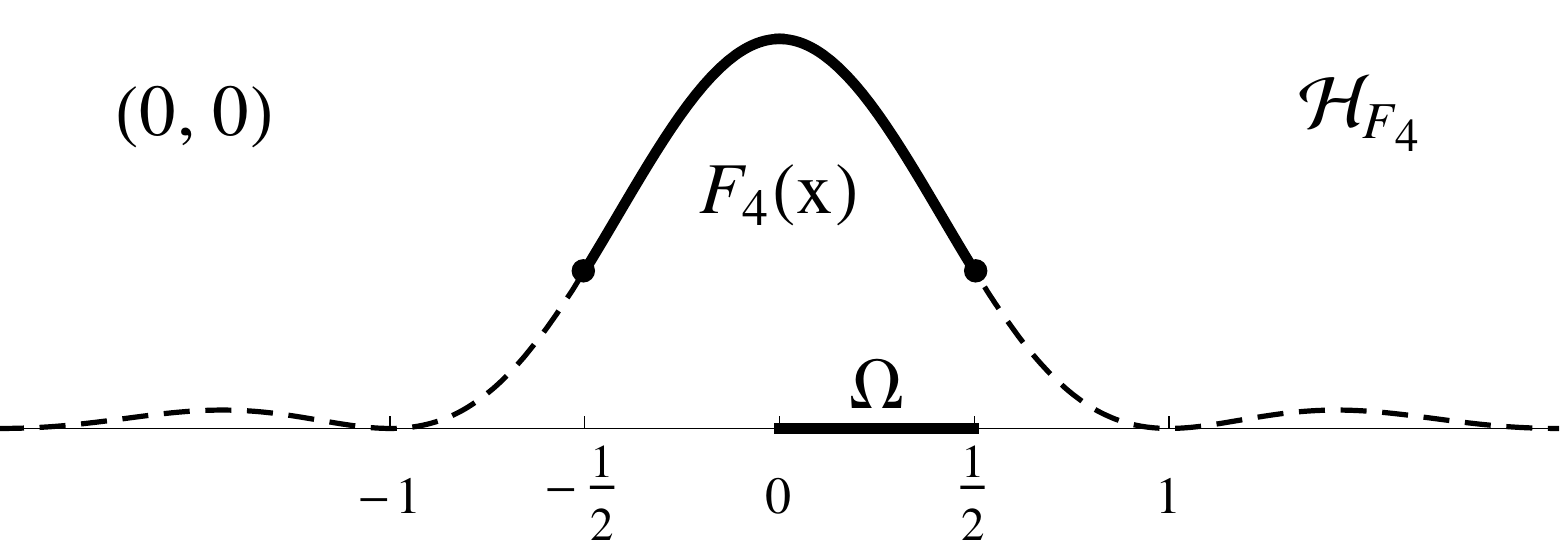}\tabularnewline
\includegraphics[scale=0.35]{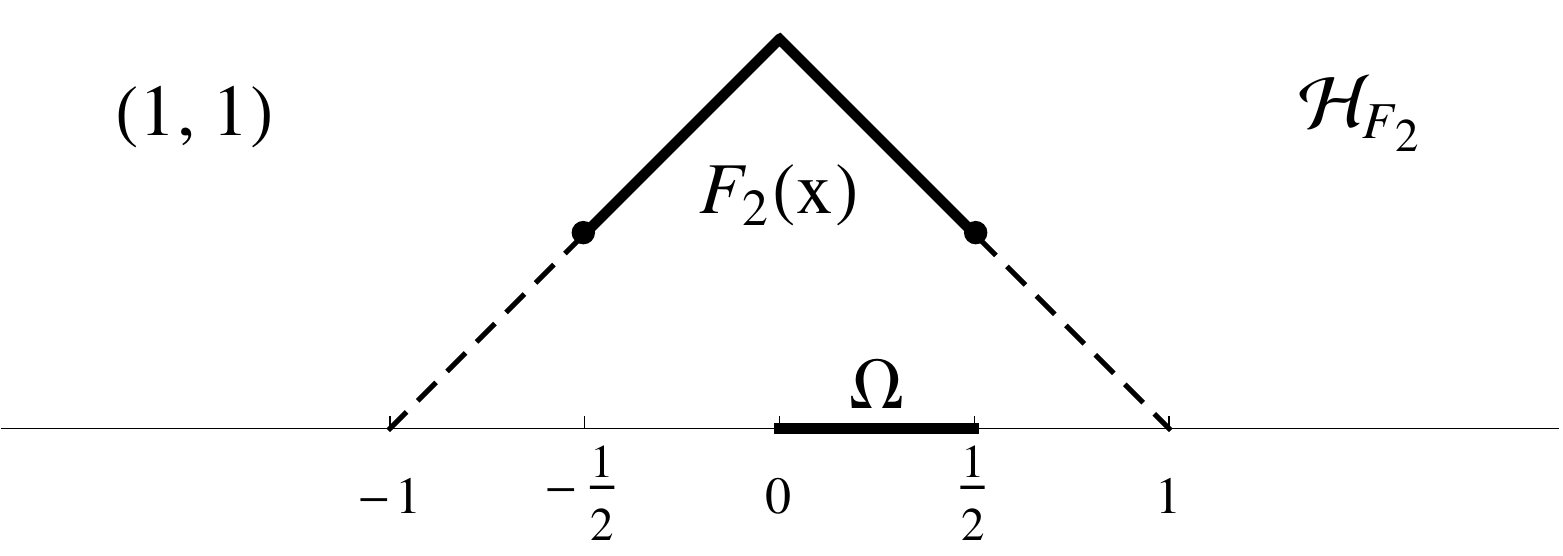} & \includegraphics[scale=0.35]{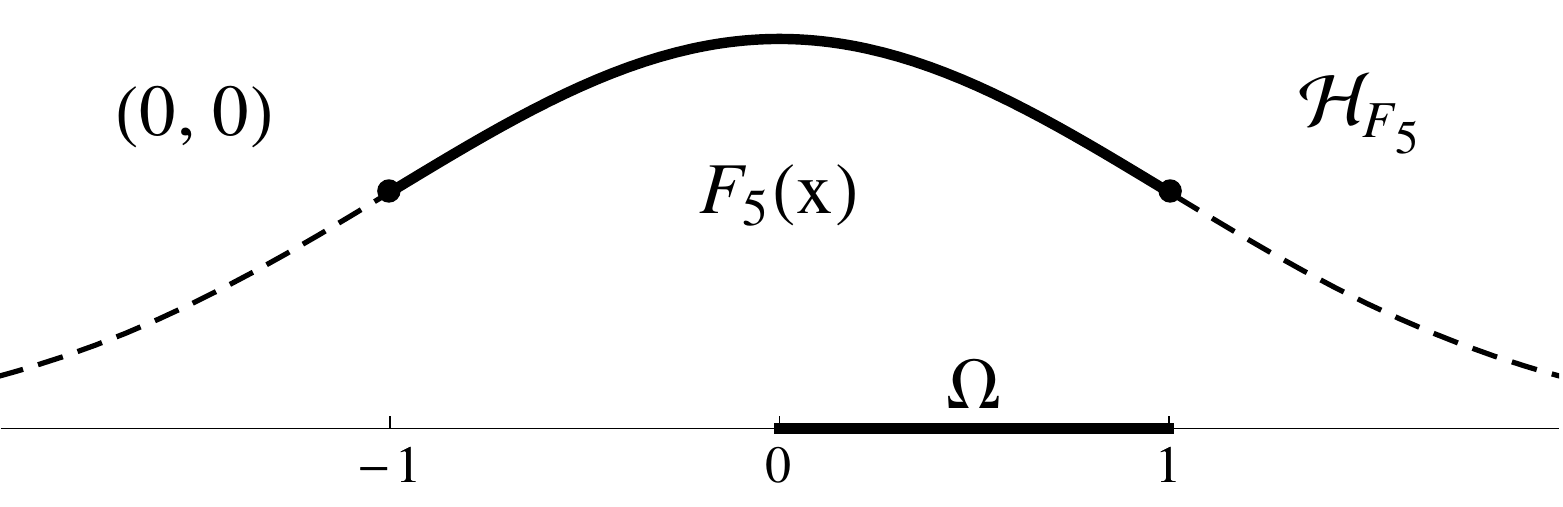}\tabularnewline
\includegraphics[scale=0.35]{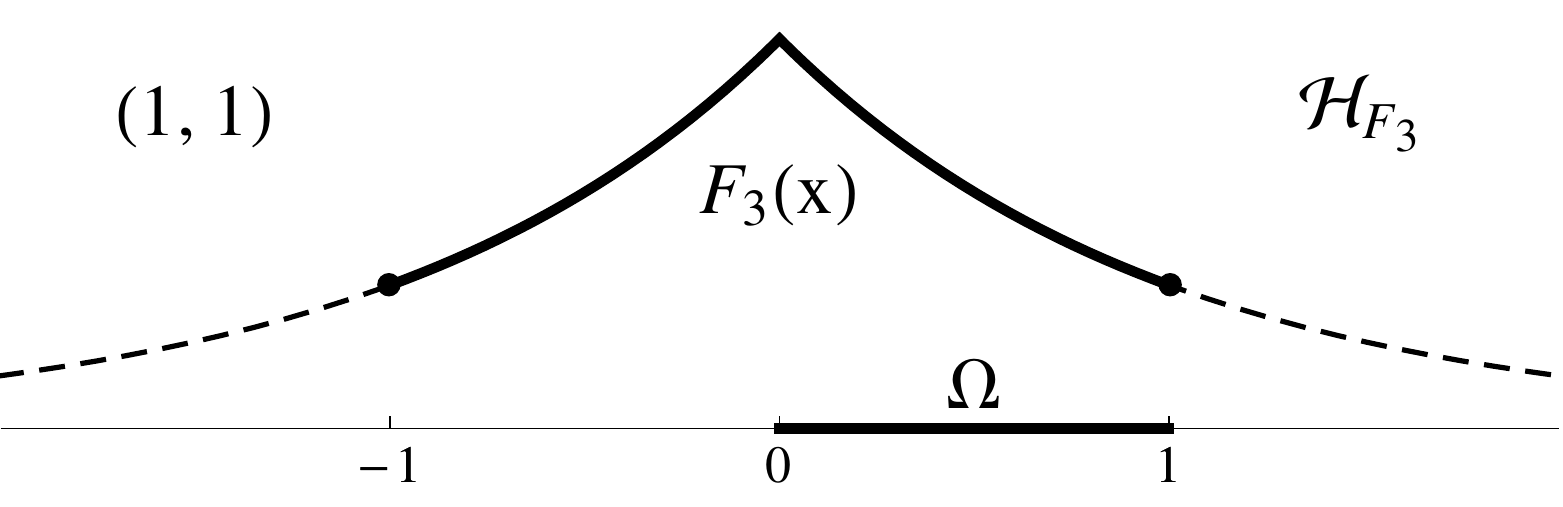} & \includegraphics[scale=0.35]{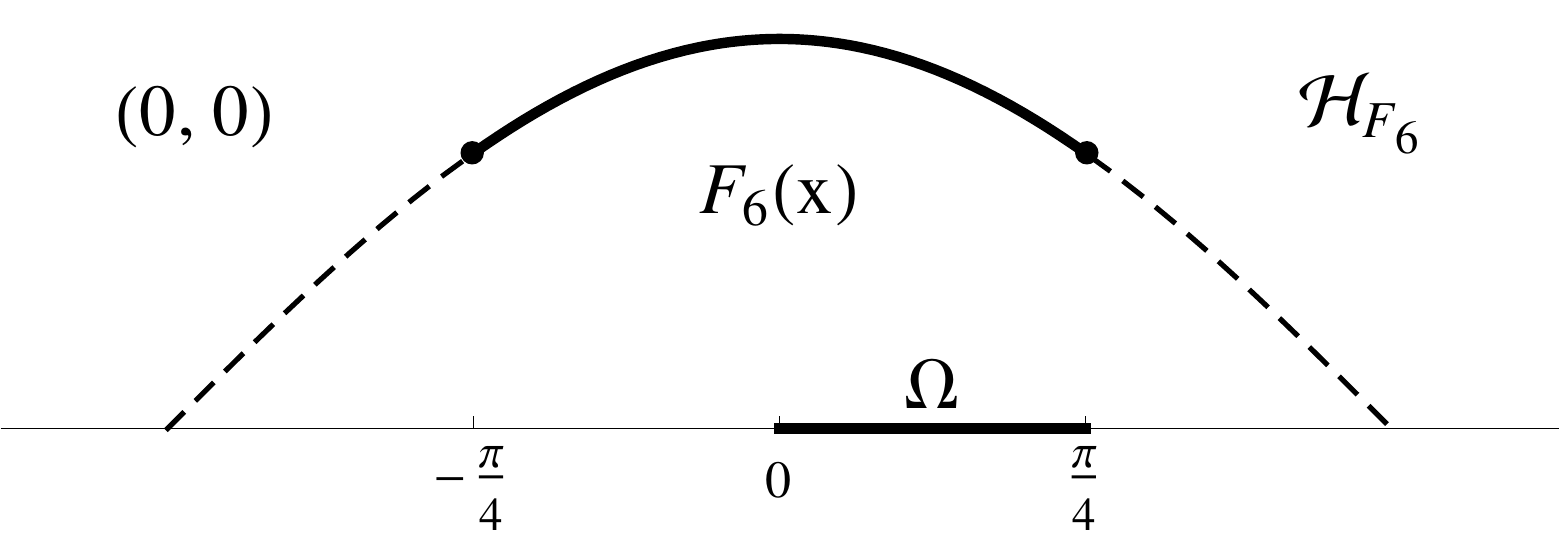}\tabularnewline
\end{tabular}

\protect\caption{\label{tab:Table-2}Type II extensions. Six cases of p.d. continuous
functions $F_{i}$ defined on a finite interval $\left(-a,a\right)$.}
\end{table}

\renewcommand{\arraystretch}{1}

\renewcommand{\arraystretch}{2}

\begin{table}
\begin{tabular}{|l|l|}
\hline 
$d\mu_{1}\left(\lambda\right)=\frac{1}{2}e^{-\left|\lambda\right|}d\lambda$ & $d\mu_{4}\left(\lambda\right)=\chi_{\left(-1,1\right)}\left(\lambda\right)\left(1-\left|\lambda\right|\right)d\lambda$,
cpt. support\tabularnewline
\hline 
$d\mu_{2}\left(\lambda\right)=\left(\frac{\sin\pi\lambda}{\pi\lambda}\right)^{2}d\lambda$,
Shannon & $d\mu_{5}\left(\lambda\right)=\frac{1}{\sqrt{2\pi}}e^{-\lambda^{2}/2}d\lambda$,
Gaussian\tabularnewline
\hline 
$d\mu_{3}\left(\lambda\right)=\frac{d\lambda}{\pi\left(1+\lambda^{2}\right)}$
, Cauchy & $d\mu_{6}\left(\lambda\right)=\frac{1}{2}\left(\delta_{1}+\delta_{-1}\right)$,
\index{atom}atomic; two Dirac masses\tabularnewline
\hline 
\end{tabular}

\bigskip{}

\protect\caption{\label{tab:Table-3}The canonical isometric embeddings: $\mathscr{H}_{F_{i}}\hookrightarrow L^{2}\left(\mathbb{R},d\mu_{i}\right)$,
$i=1,\ldots,6$.}
\end{table}

\renewcommand{\arraystretch}{1}

Return to a continuous p.d. function $F:\left(-a,a\right)\rightarrow\mathbb{C}$,
we take for the RKHS $\mathscr{H}_{F}$, and the skew-Hermitian operator
\[
D\left(F_{\varphi}\right)=F_{\varphi'},\:\varphi'=\frac{d\varphi}{dx}
\]
 If $D\subseteq A$, $A^{*}=-A$ in $\mathscr{H}_{F}$ then there
exists an isometry $J:\mathscr{H}_{F}\rightarrow L^{2}\left(\mathbb{R},\mu\right)$,
where $d\mu\left(\cdot\right)=\left\Vert P_{U}\left(\cdot\right)\xi_{0}\right\Vert ^{2}$,
\[
U_{A}\left(t\right)=e^{tA}=\int_{\mathbb{R}}e^{it\lambda}P_{U}\left(d\lambda\right),
\]
$\xi_{0}=F\left(\cdot-0\right)\in\mathscr{H}_{F}$, $J\xi_{0}=1\in L^{2}\left(\mu\right)$.

\section{\label{sec:F3-Mercer}The Example \ref{eg:F3}}

Here, we study Example \ref{eg:F3}, and compute the spectral date
of the corresponding Mercer operator. Recall that 
\[
F\left(x\right):=\begin{cases}
e^{-\left|x\right|} & \left|x\right|<1\\
e^{-1}\left(2-\left|x\right|\right) & 1\leq\left|x\right|<2\\
0 & \left|x\right|\geq2
\end{cases}
\]
See \figref{PF} below.\index{operator!Mercer} 

\begin{figure}[H]
\includegraphics[scale=0.4]{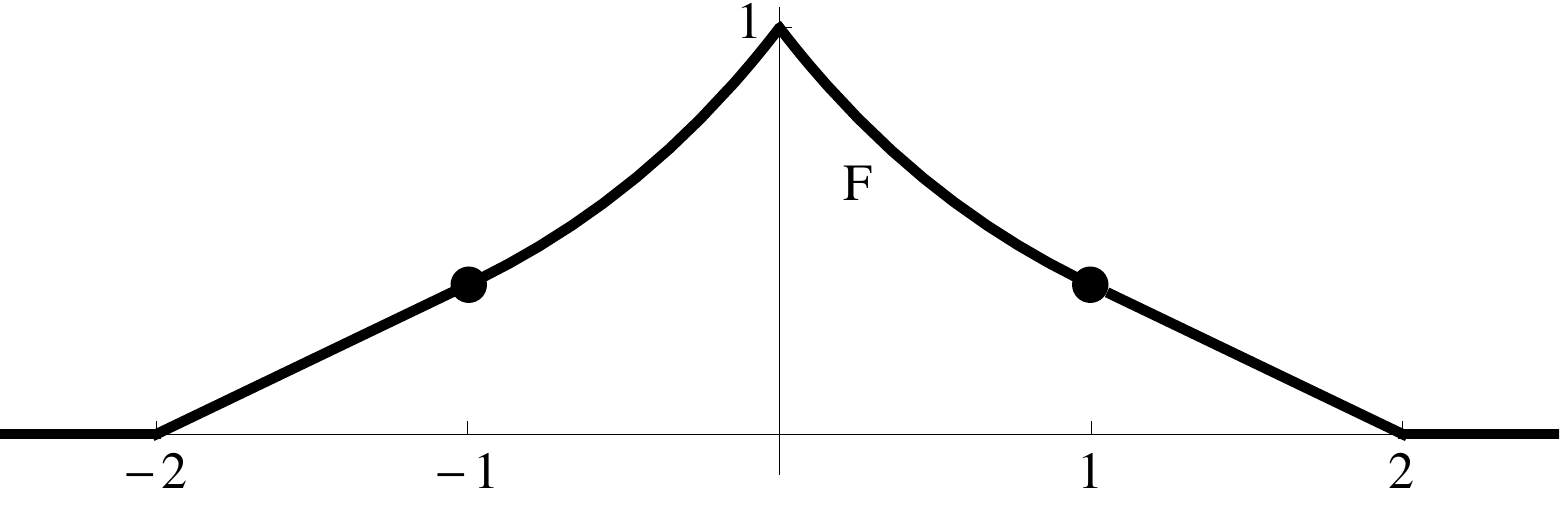}

\protect\caption{\label{fig:PF} A Polya (spline) extension of $F_{3}=e^{-\left|x\right|}$;
$\Omega=\left(-1,1\right)$.}

\end{figure}

The distributional derivative of $F$ satisfies 
\begin{eqnarray*}
F'' & = & F-2\delta_{0}+e^{-1}\left(\delta_{2}+\delta_{-2}\right)\\
 & \Updownarrow\\
\left(1-\triangle\right)F & = & 2\delta_{0}-e^{-1}\left(\delta_{2}+\delta_{-2}\right)
\end{eqnarray*}
This can be verified directly, using Schwartz' theory of distributions.
See \lemref{distd} and \cite{Tre06}. \index{Schwartz!distribution}

Thus, for 
\[
F_{x}\left(\cdot\right):=F\left(x-\cdot\right)
\]
we have the translation 
\begin{equation}
\left(1-\triangle\right)F_{x}=2\delta_{x}-e^{-1}\left(\delta_{x-2}+\delta_{x+2}\right)\label{eq:FP-1}
\end{equation}

Consider the Mercer operator (see \secref{mercer}): 
\[
L^{2}\left(0,2\right)\ni g\longmapsto\int_{0}^{2}F_{x}\left(y\right)g\left(y\right)dy=\left\langle F_{x},g\right\rangle .
\]
Suppose $\lambda\in\mathbb{R}$, and 
\begin{equation}
\left\langle F_{x},g\right\rangle =\lambda g\left(x\right).\label{eq:FP-2}
\end{equation}
Applying $\left(1-\triangle\right)$ to both sides of (\ref{eq:FP-2}),
we get
\begin{equation}
\left\langle \left(1-\triangle\right)F_{x},g\right\rangle =\lambda\left(g\left(x\right)-g''\left(x\right)\right),\;0<x<2.\label{eq:FP-3}
\end{equation}

By (\ref{eq:FP-1}), we also have 
\begin{equation}
\left\langle \left(1-\triangle\right)F_{x},g\right\rangle =2g\left(x\right),\;0<x<2;\label{eq:FP-4}
\end{equation}
using the fact the two Dirac masses in (\ref{eq:FP-1}), i.e., $\delta_{x\pm2}$,
are supported outside the open interval $\left(0,2\right)$. 

Therefore, combining (\ref{eq:FP-3})-(\ref{eq:FP-4}), we have 
\[
g''=\frac{\lambda-2}{\lambda}g,\;\forall x\in\left(0,2\right).
\]
By Mercer's theorem, $0<\lambda<2$. Setting 
\[
k:=\sqrt{\frac{2-\lambda}{\lambda}}\;\left(\Leftrightarrow\lambda=\frac{2}{1+k^{2}}\right)
\]
we have 
\[
g''=-k^{2}g,\;\forall x\in\left(0,2\right).
\]

\uline{Boundary conditions:}

In (\ref{eq:FP-3}), set $x=0$, and $x=2$, we get 
\begin{eqnarray}
2g\left(0\right)-e^{-1}g\left(2\right) & = & \lambda\left(g\left(0\right)-g''\left(0\right)\right)\label{eq:FP-bd-1}\\
2g\left(2\right)-e^{-1}g\left(0\right) & = & \lambda\left(g\left(2\right)-g''\left(2\right)\right)\label{eq:FP-bd-2}
\end{eqnarray}
Now, assume 
\[
g\left(x\right)=Ae^{ikx}+Be^{-ikx};
\]
where 
\begin{eqnarray*}
g\left(0\right) & = & A+B\\
g\left(2\right) & = & Ae^{ik2}+Be^{-ik2}\\
g''\left(0\right) & = & -k^{2}\left(A+B\right)\\
g''\left(2\right) & = & -k^{2}\left(Ae^{ik2}+Be^{-ik2}\right).
\end{eqnarray*}
Therefore, for (\ref{eq:FP-bd-1}), we have 
\begin{eqnarray*}
2\left(A+B\right)-e^{-1}\left(Ae^{ik2}+Be^{-ik2}\right) & = & \lambda\left(1+k^{2}\right)\left(A+B\right)\\
 & = & 2\left(A+B\right)
\end{eqnarray*}
i.e.,
\begin{equation}
Ae^{ik2}+Be^{-ik2}=0.\label{eq:FP-bd-3}
\end{equation}
Now, from (\ref{eq:FP-bd-2}) and using (\ref{eq:FP-bd-3}), we have
\begin{equation}
A=-B.\label{eq:FP-bd-4}
\end{equation}
Combining (\ref{eq:FP-bd-3})-(\ref{eq:FP-bd-4}), we conclude that
\[
\sin2k=0\Longleftrightarrow k=\frac{\pi n}{2},\; n\in\mathbb{Z};
\]
i.e., 
\begin{equation}
\lambda_{n}:=\frac{2}{1+\left(\dfrac{n\pi}{2}\right)^{2}},\; n\in\mathbb{N}.\label{eq:FP-5}
\end{equation}

The associated ONB in $L^{2}\left(0,2\right)$ is 
\begin{equation}
\xi_{n}\left(x\right)=\sin\left(\frac{n\pi x}{2}\right),\; n\in\mathbb{N}.\label{eq:FP-6}
\end{equation}
And the corresponding ONB in $\mathscr{H}_{F}$ consists of the functions
$\left\{ \sqrt{\lambda_{n}}\xi_{n}\right\} _{n\in\mathbb{N}}$, i.e.,
\begin{equation}
\left\{ \frac{\sqrt{2}}{\bigl(1+\bigl(\frac{n\pi}{2}\bigr)^{2}\bigr)^{1/2}}\sin\left(\frac{n\pi x}{2}\right)\right\} _{n\in\mathbb{N}}\label{eq:FP-7}
\end{equation}

\chapter{\label{chap:Ext1}The Computation of $Ext_{1}\left(F\right)$}

Given a continuous positive definite function $F$ on the open interval
$\left(-1,1\right)$. We are concerned with the set $Ext\left(F\right)$
of its extensions to continuous positive definite function defined
on all of $\mathbb{R}$, as well as a certain subset $Ext_{1}\left(F\right)$
of $Ext\left(F\right)$. Since every such p.d. extension of $F$ is
a Bochner transform of a unique positive and finite Borel measure
on $\mathbb{R}$, we will speak of the set $Ext\left(F\right)$ as
a set of finite positive Borel measure on $\mathbb{R}$. The purpose
of this chapter is to gain insight into the nature and properties
of $Ext_{1}\left(F\right)$ of $Ext\left(F\right)$.

In \secref{mercer}, we study a certain trace class integral operator
(the Mercer operator $T_{F}$). We use it to identify natural Bessel
frame in the RKHS $\mathscr{H}_{F}$; and we further introduce a notion
of Shannon sampling of finite Borel measures on $\mathbb{R}$. We
then use this in Corollary \ref{cor:shan} to give a necessary and
sufficient condition for a given finite Borel measure $\mu$ to fall
in the convex set $Ext\left(F\right)$: The measures in $Ext\left(F\right)$
are precisely those whose Shannon sampling recover the given p.d.
function $F$ on the interval $\left(-1,1\right)$.

\section{\label{sec:mercer}Mercer Operators, and Bessel Frames}

In the considerations below, we shall be primarily concerned with
the case when a fixed continuous p.d. function $F$ is defined on
a finite interval $\left(-a,a\right)\subset\mathbb{R}$. In this case,
by a Mercer operator, we mean an operator $T_{F}$ in $L^{2}\left(0,a\right)$
where $L^{2}\left(0,a\right)$ is defined from Lebesgue measure on
$\left(0,a\right)$. We have\index{operator!Mercer}
\begin{equation}
\left(T_{F}\varphi\right)\left(x\right)=\int_{0}^{a}\varphi\left(y\right)F\left(x-y\right)dy,\;\forall\varphi\in L^{2}\left(0,a\right),\forall x\in\left(0,a\right).\label{eq:mer-1}
\end{equation}

\begin{lem}
\label{lem:mer-1}Under the assumptions stated above, the Mercer operator
$T_{F}$ is trace class in $L^{2}\left(0,a\right)$; and if $F\left(0\right)=1$,
then 
\begin{equation}
trace\left(T_{F}\right)=a.\label{eq:mer-2}
\end{equation}
\end{lem}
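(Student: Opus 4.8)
The plan is to realize $T_{F}$ as the integral operator on $L^{2}(0,a)$ with kernel $K(x,y):=F(x-y)$ and to apply Mercer's theorem. First I would record that $K$ is a continuous positive definite kernel on the compact square $[0,a]\times[0,a]$: continuity is immediate since $F$ is continuous on $[-a,a]=\overline{\Omega-\Omega}$, and for any $x_{1},\dots,x_{N}\in[0,a]$ and $c_{1},\dots,c_{N}\in\mathbb{C}$ the inequality $\sum_{i,j}\overline{c_{i}}c_{j}F(x_{i}-x_{j})\geq0$ follows from Definition \ref{def:pdf} together with continuity of $F$, which lets one pass from $\Omega=(0,a)$ to its closure. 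Consequently $T_{F}$ is self-adjoint (using $F(-x)=\overline{F(x)}$), positive, and — being given by a bounded continuous kernel on a set of finite measure — Hilbert--Schmidt, hence compact.

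By the spectral theorem $T_{F}$ has an orthonormal system of eigenfunctions $\{\xi_{n}\}$ in $L^{2}(0,a)$ with eigenvalues $\lambda_{n}\geq0$; for $\lambda_{n}>0$ the identity $\xi_{n}=\lambda_{n}^{-1}T_{F}\xi_{n}$ exhibits $\xi_{n}$ as a continuous function on $[0,a]$. Mercer's theorem then yields the expansion
\[
F(x-y)=\sum_{n}\lambda_{n}\,\xi_{n}(x)\,\overline{\xi_{n}(y)},
\]
with the series converging absolutely and uniformly on $[0,a]\times[0,a]$.

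Setting $y=x$ gives $F(0)=\sum_{n}\lambda_{n}|\xi_{n}(x)|^{2}$ with uniform convergence, so I may integrate term by term over $x\in(0,a)$; since $\int_{0}^{a}|\xi_{n}(x)|^{2}\,dx=1$ this produces $\sum_{n}\lambda_{n}=a\,F(0)$. In particular the sum of eigenvalues is finite, so $T_{F}$ is trace class, and $\mathrm{trace}(T_{F})=\sum_{n}\lambda_{n}=a\,F(0)$, which equals $a$ under the normalization $F(0)=1$.

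I expect the only genuinely delicate point to be invoking Mercer's theorem in exactly the form needed — namely that a continuous positive definite kernel on a compact interval has a uniformly convergent eigenfunction expansion, whence the operator is trace class with trace $\int_{0}^{a}K(x,x)\,dx$; everything else is a routine verification. One could instead bypass Mercer by approximating $K$ using the mollifiers $\varphi_{n,x}$ of Lemma \ref{lem:dense} and the finite-rank truncations of $T_{F}$, but the Mercer route is the cleanest.
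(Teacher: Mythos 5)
Your proof is correct and follows essentially the same route as the paper: the paper's own argument is simply an appeal to Mercer's theorem for the integral operator with kernel $F(x-y)$, with the one point requiring care being that $F$ extends by limit from the open interval $(-a,a)$ to a continuous p.d. function on the closed interval $[-a,a]$ (the paper refers to Lemma \ref{lem:F-bd} for this), after which $\mathrm{trace}(T_{F})=\int_{0}^{a}F(0)\,dx=a$. Your write-up fills in the standard details of the Mercer argument; just note that the continuity of the kernel on the closed square is not quite "immediate" from the hypotheses (Definition \ref{def:pdf} only gives $F$ continuous on the open set $\Omega-\Omega$) but follows from the uniform-continuity estimate of Lemma \ref{lem:F-bd}, exactly the point the paper singles out.
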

\begin{proof}
This is an application of Mercer's theorem \cite{LP89,FR42,FM13}
to the integral operator $T_{F}$ in (\ref{eq:mer-1}). But we must
check that $F$, on $\left(-a,a\right)$, extends uniquely by limit
to a continuous p.d. function $F_{ex}$ on $\left[-a,a\right]$, the
closed interval. This is true, and easy to verify; also see Lemma
\ref{lem:F-bd}.\end{proof}
\begin{cor}
\label{cor:mer1}Let $F$ and $\left(-a,a\right)$ be as in \lemref{mer-1}.
Then there is a sequence $\left(\lambda_{n}\right)_{n\in\mathbb{N}}$,
$\lambda_{n}>0$, s.t. $\sum_{n\in\mathbb{N}}\lambda_{n}=a$, and
a system of orthogonal functions\index{orthogonal} $\left\{ \xi_{n}\right\} \subset L^{2}\left(0,a\right)\cap\mathscr{H}_{F}$
such that
\begin{equation}
F\left(x-y\right)=\sum_{n\in\mathbb{N}}\lambda_{n}\xi_{n}\left(x\right)\overline{\xi_{n}\left(y\right)},\mbox{ and}\label{eq:mer-3}
\end{equation}
\begin{equation}
\int_{0}^{a}\overline{\xi_{n}\left(x\right)}\xi_{m}\left(x\right)dx=\delta_{n,m},\; n,m\in\mathbb{N}.\label{eq:mer-4}
\end{equation}
\end{cor}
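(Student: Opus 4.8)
The plan is to obtain Corollary \ref{cor:mer1} as a direct application of Mercer's theorem to the operator $T_{F}$ of (\ref{eq:mer-1}), combined with the elementary identity tying the $L^{2}(0,a)$ and $\mathscr{H}_{F}$ inner products together. First I would record the operator-theoretic setup: $T_{F}$ is bounded and self-adjoint on $L^{2}(0,a)$ (using $F(-x)=\overline{F(x)}$), and it is positive, since for $\varphi\in L^{2}(0,a)$ one has $\langle\varphi,T_{F}\varphi\rangle_{L^{2}}=\int_{0}^{a}\int_{0}^{a}\overline{\varphi(x)}\varphi(y)F(x-y)\,dx\,dy\ge0$ by the positive definiteness of $F$ (Lemma \ref{lem:Fdef}). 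By Lemma \ref{lem:mer-1}, $T_{F}$ is trace class, hence compact, so the spectral theorem for compact positive self-adjoint operators yields an orthonormal system $\{\xi_{n}\}_{n\in\mathbb{N}}\subset L^{2}(0,a)$ (the eigenfunctions for the nonzero eigenvalues, repeated according to multiplicity) with $T_{F}\xi_{n}=\lambda_{n}\xi_{n}$, $\lambda_{n}>0$; this gives the orthonormality relation (\ref{eq:mer-4}), and $\sum_{n}\lambda_{n}=\operatorname{trace}(T_{F})=a$ by (\ref{eq:mer-2}). Each $\xi_{n}$ agrees a.e.\ with a continuous function on $[0,a]$, because $\lambda_{n}\xi_{n}(x)=\int_{0}^{a}F(x-y)\xi_{n}(y)\,dy$ and the right-hand side is continuous in $x$ once $F$ is viewed as its continuous extension to the closed interval $[-a,a]$ (established in the proof of Lemma \ref{lem:mer-1}).

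Next I would invoke Mercer's theorem proper: since $(x,y)\mapsto F(x-y)$ is a continuous positive definite kernel on the compact square $[0,a]\times[0,a]$ and $T_{F}$ is its integral operator, the eigenexpansion $F(x-y)=\sum_{n\in\mathbb{N}}\lambda_{n}\xi_{n}(x)\overline{\xi_{n}(y)}$ converges absolutely and uniformly on $[0,a]\times[0,a]$, which is exactly (\ref{eq:mer-3}); putting $x=y$ and integrating recovers $\sum_{n}\lambda_{n}=\int_{0}^{a}F(0)\,dx=a$, consistent with the trace. It remains to see that $\xi_{n}\in\mathscr{H}_{F}$. Here the key observation is the identity $\langle F_{\varphi},F_{\psi}\rangle_{\mathscr{H}_{F}}=\langle\varphi,T_{F}\psi\rangle_{L^{2}(0,a)}$, immediate from (\ref{eq:hi2}) and (\ref{eq:mer-1}). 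Since $T_{F}\xi_{n}=F_{\xi_{n}}$ and also $T_{F}\xi_{n}=\lambda_{n}\xi_{n}$, we get $F_{\xi_{n}}=\lambda_{n}\xi_{n}$, and the identity gives $\|F_{\xi_{n}}\|_{\mathscr{H}_{F}}^{2}=\langle\xi_{n},T_{F}\xi_{n}\rangle_{L^{2}}=\lambda_{n}<\infty$; hence $\xi_{n}=\lambda_{n}^{-1}F_{\xi_{n}}\in\mathscr{H}_{F}$ with $\|\xi_{n}\|_{\mathscr{H}_{F}}=\lambda_{n}^{-1/2}$ (equivalently, one checks the estimate of Theorem \ref{thm:HF} with $A_{0}=\lambda_{n}^{-1}$). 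The same identity gives $\langle\xi_{n},\xi_{m}\rangle_{\mathscr{H}_{F}}=\lambda_{n}^{-1}\lambda_{m}^{-1}\langle\xi_{n},T_{F}\xi_{m}\rangle_{L^{2}}=\lambda_{n}^{-1}\delta_{n,m}$, so that $\{\sqrt{\lambda_{n}}\,\xi_{n}\}$ is orthonormal in $\mathscr{H}_{F}$ — which is the companion statement used later.

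\textbf{Main obstacle.} The genuinely analytic content — the absolute and uniform convergence of the eigenexpansion — is packaged in the citation to Mercer's theorem, so the only point requiring care is that $F$, initially continuous only on the open interval $(-a,a)$, really does extend continuously to the closed interval $[-a,a]$ so that the kernel is continuous on the compact square and Mercer's hypotheses are met; this is precisely the verification already carried out in the proof of Lemma \ref{lem:mer-1}. Everything else is bookkeeping between the two inner products, and the restriction to strictly positive eigenvalues (discarding $\ker T_{F}$, which contributes nothing to the expansion).
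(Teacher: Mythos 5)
Your proposal is correct and follows exactly the route the paper takes: the paper's own proof of this corollary is simply a citation of Mercer's theorem (applied to the trace-class, positive, self-adjoint operator $T_{F}$ from Lemma \ref{lem:mer-1}), and your argument fills in the standard details of that application. The additional bookkeeping you supply — the identity $\left\langle F_{\varphi},F_{\psi}\right\rangle _{\mathscr{H}_{F}}=\left\langle \varphi,T_{F}\psi\right\rangle _{L^{2}}$ showing $\xi_{n}\in\mathscr{H}_{F}$ and $\bigl\Vert\sqrt{\lambda_{n}}\,\xi_{n}\bigr\Vert_{\mathscr{H}_{F}}=1$ — is precisely what the paper records separately in Corollary \ref{cor:mer2}, so nothing is missing.
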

\begin{proof}
An application of Mercer's theorem \cite{LP89,FR42,FM13}.

\end{proof}
\begin{cor}
Let $a\in\mathbb{R}_{+}$, and let $F:\left(-a,a\right)\rightarrow\mathbb{C}$
be positive definite and continuous, $F\left(0\right)=1$. Let $D^{\left(F\right)}$
be the skew-Hermitian\index{operator!skew-Hermitian} operator in
$ $$\mathscr{H}_{F}$, i.e., $D^{\left(F\right)}\left(F_{\varphi}\right):=F_{\varphi'}$,
$\forall\varphi\in C_{c}^{\infty}\left(0,a\right)$. Let $z\in\mathbb{C}\backslash\left\{ 0\right\} $,
and $DEF_{F}\left(z\right)\subset\mathscr{H}_{F}$ be the corresponding
defect space. 

Let $\left\{ \xi_{n}\right\} _{n\in\mathbb{N}}\subset\mathscr{H}_{F}\cap L^{2}\left(\Omega\right)$,
$\Omega:=\left(0,a\right)$; and $\left\{ \lambda_{n}\right\} _{n\in\mathbb{N}}$
s.t. $\lambda_{n}>0$, and $\sum_{n=1}^{\infty}\lambda_{n}=a$, be
one Mercer--system as in Corollary \ref{cor:mer1}. 

Then $DEF_{F}\left(z\right)\neq0$ if and only if 
\begin{equation}
\sum_{n=1}^{\infty}\frac{1}{\lambda_{n}}\left|\int_{0}^{a}\overline{\xi_{n}\left(x\right)}e^{zx}dx\right|^{2}<\infty.\label{eq:mer-1-1}
\end{equation}
\end{cor}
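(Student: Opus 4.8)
The plan is to reduce the statement to a clean description of which continuous functions on $\Omega=(0,a)$ belong to $\mathscr{H}_{F}$, phrased through the Mercer system $\{(\lambda_{n},\xi_{n})\}$ of Corollary \ref{cor:mer1}, and then to quote the already-established dictionary between defect vectors of $D^{(F)}$ and exponential functions. By Theorem \ref{thm:Eigenspaces-for-the-adjoint} together with Corollary \ref{cor:eigen}, $DEF_{F}(z)\neq 0$ iff the exponential $e_{z}\colon y\mapsto e^{zy}$, restricted to $\Omega$, lies in $\mathscr{H}_{F}$; the sign of the exponent is immaterial here, since for $\operatorname{Re}z\neq 0$ this is exactly Corollary \ref{cor:eigen}, and for $\operatorname{Re}z=0$ one uses that $\mathscr{H}_{F}$ is invariant under pointwise complex conjugation when $F$ is real. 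Thus the whole content is the equivalence, for an arbitrary continuous $\xi$ on $\Omega$,
\[
\xi\in\mathscr{H}_{F}\quad\Longleftrightarrow\quad \sum_{n=1}^{\infty}\frac{1}{\lambda_{n}}\Bigl|\int_{0}^{a}\overline{\xi_{n}(x)}\,\xi(x)\,dx\Bigr|^{2}<\infty,
\]
and in fact I would show that when the right side is finite it equals $\|\xi\|_{\mathscr{H}_{F}}^{2}$; applying this with $\xi=e_{z}|_{\Omega}$ then gives the corollary.

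The first real step is to establish that $\{\sqrt{\lambda_{n}}\,\xi_{n}\}_{n\in\mathbb{N}}$ is an orthonormal basis of $\mathscr{H}_{F}$. From Corollary \ref{cor:mer1} we have $\xi_{n}\in\mathscr{H}_{F}\cap L^{2}(0,a)$, the $L^{2}$-orthonormality (\ref{eq:mer-4}), and, with $T_{F}$ the Mercer operator of (\ref{eq:mer-1}), the eigen-relation $F_{\xi_{n}}=T_{F}\xi_{n}=\lambda_{n}\xi_{n}$. Since $F$ is bounded, $\psi\mapsto F_{\psi}$ is bounded from $L^{2}(0,a)$ into $\mathscr{H}_{F}$ (indeed $\|F_{\psi}\|_{\mathscr{H}_{F}}^{2}=\iint\overline{\psi}\psi F$), so Lemma \ref{lem:lcg-bdd} extends by density: $\langle\eta,F_{\psi}\rangle_{\mathscr{H}_{F}}=\int_{0}^{a}\overline{\eta(x)}\psi(x)\,dx$ for all $\eta\in\mathscr{H}_{F}$ and $\psi\in L^{2}(0,a)$. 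Taking $\eta=\xi_{n}$, $\psi=\xi_{m}$ and using $F_{\xi_{m}}=\lambda_{m}\xi_{m}$ yields $\langle\xi_{n},\xi_{m}\rangle_{\mathscr{H}_{F}}=\lambda_{m}^{-1}\delta_{n,m}$, i.e. orthonormality of $\{\sqrt{\lambda_{n}}\xi_{n}\}$; completeness follows because the dense family $\{F_{\varphi}:\varphi\in C_{c}(\Omega)\}$ lies in the closed span of the $\xi_{n}$, since $F_{\varphi}=T_{F}\varphi=\sum_{n}\lambda_{n}\langle\xi_{n},\varphi\rangle_{L^{2}}\xi_{n}$ by the spectral resolution of $T_{F}$.

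For the forward direction of the equivalence, fix $\xi\in\mathscr{H}_{F}$; from $\xi_{n}=\lambda_{n}^{-1}F_{\xi_{n}}$ and the extended reproducing identity one gets $\langle\xi_{n},\xi\rangle_{\mathscr{H}_{F}}=\lambda_{n}^{-1}\int_{0}^{a}\overline{\xi_{n}(x)}\xi(x)\,dx$, and expanding $\xi$ in the basis above gives $\|\xi\|_{\mathscr{H}_{F}}^{2}=\sum_{n}\lambda_{n}\,|\langle\xi_{n},\xi\rangle_{\mathscr{H}_{F}}|^{2}=\sum_{n}\lambda_{n}^{-1}\bigl|\int_{0}^{a}\overline{\xi_{n}(x)}\xi(x)\,dx\bigr|^{2}$. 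For the converse, suppose $\xi$ is continuous on $\Omega$ with $S:=\sum_{n}\lambda_{n}^{-1}|c_{n}|^{2}<\infty$, where $c_{n}:=\int_{0}^{a}\overline{\xi_{n}(x)}\xi(x)\,dx$; since $\xi\in L^{2}(0,a)$, the $c_{n}$ are its Fourier coefficients for the ONB $(\xi_{n})$ of $L^{2}(0,a)$, so $\sum_{n\le N}c_{n}\xi_{n}\to\xi$ in $L^{2}(0,a)$, while $\eta:=\sum_{n}(c_{n}/\sqrt{\lambda_{n}})\,(\sqrt{\lambda_{n}}\xi_{n})$ converges in $\mathscr{H}_{F}$ because $\sum_{n}|c_{n}/\sqrt{\lambda_{n}}|^{2}=S<\infty$; by the reproducing property (the Remark after Lemma \ref{lem:dense}) the same partial sums converge to $\eta$ uniformly on $\Omega$, so $\xi=\eta$ a.e., and as both are continuous $\xi=\eta\in\mathscr{H}_{F}$.

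The argument is essentially bookkeeping, and the delicate points are exactly two. One is matching the two normalizations — the $L^{2}$-Mercer pairing versus the $\mathscr{H}_{F}$-inner product — which is what produces the weight $1/\lambda_{n}$; this rests on the extended form of Lemma \ref{lem:lcg-bdd} and the identity $F_{\xi_{n}}=\lambda_{n}\xi_{n}$. The other is, in the converse, simultaneously controlling $L^{2}$-convergence and $\mathscr{H}_{F}$- (hence uniform) convergence in order to conclude $\xi=\eta$. I also expect I will need to spell out the sign/conjugation remark of the first paragraph with a little care, since the corollary is phrased with $e^{zx}$ whereas the defect equation $-\xi'=z\xi$ directly produces $e^{-zx}$; Corollary \ref{cor:eigen} (for $\operatorname{Re}z\neq 0$) and conjugation-invariance of $\mathscr{H}_{F}$ for real $F$ (for $\operatorname{Re}z=0$) close that gap.
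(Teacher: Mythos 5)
Your proof is correct and follows essentially the same route as the paper's: both rest on the fact (Corollary \ref{cor:mer2}) that $\bigl\{\sqrt{\lambda_{n}}\,\xi_{n}\bigr\}$ is an ONB of $\mathscr{H}_{F}$, identify $\bigl\langle\sqrt{\lambda_{n}}\xi_{n},\xi\bigr\rangle_{\mathscr{H}_{F}}$ with $\lambda_{n}^{-1/2}\int_{0}^{a}\overline{\xi_{n}}\xi$ via the reproducing identity and $T_{F}\xi_{n}=\lambda_{n}\xi_{n}$, and conclude by Parseval that the relevant exponential lies in $\mathscr{H}_{F}$ iff (\ref{eq:mer-1-1}) holds. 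You supply more detail than the paper's one-line argument does --- notably the explicit reduction of $DEF_{F}\left(z\right)\neq0$ to membership of an exponential through Theorem \ref{thm:Eigenspaces-for-the-adjoint} and Corollary \ref{cor:eigen}, and the $e^{zx}$ versus $e^{-zx}$ sign discussion, which the paper passes over in silence.
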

\begin{proof}
We saw in Corollary \ref{cor:mer1} that if $\left\{ \xi_{n}\right\} _{n\in\mathbb{N}}$,
$\left\{ \lambda_{n}\right\} _{n\in\mathbb{N}}$, is a Mercer system
(i.e., the spectral data for the Mercer operator $T_{F}$ in $L^{2}\left(0,a\right)$),
then $\xi_{n}\in\mathscr{H}_{F}\cap L^{2}\left(\Omega\right)$; and
$\left(\sqrt{\lambda_{n}}\xi_{n}\left(\cdot\right)\right)_{n\in\mathbb{N}}$
is an ONB in $\mathscr{H}_{F}$.

But (\ref{eq:mer-1-1}) in the corollary is merely stating that the
function $e_{z}\left(x\right):=e^{zx}$ has a finite $l^{2}$-expansion
relative to this ONB. The rest is an immediate application of Parseval's
identity (w.r.t. this ONB.)\end{proof}
\begin{rem}
The conclusion in the corollary applies more generally: It shows that
a continuous function $f$ on $\left[0,a\right]$ is in $\mathscr{H}_{F}$
iff 
\[
\sum_{n=1}^{\infty}\frac{1}{\lambda_{n}}\left|\int_{0}^{a}\overline{\xi_{n}\left(x\right)}f\left(x\right)dx\right|^{2}<\infty.
\]

\end{rem}
In Theorem \ref{thm:mer1} below, we revisit the example $F_{2}$
(in $\left|x\right|<\frac{1}{2}$) from Table \ref{tab:F1-F6}. This
example has a number of intriguing properties that allow us to compute
the eigenvalues and the eigenvectors for the Mercer operator from
Corollary \ref{cor:mer1}, (so $a=\frac{1}{2}$.).
\begin{thm}
\label{thm:mer1}Set $E\left(x,y\right)=x\wedge y=\min\left(x,y\right)$,
$x,y\in\left(0,\frac{1}{2}\right)$, and 
\begin{equation}
\left(T_{E}\varphi\right)\left(x\right)=\int_{0}^{\frac{1}{2}}\varphi\left(y\right)x\wedge y\: dy\label{eq:thm:mer1-1}
\end{equation}
then the spectral resolution of $T_{E}$ in $L^{2}\left(0,\frac{1}{2}\right)$
(Mercer operator) is as follows:
\begin{equation}
E\left(x,y\right)=\sum_{n=1}^{\infty}\frac{4}{\left(\pi\left(2n-1\right)\right)^{2}}\sin\left(\left(2n-1\right)\pi x\right)\sin\left(\left(2n-1\right)\pi y\right)\label{eq:thm:mer1-2}
\end{equation}
for all $\forall\left(x,y\right)\in\left(0,\frac{1}{2}\right)\times\left(0,\frac{1}{2}\right)$.

Setting
\begin{equation}
u\left(x\right)=\left(T_{E}\varphi\right)\left(x\right),\;\mbox{for }\varphi\in C_{c}^{\infty}\left(0,\tfrac{1}{2}\right),\;\mbox{we get}\label{eq:thm:mer1-3}
\end{equation}
\begin{eqnarray}
u'\left(x\right) & = & \int_{x}^{\frac{1}{2}}\varphi\left(y\right)dy,\mbox{ and}\label{eq:thm:mer1-4}\\
u''\left(x\right) & = & -\varphi\left(x\right);\label{eq:mer-5-1}
\end{eqnarray}
moreover, $u$ satisfies the boundary condition
\begin{equation}
\begin{cases}
u\left(0\right) & =0\\
u'\left(\tfrac{1}{2}\right) & =0
\end{cases}\label{eq:mer-6-1}
\end{equation}

\end{thm}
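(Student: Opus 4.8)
The plan is to dispatch assertions (\ref{eq:thm:mer1-4})--(\ref{eq:mer-6-1}) first and then use the resulting boundary value problem to drive the spectral resolution (\ref{eq:thm:mer1-2}). Fix $\varphi\in C_{c}^{\infty}(0,\tfrac12)$ and split the defining integral at $y=x$:
\[
u(x)=(T_{E}\varphi)(x)=\int_{0}^{1/2}\varphi(y)(x\wedge y)\,dy=\int_{0}^{x}y\,\varphi(y)\,dy+x\int_{x}^{1/2}\varphi(y)\,dy .
\]
Differentiating termwise by the fundamental theorem of calculus gives $u'(x)=x\varphi(x)+\int_{x}^{1/2}\varphi(y)\,dy-x\varphi(x)=\int_{x}^{1/2}\varphi(y)\,dy$, hence $u''(x)=-\varphi(x)$; this is (\ref{eq:thm:mer1-4})--(\ref{eq:mer-5-1}). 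The boundary conditions (\ref{eq:mer-6-1}) are then read off directly: $u(0)=0$ from the closed form of $u$, and $u'(\tfrac12)=\int_{1/2}^{1/2}\varphi=0$. In other words, $E(x,y)=x\wedge y$ is the Green's function of $-d^{2}/dx^{2}$ on $(0,\tfrac12)$ with Dirichlet condition at $0$ and Neumann condition at $\tfrac12$, so $T_{E}$ inverts that (positive, self-adjoint) differential operator.

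Next, for (\ref{eq:thm:mer1-2}) I would diagonalize $T_{E}$ via Mercer's theorem \cite{LP89,FR42,FM13}. The kernel $E(x,y)=\min(x,y)$ extends continuously to $[0,\tfrac12]^{2}$ and is positive definite there --- it is the Brownian-motion covariance, since $\sum_{i,j}\overline{c_{i}}c_{j}\min(x_{i},x_{j})=\mathbb{E}\bigl|\sum_{i}c_{i}B_{x_{i}}\bigr|^{2}\geq0$ --- so Mercer applies and yields $E(x,y)=\sum_{n}\lambda_{n}\xi_{n}(x)\overline{\xi_{n}(y)}$ with $\{\xi_{n}\}$ an orthonormal system of eigenfunctions of $T_{E}$ in $L^{2}(0,\tfrac12)$ and $\lambda_{n}>0$. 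Applying the computation above with $\xi_{n}$ in place of $\varphi$, an eigenrelation $T_{E}\xi=\lambda\xi$, $\lambda\neq0$, forces $-\lambda\xi''=\xi$ on $(0,\tfrac12)$ with $\xi(0)=0$, $\xi'(\tfrac12)=0$; positivity of $T_{E}$ gives $\lambda>0$, so writing $\lambda=1/k^{2}$ with $k>0$ gives $\xi''=-k^{2}\xi$. Then $\xi(0)=0$ forces $\xi(x)=A\sin(kx)$, and $\xi'(\tfrac12)=0$ forces $\cos(k/2)=0$, i.e. $k=(2n-1)\pi$, $n\in\mathbb{N}$, so $\lambda_{n}=1/\bigl((2n-1)\pi\bigr)^{2}$. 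Since $\int_{0}^{1/2}\sin^{2}((2n-1)\pi x)\,dx=\tfrac14$, the normalized eigenfunctions are $\xi_{n}(x)=2\sin((2n-1)\pi x)$, and substituting into Mercer's expansion produces exactly
\[
E(x,y)=\sum_{n=1}^{\infty}\frac{1}{\bigl((2n-1)\pi\bigr)^{2}}\cdot 4\sin\bigl((2n-1)\pi x\bigr)\sin\bigl((2n-1)\pi y\bigr),
\]
which is (\ref{eq:thm:mer1-2}).

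The one point that needs a little care --- and the only real obstacle --- is confirming that the eigenvalue list is complete and free of spurious members. I would note that $T_{E}$ is injective ($T_{E}\varphi=0\Rightarrow\varphi=-u''=0$), so $0$ is not an eigenvalue; and that $T_{E}$ is a compact self-adjoint operator whose eigenfunctions span $L^{2}(0,\tfrac12)$, so the family $\{2\sin((2n-1)\pi x)\}_{n\geq1}$ produced above is the \emph{full} orthonormal eigenbasis and nothing is missed. This simultaneously re-proves that $\{\sqrt{\lambda_{n}}\,\xi_{n}\}$ is the ONB of $\mathscr{H}_{F}$ promised by Corollary \ref{cor:mer1} for this example (with $a=\tfrac12$). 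As a consistency check, $\sum_{n}\lambda_{n}=\pi^{-2}\sum_{n\geq1}(2n-1)^{-2}=\pi^{-2}\cdot\tfrac{\pi^{2}}{8}=\tfrac18=\int_{0}^{1/2}E(x,x)\,dx=\mathrm{trace}(T_{E})$, in agreement with the trace-class statement of Lemma \ref{lem:mer-1}.
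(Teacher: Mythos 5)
Your proposal is correct and follows essentially the same route as the paper: differentiate $u=T_{E}\varphi$ directly to obtain $u''=-\varphi$ with the mixed Dirichlet/Neumann conditions $u(0)=0$, $u'(\tfrac12)=0$, identify $T_{E}^{-1}$ with $-\bigl(\tfrac{d}{dx}\bigr)^{2}$ under those conditions, solve the resulting eigenvalue problem to get $\lambda_{n}=\bigl((2n-1)\pi\bigr)^{-2}$ with normalized eigenfunctions $2\sin((2n-1)\pi x)$, and confirm $\sum_{n}\lambda_{n}=\tfrac18=\mathrm{trace}(T_{E})$. Your added checks --- positive definiteness of $\min(x,y)$ so that Mercer applies, and injectivity plus compact self-adjointness to rule out missing eigenfunctions --- tighten points the paper's proof leaves implicit, but do not change the argument.
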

Note that $E\left(x,y\right)$ is a p.d. kernel, but not a positive
definite\index{positive definite} function in the sense of Definition
\ref{def:pdf}.

In particular, $E$ is a function of two variables, as opposed to
one. The purpose of Theorem \ref{thm:MerF2-1}, and Lemmas \ref{lem:F2kernel},
and \ref{lem:F2kernel2} is to show that the Mercer operator $T_{F}$
defined from $F=F_{2}$ (see table \ref{tab:F1-F6}) is a rank-1 perturbation\index{perturbation}
of the related operator $T_{E}$ defined from $E\left(x,y\right)$.
The latter is of significance in at least two ways: $T_{E}$ has an
explicit spectral representation, and $E\left(x,y\right)$ is the
covariance\index{covariance} kernel for Brownian motion;\index{Brownian motion}
see also Figure \ref{fig:bm1}. By contrast, we show in Lemma \ref{lem:bbridge}
that the Mercer operator $T_{F}$ is associated with pinned Brownian
motion. 

The connection between the two operators $T_{F}$ and $T_{E}$ is
reflecting a more general feature of boundary conditions for domains
$\Omega$ in Lie groups; -- a topic we consider in \chapref{types}
and \chapref{spbd} below.

Rank-one perturbations play a role in spectral theory in different
problems; see e.g., \cite{Yos12,DJ10,Ion01,DRSS94,TW86}.
\begin{proof}[Proof of Theorem \ref{thm:mer1}]
We verify directly that (\ref{eq:thm:mer1-4})-(\ref{eq:mer-5-1})
hold. 

Consider the Hilbert space $L^{2}\left(0,\frac{1}{2}\right)$. The
operator $\triangle_{E}:=T_{E}^{-1}$ is a selfadjoint extension\index{selfadjoint extension}
of $-\left(\frac{d}{dx}\right)^{2}\Big|_{C_{c}^{\infty}\left(0,\frac{1}{2}\right)}$
in $L^{2}\left(0,\frac{1}{2}\right)$; and under the ONB $f_{n}\left(x\right)=2\sin\left(\left(2n-1\right)\pi x\right)$,
$n\in\mathbb{N}$, and the boundary condition (\ref{eq:mer-6-1}),
$\triangle_{E}$ is diagonalized as 
\begin{equation}
\triangle_{E}f_{n}=\left(\left(2n-1\right)\pi\right)^{2}f_{n},\; n\in\mathbb{N}.\label{eq:thm:mer1-5}
\end{equation}
We conclude that
\begin{align}
\triangle_{E} & =\sum_{n=1}^{\infty}\left(\left(2n-1\right)\pi\right)^{2}\left|f_{n}\left\rangle \right\langle f_{n}\right|\label{eq:thm:mer1-6}\\
T_{E} & =\triangle_{E}^{-1}=\sum_{n=1}^{\infty}\frac{1}{\left(\left(2n-1\right)\pi\right)^{2}}\left|f_{n}\left\rangle \right\langle f_{n}\right|\label{eq:thm:mer1-7}
\end{align}
where $P_{n}:=\left|f_{n}\left\rangle \right\langle f_{n}\right|=$
Dirac's rank-1 projection\index{projection} in $L^{2}\left(0,\frac{1}{2}\right)$,
and 
\begin{eqnarray}
\left(P_{n}\varphi\right)\left(x\right) & = & \left\langle f_{n},\varphi\right\rangle f_{n}\left(x\right)\nonumber \\
 & = & \left(\int_{0}^{\frac{1}{2}}f_{n}\left(y\right)\varphi\left(y\right)dy\right)f_{n}\left(x\right)\nonumber \\
 & = & 4\sin\left(\left(2n-1\right)\pi x\right)\int_{0}^{\frac{1}{2}}\sin\left(\left(2n-1\right)\pi y\right)\varphi\left(y\right)dy.\label{eq:thm:mer1-8}
\end{eqnarray}
Combing (\ref{eq:thm:mer1-7}) and (\ref{eq:thm:mer1-8}), we get
\begin{equation}
\left(T_{E}\varphi\right)\left(x\right)=\sum_{n=1}^{\infty}\frac{4\sin\left(\left(2n-1\right)\pi x\right)}{\left(\left(2n-1\right)\pi\right)^{2}}\int_{0}^{\frac{1}{2}}\sin\left(\left(2n-1\right)\pi y\right)\varphi\left(y\right)dy.\label{eq:thm:mer1-9}
\end{equation}

Note the normalization considered in (\ref{eq:thm:mer1-2}) and (\ref{eq:thm:mer1-9})
is consistent with the condition:
\[
\sum_{n=1}^{\infty}\lambda_{n}=Trace\left(T_{E}\right)
\]
in Corollary \ref{cor:mer1} for the Mercer eigenvalues $\left(\lambda_{n}\right)_{n\in\mathbb{N}}$.
Indeed,
\begin{align*}
Trace\left(T_{E}\right) & =\int_{0}^{\frac{1}{2}}x\wedge x\: dx=\frac{1}{8}\;\mbox{and}\\
\sum_{n=1}^{\infty}\frac{1}{\left(\left(2n-1\right)\pi\right)^{2}} & =\frac{1}{8}.
\end{align*}
 \end{proof}
\begin{thm}
\label{thm:MerF2-1}Set $F\left(x-y\right)=1-\left|x-y\right|$, $x,y\in\left(0,\frac{1}{2}\right)$;
$K^{\left(E\right)}\left(x,y\right)=x\wedge y=\min\left(x,y\right)$,
and let 
\begin{equation}
\left(T_{E}\varphi\right)\left(x\right)=\int_{0}^{\frac{1}{2}}\varphi\left(y\right)K^{\left(E\right)}\left(x,y\right)dy;\label{eq:f2-1}
\end{equation}
then 
\begin{equation}
K^{\left(E\right)}\left(x,y\right)=\sum_{n=1}^{\infty}\frac{4\sin\left(\left(2n-1\right)\pi x\right)\sin\left(\left(2n-1\right)\pi y\right)}{\left(\pi\left(2n-1\right)\right)^{2}}\label{eq:f2-2}
\end{equation}
and 
\begin{equation}
F\left(x-y\right)=1-x-y+2\sum_{n=1}^{\infty}\frac{4\sin\left(\left(2n-1\right)\pi x\right)\sin\left(\left(2n-1\right)\pi y\right)}{\left(\pi\left(2n-1\right)\right)^{2}}.\label{eq:f2-3}
\end{equation}
That is, 
\begin{equation}
F\left(x-y\right)=1-x-y+2K^{\left(E\right)}\left(x,y\right).\label{eq:f2-2-1}
\end{equation}
\end{thm}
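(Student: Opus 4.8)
The plan is to reduce the whole statement to the single elementary pointwise identity
\[
2\min(x,y) = x + y - |x-y|, \qquad x,y \in \mathbb{R},
\]
which is checked by splitting into the two cases $x\le y$ and $x\ge y$. Rearranging gives $1 - |x-y| = 1 - x - y + 2\min(x,y)$ for all $x,y$, and in particular for $x,y\in\left(0,\tfrac12\right)$; since then $x-y\in\left(-\tfrac12,\tfrac12\right)$, the left-hand side is genuinely $F(x-y)$ with $F=F_2$ the function of Table \ref{tab:F1-F6}, and $\min(x,y)=x\wedge y=K^{(E)}(x,y)$. This is exactly the asserted identity (\ref{eq:f2-2-1}).

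Next I would invoke Theorem \ref{thm:mer1}: the kernel $K^{(E)}(x,y)=x\wedge y$ on $\left(0,\tfrac12\right)^2$ is the Green's function of the boundary value problem $-u''=\varphi$, $u(0)=0$, $u'\!\left(\tfrac12\right)=0$, so $\triangle_E:=T_E^{-1}$, with $T_E$ as in (\ref{eq:f2-1}), is the selfadjoint realization of $-(d/dx)^2$ with those boundary conditions, diagonalized by the orthonormal basis $f_n(x)=2\sin\big((2n-1)\pi x\big)$ of $L^2\left(0,\tfrac12\right)$ with eigenvalues $\big((2n-1)\pi\big)^2$; inverting gives the Mercer expansion (\ref{eq:f2-2}) (equivalently (\ref{eq:thm:mer1-2})). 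If one prefers a self-contained check of (\ref{eq:f2-2}): letting $G(x,y)$ denote its right-hand side, term-by-term one has $-\partial_x^2$ of the $n$-th summand equal to $4\sin\big((2n-1)\pi x\big)\sin\big((2n-1)\pi y\big)$, and $\sum_{n\ge 1} 4\sin\big((2n-1)\pi x\big)\sin\big((2n-1)\pi y\big)=\delta(x-y)$ since $\{2\sin((2n-1)\pi\cdot)\}_{n\ge1}$ is an ONB of $L^2\left(0,\tfrac12\right)$; together with $G(0,y)=0$ and $\partial_x G\!\left(\tfrac12,y\right)=0$ this forces $G(x,y)=x\wedge y$.

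Finally, substituting (\ref{eq:f2-2}) into the already-proved identity (\ref{eq:f2-2-1}) yields (\ref{eq:f2-3}),
\[
F(x-y) = 1 - x - y + 2\sum_{n=1}^\infty \frac{4\sin\big((2n-1)\pi x\big)\sin\big((2n-1)\pi y\big)}{\big(\pi(2n-1)\big)^2},
\]
the series converging uniformly on $\left[0,\tfrac12\right]^2$ by Mercer's theorem as applied in Lemma \ref{lem:mer-1} and Corollary \ref{cor:mer1}. This also exhibits the Mercer operator $T_F$ as $2T_E$ plus the integral operator with the finite-rank kernel $1-x-y$, which is the structural fact exploited in Lemmas \ref{lem:F2kernel} and \ref{lem:F2kernel2}. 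There is no real obstacle here: the only points needing care are the bookkeeping of the constants ($\tfrac{1}{8}=\sum_n ((2n-1)\pi)^{-2}=\operatorname{Trace}(T_E)$) and, in the alternative direct verification of (\ref{eq:f2-2}), the justification of term-by-term differentiation — both routine, since the substantive input, the spectral resolution of $T_E$, is already established in Theorem \ref{thm:mer1}.
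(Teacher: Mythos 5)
Your proposal is correct and follows essentially the same route as the paper, which reduces the theorem to Lemma \ref{lem:F2kernel} (the elementary identity $2(x\wedge y)=x+y-|x-y|$, hence (\ref{eq:f2-2-1})) and Lemma \ref{lem:F2kernel2} (the sine eigenfunction expansion of $x\wedge y$ from Theorem \ref{thm:mer1}). Your optional self-contained Green's-function verification of (\ref{eq:f2-2}) is a harmless addition; nothing is missing.
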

\begin{rem}
Note the trace normalization $trace\left(T_{F}\right)=\frac{1}{2}$
holds. Indeed, from (\ref{eq:f2-3}), we get 
\begin{eqnarray*}
Trace\left(T_{F}\right) & = & \int_{0}^{\frac{1}{2}}\left(1-2x+2\sum_{n=1}^{\infty}\frac{4\sin^{2}\left(\left(2n-1\right)\pi x\right)}{\left(\pi\left(2n-1\right)\right)^{2}}\right)dx\\
 & = & \frac{1}{2}-\frac{1}{4}+2\sum_{n=1}^{\infty}\frac{1}{\left(\pi\left(2n-1\right)\right)^{2}}\\
 & = & \frac{1}{2}-\frac{1}{4}+2\cdot\frac{1}{8}=\frac{1}{2};
\end{eqnarray*}
where $\frac{1}{2}$ on the RHS is the right endpoint of the interval
$\left[0,\frac{1}{2}\right]$. \end{rem}
\begin{proof}[Proof of Theorem \ref{thm:MerF2-1}]
The theorem follows from lemma \ref{lem:F2kernel} and lemma \ref{lem:F2kernel2}.\end{proof}
\begin{lem}
\label{lem:F2kernel}Consider the two integral kernels:
\begin{eqnarray}
F\left(x-y\right) & = & 1-\left|x-y\right|,\; x,y\in\Omega;\label{eq:f2-4}\\
K^{\left(E\right)}\left(x,y\right) & = & x\wedge y=\min\left(x,y\right),\; x,y\in\Omega.\label{eq:f2-5}
\end{eqnarray}
We take $\Omega=\left(0,\frac{1}{2}\right)$. Then
\begin{equation}
F\left(x-y\right)=2K^{\left(E\right)}\left(x,y\right)+1-x-y;\mbox{ and}\label{eq:f2-6}
\end{equation}
\begin{equation}
\left(F_{x}-2K_{x}^{\left(E\right)}\right)''\left(y\right)=-2\delta\left(0-y\right).\label{eq:f2-7}
\end{equation}
\end{lem}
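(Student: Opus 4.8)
The plan is to separate the two assertions. Identity (\ref{eq:f2-6}) is purely algebraic and is just a restatement of (\ref{eq:f2-2-1}): for all real $x,y$ one has $|x-y|=\max(x,y)-\min(x,y)=(x+y)-2\min(x,y)$, so for $x,y\in\Omega=(0,\tfrac12)$,
\[
F(x-y)=1-|x-y|=1-x-y+2\min(x,y)=2K^{(E)}(x,y)+1-x-y .
\]
I would simply record this and pass to (\ref{eq:f2-7}).

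For (\ref{eq:f2-7}) I would fix $x\in(0,\tfrac12)$ and view the two kernel sections $y\mapsto F_{x}(y):=F(x-y)=1-|x-y|$ and $y\mapsto K^{(E)}_{x}(y):=K^{(E)}(x,y)=x\wedge y$ as functions of $y$ on a neighbourhood of the closed interval $[0,\tfrac12]$, and compute their distributional second derivatives (L. Schwartz; cf. \lemref{distd} and \cite{Tre06}). The one step with genuine content is to make explicit the extension convention across the left endpoint $y=0$: because $K^{(E)}(x,y)=x\wedge y$ is the covariance kernel of Brownian motion started at the origin --- equivalently, by Theorem \ref{thm:mer1}, the Green's function of $-d^{2}/dy^{2}$ with a Dirichlet condition at $y=0$ and a Neumann condition at $y=\tfrac12$ --- the section $K^{(E)}_{x}$ is continued by $0$ on $\{y\le 0\}$, whereas $F_{x}$ is continued by the same formula $1-|x-y|$, which is $C^{\infty}$ near $y=0$.

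With this convention the rest is a one-line computation. By \lemref{distd}, applied to $F_{2}(t)=1-|t|$ with $t=x-y$, one gets $\partial_{y}^{2}F_{x}=-2\delta_{x}$; and the piecewise-linear function $K^{(E)}_{x}$, which is $0$ for $y<0$, equals $y$ for $0<y<x$, and equals $x$ for $y>x$, has a corner at $y=0$ (slope jumping from $0$ to $1$) and one at $y=x$ (slope jumping from $1$ to $0$), so $\partial_{y}^{2}K^{(E)}_{x}=\delta_{0}-\delta_{x}$. Subtracting,
\[
\bigl(F_{x}-2K^{(E)}_{x}\bigr)''(y)=-2\delta_{x}-2(\delta_{0}-\delta_{x})=-2\delta_{0}=-2\delta(0-y),
\]
which is (\ref{eq:f2-7}). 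This is consistent with (\ref{eq:f2-6}): on the open interval $(0,\tfrac12)$ the difference $F_{x}-2K^{(E)}_{x}$ is the affine function $1-x-y$, so its second derivative is carried by the endpoints, and only the endpoint $0$ survives --- precisely because $K^{(E)}_{x}$ vanishes to the left of $0$ while $F_{x}$ does not. The only obstacle to watch is this boundary bookkeeping; everything else is routine, and (\ref{eq:f2-7}) is exactly the input needed for Lemma \ref{lem:F2kernel2} and Theorem \ref{thm:MerF2-1}, where $T_{F}$ is realized as a rank-one perturbation of $T_{E}$.
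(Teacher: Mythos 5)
Your proof is correct and follows essentially the same route as the paper: the same algebraic identity $\left|x-y\right|=x+y-2\left(x\wedge y\right)$ gives (\ref{eq:f2-6}), and the same distributional computation $\left(F_{x}\right)''=-2\delta_{x}$, $\left(K_{x}^{\left(E\right)}\right)''=\delta_{0}-\delta_{x}$ gives (\ref{eq:f2-7}) (the paper encodes the latter via $\left(K_{x}^{\left(E\right)}\right)'=H_{0}-H_{x}$). Your explicit observation that the section $y\mapsto x\wedge y$ must be continued by $0$ across $y=0$ --- which is exactly what produces the $\delta_{0}$ term --- makes precise a convention the paper leaves implicit in its Heaviside formula.
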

\begin{proof}
A calculation yields:
\[
x\wedge y=\frac{x+y-1+F\left(x-y\right)}{2},
\]
and therefore, solving for $F\left(x-y\right)$, we get (\ref{eq:f2-6}).

To prove (\ref{eq:f2-7}), we calculate the respective Schwartz derivatives
(in the sense of distributions). Let $H_{x}=$ the Heaviside function
at $x$, with $x$ fixed. Then 
\begin{eqnarray}
\left(K_{x}^{\left(E\right)}\right)' & = & H_{0}-H_{x},\mbox{ and}\label{eq:f2-8}\\
\left(K_{x}^{\left(E\right)}\right)'' & = & \delta_{0}-\delta_{x}\label{eq:f2-9}
\end{eqnarray}
combining (\ref{eq:f2-9}) with $\left(F_{x}\right)''=-2\delta_{x}$,
we get 
\begin{equation}
\left(F_{x}-2K_{x}^{\left(E\right)}\right)''=-2\delta_{x}-2\left(\delta_{0}-\delta_{x}\right)=-2\delta_{0}\label{eq:f2-10}
\end{equation}
which is the desired conclusion (\ref{eq:f2-7}).\end{proof}
\begin{rem}
From (\ref{eq:f2-6}), we get the following formula for three integral
operators
\begin{equation}
T_{F}=2T_{E}+L,\mbox{ where}\label{eq:f2-4-1}
\end{equation}
\begin{equation}
\left(L\varphi\right)\left(x\right)=\int_{0}^{\frac{1}{2}}\varphi\left(y\right)\left(1-x-y\right)dy.\label{eq:f2-4-2}
\end{equation}
Now in Lemma \ref{lem:F2kernel}, we diagonalize $T_{E}$, but the
two Hermitian operators on the RHS in (\ref{eq:f2-4-1}), do \uline{not}
commute. But the perturbation\index{perturbation} $L$ in (\ref{eq:f2-4-1})
is still relatively harmless; it is a rank-1 operator with just one
eigenfunction: $\varphi\left(x\right)=a+bx$, where $a$ and $b$
are determined from $\left(L\varphi\right)\left(x\right)=\lambda\varphi\left(x\right)$;
and 
\begin{eqnarray*}
\left(L\varphi\right)\left(x\right) & = & \left(1-x\right)\frac{a}{2}-\frac{1}{8}\left(a+\frac{b}{3}\right)\\
 & = & \left(\frac{3}{8}a-\frac{b}{24}\right)-\left(\frac{a}{2}\right)x=\lambda\left(a+bx\right)
\end{eqnarray*}
thus the system of equations 
\[
\begin{cases}
\left(\frac{3}{8}-\lambda\right)a-\frac{1}{24}b=0\\
-\frac{1}{2}a-\lambda b=0.
\end{cases}
\]
It follows that 
\begin{eqnarray*}
\lambda & = & \frac{1}{48}\left(9+\sqrt{129}\right)\\
b & = & -\frac{1}{2}\left(\sqrt{129}-9\right)a.
\end{eqnarray*}

\end{rem}

\begin{rem}[A dichotomy for integral kernel operators]
 Note the following dichotomy for the two integral kernel-operators,
one with the kernel $L\left(x,y\right)=1-x-y$, a rank-one operator;
and the other $T_{F}$ corresponding to $F=F_{2}$, i.e., with kernel
$F\left(x\lyxmathsym{\textendash}y\right)=1-\left|x\lyxmathsym{\textendash}y\right|$.
And by contrast, $T_{F}$ is an infinite dimensional integral-kernel
operator. Denoting both the kernel $L$, and the rank-one operator,
by the same symbol, we then establish the following link between the
two integral operators: The two operators $T_{F}$ and $L$ satisfy
the identity $T_{F}=L+2T_{E}$, where $T_{E}$ is the integral kernel-operator
defined from the covariance\index{covariance} function of Brownian
motion. For more applications of rank-one perturbations, see e.g.,
\cite{Yos12,DJ10,Ion01,DRSS94,TW86}.\end{rem}
\begin{lem}
\label{lem:F2kernel2}Set 
\begin{equation}
\left(T_{E}\varphi\right)\left(x\right)=\int_{0}^{\frac{1}{2}}K^{\left(E\right)}\left(x,y\right)\varphi\left(y\right)dy,\;\varphi\in L^{2}\left(0,\tfrac{1}{2}\right),x\in\left(0,\tfrac{1}{2}\right)\label{eq:f2-11}
\end{equation}
Then $s_{n}\left(x\right):=\sin\left(\left(2n-1\right)\pi x\right)$
satisfies 
\begin{equation}
T_{E}s_{n}=\frac{1}{\left(\left(2n-1\right)\pi\right)^{2}}s_{n};\label{eq:f2-12}
\end{equation}
and we have
\begin{equation}
K^{\left(E\right)}\left(x,y\right)=\sum_{n\in\mathbb{N}}\frac{4}{\left(\left(2n-1\right)\pi\right)^{2}}\sin\left(\left(2n-1\right)\pi x\right)\sin\left(\left(2n-1\right)\pi y\right)\label{eq:f2-13}
\end{equation}
\end{lem}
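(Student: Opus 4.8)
The plan is to establish the two assertions in order: first the eigenvalue relation $T_E s_n = ((2n-1)\pi)^{-2}s_n$ by a direct computation, and then the Mercer expansion (\ref{eq:f2-13}) by combining this with the fact that $\{2s_n\}_{n\in\mathbb N}$ is an orthonormal basis for $L^2(0,\tfrac12)$.

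For (\ref{eq:f2-12}), write $K^{(E)}(x,y)=x\wedge y$ explicitly and split the $y$-integration at $y=x$:
$$(T_E s_n)(x)=\int_0^x y\sin(k_n y)\,dy+x\int_x^{1/2}\sin(k_n y)\,dy,\qquad k_n:=(2n-1)\pi.$$
Integration by parts on the first integral gives $-\tfrac{x}{k_n}\cos(k_n x)+\tfrac{1}{k_n^2}\sin(k_n x)$, and the second integral is $\tfrac{x}{k_n}\cos(k_n x)-\tfrac{x}{k_n}\cos(k_n/2)$. The key arithmetic fact is $\cos(k_n/2)=\cos((2n-1)\tfrac{\pi}{2})=0$ for every $n\in\mathbb N$ — this is precisely the Neumann condition $u'(\tfrac12)=0$ of (\ref{eq:mer-6-1}) reappearing. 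The two $\pm\tfrac{x}{k_n}\cos(k_n x)$ terms cancel, leaving $(T_E s_n)(x)=k_n^{-2}\sin(k_n x)=k_n^{-2}s_n(x)$, which is (\ref{eq:f2-12}).

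For (\ref{eq:f2-13}), note that $K^{(E)}(x,y)=\min(x,y)$ is continuous and symmetric on $[0,\tfrac12]^2$, and it is a positive definite kernel since $\min(x,y)=\int_0^{1/2}\mathbf 1_{[0,x)}(t)\mathbf 1_{[0,y)}(t)\,dt$; hence $T_E$ is a positive, trace-class operator (cf. Lemma \ref{lem:mer-1}, Corollary \ref{cor:mer1}, and the analysis in Theorem \ref{thm:mer1}). The functions $\{2\sin((2n-1)\pi x)\}_{n\in\mathbb N}$ are the normalized eigenfunctions of $-d^2/dx^2$ with boundary conditions $u(0)=0$, $u'(\tfrac12)=0$; they form an ONB of $L^2(0,\tfrac12)$ by standard Sturm–Liouville theory (the same basis $f_n=2\sin((2n-1)\pi x)$ used in Theorem \ref{thm:mer1}, with $\int_0^{1/2}\sin^2((2n-1)\pi x)\,dx=\tfrac14$ confirming the normalization constant $2$). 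Since by (\ref{eq:f2-12}) each $2s_n$ is an eigenvector of $T_E$ with eigenvalue $((2n-1)\pi)^{-2}>0$, Mercer's theorem yields the uniformly convergent expansion
$$K^{(E)}(x,y)=\sum_{n\in\mathbb N}\frac{1}{((2n-1)\pi)^2}\,(2s_n(x))(2s_n(y))=\sum_{n\in\mathbb N}\frac{4}{((2n-1)\pi)^2}\sin((2n-1)\pi x)\sin((2n-1)\pi y),$$
which is (\ref{eq:f2-13}). Alternatively, one can bypass Mercer and expand $\min(x,\cdot)$ directly in the ONB $\{2s_n\}$: its $n$-th coefficient equals $2\int_0^{1/2}\min(x,y)\sin(k_n y)\,dy=2k_n^{-2}\sin(k_n x)$ by the computation above, which reproduces the same series, and continuity of the kernel upgrades the resulting $L^2(dy)$ identity to a pointwise one.

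The only genuinely non-routine ingredient is the completeness of $\{2s_n\}$ in $L^2(0,\tfrac12)$, which is classical; everything else is bookkeeping — correctly tracking the two boundary terms and the vanishing of $\cos((2n-1)\pi/2)$ in the integration by parts, and confirming that the normalization yields the factor $4$ rather than $2$ or $1$. Invoking Mercer's theorem makes the series convergence automatic, so no delicate estimate has to be carried out by hand.
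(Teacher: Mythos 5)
Your proposal is correct and follows essentially the same route as the paper: verify the eigenvalue relation $T_E s_n = ((2n-1)\pi)^{-2}s_n$, note the normalization $\int_0^{1/2}\sin^2((2n-1)\pi x)\,dx=\tfrac14$, and conclude the expansion (\ref{eq:f2-13}) from Mercer's theorem and completeness of the sine system. The only cosmetic difference is that the paper obtains the eigenvalue equation from the distributional identity $(K_x^{(E)})''=\delta_0-\delta_x$ of Lemma \ref{lem:F2kernel} (i.e., $T_E$ inverting $-\tfrac{d^2}{dx^2}$ with the boundary conditions (\ref{eq:mer-6-1})), whereas you do the equivalent integration by parts directly; your write-up also makes explicit the completeness and Mercer steps that the paper leaves implicit.
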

\begin{proof}
Let $\Omega=\left(0,\frac{1}{2}\right)$. Setting 
\begin{equation}
s_{n}\left(x\right):=\sin\left(\left(2n-1\right)\pi x\right);\; x\in\Omega,n\in\mathbb{N}.\label{eq:f2-14}
\end{equation}
Using (\ref{eq:f2-9}), we get 
\[
\left(T_{E}s_{n}\right)\left(x\right)=\frac{1}{\left(\left(2n-1\right)\pi\right)^{2}}s_{n}\left(x\right),\; x\in\Omega,n\in\mathbb{N},
\]
where $T_{E}$ is the integral operator with kernel $K^{\left(E\right)}$,
and $s_{n}$ is as in (\ref{eq:f2-14}). Since 
\[
\int_{0}^{\frac{1}{2}}\sin^{2}\left(\left(2n-1\right)\pi x\right)dx=\frac{1}{4}
\]
the desired formula (\ref{eq:f2-13}) holds.\end{proof}
\begin{cor}
Let $T_{F}$ and $T_{E}$ be the integral operators in $L^{2}\left(0,\frac{1}{2}\right)$
defined in the lemmas; i.e., $T_{F}$ with kernel $F\left(x-y\right)$;
and $T_{E}$ with kernel $x\wedge y$. Then the selfadjoint operator
$\left(T_{F}-2T_{E}\right)^{-1}$ is well-defined, and it is the Friedrichs
extension of $-\frac{1}{2}\left(\frac{d}{dx}\right)^{-1}\Big|_{C_{c}^{\infty}\left(0,\frac{1}{2}\right)}$
as a Hermitian and semibounded operator in $L^{2}\left(0,\frac{1}{2}\right)$. 
\end{cor}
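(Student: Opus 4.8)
The plan is to turn the integral equation into a regular two-point boundary value problem for $-\tfrac12\,d^{2}/dx^{2}$ on $\left(0,\tfrac12\right)$, using the kernel identity of Lemma~\ref{lem:F2kernel} and the distributional relations $F_{x}''=-2\delta_{x}$ and $\bigl(F_{x}-2K^{(E)}_{x}\bigr)''=-2\delta_{0}$ from Lemma~\ref{lem:distd} and (\ref{eq:f2-10}). First I would establish that the operator to be inverted is injective with dense range, so that its inverse is a genuinely densely defined self-adjoint operator: by Lemma~\ref{lem:mer-1} and Mercer's theorem $T_{F}$ and $T_{E}$ are positive trace-class operators, and since $F=F_{2}$ is a bona fide continuous positive definite function with $F(0)=1$, all its Mercer eigenvalues are strictly positive, so $T_{F}$ is injective; the relation (\ref{eq:f2-4-1})--(\ref{eq:f2-4-2}) exhibits the difference $T_{F}-2T_{E}$ as the finite-rank operator $L$, which I would treat separately (its two nonzero eigenvalues coming from the explicit $2\times2$ system in the Remark following Lemma~\ref{lem:F2kernel}), so that the boundary data encoded by $L$ can be reinstated by a Krein-type resolvent correction relative to the exactly diagonalized operator $2T_{E}$ of Theorem~\ref{thm:mer1}/Lemma~\ref{lem:F2kernel2}. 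Because the inverse of a compact operator is involved, the resulting self-adjoint operator automatically has compact resolvent and discrete spectrum.

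Next I would identify that inverse as a differential operator. For $\varphi\in C_{c}^{\infty}\left(0,\tfrac12\right)$ put $u(x)=\int_{0}^{1/2}F(x-y)\varphi(y)\,dy$; applying $-\tfrac12\,d^{2}/dx^{2}$ in the sense of distributions and using $F_{x}''=-2\delta_{x}$ yields $-\tfrac12 u''=\varphi$ on $\left(0,\tfrac12\right)$, and since $\varphi$ is smooth, $u\in C^{2}$. An integration by parts shows $-\tfrac12\,d^{2}/dx^{2}$ is Hermitian and non-negative (semibounded) on $C_{c}^{\infty}\left(0,\tfrac12\right)$, so it has a Friedrichs extension; the Mercer construction produces a self-adjoint extension of it, and to match the two I would read off the boundary conditions attached to the range of the Mercer operator — $u$ is affine near each endpoint with slopes fixed by $\int_{0}^{1/2}\varphi$, which forces two linear relations among $u(0),u(\tfrac12),u'(0),u'(\tfrac12)$ — and then verify that the associated closed quadratic form is the form closure of $C_{c}^{\infty}\left(0,\tfrac12\right)$ in the graph norm of $\bigl(I-\tfrac12\,d^{2}/dx^{2}\bigr)^{1/2}$, which is the defining property of the Friedrichs extension. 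Here the finite-rank term $L$ of (\ref{eq:f2-4-1}) is exactly what shifts the naive Dirichlet conditions coming from $2T_{E}$ to the conditions appropriate to $T_{F}$.

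The step I expect to be the real obstacle is this last identification: pinning down precisely the boundary conditions cut out by the range of the Mercer operator, and checking that the self-adjoint realization so obtained coincides with the Friedrichs extension of the minimal operator (equivalently, that its form domain is $\overline{C_{c}^{\infty}\left(0,\tfrac12\right)}$ in the form norm), while correctly absorbing the rank-two discrepancy between $T_{F}$ and $2T_{E}$. Everything else — injectivity, the distributional ODE on the open interval, semiboundedness, and the reduction to the solvable kernel $K^{(E)}(x,y)=x\wedge y$ with eigensystem $\{\sin((2n-1)\pi x)\}$, $\{((2n-1)\pi)^{-2}\}$ already supplied by Theorem~\ref{thm:mer1} — is routine bookkeeping once the boundary analysis is in place.
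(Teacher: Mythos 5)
Your plan never actually inverts the operator named in the statement, and the reason is visible inside your own first paragraph: you correctly cite (\ref{eq:f2-4-1})--(\ref{eq:f2-4-2}) to write $T_{F}-2T_{E}=L$, the finite-rank integral operator with kernel $1-x-y$. A finite-rank operator on the infinite-dimensional space $L^{2}\left(0,\tfrac{1}{2}\right)$ has an infinite-dimensional null space, so it admits no densely defined inverse at all; the injectivity you establish (strict positivity of the Mercer eigenvalues) is injectivity of $T_{F}$, not of $T_{F}-2T_{E}$, and no ``Krein-type resolvent correction'' can manufacture an inverse for $L$. The same silent switch of target occurs in your second paragraph: the distributional identity you use, $-\tfrac{1}{2}u''=\varphi$ for $u=T_{F}\varphi$, comes from $F_{x}''=-2\delta_{x}$ and identifies $T_{F}^{-1}$ as an extension of $-\tfrac{1}{2}\bigl(\tfrac{d}{dx}\bigr)^{2}\big|_{C_{c}^{\infty}(0,1/2)}$, whereas $L\varphi$ is an affine function of $x$ and is annihilated by $\bigl(\tfrac{d}{dx}\bigr)^{2}$ on the open interval (equivalently, $(T_{F}\varphi)''=-2\varphi$ and $(T_{E}\varphi)''=-\varphi$, so the difference has vanishing second derivative). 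What your sketch would deliver, once completed, is therefore a statement about $T_{F}^{-1}$ --- the content of (\ref{eq:F2-D-ext}) and of the summary in \S\ref{sub:green} --- not about $\left(T_{F}-2T_{E}\right)^{-1}$. You should have flagged this inconsistency in the statement rather than deferring it.

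For comparison, the paper's own proof is two lines: it reads (\ref{eq:f2-7}) as $\left(T_{F}\varphi-2T_{E}\varphi\right)''=-2\varphi$ and then invokes ``the Friedrichs extension is given by the Dirichlet boundary condition.'' Your instinct that the genuinely hard step is pinning down the boundary conditions cut out by the range of the Mercer operator and matching the resulting realization with the form closure of $C_{c}^{\infty}\left(0,\tfrac{1}{2}\right)$ is sound; that verification is not carried out in the paper either, and it is not innocuous --- for instance $u=T_{F}\varphi$ satisfies $u(0)=\int_{0}^{1/2}(1-y)\varphi(y)\,dy$, which need not vanish, so membership of $\mathrm{Ran}(T_{F})$ in the Dirichlet form domain is exactly what must be examined. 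But before that question can even be posed, the operator to be inverted has to be corrected to $T_{F}$ itself; as written, both your proposal and the two-line argument founder on the fact that $T_{F}-2T_{E}$ is the non-injective rank-two operator $L$.
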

\index{operator!selfadjoint}\index{Friedrichs extension}\index{operator!semibounded}
\begin{proof}
Formula (\ref{eq:f2-7}) in the lemma translates into 
\begin{equation}
\left(T_{F}\varphi-2T_{E}\varphi\right)''=-2\varphi\label{eq:f2-15}
\end{equation}
for all $\varphi\in C_{c}^{\infty}\left(0,\frac{1}{2}\right)$. Hence
$\left(T_{F}-2T_{E}\right)^{-1}$ is well-defined as an unbounded
selfadjoint operator in $L^{2}\left(0,\frac{1}{2}\right)$; and 
\[
\left(T_{F}-2T_{E}\right)^{-1}\varphi=-\frac{1}{2}\varphi'',\;\forall\varphi\in C_{c}^{\infty}\left(0,\tfrac{1}{2}\right).
\]
Since the Friedrichs extension is given by Dirichlet boundary condition
in $L^{2}\left(0,\frac{1}{2}\right)$, the result follows. \end{proof}
\begin{thm}
\label{thm:shannon}Let $F:\left(-1,1\right)\rightarrow\mathbb{C}$
be a fixed continuous and positive definite function, and let 
\begin{equation}
T_{F}:L^{2}\left(0,1\right)\rightarrow\mathscr{H}_{F}\label{eq:m-4-1}
\end{equation}
be the corresponding Mercer operator, where
\begin{itemize}
\item $L^{2}\left(0,1\right)=$ the $L^{2}$-space of $\left[0,1\right]$
w.r.t. Lebesgue measure restricted to $\left[0,1\right]$; and 
\item $\mathscr{H}_{F}$ is the RKHS from $F$.
\end{itemize}

Let $\mu\in Ext\left(F\right)$; then the range of $T_{F}$ as a subspace
of $\mathscr{H}_{F}$ admits the following representation:
\begin{equation}
Ran\left(T_{F}\right)=\left\{ \sum_{n\in\mathbb{Z}}c_{n}f_{n}\left(x\right)\right\} \quad\mbox{where}\label{eq:m-4-2}
\end{equation}

\begin{itemize}
\item $\left(c_{n}\right)\in l^{2}\left(\mathbb{Z}\right)$, i.e., $\sum_{n\in\mathbb{Z}}\left|c_{n}\right|^{2}<\infty$;
and
\item $\left\{ f_{n}\:|\: n\in\mathbb{Z}\right\} \subset\mathscr{H}_{F}$
is given by 
\begin{equation}
f_{n}\left(x\right)=\int_{\mathbb{R}}e^{i\lambda x}\frac{\sin\pi\left(\lambda-n\right)}{\pi\left(\lambda-n\right)}e^{-i\pi\left(\lambda-n\right)/2}d\mu\left(\lambda\right)\label{eq:m-4-3}
\end{equation}
for all $n\in\mathbb{Z}$, and all $x\in\left[0,1\right]$.
\end{itemize}

Setting 
\begin{equation}
Sha\left(\xi\right)=e^{i\xi/2}\frac{\sin\xi}{\xi},\mbox{\,\  then}\label{eq:m-5-1}
\end{equation}
\begin{equation}
f_{n}\left(x\right)=\int_{\mathbb{R}}Sha\left(\pi\left(\lambda-n\right)\right)e^{i\lambda x}d\mu\left(x\right).\label{eq:m-5-2}
\end{equation}

\end{thm}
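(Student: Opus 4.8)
The plan is to identify $\mathrm{Ran}(T_F)$ inside $\mathscr{H}_F$ by pushing the problem through the isometry $T_\mu\colon\mathscr{H}_F\to L^2(\widehat{G},\mu)=L^2(\mathbb{R},\mu)$ of Corollary \ref{cor:lcg-isom}, which sends $F_\varphi\mapsto\widehat\varphi$ and whose adjoint is $T_\mu^*f=(fd\mu)^\vee$. Since $\mu\in Ext(F)$, we have $F(x)=\widehat{d\mu}(x)$ on $(-1,1)$, and $T_\mu$ is isometric with $T_\mu^*$ mapping $L^2(\mathbb{R},\mu)$ onto $\mathscr{H}_F$. The Mercer operator $T_F$ acts as $(T_F\varphi)(x)=\int_0^1\varphi(y)F(x-y)\,dy$ for $\varphi\in L^2(0,1)$; using $F=\widehat{d\mu}$ and Fubini, $(T_F\varphi)(x)=\int_{\mathbb{R}}e^{i\lambda x}\widehat\varphi(\lambda)\,d\mu(\lambda)=T_\mu^*(\widehat\varphi\restr{\Omega})$, i.e. $T_F=T_\mu^*\circ M$, where $M\colon L^2(0,1)\to L^2(\mathbb{R},\mu)$ is $\varphi\mapsto\widehat\varphi$ (Fourier transform of the zero-extension of $\varphi$ to $\mathbb{R}$). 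Hence $\mathrm{Ran}(T_F)=T_\mu^*\bigl(\overline{\mathrm{Ran}(M)}\bigr)$ up to the usual closure subtleties, and the whole question reduces to describing the closed subspace $\overline{\mathrm{Ran}(M)}\subset L^2(\mathbb{R},\mu)$.

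The second step is to exhibit a concrete orthonormal-type basis of $\overline{\mathrm{Ran}(M)}$. The functions $\varphi\in L^2(0,1)$ have Fourier transforms $\widehat\varphi$ that are exactly the band-limited functions in the Paley--Wiener sense: $\{\widehat\varphi:\varphi\in L^2(0,1)\}$ is the Paley--Wiener space $PW_{[0,1]}$ of entire functions of exponential type whose restriction to $\mathbb{R}$ is $L^2$. The classical Shannon sampling theorem gives an orthonormal basis of $PW_{[0,1]}$ (with the normalization matching the interval $[0,1]$, so sampling at the integers) consisting of the shifted sinc kernels; the extra phase factor $e^{-i\pi(\lambda-n)/2}$ in \eqref{eq:m-4-3} is exactly the modulation that centers the band on $[0,1]$ rather than $[-\tfrac12,\tfrac12]$, i.e. it records that $\Omega=(0,1)$ is not symmetric about the origin. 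Writing $g_n(\lambda):=\mathrm{Sha}(\pi(\lambda-n))=e^{i\pi(\lambda-n)/2}\dfrac{\sin\pi(\lambda-n)}{\pi(\lambda-n)}$, these $\{g_n\}_{n\in\mathbb{Z}}$ form an orthonormal basis for $\overline{\mathrm{Ran}(M)}$ inside $L^2(\mathbb{R},d\lambda)$; one then transports this through $T_\mu^*$. Setting $f_n:=T_\mu^*(g_n)=(g_n\,d\mu)^\vee\restr{\Omega}$ gives precisely \eqref{eq:m-4-3}–\eqref{eq:m-5-2}, and since $T_\mu^*$ is a coisometry onto $\mathscr{H}_F$, every element of $\mathrm{Ran}(T_F)$ is of the form $\sum_{n\in\mathbb{Z}}c_n f_n$ with $(c_n)\in\ell^2(\mathbb{Z})$, as claimed in \eqref{eq:m-4-2}. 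Finally, \eqref{eq:m-5-1}–\eqref{eq:m-5-2} are just the renamed compact form of \eqref{eq:m-4-3}, obtained by the change of variable $\xi=\pi(\lambda-n)$.

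The main technical obstacle I expect is bookkeeping around two closure/density issues rather than anything deep. First, one must be careful that $\mathrm{Ran}(T_F)$ (Mercer operator acting on all of $L^2(0,1)$, not just $C_c^\infty$) equals $T_\mu^*$ applied to the \emph{closed} Paley--Wiener subspace $PW_{[0,1]}\subset L^2(\mathbb{R},\mu)$; this uses that $M$ has closed range (it is, after restriction to its coimage, a unitary onto $PW_{[0,1]}$) together with boundedness of $T_\mu^*$, and one should check that the inclusion $PW_{[0,1]}\subset L^2(\mathbb{R},\mu)$ makes sense, which follows since $\mu$ is a finite measure and $PW_{[0,1]}\subset L^\infty\cap L^2(\mathbb{R},d\lambda)$. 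Second, one must verify that the $\ell^2$-coefficient description survives: because $T_\mu^*$ need not be injective on $L^2(\mathbb{R},\mu)$, the map $(c_n)\mapsto\sum c_n f_n$ may not be isometric, but it is still surjective onto $\mathrm{Ran}(T_F)$ with bounded coefficients, which is all \eqref{eq:m-4-2} asserts. Everything else — the Fubini interchange, the Paley--Wiener identification, the Shannon ONB, and the phase-factor computation — is routine harmonic analysis and needs only careful tracking of normalizations for the interval $(0,1)$.
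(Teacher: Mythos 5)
Your proposal is correct and follows essentially the same route as the paper: both rest on the factorization $(T_F\varphi)(x)=\int_{\mathbb R}e^{i\lambda x}\widehat\varphi(\lambda)\,d\mu(\lambda)$ together with the Shannon sampling expansion of the band-limited function $\widehat\varphi$ in the shifted sinc system, with the $\ell^2$ coefficients coming from Parseval for the samples $\widehat\varphi(n)$. Your operator-theoretic packaging (writing $T_F=T_\mu^*\circ M$ and identifying $\mathrm{Ran}(M)$ with the Paley--Wiener space) is just a more abstract phrasing of the same argument; note also that $M$ maps $L^2(0,1)$ \emph{onto} $PW_{[0,1]}$, so the closure subtleties you flag do not actually arise.
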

\begin{proof}
We first show $\subseteq$ in (\ref{eq:m-4-2}). If $\varphi\in L^{2}\left(0,1\right)$,
then we may apply Shannon's sampling theorem \cite{KT09} to \index{Shannon sampling}
\begin{equation}
\widehat{\varphi}\left(\lambda\right)=\int_{0}^{1}e^{-i\lambda x}\varphi\left(x\right)dx.\label{eq:m-4-4}
\end{equation}
Hence (Shannon sampling), 
\begin{eqnarray}
\widehat{\varphi}\left(\lambda\right) & = & \sum_{n\in\mathbb{Z}}\widehat{\varphi}\left(n\right)\frac{\sin\pi\left(\lambda-n\right)}{\pi\left(\lambda-n\right)}e^{-i\pi\left(\lambda-n\right)/2}\nonumber \\
 & = & \sum_{n\in\mathbb{Z}}\widehat{\varphi}\left(n\right)Sha\left(\pi\left(\lambda-n\right)\right)e^{-i\pi\left(\lambda-n\right)/2}\quad\mbox{where}\label{eq:m-4-5}
\end{eqnarray}
\[
\int_{0}^{1}\left|\varphi\left(x\right)\right|^{2}dx=\sum_{n\in\mathbb{Z}}\left|\widehat{\varphi}\left(n\right)\right|^{2}
\]
where the kernel on the RKS in (\ref{eq:m-4-5}) is the Shannon integral
kernel.

But we already proved that 
\begin{equation}
\left(T_{F}\varphi\right)\left(x\right)=\int_{\mathbb{R}}e^{i\lambda x}\widehat{\varphi}\left(\lambda\right)d\mu\left(\lambda\right)\label{eq:m-4-6}
\end{equation}
holds for all $\varphi\in L^{2}\left(0,1\right)$, $x\in\left[0,1\right]$.
Now substituting (\ref{eq:m-4-5}) into the RHS of (\ref{eq:m-4-6}),
we arrive at the desired conclusion (\ref{eq:m-4-2}). The interchange
of integration and summation is justified by (\ref{eq:m-4-6}) and
Fubini.

It remains to prove $\supseteq$ in (\ref{eq:m-4-2}). Let $\left(c_{n}\right)\in l^{2}\left(\mathbb{Z}\right)$
be given. By Parseval, it follows that 
\[
\varphi\left(x\right)=\sum_{n\in\mathbb{Z}}c_{n}e^{i2\pi nx}\in L^{2}\left(0,1\right),
\]
and that (\ref{eq:m-4-6}) holds. Now the same argument (as above)
with Fubini proves that 
\[
\left(T_{F}\varphi\right)\left(x\right)=\sum_{n\in\mathbb{Z}}c_{n}f_{n}\left(x\right),\; x\in\left[0,1\right].
\]
\end{proof}
\begin{example}
Let $F=F_{3}$ from Table \ref{tab:F1-F6}, i.e., $F\left(x\right):=e^{-\left|x\right|}$,
$x\in\left(-1,1\right)$; then the generating function system $\left\{ f_{n}\right\} _{n\in\mathbb{Z}}$
in $\mathscr{H}_{F}$ (from Shannon sampling) is as follows:

\begin{eqnarray*}
\Re\left\{ f_{n}\right\} \left(x\right) & = & \frac{e^{x-1}+e^{-x}-2\cos\left(2\pi nx\right)}{1+\left(2\pi n\right)^{2}},\;\mbox{and}\\
\Im\left\{ f_{n}\right\} \left(x\right) & = & \frac{\left(e^{x-1}-e^{-x}\right)2\pi n-2\sin\left(2\pi nx\right)}{1+\left(2\pi n\right)^{2}},\;\forall n\in\mathbb{Z},x\in\left[0,1\right].
\end{eqnarray*}
\end{example}
\begin{rem}
While this system is explicit, it has a drawback compared to the eigenfunctions
for the Mercer operator, in that $\left\{ f_{n}\right\} _{n\in\mathbb{Z}}$
is \uline{not} orthogonal. The boundary values are as in Table
\ref{tab:shan}. 
\end{rem}
\renewcommand{\arraystretch}{2}

\begin{table}
\begin{tabular}{|c|c|c|}
\hline 
 & $\Re\left\{ f_{n}\right\} \left(x\right)$ & $\Im\left\{ f\right\} \left(x\right)$\tabularnewline
\hline 
$x=0$ & $\dfrac{e^{-1}+1-2}{1+\left(2\pi n\right)^{2}}$ & $-\dfrac{2\pi n\left(1-e^{-1}\right)}{1+\left(2\pi n\right)^{2}}$\tabularnewline
\hline 
$x=1$ & $\dfrac{e^{-1}+1-2}{1+\left(2\pi n\right)^{2}}$ & $\dfrac{2\pi n\left(1-e^{-1}\right)}{1+\left(2\pi n\right)^{2}}$\tabularnewline
\hline 
\end{tabular}

\bigskip{}
\protect\caption{\label{tab:shan}Boundary values of the Shannon functions, s.t. $\Re\left\{ f_{n}\right\} \left(1\right)=\Re\left\{ f_{n}\right\} \left(0\right)$,
and $\Im\left\{ f_{n}\right\} \left(1\right)=-\Im\left\{ f_{n}\right\} \left(0\right)$. }

\end{table}

\renewcommand{\arraystretch}{1}
\begin{cor}
\label{cor:shan}Let $F:\left(-1,1\right)\rightarrow\mathbb{C}$ be
continuous and p.d., $F\left(0\right)=1$, and let $\mu\in\mathscr{M}_{+}\left(\mathbb{R}\right)$;
then the following two conditions are equivalent:
\begin{enumerate}
\item \label{enu:m-6-1}$\mu\in Ext\left(F\right)$
\item \label{enu:m-6-2}For all $x\in\left(-1,1\right)$, 
\begin{equation}
\sum_{n\in\mathbb{Z}}\int_{\mathbb{R}}e^{i\lambda x}Sha\left(\pi\left(\lambda-n\right)\right)d\mu\left(\lambda\right)=F\left(x\right).\label{eq:m-6-1}
\end{equation}

\end{enumerate}
\end{cor}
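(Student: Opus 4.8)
The plan is to reduce Corollary~\ref{cor:shan} to a single pointwise identity on the Fourier side and then quote Bochner's theorem. By Lemma~\ref{lem:lcg-Bochner} (applied with $G=\mathbb{R}$) and the convention in (\ref{eq:ext-1-4}), condition (\ref{enu:m-6-1}) is, by definition, the statement that
\[
\widehat{\mu}(x)=\int_{\mathbb{R}}e^{i\lambda x}\,d\mu(\lambda)=F(x)\qquad\text{for all }x\in(-1,1)
\]
(and, since $F(0)=1$, any such $\mu$ automatically has $\mu(\mathbb{R})=1$, consistent with $\mu\in\mathscr{M}_{+}(\mathbb{R})$). Hence it suffices to prove that for \emph{every} finite positive Borel measure $\mu$ on $\mathbb{R}$ and every $x\in(-1,1)$,
\[
\sum_{n\in\mathbb{Z}}\int_{\mathbb{R}}e^{i\lambda x}\,Sha(\pi(\lambda-n))\,d\mu(\lambda)=\int_{\mathbb{R}}e^{i\lambda x}\,d\mu(\lambda),
\]
the series being read as the symmetric limit $\lim_{N\to\infty}\sum_{|n|\le N}$. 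Granting this, the left-hand side of (\ref{eq:m-6-1}) is precisely $\widehat{\mu}(x)$, so (\ref{enu:m-6-2}) holds iff $\widehat{\mu}=F$ on $(-1,1)$ iff (\ref{enu:m-6-1}) holds.

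The first step toward the displayed identity is the \emph{Shannon partition of unity}: $\lim_{N\to\infty}\sum_{|n|\le N}Sha(\pi(\lambda-n))=1$ for every $\lambda\in\mathbb{R}$. For $\lambda\notin\mathbb{Z}$, using $Sha(\xi)=e^{i\xi/2}\sin\xi/\xi$ together with $\sin\pi(\lambda-n)=(-1)^{n}\sin\pi\lambda$ and $e^{i\pi(\lambda-n)/2}=e^{i\pi\lambda/2}(-i)^{n}$, one gets
\[
\sum_{|n|\le N}Sha(\pi(\lambda-n))=\frac{e^{i\pi\lambda/2}\sin\pi\lambda}{\pi}\sum_{|n|\le N}\frac{i^{n}}{\lambda-n},
\]
and the symmetric partial sums of $\sum_{n}i^{n}/(\lambda-n)$ converge to $\pi e^{-i\pi\lambda/2}/\sin\pi\lambda$; this is the Mittag--Leffler expansion $\sum_{n}e^{2\pi in\beta}/(\lambda-n)=\pi e^{i\pi\lambda(2\beta-1)}/\sin\pi\lambda$ ($0<\beta<1$) evaluated at $\beta=\tfrac14$, or equivalently the classical Whittaker--Shannon interpolation formula for the bandlimited function $\lambda\mapsto e^{-i\pi\lambda/2}$, whose integer samples are $(-i)^{n}$. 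So the product is $1$; for $\lambda\in\mathbb{Z}$ only the $n=\lambda$ term survives, with value $Sha(0)=1$.

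The second step is to move $\sum_{n}$ inside $\int d\mu$, which I would do by dominated convergence applied to the partial sums $S_{N}(\lambda):=\sum_{|n|\le N}Sha(\pi(\lambda-n))$: these converge pointwise to $1$ by the previous step, so $e^{i\lambda x}S_{N}(\lambda)\to e^{i\lambda x}$ pointwise, and one only needs a $\mu$-integrable majorant. The crux of the whole argument is therefore the \emph{uniform} bound $\sup_{N,\lambda}|S_{N}(\lambda)|<\infty$ --- then the constant majorant is $\mu$-integrable because $\mu$ is finite. Note that $\sum_{n}|Sha(\pi(\lambda-n))|$ \emph{diverges}, so Tonelli is unavailable; the point is that the \emph{signed} partial sums stay bounded even though the truncated-Shannon Lebesgue constants grow like $\log N$. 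I would get $\sup_{N,\lambda}|S_{N}(\lambda)|<\infty$ by summing the Dirichlet kernel to obtain
\[
S_{N}(\lambda)=e^{i\pi\lambda/2}\int_{-1/2}^{1/2}e^{2\pi i\lambda t}\,\frac{\sin\big((N+\tfrac12)(\tfrac{\pi}{2}+2\pi t)\big)}{\sin(\tfrac{\pi}{4}+\pi t)}\,dt,
\]
then splitting off the single simple pole of $1/\sin(\tfrac{\pi}{4}+\pi t)$ at $t=-\tfrac14$: the smooth remainder contributes $O(1)$ uniformly in $N,\lambda$, while the polar part reduces, after expanding the oscillatory factor into exponentials, to bounded values of the sine integral $\mathrm{Si}$. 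With $\sup_{N,\lambda}|S_{N}(\lambda)|<\infty$ in hand, dominated convergence gives $\int e^{i\lambda x}S_{N}\,d\mu\to\int e^{i\lambda x}\,d\mu$, which is the identity of the first paragraph, and the corollary follows.

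I expect the uniform boundedness of the signed Shannon partial sums to be the only genuine obstacle; the Bochner reduction and the partition-of-unity computation are routine (and the identity is exactly the measure-theoretic shadow of the sampling representation in Theorem~\ref{thm:shannon}). If one prefers to sidestep that estimate, one can first prove the displayed identity for $\mu$ of compact support in $[-R,R]$ --- where $\sup_{N,\,|\lambda|\le R}|S_{N}(\lambda)|<\infty$ follows from the elementary fact that, for $N>2R$, $1-S_{N}(\lambda)=\tfrac{\sin\pi\lambda}{\pi}\sum_{|n|>N}i^{n}/(\lambda-n)$ together with a Dirichlet-test tail estimate (using the cancellation in $i^{n}$ and monotonicity of $|\lambda-n|^{-1}$) yields $|1-S_{N}(\lambda)|\le C/(N+1-R)$ --- and then pass to general finite $\mu$ by a truncation argument; I would use whichever version is cleaner to write out in full.
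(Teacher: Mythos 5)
Your proposal is correct, and its overall shape matches the paper's: both directions are reduced to the single identity $\sum_{n}\int e^{i\lambda x}Sha(\pi(\lambda-n))\,d\mu(\lambda)=\widehat{d\mu}(x)$, after which $\mu\in Ext(F)\Leftrightarrow\widehat{d\mu}=F$ on $(-1,1)$ does the rest. Where you genuinely diverge is in how that identity is established. The paper disposes of it in one line, citing ``Parseval on $L^{2}$'' and the proof of Theorem \ref{thm:shannon} (i.e.\ the Whittaker--Shannon expansion of $\widehat{\varphi}$ for $\varphi\in L^{2}(0,1)$); you instead prove it directly from the Shannon partition of unity $\sum_{n}Sha(\pi(\lambda-n))=1$ (your computation reducing this to $\sum_{n}i^{n}/(\lambda-n)=\pi e^{-i\pi\lambda/2}/\sin\pi\lambda$ is correct), and --- importantly --- you isolate the analytic point the paper glosses over: the series is not absolutely convergent, so Tonelli is unavailable and the interchange of $\sum_{n}$ with $\int d\mu$ must be justified by dominated convergence against the uniform bound $\sup_{N,\lambda}|S_{N}(\lambda)|<\infty$ on the symmetric partial sums. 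That bound is true and your Dirichlet-kernel/Dirichlet-test sketch for it is sound (near $n\approx\lambda$ the factor $\sin\pi\lambda$ tames the pole, and away from it the cancellation in $i^{n}$ gives an $O(1)$ tail), so your version is the more rigorous of the two, at the cost of some extra classical analysis. One caveat on your proposed shortcut at the end: the ``compact support plus truncation'' alternative does not actually sidestep the uniform estimate, since controlling $\int_{|\lambda|>R}e^{i\lambda x}S_{N}\,d\mu$ uniformly in $N$ still requires a bound on $S_{N}(\lambda)$ in the region $R<|\lambda|\le 2N$; stick with your primary route. You are also right to fix a summation convention (symmetric partial sums) for \eqref{eq:m-6-1}; the statement needs one, and yours is the one consistent with the sampling-theorem origin of the formula.
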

\begin{proof}
In the assertion (\ref{enu:m-6-2}) on the LHS, we use ``$Sha$''
to denote Shannon's sampling kernel; see (\ref{eq:m-5-1})-(\ref{eq:m-5-2}).

The implication (\ref{enu:m-6-2})$\Longrightarrow$(\ref{enu:m-6-1})
follows since $\mbox{LHS}_{\left(\ref{eq:m-6-1}\right)}$ is a continuous
p.d. function defined on all of $\mathbb{R}$; and so it is an extension
as required. It remains to show that $\mbox{LHS}_{\left(\ref{eq:m-6-1}\right)}=\widehat{d\mu}$
(Bochner transform), but his follows from the proof of Theorem \ref{thm:shannon}.

The argument for (\ref{enu:m-6-1})$\Longrightarrow$(\ref{enu:m-6-2})
is as follows: Let $x\in\left(-1,1\right)$; then 
\begin{eqnarray*}
F\left(x\right) & = & \sum_{n\in\mathbb{Z}}\int_{\mathbb{R}}e^{i\lambda x}Sha\left(\pi\left(\lambda-n\right)\right)d\mu\left(\lambda\right)\quad\mbox{(Parseval on \ensuremath{L^{2}})}\\
 & = & \widehat{d\mu}\left(x\right);
\end{eqnarray*}
which is (\ref{enu:m-6-2}).
\end{proof}
In the remaining of this section, we turn to domains in Lie groups.
\begin{rem}
\label{rem:mer1}A trace class Mercer operator may be defined for
any continuous p.d. function $F:\Omega^{-1}\cdot\Omega\rightarrow\mathbb{C}$
where $\Omega\subset G$ is a subset in a Lie group satisfying the
following:
\begin{enumerate}[label=B\arabic{enumi}.,ref=B\arabic{enumi}]
\item \label{enu:mer1} $\Omega\neq\phi$,
\item \label{enu:mer2}$\Omega$ is open and connected,
\item \label{enu:mer3}the closure $\overline{\Omega}$ is compact,
\item \label{enu:mer4}the boundary of $\overline{\Omega}$ has Haar measure
zero.
\end{enumerate}
\begin{flushleft}
We then set 
\begin{equation}
\left(T_{F}\varphi\right)\left(x\right)=\int_{\Omega}\varphi\left(y\right)F\left(y^{-1}x\right)dy\label{eq:mer0}
\end{equation}
where $dy$ is the restriction of Haar measure on $G$ to the subset
$\Omega$, or equivalently to $\overline{\Omega}$.
\par\end{flushleft}
\end{rem}
\index{measure!Haar}

Note that with Lemma \ref{lem:F-bd} and assumption \ref{enu:mer3}
in Remark \ref{rem:mer1}, we conclude that 
\begin{equation}
\mathscr{H}_{F}\subseteq C\left(\overline{\Omega}\right)\subset L^{2}\left(\Omega\right)\label{eq:m-3-1}
\end{equation}
so it is natural to ask: ``What is the orthogonal complement of $\mathscr{H}_{F}$
in the larger Hilbert space $L^{2}\left(\Omega\right)$?'' The answer
is as follows:
\begin{cor}
We have 
\begin{equation}
L^{2}\left(\Omega\right)\ominus\mathscr{H}_{F}=Ker\left(T_{F}\right)\label{eq:m-3-2}
\end{equation}
where $T_{F}$ is the Mercer operator. As an operator in $L^{2}\left(\Omega\right)$,
$T_{F}$ takes the form:

\[ 
T_{F} = \begin{blockarray}{ccc}
\mathscr{H}_{F} & Ker(T_{F}) &   \\
\begin{block}{(cc)c}
  T_{F} & 0 & \mathscr{H}_{F} \\
  0 & 0 &  Ker(T_{F}) \\ 
\end{block} 
\end{blockarray} 
\]\end{cor}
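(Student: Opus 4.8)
The plan is to reduce the statement to the elementary Hilbert-space identity $\ker(T^{*})=(\operatorname{Ran}T)^{\perp}$, applied to the (bounded, self-adjoint) Mercer operator $T_{F}$ on $L^{2}(\Omega)$, together with the observation that $\mathscr{H}_{F}$ and $\operatorname{Ran}(T_{F})$ have the same $L^{2}$-closure. First I would record the basic properties of $T_{F}$ as an operator on $L^{2}(\Omega)$: it is trace class, hence bounded, by Lemma \ref{lem:mer-1}; it is positive because $F$ is positive definite; and it is self-adjoint because its integral kernel $K(x,y)=F(y^{-1}x)$ is Hermitian, $\overline{K(y,x)}=\overline{F(x^{-1}y)}=F(y^{-1}x)=K(x,y)$, using the group analogue $\overline{F(g^{-1})}=F(g)$ (for $g\in\Omega^{-1}\Omega$) of the listed identity $\overline{F(x)}=F(-x)$. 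From self-adjointness, $\ker(T_{F})=\operatorname{Ran}(T_{F})^{\perp}=\bigl(\overline{\operatorname{Ran}(T_{F})}^{\,L^{2}}\bigr)^{\perp}$, all complements taken in $L^{2}(\Omega)$.

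The key step is to sandwich
\[
\operatorname{Ran}(T_{F})\ \subseteq\ \mathscr{H}_{F}\ \subseteq\ \overline{\operatorname{Ran}(T_{F})}^{\,L^{2}}.
\]
For the left inclusion, $T_{F}\varphi=F_{\varphi}\in\mathscr{H}_{F}$ for $\varphi\in C_{c}(\Omega)$ by the very construction of $\mathscr{H}_{F}$; for general $\varphi\in L^{2}(\Omega)$ one approximates by $C_{c}$-functions and uses $\Vert T_{F}\varphi\Vert_{\mathscr{H}_{F}}^{2}=\langle\varphi,T_{F}\varphi\rangle_{L^{2}}\le\Vert T_{F}\Vert_{\mathrm{op}}\Vert\varphi\Vert_{L^{2}}^{2}$ to see the images stay Cauchy in $\mathscr{H}_{F}$, which is exactly the mapping property $T_{F}\colon L^{2}(\Omega)\to\mathscr{H}_{F}$. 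For the right inclusion, I would invoke (\ref{eq:m-3-1}) and Lemma \ref{lem:F-bd}: the reproducing property gives $\Vert\xi\Vert_{\infty}\le\sqrt{F(e)}\,\Vert\xi\Vert_{\mathscr{H}_{F}}$, and since $\vert\Omega\vert<\infty$ by \ref{enu:mer3} the inclusion $\mathscr{H}_{F}\hookrightarrow C(\overline{\Omega})\hookrightarrow L^{2}(\Omega)$ is continuous; as $\mathscr{H}_{F}$ is the $\mathscr{H}_{F}$-norm closure of $\operatorname{span}\{F_{\varphi}\}=\operatorname{span}\{T_{F}\varphi\}$, this continuous inclusion carries $\mathscr{H}_{F}$ into the $L^{2}$-closure of $\operatorname{Ran}(T_{F})$.

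Given the sandwich, $\overline{\mathscr{H}_{F}}^{\,L^{2}}=\overline{\operatorname{Ran}(T_{F})}^{\,L^{2}}=\ker(T_{F})^{\perp}$, and taking orthogonal complements once more yields $L^{2}(\Omega)\ominus\mathscr{H}_{F}:=\mathscr{H}_{F}^{\perp}=\bigl(\ker(T_{F})^{\perp}\bigr)^{\perp}=\ker(T_{F})$, which is (\ref{eq:m-3-2}). For the matrix form, relative to the orthogonal splitting $L^{2}(\Omega)=\overline{\mathscr{H}_{F}}^{\,L^{2}}\oplus\ker(T_{F})$ the operator $T_{F}$ annihilates $\ker(T_{F})$ and sends $\overline{\mathscr{H}_{F}}^{\,L^{2}}$ into $\operatorname{Ran}(T_{F})\subseteq\mathscr{H}_{F}\subseteq\overline{\mathscr{H}_{F}}^{\,L^{2}}$, so it is block-diagonal with blocks $T_{F}$ and $0$, i.e.\ $\left(\begin{smallmatrix}T_{F}&0\\0&0\end{smallmatrix}\right)$, the displayed array.

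I do not expect a serious obstacle here; the one point that needs care is the interplay of the two topologies — that ``$L^{2}(\Omega)\ominus\mathscr{H}_{F}$'' must be read as the $L^{2}$-orthogonal complement of the (in general non-$L^{2}$-closed) subspace $\mathscr{H}_{F}$, which coincides with the complement of its $L^{2}$-closure, and that this closure is precisely $\overline{\operatorname{Ran}T_{F}}$. Secondarily, one must keep the group/left-Haar-measure bookkeeping straight so that $T_{F}\varphi$ is genuinely the generator $F_{\varphi}$ of $\mathscr{H}_{F}$ (and the kernel $F(y^{-1}x)$ is Hermitian), but that is routine.
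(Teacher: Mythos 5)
Your proposal is correct and follows essentially the same route as the paper: both arguments rest on the continuity of the inclusion $\mathscr{H}_{F}\hookrightarrow L^{2}\left(\Omega\right)$, the density of $\left\{ F_{\varphi}=T_{F}\varphi\:\big|\:\varphi\in C_{c}\left(\Omega\right)\right\} $ in $\mathscr{H}_{F}$, and the selfadjointness of $T_{F}$ in $L^{2}\left(\Omega\right)$; your identity $\ker\left(T_{F}\right)=\left(\overline{\mathrm{Ran}\, T_{F}}\right)^{\perp}$ together with the sandwich $\mathrm{Ran}\left(T_{F}\right)\subseteq\mathscr{H}_{F}\subseteq\overline{\mathrm{Ran}\left(T_{F}\right)}^{\, L^{2}}$ is just a repackaging of the paper's chain of equivalences for $f\perp\mathscr{H}_{F}$. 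The only cosmetic difference is that you obtain the $L^{2}$-continuity of the inclusion from the pointwise bound $\left|\xi\left(x\right)\right|\leq\sqrt{F\left(e\right)}\left\Vert \xi\right\Vert _{\mathscr{H}_{F}}$ and $\left|\Omega\right|<\infty$, whereas the paper invokes the closed graph theorem.
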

\begin{proof}
We shall first need the following:
\begin{lem}
There is a finite constant $C_{1}$ such that 
\begin{equation}
\left\Vert \xi\right\Vert _{L^{2}\left(\Omega\right)}\leq C_{1}\left\Vert \xi\right\Vert _{\mathscr{H}_{F}}\label{eq:m-3-3}
\end{equation}
for all $\xi\in\mathscr{H}_{F}$. \end{lem}
\begin{proof}
Since $\mathscr{H}_{F}\subset L^{2}\left(\Omega\right)$ by (\ref{eq:m-3-1}),
the inclusion mapping, $\mathscr{H}_{F}\rightarrow L^{2}\left(\Omega\right)$,
is closed and therefore bounded; by the Closed-Graph Theorem; and
so the estimate (\ref{eq:m-3-3}) follows. (See also Theorem \ref{thm:mer4}
below for an explicit bound $C_{1}$.)
\end{proof}

\emph{Proof of the Corollary continued:}

Let $\xi\in\mathscr{H}_{F}$, and pick $\varphi_{n}\in C_{c}\left(\Omega\right)$
such that 
\begin{equation}
\left\Vert \xi-F_{\varphi_{n}}\right\Vert _{\mathscr{H}_{F}}\rightarrow0,\; n\rightarrow\infty.\label{eq:m-3-4}
\end{equation}
Note $F_{\varphi_{n}}=T_{F}\varphi_{n}$; and we get 
\[
\left\Vert \xi-T_{F}\varphi_{n}\right\Vert _{L^{2}\left(\Omega\right)}\underset{\left(\ref{eq:m-3-3}\right)}{\leq}C_{1}\left\Vert \xi-F_{\varphi_{n}}\right\Vert _{\mathscr{H}_{F}}\underset{\left(\ref{eq:m-3-4}\right)}{\longrightarrow}0,\; n\rightarrow\infty.
\]
Therefore, if $f\in L^{2}\left(\Omega\right)$ is given, we conclude
that the following properties are equivalent:
\begin{eqnarray*}
\left\langle f,\xi\right\rangle _{L^{2}\left(\Omega\right)} & = & 0,\;\forall\xi\in\mathscr{H}_{F}\\
 & \Updownarrow\\
\left\langle f,F_{\varphi}\right\rangle _{L^{2}\left(\Omega\right)} & = & 0,\;\forall\varphi\in C_{c}\left(\Omega\right)\\
 & \Updownarrow\\
\left\langle f,T_{F}\varphi\right\rangle _{L^{2}\left(\Omega\right)} & = & 0,\;\forall\varphi\in C_{c}\left(\Omega\right)\\
 & \Updownarrow & \text{(since \ensuremath{T_{F}}is selfadjoint as an operator in \ensuremath{L^{2}\left(\Omega\right)})}\\
\left\langle T_{F}f,\varphi\right\rangle _{L^{2}\left(\Omega\right)} & = & 0,\;\forall\varphi\in C_{c}\left(\Omega\right)\\
 & \Updownarrow & \text{(since \ensuremath{C_{c}}\ensuremath{\left(\Omega\right)} is dense in \ensuremath{L^{2}\left(\Omega\right)})}\\
T_{F}f & = & 0
\end{eqnarray*}
\end{proof}
\begin{rem}
Note that $Ker\left(T_{F}\right)$ may be infinite-dimensional. This
happens for example, in the cases studied in  \secref{expT}, and
in the case of $F_{6}$ in Table \ref{tab:F1-F6} (\secref{Polya}).
On the other hand, these examples are rather degenerate since they
have $\dim\mathscr{H}_{F}<\infty$.

\textbf{Convention.} When computing $T_{F}\left(f\right)$ for $f\in L^{2}\left(\Omega\right)$,
we may use (\ref{eq:m-3-1}) to write $f$ as $f=f^{\left(F\right)}+f^{\left(K\right)}$,
where $f^{\left(F\right)}\in\mathscr{H}_{F}$, and $f^{\left(K\right)}\in\mbox{Ker}\left(T_{F}\right)$,
and $\left\langle f^{\left(F\right)},f^{\left(K\right)}\right\rangle _{L^{2}\left(\Omega\right)}=0$.
As a result,
\[
T_{F}\left(f\right)=T_{F}f^{\left(F\right)}+T_{F}f^{\left(K\right)}=T_{F}f^{\left(F\right)},
\]
and $\mbox{Ker}\left(T_{F}\right)$ is not a problem: For example,
when we write $T_{F}^{-1}\left(f\right)$, we mean $T_{F}^{-1}f^{\left(F\right)}$.\end{rem}
\begin{cor}
Let $G$, $\Omega$, $F$ and $\mathscr{H}_{F}$ be as above, assume
that $\Omega\subset G$ satisfies \ref{enu:mer1}-\ref{enu:mer4}
in Remark \ref{rem:mer1}. Suppose in addition that $G=\mathbb{R}^{k}$,
and that there exists $\mu\in Ext\left(F\right)$ such that the support
$suppt\left(\mu\right)$ contains a non-empty open subset in $\mathbb{R}^{k}$;
then $Ker\left(T_{F}\right)=0$.\end{cor}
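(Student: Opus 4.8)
The plan is to exploit the Mercer decomposition together with the isometry $T_\mu:\mathscr{H}_F\to L^2(\widehat G,\mu)=L^2(\mathbb R^k,\mu)$, and to show that a non-zero element of $\mathrm{Ker}(T_F)\subset L^2(\Omega)$ would have to be orthogonal in $L^2(\Omega)$ to every exponential $e_\lambda|_\Omega$ with $\lambda$ in the open set $V\subset\mathrm{supp}(\mu)$, which forces it to vanish. First I would recall from the preceding corollary that $L^2(\Omega)\ominus\mathscr{H}_F=\mathrm{Ker}(T_F)$, so it suffices to prove that the only $f\in L^2(\Omega)$ with $\langle f,F_\varphi\rangle_{L^2(\Omega)}=0$ for all $\varphi\in C_c(\Omega)$ is $f=0$. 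Using $F=\widehat{d\mu}$ on $\Omega-\Omega$ (valid since $\mu\in Ext(F)$), I would write
\[
F_\varphi(x)=\int_\Omega\varphi(y)F(x-y)\,dy=\int_{\mathbb R^k}e^{i\lambda\cdot x}\widehat\varphi(\lambda)\,d\mu(\lambda),\qquad x\in\Omega,
\]
by Fubini, exactly as in the proof of Corollary \ref{cor:lcg-isom}.

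Next I would compute, for $f\in\mathrm{Ker}(T_F)=L^2(\Omega)\ominus\mathscr{H}_F$,
\[
0=\overline{\langle f,F_\varphi\rangle_{L^2(\Omega)}}=\int_\Omega f(x)\overline{F_\varphi(x)}\,dx=\int_{\mathbb R^k}\overline{\widehat\varphi(\lambda)}\Bigl(\int_\Omega f(x)e^{-i\lambda\cdot x}\,dx\Bigr)d\mu(\lambda)=\int_{\mathbb R^k}\overline{\widehat\varphi(\lambda)}\,\widehat{f\chi_\Omega}(\lambda)\,d\mu(\lambda),
\]
again interchanging integrals via Fubini (legitimate since $f\in L^2(\Omega)$, $\Omega$ has finite measure, and $\mu$ is finite). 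Thus the bounded continuous function $g(\lambda):=\widehat{f\chi_\Omega}(\lambda)=\int_\Omega f(x)e^{-i\lambda\cdot x}\,dx$ satisfies $\int \overline{\widehat\varphi}\,g\,d\mu=0$ for all $\varphi\in C_c(\Omega)$. Since $\{\widehat\varphi:\varphi\in C_c(\Omega)\}$ is dense in $L^1(\mathbb R^k,\mu)$ for every finite $\mu$ (the density fact invoked in the proof of Lemma \ref{lem:li-meas}), and $g\in L^\infty(\mu)$, it follows that $g=0$ $\mu$-a.e., hence $g\equiv 0$ on the open set $V\subset\mathrm{supp}(\mu)$ (the restriction of $g$ to any open set where $\mu$ is fully supported must vanish identically by continuity).

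Finally I would invoke analyticity: $g(\lambda)=\int_\Omega f(x)e^{-i\lambda\cdot x}\,dx$ extends to an entire function of $\lambda\in\mathbb C^k$ because $\Omega$ is bounded (assumption \ref{enu:mer3}) and $f\in L^2(\Omega)\subset L^1(\Omega)$ — this is the Paley--Wiener mechanism already used in the proof of Corollary, part (\ref{enu:rn-1}). An entire function on $\mathbb C^k$ vanishing on the non-empty open subset $V\subset\mathbb R^k$ is identically zero, so $g\equiv 0$ on all of $\mathbb R^k$; by Fourier uniqueness $f\chi_\Omega=0$, i.e. $f=0$ in $L^2(\Omega)$. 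Therefore $\mathrm{Ker}(T_F)=0$. The main obstacle, and the only place needing care, is the justification that $\{\widehat\varphi:\varphi\in C_c(\Omega)\}$ is $L^1(\mu)$-dense and the interchange of integrals; both are routine given the finiteness of $\mu$ and of $|\Omega|$, and the second analyticity step is where the hypothesis that $\mathrm{supp}(\mu)$ contains an open set is actually used in an essential way (without it $g$ could vanish on a lower-dimensional set without being zero, even though it is entire — but here the open set makes the conclusion immediate).
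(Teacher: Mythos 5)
Your overall architecture coincides with the paper's: show that $\widehat{f\chi_\Omega}$ vanishes $\mu$-a.e., hence (by continuity) on the open subset of $\mathrm{supp}(\mu)$, and then invoke Paley--Wiener analyticity plus the identity theorem to force $\widehat{f\chi_\Omega}\equiv 0$, whence $f=0$. However, the step where you pass from $\int\overline{\widehat\varphi}\,g\,d\mu=0$ for all $\varphi\in C_c(\Omega)$ to $g=0$ $\mu$-a.e.\ rests on a false density claim. The fact used in the proof of Lemma \ref{lem:li-meas} is that $\{\widehat\psi\,:\,\psi\in C_c(\mathbb R^k)\}$ is dense in $L^1(\mu)$ --- the test functions there range over all of $\mathbb R^k$. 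Once the supports are confined to the fixed bounded set $\Omega$, density fails in general: a function $h\in L^\infty(\mu)$ annihilates all $\widehat\varphi$ with $\varphi\in C_c(\Omega)$ precisely when the continuous function $x\mapsto\int e^{-i\lambda\cdot x}h(\lambda)\,d\mu(\lambda)$ vanishes on $\Omega$, and there are nonzero finite measures whose Fourier transform vanishes on a given bounded open set. For instance, take $u\in C_c^\infty(\mathbb R^k)$ nonzero with $\mathrm{supp}(u)\cap\Omega=\emptyset$, let $\rho$ be its (Schwartz-class) inverse transform, and set $d\mu=\bigl(|\rho(\lambda)|+e^{-|\lambda|^2}\bigr)d\lambda$, $h=\rho/\bigl(|\rho|+e^{-|\lambda|^2}\bigr)\in L^\infty(\mu)\setminus\{0\}$; then $\int\widehat\varphi\,h\,d\mu=\int\varphi\,u=0$ for every $\varphi\in C_c(\Omega)$. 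So the $L^1$--$L^\infty$ duality argument does not deliver $g=0$ $\mu$-a.e.

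The step is nevertheless true, and the repair is exactly what the paper does: by Corollary \ref{cor:lcg-isom} and the boundedness of $T_F$ on $L^2(\Omega)$ (Theorem \ref{thm:mer2}), the identity $\left\Vert T_Fh\right\Vert_{\mathscr{H}_F}=\bigl\Vert\widehat h\bigr\Vert_{L^2(\mu)}$ extends from $C_c(\Omega)$ to all $h\in L^2(\Omega)$. Polarizing, your orthogonality relation reads $\langle T_F\varphi,T_Ff\rangle_{\mathscr{H}_F}=\int\overline{\widehat\varphi}\,\widehat f\,d\mu=0$ for all $\varphi$, which says $T_Ff\perp\mathscr{H}_F$, i.e.\ $T_Ff=0$, and then the norm identity gives $\bigl\Vert\widehat f\bigr\Vert_{L^2(\mu)}=0$ directly --- in fact, since you already assumed $f\in\mathrm{Ker}(T_F)$, you can skip the orthogonality detour entirely and apply the norm identity at once. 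With that substitution the remainder of your argument (vanishing on the open set, entirety of $\widehat{f\chi_\Omega}$, Fourier uniqueness) is correct and is the paper's proof.
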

\begin{proof}
We saw in Corollary \ref{cor:lcg-isom} that there is an isometry
$\mathscr{H}_{F}\rightarrow L^{2}\left(\mathbb{R}^{k},\mu\right)$
given on the dense subset $\left\{ F_{\varphi}\:\big|\:\varphi\in C_{c}\left(\Omega\right)\right\} $,
by $F_{\varphi}\mapsto\widehat{\varphi}$, s.t. $\left\Vert F_{\varphi}\right\Vert _{\mathscr{H}_{F}}=\left\Vert \widehat{\varphi}\right\Vert _{L^{2}\left(\mathbb{R}^{k},\mu\right)}$;
where $\widehat{\varphi}$ denotes Fourier transform in $\mathbb{R}^{k}$.
Since $F_{\varphi}=T_{F}\left(\varphi\right)$, for all $\varphi\in C_{c}\left(\Omega\right)$,
and $C_{c}\left(\Omega\right)$ is dense in $L^{2}\left(\Omega\right)$,
we get 
\begin{equation}
\bigl\Vert T_{F}f\bigr\Vert_{\mathscr{H}_{F}}=\bigl\Vert\widehat{f}\bigr\Vert_{L^{2}\left(\mathbb{R}^{k},\mu\right)}\label{eq:m-3-6}
\end{equation}
holds also for $f\in L^{2}\left(\Omega\right)$. So $f\in Ker\left(T_{F}\right)\Rightarrow\widehat{f}\equiv0$
on $suppt\left(\mu\right)$, by (\ref{eq:m-3-6}). But since $\overline{\Omega}$
is compact, we conclude by Paley-Wiener, that $\widehat{f}$ is entire
analytic. Since $suppt\left(\mu\right)$ consists a non-empty open
set, we conclude that $\widehat{f}\equiv0$; and by (\ref{eq:m-3-6}),
that therefore $f=0$. 
\end{proof}
\index{Paley-Wiener}\index{measure!Haar}
\begin{thm}
\label{thm:mer2}Let $G$, $\Omega$, $F:\Omega^{-1}\Omega\rightarrow\mathbb{C}$,
and $\mathscr{H}_{F}$ be as in Remark \ref{rem:mer1}, i.e., we assume
that \ref{enu:mer1}-\ref{enu:mer4} hold. 
\begin{enumerate}
\item \label{enu:mer-1}Let $T_{F}$ denote the corresponding Mercer operator
$T_{F}:L^{2}\left(\Omega\right)\rightarrow\mathscr{H}_{F}$. Assume
further that $\Omega$ has \uline{finite} Haar measure. Then $T_{F}^{*}$
is also a densely defined operator on $\mathscr{H}_{F}$ as follows:
\begin{equation}
\xymatrix{\mathscr{H}_{F}\ar@/^{1pc}/[rr]^{j=T_{F}^{*}} &  & L^{2}\left(\Omega\right)\ar@/^{1pc}/[ll]^{T_{F}}}
\label{eq:mer18}
\end{equation}

\item \label{enu:mer-2}Moreover, every $\xi\in\mathscr{H}_{F}$ has a realization
$j\left(\xi\right)$ as a bounded uniformly continuous function on
$\Omega$, and 
\begin{equation}
T_{F}^{*}\xi=j\left(\xi\right),\;\forall\xi\in\mathscr{H}_{F}.\label{eq:mer19}
\end{equation}

\item \label{enu:mer-3}Finally, the operator $j$ from (\ref{enu:mer-1})-(\ref{enu:mer-2})
satisfies 
\begin{equation}
\mbox{Ker}\left(j\right)=0.\label{eq:mer20}
\end{equation}

\end{enumerate}
\end{thm}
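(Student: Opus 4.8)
\textbf{Proof proposal for Theorem \ref{thm:mer2}.}

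The plan is to unpack the three assertions in the order given, leaning on the two ingredients that the excerpt has already set up: first, the inclusion $\mathscr{H}_{F}\subseteq C(\overline{\Omega})\subset L^{2}(\Omega)$ together with the norm estimate $\|\xi\|_{L^{2}(\Omega)}\le C_{1}\|\xi\|_{\mathscr{H}_{F}}$ (the Closed-Graph argument already quoted just above the theorem); and second, the identity $\langle F_{\varphi},F_{\psi}\rangle_{\mathscr{H}_{F}}=\langle\varphi,F_{\psi}\rangle_{L^{2}(\Omega)}$ (the last display of Lemma \ref{lem:RKHS-def-by-integral}, adapted to the group setting via Lemma \ref{lem:lcg-F_varphi}), which is exactly the statement that, \emph{viewing} $T_{F}$ as a map into $L^{2}(\Omega)$, its adjoint restricted to $\mathscr{H}_{F}$ is the inclusion map.

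For part (\ref{enu:mer-1}): the bounded inclusion $\mathscr{H}_{F}\hookrightarrow L^{2}(\Omega)$ has an adjoint $L^{2}(\Omega)\to\mathscr{H}_{F}$, and I claim this adjoint is $T_{F}$ itself. Indeed for $\varphi\in C_{c}(\Omega)$ and $\xi\in\mathscr{H}_{F}$, the reproducing property gives $\langle \xi, F_{\varphi}\rangle_{\mathscr{H}_{F}} = \int_{\Omega}\overline{\varphi(x)}\,\xi(x)\,dx = \langle \iota\xi,\varphi\rangle_{L^{2}(\Omega)}$, where $\iota$ is the inclusion; since $\{F_{\varphi}\}$ is dense in $\mathscr{H}_{F}$ (Lemma \ref{lem:dense} / the spanning statement) and $F_{\varphi}=T_{F}\varphi$, this says $\langle T_{F}\varphi,\xi\rangle_{\mathscr{H}_{F}} = \langle\varphi,\iota\xi\rangle_{L^{2}(\Omega)}$ for all $\varphi\in C_{c}(\Omega)$, i.e. $\iota=T_{F}^{*}$ with $j:=T_{F}^{*}=\iota$. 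Here finiteness of Haar measure of $\Omega$ is what guarantees $C(\overline{\Omega})\subset L^{2}(\Omega)$, so that $\iota$ is genuinely defined on all of $\mathscr{H}_{F}$; density of its domain is then automatic.

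For part (\ref{enu:mer-2}): this is now just the identification of $j=T_{F}^{*}$ with the inclusion $\iota$ made explicit. Every $\xi\in\mathscr{H}_{F}$ is, by the general RKHS theory recalled after Lemma \ref{lem:dense} (continuity of point evaluations, the reproducing kernel being continuous and bounded on $\overline{\Omega}\times\overline{\Omega}$ by assumption \ref{enu:mer3}), represented by a bounded uniformly continuous function on $\Omega$ — uniform continuity follows from $|\xi(x)-\xi(x')|=|\langle F(x^{-1}\cdot)-F(x'^{-1}\cdot),\xi\rangle|\le\|F(x^{-1}\cdot)-F(x'^{-1}\cdot)\|_{\mathscr{H}_{F}}\|\xi\|_{\mathscr{H}_{F}}$ together with continuity of $x\mapsto F(x^{-1}\cdot)$ into $\mathscr{H}_{F}$ and compactness of $\overline{\Omega}$. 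Writing $j(\xi)$ for this function, the computation in part (\ref{enu:mer-1}) shows $T_{F}^{*}\xi=j(\xi)$. For part (\ref{enu:mer-3}): $j(\xi)=0$ in $L^{2}(\Omega)$ means $\xi$ vanishes a.e.\ on $\Omega$, hence everywhere by continuity, hence $\xi=0$ in $\mathscr{H}_{F}$; equivalently, $\mathrm{Ker}(T_{F}^{*})=(\mathrm{Ran}\,T_{F})^{\perp}$ in $\mathscr{H}_{F}$, and $\mathrm{Ran}\,T_{F}\supseteq\{F_{\varphi}\}$ is dense, so the kernel is $\{0\}$.

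The only genuinely delicate point is part (\ref{enu:mer-2}): one must be careful that the abstract adjoint $T_{F}^{*}$ — a priori only a closed densely defined operator from $\mathscr{H}_{F}$ to $L^{2}(\Omega)$ — is in fact \emph{everywhere} defined and coincides with the concrete pointwise realization $j$, rather than merely agreeing with it on a core. This is where the hypotheses \ref{enu:mer1}–\ref{enu:mer4} enter in an essential way (via $\mathscr{H}_{F}\subset C(\overline{\Omega})\subset L^{2}(\Omega)$ with a bounded inclusion, already established); once that inclusion is in hand the identification $T_{F}^{*}=\iota=j$ is forced, and everything else is bookkeeping with the reproducing property and Fubini.
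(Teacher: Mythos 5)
Your proposal is correct and follows essentially the same route as the paper: identify $T_{F}^{*}$ with the inclusion $\mathscr{H}_{F}\hookrightarrow L^{2}(\Omega)$ via the reproducing-property identity $\langle \xi,T_{F}\varphi\rangle_{\mathscr{H}_{F}}=\int_{\Omega}\overline{\xi(x)}\varphi(x)\,dx$, derive boundedness and uniform continuity of elements of $\mathscr{H}_{F}$ from the kernel estimates $\|F_{x}-F_{y}\|_{\mathscr{H}_{F}}^{2}=2\left(F(e)-\Re F(x^{-1}y)\right)$ and $|\xi(x)|\le\|\xi\|_{\mathscr{H}_{F}}\sqrt{F(e)}$, and conclude $\mathrm{Ker}(j)=(\mathrm{Ran}\,T_{F})^{\perp}=0$ from density of $\{F_{\varphi}\}$. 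The only difference is cosmetic: you make the adjoint identification slightly more explicit than the paper does.
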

\begin{proof}
We begin with the claims relating (\ref{eq:mer18}) and (\ref{eq:mer19})
in the theorem.

Using the reproducing property in the RKHS $\mathscr{H}_{F}$, we
get the following two estimates, valid for all $\xi\in\mathscr{H}_{F}$:
\begin{equation}
\left|\xi\left(x\right)-\xi\left(y\right)\right|^{2}\leq2\left\Vert \xi\right\Vert _{\mathscr{H}_{F}}^{2}\left(F\left(e\right)-\Re\left(F\left(x^{-1}y\right)\right)\right),\;\forall x,y\in\Omega;\label{eq:mer21}
\end{equation}
and
\begin{equation}
\left|\xi\left(x\right)\right|\leq\left\Vert \xi\right\Vert _{\mathscr{H}_{F}}\sqrt{F\left(e\right)},\;\forall x\in\Omega.\label{eq:mer22}
\end{equation}

In the remaining of the proof, we shall adopt the normalization $F\left(e\right)=1$;
so the assertion in (\ref{eq:mer22}) states that point-evaluation
for $\xi\in\mathscr{H}_{F}$ is contractive.
\begin{rem*}
In the discussion below, we give conditions which yield boundedness
of the operators in (\ref{eq:mer18}). If bounded, then, by general
theory, we have
\begin{equation}
\left\Vert j\right\Vert _{\mathscr{H}_{F}\rightarrow L^{2}\left(\Omega\right)}=\left\Vert T_{F}\right\Vert _{L^{2}\left(\Omega\right)\rightarrow\mathscr{H}_{F}}\label{eq:mer23}
\end{equation}
for the respective operator-norms. While boundedness holds in ``most''
cases, it does not in general.
\end{rem*}
The assertion in (\ref{eq:mer19}) in Part (\ref{enu:mer-2}) is a
statement about the adjoint of an operator mapping between different
Hilbert spaces; and it is a result of the following:
\begin{equation}
\int_{\Omega}\overline{j\left(\xi\right)\left(x\right)}\varphi\left(x\right)dx=\left\langle \xi,T_{F}\varphi\right\rangle _{\mathscr{H}_{F}}\left(=\left\langle \xi,F_{\varphi}\right\rangle _{\mathscr{H}_{F}}\right)\label{eq:mer24}
\end{equation}
for all $\xi\in\mathscr{H}_{F}$, and $\varphi\in C_{c}\left(\Omega\right)$.
But eq. (\ref{eq:mer24}), in turn, is immediate from (\ref{eq:mer0})
and the reproducing property in $\mathscr{H}_{F}$.

The assertion in Part \ref{enu:mer-3} of the theorem follows from:
\begin{eqnarray*}
\mbox{Ker}\left(j\right) & = & \left(\mbox{Ran}\left(j^{*}\right)\right)^{\perp}\\
 & \underset{\left(\text{by \ensuremath{\left(\mbox{\ref{enu:mer-2}}\right)}}\right)}{\begin{array}[t]{c}
=\end{array}} & \left(\mbox{Ran}\left(T_{F}\right)\right)^{\perp}=0;
\end{eqnarray*}
where we used that $\mbox{Ran}\left(T_{F}\right)$ is dense in $\mathscr{H}_{F}$.\end{proof}
\begin{rem}
The RHS in (\ref{eq:mer19}) is subtle because of boundary values
for the function $\xi:\overline{\Omega}\rightarrow\mathbb{C}$ which
represent some vector (also denoted $\xi$) in $\mathscr{H}_{F}$.
We refer to Lemma \ref{lem:F-bd} for specifics.

We showed that if $\varphi\in C_{c}\left(\Omega\right)$, then (\ref{eq:mer24})
holds; but if some $\varphi\in L^{2}\left(\Omega\right)$ is not in
$C_{c}\left(\Omega\right)$, then we pick up a boundary term when
computing $\left\langle \xi,T_{F}\varphi\right\rangle _{\mathscr{H}_{F}}$.
Specifically, we show in  \subref{lie} (in the case of connected
Lie groups) that there is a measure $\beta$ on the boundary $\partial\Omega=\overline{\Omega}\backslash\Omega$
such that \index{measure!boundary} 
\begin{equation}
\left\langle \xi,T_{F}\varphi\right\rangle _{\mathscr{H}_{F}}=\int_{\Omega}\overline{\xi\left(x\right)}\varphi\left(x\right)dx+\int_{\partial\Omega}\overline{\xi\Big|_{\partial\Omega}\left(\sigma\right)}\left(T_{F}\varphi\right)_{n}\left(\sigma\right)d\beta\left(\sigma\right)\label{eq:m-2-1}
\end{equation}
for all $\varphi\in L^{2}\left(\Omega\right)$. 

In (\ref{eq:m-2-1}), on the RHS, we must make the following restrictions:
\begin{enumerate}
\item $\overline{\Omega}$ is compact (and of course $\Omega$ is assumed
open connected);
\item $\partial\Omega$ is a differentiable manifold of dimension $=\dim\left(G\right)-1$.
\item The function $T_{F}\varphi\in\mathscr{H}_{F}\subset C\left(\Omega\right)$
has a well-defined inward normal vector field; and $\left(T_{F}\varphi\right)_{n}$
denotes the corresponding normal derivative.
\end{enumerate}

It follows from Lemma \ref{lem:F-bd} that the term $b_{\xi}\left(\sigma\right):=\xi\big|_{\partial\Omega}\left(\sigma\right)$
on the RHS in (\ref{eq:m-2-1}) satisfies 
\begin{equation}
\left|b_{\xi}\left(\sigma\right)\right|\leq\left\Vert \xi\right\Vert _{\mathscr{H}_{F}},\;\forall\sigma\in\partial\Omega.\label{eq:m-2-2}
\end{equation}

\end{rem}
\begin{cor}[Renormalization]
\label{cor:mer2} Let $G$, $\Omega$, $F$, $\mathscr{H}_{F}$,
and $T_{F}$ be as in the statement of Theorem \ref{thm:mer2}. Let
$\left\{ \xi_{n}\right\} _{n\in\mathbb{N}}$, $\left\{ \lambda_{n}\right\} _{n\in\mathbb{N}}$
be the spectral data for the Mercer operator $T_{F}$, i.e., $\lambda_{n}>0$,\index{Renormalization}
\begin{equation}
\sum_{n=1}^{\infty}\lambda_{n}=\left|\Omega\right|,\mbox{ and }\left\{ \xi_{n}\right\} _{n\in\mathbb{N}}\subset L^{2}\left(\Omega\right)\cap\mathscr{H}_{F},\mbox{ satisfying }\label{eq:mer25}
\end{equation}
\begin{equation}
T_{F}\xi_{n}=\lambda_{n}\xi_{n},\:\int_{\Omega}\overline{\xi_{n}\left(x\right)}\xi_{m}\left(x\right)dx=\delta_{n,m},\; n,m\in\mathbb{N},\label{eq:mer26}
\end{equation}
and 
\begin{equation}
F\left(x^{-1}y\right)=\sum_{n=1}^{\infty}\lambda_{n}\overline{\xi_{n}\left(x\right)}\xi_{n}\left(y\right),\;\forall x,y\in\Omega;\label{eq:mer27}
\end{equation}
then
\begin{equation}
\left\{ \sqrt{\lambda_{n}}\xi_{n}\left(\cdot\right)\right\} _{n\in\mathbb{N}}\label{eq:mer28}
\end{equation}
\textup{is an ONB in $\mathscr{H}_{F}$.}\end{cor}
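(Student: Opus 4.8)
The plan is to prove the two halves separately. Orthonormality of $\{\sqrt{\lambda_n}\,\xi_n\}$ is a short computation from the eigen‑equation (\ref{eq:mer26}) together with the reproducing property, while totality will be reduced to the totality, \emph{inside $L^{2}(\Omega)$}, of the Mercer eigensystem $\{\xi_n\}$ in the orthogonal complement of $\mathrm{Ker}(T_F)$. First I would fix notation. Let $\iota\colon\mathscr{H}_F\hookrightarrow L^{2}(\Omega)$ be the inclusion; it is bounded by the Closed‑Graph estimate $\|\xi\|_{L^{2}(\Omega)}\le C_{1}\|\xi\|_{\mathscr{H}_F}$ recorded near (\ref{eq:m-3-2}). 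The Mercer operator $T_F\colon L^{2}(\Omega)\to\mathscr{H}_F$ of (\ref{eq:mer0}) is exactly $\iota^{*}$: for $\xi\in\mathscr{H}_F$ and $\varphi\in C_c(\Omega)$ the reproducing property gives $\langle\xi,T_F\varphi\rangle_{\mathscr{H}_F}=\langle\xi,F_\varphi\rangle_{\mathscr{H}_F}=\int_\Omega\overline{\xi}\,\varphi\,dx=\langle\iota\xi,\varphi\rangle_{L^{2}(\Omega)}$, and $C_c(\Omega)$ is $L^{2}$‑dense. Hence $j:=T_F^{*}=\iota^{**}=\iota$, in agreement with Theorem \ref{thm:mer2}, so $\mathrm{Ker}(\iota)=\mathrm{Ker}(j)=0$; moreover $T_F$ regarded as an operator on $L^{2}(\Omega)$ equals $\iota\iota^{*}$, whence $\mathrm{Ker}(T_F)=\mathrm{Ker}(\iota^{*})$.

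\textbf{Orthonormality.} Since each $\xi_n\in\mathscr{H}_F\cap L^{2}(\Omega)$, (\ref{eq:mer26}) gives $\xi_n=\lambda_n^{-1}T_F\xi_n=\lambda_n^{-1}\iota^{*}\xi_n$, whence for all $m,n$
\[
\langle\xi_m,\xi_n\rangle_{\mathscr{H}_F}=\lambda_n^{-1}\langle\xi_m,\iota^{*}\xi_n\rangle_{\mathscr{H}_F}=\lambda_n^{-1}\langle\iota\xi_m,\xi_n\rangle_{L^{2}(\Omega)}=\lambda_n^{-1}\,\delta_{m,n},
\]
the last step by the $L^{2}$‑orthonormality in (\ref{eq:mer26}). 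Therefore $\langle\sqrt{\lambda_m}\,\xi_m,\sqrt{\lambda_n}\,\xi_n\rangle_{\mathscr{H}_F}=\delta_{m,n}$, i.e.\ (\ref{eq:mer28}) is an orthonormal system in $\mathscr{H}_F$.

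\textbf{Totality.} By (\ref{eq:mer27}), for $f\in L^{2}(\Omega)$ one has $(T_F f)(x)=\sum_n\lambda_n\,\xi_n(x)\,\langle\xi_n,f\rangle_{L^{2}(\Omega)}$ (the interchange of sum and integral being licensed by Mercer's theorem); pairing with $\xi_m$ and using (\ref{eq:mer26}) yields $\langle\xi_m,T_Ff\rangle_{L^{2}(\Omega)}=\lambda_m\langle\xi_m,f\rangle_{L^{2}(\Omega)}$, so $f\in\mathrm{Ker}(T_F)$ iff $\langle\xi_n,f\rangle_{L^{2}(\Omega)}=0$ for all $n$. Combined with the orthonormality in (\ref{eq:mer26}) this says $\{\xi_n\}$ is an orthonormal basis of $(\mathrm{Ker}\,T_F)^{\perp}$ in $L^{2}(\Omega)$. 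Now let $\eta\in\mathscr{H}_F$ with $\langle\eta,\xi_n\rangle_{\mathscr{H}_F}=0$ for all $n$. Exactly as in the orthonormality step, $\langle\eta,\xi_n\rangle_{\mathscr{H}_F}=\lambda_n^{-1}\langle\iota\eta,\xi_n\rangle_{L^{2}(\Omega)}$, so $\langle\iota\eta,\xi_n\rangle_{L^{2}(\Omega)}=0$ for every $n$. Since $\iota\eta\in\mathrm{Ran}(\iota)\subseteq\overline{\mathrm{Ran}(\iota)}=(\mathrm{Ker}\,\iota^{*})^{\perp}=(\mathrm{Ker}\,T_F)^{\perp}$ and $\{\xi_n\}$ is total there, we get $\iota\eta=0$ in $L^{2}(\Omega)$, hence $\eta=0$ by injectivity of $\iota$. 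Thus the orthonormal system $\{\sqrt{\lambda_n}\,\xi_n\}_{n\in\mathbb{N}}$ is total, i.e.\ an ONB of $\mathscr{H}_F$.

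The only delicate point is the bookkeeping: keeping straight the three avatars of $T_F$ (as $L^{2}(\Omega)\to\mathscr{H}_F$, as $L^{2}(\Omega)\to L^{2}(\Omega)$, and as its adjoint $j=\iota$), and checking that $\mathrm{Ker}(T_F)$ is unambiguous — which holds because $F_f=0$ in $\mathscr{H}_F$ iff $F_f\equiv0$ pointwise on $\Omega$ (by the uniform bound $|F_f(x)|\le\|F_f\|_{\mathscr{H}_F}\sqrt{F(e)}$) iff $F_f=0$ in $L^{2}(\Omega)$. No new ingredient beyond Corollary \ref{cor:mer1} and Theorem \ref{thm:mer2} is needed; this is the classical passage from a Mercer expansion to an orthonormal basis of the associated RKHS.
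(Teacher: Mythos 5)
Your proof is correct. The orthonormality half is exactly the computation the paper writes out (the key identity $\langle\xi,T_F\varphi\rangle_{\mathscr{H}_F}=\int_\Omega\overline{\xi}\,\varphi\,dx$ from Theorem \ref{thm:mer2}, applied with $\varphi=\xi_n$ and extended from $C_c(\Omega)$ to $L^2(\Omega)$ by boundedness and density), so on that half you and the paper coincide. Where you add genuine value is the totality argument, which the paper dismisses as ``immediate from the theorem'': you make it rigorous by identifying $T_F$ with $\iota^{*}$ for the inclusion $\iota\colon\mathscr{H}_F\hookrightarrow L^2(\Omega)$, showing via the Mercer expansion that $\{\xi_n\}$ is an ONB of $(\mathrm{Ker}\,T_F)^{\perp}=\overline{\mathrm{Ran}(\iota)}$ in $L^2(\Omega)$, and then using $\mathrm{Ker}(\iota)=\mathrm{Ker}(j)=0$ from part (3) of Theorem \ref{thm:mer2} to conclude that a vector of $\mathscr{H}_F$ orthogonal to all $\xi_n$ must vanish. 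This is the right way to close the gap, and your closing remark about the consistency of the two avatars of $\mathrm{Ker}(T_F)$ addresses the one genuinely delicate bookkeeping point.
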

\begin{proof}
This is immediate for the theorem. To stress the idea, we include
the proof that $\left\Vert \sqrt{\lambda_{n}}\xi_{n}\right\Vert _{\mathscr{H}_{F}}=1$,
$\forall n\in\mathbb{N}$. 

Clearly, 
\[
\left\Vert \sqrt{\lambda_{n}}\xi_{n}\right\Vert _{\mathscr{H}_{F}}^{2}=\lambda_{n}\left\langle \xi_{n},\xi_{n}\right\rangle _{\mathscr{H}_{F}}=\left\langle \xi_{n},T_{F}\xi_{n}\right\rangle _{\mathscr{H}_{F}}.
\]
But since $\xi_{n}\in L^{2}\left(\Omega\right)\cap\mathscr{H}_{F}$,
\[
\left\langle \xi_{n},T_{F}\xi_{n}\right\rangle _{\mathscr{H}_{F}}=\int_{\Omega}\overline{\xi_{n}\left(x\right)}\xi_{n}\left(x\right)dx=\left\Vert \xi_{n}\right\Vert _{L^{2}\left(\Omega\right)}^{2}=1\;\mbox{by }(\ref{eq:mer26}),
\]
and the result follows.\end{proof}
\begin{thm}
\label{thm:mer4}Let $G$, $\Omega$, $F$, $\mathscr{H}_{F}$, $\left\{ \xi_{n}\right\} $,
and $\left\{ \lambda_{n}\right\} $ be as specified in Corollary \ref{cor:mer2}. 
\begin{enumerate}
\item \label{enu:m-1-1}Then $\mathscr{H}_{F}\subseteq L^{2}\left(\Omega\right)$,
and there is a finite constant $C_{1}$ such that 
\begin{equation}
\int_{\Omega}\left|g\left(x\right)\right|^{2}dx\leq C_{1}\left\Vert g\right\Vert _{\mathscr{H}_{F}}\label{eq:m-1-1}
\end{equation}
holds for all $g\in\mathscr{H}_{F}$. Indeed, $C_{1}=\left\Vert \lambda_{1}\right\Vert _{\infty}$
will do.
\item \label{enu:m-1-2}Let $\left\{ \xi_{k}\right\} $, $\left\{ \lambda_{k}\right\} $
be as above, let $N\in\mathbb{N}$, set 
\begin{equation}
\mathscr{H}_{F}\left(N\right)=span\left\{ \xi_{k}\:\big|\: k=1,2,3,\ldots,N\right\} ;\label{eq:mer-4-1}
\end{equation}
and let $Q_{N}$ be the $\mathscr{H}_{F}$-orthogonal projection onto
$\mathscr{H}_{F}\left(N\right)$; and let $P_{N}$ be the $L^{2}\left(\Omega\right)$-orthogonal
projection onto $span_{1\leq k\leq N}\left\{ \xi_{k}\right\} $; then
\index{projection} 
\begin{equation}
Q_{N}\geq\left(\frac{1}{\lambda_{1}}\right)P_{N}\label{eq:mer-4-2}
\end{equation}
where $\leq$ in (\ref{eq:mer-4-2}) is the order of Hermitian operators,
and where $\lambda_{1}$ is the largest eigenvalue.
\end{enumerate}
\end{thm}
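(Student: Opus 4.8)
The plan is to run both parts off the single orthogonal expansion furnished by Mercer's theorem, together with the renormalization of Corollary \ref{cor:mer2}. Recall from Corollary \ref{cor:mer1} that $\left\{ \xi_{n}\right\} _{n\in\mathbb{N}}$ is an orthonormal system in $L^{2}(\Omega)$ consisting of eigenfunctions $T_{F}\xi_{n}=\lambda_{n}\xi_{n}$, with $\lambda_{n}>0$ and $\sum_{n}\lambda_{n}=\left|\Omega\right|<\infty$; and from Corollary \ref{cor:mer2} that $\left\{ \sqrt{\lambda_{n}}\,\xi_{n}\right\} _{n\in\mathbb{N}}$ is an ONB of $\mathscr{H}_{F}$. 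I would enumerate so that $\lambda_{1}=\sup_{n}\lambda_{n}$, which is legitimate since $\lambda_{n}\to0$. Given $g\in\mathscr{H}_{F}$, write $g=\sum_{n}c_{n}\sqrt{\lambda_{n}}\,\xi_{n}$ in $\mathscr{H}_{F}$, so that $\left\Vert g\right\Vert _{\mathscr{H}_{F}}^{2}=\sum_{n}\left|c_{n}\right|^{2}$.

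First I would record that $\mathscr{H}_{F}$-convergence implies $L^{2}(\Omega)$-convergence. By the reproducing property and the normalization $F(e)=1$ one has the pointwise bound $\left|h(x)\right|\leq\left\Vert h\right\Vert _{\mathscr{H}_{F}}$ for all $h\in\mathscr{H}_{F}$ and $x\in\Omega$ (cf. (\ref{eq:mer22}) and (\ref{eq:m-3-1})); since $\left|\Omega\right|<\infty$ this gives $\left\Vert h\right\Vert _{L^{2}(\Omega)}\leq\left|\Omega\right|^{1/2}\left\Vert h\right\Vert _{\mathscr{H}_{F}}$, hence $\mathscr{H}_{F}\subseteq L^{2}(\Omega)$ and the partial sums of $\sum_{n}c_{n}\sqrt{\lambda_{n}}\,\xi_{n}$ converge to $g$ in $L^{2}(\Omega)$ as well. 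Because $\left\{ \xi_{n}\right\} $ is orthonormal in $L^{2}(\Omega)$, Parseval then yields $\int_{\Omega}\left|g(x)\right|^{2}dx=\sum_{n}\left|c_{n}\right|^{2}\lambda_{n}\leq\lambda_{1}\sum_{n}\left|c_{n}\right|^{2}=\lambda_{1}\left\Vert g\right\Vert _{\mathscr{H}_{F}}^{2}$, which is the estimate (\ref{eq:m-1-1}) with $C_{1}=\lambda_{1}$ (and this sharpens the crude constant $\left|\Omega\right|^{1/2}$ from the previous step).

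For Part (\ref{enu:m-1-2}) I would just compute both quadratic forms in the same coordinates $(c_{n})$ and compare them term by term over the finite index set $1\leq n\leq N$. Since $Q_{N}$ is the $\mathscr{H}_{F}$-orthogonal projection onto $span\left\{ \xi_{1},\dots,\xi_{N}\right\} =span\left\{ \sqrt{\lambda_{1}}\xi_{1},\dots,\sqrt{\lambda_{N}}\xi_{N}\right\} $, we get $\left\langle Q_{N}g,g\right\rangle _{\mathscr{H}_{F}}=\sum_{n=1}^{N}\left|c_{n}\right|^{2}$. On the $L^{2}(\Omega)$ side, the expansion of $g$ in the orthonormal system $\left\{ \xi_{n}\right\} $ has coefficients $c_{n}\sqrt{\lambda_{n}}$, so $\left\langle P_{N}g,g\right\rangle _{L^{2}(\Omega)}=\sum_{n=1}^{N}\left|c_{n}\right|^{2}\lambda_{n}$. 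From $0<\lambda_{n}\leq\lambda_{1}$ we obtain $\left\langle P_{N}g,g\right\rangle _{L^{2}(\Omega)}\leq\lambda_{1}\left\langle Q_{N}g,g\right\rangle _{\mathscr{H}_{F}}$, i.e. $Q_{N}\geq\lambda_{1}^{-1}P_{N}$ in the sense of the associated quadratic forms on $\mathscr{H}_{F}(N)$, which is (\ref{eq:mer-4-2}).

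The argument is essentially bookkeeping once the Mercer spectral data of Corollary \ref{cor:mer2} is in hand; the only point requiring genuine care is the compatibility of the two orthogonal expansions, namely that the series $\sum_{n}c_{n}\sqrt{\lambda_{n}}\,\xi_{n}$, which converges to $g$ in $\mathscr{H}_{F}$ by construction, converges to the same function $g$ in $L^{2}(\Omega)$. That identification is exactly what the uniform pointwise bound from the reproducing property delivers, so I do not expect any obstacle beyond making it explicit and fixing the enumeration so that $\lambda_{1}$ is the top eigenvalue.
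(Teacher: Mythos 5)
Your proof is correct and follows essentially the same route as the paper: both parts reduce to the Mercer spectral data of Corollary \ref{cor:mer2}, the $L^{2}(\Omega)$-orthonormality of $\{\xi_{n}\}$, and the relation $\langle\xi_{n},g\rangle_{L^{2}}=\lambda_{n}\langle\xi_{n},g\rangle_{\mathscr{H}_{F}}$, followed by Parseval and the bound $\lambda_{n}\leq\lambda_{1}$. The only (welcome) extra care in your version is the explicit justification, via the pointwise bound from the reproducing property, that the $\mathscr{H}_{F}$-expansion of $g$ also converges in $L^{2}(\Omega)$ — a step the paper's proof uses implicitly when it invokes Parseval in $L^{2}(\Omega)$.
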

\begin{proof}
Part (\ref{enu:m-1-1}). Pick $\lambda_{n}$, $\xi_{n}$ as in (\ref{eq:mer26})-(\ref{eq:mer28}).
We saw that $\mathscr{H}_{F}\subset L^{2}\left(\Omega\right)$; recall
$\overline{\Omega}$ is compact. Hence
\begin{eqnarray*}
\left\Vert g\right\Vert _{L^{2}\left(\Omega\right)}^{2} & = & \sum_{n=1}^{\infty}\left|\left\langle \xi_{n},g\right\rangle _{2}\right|^{2}\\
 & = & \sum_{n=1}^{\infty}\left|\left\langle T_{F}\xi_{n},g\right\rangle _{\mathscr{H}_{F}}\right|^{2}\\
 & = & \sum_{n=1}^{\infty}\left|\left\langle \lambda_{n}\xi_{n},g\right\rangle _{\mathscr{H}_{F}}\right|^{2}\\
 & = & \sum_{n=1}^{\infty}\lambda_{n}\left|\left\langle \sqrt{\lambda_{n}}\xi_{n},g\right\rangle _{\mathscr{H}_{F}}\right|^{2}\\
 & \leq & \left(\sup_{n\in\mathbb{N}}\left\{ \lambda_{n}\right\} \right)\sum_{n=1}^{\infty}\left|\left\langle \sqrt{\lambda_{n}}\xi_{n},g\right\rangle _{\mathscr{H}_{F}}\right|^{2}\\
 & = & \left(\sup_{n\in\mathbb{N}}\left\{ \lambda_{n}\right\} \right)\left\Vert g\right\Vert _{\mathscr{H}_{F}}^{2},\mbox{ by (\ref{eq:mer28}) and Parseval.}
\end{eqnarray*}
This proves (\ref{enu:m-1-1}) with $C_{1}=\sup_{n\in\mathbb{N}}\left\{ \lambda_{n}\right\} $. 

Part (\ref{enu:m-1-2}). Let $f\in L^{2}\left(\Omega\right)\cap\mathscr{H}_{F}$.
Arrange the eigenvalues $\left(\lambda_{n}\right)$ s.t. 
\begin{equation}
\lambda_{1}\geq\lambda_{2}\geq\lambda_{3}\cdots>0.\label{eq:m-1-3}
\end{equation}
Then 
\begin{eqnarray*}
\left\langle f,Q_{N}f\right\rangle _{\mathscr{H}_{F}}=\left\Vert Q_{N}f\right\Vert _{\mathscr{H}_{F}}^{2} & \overset{\text{(\ref{eq:mer28})}}{=} & \sum_{n=1}^{N}\left|\left\langle \sqrt{\lambda_{n}}\xi_{n},f\right\rangle _{\mathscr{H}_{F}}\right|^{2}\\
 & = & \sum_{n=1}^{N}\frac{1}{\lambda_{n}}\left|\left\langle \lambda_{n}\xi_{n},f\right\rangle _{\mathscr{H}_{F}}\right|^{2}\\
 & = & \sum_{n=1}^{N}\frac{1}{\lambda_{n}}\left|\left\langle T_{F}\xi_{n},f\right\rangle _{\mathscr{H}_{F}}\right|^{2}\\
 & = & \sum_{n=1}^{N}\frac{1}{\lambda_{n}}\left|\left\langle \xi_{n},f\right\rangle _{L^{2}\left(\Omega\right)}\right|^{2}\\
 & \overset{\text{(\ref{eq:m-1-3})}}{\geq} & \frac{1}{\lambda_{1}}\sum_{n=1}^{N}\left|\left\langle \xi_{n},f\right\rangle _{L^{2}\left(\Omega\right)}\right|^{2}\\
 & \overset{\text{(Parseval)}}{=} & \frac{1}{\lambda_{1}}\left\Vert P_{N}f\right\Vert _{L^{2}\left(\Omega\right)}^{2}=\frac{1}{\lambda_{1}}\left\langle f,P_{N}f\right\rangle _{L^{2}\left(\Omega\right)};
\end{eqnarray*}
and so the \emph{a priori} estimate (\ref{enu:m-1-2}) holds.\end{proof}
\begin{rem}
The estimate (\ref{eq:m-1-1}) in Theorem \ref{thm:mer4} is related
to, but different from a classical Poincaré-inequality \cite{Maz11}.
The latter \emph{a priori }is as follows:

\index{Poincaré-inequality}

Let $\Omega\subset\mathbb{R}^{k}$ satisfying in \ref{enu:mer1}-\ref{enu:mer4}
in Remark \ref{rem:mer1}, and let $\left|\Omega\right|_{k}$ denote
the $k$-dimensional Lebesgue measure of $\Omega$, i.e., 
\[
\left|\Omega\right|_{k}=\int_{\mathbb{R}^{k}}\chi_{\Omega}\left(x\right)\underset{dx}{\underbrace{dx_{1}\cdots dx_{k}}};
\]
let $\nabla=\left(\frac{\partial}{\partial x_{1}},\ldots,\frac{\partial}{\partial x_{k}}\right)$
be the gradient, and 
\begin{equation}
\left\Vert \nabla f\right\Vert _{L^{2}\left(\Omega\right)}^{2}:=\sum_{i=1}^{k}\int_{\Omega}\left|\frac{\partial f}{\partial x_{i}}\right|^{2}dx.\label{eq:f2-3-1}
\end{equation}
Finally, let $\lambda_{1}\left(N\right)=$ the finite eigenvalue for
the Neumann problem on $\Omega$ (NBP$\Omega$). Then,
\begin{equation}
\left\Vert f-\frac{1}{\left|\Omega\right|_{k}}\int_{\Omega}fdx\right\Vert _{L^{2}\left(\Omega\right)}^{2}\leq\frac{1}{\lambda_{1}\left(N\right)}\left\Vert \nabla f\right\Vert _{L^{2}\left(\Omega\right)}^{2}\label{eq:f2-3-2}
\end{equation}
holds for all $f\in L^{2}\left(\Omega\right)$ such that $\frac{\partial f}{\partial x_{i}}\in L^{2}\left(\Omega\right)$,
$1\leq i\leq k$.\end{rem}
\begin{defn}
A system of functions $\left\{ f_{n}\right\} _{n\in\mathbb{Z}}\subset\mathscr{H}_{F}$
is said to be a Bessel frame \cite{CM13} if there is a finite constant
$A$ such that \index{Bessel frame} 
\begin{equation}
\sum_{n\in\mathbb{Z}}\left|\left\langle f_{n},\xi\right\rangle _{\mathscr{H}_{F}}\right|^{2}\leq A\left\Vert \xi\right\Vert _{\mathscr{H}_{F}}^{2},\;\forall\xi\in\mathscr{H}_{F}.\label{eq:f2-5-1}
\end{equation}
\end{defn}
\begin{thm}
Let $F:\left(-1,1\right)\rightarrow\mathbb{C}$ be continuous and
p.d., and let $\left\{ f_{n}\right\} _{n\in\mathbb{Z}}\subset\mathscr{H}_{F}$
be the system (\ref{eq:m-4-3}) obtained by Shannon sampling in Fourier-domain;
then $\left\{ f_{n}\right\} $ is a Bessel frame, where we may take
$A=\lambda_{1}$ as frame bound in (\ref{eq:f2-5-1}). ($\lambda_{1}=$
the largest eigenvalue of the Mercer operator $T_{F}$.)\end{thm}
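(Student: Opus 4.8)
The statement to prove is that the Shannon system $\{f_n\}_{n\in\mathbb Z}\subset\mathscr H_F$ defined in \eqref{eq:m-4-3} is a Bessel frame, with bound $A=\lambda_1$, the top eigenvalue of the Mercer operator $T_F$. The natural strategy is to recognize the Shannon functions as the images under $T_F$ of an orthonormal system in $L^2(0,1)$, and then to transfer the Bessel inequality for that system in $L^2(0,1)$ through $T_F$, picking up exactly the factor $\lambda_1=\|T_F\|$. More precisely, I would set $e_n(x):=e^{i2\pi nx}$ for $n\in\mathbb Z$, which form an orthonormal basis of $L^2(0,1)$, and observe from the proof of Theorem \ref{thm:shannon} (see the computation around \eqref{eq:m-4-5}--\eqref{eq:m-4-6}) that $f_n=T_F e_n$, since $\widehat{e_n}(\lambda)=\mathrm{Sha}(\pi(\lambda-n))e^{-i\pi(\lambda-n)/2}$ up to the normalization already fixed in \eqref{eq:m-4-3}, and $(T_F\varphi)(x)=\int_{\mathbb R}e^{i\lambda x}\widehat\varphi(\lambda)\,d\mu(\lambda)$ for any $\varphi\in L^2(0,1)$.

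**Key steps.** First I would verify carefully that $f_n=T_F e_n$ as elements of $\mathscr H_F$; this is essentially bookkeeping with the formulas \eqref{eq:m-4-3}, \eqref{eq:m-5-1}--\eqref{eq:m-5-2}, and \eqref{eq:m-4-6}, using Fubini to justify the interchange of sum/integral exactly as in the proof of Theorem \ref{thm:shannon}. Second, for fixed $\xi\in\mathscr H_F$ I would use the adjoint relation from Theorem \ref{thm:mer2}: since $T_F:L^2(0,1)\to\mathscr H_F$ has adjoint $j=T_F^*:\mathscr H_F\to L^2(0,1)$ with $T_F^*\xi=j(\xi)$ realized as the continuous-function representative of $\xi$ on $\Omega=(0,1)$, we get
\[
\left\langle f_n,\xi\right\rangle_{\mathscr H_F}=\left\langle T_F e_n,\xi\right\rangle_{\mathscr H_F}=\left\langle e_n,T_F^*\xi\right\rangle_{L^2(0,1)}.
\]
Third, since $\{e_n\}$ is an ONB of $L^2(0,1)$, Parseval gives
\[
\sum_{n\in\mathbb Z}\left|\left\langle f_n,\xi\right\rangle_{\mathscr H_F}\right|^2=\sum_{n\in\mathbb Z}\left|\left\langle e_n,T_F^*\xi\right\rangle_{L^2(0,1)}\right|^2=\left\Vert T_F^*\xi\right\Vert_{L^2(0,1)}^2\le\left\Vert T_F\right\Vert^2\left\Vert \xi\right\Vert_{\mathscr H_F}^2.
\]
Fourth, I would identify $\|T_F\|=\lambda_1$. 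By Corollary \ref{cor:mer1}/\ref{cor:mer2}, $T_F$ is a positive trace-class operator on $L^2(0,1)$ with eigenvalues $\lambda_n>0$ arranged so that $\lambda_1$ is the largest, so $\|T_F\|_{L^2(0,1)\to L^2(0,1)}=\lambda_1$; and by \eqref{eq:mer23} the norm of $T_F$ as a map $L^2(0,1)\to\mathscr H_F$ equals the norm of $j=T_F^*$, which equals $\lambda_1$ as well (or one argues directly that $\|T_F\xi\|_{\mathscr H_F}^2=\langle\xi,T_F\xi\rangle_{L^2}\le\lambda_1\|\xi\|_{L^2}^2$ via the diagonalization \eqref{eq:mer27}). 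Combining the last two displays yields \eqref{eq:f2-5-1} with $A=\lambda_1$.

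**Main obstacle.** The delicate point is the operator-norm identity and the precise sense in which $T_F^*$ is the adjoint between the two different Hilbert spaces $\mathscr H_F$ and $L^2(0,1)$; in particular one must be careful that $T_F$ is genuinely bounded as a map into $\mathscr H_F$ (guaranteed here because $F$ is continuous p.d.\ on the closed interval and $T_F$ is Mercer/trace-class, cf.\ Lemma \ref{lem:mer-1} and Theorem \ref{thm:mer4}\eqref{enu:m-1-1}) and that the bound propagates to $\|T_F^*\|=\|T_F\|$ with the same constant $\lambda_1$. A cleaner alternative that sidesteps \eqref{eq:mer23} is to write, for $\xi\in\mathscr H_F$ with $L^2$-representative still called $\xi$, $\langle f_n,\xi\rangle_{\mathscr H_F}=\langle e_n,\xi\rangle_{L^2(0,1)}$ directly from \eqref{eq:mer24}, then apply Parseval to get $\sum_n|\langle f_n,\xi\rangle_{\mathscr H_F}|^2=\|\xi\|_{L^2(0,1)}^2\le\lambda_1\|\xi\|_{\mathscr H_F}^2$, where the last inequality is exactly Theorem \ref{thm:mer4}\eqref{enu:m-1-1} with $C_1=\lambda_1$. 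This is the shortest route and I would present it that way, invoking Theorem \ref{thm:mer2} for \eqref{eq:mer24} and Theorem \ref{thm:mer4}\eqref{enu:m-1-1} for the final estimate. I should also double-check that the constant in \eqref{eq:m-1-1} is really $\lambda_1$ under the normalization $\lambda_1\ge\lambda_2\ge\cdots$, which it is by the proof of that theorem.
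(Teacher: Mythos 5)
Your ``cleaner alternative'' is exactly the paper's proof: write $f_n=T_F e_n$ with $e_n(x)=e^{i2\pi nx}$ the Fourier ONB of $L^2(0,1)$, use the adjoint/reproducing relation to get $\left\langle f_n,\xi\right\rangle_{\mathscr H_F}=\left\langle e_n,\xi\right\rangle_{L^2(0,1)}$, apply Parseval, and finish with $\left\Vert \xi\right\Vert_{L^2(0,1)}^2\leq\lambda_1\left\Vert \xi\right\Vert_{\mathscr H_F}^2$ from Theorem \ref{thm:mer4}. (One small caution on your first route: the operator norm of $T_F:L^2(0,1)\to\mathscr H_F$ is $\sqrt{\lambda_1}$, not $\lambda_1$ --- it is its square that equals $\lambda_1$, as your own parenthetical computation $\left\Vert T_F\varphi\right\Vert_{\mathscr H_F}^2=\left\langle\varphi,T_F\varphi\right\rangle_{L^2}\leq\lambda_1\left\Vert\varphi\right\Vert_{L^2}^2$ shows --- so that route also lands on the correct bound $\lambda_1$.)
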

\begin{proof}
Let $\xi\in\mathscr{H}_{F}$, then 
\begin{eqnarray*}
\sum_{n\in\mathbb{Z}}\left|\left\langle f_{n},\xi\right\rangle _{\mathscr{H}_{F}}\right|^{2} & = & \sum_{n\in\mathbb{Z}}\left|\left\langle Te_{n},\xi\right\rangle _{\mathscr{H}_{F}}\right|^{2}\\
 & \underset{\text{Cor. \ensuremath{\left(\ref{cor:mer2}\right)}}}{=} & \sum_{n\in\mathbb{Z}}\left|\left\langle e_{n},\xi\right\rangle _{L^{2}\left(0,1\right)}\right|^{2}\\
 & \underset{\text{(Parseval)}}{=} & \left\Vert \xi\right\Vert _{L^{2}\left(0,1\right)}^{2}\\
 & \underset{\text{(Thm. \ensuremath{\left(\ref{thm:mer4}\right)})}}{\leq} & \lambda_{1}\left\Vert \xi\right\Vert _{\mathscr{H}_{F}}^{2}
\end{eqnarray*}
which is the desired conclusion.

At the start of the estimate above we used the Fourier basis $e_{n}\left(x\right)=e^{i2\pi nx}$,
$n\in\mathbb{Z}$, an ONB in $L^{2}\left(0,1\right)$; and the fact
that $f_{n}=T_{F}\left(e_{n}\right)$, $n\in\mathbb{Z}$; see the
details in the proof of Theorem \ref{thm:shannon}. \end{proof}
\begin{cor}
For every $f\in L^{2}\left(\Omega\right)\cap\mathscr{H}_{F}$, and
every $x\in\overline{\Omega}$ (including boundary points), we have
the following estimate:
\begin{equation}
\left|f\left(x\right)\right|\leq\left\Vert f\right\Vert _{\mathscr{H}_{F}}\label{eq:m-1-4}
\end{equation}
(Note that this is independent of $x$ and of $f$.)\end{cor}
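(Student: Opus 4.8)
The plan is to use the reproducing property together with the pointwise bound already recorded in the proof of Theorem \ref{thm:mer2}, and the normalization $F(e)=1$. Recall from (\ref{eq:mer22}) that for every $\xi\in\mathscr{H}_{F}$ and every $x\in\Omega$ we have $\left|\xi(x)\right|\le\left\Vert\xi\right\Vert_{\mathscr{H}_{F}}\sqrt{F(e)}=\left\Vert\xi\right\Vert_{\mathscr{H}_{F}}$. So on the open set $\Omega$ the estimate (\ref{eq:m-1-4}) is immediate; the only genuine point is to extend it to the boundary points $x\in\partial\Omega=\overline{\Omega}\setminus\Omega$.

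For the boundary, I would argue by continuity. First, since $f\in\mathscr{H}_{F}\subseteq C(\overline{\Omega})$ by (\ref{eq:m-3-1}) (using Lemma \ref{lem:F-bd} and assumption \ref{enu:mer3}), the function $f$ extends to a continuous function on the compact set $\overline{\Omega}$. Fix $x_{0}\in\partial\Omega$ and pick a sequence $x_{k}\in\Omega$ with $x_{k}\to x_{0}$; such a sequence exists because $\Omega$ is open and connected with $\overline{\Omega}$ its closure, hence $\partial\Omega\subseteq\overline{\Omega}$. Then $f(x_{0})=\lim_{k}f(x_{k})$ by continuity, and for each $k$ we have $\left|f(x_{k})\right|\le\left\Vert f\right\Vert_{\mathscr{H}_{F}}$ by the interior estimate just quoted. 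Passing to the limit gives $\left|f(x_{0})\right|\le\left\Vert f\right\Vert_{\mathscr{H}_{F}}$, which is (\ref{eq:m-1-4}).

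There is essentially no obstacle here; the corollary is a packaging of the contractivity of point evaluation on $\mathscr{H}_{F}$ (the content of (\ref{eq:mer21})--(\ref{eq:mer22})) plus the boundary-continuity of RKHS vectors. The one place that deserves a word is why the reproducing-kernel bound persists at the boundary: it does so purely through the continuous extension, not through any new kernel identity, since the kernel $F(x^{-1}y)$ itself extends continuously to $\overline{\Omega}\times\overline{\Omega}$ by Lemma \ref{lem:F-bd}, so the reproducing functional $\xi\mapsto\xi(x_{0})$ at a boundary point is again given by an inner product against a vector of norm $\sqrt{F(e)}=1$. Either route — direct limiting of the interior bound, or observing that $F(x_0^{-1}\cdot)\in\mathscr{H}_{F}$ with $\left\Vert F(x_0^{-1}\cdot)\right\Vert_{\mathscr{H}_F}^2=F(e)=1$ so that $\left|f(x_0)\right|=\left|\left\langle F(x_0^{-1}\cdot),f\right\rangle_{\mathscr{H}_F}\right|\le\left\Vert f\right\Vert_{\mathscr{H}_F}$ by Cauchy--Schwarz — closes the proof. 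I would present the first route, since it needs only what has already been displayed.
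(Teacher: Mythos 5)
Your proof is correct and is essentially the paper's argument: the paper simply cites Lemma \ref{lem:F-bd} (continuous extension of $\mathscr{H}_{F}$-vectors to $\overline{\Omega}$) together with the contractivity of point evaluation under the normalization $F(e)=1$, which is exactly the content of your appeal to (\ref{eq:mer22}) plus the limiting argument at the boundary. Both of your closing routes (limiting the interior bound, or Cauchy--Schwarz against the boundary kernel vector of norm $\sqrt{F(e)}=1$) are sound.
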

\begin{proof}
The estimate in (\ref{eq:m-1-4}) follows from Lemma \ref{lem:F-bd};
and (\ref{eq:mer-4-2}) in part (\ref{enu:m-1-2}) of Theorem \ref{thm:mer4}. \end{proof}
\begin{cor}
Let $\left\{ \xi_{n}\right\} $, $\left\{ \lambda_{n}\right\} $,
$T_{F}$, $\mathscr{H}_{F}$ and $L^{2}\left(\Omega\right)$ be as
above; and assume $\lambda_{1}\geq\lambda_{2}\geq\cdots$; then we
have the following details for operator norms
\begin{eqnarray}
\left\Vert T_{F}^{-1}\Big|_{span\left\{ \xi_{k}:k=1,\ldots,N\right\} }\right\Vert _{\mathscr{H}_{F}\rightarrow\mathscr{H}_{F}} & = & \left\Vert T_{F}^{-1}\Big|_{span\left\{ \xi_{k}:k=1,\ldots,N\right\} }\right\Vert _{L^{2}\left(\Omega\right)\rightarrow L^{2}\left(\Omega\right)}\label{eq:mer-1-5}\\
 & = & \frac{1}{\lambda_{N}}\longrightarrow\infty,\mbox{ as }N\rightarrow\infty.\nonumber 
\end{eqnarray}

\end{cor}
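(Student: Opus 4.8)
The statement to prove is the final Corollary: for the Mercer system $\{\xi_n\}$, $\{\lambda_n\}$ with $\lambda_1 \geq \lambda_2 \geq \cdots$, we have
\begin{equation*}
\left\Vert T_F^{-1}\Big|_{\mathrm{span}\{\xi_k : k=1,\ldots,N\}}\right\Vert_{\mathscr{H}_F \to \mathscr{H}_F} = \left\Vert T_F^{-1}\Big|_{\mathrm{span}\{\xi_k : k=1,\ldots,N\}}\right\Vert_{L^2(\Omega)\to L^2(\Omega)} = \frac{1}{\lambda_N} \to \infty.
\end{equation*}

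The plan is to compute both operator norms directly from the spectral data of the Mercer operator. First I would recall from Corollary \ref{cor:mer2} that $\{\sqrt{\lambda_n}\,\xi_n\}_{n\in\mathbb{N}}$ is an orthonormal basis for $\mathscr{H}_F$, while $\{\xi_n\}_{n\in\mathbb{N}}$ is orthonormal in $L^2(\Omega)$, and that $T_F \xi_n = \lambda_n \xi_n$. On the finite-dimensional subspace $V_N := \mathrm{span}\{\xi_1,\ldots,\xi_N\}$, which is the same subspace whether viewed inside $\mathscr{H}_F$ or inside $L^2(\Omega)$, the operator $T_F$ acts diagonally with eigenvalues $\lambda_1,\ldots,\lambda_N$, so $T_F^{-1}$ restricted to $V_N$ is diagonal with eigenvalues $1/\lambda_1,\ldots,1/\lambda_N$. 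The operator norm of a diagonal operator on a finite-dimensional inner-product space, with respect to an orthogonal eigenbasis, equals the largest absolute value of an eigenvalue; since the ordering $\lambda_1 \geq \cdots \geq \lambda_N > 0$ gives $1/\lambda_N$ as the largest among $\{1/\lambda_k\}$, both norms equal $1/\lambda_N$.

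The one subtlety worth spelling out is that the norms in the two Hilbert spaces genuinely agree here, and this is \emph{not} automatic for a general operator — it works because the \emph{same} orthogonal family (up to the scaling $\sqrt{\lambda_n}$) simultaneously diagonalizes $T_F^{-1}$ in both norms. Concretely, for $f = \sum_{k=1}^N c_k \xi_k \in V_N$ I would write $\Vert f\Vert_{L^2(\Omega)}^2 = \sum_{k=1}^N |c_k|^2$ and $\Vert f\Vert_{\mathscr{H}_F}^2 = \sum_{k=1}^N |c_k|^2/\lambda_k$ (using that $\sqrt{\lambda_k}\xi_k$ is an ONB of $\mathscr{H}_F$, so $\xi_k$ has $\mathscr{H}_F$-norm $1/\sqrt{\lambda_k}$), and then $T_F^{-1} f = \sum_{k=1}^N (c_k/\lambda_k)\xi_k$. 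A short Rayleigh-quotient computation in each norm gives
\begin{equation*}
\frac{\Vert T_F^{-1} f\Vert_{L^2(\Omega)}^2}{\Vert f\Vert_{L^2(\Omega)}^2} = \frac{\sum_k |c_k|^2/\lambda_k^2}{\sum_k |c_k|^2} \leq \frac{1}{\lambda_N^2}, \qquad \frac{\Vert T_F^{-1} f\Vert_{\mathscr{H}_F}^2}{\Vert f\Vert_{\mathscr{H}_F}^2} = \frac{\sum_k |c_k|^2/\lambda_k^3}{\sum_k |c_k|^2/\lambda_k} \leq \frac{1}{\lambda_N^2},
\end{equation*}
with equality attained at $f = \xi_N$ in both cases. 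This proves both equalities.

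Finally, for the divergence assertion $1/\lambda_N \to \infty$ as $N \to \infty$: by Lemma \ref{lem:mer-1} and Corollary \ref{cor:mer1}, $\sum_{n} \lambda_n = \mathrm{trace}(T_F) < \infty$ (equal to $a$, or more generally $|\Omega|$), so $\lambda_n \to 0$, hence $1/\lambda_N \to \infty$; I would note that since $\mathscr{H}_F$ is infinite-dimensional in the cases of interest there are infinitely many positive $\lambda_n$, so the limit is genuinely over an infinite sequence. I do not expect any real obstacle here — the whole argument is a finite-dimensional spectral computation plus the trace-class summability already established; the only thing requiring care is keeping the two inner products straight and pointing out why the two operator norms coincide (which is the content of the first equality and is precisely why the statement is stated as it is).
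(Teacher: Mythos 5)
Your proof is correct, and in fact the paper supplies no proof of this corollary at all (it immediately remarks that details are omitted), so your Rayleigh-quotient computation in the two inner products fills that gap; it is exactly the argument implicit in the surrounding material, matching the manipulations in the proof of Theorem \ref{thm:mer4} and using Corollary \ref{cor:mer2} for the fact that $\left\{ \sqrt{\lambda_{n}}\xi_{n}\right\}$ is an ONB of $\mathscr{H}_{F}$ while $\left\{ \xi_{n}\right\}$ is orthonormal in $L^{2}\left(\Omega\right)$. Your closing observations — that the two operator norms coincide because the same orthogonal family diagonalizes $T_{F}^{-1}$ in both spaces, and that $1/\lambda_{N}\rightarrow\infty$ follows from $\sum_{n}\lambda_{n}=\left|\Omega\right|<\infty$ together with $\dim\mathscr{H}_{F}=\infty$ — are precisely the points that need to be made explicit.
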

Since we shall not have occasion to use this more general version
of the Mercer-operators we omit details below, and restrict attention
to the case of finite interval in $\mathbb{R}$.
\begin{rem}
Some of the conclusions in Theorem \ref{thm:mer2} hold even if conditions
\ref{enu:mer1}-\ref{enu:mer4} are relaxed. But condition \ref{enu:mer3}
ensures that $L^{2}\left(\Omega\right)$ has a realization as a subspace
of $\mathscr{H}_{F}$; see eq. (\ref{eq:mer18}). By going to unbounded
sets $\Omega$ we give up this. 

Even if $\Omega\subset G$ is unbounded, then the operator $T_{F}$
in (\ref{eq:mer0}) is still well-defined; and it may be considered
as a possibly unbounded linear operator as follow:
\begin{equation}
L^{2}\left(\Omega\right)\stackrel{T_{F}}{\longrightarrow}\mathscr{H}_{F}\label{eq:mer-2-1}
\end{equation}
with dense domain $C_{c}\left(\Omega\right)$ in $L^{2}\left(\Omega\right)$.
(If $G$ is a Lie group, we may take $C_{c}^{\infty}\left(\Omega\right)$
as dense domain for $T_{F}$ in (\ref{eq:mer-2-1}).)\end{rem}
\begin{lem}
\label{lem:mer-2-2}Let $\left(\Omega,F\right)$ be as above, but
now assume only conditions \ref{enu:mer1} and \ref{enu:mer2} for
the subset $\Omega\subset G$. 

Then the operator $T_{F}$ in (\ref{eq:mer-2-1}) is a \uline{closable}
operator from $L^{2}\left(\Omega\right)$ into $\mathscr{H}_{F}$;
i.e., the closure of the graph of $T_{F}$, as a subspace in $L^{2}\left(\Omega\right)\times\mathscr{H}_{F}$,
is the graph of a (closed) operator $\overline{T_{F}}$ from $L^{2}\left(\Omega\right)$
into $\mathscr{H}_{F}$; still with $dom\left(\overline{T_{F}}\right)$
dense in $L^{2}\left(\Omega\right)$. \index{operator!closed}\end{lem}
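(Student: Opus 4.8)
The plan is to invoke the general Hilbert-space criterion that a densely defined operator is closable if and only if its adjoint is densely defined, and then to identify a dense domain for $T_{F}^{*}:\mathscr{H}_{F}\to L^{2}(\Omega)$; this is the non-compact counterpart of the computation underlying Theorem \ref{thm:mer2}. Recall from the RKHS facts used already in Lemma \ref{lem:lcg-bdd} that every $\xi\in\mathscr{H}_{F}$ has a canonical realization as a continuous function on $\Omega$, and that the reproducing property together with $\left\Vert F_{x}\right\Vert _{\mathscr{H}_{F}}^{2}=F(e)$ (where $F_{x}$ is the reproducing-kernel vector at $x$) gives the pointwise bound $\left|\xi(x)\right|\le\sqrt{F(e)}\,\left\Vert \xi\right\Vert _{\mathscr{H}_{F}}$ for all $x\in\Omega$. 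Also, for $\varphi\in C_{c}(\Omega)$ one has $\left\langle F_{\varphi},\xi\right\rangle _{\mathscr{H}_{F}}=\int_{\Omega}\overline{\varphi(x)}\,\xi(x)\,dx$, by writing $F_{\varphi}=\int_{\Omega}\varphi(y)F_{y}\,dy$ as a Bochner integral in $\mathscr{H}_{F}$ and using the reproducing property. Hence, if the realization of $\xi$ lies in $L^{2}(\Omega)$, then $\xi\in\mathrm{dom}(T_{F}^{*})$ and $T_{F}^{*}\xi=\xi$; in particular $\mathrm{dom}(T_{F}^{*})$ contains the subspace $\mathscr{H}_{F}^{(2)}$ of all $\xi\in\mathscr{H}_{F}$ whose realization is square-integrable on $\Omega$, and closability of $T_{F}$ reduces to the assertion that $\mathscr{H}_{F}^{(2)}$ is dense in $\mathscr{H}_{F}$.

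A self-contained alternative, which may be cleaner to write up, is the direct closability argument. Suppose $\varphi_{n}\in C_{c}(\Omega)$ with $\varphi_{n}\to0$ in $L^{2}(\Omega)$ and $F_{\varphi_{n}}\to\eta$ in $\mathscr{H}_{F}$. Fix $m$ and compute $\left\langle F_{\varphi_{m}},\eta\right\rangle _{\mathscr{H}_{F}}=\lim_{n}\left\langle F_{\varphi_{m}},F_{\varphi_{n}}\right\rangle _{\mathscr{H}_{F}}=\lim_{n}\int_{\Omega}\overline{\varphi_{m}(x)}\,F_{\varphi_{n}}(x)\,dx=\int_{\Omega}\overline{\varphi_{m}(x)}\,\eta(x)\,dx$, where the last step is dominated convergence: $F_{\varphi_{n}}(x)\to\eta(x)$ pointwise by the reproducing property, and $\left|F_{\varphi_{n}}(x)\right|\le\sqrt{F(e)}\,\sup_{n}\left\Vert F_{\varphi_{n}}\right\Vert _{\mathscr{H}_{F}}$ with $\varphi_{m}\in C_{c}(\Omega)$ as the integrable dominating weight. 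Now let $m\to\infty$: the left side tends to $\left\Vert \eta\right\Vert _{\mathscr{H}_{F}}^{2}$, and the right side tends to $0$ provided $\eta$ is square-integrable on $\Omega$ (since then $\varphi_{m}\to0$ in $L^{2}(\Omega)$ forces $\int_{\Omega}\overline{\varphi_{m}}\,\eta\to0$); one concludes $\eta=0$.

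The main obstacle is exactly the integrability step common to both routes, namely controlling how much of $\mathscr{H}_{F}$ lies inside $L^{2}(\Omega)$. If $\overline{\Omega}$ is compact — condition \ref{enu:mer3} of Remark \ref{rem:mer1}, equivalently $\Omega$ of finite Haar measure — then the pointwise bound above gives $\mathscr{H}_{F}\subseteq L^{2}(\Omega)$ outright, so $\mathscr{H}_{F}^{(2)}=\mathscr{H}_{F}$, $\eta$ is automatically in $L^{2}(\Omega)$, and the argument closes immediately; this is just Theorem \ref{thm:mer2} and eq. (\ref{eq:mer24}). In the genuinely unbounded case one would have to establish the density of $\mathscr{H}_{F}^{(2)}$ by exhausting $\Omega$ by open subsets with compact closure and truncating the approximants $F_{\varphi_{n}}$, and then checking that the truncated functions remain in $\mathscr{H}_{F}$ while still approximating the target. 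That verification is where the real work lies, and it is plausibly the point at which some regularity of $\Omega$ beyond \ref{enu:mer1}--\ref{enu:mer2} (e.g. $\sigma$-compactness, or local finiteness of Haar measure) has to be brought in.
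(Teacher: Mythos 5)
Your second, direct argument is essentially the paper's own proof of Lemma \ref{lem:mer-2-2}: the paper takes $f_{n}\in C_{c}\left(\Omega\right)$ with $f_{n}\rightarrow0$ in $L^{2}\left(\Omega\right)$ and $T_{F}f_{n}\rightarrow\xi$ in $\mathscr{H}_{F}$, writes $\left\langle \xi,T_{F}f_{n}\right\rangle _{\mathscr{H}_{F}}=\int_{\Omega}\overline{\xi\left(x\right)}f_{n}\left(x\right)dx$ (this is eq.\ (\ref{eq:mer-2-4}), an immediate consequence of the reproducing property as in Lemma \ref{lem:lcg-bdd} --- so your extra limit-in-$n$ step with dominated convergence is not needed), and then lets $n\rightarrow\infty$: the left side tends to $\left\Vert \xi\right\Vert _{\mathscr{H}_{F}}^{2}$ and the right side is claimed to tend to $0$. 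The obstacle you flag is genuine and is \emph{not} resolved in the paper either: under only \ref{enu:mer1}--\ref{enu:mer2} the element $\xi$ need not lie in $L^{2}\left(\Omega\right)$, and $f_{n}\rightarrow0$ in $L^{2}\left(\Omega\right)$ does not by itself force $\int_{\Omega}\overline{\xi}f_{n}\rightarrow0$ when $\Omega$ has infinite Haar measure (the uniform bound $\left|\xi\left(x\right)\right|\leq\left\Vert \xi\right\Vert _{\mathscr{H}_{F}}$ is not enough, since the supports of the $f_{n}$ may grow); the paper's closing ``domination'' remark only bounds $\sup_{n}\left\Vert T_{F}f_{n}\right\Vert _{\mathscr{H}_{F}}$ and does not touch this point. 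Indeed the example $G=\mathbb{R}$, $\Omega=\mathbb{R}$, $F\equiv1$ (so $\mathscr{H}_{F}=\mathbb{C}\mathbf{1}$ and $T_{F}\varphi=\left(\int\varphi\right)\mathbf{1}$) shows that some hypothesis beyond \ref{enu:mer1}--\ref{enu:mer2} is needed for closability, confirming your suspicion; under \ref{enu:mer3} (finite measure) both your routes and the paper's close immediately via Theorem \ref{thm:mer2}. So your proposal matches the paper's approach, and your honest identification of the missing integrability step is, if anything, more careful than the source.
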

\begin{proof}
Using a standard lemma on unbounded operators, see \cite[ch.13]{Rud73},
we need only show that the following implication holds:\index{operator!unbounded}

Given $\left\{ f_{n}\right\} \subset C_{c}\left(\Omega\right)\left(\subset L^{2}\left(\Omega\right)\right)$,
suppose $\exists\,\xi\in\mathscr{H}_{F}$; and suppose the following
two limits holds:
\begin{align}
 & \lim_{n\rightarrow\infty}\left\Vert f_{n}\right\Vert _{L^{2}\left(\Omega\right)}=0,\mbox{ and}\label{eq:mer-2-2}\\
 & \lim_{n\rightarrow\infty}\left\Vert \xi-T_{F}\left(f_{n}\right)\right\Vert _{\mathscr{H}_{F}}=0.\label{eq:mer-2-3}
\end{align}
Then, it follows that $\xi=0$ in $\mathscr{H}_{F}$.

Now assume $\left\{ f_{n}\right\} $ and $\xi$ satisfying (\ref{eq:mer-2-2})-(\ref{eq:mer-2-3});
the by the reproducing property in $\mathscr{H}_{F}$, we have 
\begin{equation}
\left\langle \xi,T_{F}f_{n}\right\rangle _{\mathscr{H}_{F}}=\int_{\Omega}\overline{\xi\left(x\right)}f_{n}\left(x\right)dx\label{eq:mer-2-4}
\end{equation}
Using (\ref{eq:mer-2-3}), we get 
\[
\lim_{n\rightarrow\infty}\left(\mbox{LHS}\right)_{\left(\ref{eq:mer-2-4}\right)}=\left\langle \xi,\xi\right\rangle _{\mathscr{H}_{F}}=\left\Vert \xi\right\Vert _{\mathscr{H}_{F}}^{2};
\]
and using (\ref{eq:mer-2-2}), we get 
\[
\lim_{n\rightarrow\infty}\left(\mbox{RHS}\right)_{\left(\ref{eq:mer-2-4}\right)}=0.
\]
The domination here is justified by (\ref{eq:mer-2-3}). Indeed, if
(\ref{eq:mer-2-3}) holds, $\exists\, n_{0}$ s.t.
\[
\left\Vert \xi-T_{F}\left(f_{n}\right)\right\Vert _{\mathscr{H}_{F}}\leq1,\;\forall n\geq n_{0},
\]
and therefore,
\begin{equation}
\sup_{n\in\mathbb{N}}\left\Vert T_{F}\left(f_{n}\right)\right\Vert _{\mathscr{H}_{F}}\leq\max_{n\leq n_{0}}\Bigl(1+\left\Vert T_{F}\left(f_{n}\right)\right\Vert _{\mathscr{H}_{F}}\Bigr)<\infty.\label{eq:mer-2-5}
\end{equation}
The desired conclusion follows; we get $\xi=0$ in $\mathscr{H}_{F}$.\end{proof}
\begin{rem}
The conclusion in Lemma \ref{lem:mer-2-2} is the assertion that the
closure of the graph of $T_{F}$ is again the graph of a closed operator,
called the closure. Hence the importance of \textquotedblleft closability.\textquotedblright{}
Once we have existence of the closure of the operator $T_{F}$, as
a closed operator, we will denote this closed operator also by the
same $T_{F}$. This helps reduce the clutter in operator symbols to
follow. From now on, $T_{F}$ will be understood to be the closed
operator obtained in Lemma \ref{lem:mer-2-2}.\index{operator!closed}
\end{rem}

\begin{rem}
If in Lemma \ref{lem:mer-2-2}, for $\left(F,\Omega\right)$ the set
$\Omega$ also satisfies \ref{enu:mer3}-\ref{enu:mer4}, then the
operator $T_{F}$ in (\ref{eq:mer-2-1}) is in fact bounded; but in
general it is not; see the example below with $G=\mathbb{R}$, and
$\Omega=\mathbb{R}_{+}$.\end{rem}
\begin{cor}
\label{cor:mer-2-4}Let $\Omega$, $G$ and $F$ be as in Lemma \ref{lem:mer-2-2},
i.e., with $\Omega$ possibly unbounded, and let $T_{F}$ denote the
closed operator obtained from (\ref{eq:mer-2-1}), and the conclusion
in the lemma. Then we get the following two conclusions:\index{operator!selfadjoint}
\begin{enumerate}
\item \label{enu:mer-2-1}$T_{F}^{*}T_{F}$ is selfadjoint with dense domain
in $L^{2}\left(\Omega\right)$, and 
\item \label{enu:mer-2-2}$T_{F}T_{F}^{*}$ is selfadjoint with dense domain
in $\mathscr{H}_{F}$.
\end{enumerate}
\end{cor}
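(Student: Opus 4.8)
The plan is to deduce both assertions from von Neumann's classical theorem on operators of the form $S^{*}S$, once the preliminary structural facts about $T_{F}$ are in place. First I would record that, by Lemma \ref{lem:mer-2-2}, the operator $T_{F}\colon L^{2}\left(\Omega\right)\to\mathscr{H}_{F}$ of (\ref{eq:mer-2-1}) is closable, and --- adopting the convention stated in the remark following that lemma --- we replace $T_{F}$ by its closure. Thus $T_{F}$ is a \emph{closed} operator with dense domain $\mathrm{dom}\left(T_{F}\right)\supseteq C_{c}\left(\Omega\right)$ in $L^{2}\left(\Omega\right)$, mapping into $\mathscr{H}_{F}$ (density is immediate, since $C_{c}\left(\Omega\right)$, or $C_{c}^{\infty}\left(\Omega\right)$ in the Lie case, lies in the domain). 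Since a closable densely defined operator between Hilbert spaces has a densely defined adjoint, $T_{F}^{*}\colon\mathscr{H}_{F}\to L^{2}\left(\Omega\right)$ is itself a closed, densely defined operator, and $\left(T_{F}^{*}\right)^{*}=T_{F}$ because $T_{F}$ is closed.

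Next I would invoke von Neumann's theorem (see e.g. \cite[Ch.~13]{Rud73}, or \cite{DS88b}): if $S$ is a closed, densely defined operator from a Hilbert space $\mathscr{H}_{1}$ into a Hilbert space $\mathscr{H}_{2}$, then $S^{*}S$, with domain $\mathrm{dom}\left(S^{*}S\right)=\left\{u\in\mathrm{dom}\left(S\right)\::\:Su\in\mathrm{dom}\left(S^{*}\right)\right\}$, is a selfadjoint (indeed non-negative) operator in $\mathscr{H}_{1}$; moreover $\mathrm{dom}\left(S^{*}S\right)$ is a core for $S$, so in particular it is dense in $\mathscr{H}_{1}$, equivalently $I+S^{*}S$ is a bijection of $\mathrm{dom}\left(S^{*}S\right)$ onto $\mathscr{H}_{1}$ with bounded everywhere-defined inverse. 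Applying this with $S=T_{F}$ (so $\mathscr{H}_{1}=L^{2}\left(\Omega\right)$, $\mathscr{H}_{2}=\mathscr{H}_{F}$) gives conclusion (\ref{enu:mer-2-1}): $T_{F}^{*}T_{F}$ is selfadjoint with dense domain in $L^{2}\left(\Omega\right)$. Applying it with $S=T_{F}^{*}$ (so $S^{*}=T_{F}$, $\mathscr{H}_{1}=\mathscr{H}_{F}$, $\mathscr{H}_{2}=L^{2}\left(\Omega\right)$) gives conclusion (\ref{enu:mer-2-2}): $T_{F}T_{F}^{*}$ is selfadjoint with dense domain in $\mathscr{H}_{F}$.

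There is no genuine obstacle here beyond this bookkeeping: the one thing to be sure of is that $T_{F}$ is closable so that $T_{F}^{*}$ is densely defined, and that is precisely the content of Lemma \ref{lem:mer-2-2}. If desired, I would add the remarks that both $T_{F}^{*}T_{F}$ and $T_{F}T_{F}^{*}$ are non-negative, that they have the same non-zero spectrum, and that when $\Omega$ additionally satisfies \ref{enu:mer3}--\ref{enu:mer4} (so $T_{F}$ is bounded), the common value $\left\Vert T_{F}^{*}T_{F}\right\Vert=\left\Vert T_{F}T_{F}^{*}\right\Vert=\left\Vert T_{F}\right\Vert^{2}$ recovers the spectral constant $\lambda_{1}$ of Theorem \ref{thm:mer4}.
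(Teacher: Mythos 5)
Your proof is correct and is essentially the paper's own argument spelled out: the paper simply cites the fundamental theorem for closed operators (\cite[Theorem 13.13]{Rud73}, i.e.\ von Neumann's theorem that $S^{*}S$ is selfadjoint for a closed densely defined $S$), applied to the closed operator $T_{F}$ furnished by Lemma \ref{lem:mer-2-2}. Your additional bookkeeping (density of the domain, $\left(T_{F}^{*}\right)^{*}=T_{F}$, and the second application with $S=T_{F}^{*}$) is exactly what that citation suppresses.
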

\begin{proof}
This is an application of the fundamental theorem for closed operators;
see \cite[Theorem 13.13]{Rud73}.\end{proof}
\begin{rem}
\label{rem:mer-2-5}The significance of the conclusions (\ref{enu:mer-2-1})-(\ref{enu:mer-2-2})
in the corollary is that we may apply the spectral theorem to the
respective selfadjoint operators in order to get that $\left(T_{F}^{*}T_{F}\right)^{1/2}$
is a well-defined selfadjoint operator in $L^{2}\left(\Omega\right)$;
and that $\left(T_{F}T_{F}^{*}\right)^{1/2}$ well-defined and selfadjoint
in $\mathscr{H}_{F}$. 

Moreover, by the polar decomposition applied to $T_{F}$ (see \cite[ch. 13]{Rud73}),
we conclude that:\index{polar decomposition}
\begin{equation}
\mbox{spec}\left(T_{F}^{*}T_{F}\right)\backslash\left\{ 0\right\} =\mbox{spec}\left(T_{F}T_{F}^{*}\right)\backslash\left\{ 0\right\} .\label{eq:mer-2-6}
\end{equation}
\end{rem}
\begin{thm}
\label{thm:mer3}Assume $F$, $G$, $\Omega$, and $T_{F}$, are as
above, where $T_{F}$ denotes the closed operator $L^{2}\left(\Omega\right)\stackrel{T}{\longrightarrow}\mathscr{H}_{F}$.
We are assuming that $G$ is a Lie group, $\Omega$ satisfies \ref{enu:mer1}-\ref{enu:mer2}.
Let $X$ be a vector in the Lie algebra of $G$, $X\in La$$\left(G\right)$,
and define $D_{X}^{\left(F\right)}$ as a skew-Hermitian operator
in $\mathscr{H}_{F}$ as follows:
\begin{align}
dom\left(D_{X}^{\left(F\right)}\right) & =\left\{ T_{F}\varphi\:\big|\:\varphi\in C_{c}^{\infty}\left(\Omega\right)\right\} ,\mbox{ and}\label{eq:mer-2-7}\\
D_{X}^{\left(F\right)}\left(F_{\varphi}\right) & =F_{\widetilde{X}\varphi}\nonumber 
\end{align}
where
\begin{align}
\bigl(\widetilde{X}\varphi\bigr)\left(g\right)= & \lim_{t\rightarrow0}\frac{1}{t}\left(\varphi\left(\exp\left(-tX\right)g\right)-\varphi\left(g\right)\right)\label{eq:mer-2-8}
\end{align}
for all $\varphi\in C_{c}^{\infty}\left(\Omega\right)$, and all $g\in\Omega$. 

Then $\widetilde{X}$ defines a skew-Hermitian operator in $L^{2}\left(\Omega\right)$
with dense domain $C_{c}^{\infty}\left(\Omega\right)$. (It is closable,
and we shall denote its closure also by $\widetilde{X}$.)\index{operator!skew-Hermitian}

We get 
\begin{equation}
D_{X}^{\left(F\right)}T_{F}=T_{F}\widetilde{X}\label{eq:mer-2-9}
\end{equation}
on the domain of $\widetilde{X}$; or equivalently
\begin{equation}
D_{X}^{\left(F\right)}=T_{F}\widetilde{X}T_{F}^{-1}.\label{eq:mer-2-10}
\end{equation}
\end{thm}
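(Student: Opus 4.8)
The plan is to peel the statement into three essentially independent pieces and to observe that, once the two operators are known to be well defined, the intertwining relations $(\ref{eq:mer-2-9})$--$(\ref{eq:mer-2-10})$ are almost formal. The three pieces are: (a) $\widetilde X$, as given by $(\ref{eq:mer-2-8})$ on the dense domain $C_c^\infty(\Omega)\subset L^2(\Omega)$, is skew-Hermitian and closable; (b) $D_X^{(F)}$ of $(\ref{eq:mer-2-7})$ is a well-defined skew-Hermitian operator in $\mathscr H_F$ (this is the Lie-group analogue of Lemma \ref{lem:DF}, and indeed of the Lie-algebra representation lemma in \subref{lie}); (c) the identity $T_F\varphi=F_\varphi$ turns $D_X^{(F)}T_F=T_F\widetilde X$ into a one-line computation, after which $(\ref{eq:mer-2-10})$ follows by checking that $T_F^{-1}$ makes sense modulo $\mathrm{Ker}(T_F)$.

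For (a), I would introduce, for $\varphi\in C_c^\infty(\Omega)$ and $t$ small, the locally defined one-parameter family $(U_t\varphi)(g):=\varphi(\exp_G(-tX)g)$. Since the chosen Haar measure is left-invariant and the flow is by left translations, $\{U_t\}$ acts by isometries of $L^2(\Omega)$ for small $t$ (the support of $\varphi$ stays inside the open set $\Omega$), with generator $\widetilde X$. Differentiating $\|U_t\varphi\|_{L^2(\Omega)}^2=\|\varphi\|_{L^2(\Omega)}^2$ at $t=0$ — equivalently, a direct integration by parts along the flow — gives $\langle\widetilde X\varphi,\psi\rangle_{L^2(\Omega)}+\langle\varphi,\widetilde X\psi\rangle_{L^2(\Omega)}=0$ for all $\varphi,\psi\in C_c^\infty(\Omega)$, so $\widetilde X$ is skew-symmetric; since $C_c^\infty(\Omega)\subset\mathrm{dom}(\widetilde X^*)$ (with $\widetilde X^*\supseteq-\widetilde X$ there), $\widetilde X^*$ is densely defined, hence $\widetilde X$ is closable, and I denote its closure again $\widetilde X$.

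For (b) and (c), the key computational fact is $\widetilde X F_\psi=F_{\widetilde X\psi}$, proved by differentiating $(\ref{eq:li-5})$ at $t=0$ and substituting $y\mapsto\exp_G(-tX)y$ (left Haar invariance). Combined with the reproducing-kernel identity $\langle F_{\widetilde X\varphi},F_\psi\rangle_{\mathscr H_F}=\int_\Omega\overline{(\widetilde X\varphi)(x)}F_\psi(x)\,dx$ and skew-symmetry of $\widetilde X$ on $L^2(\Omega)$, this yields $\langle F_{\widetilde X\varphi},F_\psi\rangle_{\mathscr H_F}+\langle F_\varphi,F_{\widetilde X\psi}\rangle_{\mathscr H_F}=0$. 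Hence $F_\varphi=0$ in $\mathscr H_F$ forces $\langle F_{\widetilde X\varphi},F_\psi\rangle_{\mathscr H_F}=0$ for all $\psi$, and density of $\{F_\psi\}$ (Lemma \ref{lem:dense}, in its Lie-group form) gives $F_{\widetilde X\varphi}=0$; so $D_X^{(F)}$ is well defined, and the same identity shows it is skew-Hermitian. Finally, since $T_F\varphi=F_\varphi$ by $(\ref{eq:mer0})$ and $(\ref{eq:li-5})$, one has on $C_c^\infty(\Omega)$
\[
D_X^{(F)}T_F\varphi=D_X^{(F)}F_\varphi=F_{\widetilde X\varphi}=T_F(\widetilde X\varphi),
\]
which is $(\ref{eq:mer-2-9})$; and the well-definedness computation also shows $\widetilde X$ maps $\mathrm{Ker}(T_F)\cap C_c^\infty(\Omega)$ into $\mathrm{Ker}(T_F)$, so $T_F\widetilde X T_F^{-1}$ is unambiguous on $\mathrm{dom}(D_X^{(F)})=\{F_\varphi:\varphi\in C_c^\infty(\Omega)\}$ — reading $T_F^{-1}$ modulo $\mathrm{Ker}(T_F)$ as in the Convention following the Corollary — and equals $D_X^{(F)}$ there, which is $(\ref{eq:mer-2-10})$.

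The main obstacle is not depth but bookkeeping: keeping the left-versus-right Haar conventions aligned with the choice $\exp_G(-tX)g$ in $(\ref{eq:mer-2-8})$ so that the flow is measure-preserving; distinguishing "skew-symmetric" from "closable" (resolved by $C_c^\infty(\Omega)\subset\mathrm{dom}(\widetilde X^*)$); and giving meaning to $T_F^{-1}$ when $\mathrm{Ker}(T_F)\neq 0$ (resolved by the invariance of that kernel under $\widetilde X$). The one genuinely technical point is justifying the integration by parts and the change of variables globally on $\Omega$ rather than only locally — legitimate because every $\varphi\in C_c^\infty(\Omega)$ has compact support inside the open set $\Omega$, and the flow of $\widetilde X$ keeps that support in $\Omega$ for small time, so all boundary terms vanish.
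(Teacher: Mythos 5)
Your proposal is correct and follows essentially the same route as the paper: the identity $D_{X}^{(F)}T_{F}\varphi=D_{X}^{(F)}F_{\varphi}=F_{\widetilde{X}\varphi}=T_{F}\widetilde{X}\varphi$ on the core $C_{c}^{\infty}(\Omega)$ is exactly the paper's argument for (\ref{eq:mer-2-9}), and the supporting facts you supply (skew-symmetry of $\widetilde{X}$ from the measure-preserving local flow, well-definedness of $D_{X}^{(F)}$ via the Lemma \ref{lem:DF}-style argument, and invariance of $\mathrm{Ker}(T_{F})$ to make sense of $T_{F}^{-1}$ in (\ref{eq:mer-2-10})) are precisely what the paper delegates to Lemma \ref{lem:mer-2-2} and Corollary \ref{cor:mer-2-4}. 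You have simply made explicit the details the paper leaves to its cited lemmas.
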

\begin{proof}
By definition, for all $\varphi\in C_{c}^{\infty}\left(\Omega\right)$,
we have 
\[
\left(D_{X}^{\left(F\right)}T_{F}\right)\left(\varphi\right)=D_{X}^{\left(F\right)}F_{\varphi}=F_{\widetilde{X}\varphi}=\left(T_{F}\widetilde{X}\right)\left(\varphi\right).
\]
Since $\left\{ F_{\varphi}\:\big|\:\varphi\in C_{c}^{\infty}\left(\Omega\right)\right\} $
is a core-domain, (\ref{eq:mer-2-9}) follows. Then the conclusions
in the theorem follow from a direct application of Lemma \ref{lem:mer-2-2},
and Corollary \ref{cor:mer-2-4}; see also Remark \ref{rem:mer-2-5}.\end{proof}
\begin{cor}
\label{cor:mer-2-10}For the respective adjoint operators in (\ref{eq:mer-2-10}),
we have 
\begin{equation}
\bigl(D_{X}^{\left(F\right)}\bigr)^{*}=T_{F}^{*-1}\widetilde{X}^{*}T_{F}^{*}.\label{eq:mer-2-11}
\end{equation}
\end{cor}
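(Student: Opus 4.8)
The statement to prove is the formula
\[
\bigl(D_{X}^{\left(F\right)}\bigr)^{*}=T_{F}^{*-1}\widetilde{X}^{*}T_{F}^{*}
\]
for the adjoint of the operator $D_{X}^{\left(F\right)}=T_{F}\widetilde{X}T_{F}^{-1}$ obtained in Theorem \ref{thm:mer3}. The plan is to take the adjoint of the factored expression $D_{X}^{\left(F\right)}=T_{F}\widetilde{X}T_{F}^{-1}$ formally, using the elementary rule $(ABC)^{*}=C^{*}B^{*}A^{*}$, which gives $\bigl(D_{X}^{\left(F\right)}\bigr)^{*}=(T_{F}^{-1})^{*}\widetilde{X}^{*}T_{F}^{*}=T_{F}^{*-1}\widetilde{X}^{*}T_{F}^{*}$, and then to justify that this formal computation is legitimate for the \emph{unbounded} closed operators in play, so that equality of operators (with the correct domains) genuinely holds.

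First I would record the operator-theoretic preliminaries. By Lemma \ref{lem:mer-2-2}, $T_{F}:L^{2}(\Omega)\to\mathscr{H}_{F}$ is a densely defined closed operator, and by Theorem \ref{thm:mer2}(\ref{enu:mer-3}) (with its proof via $\mathrm{Ker}(j)=(\mathrm{Ran}(j^{*}))^{\perp}=(\mathrm{Ran}(T_{F}))^{\perp}=0$, together with $\mathrm{Ran}(T_{F})$ dense in $\mathscr{H}_{F}$) the operator $T_{F}$ has trivial kernel and dense range; hence $T_{F}^{-1}$ is a well-defined, densely defined, closed operator $\mathscr{H}_{F}\to L^{2}(\Omega)$, and $T_{F}^{*}$ likewise has trivial kernel and dense range, so $T_{F}^{*-1}=(T_{F}^{-1})^{*}$ makes sense (the identity $(T_{F}^{-1})^{*}=(T_{F}^{*})^{-1}$ for closed injective densely-defined operators with dense range is standard, cf. \cite[ch.~13]{Rud73}). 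Similarly $\widetilde{X}$ is closable on $L^{2}(\Omega)$ with closure again denoted $\widetilde{X}$, so $\widetilde{X}^{*}$ is a well-defined closed operator. Then I would invoke the general adjoint-of-a-product results for closed operators: if $A$ is boundedly invertible, or more generally when the relevant composites are densely defined and one has the needed closedness, $(AB)^{*}\supseteq B^{*}A^{*}$, with equality under suitable conditions (e.g. $A$ with bounded inverse, or the von Neumann theorem on $(AB)^{*}=B^{*}A^{*}$ when $A$ is bounded). The cleanest route is to exploit the \emph{similarity} structure: $D_{X}^{\left(F\right)}=T_{F}\widetilde{X}T_{F}^{-1}$ exhibits $D_{X}^{\left(F\right)}$ as conjugate to $\widetilde{X}$ by the closed operator $T_{F}$, and taking adjoints of a conjugation $S A S^{-1}$ yields $(S^{-1})^{*}A^{*}S^{*}=S^{*-1}A^{*}S^{*}$, i.e. $\bigl(D_{X}^{\left(F\right)}\bigr)^{*}$ is conjugate to $\widetilde{X}^{*}$ by $T_{F}^{*-1}$, which is exactly (\ref{eq:mer-2-11}).

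Concretely, the key steps in order would be: (1) restate from Theorem \ref{thm:mer3} the factorization $D_{X}^{\left(F\right)}=T_{F}\widetilde{X}T_{F}^{-1}$ and $D_{X}^{\left(F\right)}T_{F}=T_{F}\widetilde{X}$ on $\mathrm{dom}(\widetilde{X})$; (2) take Hilbert-space adjoints of $D_{X}^{\left(F\right)}T_{F}=T_{F}\widetilde{X}$ to obtain $T_{F}^{*}\bigl(D_{X}^{\left(F\right)}\bigr)^{*}\supseteq \widetilde{X}^{*}T_{F}^{*}$, i.e. the intertwining relation in adjoint form; (3) use injectivity of $T_{F}^{*}$ and density of $\mathrm{Ran}(T_{F}^{*})$ to solve for $\bigl(D_{X}^{\left(F\right)}\bigr)^{*}$, getting $\bigl(D_{X}^{\left(F\right)}\bigr)^{*}=T_{F}^{*-1}\widetilde{X}^{*}T_{F}^{*}$ on the appropriate domain; (4) verify that the domain coming out of this manipulation is exactly $\mathrm{dom}\bigl(\bigl(D_{X}^{\left(F\right)}\bigr)^{*}\bigr)$, using that $\{F_{\varphi}\,|\,\varphi\in C_{c}^{\infty}(\Omega)\}$ is a core for $D_{X}^{\left(F\right)}$ (already noted in the proof of Theorem \ref{thm:mer3}) and the corresponding core-domain statements for $\widetilde{X}$ on $C_{c}^{\infty}(\Omega)\subset L^{2}(\Omega)$.

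The main obstacle, as usual for unbounded operators, is the bookkeeping of domains and the direction of inclusions: the naive identity $(ABC)^{*}=C^{*}B^{*}A^{*}$ can fail (one only gets $\supseteq$ in general), so the real content is showing that no domain is lost — i.e., that the composite $T_{F}^{*-1}\widetilde{X}^{*}T_{F}^{*}$ is not a proper extension of $\bigl(D_{X}^{\left(F\right)}\bigr)^{*}$, nor vice versa. I expect this to hinge on two facts available earlier: that $\{F_{\varphi}\}$ is a core-domain for $D_{X}^{\left(F\right)}$ (so adjoints are computed against a dense, tractable set), and the closability/closedness statements of Lemma \ref{lem:mer-2-2} and Corollary \ref{cor:mer-2-4} which guarantee that all the operators involved — $T_{F}$, $T_{F}^{-1}$, $\widetilde{X}$, $T_{F}^{*}$, $T_{F}^{*-1}$ — are closed with dense domains, which is precisely what is needed to promote the formal adjoint identity to an honest equality. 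If a fully rigorous equality of domains proves delicate, a safe fallback is to state and prove the clean inclusion $\bigl(D_{X}^{\left(F\right)}\bigr)^{*}\supseteq T_{F}^{*-1}\widetilde{X}^{*}T_{F}^{*}$ together with the reverse inclusion on the core, which suffices for all subsequent applications of this corollary in the Memoir.
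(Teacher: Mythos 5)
Your proposal is correct and follows essentially the same route as the paper: take adjoints of the factorization from Theorem \ref{thm:mer3}, reverse the order of the factors, and justify the formal manipulation by the closedness and domain facts from Lemma \ref{lem:mer-2-2}, Corollary \ref{cor:mer-2-4}, and Remark \ref{rem:mer-2-5}. Your variant of passing through the two-factor intertwining relation $D_{X}^{(F)}T_{F}=T_{F}\widetilde{X}$ and then solving via injectivity and dense range of $T_{F}^{*}$ is a slightly cleaner organization of the same domain bookkeeping the paper performs on the triple product directly.
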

\begin{proof}
The formula (\ref{eq:mer-2-11}) in the corollary results from applying
the adjoint operation to both sides of eq (\ref{eq:mer-2-10}), and
keeping track of the domains of the respective operators in the product
on the RHS in eq (\ref{eq:mer-2-10}). Only after checking domains
of the three respective operators, occurring as factors in the product
on the RHS in eq (\ref{eq:mer-2-10}), may we then use the algebraic
rules for adjoint of a product of operators. In this instance, we
conclude that adjoint of the product on the RHS in eq (\ref{eq:mer-2-10})
is the product of the adjoint of the factors, but now composed in
the reverse order; so product from left to right, becomes product
of the adjoints from right to left; hence the result on the RHS in
eq (\ref{eq:mer-2-11}).

Now the fact that the domain issues work out follows from application
of Corollary \ref{cor:mer-2-4}, Remark \ref{rem:mer-2-5}, and Theorem
\ref{thm:mer3}; see especially eqs (\ref{eq:mer-2-6}), and (\ref{eq:mer-2-7}).
The rules for adjoint of a product of operators, where some factors
are unbounded are subtle, and we refer to \cite[chapter 13]{Rud73}
and \cite[Chapter 11-12]{DS88b}. Care must be exercised when the
unbounded operators in the product map between different Hilbert spaces.
The fact that our operator $T_{F}$ is closed as a linear operator
from $L^{2}\left(\Omega\right)$ into $\mathscr{H}_{F}$ is crucial
in this connection; see Lemma \ref{lem:mer-2-2}.
\end{proof}

\begin{cor}
\label{cor:mer3}Let $G$, $\Omega$, $F$, and $T_{F}$ be as above;
then the RKHS $\mathscr{H}_{F}$ consists precisely of the continuous
functions $\xi$ on $\Omega$ such that $\xi\in dom\bigl(\bigl(T_{F}^{*}T_{F}\bigr)^{-1/4}\bigr)$,
and then 
\[
\bigl\Vert\xi\bigr\Vert_{\mathscr{H}_{F}}=\bigl\Vert\bigl(T_{F}^{*}T_{F}\bigr)^{-1/4}\xi\bigr\Vert_{L^{2}\left(\Omega\right)}.
\]
\end{cor}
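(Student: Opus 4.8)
The plan is to realize $\mathscr{H}_F$ as the range of the operator $\bigl(T_F T_F^*\bigr)^{1/4}$ acting on $L^2(\Omega)$, or equivalently to show $\mathscr{H}_F = \operatorname{dom}\bigl((T_F^*T_F)^{-1/4}\bigr)$ with the stated norm identity, by combining the polar-decomposition machinery of Corollary \ref{cor:mer-2-4} and Remark \ref{rem:mer-2-5} with the defining property of the Mercer operator, namely $\langle \xi, T_F\varphi\rangle_{\mathscr{H}_F} = \langle \xi,\varphi\rangle_{L^2(\Omega)}$ for $\xi\in\mathscr{H}_F$, $\varphi\in C_c^\infty(\Omega)$ (this is eq. (\ref{eq:mer24}), i.e. $T_F^* = j$, the inclusion $\mathscr{H}_F\hookrightarrow L^2(\Omega)$). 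First I would record the basic intertwining: since $T_F\colon L^2(\Omega)\to\mathscr{H}_F$ is closed with dense range and trivial kernel issues handled as in the Convention after Theorem \ref{thm:mer2}, the partial isometry $V$ in the polar decomposition $T_F = V\,(T_F^*T_F)^{1/2}$ is in fact a unitary from $\overline{\operatorname{Ran}(T_F^*T_F)^{1/2}}$ onto $\overline{\operatorname{Ran}(T_F)} = \mathscr{H}_F$, and on the other side $T_F = (T_FT_F^*)^{1/2} V$.

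The heart of the argument is the norm computation. For $\varphi\in C_c^\infty(\Omega)$ I would compute, using $F_\varphi = T_F\varphi$ and (\ref{eq:mer24}),
\begin{equation}
\bigl\Vert F_\varphi\bigr\Vert_{\mathscr{H}_F}^2 = \langle T_F\varphi, T_F\varphi\rangle_{\mathscr{H}_F} = \langle \varphi, T_F^*T_F\varphi\rangle_{L^2(\Omega)} = \bigl\Vert (T_F^*T_F)^{1/2}\varphi\bigr\Vert_{L^2(\Omega)}^2.
\end{equation}
Hence $\varphi\mapsto T_F\varphi$ extends to a unitary $W\colon \overline{\operatorname{Ran}(T_F^*T_F)^{1/2}}\to\mathscr{H}_F$ with $W(T_F^*T_F)^{1/2}\varphi = T_F\varphi = F_\varphi$. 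Now given $\xi\in\mathscr{H}_F$, write $\xi = W\eta$ for a unique $\eta$ in $\overline{\operatorname{Ran}(T_F^*T_F)^{1/2}}$; the claim $\xi\in\operatorname{dom}\bigl((T_F^*T_F)^{-1/4}\bigr)$ (interpreted via the spectral theorem on the reducing subspace, with $Ker$ divided out per the Convention) is equivalent to $\eta\in\operatorname{dom}\bigl((T_F^*T_F)^{1/4}\bigr)$, and in that case I would identify $(T_F^*T_F)^{-1/4}\xi$ with the $L^2$-function $(T_F^*T_F)^{1/4}\eta$ and verify $\Vert(T_F^*T_F)^{1/4}\eta\Vert_{L^2} = \Vert\eta\Vert_{?}$... here one must be careful: the correct reading, consistent with the Mercer expansion, is that $\mathscr{H}_F$-membership of a continuous function $\xi$ on $\Omega$ is exactly $\sum_n \lambda_n^{-1}|\langle\xi_n,\xi\rangle_{L^2}|^2 <\infty$ (the Remark after Corollary \ref{cor:mer2}), i.e. $\xi\in\operatorname{dom}(T_F^{-1/2})$ as an $L^2$-operator — so I would reconcile the $(T_F^*T_F)^{-1/4}$ statement with $T_F^{-1/2}$ using that on $L^2(\Omega)$ the selfadjoint Mercer operator satisfies $T_F^*T_F = T_F^2$ (as $T_F=T_F^*\ge 0$ when $\Omega$ is bounded), whence $(T_F^*T_F)^{-1/4} = T_F^{-1/2}$.

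Concretely the cleanest route, which I would adopt for bounded $\Omega$, is: (1) from Corollary \ref{cor:mer2}, $\{\sqrt{\lambda_n}\,\xi_n\}$ is an ONB of $\mathscr{H}_F$ and $\{\xi_n\}$ is an ONB of $\overline{\operatorname{Ran}(T_F)}\subset L^2(\Omega)$ with $T_F\xi_n = \lambda_n\xi_n$; (2) for a continuous $\xi$ on $\Omega$, expand formally $\xi = \sum_n c_n\,\xi_n$ in $L^2$ with $c_n = \langle\xi_n,\xi\rangle_{L^2}$, and observe $\langle\sqrt{\lambda_n}\xi_n,\xi\rangle_{\mathscr{H}_F} = \lambda_n^{-1/2}\langle T_F\xi_n,\xi\rangle_{\mathscr{H}_F} = \lambda_n^{-1/2}\langle\xi_n,\xi\rangle_{L^2} = \lambda_n^{-1/2}c_n$, using (\ref{eq:mer24}); (3) by Parseval in $\mathscr{H}_F$, $\xi\in\mathscr{H}_F$ iff $\sum_n \lambda_n^{-1}|c_n|^2<\infty$, in which case $\Vert\xi\Vert_{\mathscr{H}_F}^2 = \sum_n\lambda_n^{-1}|c_n|^2 = \Vert T_F^{-1/2}\xi\Vert_{L^2(\Omega)}^2 = \bigl\Vert (T_F^*T_F)^{-1/4}\xi\bigr\Vert_{L^2(\Omega)}^2$, the last equality since $T_F^*T_F=T_F^2$ diagonalizes as $\sum\lambda_n^2|\xi_n\rangle\langle\xi_n|$ so $(T_F^*T_F)^{-1/4}\xi = \sum\lambda_n^{-1/2}c_n\,\xi_n$. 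The main obstacle is not any single estimate but the bookkeeping of domains and kernels: $T_F$ has a nontrivial kernel in $L^2(\Omega)$ (Corollary giving $L^2(\Omega)\ominus\mathscr{H}_F = \operatorname{Ker}(T_F)$), so all fractional powers must be taken on the reducing subspace $\overline{\operatorname{Ran}(T_F)}$, exactly as in the \textbf{Convention} following that Corollary, and one must check that a continuous $\xi$ lying a priori only in $\mathscr{H}_F\subset C(\overline\Omega)$ does have an $L^2$-convergent $\xi_n$-expansion — which follows from $\mathscr{H}_F\subset L^2(\Omega)$ (Theorem \ref{thm:mer4}(\ref{enu:m-1-1})) plus density of $\operatorname{span}\{\xi_n\}$ in $\overline{\operatorname{Ran}(T_F)}$. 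I would also note the unbounded-$\Omega$ case is handled identically once $T_F$ is replaced by its closure from Lemma \ref{lem:mer-2-2} and $(T_F^*T_F)^{1/2}$ by the selfadjoint operator of Corollary \ref{cor:mer-2-4}, using Remark \ref{rem:mer-2-5}.
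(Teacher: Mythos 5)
Your proposal is correct and follows essentially the same route as the paper, whose own proof is a two-line appeal to Corollary \ref{cor:mer-2-4} and the polar decomposition of the closed operator $T_{F}$ from Lemma \ref{lem:mer-2-2}; your ``cleanest route'' via the Mercer eigendata $\left(\lambda_{n},\xi_{n}\right)$ of Corollary \ref{cor:mer2} is just the concrete diagonalized form of that same argument. The one substantive point you raise --- and resolve correctly --- deserves emphasis: under the literal two-Hilbert-space reading of the adjoint from Theorem \ref{thm:mer2} (where $T_{F}^{*}=j$), the operator $T_{F}^{*}T_{F}$ acting in $L^{2}\left(\Omega\right)$ \emph{is} the Mercer integral operator with eigenvalues $\lambda_{n}$, and the polar decomposition then yields $\left\Vert \xi\right\Vert _{\mathscr{H}_{F}}=\bigl\Vert\bigl(T_{F}^{*}T_{F}\bigr)^{-1/2}j\left(\xi\right)\bigr\Vert_{L^{2}\left(\Omega\right)}$, i.e.\ exponent $-\tfrac{1}{2}$; the stated exponent $-\tfrac{1}{4}$ is correct only under the identification of $T_{F}$ with the selfadjoint Mercer operator in $L^{2}\left(\Omega\right)$, so that $T_{F}^{*}T_{F}=T_{F}^{2}$ and $\bigl(T_{F}^{*}T_{F}\bigr)^{-1/4}=T_{F}^{-1/2}$, which is exactly the reconciliation you adopt and which your eigenfunction computation $\left\Vert \xi\right\Vert _{\mathscr{H}_{F}}^{2}=\sum_{n}\lambda_{n}^{-1}\left|\left\langle \xi_{n},\xi\right\rangle _{L^{2}}\right|^{2}$ confirms is the reading that makes the corollary true. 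Your handling of $\mathrm{Ker}\left(T_{F}\right)$ and of the converse inclusion (a continuous $\xi$ with $\sum_{n}\lambda_{n}^{-1}\left|c_{n}\right|^{2}<\infty$ lies in $\mathscr{H}_{F}$) is only sketched, but the missing step is the standard argument that $\mathscr{H}_{F}$-convergence implies uniform convergence on $\Omega$, which the paper also leaves implicit.
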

\begin{proof}
An immediate application of Corollary \ref{cor:mer-2-4}; and the
polar decomposition, applied to the closed operator $T_{F}$ from
Lemma \ref{lem:mer-2-2}.\end{proof}
\begin{example}[Application]
Let $G=\mathbb{R}$, $\Omega=\mathbb{R}_{+}=\left(0,\infty\right)$;
so that $\Omega-\Omega=\mathbb{R}$; let $F\left(x\right)=e^{-\left|x\right|}$,
$\forall x\in\mathbb{R}$, and let $D^{\left(F\right)}$ be the skew-Hermitian
operator from Corollary \ref{cor:mer-2-10}. Then $D^{\left(F\right)}$
has deficiency indices $\left(1,0\right)$ in $\mathscr{H}_{F}$.
\index{deficiency indices}\end{example}
\begin{proof}
From Corollary \ref{cor:mer-2-4}, we conclude that $\mathscr{H}_{F}$
consists of all continuous functions $\xi$ on $\mathbb{R}_{+}\left(=\Omega\right)$
such that $\xi$ and $\xi'=\frac{d\xi}{dx}$ are in $L^{2}\left(\mathbb{R}_{+}\right)$;
and then 
\begin{equation}
\bigl\Vert\xi\bigr\Vert_{\mathscr{H}_{F}}^{2}=\frac{1}{2}\left(\int_{0}^{\infty}\left|\xi\left(x\right)\right|^{2}dx+\int_{0}^{\infty}\left|\xi'\left(x\right)\right|^{2}dx\right)+\int_{0}^{1}\overline{\xi_{n}}\xi\, d\beta;\label{eq:mer-2-13}
\end{equation}
where $\xi_{n}$ denote its inward normal derivative, and $d\beta$
is the corresponding boundary measure. Indeed, $d\beta=-\frac{1}{2}\delta_{0}$,
with $\delta_{0}:=\delta\left(\cdot-0\right)=$ Dirac mass at $x=0$.
See sections \ref{sub:F2}-\ref{sub:F3} for details. 

We now apply Corollary \ref{cor:mer-2-10} to the operator $D_{0}=\dfrac{d}{dx}$
in $L^{2}\left(\mathbb{R}_{+}\right)$ with $dom\left(D_{0}\right)=C_{c}^{\infty}\left(\mathbb{R}_{+}\right)$.
It is well known that $D_{0}$ has deficiency indices $\left(1,0\right)$;
and the $+$ deficiency space is spanned by $\xi_{+}\left(x\right):=e^{-x}\in L^{2}\left(\mathbb{R}_{+}\right)$,
i.e., $x>0$.

Hence, using (\ref{eq:mer-2-11}), we only need to show that $\xi_{+}\in\mathscr{H}_{F}$;
but this is immediate from (\ref{eq:mer-2-13}); in fact
\[
\bigl\Vert\xi_{+}\bigr\Vert_{\mathscr{H}_{F}}^{2}=1.
\]
Setting $\xi_{-}\left(x\right):=e^{x}$, the same argument shows that
$\mbox{RHS}_{\left(\ref{eq:mer-2-13}\right)}=\infty$, so the index
conclusion $\left(1,0\right)$ follows.
\end{proof}
We now return to the case for $G=\mathbb{R}$, and $F$ is fixed continuous
positive definite function on some finite interval $\left(-a,a\right)$,
i.e., the case where $\Omega=\left(0,a\right)$.
\begin{cor}
If $G=\mathbb{R}$ and if $\Omega=\left(0,a\right)$ is a bounded
interval, $a<\infty$, then the operator $D^{\left(F\right)}$ has
equal indices for all given $F:\left(-a,a\right)\rightarrow\mathbb{C}$
which is p.d. and continuous.\end{cor}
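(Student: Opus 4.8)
The plan is to reduce the assertion, via Corollary \ref{cor:DF}, to the equality $\dim DEF^{+}=\dim DEF^{-}$, and then to produce a conjugation on $\mathscr{H}_{F}$ that anti-commutes with $D^{(F)}$. This is exactly the role played by the operator $J$ of Lemma \ref{lem:Conjugation-Operator}, except that $J$ was only shown to work when $F$ is real-valued; the one new ingredient needed here is a version of that conjugation valid for arbitrary complex-valued continuous p.d.\ $F$ on $(-a,a)$.

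As in the statement, $\Omega=(0,a)$ with $a<\infty$. By Corollary \ref{cor:DF} the deficiency indices of $D^{(F)}$ lie in $\{(0,0),(0,1),(1,0),(1,1)\}$, so it suffices to show $\dim DEF^{+}=\dim DEF^{-}$. From the earlier identification of the deficiency spaces (a vector in $DEF^{\pm}$ is a weak, hence classical, solution of $-\xi'=\pm\xi$), we have $DEF^{+}=\mathbb{C}\,e^{-y}\big|_{(0,a)}$ if $e^{-y}\big|_{(0,a)}\in\mathscr{H}_{F}$ and $DEF^{+}=\{0\}$ otherwise, and similarly $DEF^{-}=\mathbb{C}\,e^{+y}\big|_{(0,a)}$ or $\{0\}$. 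So the task is to prove $e^{-y}\big|_{(0,a)}\in\mathscr{H}_{F}\iff e^{+y}\big|_{(0,a)}\in\mathscr{H}_{F}$.

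To that end I would define the antilinear map $V$ on the dense domain $\{F_{\varphi}:\varphi\in C_{c}^{\infty}(0,a)\}$ by $VF_{\varphi}:=F_{\eta}$, where $\eta(x):=\overline{\varphi(a-x)}\in C_{c}^{\infty}(0,a)$, and verify the isometry identity
\[
\left\Vert VF_{\varphi}\right\Vert _{\mathscr{H}_{F}}^{2}=\int_{0}^{a}\int_{0}^{a}\varphi(s)\overline{\varphi(t)}F(t-s)\,ds\,dt=\int_{0}^{a}\int_{0}^{a}\overline{\varphi(s)}\varphi(t)F(s-t)\,ds\,dt=\left\Vert F_{\varphi}\right\Vert _{\mathscr{H}_{F}}^{2},
\]
which follows from the change of variables $x\mapsto a-x$, $y\mapsto a-y$ in (\ref{eq:hn2}) followed by relabeling the two integration variables. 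In particular $V$ is well defined, antilinear and isometric on a dense subspace, hence extends to an antilinear isometry of $\mathscr{H}_{F}$; using the reproducing property one checks $(V\xi)(x)=\overline{\xi(a-x)}$ for every $\xi\in\mathscr{H}_{F}$, so $V^{2}=I$, i.e.\ $V$ is a conjugation. Next, since $\tfrac{d}{dx}\overline{\varphi(a-x)}=-\overline{\varphi'(a-x)}$, one gets $VF_{\varphi'}=F_{-\eta'}=-D^{(F)}F_{\eta}=-D^{(F)}VF_{\varphi}$, i.e.\ $VD^{(F)}=-D^{(F)}V$ on $\mathrm{dom}\,D^{(F)}$. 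Consequently $V$ maps $DEF^{+}$ onto $DEF^{-}$ (concretely, $V$ sends $e^{-y}\big|_{(0,a)}$ to the nonzero multiple $e^{-a}e^{+y}\big|_{(0,a)}$ of a generator of $DEF^{-}$, and conversely); equivalently, $S:=iD^{(F)}$ is symmetric with $VS=SV$, so by von Neumann's theory (\cite{AG93}, \cite{DS88b}) $S$ has equal deficiency indices. Since $V$ is a bijection of $\mathscr{H}_{F}$, $\dim DEF^{+}=\dim DEF^{-}$, and the indices of $D^{(F)}$ are $(0,0)$ or $(1,1)$.

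The only real obstacle is the complex-valued case: if $F$ were real the result would be immediate from the Corollary following Lemma \ref{lem:Conjugation-Operator}, and the substance of the argument is checking that the modified conjugation $V$ — which conjugates the test function $\varphi$, not merely reflects its variable — is still an isometric involution of $\mathscr{H}_{F}$ intertwining $D^{(F)}$ with $-D^{(F)}$. This hinges on the elementary integral identity displayed above, and it is precisely boundedness of $(0,a)$ that makes the reflection $y\mapsto a-y$ available; for unbounded $\Omega$ (e.g.\ $\Omega=\mathbb{R}_{+}$) no such reflection exists, consistent with the preceding Application where $D^{(F)}$ has indices $(1,0)$.
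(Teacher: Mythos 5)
Your argument is correct, but it takes a genuinely different route from the paper's. The paper derives this corollary from the Mercer-operator machinery: boundedness of $\Omega=(0,a)$ makes $T_{F}:L^{2}(0,a)\rightarrow\mathscr{H}_{F}$ bounded with closed inverse (Theorem \ref{thm:mer2}, Corollaries \ref{cor:mer2} and \ref{cor:mer-2-4}), and the similarity $D^{(F)}=T_{F}\widetilde{X}T_{F}^{-1}$ of Theorem \ref{thm:mer3}, together with the adjoint formula of Corollary \ref{cor:mer-2-10}, transports the index computation from $D_{0}=\tfrac{d}{dx}\big|_{C_{c}^{\infty}(0,a)}$, which has indices $(1,1)$ in $L^{2}(0,a)$; the unequal cases are then excluded because a skew-Hermitian operator of index $(1,0)$ would generate a non-unitary semigroup of isometries, incompatible with the norm formula of Corollary \ref{cor:mer3}. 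You instead upgrade the conjugation $J$ of Lemma \ref{lem:Conjugation-Operator} from real-valued to arbitrary complex-valued $F$ by conjugating the test function as well as reflecting its variable, $VF_{\varphi}=F_{\overline{\varphi(a-\cdot)}}$; the anti-unitarity follows from the substitution $(x,y)\mapsto(a-x,a-y)$ combined with $F(-u)=\overline{F(u)}$, and von Neumann's criterion (a symmetric operator commuting with a conjugation has equal deficiency indices) finishes the proof. Your route is more elementary and self-contained — it needs only Corollary \ref{cor:DF} plus the reflection symmetry of the bounded interval, and it exhibits $\dim DEF^{+}=\dim DEF^{-}$ concretely via $V(e^{-y})=e^{-a}e^{+y}$ — while the paper's route buys uniformity with the rest of the Memoir, since the identity $D^{(F)}=T_{F}\widetilde{X}T_{F}^{-1}$ also controls the indices in settings (several variables, Lie groups) where no single reflection is available. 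Your closing remark, that the reflection $y\mapsto a-y$ is precisely what fails for $\Omega=\mathbb{R}_{+}$ where the indices are $(1,0)$, is a correct and worthwhile consistency check.
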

\begin{proof}
We showed in Theorem \ref{thm:mer2}, and Corollary \ref{cor:mer2}
that if $\Omega=\left(0,a\right)$ is bounded, then $T_{F}:L^{2}\left(0,a\right)\rightarrow\mathscr{H}_{F}$
is bounded. By Corollary \ref{cor:mer-2-4}, we get that $T_{F}^{-1}:\mathscr{H}_{F}\rightarrow L^{2}\left(0,a\right)$
is closed. Moreover, as an operator in $L^{2}\left(0,a\right)$, $T_{F}$
is positive and selfadjoint. 

Since 
\begin{equation}
D_{0}=\dfrac{d}{dx}\Big|{}_{C_{c}^{\infty}\left(0,a\right)}\label{eq:mer-2-14}
\end{equation}
has indices $\left(1,1\right)$ in $L^{2}\left(0,a\right)$, it follows
from (\ref{eq:mer-2-11}) applied to (\ref{eq:mer-2-14}) that $D^{\left(F\right)}$,
as a skew-Hermitian operator in $\mathscr{H}_{F}$, must have indices
$\left(0,0\right)$ or $\left(1,1\right)$.

To finish the proof, use that a skew Hermitian operator with indices
$\left(1,0\right)$ must generate a semigroup of isometries; one that
is non-unitary. If such an isometry semigroup were generated by the
particular skew Hermitian operator $D^{\left(F\right)}$ then this
would be inconsistent with Corollary \ref{cor:mer3}; see especially
the formula for the norm in $\mathscr{H}_{F}$.
\end{proof}
To simplify notation, we now assume that the endpoint $a$ in \eqref{mer-1}
is $a=1$.
\begin{prop}
\label{prop:mer1}Let $F$ be p.d. continuos on $I=\left(-1,1\right)\subset\mathbb{R}$.
Assume $\mu\in Ext\left(F\right)$, and $\mu\ll d\lambda$, i.e.,
$\exists M\in L^{1}\left(\mathbb{R}\right)$ s.t.
\begin{equation}
d\mu\left(\lambda\right)=M\left(\lambda\right)d\lambda,\;\mbox{where }d\lambda=\mbox{Lebesgue measure on }\mathbb{R}.\label{eq:mer1}
\end{equation}
Set $\mathscr{L}=\left(2\pi\mathbb{Z}\right)$ (period lattice), and
\begin{equation}
\widehat{\varphi_{I}}\left(\xi\right)=\int_{0}^{1}e^{-i\xi y}\varphi\left(y\right)dy,\;\forall\varphi\in C_{c}\left(0,1\right);\label{eq:mer2}
\end{equation}
then the Mercer operator is as follows:
\begin{equation}
\left(T_{F}\varphi\right)\left(x\right)=\sum_{l\in\mathscr{L}}M\left(l\right)\widehat{\varphi_{I}\left(l\right)}e^{ilx}.\label{eq:mer3}
\end{equation}
\end{prop}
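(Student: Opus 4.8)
The plan is to turn the defining integral of the Mercer operator into a Bochner integral against $d\mu=M\,d\lambda$, to expand $\varphi$ in the Fourier basis of the period cell $(0,1)$, and then to collapse the resulting integral over $\mathbb{R}$ onto the period lattice $\mathscr{L}=2\pi\mathbb{Z}$ by a Poisson-summation argument that uses that $\mathrm{supp}\,\varphi$ sits inside a single fundamental domain of $\mathscr{L}$.

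First I would record the starting point. Since $\mu\in Ext(F)$ and $d\mu=M\,d\lambda$, Bochner's formula gives $F(t)=\int_{\mathbb{R}}e^{i\lambda t}M(\lambda)\,d\lambda$ for $t\in(-1,1)$; as $x-y\in(-1,1)$ whenever $x,y\in(0,1)$, substituting this into $(T_{F}\varphi)(x)=\int_{0}^{1}\varphi(y)F(x-y)\,dy$ and using Fubini yields
\[
(T_{F}\varphi)(x)=\int_{\mathbb{R}}e^{i\lambda x}\,M(\lambda)\,\widehat{\varphi_{I}}(\lambda)\,d\lambda,\qquad x\in(0,1),
\]
which is (\ref{eq:m-4-6}). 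Since $T_{F}\varphi\in\mathscr{H}_{F}\subseteq L^{2}(0,1)$ by Theorem \ref{thm:mer4}, and $\{e_{l}(x)=e^{ilx}:l\in\mathscr{L}\}$ is an orthonormal basis of $L^{2}(0,1)$, it suffices to show that the $l$-th Fourier coefficient of $T_{F}\varphi$ equals $M(l)\widehat{\varphi_{I}}(l)$ for every $l\in\mathscr{L}$.

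Next I would bring in the periodic structure. Because $\varphi\in C_{c}(0,1)$, its $1$-periodic extension $\tilde\varphi$ is continuous, with Fourier coefficients exactly the lattice samples $c_{n}=\int_{0}^{1}\varphi(y)e^{-2\pi iny}\,dy=\widehat{\varphi_{I}}(2\pi n)$, so $\varphi(y)=\sum_{n}c_{n}e^{2\pi iny}$ on $(0,1)$. Inserting this series into $(T_{F}\varphi)(x)=\int_{0}^{1}\varphi(y)F(x-y)\,dy$, interchanging sum and integral, and evaluating each term by the substitution $u=x-y$ together with the Bochner representation of $F$ on $(-1,1)$, one reduces the statement to the family of identities
\[
\int_{\mathbb{R}}M(\lambda)\Big(\int_{x-1}^{x}e^{i(\lambda-2\pi n)u}\,du\Big)\,d\lambda=M(2\pi n),\qquad x\in(0,1),\ n\in\mathbb{Z};
\]
summing these against $c_{n}e^{2\pi inx}$ and re-indexing $l=2\pi n$ then gives (\ref{eq:mer3}). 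The inner $u$-integral equals $\delta_{n,m}$ at $\lambda=2\pi m$, so the displayed identity says that the spectral integral "samples" $M$ at the lattice point $2\pi n$; this is where Poisson summation for the period-$1$ lattice, and the fact that $x$ and $\mathrm{supp}\,\varphi$ lie in the single cell $(0,1)$, are used --- effectively, on that cell one may replace $F(x-\cdot)$ by its $\mathscr{L}$-periodization. I would isolate this as a separate periodization lemma.

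The hard part is this last step, and I expect essentially all of the genuine work to be there. Two issues need care: (i) the interchange of summation and integration, which is immediate when $\varphi\in C_{c}^{\infty}(0,1)$ (rapid decay of $(c_{n})$ together with $M\in L^{1}$), but for merely continuous $\varphi$ requires an $L^{2}$/density argument leaning on the boundedness and trace-class property of $T_{F}$ from Lemma \ref{lem:mer-1} and Theorem \ref{thm:mer4}; and (ii) the collapse of the $\mathbb{R}$-integral onto $\mathscr{L}$, which is a genuine periodization statement rather than a formal manipulation, so that the off-lattice contributions cancel only because of the support restriction --- and one must keep track of the normalization of the Fourier transform through this step so that the constant on the right-hand side of (\ref{eq:mer3}) comes out exactly as stated.
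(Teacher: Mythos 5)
Your setup is sound and, up to the point you defer to a ``periodization lemma,'' it follows the same route as the paper: the paper's own proof is a three-line computation that applies Fubini to the right-hand side of (\ref{eq:mer3}) and then asserts, citing Poisson summation, that $\sum_{l\in\mathscr{L}}M(l)e^{il(x-y)}=F(x-y)$. The problem is that the identity you isolate,
\[
\int_{\mathbb{R}}M(\lambda)\Bigl(\int_{x-1}^{x}e^{i(\lambda-2\pi n)u}\,du\Bigr)\,d\lambda=M(2\pi n),
\]
is false for a general density $M\in L^{1}(\mathbb{R})$, and no support restriction on $\varphi$ rescues it. The inner integral equals $e^{i(\lambda-2\pi n)x}\bigl(1-e^{-i(\lambda-2\pi n)}\bigr)/\bigl(i(\lambda-2\pi n)\bigr)$, a bounded Dirichlet-type kernel in $\lambda$ which indeed vanishes at the other lattice points $\lambda=2\pi m$, $m\neq n$, and equals $1$ at $\lambda=2\pi n$ --- but it is not a point mass, so pairing it against the absolutely continuous measure $M(\lambda)\,d\lambda$ does not sample $M$ at $2\pi n$; the result depends on the off-lattice values of $M$ and on $x$. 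What Poisson summation actually yields is
\[
\sum_{l\in\mathscr{L}}M(l)e^{ilt}=\frac{1}{2\pi}\sum_{m\in\mathbb{Z}}\widehat{d\mu}\,(t+m),
\]
the $\mathbb{Z}$-periodization of the full Bochner transform (with a normalizing constant), and this coincides with $F(t)=\widehat{d\mu}(t)$ on $(-1,1)$ only under strong extra hypotheses on $\mu$ --- not merely because $x$ and $\mathrm{supp}\,\varphi$ lie in one fundamental domain; note also that $t=x-y$ sweeps the interval $(-1,1)$ of length $2$, so one is never working inside a single period cell of $\mathbb{Z}$ anyway.

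A concrete check that the off-lattice contributions do not cancel: take $M(\lambda)=\tfrac12 e^{-|\lambda|}$, so $F(t)=1/(1+t^{2})$ (the first row of Table \ref{tab:mercer}). Then $\sum_{l\in\mathscr{L}}M(l)=\tfrac12\sum_{n\in\mathbb{Z}}e^{-2\pi|n|}=\tfrac12\coth\pi\approx 0.502$, whereas $F(0)=1$; letting $\varphi$ run through an approximate identity at a point $y_{0}\in(0,1)$ shows that (\ref{eq:mer3}) then fails pointwise. The correct expression for the $l$-th Fourier coefficient of $T_{F}\varphi$ on $(0,1)$, obtained from your first display (which is the paper's (\ref{eq:m-4-6}) and is fine), is $\int_{\mathbb{R}}M(\lambda)\,\widehat{\varphi_{I}}(\lambda)\,\overline{\widehat{(e^{il\cdot})_{I}}(\lambda)}\,d\lambda$, i.e.\ $M$ integrated against a sinc-type kernel centered at $l$, not the point value $M(l)$; this is exactly the structure that the Shannon-sampling result in Theorem \ref{thm:shannon} captures rigorously. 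So the gap in your argument is real, but it is the same unproved (and, as stated, untrue for general absolutely continuous $\mu$) step on which the paper's own one-line proof rests; to make either argument work one must add a hypothesis forcing $\frac{1}{2\pi}\sum_{m}\widehat{d\mu}(\cdot+m)=F$ on $(-1,1)$.
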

\begin{proof}
Let $x\in\left(0,1\right)$, then 
\begin{eqnarray*}
\mbox{RHS}_{\mbox{\ensuremath{\left(\ref{eq:mer3}\right)}}}\left(x\right) & = & \sum_{l\in\mathscr{L}}M\left(l\right)\left(\int_{0}^{1}e^{-ily}\varphi\left(y\right)dy\right)e^{ilx}\\
 & \underset{\left(\mbox{Fubini}\right)}{=} & \int_{0}^{1}\varphi\left(y\right)\underset{\mbox{Poisson summation}}{\underbrace{\left(\sum_{l\in\mathscr{L}}M\left(l\right)e^{il\left(x-y\right)}\right)}}dy\\
 & = & \int_{0}^{1}\varphi\left(y\right)F\left(x-y\right)dy\\
 & = & \left(T_{F}\varphi\right)\left(x\right)\\
 & = & \mbox{LHS}_{\left(\ref{eq:mer3}\right)},
\end{eqnarray*}
where we use that $F=\widehat{d\mu}\Big|_{\left(-1,1\right)}$, and
(\ref{eq:mer1}).\end{proof}
\begin{example}
Application to Table \vpageref{tab:F1-F6}: $\mathscr{L}=2\pi\mathbb{Z}$. \end{example}
\begin{proof}
Application of Proposition \ref{prop:mer1}. See Table \ref{tab:mercer}
below.
\end{proof}

\renewcommand{\arraystretch}{2}

\begin{table}[H]
\begin{tabular}{|c|c|c|}
\hline 
p.d. Function & $\left(T_{F}\varphi\right)\left(x\right)$ & $M\left(\lambda\right)$, $\lambda\in\mathbb{R}$\tabularnewline
\hline 
$F_{1}$ & ${\displaystyle \frac{1}{2}\sum_{l\in\mathscr{L}}e^{-\left|l\right|}\widehat{\varphi}_{I}\left(l\right)e^{ilx}}$ & ${\displaystyle \frac{1}{2}e^{-\left|l\right|}}$ \tabularnewline
\hline 
$F_{3}$ & ${\displaystyle \sum_{l\in\mathscr{L}}\frac{1}{\pi\left(1+l^{2}\right)}\widehat{\varphi}_{I}\left(l\right)e^{ilx}}$ & ${\displaystyle \frac{1}{\pi\left(1+l^{2}\right)}}$\tabularnewline
\hline 
$F_{5}$ & ${\displaystyle \sum_{l\in\mathscr{L}}\frac{1}{\sqrt{2\pi}}e^{-l^{2}/2}\widehat{\varphi}_{I}\left(l\right)e^{ilx}}$ & ${\displaystyle \frac{1}{\sqrt{2\pi}}e^{-l^{2}/2}}$\tabularnewline
\hline 
\end{tabular}

\protect\caption{\label{tab:mercer}Application of Proposition \ref{prop:mer1} to
Table \ref{tab:F1-F6}.}
\end{table}

\renewcommand{\arraystretch}{1}
\begin{cor}
Let $F:\left(-1,1\right)\rightarrow\mathbb{C}$ be a continuous positive
definite function on the interval $\left(-1,1\right)$, and assume:
\begin{enumerate}[label=(\roman{enumi}),ref=\roman{enumi}]
\item  $F\left(0\right)=1$
\item $\exists\mu\in Ext_{1}\left(F\right)$ s.t. $\mu\ll d\lambda$, i.e.,
$\exists M\in L^{1}\left(\mathbb{R}\right)$ s.t. $d\mu\left(\lambda\right)=M\left(\lambda\right)d\lambda$
on $\mathbb{R}$. 
\end{enumerate}
Now consider the Mercer operator 
\begin{equation}
\left(T_{F}\varphi\right)\left(x\right)=\int_{0}^{1}\varphi\left(y\right)F\left(x-y\right)dy,\;\varphi\in L^{2}\left(0,1\right),x\in\left(0,1\right).\label{eq:mer11}
\end{equation}
Then the following two conditions (bd-1) and (bd-2) are equivalent,
where 
\begin{equation}
\mathscr{L}=\left(2\pi\mathbb{Z}\right)=\widehat{\mathbb{T}},\mbox{ and}\label{eq:mer12}
\end{equation}
\begin{eqnarray*}
 & \text{(bd-1)} & \qquad b_{M}:=\sup_{\lambda\in\left[0,1\right]}\sum_{l\in\mathscr{L}}M\left(\lambda+l\right)<\infty,\;\mbox{and}\\
 & \Updownarrow\\
 & \text{(bd-2)} & \qquad T_{F}\left(L^{2}\left(0,1\right)\right)\subseteq\mathscr{H}_{F}.
\end{eqnarray*}
If (bd-1) ($\Leftrightarrow$ (bd-2)) holds, then, for the corresponding
operator-norm, we then have
\begin{equation}
\left\Vert T_{F}\right\Vert _{L^{2}\left(0,1\right)\rightarrow\mathscr{H}_{F}}=\sqrt{b_{M}}\;\mbox{in }\text{(bd-1)}.\label{eq:mer13}
\end{equation}
\end{cor}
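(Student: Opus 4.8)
The plan is to work through the isometry $T:\mathscr{H}_F\to L^2(\widehat{G},\mu)=L^2(\mathbb{T},\mu)$ from Corollary~\ref{cor:lcg-isom}, where now $\widehat{\mathbb{R}}=\mathbb{R}$ but the relevant frequency content of functions supported in $(0,1)$ is periodic with period lattice $\mathscr{L}=2\pi\mathbb{Z}$. First I would recall from Theorem~\ref{thm:lcg-1} and Corollary~\ref{cor:lcg-isom} that $T(F_\varphi)=\widehat{\varphi}$ and $T^*f=(fd\mu)^\vee$, and that $T_F\varphi=F_\varphi=T^*(\widehat{\varphi})$ holds on the dense domain $\{F_\varphi:\varphi\in C_c(0,1)\}$; hence $T_F=T^*T$ as operators into $L^2(0,1)$, and $T_F\big(L^2(0,1)\big)\subseteq\mathscr{H}_F$ is exactly the statement that $T^*$ maps the closure of $\{\widehat{\varphi}:\varphi\in C_c(0,1)\}$ into $\mathscr{H}_F$, i.e. that $T_F$ is bounded from $L^2(0,1)$ into $\mathscr{H}_F$. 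So the whole thing reduces to computing $\|T_F\|_{L^2(0,1)\to\mathscr{H}_F}$.

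Next I would use Proposition~\ref{prop:mer1}, which already gives $T_F\varphi(x)=\sum_{l\in\mathscr{L}}M(l)\widehat{\varphi_I}(l)e^{ilx}$, together with the formula $\|T_F\varphi\|_{\mathscr{H}_F}^2=\int_{\mathbb{R}}|\widehat{\varphi}(\lambda)|^2 d\mu(\lambda)=\int_{\mathbb{R}}|\widehat{\varphi_I}(\lambda)|^2 M(\lambda)d\lambda$ from Lemma~\ref{lem:RKHS-def-by-integral}/eq.~(\ref{eq:hn2}). The key calculation is then to express $\int_{\mathbb{R}}|\widehat{\varphi_I}(\lambda)|^2 M(\lambda)\,d\lambda$ in terms of the $L^2(0,1)$-norm of $\varphi$. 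Folding the integral over the period lattice,
\[
\int_{\mathbb{R}}|\widehat{\varphi_I}(\lambda)|^2 M(\lambda)\,d\lambda=\int_0^{1}\Bigl(\sum_{l\in\mathscr{L}}|\widehat{\varphi_I}(\lambda+l)|^2 M(\lambda+l)\Bigr)\frac{d\lambda}{\,?\,},
\]
but the point is that for $\varphi$ supported in $(0,1)$ the Fourier coefficients $\widehat{\varphi_I}(2\pi n)$, $n\in\mathbb{Z}$, are exactly the ordinary Fourier coefficients of $\varphi$ viewed as an element of $L^2(\mathbb{T})$, so $\sum_{n\in\mathbb{Z}}|\widehat{\varphi_I}(2\pi n)|^2=\|\varphi\|_{L^2(0,1)}^2$ by Parseval. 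One then wants a pointwise-in-$\lambda$ bound $|\widehat{\varphi_I}(\lambda+l)|^2\le(\text{something summable in }l)\cdot(\text{a single }L^2\text{ quantity})$; the cleanest route is to observe that replacing $\varphi$ by modulates $e^{i\lambda x}\varphi(x)$ shifts the lattice samples, and to take a supremum over the "phase" $\lambda\in[0,1]$ (equivalently over $[0,2\pi)$ after rescaling), producing the quantity $b_M=\sup_{\lambda\in[0,1]}\sum_{l\in\mathscr{L}}M(\lambda+l)$. Writing $\varphi_\lambda(x)=e^{i\lambda x}\varphi(x)$, one gets $\widehat{(\varphi_\lambda)_I}(l)=\widehat{\varphi_I}(l-\lambda)$, hence $\|T_F\varphi_\lambda\|_{\mathscr{H}_F}^2=\sum_{l\in\mathscr{L}}M(l)|\widehat{\varphi_I}(l-\lambda)|^2$, and by Parseval on $L^2(\mathbb{T})$ applied carefully to the sampled-and-shifted sequence one bounds this by $b_M\|\varphi\|_{L^2(0,1)}^2$, with equality attained by pushing $\varphi$ toward a single exponential $e^{2\pi i n x}$ near the optimal phase. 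Running the inequality in both directions establishes (bd-1)$\Rightarrow$(bd-2) with the norm bound $\|T_F\|\le\sqrt{b_M}$, and sharpness gives (\ref{eq:mer13}); conversely if $b_M=\infty$ a test-function argument (concentrating $\widehat{\varphi}$ near a phase where the lattice sum diverges) shows $T_F$ is unbounded, hence $T_F(L^2(0,1))\not\subseteq\mathscr{H}_F$, giving (bd-2)$\Rightarrow$(bd-1).

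The main obstacle I anticipate is the bookkeeping in the "fold over the lattice and take a sup over the phase" step: one has to be careful that $\widehat{\varphi_I}$ restricted to a coset $\lambda+2\pi\mathbb{Z}$ is, for $\varphi\in L^2(0,1)$, an honest $\ell^2$ sequence with $\ell^2$-norm controlled by $\|\varphi\|_{L^2(0,1)}$ \emph{uniformly in the phase} $\lambda$ — this is where support in a single period interval is essential, and it is exactly the place where a naive Poisson-summation manipulation could go wrong. I would handle this by the modulation trick above (reducing the phase dependence to a genuine shift of the argument of $\varphi$, under which the $L^2(0,1)$-norm is invariant only after one also notes that $e^{i\lambda x}\varphi$ still has the same modulus, so the same Parseval identity applies), rather than by interchanging sum and integral directly. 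Everything else — boundedness of $T_F$ being equivalent to $\mathrm{Ran}(T_F)\subseteq\mathscr{H}_F$ via the closed graph theorem together with Lemma~\ref{lem:mer-2-2} (closability of $T_F$ into $\mathscr{H}_F$), and the identification of operator norms — is routine given the results already in the excerpt.
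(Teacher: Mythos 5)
Your proposal follows essentially the same route as the paper's proof: reduce to $\left\Vert T_{F}\varphi\right\Vert _{\mathscr{H}_{F}}^{2}=\int_{\mathbb{R}}\left|\widehat{\varphi}\left(\lambda\right)\right|^{2}M\left(\lambda\right)d\lambda$ via Corollary \ref{cor:lcg-isom}, fold the integral over the period lattice $\mathscr{L}$, use Parseval for $\varphi$ supported in a single period interval, and take the supremum over the phase to produce $b_{M}$, with the converse and the norm identity obtained by sharpness of this bound. Your modulation trick $\varphi\mapsto e^{i\lambda x}\varphi$ is a clean way to make the "fold and take the sup over the phase" step uniform in $\lambda$, which is exactly the point the paper's displayed inequality glosses over; otherwise the two arguments coincide.
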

\begin{rem}
Condition (bd-1) is automatically satisfied in all interesting cases
(at least from the point of view of our present Memoir.)\end{rem}
\begin{proof}
The key step in the proof of ``$\Longleftrightarrow$'' was the
Parseval duality, 
\begin{equation}
\int_{0}^{1}\left|f\left(x\right)\right|^{2}dx=\sum_{l\in\mathscr{L}}\left|\widehat{f_{I}}\left(l\right)\right|^{2},\mbox{ where}\label{eq:mer14}
\end{equation}
$\left[0,1\right)\simeq\mathbb{T}=\mathbb{R}/\mathbb{Z}$, $\widehat{\mathbb{T}}\simeq\mathscr{L}$. 

Let $F$, $T_{F}$, and $M$ be as in the statement of the corollary.
Then for $\varphi\in C_{c}\left(0,1\right)$, we compute the $\mathscr{H}_{F}$-norm
of 
\begin{equation}
T_{F}\left(\varphi\right)=F_{\varphi}\label{eq:mer15}
\end{equation}
with the use of (\ref{eq:mer11}), and Proposition \ref{prop:mer1}.

We return to 
\begin{equation}
\widehat{\varphi_{I}}\left(l\right)=\int_{0}^{1}e^{-ily}\varphi\left(y\right)dy,\; l\in\mathscr{L};\label{eq:mer16}
\end{equation}
and we now compute $\widehat{\left(T_{F}\varphi\right)_{I}}\left(l\right)$,
$l\in\mathscr{L}$; starting with $T_{F}\varphi$ from (\ref{eq:mer11}).
The result is
\[
\widehat{\left(T_{F}\varphi\right)_{I}}\left(l\right)=M\left(l\right)\widehat{\varphi_{I}}\left(l\right),\;\forall l\in\mathscr{L}\left(=2\pi\mathbb{Z}.\right)
\]
And further, using \chapref{conv}, we have: 
\begin{eqnarray*}
\left\Vert F_{\varphi}\right\Vert _{\mathscr{H}_{F}}^{2} & = & \left\Vert T_{F}\varphi\right\Vert _{\mathscr{H}_{F}}^{2}\\
 & \begin{array}[t]{c}
=\\
\text{(Cor. \ensuremath{\left(\ref{cor:lcg-isom}\right)})}
\end{array} & \int_{\mathbb{R}}\left|\widehat{\varphi}\left(\lambda\right)\right|^{2}M\left(\lambda\right)d\lambda\\
 & = & \int_{0}^{1}\sum_{l\in\mathscr{L}}\left|\widehat{\varphi}\left(\lambda+l\right)\right|^{2}M\left(\lambda+l\right)d\lambda\\
 & \begin{array}[t]{c}
\leq\\
\text{\ensuremath{\left(\ref{eq:mer16}\right)}}
\end{array} & \left(\sum_{l\in\mathscr{L}}\left|\widehat{\varphi}\left(l\right)\right|^{2}\right)\sup_{\lambda\in\left[0,1\right)}\sum_{l\in\mathscr{L}}M\left(\lambda+l\right)\\
 & \begin{array}[t]{c}
=\\
\text{\ensuremath{\left(\ref{eq:mer14}\right)}}\\
\text{and (bd-1)}
\end{array} & \left(\int_{0}^{1}\left|\varphi\left(x\right)\right|^{2}dx\right)\cdot b_{M}=\left\Vert \varphi\right\Vert _{L^{2}\left(0,1\right)}^{2}\cdot b_{M}.
\end{eqnarray*}

Hence, if $b_{M}<\infty$, (bd-2) holds, with
\begin{equation}
\Bigl\Vert T_{F}\Bigr\Vert_{L^{2}\left(0,1\right)\rightarrow\mathscr{H}_{F}}\leq\sqrt{b_{M}}.\label{eq:mer17}
\end{equation}
Using standard Fourier duality, one finally sees that ``$\leq$''
in (\ref{eq:mer17}) is in fact ``$=$''. \index{Fourier duality}\end{proof}
\begin{rem}
A necessary condition for boundedness of $T_{F}:L^{2}\left(0,1\right)\rightarrow\mathscr{H}_{F}$,
is $M\in L^{\infty}\left(\mathbb{R}\right)$ when the function $M\left(\cdot\right)$
is as specified in (ii) of the corollary.\end{rem}
\begin{proof}
Let $\varphi\in C_{c}\left(0,1\right)$, then

\begin{eqnarray*}
\Bigl\Vert T_{F}\varphi\Bigr\Vert_{\mathscr{H}_{F}}^{2}=\Bigl\Vert F_{\varphi}\Bigr\Vert_{\mathscr{H}_{F}}^{2} & = & \int_{\mathbb{R}}\left|\widehat{\varphi}\left(\lambda\right)\right|^{2}M\left(\lambda\right)d\lambda\\
 & \leq & \bigl\Vert M\bigr\Vert{}_{\infty}\cdot\int_{\mathbb{R}}\left|\widehat{\varphi}\left(\lambda\right)\right|^{2}d\lambda\\
 & = & \bigl\Vert M\bigr\Vert{}_{\infty}\cdot\int_{\mathbb{R}}\left|\varphi\left(x\right)\right|^{2}d\lambda\,\,\,\,\,\,(\mbox{Parseval})\\
 & = & \bigl\Vert M\bigr\Vert{}_{\infty}\bigl\Vert\varphi\bigr\Vert{}_{L^{2}\left(0,1\right)}^{2}.
\end{eqnarray*}
\end{proof}
\begin{thm}
Let $F$ be as in Proposition \ref{prop:mer1}, and $\mathscr{H}_{F}$
the corresponding RKHS. Define the skew-Hermitian operator $D^{\left(F\right)}\left(F_{\varphi}\right)=\frac{1}{i}F_{\varphi'}$
on 
\[
dom\bigl(D^{\left(F\right)}\bigr)=\left\{ F_{\varphi}\:\Big|\:\varphi\in C_{c}^{\infty}\left(0,1\right)\right\} \subset\mathscr{H}_{F}
\]
as before. Let $A\supset D^{\left(F\right)}$ be a selfadjoint extension
of $D^{\left(F\right)}$, i.e., 
\[
D^{\left(F\right)}\subset A\subset\bigl(D^{\left(F\right)}\bigr)^{*},\; A=A^{*}.
\]
Let $P=P_{A}$ be the projection valued measure (PVM) of $A$, and
\begin{equation}
U_{t}^{\left(A\right)}=e^{tA}=\int_{\mathbb{R}}e^{it\lambda}P_{A}\left(d\lambda\right),\: t\in\mathbb{R}\label{eq:mer4}
\end{equation}
be the one-parameter unitary group; and for all $f$ measurable on
$\mathbb{R}$, set (the Spectral Theorem applied to $A$)
\begin{equation}
f\left(A\right)=\int_{\mathbb{R}}f\left(\lambda\right)P_{A}\left(d\lambda\right);\label{eq:mer5}
\end{equation}
then we get the following
\begin{equation}
\left(T_{F}\varphi\right)\left(x\right)=\left(M\widehat{\varphi_{I}}\right)\left(A\right)U_{x}^{\left(A\right)}=U_{x}^{\left(A\right)}\left(M\widehat{\varphi_{I}}\right)\left(A\right)\label{eq:mer6}
\end{equation}
for the Mercer operator $\left(T_{F}\varphi\right)\left(x\right)=\int_{0}^{1}F\left(x-y\right)\varphi\left(y\right)$,
$\varphi\in L^{2}\left(0,1\right)$.

\index{measure!PVM}\index{selfadjoint extension}\end{thm}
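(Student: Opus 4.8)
The plan is to reduce the claimed identity to a statement about the action of the Mercer operator $T_F$ on the exponential basis functions, and then invoke the spectral theorem for the chosen selfadjoint extension $A$. First I would fix $\varphi\in C_c^\infty(0,1)$ and express $\varphi$ via its Fourier series over the period lattice $\mathscr{L}=2\pi\mathbb{Z}$, so that on $(0,1)$ the identity from Proposition \ref{prop:mer1} gives
\begin{equation}
\left(T_F\varphi\right)(x)=\sum_{l\in\mathscr{L}}M(l)\,\widehat{\varphi_I}(l)\,e^{ilx},\qquad x\in(0,1).\nonumber
\end{equation}
The key observation is that each $e^{ilx}$, restricted to $\Omega=(0,1)$, is the eigenfunction $e_l$ associated with the atom $\{l\}$ of the PVM $P_A$ of a selfadjoint extension $A\supset D^{(F)}$ (here $D^{(F)}(F_\varphi)=\tfrac1i F_{\varphi'}$, so $A$ generates the unitary group $U_x^{(A)}=e^{xA}$ with $U_x^{(A)}e_l=e^{ilx}e_l$ on the relevant spectral subspace). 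This identification is exactly the content of Theorem \ref{thm:Eigenspaces-for-the-adjoint} together with the Case $\mathscr{H}_F^{(atom)}$ discussion in Theorem \ref{thm:R^n-spect}: the spectrum of $A$ consists of those $l$ for which $e_l\in\mathscr{H}_F$, and $P_A(\{l\})\mathscr{H}_F=\mathbb{C}e_l$.

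Next I would rewrite the sum $\sum_{l}M(l)\widehat{\varphi_I}(l)e^{ilx}$ as an operator applied to the cyclic vector. Using the spectral representation $f(A)=\int_{\mathbb{R}}f(\lambda)P_A(d\lambda)$ from (\ref{eq:mer5}), the function $\lambda\mapsto M(\lambda)\widehat{\varphi_I}(\lambda)$ is bounded (by the hypothesis $\mu\ll d\lambda$, $M\in L^1$, together with the boundedness condition appearing just above, cf.\ (bd-1)), so $\left(M\widehat{\varphi_I}\right)(A)$ is a well-defined bounded operator in $\mathscr{H}_F$. Since the spectrum of $A$ in the atomic part is precisely $\mathscr{L}$ (for the $\mathbb{Z}$-periodic extension $\widetilde F$), the operator $\left(M\widehat{\varphi_I}\right)(A)$ acts on $P_A(\{l\})\mathscr{H}_F=\mathbb{C}e_l$ by multiplication by $M(l)\widehat{\varphi_I}(l)$; and $U_x^{(A)}$ acts there by multiplication by $e^{ilx}$. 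Because $A$ and $U_x^{(A)}=e^{xA}$ commute (both are functions of $A$ by the spectral theorem), the two orderings in (\ref{eq:mer6}) agree, giving
\begin{equation}
\left(M\widehat{\varphi_I}\right)(A)\,U_x^{(A)}=U_x^{(A)}\left(M\widehat{\varphi_I}\right)(A)=\sum_{l\in\mathscr{L}}M(l)\widehat{\varphi_I}(l)e^{ilx}\,P_A(\{l\}),\nonumber
\end{equation}
which, evaluated on the cyclic vector $F_0$ and using the reproducing property exactly as in the proof of the Proposition in \subref{exp(-|x|)}, reproduces $T_F\varphi$.

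The main obstacle I anticipate is the bookkeeping of \emph{which} vector the operator identity (\ref{eq:mer6}) is being applied to, and the precise sense in which ``$\left(T_F\varphi\right)(x)$'' on the left is to be read as an operator-valued object rather than a scalar. The cleanest route is to interpret both sides as the value, at the point $x\in(0,1)$, of the function $T_F\varphi\in\mathscr{H}_F\subset C(0,1)$, and to use the reproducing kernel $\langle F_x,\cdot\rangle_{\mathscr{H}_F}$ to extract that value; then the right-hand side becomes $\langle F_x,\left(M\widehat{\varphi_I}\right)(A)F_0\rangle$, and one matches term-by-term against the Poisson-summation expansion of $F(x-y)$ used in Proposition \ref{prop:mer1}. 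A secondary technical point is justifying the interchange of the infinite $\mathscr{L}$-sum with the spectral integral; this is handled by dominated convergence using $M\in L^1(\mathbb{R})$ and $\varphi\in C_c^\infty$, so $\widehat{\varphi_I}$ has rapid decay on $\mathscr{L}$. Once the atomic case is in place, extending the formula from $\varphi\in C_c^\infty(0,1)$ to $\varphi\in L^2(0,1)$ follows by density together with the boundedness of $T_F$ (Lemma \ref{lem:mer-1}) and of $\left(M\widehat{\varphi_I}\right)(A)$.
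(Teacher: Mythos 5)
Your argument rests on the claim that the exponentials $e_{l}(x)=e^{ilx}$, $l\in\mathscr{L}=2\pi\mathbb{Z}$, are eigenfunctions attached to atoms of the PVM $P_{A}$, so that $\left(M\widehat{\varphi_{I}}\right)(A)=\sum_{l}M(l)\widehat{\varphi_{I}}(l)P_{A}(\{l\})$ and the identity can be checked term by term on the spectral subspaces $P_{A}(\{l\})\mathscr{H}_{F}=\mathbb{C}e_{l}$. This premise is not justified and is in fact false in the very setting of Proposition \ref{prop:mer1}: there the standing hypothesis is $\mu\ll d\lambda$ with density $M$, so the measure $d\mu_{A}(\lambda)=\left\Vert P_{A}(d\lambda)F_{0}\right\Vert ^{2}$ attached to the cyclic vector is absolutely continuous and $P_{A}$ has no atoms on the cyclic subspace. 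Correspondingly, by Theorem \ref{thm:Eigenspaces-for-the-adjoint} (and explicitly by Proposition \ref{prop:exp} for $F_{3}(x)=e^{-\left|x\right|}$, which is one of the rows of Table \ref{tab:mercer} to which this theorem applies, with $M(\lambda)=\tfrac{1}{\pi(1+\lambda^{2})}$), the restricted exponentials $e_{l}\big|_{(0,1)}$ need not lie in $\mathscr{H}_{F}$ at all. The ``Case $\mathscr{H}_{F}^{(atom)}$'' of Theorem \ref{thm:R^n-spect} that you invoke is a conditional statement about atoms \emph{if} they exist; it does not supply atoms at the lattice points. So the decomposition on which your whole computation is built collapses, and with it the term-by-term matching against the Poisson-summation expansion.

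The fix is to avoid atomicity altogether and work with the continuous spectral integral, which is what the paper does: from (\ref{eq:mer5}) one has
\begin{equation}
\widehat{\varphi_{I}}\left(A\right)U_{x}^{\left(A\right)}=\int_{\mathbb{R}}\Bigl(\int_{0}^{1}\varphi(y)e^{-i\lambda y}dy\Bigr)e^{i\lambda x}P_{A}\left(d\lambda\right)\underset{\text{Fubini}}{=}\int_{0}^{1}\varphi(y)\, U^{\left(A\right)}\left(x-y\right)dy,\nonumber
\end{equation}
and pairing with the cyclic vector $v_{0}=F_{0}$ gives $\int_{0}^{1}\varphi(y)F^{\left(A\right)}(x-y)dy=\left(T_{F}\varphi\right)(x)$, since $F^{\left(A\right)}=\widehat{d\mu_{A}}$ extends $F$ and $x-y\in(-1,1)$. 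No information about the pointwise structure of $\mathrm{spec}(A)$ is needed; the density $M$ enters only through $d\mu_{A}=M\, d\lambda$ when the operator is evaluated on $F_{0}$. Your closing remark about reading both sides as $\left\langle F_{x},\cdot\, F_{0}\right\rangle _{\mathscr{H}_{F}}$ points in the right direction, but the route to it must go through Fubini on the spectral integral, not through a lattice of atoms.
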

\begin{proof}
Using (\ref{eq:mer5}), we get 
\begin{eqnarray}
\widehat{\varphi_{I}}\left(A\right)U_{x}^{\left(A\right)} & = & \int_{\mathbb{R}}\int_{0}^{1}\varphi\left(y\right)e^{-i\lambda y}P_{A}\left(d\lambda\right)U^{A}\left(x\right)\nonumber \\
 & \underset{\text{(Fubini)}}{=} & \int_{0}^{1}\varphi\left(y\right)\left(\int_{\mathbb{R}}e^{i\lambda\left(x-y\right)}P_{A}\left(d\lambda\right)\right)dy\nonumber \\
 & \underset{\text{\ensuremath{\left(\ref{eq:mer4}\right)}}}{=} & \int_{0}^{1}U^{A}\left(x-y\right)\varphi\left(y\right)dy\nonumber \\
 & = & U_{x}^{\left(A\right)}U^{\left(A\right)}\left(\varphi\right),\;\mbox{where}\label{eq:mer7}
\end{eqnarray}
\begin{equation}
U^{\left(A\right)}\left(\varphi\right)=\int_{0}^{1}\varphi\left(y\right)U^{\left(A\right)}\left(-y\right)dy,\label{eq:mer8}
\end{equation}
all operators in the RKHS $\mathscr{H}_{F}$.

We have a selfadjoint extension $A$ corresponding to $\mu=\mu^{\left(A\right)}\in Ext\left(F\right)$,
and a cyclic vector $v_{0}$: 
\begin{eqnarray}
F^{\left(A\right)}\left(t\right) & = & \left\langle v_{0},U^{\left(A\right)}\left(t\right)v_{0}\right\rangle \nonumber \\
 & = & \int_{\mathbb{R}}e^{i\lambda t}d\mu_{A}\left(\lambda\right),\mbox{ where }d\mu_{A}\left(\lambda\right)=\left\Vert P_{A}\left(d\lambda\right)v_{0}\right\Vert ^{2},\label{eq:mer9}
\end{eqnarray}
and from (\ref{eq:mer8}):
\begin{equation}
\left(T_{F}\varphi\right)\left(x\right)=\bigl(F_{\varphi}^{\left(A\right)}\bigr)\left(x\right),\;\forall\varphi\in C_{c}\left(0,1\right),\mbox{ and }\forall x\in\left(0,1\right).\label{eq:mer10}
\end{equation}

\end{proof}

\section{Positive Definite Functions, Green's Functions, and Boundary}

In this section, we consider a correspondence and interplay between
a class of boundary value problems on the one hand, and spectral theoretic
properties of extension operators on the other.

Fix a bounded domain $\Omega\subset\mathbb{R}^{n}$, open and connected.
Let $F:\Omega-\Omega\rightarrow\mathbb{C}$ be a continuous positive
definite (p.d.) function. We consider a special case when $F$ occurs
as the Green's function of certain linear operator. \index{positive definite}\index{operator!selfadjoint}\index{Schwartz!test function}
\begin{lem}
\label{lem:gr-1}Let $\mathscr{D}$ be a Hilbert space, a Fréchet
space or an LF-space (see \cite{Tre06}), such that $\mathscr{D}\underset{j}{\hookrightarrow}L^{2}\left(\Omega\right)$;
and such that the inclusion mapping $j$ is continuous relative to
the respective topologies on $\mathscr{D}$, and on $L^{2}\left(\Omega\right)$.
Let $\mathscr{D}^{*}:=$ the dual of $\mathscr{D}$ when $\mathscr{D}$
is given its Fréchet (LF, or Hilbert) topology; then there is a natural
``inclusion'' mapping $j^{*}$ from $L^{2}\left(\Omega\right)$
to $\mathscr{D}^{*}$, i.e., we get 
\begin{equation}
\mathscr{D}\underset{j}{\hookrightarrow}L^{2}\left(\Omega\right)\underset{j^{*}}{\hookrightarrow}\mathscr{D}^{*}.\label{eq:gr-2-1}
\end{equation}
\end{lem}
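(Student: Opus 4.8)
The statement is the standard Gelfand-triple (rigged Hilbert space) construction, so the proof is essentially a matter of organizing known functional-analytic facts. First I would recall the hypothesis: $\mathscr{D}$ carries a topology (Hilbert, Fr\'echet, or LF) that is finer than the subspace topology inherited from $L^{2}(\Omega)$, and the inclusion $j:\mathscr{D}\hookrightarrow L^{2}(\Omega)$ is continuous. The plan is to produce the map $j^{*}:L^{2}(\Omega)\to\mathscr{D}^{*}$ by dualizing $j$, to check it is continuous and injective, and to identify it with a genuine ``inclusion'' by using the self-duality of $L^{2}(\Omega)$ via the Riesz representation theorem.

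The key steps, in order. (1) Since $j:\mathscr{D}\to L^{2}(\Omega)$ is continuous and linear, its transpose $j^{t}:\left(L^{2}(\Omega)\right)^{*}\to\mathscr{D}^{*}$ is well-defined and continuous for the strong (or weak-$*$) dual topologies; here $\mathscr{D}^{*}$ is the continuous dual of $\mathscr{D}$ in whichever of the three topologies is in force. (2) Identify $\left(L^{2}(\Omega)\right)^{*}$ with $L^{2}(\Omega)$ itself through the conjugate-linear Riesz isomorphism $g\mapsto\langle g,\cdot\rangle_{L^{2}(\Omega)}$; composing with $j^{t}$ gives the desired map, which I will call $j^{*}:L^{2}(\Omega)\to\mathscr{D}^{*}$, explicitly
\[
\langle j^{*}(g),\varphi\rangle_{\mathscr{D}^{*},\mathscr{D}}=\langle g,j(\varphi)\rangle_{L^{2}(\Omega)},\qquad g\in L^{2}(\Omega),\ \varphi\in\mathscr{D}.
\]
(3) Injectivity of $j^{*}$: suppose $j^{*}(g)=0$, i.e.\ $\langle g,j(\varphi)\rangle_{L^{2}(\Omega)}=0$ for all $\varphi\in\mathscr{D}$; since $j(\mathscr{D})$ is dense in $L^{2}(\Omega)$ (this is where I would invoke the standing assumption that $\mathscr{D}$ is dense in $L^{2}(\Omega)$ — in the cases of interest, e.g.\ $\mathscr{D}=C_{c}^{\infty}(\Omega)$ or a Sobolev-type space, density is automatic), it follows that $g=0$. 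Hence $j^{*}$ is a continuous linear injection, which is the precise sense in which $L^{2}(\Omega)\hookrightarrow\mathscr{D}^{*}$, completing the chain (\ref{eq:gr-2-1}).

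The main obstacle — really the only subtlety — is the density of $j(\mathscr{D})$ in $L^{2}(\Omega)$, which is needed for injectivity of $j^{*}$ and hence for calling $j^{*}$ an ``inclusion'' rather than merely a continuous map. If one does not assume density outright, then $j^{*}$ need only be injective on the annihilator considerations, and one should either add density to the hypotheses or weaken the conclusion to ``$j^{*}$ has dense range'' / ``$j^{*}$ is injective when $\overline{j(\mathscr{D})}=L^{2}(\Omega)$.'' A secondary bookkeeping point is to be explicit about which dual topology $\mathscr{D}^{*}$ carries (strong dual vs.\ weak-$*$) and, in the LF-case, to cite that the transpose of a continuous map between LF-spaces and Hilbert spaces is continuous for the strong duals (see \cite{Tre06}); I would simply reference Tr\`eves for this rather than reprove it. The conjugate-linearity of the Riesz map is consistent with the paper's convention (inner products conjugate-linear in the first slot), so no sign/conjugation issue arises in the pairing formula above.
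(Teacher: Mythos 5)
Your proof is correct and follows the same route the paper intends: the paper's own proof is just the one-line remark that the claim ``is immediate from the assumptions, and the fact that $L^{2}\left(\Omega\right)$ is its own dual,'' which is precisely the transpose-plus-Riesz argument you spell out. Your observation that injectivity of $j^{*}$ (hence the word ``inclusion'') requires density of $j\left(\mathscr{D}\right)$ in $L^{2}\left(\Omega\right)$ is a fair point the paper leaves implicit; in the two applications it makes ($\mathscr{D}=C_{c}^{\infty}\left(\Omega\right)$, and $\mathscr{D}=$ the domain of a densely defined selfadjoint operator with its graph topology) density does hold.
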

\begin{proof}
It is immediate from the assumptions, and the fact that $L^{2}\left(\Omega\right)$
is its own dual. See also \cite{Tre06}.\end{proof}
\begin{rem}
In the following we shall use \lemref{gr-1} in two cases:
\begin{enumerate}[leftmargin=*]
\item Let $A$ be a selfadjoint operator (unbounded in the non-trivial
cases) acting in $L^{2}\left(\Omega\right)$; and with dense domain.
For $\mathscr{D}=\mathscr{D}_{A}$, we may choose the domain of $A$
with its graph topology. 
\item Let $\mathscr{D}$ be a space of Schwartz test functions, e.g., $C_{c}^{\infty}\left(\Omega\right)$,
given its natural LF-topology, see \cite{Tre06}; then the inclusion
\begin{equation}
C_{c}^{\infty}\left(\Omega\right)\underset{j}{\hookrightarrow}L^{2}\left(\Omega\right)\label{eq:gr-2-2}
\end{equation}
satisfies the condition in \lemref{gr-1}.
\end{enumerate}
\end{rem}
\begin{cor}
\label{cor:gr-1}Let $\mathscr{D}\subset L^{2}\left(\Omega\right)$
be a subspace satisfying the conditions in \lemref{gr-1}; and consider
the triple of spaces (\ref{eq:gr-2-1}); then the inner product in
$L^{2}\left(\Omega\right)$, here denoted $\left\langle \cdot,\cdot\right\rangle _{2}$,
extends by closure to a sesquilinear function $\left\langle \cdot,\cdot\right\rangle $
(which we shall also denote by $\left\langle \cdot,\cdot\right\rangle _{2}$):
\begin{equation}
\left\langle \cdot,\cdot\right\rangle :L^{2}\left(\Omega\right)\times\mathscr{D}^{*}\rightarrow\mathbb{C}.\label{eq:gr-2-3}
\end{equation}
\end{cor}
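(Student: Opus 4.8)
The statement to prove is Corollary \ref{cor:gr-1}: given the continuous dense inclusion $\mathscr{D}\hookrightarrow L^{2}(\Omega)$ with the resulting Gelfand triple $\mathscr{D}\hookrightarrow L^{2}(\Omega)\hookrightarrow\mathscr{D}^{*}$ from Lemma \ref{lem:gr-1}, the $L^{2}$-inner product extends to a sesquilinear pairing $L^{2}(\Omega)\times\mathscr{D}^{*}\to\mathbb{C}$.

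\medskip

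The plan is to realize the asserted pairing simply as the canonical duality bracket between $\mathscr{D}^{*}$ and $\mathscr{D}$, precomposed with the inclusion $j^{*}$, and then check two things: that this bracket restricts correctly to the $L^{2}$ inner product on $L^{2}(\Omega)\times j(\mathscr{D})$, and that it is continuous/sesquilinear so that ``extends by closure'' makes literal sense. First I would fix notation: write $\langle\cdot,\cdot\rangle_{2}$ for the $L^{2}(\Omega)$ inner product (conjugate-linear in the first slot, per the paper's convention), and for $u\in\mathscr{D}^{*}$ and $\varphi\in\mathscr{D}$ write $u(\varphi)$ or $\langle u,\varphi\rangle$ for the dual pairing. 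For $f\in L^{2}(\Omega)$, the element $j^{*}(f)\in\mathscr{D}^{*}$ is by definition the functional $\varphi\mapsto\langle f,j\varphi\rangle_{2}=\langle j\varphi, f\rangle_{2}^{-}$ (one has to be careful which conjugation convention makes $j^{*}$ conjugate-linear versus linear; I would define $j^{*}$ so that for $f\in L^2(\Omega)$ and $\varphi\in\mathscr D$, $\langle j^*f,\varphi\rangle = \langle f, j\varphi\rangle_2$, which is the standard adjoint in the conjugate-linear-first-slot world). The key point, which is really the content of Lemma \ref{lem:gr-1} already, is that $j^{*}$ is injective with dense range because $j$ has dense range, so $L^{2}(\Omega)$ genuinely sits inside $\mathscr{D}^{*}$.

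\medskip

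Next I would define the desired sesquilinear map on $L^{2}(\Omega)\times\mathscr{D}^{*}$ as follows: for $f\in L^{2}(\Omega)$ and $u\in\mathscr{D}^{*}$, since $j(\mathscr{D})$ is dense in $L^{2}(\Omega)$ and $j^{*}(L^{2}(\Omega))$ is dense in $\mathscr{D}^{*}$, pick $\psi_{n}\in\mathscr{D}$ with $j\psi_{n}\to f$ in $L^{2}(\Omega)$; then the natural candidate for $\langle f,u\rangle$ is $\lim_{n}\, u(\psi_{n})$ — but this need not converge for arbitrary $u\in\mathscr{D}^*$, so the cleaner route is to go the other way: approximate the $\mathscr{D}^{*}$-argument. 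Given $u\in\mathscr{D}^{*}$, by density of $j^{*}(L^{2})$ in $\mathscr{D}^{*}$ (in the strong or weak-$*$ topology — here I would be explicit about which topology and would use the one that makes the statement true, typically the strong dual topology if $\mathscr{D}$ is Fréchet/LF, or just note that on the image the bracket is already defined and one extends by the continuity asserted) choose $g_{n}\in L^{2}(\Omega)$ with $j^{*}g_{n}\to u$ in $\mathscr{D}^{*}$; then set $\langle f,u\rangle := \lim_{n}\langle f,g_{n}\rangle_{2}$. I would then verify: (i) the limit exists and is independent of the approximating sequence, using that the pairing $L^{2}\times L^{2}\to\mathbb{C}$ is continuous and that $j^{*}g_{n}$ Cauchy in $\mathscr{D}^{*}$ controls $\langle f,g_n\rangle_2$ for $f$ fixed — this is precisely where one needs a compatibility/boundedness estimate, and it is the main obstacle; (ii) when $u=j^{*}(h)$ for $h\in L^{2}$, one recovers $\langle f,u\rangle=\langle f,h\rangle_{2}$; (iii) the map is linear in $u$ and conjugate-linear in $f$ (or whatever matches the paper's convention).

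\medskip

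The step I expect to be genuinely delicate — and where the proof is more of a ``this is routine from functional analysis'' remark than a computation — is pinning down in which topology ``extends by closure'' holds: $\mathscr{D}^{*}$ carries several natural topologies (weak-$*$, strong), and the pairing $L^{2}(\Omega)\times\mathscr{D}^{*}\to\mathbb{C}$ is separately continuous but not jointly continuous in general, so the honest statement is that for each fixed $f\in L^{2}(\Omega)$ the functional $u\mapsto\langle f,u\rangle$ is the unique weak-$*$-continuous extension to $\mathscr{D}^{*}$ of $j^{*}(h)\mapsto\langle f,h\rangle_{2}$, and symmetrically for fixed $u$. Given that the paper treats this as a corollary with a one-line proof (``immediate from Lemma \ref{lem:gr-1} and the fact that $L^{2}(\Omega)$ is its own dual''), I would keep the write-up correspondingly short: state that $j^{*}$ embeds $L^{2}(\Omega)$ densely into $\mathscr{D}^{*}$, invoke the canonical $\mathscr{D}^{*}$–$\mathscr{D}$ duality bracket restricted appropriately, observe it agrees with $\langle\cdot,\cdot\rangle_{2}$ on the dense subspace $L^{2}(\Omega)\times j^{*}(L^{2}(\Omega))$, and conclude by the density and separate continuity that it extends uniquely — referring to \cite{Tre06} for the topological-vector-space generalities, exactly as Lemma \ref{lem:gr-1} does.
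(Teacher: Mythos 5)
Your proposal takes the same route the paper intends: the paper's own ``proof'' is a one-line appeal to standard dual-topology arguments with a citation to Tr\`eves, and your write-up is exactly the fleshed-out version of that argument (canonical $\mathscr{D}^{*}$--$\mathscr{D}$ bracket, agreement with $\left\langle \cdot,\cdot\right\rangle _{2}$ on the dense subspace $j^{*}\left(L^{2}\left(\Omega\right)\right)$, extension by density). You have also correctly identified where the real content lies, namely that ``extends by closure'' cannot mean a jointly continuous, everywhere-defined form.

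One technical point in your fallback is wrong, though, and worth fixing: you assert that for each fixed $f\in L^{2}\left(\Omega\right)$ the functional $u\mapsto\left\langle f,u\right\rangle $ is the unique weak-$*$-continuous extension to $\mathscr{D}^{*}$ of $j^{*}\left(h\right)\mapsto\left\langle f,h\right\rangle _{2}$. A weak-$*$-continuous linear functional on $\mathscr{D}^{*}$ is evaluation at an element of $\mathscr{D}$, so such an extension exists only when $f\in j\left(\mathscr{D}\right)$; for $f\in L^{2}\left(\Omega\right)\backslash j\left(\mathscr{D}\right)$ there is no weak-$*$-continuous extension at all (and an everywhere-defined separately continuous pairing would force $L^{2}\left(\Omega\right)\subseteq\mathscr{D}^{**}=\mathscr{D}$ when $\mathscr{D}$ is reflexive, e.g.\ $C_{c}^{\infty}\left(\Omega\right)$). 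The honest reading of the corollary --- and the one consistent with how the paper actually uses it, e.g.\ $\left\langle \delta_{x},f\right\rangle _{2}=f\left(x\right)$ only for $f\in C\left(\overline{\Omega}\right)\cap L^{2}\left(\Omega\right)$ --- is that the extended bracket is a densely defined sesquilinear form: it is declared on those pairs $\left(f,u\right)$ for which the approximating limits in your step (i) exist, rather than on all of $L^{2}\left(\Omega\right)\times\mathscr{D}^{*}$. If you phrase your conclusion that way, the rest of your argument goes through as written.
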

\begin{proof}
This is a standard argument based on dual topologies; see \cite{Tre06}.\end{proof}
\begin{example}[Application]
If $\mathscr{D}=C_{c}^{\infty}\left(\Omega\right)$ in (\ref{eq:gr-2-1}),
then $\mathscr{D}^{*}=$ the space of all Schwartz-distributions on
$\Omega$, including the Dirac masses. Referring to (\ref{eq:gr-2-3}),
we shall write $\left\langle \delta_{x},f\right\rangle _{2}$ to mean
$f\left(x\right)$, when $f\in C\left(\overline{\Omega}\right)\cap L^{2}\left(\Omega\right)$. 
\end{example}
Adopting the constructions from Lemma \lemref{gr-1} and Corollary
\ref{cor:gr-1}, we now turn to calculus of positive definite functions: 
\begin{defn}
If $F:\Omega-\Omega\rightarrow\mathbb{C}$ is a function, or a distribution,
then we say that $F$ is positive definite iff\index{positive definite!distribution}
\begin{equation}
\left\langle F,\overline{\varphi}\otimes\varphi\right\rangle \geq0\label{eq:gr-1-1}
\end{equation}
for all $\varphi\in C_{c}^{\infty}\left(\Omega\right)$. The meaning
of (\ref{eq:gr-1-1}) is the distribution $K_{F}:=F\left(x-y\right)$
acting on $\left(\overline{\varphi}\otimes\varphi\right)\left(x,y\right):=\overline{\varphi\left(x\right)}\varphi\left(y\right)$,
$x,y\in\Omega$. 
\end{defn}
Let 
\begin{equation}
\triangle:=\sum_{j=1}^{k}\Bigl(\frac{\partial}{\partial x_{j}}\Bigr)^{2}\label{eq:gr-1-2}
\end{equation}
and consider an open domain $\Omega\subset\mathbb{R}^{k}$.

In $\mathscr{H}_{F}$, set 
\begin{equation}
D_{j}^{\left(F\right)}\left(F_{\varphi}\right):=F_{\frac{\partial\varphi}{\partial x_{j}}},\;\varphi\in C_{c}^{\infty}\left(\Omega\right),\; j=1,\ldots,k.\label{eq:gr-1-3}
\end{equation}
Then this is a system of commuting skew-Hermitian operators with dense
domain in $\mathscr{H}_{F}$. 
\begin{lem}
Let $F:\Omega-\Omega\rightarrow\mathbb{C}$ be a positive definite
function (or a distribution); and set 
\begin{equation}
M:=-\triangle F\label{eq:gr-1-4}
\end{equation}
where $\triangle F$ on the RHS in (\ref{eq:gr-1-4}) is in the sense
of distributions. Then $M$ is also positive definite and 
\begin{equation}
\left\langle M_{\varphi},M_{\psi}\right\rangle _{\mathscr{H}_{M}}=\sum_{j=1}^{k}\left\langle D_{j}^{\left(F\right)}F_{\varphi},D_{j}^{\left(F\right)}F_{\psi}\right\rangle _{\mathscr{H}_{F}}\label{eq:gr-1-5}
\end{equation}
for all $\varphi,\psi\in C_{c}^{\infty}\left(\Omega\right)$. In particular,
setting $\varphi=\psi$ in (\ref{eq:gr-1-5}), we have 
\begin{equation}
\Bigl\Vert M_{\varphi}\Bigr\Vert_{\mathscr{H}_{M}}^{2}=\sum_{j=1}^{k}\Bigl\Vert D_{j}^{\left(F\right)}F_{\varphi}\Bigr\Vert_{\mathscr{H}_{F}}^{2}.\label{eq:gr-1-6}
\end{equation}
\end{lem}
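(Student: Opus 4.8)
The statement asserts that $M = -\triangle F$ is positive definite (as a distribution), and that the RKHS norm on $\mathscr{H}_M$ is related to the $\mathscr{H}_F$-norms of the first-order operators $D_j^{(F)}$ via \eqref{gr-1-5}. My plan is to work entirely at the level of test functions $\varphi, \psi \in C_c^\infty(\Omega)$, exploiting the fact that $M_\varphi = \varphi * M = \varphi * (-\triangle F) = -\triangle(\varphi * F) = -\triangle F_\varphi$ in the distribution sense, together with integration by parts which is legitimate because $\varphi$ has compact support in $\Omega$.

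\textbf{Step 1: Positive-definiteness of $M$.} First I would unwind the definition \eqref{gr-1-1}: I must show $\langle M, \overline{\varphi}\otimes\varphi\rangle \geq 0$ for all $\varphi \in C_c^\infty(\Omega)$. Writing $M = -\triangle F$ and moving the Laplacian onto the test-function side by integration by parts (twice), one gets
\[
\langle -\triangle F, \overline{\varphi}\otimes\varphi\rangle = \sum_{j=1}^k \left\langle F, \overline{\tfrac{\partial\varphi}{\partial x_j}}\otimes \tfrac{\partial\varphi}{\partial x_j}\right\rangle,
\]
where one uses that $\triangle$ acting on the kernel $F(x-y)$ in the $x$-variable equals $\triangle$ in the $y$-variable up to sign bookkeeping, so the cross terms organize into a sum of squares. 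Each summand on the right is $\geq 0$ since $F$ is positive definite; hence $M$ is positive definite. This also tells me what $\mathscr{H}_M$ is: the completion of $\{M_\varphi : \varphi \in C_c^\infty(\Omega)\}$ in the inner product $\langle M_\varphi, M_\psi\rangle_{\mathscr{H}_M} = \langle M, \overline{\varphi}\otimes\psi\rangle$.

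\textbf{Step 2: The norm identity.} By definition of the RKHS inner product on $\mathscr{H}_M$,
\[
\langle M_\varphi, M_\psi\rangle_{\mathscr{H}_M} = \int_\Omega\int_\Omega \overline{\varphi(x)}\psi(y)\, M(x-y)\, dx\, dy = \langle M, \overline{\varphi}\otimes\psi\rangle.
\]
Then I substitute $M = -\triangle F$ and integrate by parts exactly as in Step 1, transferring one derivative onto $\overline{\varphi}$ and one onto $\psi$, which yields $\sum_j \int\int \overline{(\partial_j\varphi)(x)}\,(\partial_j\psi)(y)\, F(x-y)\, dx\, dy$. But this is precisely $\sum_j \langle F_{\partial_j\varphi}, F_{\partial_j\psi}\rangle_{\mathscr{H}_F}$ by \eqref{eq:hi2}-type formulas, and $F_{\partial_j\varphi} = D_j^{(F)}F_\varphi$ by the definition \eqref{gr-1-3}. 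Setting $\varphi = \psi$ gives \eqref{gr-1-6}. The polarization is automatic since both sides are sesquilinear.

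\textbf{Main obstacle.} The only delicate point is the rigor of the integration by parts when $F$ is merely a distribution rather than a $C^2$ function: one must justify that $\triangle$ applied to the two-variable kernel $K_F(x,y) = F(x-y)$ distributes correctly onto the tensor test function $\overline{\varphi}\otimes\psi$, and that no boundary contributions from $\partial\Omega$ appear. Both are handled by the compact support of $\varphi, \psi$ inside the open set $\Omega$ (so all boundary terms vanish) and by the standard rule that for a distribution $F$ on $\Omega - \Omega$, $\langle \triangle F, \overline{\varphi}\otimes\psi\rangle$ is computed by pairing $F$ against $\triangle_x(\overline{\varphi(x)}\psi(y)) = \overline{(\triangle\varphi)(x)}\psi(y)$ and using translation-invariance to convert this to the symmetric form. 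I expect the chain rule $\partial_{x_j} F(x-y) = -\partial_{y_j} F(x-y)$ to be the book-keeping device that makes the $k$ cross-terms collapse into $\sum_j \langle D_j^{(F)}F_\varphi, D_j^{(F)}F_\psi\rangle$; care is needed with the conjugate-linear-in-the-first-variable convention, but the signs work out because each summand involves one $\partial_j$ on each slot, giving $(-1)(-1) = 1$ after the two integrations by parts.
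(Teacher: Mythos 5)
Your proposal is correct and follows essentially the same route as the paper: write $-\triangle F = \sum_j \partial_{x_j}\partial_{y_j}K_F$ on the kernel level, move the derivatives onto the test functions in the distributional sense (no boundary terms, by compact support), and recognize each resulting summand as $\langle D_j^{(F)}F_\varphi, D_j^{(F)}F_\psi\rangle_{\mathscr{H}_F}$. The paper only writes out the diagonal case $\varphi=\psi$; your explicit treatment of the sesquilinear identity and the remark that polarization handles the rest is a harmless elaboration, not a different argument.
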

\begin{proof}
We must show that $M$ satisfies (\ref{eq:gr-1-1}), i.e., that 
\begin{equation}
\left\langle M,\overline{\varphi}\otimes\varphi\right\rangle \geq0;\label{eq:gr-1-7}
\end{equation}
and moreover that (\ref{eq:gr-1-5}), or equivalently (\ref{eq:gr-1-4}),
holds.

For $\mbox{LHS}_{\left(\ref{eq:gr-1-7}\right)}$, we have 
\[
\left\langle M,\overline{\varphi}\otimes\varphi\right\rangle =\left\langle -\triangle F,\overline{\varphi}\otimes\varphi\right\rangle =-\sum_{j=1}^{k}\left\langle \Bigl(\frac{\partial}{\partial x_{j}}\Bigr)^{2}F,\overline{\varphi}\otimes\varphi\right\rangle ;
\]
and using the action of $\frac{\partial}{\partial x_{j}}$ in the
sense of distributions, we get, 
\[
\mbox{LHS}_{\left(\ref{eq:gr-1-7}\right)}=\sum_{j=1}^{k}\left\langle F,\overline{\frac{\partial\varphi}{\partial x_{j}}}\otimes\frac{\partial\varphi}{\partial x_{j}}\right\rangle \underset{\left(\text{by \ensuremath{\left(\ref{eq:gr-1-3}\right)}}\right)}{=}\sum_{j=1}^{k}\Bigl\Vert D_{j}^{\left(F\right)}\left(F_{\varphi}\right)\Bigr\Vert_{\mathscr{H}_{F}}^{2}\geq0,
\]
which yields of the desired conclusions.\end{proof}
\begin{example}
For $k=1$, consider the functions $F_{2}$ and $F_{3}$ from Table
\ref{tab:F1-F6}. 
\begin{enumerate}
\item Let $F=F_{2}$, $\Omega=\left(-\frac{1}{2},\frac{1}{2}\right)$, then
\begin{equation}
M=-F''=2\delta\label{eq:gr-1-8}
\end{equation}
where $\delta$ is the Dirac mass at $x=0$, i.e., $\delta=\delta\left(x-0\right)$. 
\item Let $F=F_{3}$, $\Omega=\left(-1,1\right)$, then 
\begin{equation}
M=-F''=2\delta-F\label{eq:gr-1-9}
\end{equation}
 \end{enumerate}
\begin{proof}
The proof of the assertions in the two examples follows directly from
sections \ref{sub:F2} and \ref{sub:F3}.
\end{proof}
\end{example}
Now we return to the p.d. function $F:\Omega-\Omega\rightarrow\mathbb{C}$.
Suppose $A:L^{2}\left(\Omega\right)\rightarrow L^{2}\left(\Omega\right)$
is an unbounded positive linear operator, i.e., $A\geq c>0$, for
some constant $c$. Further assume that $A^{-1}$ has the integral
kernel (Green's function) $F$, i.e., 
\begin{equation}
\left(A^{-1}f\right)\left(x\right)=\int_{\Omega}F\left(x-y\right)f\left(y\right)dy,\;\forall f\in L^{2}\left(\Omega\right).\label{eq:gr-1}
\end{equation}
For all $x\in\Omega$, define 
\begin{equation}
F_{x}\left(\cdot\right):=F\left(x-\cdot\right).\label{eq:gr-2}
\end{equation}

Here $F_{x}$ is the fundamental solution to the following equation
\[
Au=f
\]
where $u\in dom\left(A\right)$, and $f\in L^{2}\left(\Omega\right)$.
Hence, in the sense of distribution, we have
\begin{eqnarray*}
AF_{x}\left(\cdot\right) & = & \delta_{x}\\
 & \Updownarrow\\
A\left(\int_{\Omega}F\left(x,y\right)f\left(y\right)dy\right) & = & \int\left(AF_{x}\left(y\right)\right)f\left(y\right)dy\\
 & = & \int\delta_{x}\left(y\right)f\left(y\right)dy\\
 & = & f\left(x\right).
\end{eqnarray*}
 Note that $A^{-1}\geq0$ iff $F$ is a p.d. kernel. 

Let $\mathscr{H}_{A}=$ the completion of $C_{c}^{\infty}\left(\Omega\right)$
in the bilinear form 
\begin{equation}
\left\langle f,g\right\rangle _{A}:=\left\langle Af,g\right\rangle _{2};\label{eq:gr-3}
\end{equation}
where the RHS extends the inner product in $L^{2}\left(\Omega\right)$
as in (\ref{eq:gr-2-3}).
\begin{lem}
$\mathscr{H}_{A}$ is a RKHS and the reproducing kernel is $F_{x}$.
\index{RKHS}\end{lem}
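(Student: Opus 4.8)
The plan is to verify the two assertions of the lemma directly from the definition of $\mathscr{H}_A$ as the completion of $C_c^\infty(\Omega)$ in the $A$-inner product $\langle f,g\rangle_A = \langle Af,g\rangle_2$, together with the hypothesis that $A^{-1}$ has Green's function $F$, i.e. $(A^{-1}f)(x) = \int_\Omega F(x-y)f(y)\,dy$ and $AF_x = \delta_x$ in the distribution sense. First I would check the reproducing property on the dense subspace: for $\varphi \in C_c^\infty(\Omega)$ and $x \in \Omega$, I want $\langle F_x, \varphi\rangle_A = \varphi(x)$. Formally, $\langle F_x,\varphi\rangle_A = \langle AF_x,\varphi\rangle_2 = \langle \delta_x,\varphi\rangle_2 = \varphi(x)$, using the extension of the $L^2$-pairing to $\mathscr{D}\times\mathscr{D}^*$ supplied by Corollary \ref{cor:gr-1} and the Application remark identifying $\langle\delta_x,\varphi\rangle_2$ with $\varphi(x)$. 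One must be slightly careful that $F_x$ genuinely lies in $\mathscr{H}_A$ (it is the $A$-limit of $A^{-1}\psi_n$ for an approximate identity $\psi_n \to \delta_x$, since $A(A^{-1}\psi_n) = \psi_n$ and $\langle A^{-1}\psi_n, A^{-1}\psi_m\rangle_A = \langle \psi_n, A^{-1}\psi_m\rangle_2 \to F(x-x) $-type quantities converge), and that point evaluation is continuous in the $\mathscr{H}_A$-norm so the identity $\langle F_x,\xi\rangle_A = \xi(x)$ passes to all $\xi\in\mathscr{H}_A$ by density.

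Next I would assemble the RKHS claim itself. Having the candidate reproducing kernel $K(x,y) := \langle F_x, F_y\rangle_A$, I would compute $\langle F_x,F_y\rangle_A = \langle AF_x, F_y\rangle_2 = \langle \delta_x, F_y\rangle_2 = F_y(x) = F(y-x) = F(x-y)$ (using $F(-t)=\overline{F(t)}$, here $F$ real in the motivating examples, or the Hermitian symmetry in general), so the kernel is exactly $F(x-y)$, matching $F_x(\cdot)$. Then $\mathscr{H}_A$ is a Hilbert space of functions on $\Omega$ in which $\xi(x) = \langle F_x,\xi\rangle_A$ for all $\xi$, which is precisely the definition of a reproducing kernel Hilbert space; uniqueness of the RKHS attached to a given p.d. kernel (Aronszajn \cite{Aro50}) then identifies $\mathscr{H}_A$ with $\mathscr{H}_F$ and its reproducing kernel with $F_x$. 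I would also invoke the earlier fact that $A^{-1}\ge 0 \Leftrightarrow F$ is a p.d. kernel, noted just before the lemma, to guarantee the form $\langle\cdot,\cdot\rangle_A$ restricted through $A^{-1}$ produces an honest inner product (positivity) on the completion.

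The main obstacle I anticipate is not the algebra but the functional-analytic bookkeeping: making rigorous the manipulation $\langle AF_x,\varphi\rangle_2 = \langle\delta_x,\varphi\rangle_2$ when $F_x$ is only in $\mathscr{H}_A$ and $AF_x$ is a distribution, and ensuring the completion $\mathscr{H}_A$ really consists of (continuous) \emph{functions} on $\Omega$ rather than abstract equivalence classes. This is handled by the triple $\mathscr{D}\hookrightarrow L^2(\Omega)\hookrightarrow\mathscr{D}^*$ of Lemma \ref{lem:gr-1} and Corollary \ref{cor:gr-1}, which legitimizes the pairing $\langle AF_x,\varphi\rangle$, together with the a priori bound that point evaluations $\xi\mapsto\xi(x)$ are $\mathscr{H}_A$-bounded (this is where one sees $\mathscr{H}_A$ embeds in $C(\Omega)$), exactly as in the analogous estimates for $\mathscr{H}_F$ recorded earlier (e.g. the reproducing-property remark following Lemma \ref{lem:dense}, and Theorem \ref{thm:HF}). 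Once these two continuity facts are in place, the proof is a two-line computation on the dense subspace $C_c^\infty(\Omega)$ followed by passage to the limit, and I would present it in that order.
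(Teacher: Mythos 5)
Your proof is correct and rests on exactly the same computation the paper uses: positivity of $A$ (i.e.\ $A\geq c>0$) makes $\left\langle \cdot,\cdot\right\rangle _{A}$ an honest inner product, and the reproducing property is the chain $\left\langle F_{x},g\right\rangle _{A}=\left\langle AF_{x},g\right\rangle _{2}=\left\langle \delta_{x},g\right\rangle _{2}=g\left(x\right)$, justified by the triple $\mathscr{D}\hookrightarrow L^{2}\left(\Omega\right)\hookrightarrow\mathscr{D}^{*}$. The extra bookkeeping you supply (membership of $F_{x}$ in $\mathscr{H}_{A}$, continuity of point evaluation, passage from the dense subspace) fills in details the paper leaves implicit but does not change the argument.
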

\begin{proof}
Since $A\geq c>0$, in the usual ordering of Hermitian operator, (\ref{eq:gr-3})
is a well-defined inner product, so $\mathscr{H}_{A}$ is a Hilbert
space. For the reproducing property, we check that 
\[
\left\langle F_{x},g\right\rangle _{A}=\left\langle AF_{x},g\right\rangle _{2}=\left\langle \delta_{x},g\right\rangle _{2}=g\left(x\right).
\]
\end{proof}
\begin{lem}
Let $\mathscr{H}_{F}$ be the RKHS corresponding to $F$, i.e., the
completion of $span\left\{ F_{x}\::\: x\in\Omega\right\} $ in the
inner product 
\begin{equation}
\left\langle F_{y},F_{x}\right\rangle _{F}:=F_{x}\left(y\right)=F\left(x-y\right)\label{eq:gr-4}
\end{equation}
extending linearly. Then we have the isometric embedding $\mathscr{H}_{F}\hookrightarrow\mathscr{H}_{A}$,
via the map,
\begin{equation}
F_{x}\mapsto F_{x}.\label{eq:gr-5}
\end{equation}
\end{lem}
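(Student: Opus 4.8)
The plan is to show that the linear map $F_x\mapsto F_x$ extends from $\operatorname{span}\{F_x:x\in\Omega\}$ to an isometry $\mathscr{H}_F\hookrightarrow\mathscr{H}_A$, and the only thing that needs checking is that it is isometric on the dense subspace $\operatorname{span}\{F_x\}$ — then it extends by continuity. So first I would take a finite linear combination $\xi=\sum_i c_i F_{x_i}$ and compute its two norms. On the $\mathscr{H}_F$ side, by the defining inner product (\ref{eq:gr-4}),
\[
\left\Vert\xi\right\Vert_F^2=\sum_i\sum_j\overline{c_i}c_j\left\langle F_{y}\big|_{y=x_j},F_{x_i}\right\rangle_F=\sum_i\sum_j\overline{c_i}c_j F(x_i-x_j).
\]
On the $\mathscr{H}_A$ side, using the reproducing property just established for $\mathscr{H}_A$ (namely $\left\langle F_x,g\right\rangle_A=g(x)$ for $g\in\mathscr{H}_A$, proved in the preceding lemma) together with the fact that each $F_{x_j}\in\mathscr{H}_A$, I get
\[
\left\Vert\xi\right\Vert_A^2=\sum_i\sum_j\overline{c_i}c_j\left\langle F_{x_j},F_{x_i}\right\rangle_A=\sum_i\sum_j\overline{c_i}c_j F_{x_i}(x_j)=\sum_i\sum_j\overline{c_i}c_j F(x_j-x_i).
\]
Since $F(x_i-x_j)=\overline{F(x_j-x_i)}$ (a standard property of continuous p.d.\ functions, recorded earlier), and since both sums are real and nonnegative, the two quantities agree. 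Hence the map is isometric on $\operatorname{span}\{F_x\}$.

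Next I would address well-definedness and the extension. The subspace $\operatorname{span}\{F_x:x\in\Omega\}$ is dense in $\mathscr{H}_F$ by construction (indeed by \lemref{dense}/\lemref{RKHS-def-by-integral} the $F_\varphi$ are also dense, but the $F_x$ suffice here). A densely defined isometry between Hilbert spaces extends uniquely to an isometry on the completion, so the map $V:F_x\mapsto F_x$ extends to an isometry $V:\mathscr{H}_F\to\mathscr{H}_A$. One should note that $V$ need not be surjective: its range is the closure of $\operatorname{span}\{F_x\}$ inside $\mathscr{H}_A$, which may be a proper subspace since $\mathscr{H}_A$ is the completion of all of $C_c^\infty(\Omega)$ in the form $\left\langle A\cdot,\cdot\right\rangle_2$, a generally larger object. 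This is why the statement claims an isometric embedding rather than a unitary equivalence.

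The one genuine subtlety — and the step I expect to be the main obstacle — is making rigorous the identity $\left\langle F_{x_j},F_{x_i}\right\rangle_A=F_{x_i}(x_j)$. This requires knowing that $F_{x_j}$, a priori only a function on $\Omega$, actually lies in $\mathscr{H}_A$, i.e., is in the closure of $C_c^\infty(\Omega)$ under the $A$-norm; and it requires that the reproducing identity $\left\langle F_x,g\right\rangle_A=g(x)$, proved for $g$ in (a core of) $\mathscr{H}_A$, persists for $g=F_{x_i}$. The clean way is to approximate: pick $\varphi_{n,x}$ as in \lemref{dense} with $\varphi_{n,x}\to\delta_x$, observe $A^{-1}\varphi_{n,x}=F_{\varphi_{n,x}}\to F_x$ in the appropriate topology, and use continuity of the form $\left\langle\cdot,\cdot\right\rangle_A$ together with (\ref{eq:gr-1}) and (\ref{eq:gr-3}) to pass to the limit; the distributional pairing $\left\langle\delta_x,g\right\rangle_2=g(x)$ from the Application example above is exactly what legitimizes evaluating the limit. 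Once this limiting argument is in place, the norm computation in the first paragraph closes the proof. The remaining manipulations are routine and I would not belabor them.
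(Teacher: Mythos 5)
Your proof is correct and follows essentially the same route as the paper: both reduce the isometry claim to the identity $\left\langle F_{x},F_{y}\right\rangle _{A}=\left\langle AF_{x},F_{y}\right\rangle _{2}=\left\langle \delta_{x},F_{y}\right\rangle _{2}=F_{y}\left(x\right)$, i.e., the reproducing property of $\mathscr{H}_{A}$ coming from $AF_{x}=\delta_{x}$. If anything you are more thorough than the paper, which only records the diagonal computation $\left\Vert F_{x}\right\Vert _{F}^{2}=F\left(0\right)=\left\Vert F_{x}\right\Vert _{A}^{2}$ and leaves the general Gram-matrix identity, the extension by density, and the membership $F_{x}\in\mathscr{H}_{A}$ implicit — the last point being the same gap you correctly flag and propose to close by approximating $\delta_{x}$ with $\varphi_{n,x}$ as in Lemma \ref{lem:dense}.
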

\begin{proof}
We check directly that 
\begin{align*}
\left\Vert F_{x}\right\Vert _{F}^{2} & =\left\langle F_{x},F_{x}\right\rangle _{F}=F_{x}\left(x\right)=F\left(0\right)\\
\left\Vert F_{x}\right\Vert _{A}^{2} & =\left\langle F_{x},F_{x}\right\rangle _{A}=\left\langle AF_{x},F_{x}\right\rangle _{L^{2}}=\left\langle \delta_{x},F_{x}\right\rangle _{L^{2}}=F_{x}\left(x\right)=F\left(0\right).
\end{align*}
\end{proof}
\begin{rem}
Now consider $\mathbb{R}$, and let $\Omega=\left(0,a\right)$. Recall
the \uline{Mercer operator} 
\[
T_{F}:L^{2}\left(\Omega\right)\rightarrow L^{2}\left(\Omega\right),\mbox{ by}
\]
\begin{eqnarray}
\left(T_{F}g\right)\left(x\right) & := & \int_{0}^{a}F_{x}\left(y\right)g\left(y\right)dy\label{eq:gr-10}\\
 & = & \left\langle F_{x},g\right\rangle _{2},\;\forall g\in L^{2}\left(0,a\right).\nonumber 
\end{eqnarray}
By Corollary \ref{cor:mer1}, $T_{F}$ can be diagonalized in $L^{2}\left(0,a\right)$
by
\[
T_{F}\xi_{n}=\lambda_{n}\xi_{n},\;\lambda_{n}>0
\]
where $\left\{ \xi_{n}\right\} _{n\in\mathbb{N}}$ is an ONB in $L^{2}\left(0,a\right)$;
further $\xi_{n}\subset\mathscr{H}_{F}$, for all $n\in\mathbb{N}$.

From (\ref{eq:gr-10}), we then have 
\begin{equation}
\left\langle F_{x},\xi_{n}\right\rangle _{2}=\lambda_{n}\xi_{n}\left(x\right).\label{eq:gr-11}
\end{equation}
Applying $A$ on both sides of (\ref{eq:gr-11}) yields 
\begin{eqnarray*}
\mbox{LHS}_{\left(\ref{eq:gr-11}\right)} & = & \left\langle AF_{x},\xi_{n}\right\rangle _{2}=\left\langle \delta_{x},\xi_{n}\right\rangle _{2}=\xi_{n}\left(x\right)\\
\mbox{RHS}_{\left(\ref{eq:gr-11}\right)} & = & \lambda_{n}\left(A\xi_{n}\right)\left(x\right)
\end{eqnarray*}
therefore, $A\xi_{n}=\frac{1}{\lambda_{n}}\xi_{n}$, i.e., 
\begin{equation}
A=T_{F}^{-1}.\label{eq:fr-12}
\end{equation}
Consequently, 
\[
\left\langle \xi_{n},\xi_{m}\right\rangle _{A}=\left\langle A\xi_{n},\xi_{m}\right\rangle _{2}=\frac{1}{\lambda_{n}}\left\langle \xi_{n},\xi_{m}\right\rangle _{2}=\frac{1}{\lambda_{n}}\delta_{n,m}.
\]
And we conclude that $\left\{ \sqrt{\lambda_{n}}\xi_{n}\right\} _{n\in\mathbb{N}}$
is an ONB in $\mathscr{H}_{A}=\mathscr{H}_{T_{F}^{-1}}$. 

See \secref{F3-Mercer}, where $F=$ Polya extension of $F_{3}$,
and a specific construction of $\mathscr{H}_{T_{F}^{-1}}$. 
\end{rem}

\subsection{Connection to the Energy Space Hilbert Space}

Now consider $A=1-\triangle$ defined on $C_{c}^{\infty}\left(\Omega\right)$.
There is a connection between the RKHS $\mathscr{H}_{A}$ and the
energy space as follows:

For $f,g\in\mathscr{H}_{A}$, we have (restricting to real valued
functions), 
\begin{eqnarray*}
\left\langle f,g\right\rangle _{A} & = & \left\langle \left(1-\triangle\right)f,g\right\rangle _{L^{2}}\\
 & = & \int_{\Omega}fg-\int_{\Omega}\left(\triangle f\right)g\\
 & = & \underset{\mbox{energy inner product}}{\underbrace{\int_{\Omega}fg+\int_{\Omega}Df\cdot Dg}}+\text{boundary corrections};
\end{eqnarray*}
So we define 
\begin{equation}
\left\langle f,g\right\rangle _{Energy}:=\int_{\Omega}fg+\int_{\Omega}Df\cdot Dg;\label{eq:gr-6}
\end{equation}
and then 
\begin{equation}
\left\langle f,g\right\rangle _{A}=\left\langle f,g\right\rangle _{Energy}+\mbox{boundary corrections.}\label{eq:gr-7}
\end{equation}

\begin{rem}
The $A$-inner product on the LHS of (\ref{eq:gr-7}) incorporates
the boundary information.\end{rem}
\begin{example}
Consider $L^{2}\left(0,1\right)$, $F\left(x\right)=e^{-\left|x\right|}\big|_{\left(-1,1\right)}$,
and $A=\frac{1}{2}\bigl(1-\bigl(\frac{d}{dx}\bigr)^{2}\bigr)$. We
have 

\begin{eqnarray*}
\left\langle f,g\right\rangle _{A} & = & \frac{1}{2}\left\langle f-f'',g\right\rangle _{L^{2}}\\
 & = & \frac{1}{2}\int_{0}^{1}fg-\frac{1}{2}\int_{0}^{1}f''g\\
 & = & \frac{1}{2}\left(\int_{0}^{1}fg+\int_{0}^{1}f'g'\right)+\frac{\left(f'g\right)\left(0\right)-\left(f'g\right)\left(1\right)}{2}\\
 & = & \left\langle f,g\right\rangle _{Energy}+\frac{\left(f'g\right)\left(0\right)-\left(f'g\right)\left(1\right)}{2}.
\end{eqnarray*}
Here, the boundary term
\begin{equation}
\frac{\left(f'g\right)\left(0\right)-\left(f'g\right)\left(1\right)}{2}\label{eq:gr-8}
\end{equation}
contains the inward normal derivative of $f'$ at $x=0$ and $x=1$.
\begin{enumerate}[leftmargin=*]
\item \begin{flushleft}
We proceed to check the reproducing property w.r.t. the $A$-inner
product:
\[
2\left\langle e^{-\left|x-\cdot\right|},g\right\rangle _{Energy}=\int_{0}^{1}e^{-\left|x-y\right|}g\left(y\right)dy+\int_{0}^{1}\left(\frac{d}{dy}e^{-\left|x-y\right|}\right)g'\left(y\right)dy
\]
where
\begin{eqnarray*}
 &  & \int_{0}^{1}\left(\frac{d}{dy}e^{-\left|x-y\right|}\right)g'\left(y\right)dy\\
 & = & \int_{0}^{x}e^{-\left(x-y\right)}g'\left(y\right)dy-\int_{x}^{1}e^{-\left(y-x\right)}g'\left(y\right)dy\\
 & = & 2g\left(x\right)-g\left(0\right)e^{-x}-g\left(1\right)e^{-\left(1-x\right)}-\int_{0}^{1}e^{-\left|x-y\right|}g\left(y\right)dy;
\end{eqnarray*}
it follows that 
\begin{equation}
\left\langle e^{-\left|x-\cdot\right|},g\right\rangle _{Energy}=g\left(x\right)-\frac{g\left(0\right)e^{-x}+g\left(1\right)e^{-\left(1-x\right)}}{2}\label{eq:f3-2-1}
\end{equation}

\par\end{flushleft}
\item \begin{flushleft}
It remains to check the boundary term in (\ref{eq:f3-2-1}) comes
from the inward normal derivative of $e^{-\left|x-\cdot\right|}$.
Indeed, set $f\left(\cdot\right)=e^{-\left|x-\cdot\right|}$ in (\ref{eq:gr-8}),
then 
\[
f'\left(0\right)=e^{-x},\qquad f'\left(1\right)=-e^{-\left(1-x\right)}
\]
therefore,
\par\end{flushleft}
\end{enumerate}
\[
\frac{\left(f'g\right)\left(0\right)-\left(f'g\right)\left(1\right)}{2}=\frac{e^{-x}g\left(0\right)+e^{-\left(1-x\right)}g\left(1\right)}{2}.
\]

\end{example}

\begin{example}
Consider $L^{2}\left(0,\frac{1}{2}\right)$, $F\left(x\right)=1-\left|x\right|$
with $\left|x\right|<\frac{1}{2}$, and let $A=-\frac{1}{2}\left(\frac{d}{dx}\right)^{2}$.
Then the $A$-inner product yields
\begin{eqnarray*}
\left\langle f,g\right\rangle _{A} & = & -\frac{1}{2}\left\langle f'',g\right\rangle _{L^{2}}\\
 & = & \frac{1}{2}\int_{0}^{\frac{1}{2}}f'g'-\frac{\left(f'g\right)\left(\frac{1}{2}\right)-\left(f'g\right)\left(0\right)}{2}\\
 & = & \left\langle f,g\right\rangle _{Energy}+\frac{\left(f'g\right)\left(0\right)-\left(f'g\right)\left(\frac{1}{2}\right)}{2}
\end{eqnarray*}
where we set 
\[
\left\langle f,g\right\rangle _{Energy}:=\frac{1}{2}\int_{0}^{\frac{1}{2}}f'g';
\]
and the corresponding boundary term is 
\begin{equation}
\frac{\left(f'g\right)\left(0\right)-\left(f'g\right)\left(\frac{1}{2}\right)}{2}\label{eq:gr-9}
\end{equation}

\begin{enumerate}[leftmargin=*]
\item \begin{flushleft}
To check the reproducing property w.r.t. the $A$-inner product:
Set $F_{x}\left(y\right):=1-\left|x-y\right|$, $x,y\in\left(0,\frac{1}{2}\right)$;
then 
\begin{eqnarray}
\left\langle F_{x},g\right\rangle _{Energy} & = & \frac{1}{2}\int_{0}^{\frac{1}{2}}F_{x}\left(y\right)'g'\left(y\right)dy\nonumber \\
 & = & =\frac{1}{2}\left(\int_{0}^{x}g'\left(y\right)dy-\int_{x}^{\frac{1}{2}}g'\left(y\right)dy\right)\nonumber \\
 & = & g\left(x\right)-\frac{g\left(0\right)+g\left(\frac{1}{2}\right)}{2}.\label{eq:F2-3-1}
\end{eqnarray}

\par\end{flushleft}
\item \begin{flushleft}
Now we check the second term on the RHS of (\ref{eq:F2-3-1}) contains
the inward normal derivative of $F_{x}$. Note that 
\begin{eqnarray*}
F_{x}'\left(0\right) & = & \frac{d}{dy}\Big|_{y=0}\left(1-\left|x-y\right|\right)=1\\
F_{x}'\left(\frac{1}{2}\right) & = & \frac{d}{dy}\Big|_{y=\frac{1}{2}}\left(1-\left|x-y\right|\right)=-1
\end{eqnarray*}
Therefore, 
\[
\frac{\left(f'g\right)\left(0\right)-\left(f'g\right)\left(\frac{1}{2}\right)}{2}=\frac{g\left(0\right)+g\left(\frac{1}{2}\right)}{2};
\]
which verifies the boundary term in (\ref{eq:gr-9}).
\par\end{flushleft}
\end{enumerate}
\end{example}

\section{\label{sec:F2F3}The RKHSs for the Two Examples $F_{2}$ and $F_{3}$
in Table \ref{tab:F1-F6}}

In this section, we revisit cases $F_{2}$, and $F_{3}$ (from \tabref{F1-F6})
and their associated RKHSs. We show that they are (up to isomorphism)
also the Hilbert spaces used in stochastic integration for Brownian
motion, and for the Ornstein-Uhlenbeck process (see e.g., \cite{Hi80}),
respectively. As reproducing kernel Hilbert spaces, they have an equivalent
and more geometric form, of use in for example analysis of Gaussian
processes. Analogous results for the respective RKHSs also hold for
other positive definite function systems $\left(F,\Omega\right)$,
but for the present two examples $F_{2}$, and $F_{3}$, the correspondences
involved are explicit. As a bonus, we get an easy and transparent
proof that the deficiency-indices for the respective operators $D^{\left(F\right)}$
are $\left(1,1\right)$ in both these examples.

The purpose of the details below are two-fold. First we show that
the respective RKHSs corresponding to $F_{2}$ and $F_{3}$ in \tabref{F1-F6}
are naturally isomorphic to more familiar RKHSs which are used in
the study of Gaussian processes, see e.g., \cite{AJL11,AL10,AJ12};
and secondly, to give an easy (and intuitive) proof that the deficiency
indices in these two cases are $\left(1,1\right)$. Recall for each
p.d. function $F$ in an interval $\left(-a,a\right)$, we study 
\begin{equation}
D^{\left(F\right)}\left(F_{\varphi}\right):=F_{\varphi'},\;\varphi\in C_{c}^{\infty}\left(0,a\right)\label{eq:RKHS-eg-1}
\end{equation}
as a skew-Hermitian operator in $\mathscr{H}_{F}$; see Lemma \ref{lem:lcg-F_varphi}.

\index{Brownian motion}

\index{Ornstein-Uhlenbeck}

\index{RKHS}

\index{positive definite}

\index{Gaussian processes}

\index{deficiency indices}

\index{operator!skew-Hermitian}

\subsection{\label{sub:green}Green's Functions}
\begin{lem}
~
\begin{enumerate}
\item \label{enu:F2-D}For $F_{2}\left(x\right)=1-\left|x\right|$, $\left|x\right|<\frac{1}{2}$,
let $\varphi\in C_{c}^{\infty}\left(0,\frac{1}{2}\right)$, then $ $$u\left(x\right):=\left(T_{F_{2}}\varphi\right)\left(x\right)$
satisfies 
\begin{equation}
\varphi=-\frac{1}{2}\bigl(\frac{d}{dx}\bigr)^{2}u.\label{eq:F2-D}
\end{equation}
Hence, 
\begin{equation}
T_{F_{2}}^{-1}\supset-\frac{1}{2}\bigl(\frac{d}{dx}\bigr)^{2}\Big|_{C_{c}^{\infty}\left(0,\frac{1}{2}\right)}.\label{eq:F2-D-ext}
\end{equation}

\item \label{enu:F3-D}For $F_{3}\left(x\right)=e^{-\left|x\right|}$, $\left|x\right|<1$,
let $\varphi\in C_{c}^{\infty}\left(0,1\right)$, then 
\begin{equation}
\varphi=\frac{1}{2}\bigl(I-\bigl(\frac{d}{dx}\bigr)^{2}\bigr)u.\label{eq:F3-D}
\end{equation}
Hence,
\begin{equation}
T_{F_{3}}^{-1}\supset\frac{1}{2}\bigl(I-\bigl(\frac{d}{dx}\bigr)^{2}\bigr)\Big|_{C_{c}^{\infty}\left(0,1\right)}.\label{eq:F3-D-ext}
\end{equation}

\end{enumerate}
\end{lem}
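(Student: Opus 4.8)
The statement is purely a computation with the Mercer operator $T_F$ (acting on $L^2(0,a)$ via $(T_F\varphi)(x)=\int_0^a\varphi(y)F(x-y)\,dy$) for the two concrete functions $F_2(x)=1-|x|$ on $(-\tfrac12,\tfrac12)$ and $F_3(x)=e^{-|x|}$ on $(-1,1)$. The key input, already recorded in Lemma \ref{lem:distd}, is the distributional identity for the double derivative of the kernel: $F_2''=-2\delta_0$ and $F_3''=F_3-2\delta_0$, and more precisely, in translated form, $\big(\tfrac{d}{dy}\big)^2 F_x = -2\delta_x$ for $F=F_2$ and $\big(\tfrac{d}{dy}\big)^2 F_x = F_x - 2\delta_x$ for $F=F_3$, where $F_x(\cdot):=F(x-\cdot)$. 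Since differentiation under the integral sign is legitimate here (the kernel is $C^\infty$ away from the diagonal and the singular part is a single $\delta$), applying $\big(\tfrac{d}{dx}\big)^2$ to $u(x)=(T_F\varphi)(x)=\int_0^a\varphi(y)F(x-y)\,dy$ for $\varphi\in C_c^\infty(0,a)$ pulls the derivative onto $F$ and produces exactly the claimed formulas.

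\textbf{Step 1.} For $F=F_2$, $a=\tfrac12$: write $u''(x)=\int_0^{1/2}\varphi(y)\,\partial_x^2 F(x-y)\,dy = \int_0^{1/2}\varphi(y)(-2\delta(x-y))\,dy = -2\varphi(x)$, using that for $x\in(0,\tfrac12)$ the point mass $\delta(x-\cdot)$ sits in the interior of $(0,\tfrac12)$ so the pairing picks out $\varphi(x)$. Rearranging gives $\varphi=-\tfrac12\big(\tfrac{d}{dx}\big)^2 u$, which is \eqref{eq:F2-D}; and since $u=T_{F_2}\varphi$, this says $T_{F_2}^{-1}u = -\tfrac12\big(\tfrac{d}{dx}\big)^2 u$ on the range of $T_{F_2}$ applied to $C_c^\infty(0,\tfrac12)$, i.e., the inclusion \eqref{eq:F2-D-ext}. (One should note $T_{F_2}$ is injective by Corollary \ref{cor:mer1}/the positivity of the Mercer eigenvalues, so $T_{F_2}^{-1}$ is a well-defined operator on $\operatorname{Ran}(T_{F_2})$, and the displayed containment of operators is the correct reading of the statement.) This is essentially a restatement of the computation already performed in Theorem \ref{thm:mer1}, eqs.\ \eqref{eq:thm:mer1-3}--\eqref{eq:mer-5-1}, for the related kernel $x\wedge y$, transported to $F_2$ via the rank-one relation $F_2(x-y)=1-x-y+2(x\wedge y)$ of Lemma \ref{lem:F2kernel}; alternatively one can just do the direct two-line computation with $\partial_x^2 F_2$.

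\textbf{Step 2.} For $F=F_3$, $a=1$: identically, $u''(x)=\int_0^1\varphi(y)\,\partial_x^2 F_3(x-y)\,dy = \int_0^1\varphi(y)\big(F_3(x-y)-2\delta(x-y)\big)\,dy = u(x)-2\varphi(x)$ for $x\in(0,1)$, hence $\varphi=\tfrac12\big(u-u''\big)=\tfrac12\big(I-(\tfrac{d}{dx})^2\big)u$, which is \eqref{eq:F3-D}, and therefore $T_{F_3}^{-1}\supset\tfrac12\big(I-(\tfrac{d}{dx})^2\big)\big|_{C_c^\infty(0,1)}$, which is \eqref{eq:F3-D-ext}. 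The same injectivity remark applies. One small verification worth isolating: in the $F_3$ case the singular contributions at the endpoints of the original interval (the $\delta_{\pm 1}$-type terms appearing once one thinks of the Polya-extended picture, as in \eqref{eq:FP-1}) do not enter here because $\varphi$ is compactly supported in the \emph{open} interval $(0,1)$ and we only evaluate for $x\in(0,1)$, so only the interior $\delta(x-\cdot)$ is seen.

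\textbf{Main obstacle.} There is no deep obstacle — the content is the distributional derivative identities of Lemma \ref{lem:distd}, which are granted. The only point requiring a little care is the justification of differentiating under the integral in the distributional sense: one argues by testing against an auxiliary $\psi\in C_c^\infty(0,a)$, moving the two derivatives off $u$ and onto $\psi$ by integration by parts (no boundary terms since $\psi$ has compact support), then using Fubini and the known action of $\partial_x^2 F$ as a distribution on $\psi(x)\varphi(y)$; this is exactly the Schwartz-theory computation cited from \cite{Tre06} and already used repeatedly in this chapter (e.g.\ in the proof of Lemma \ref{lem:F2kernel}). So the proof is: invoke Lemma \ref{lem:distd} for the kernels, differentiate $T_F\varphi$ twice pulling the derivatives onto $F$, read off \eqref{eq:F2-D} and \eqref{eq:F3-D}, and rephrase as the operator containments \eqref{eq:F2-D-ext} and \eqref{eq:F3-D-ext}.
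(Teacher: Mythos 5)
Your proof is correct, but it takes a different route from the paper's. The paper proves both identities by a completely elementary, self-contained computation: it splits the integral at $y=x$, e.g.\ $u(x)=\int_0^x\varphi(y)(1-(x-y))\,dy+\int_x^{1/2}\varphi(y)(1-(y-x))\,dy$ for $F_2$, and differentiates twice using the Leibniz rule for integrals with variable limits, obtaining $u'(x)=-\int_0^x\varphi+\int_x^{1/2}\varphi$ and hence $u''=-2\varphi$ directly (and analogously $u''=u-2\varphi$ for $F_3$), with no appeal to distribution theory. You instead invoke the distributional identities $F_2''=-2\delta_0$ and $F_3''=F_3-2\delta_0$ from Lemma \ref{lem:distd} and differentiate under the integral in the sense of distributions, justifying this by testing against an auxiliary $\psi\in C_c^\infty$. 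Your route is shorter and more conceptual, and it makes transparent why the answer is what it is (the Green's-function structure of the kernel); the paper's route is more pedestrian but requires no machinery beyond calculus and yields the intermediate formula for $u'$, which the paper reuses elsewhere (e.g.\ the boundary conditions in Theorem \ref{thm:mer1}). One small caveat: your aside about ``$\delta_{\pm1}$-type terms from the Polya-extended picture'' is not relevant here — for $F_3$ restricted to $(-1,1)$ the identity $F_3''=F_3-2\delta_0$ has no endpoint masses; those arise only for the compactly supported Polya extension treated in \secref{F3-Mercer} — but you correctly conclude they play no role, so this does not affect the validity of the argument.
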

\begin{proof}
The computation for $F=F_{2}$ is as follows: Let $\varphi\in C_{c}^{\infty}\left(0,\frac{1}{2}\right)$,
then 
\begin{align*}
u\left(x\right)=\left(T_{F_{2}}\varphi\right)\left(x\right) & =\int_{0}^{\frac{1}{2}}\varphi\left(y\right)\left(1-\left|x-y\right|\right)dy\\
 & =\int_{0}^{x}\varphi\left(y\right)\left(1-\left(x-y\right)\right)dy+\int_{x}^{\frac{1}{2}}\varphi\left(y\right)\left(1-\left(y-x\right)\right)dy;
\end{align*}
and 
\[
\int_{0}^{x}\varphi\left(y\right)\left(1-\left(x-y\right)\right)dy=\int_{0}^{x}\varphi\left(y\right)dy-x\int_{0}^{x}\varphi\left(y\right)dy+\int_{0}^{x}y\varphi\left(y\right)dy
\]
\begin{eqnarray*}
u'\left(x\right) & = & -\int_{0}^{x}\varphi\left(y\right)+\varphi\left(x\right)+\int_{x}^{\frac{1}{2}}\varphi\left(y\right)dy-\varphi\left(x\right)\\
u''\left(x\right) & = & -2\varphi\left(x\right).
\end{eqnarray*}
Thus, $\varphi=-\frac{1}{2}u''$, and the desired result follows.

For $F_{3}$, let $\varphi\in C_{c}^{\infty}\left(0,1\right)$, then
\begin{eqnarray*}
u\left(x\right)=\left(T_{F_{3}}\varphi\right)\left(x\right) & = & \int_{0}^{1}e^{-\left|x-y\right|}\varphi\left(y\right)dy\\
 & = & \int_{0}^{x}e^{-\left(x-y\right)}\varphi\left(y\right)dy+\int_{x}^{1}e^{-\left(y-x\right)}\varphi\left(y\right)dy.
\end{eqnarray*}
Now, 
\begin{eqnarray*}
u'\left(x\right)=\left(T_{F_{3}}\varphi\right)'\left(x\right) & = & -e^{-x}\int_{0}^{x}e^{-y}\varphi\left(y\right)dy+\varphi\left(x\right)\\
 &  & +e^{x}\int_{x}^{1}e^{-y}\varphi\left(y\right)dy-\varphi\left(x\right)\\
 & = & -e^{-x}\int_{0}^{x}e^{y}\varphi\left(y\right)dy+e^{x}\int_{x}^{1}e^{-y}\varphi\left(y\right)dy\\
u'' & = & e^{-x}\int_{0}^{x}e^{y}\varphi\left(y\right)dy-\varphi\left(x\right)\\
 &  & +e^{x}\int_{x}^{1}e^{-y}\varphi\left(y\right)dy-\varphi\left(x\right)\\
 & = & -2\varphi+\int_{0}^{1}e^{-\left|x-y\right|}\varphi\left(y\right)dy\\
 & = & -2\varphi+T_{F_{3}}\left(\varphi\right);
\end{eqnarray*}
and then 
\begin{eqnarray*}
u''\left(x\right) & = & e^{-x}\int_{0}^{x}e^{y}\varphi\left(y\right)dy-\varphi\left(x\right)\\
 &  & +e^{x}\int_{x}^{1}e^{-y}\varphi\left(y\right)dy-\varphi\left(x\right)\\
 & = & -2\varphi\left(x\right)+\int_{0}^{1}e^{-\left|x-y\right|}\varphi\left(y\right)dy\\
 & = & -2\varphi+u\left(x\right).
\end{eqnarray*}
Thus, $\varphi=\frac{1}{2}$$\left(u-u''\right)=\frac{1}{2}\left(I-\frac{1}{2}\left(\frac{d}{dx}\right)^{2}\right)u$.
This proves (\ref{eq:F3-D}).\end{proof}
\begin{summary}[Conclusions for the two examples]

The computation for $F=F_{2}$ is as follows: If $\varphi\in L^{2}\left(0,\frac{1}{2}\right)$,
then $u\left(x\right):=\left(T_{F}\varphi\right)\left(x\right)$ satisfies
\[
\begin{array}[t]{cccc}
(F_{2}) &  &  & \varphi=\frac{1}{2}\bigl(-\bigl(\frac{d}{dx}\bigr)^{2}\bigr)u;\end{array}
\]
while, for $F=F_{3}$, the corresponding computation is as follows:
If $\varphi\in L^{2}\left(0,1\right)$, then $u\left(x\right)=\left(T_{F}\varphi\right)\left(x\right)$
satisfies
\[
\begin{array}[t]{cccc}
(F_{3}) &  &  & \varphi=\frac{1}{2}\bigl(I-\bigl(\frac{d}{dx}\bigr)^{2}\bigr)u;\end{array}
\]
For the operator $D^{\left(F\right)}$, in the case of $F=F_{2}$,
it follows that the Mercer operator $T_{F}$ plays the following role:
$T_{F}^{-1}$ is a selfadjoint extension of $-\frac{1}{2}\bigl(D^{\left(F\right)}\bigr)^{2}$.
In the case of $F=F_{3}$ the corresponding operator $T_{F}^{-1}$
(in the RKHS $\mathscr{H}_{F_{3}}$) is a selfadjoint extension of
$\frac{1}{2}\bigl(I-\bigl(D^{\left(F\right)}\bigr)^{2}\bigr)$; in
both cases, they are the Friedrichs extensions.\end{summary}
\begin{rem}
When solving boundary values for elliptic operators in a bounded domain,
say $\Omega\subset\mathbb{R}^{n}$, one often ends up with Green's
functions (= integral kernels) which are positive definite kernels,
so $K\left(x,y\right)$, defined on $\Omega\times\Omega$, not necessarily
of the form $K\left(x,y\right)=F\left(x\lyxmathsym{\textendash}y\right)$. 

But many of the questions we ask in the special case of p.d. functions,
so when the kernel is $K\left(x,y\right)=F\left(x-y\right)$ will
also make sense for p.d. kernels.
\end{rem}
\index{Friedrichs extension}

\index{operator!Mercer}

\subsection{\label{sub:F2}The Case of $F_{2}\left(x\right)=1-\left|x\right|$,
$x\in\left(-\frac{1}{2},\frac{1}{2}\right)$}

Let $F=F_{2}$. Fix $x\in\left(0,\frac{1}{2}\right)$, and set 
\begin{equation}
F_{x}\left(y\right)=F\left(x-y\right),\;\mbox{for }x,y\in{\textstyle \left(0,\frac{1}{2}\right)};\label{eq:RKHS-eg-2}
\end{equation}
where $F_{x}\left(\cdot\right)$ and its derivative (in the sense
of distributions) are as in \figref{Fx} (sketched for two values
of $x$).

Consider the Hilbert space
\begin{align}
\mathscr{H}_{F} & :=\begin{Bmatrix}h; & \text{continuous on \ensuremath{\left(0,\tfrac{1}{2}\right)}, and }h'=\tfrac{dh}{dx}\in L^{2}\left(0,\tfrac{1}{2}\right)\\
 & \text{where the derivative is in the weak sense}\qquad
\end{Bmatrix}\label{eq:RKHS-eg-3}
\end{align}
modulo constants; and let the norm, and inner-product, in $\mathscr{H}_{F}$
be given by 
\begin{equation}
\left\Vert h\right\Vert _{\mathscr{H}_{F}}^{2}=\frac{1}{2}\int_{0}^{\frac{1}{2}}\left|h'\left(x\right)\right|^{2}dx+\int_{\partial\Omega}\overline{h_{n}}h\, d\beta.\label{eq:RKHS-eg-4}
\end{equation}
On the RHS of (\ref{eq:RKHS-eg-4}), $d\beta$ denotes the corresponding
boundary measure, and $h_{n}$ is the inward normal derivative of
$h$. See Theorem \ref{thm:F2-bd} below. 

Then the reproducing kernel property is as follows:
\begin{equation}
\left\langle F_{x},h\right\rangle _{\mathscr{H}_{F}}=h\left(x\right),\;\forall h\in\mathscr{H}_{F},\forall x\in\left(0,\tfrac{1}{2}\right);\label{eq:RKHS-eg-5}
\end{equation}
and it follows that $\mathscr{H}_{F}$ is naturally isomorphic to
the RKHS for $F_{2}$ from  \subref{lcg}. 

\begin{figure}
\begin{tabular}{cc}
\includegraphics[scale=0.6]{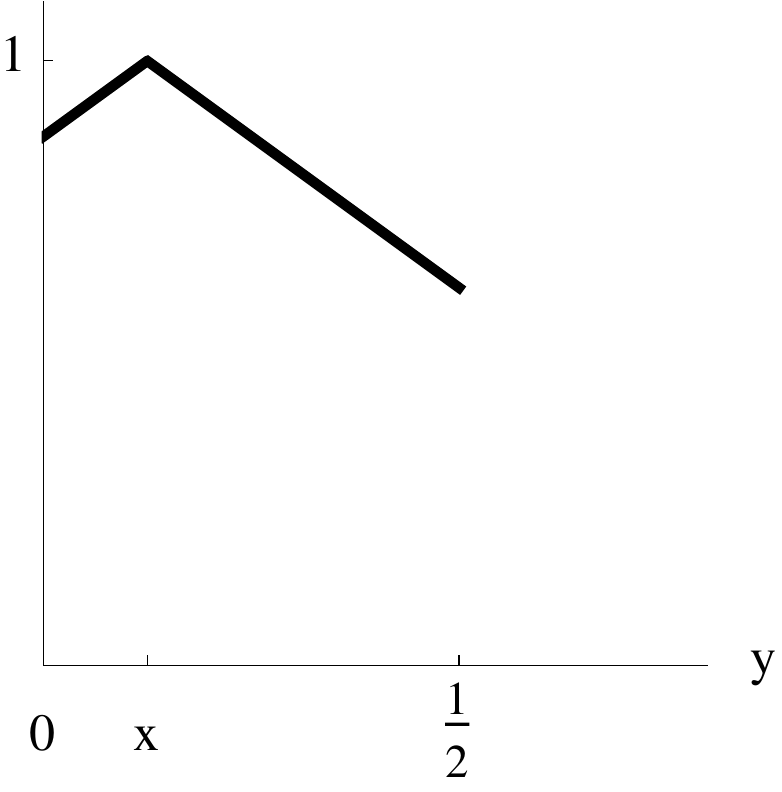} & \includegraphics[scale=0.6]{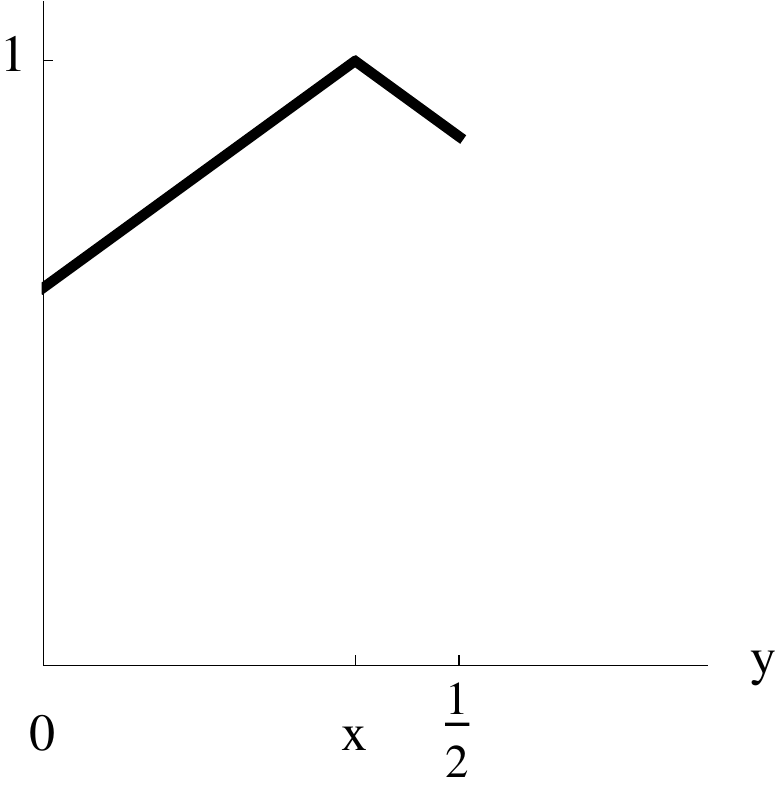}\tabularnewline
\includegraphics[scale=0.6]{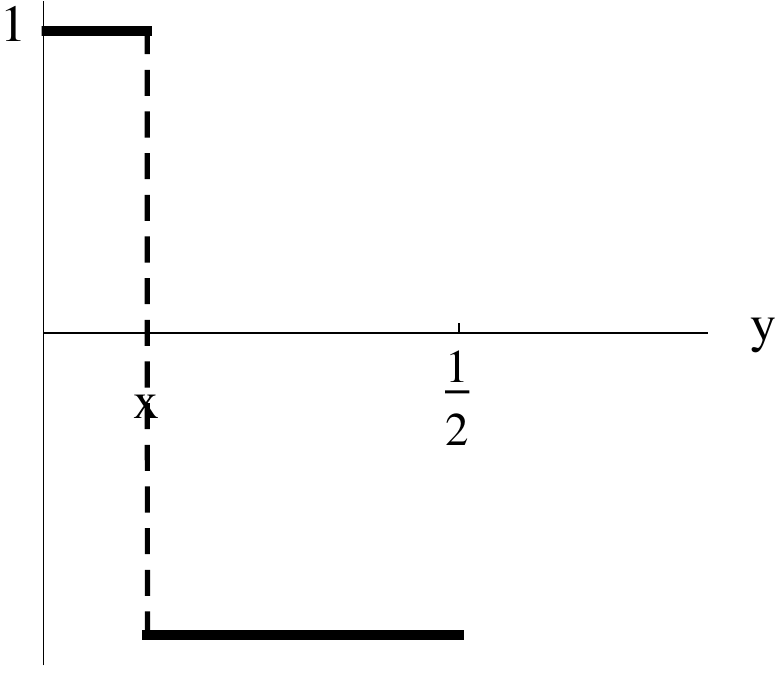} & \includegraphics[scale=0.6]{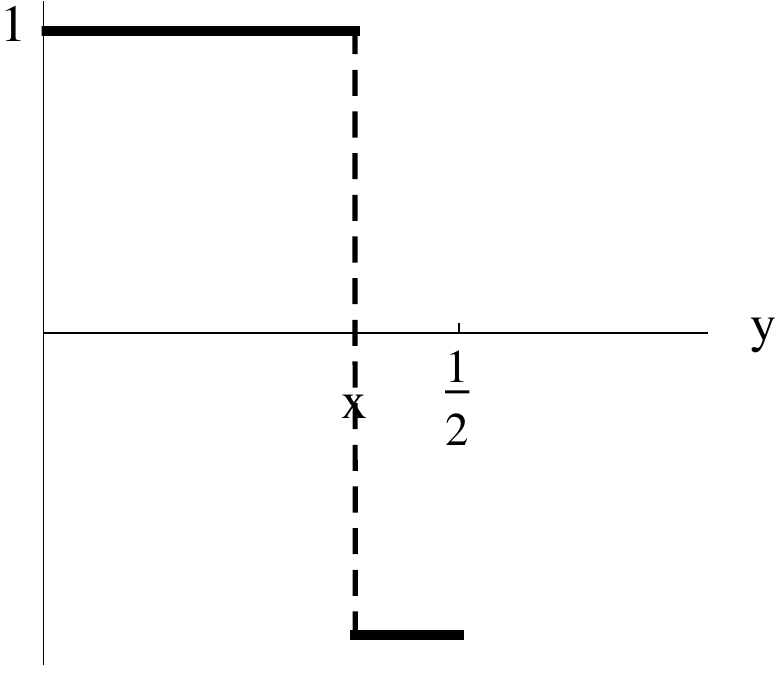}\tabularnewline
\end{tabular}

\protect\caption{\label{fig:Fx}The kernel $F_{x}$ and its derivative (the case of
$F_{2}$)}
\end{figure}

\begin{thm}
\label{thm:F2-bd}The boundary measure for $F=F_{2}$ (see (\ref{eq:RKHS-eg-4}))
is 
\[
\beta=\frac{1}{2}\left(\delta_{0}+\delta_{1/2}\right).
\]
\end{thm}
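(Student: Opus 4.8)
The plan is to identify $\mathscr{H}_{F_{2}}$ with the energy-type space attached to $A=-\tfrac12\bigl(\tfrac{d}{dx}\bigr)^{2}$ on $\Omega=\left(0,\tfrac12\right)$ and to read off $\beta$ from the integration-by-parts identity already recorded in the example with $F(x)=1-|x|$ and $A=-\tfrac12\bigl(\tfrac{d}{dx}\bigr)^{2}$. First, by the Green's-function lemma, $T_{F_{2}}^{-1}\supset-\tfrac12\bigl(\tfrac{d}{dx}\bigr)^{2}\big|_{C_{c}^{\infty}(0,1/2)}$, so $F_{2}$ is the integral kernel of the selfadjoint realization $A=T_{F_{2}}^{-1}$ of $-\tfrac12\bigl(\tfrac{d}{dx}\bigr)^{2}$; the lemma that $\mathscr{H}_{A}$ is a RKHS with reproducing kernel $F_{x}$, together with totality of $\{F_{x}\}$ in both spaces, gives an isometric isomorphism $\mathscr{H}_{F_{2}}\cong\mathscr{H}_{A}$ under $F_{x}\mapsto F_{x}$. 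Hence it suffices to compute the boundary term in $\langle\cdot,\cdot\rangle_{A}$ and compare with (\ref{eq:RKHS-eg-4}).

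Second, integrating by parts twice (in the sense of distributions, using $(F_{x})''=-2\delta_{x}$ from Lemma \ref{lem:distd} for the kink at $x=y$) gives, for $f,g\in\mathscr{H}_{A}$ (taken real-valued as in the excerpt),
\[
\langle f,g\rangle_{A}=\tfrac12\bigl\langle -f'',g\bigr\rangle_{L^{2}}=\underbrace{\tfrac12\int_{0}^{1/2}f'(x)g'(x)\,dx}_{=\langle f,g\rangle_{Energy}}+\tfrac12\bigl(f'(0)g(0)-f'(\tfrac12)g(\tfrac12)\bigr).
\]
At $\partial\Omega=\{0,\tfrac12\}$ the inward normal derivatives are $f_{n}(0)=f'(0)$ and $f_{n}(\tfrac12)=-f'(\tfrac12)$, so the boundary term equals $\tfrac12 f_{n}(0)g(0)+\tfrac12 f_{n}(\tfrac12)g(\tfrac12)=\int_{\partial\Omega}f_{n}(\sigma)g(\sigma)\,d\beta(\sigma)$ precisely when $\beta=\tfrac12\bigl(\delta_{0}+\delta_{1/2}\bigr)$; matching this against (\ref{eq:RKHS-eg-4}) yields the claim.

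Third, as a consistency check I would verify the reproducing property (\ref{eq:RKHS-eg-5}) directly with this $\beta$: from the energy computation $\langle F_{x},g\rangle_{Energy}=g(x)-\tfrac12\bigl(g(0)+g(\tfrac12)\bigr)$, while $F_{x}'(0)=1$ and $F_{x}'(\tfrac12)=-1$ give $(F_{x})_{n}(0)=(F_{x})_{n}(\tfrac12)=1$, hence $\int_{\partial\Omega}(F_{x})_{n}g\,d\beta=\tfrac12\bigl(g(0)+g(\tfrac12)\bigr)$; adding the two contributions produces $\langle F_{x},g\rangle_{\mathscr{H}_{F}}=g(x)$, so (\ref{eq:RKHS-eg-4}) with $\beta=\tfrac12(\delta_{0}+\delta_{1/2})$ reproduces the kernel and thus is the correct norm. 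The only genuinely delicate point is the first step: making the double integration by parts rigorous across the kink of $F_{x}$, and confirming that in the general RKHS/Green's-function formalism the boundary contribution is represented by an honest (positive) measure on $\partial\Omega$ rather than by something more singular. This is exactly what the distributional identity of Lemma \ref{lem:distd} controls, and since $\overline{\Omega}$ is a compact interval with two-point boundary, the boundary data of $f\in\mathscr{H}_{F}$ are just the finite numbers $f(0),f(\tfrac12)$, bounded by $\|f\|_{\mathscr{H}_{F}}$ via (\ref{eq:m-2-2}); so no further regularity issues arise.
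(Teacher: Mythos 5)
Your argument is correct and is essentially the paper's own: the decisive computation --- $E(F_{x},g)=g(x)-\tfrac{1}{2}\bigl(g(0)+g(\tfrac{1}{2})\bigr)$ together with $(F_{x})_{n}\equiv 1$ at $\{0,\tfrac{1}{2}\}$, which forces $\beta=\tfrac{1}{2}(\delta_{0}+\delta_{1/2})$ via the reproducing property --- is exactly what the paper does, and your ``consistency check'' in the third step is literally its proof. The preliminary detour through $A=T_{F_{2}}^{-1}\supset-\tfrac{1}{2}\bigl(\tfrac{d}{dx}\bigr)^{2}$ and the double integration by parts merely reproduces the computation the paper carries out in its ``Connection to the Energy Space Hilbert Space'' example, so no genuinely different idea is involved.
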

\begin{proof}
Set 
\begin{equation}
E\left(\xi\right):=\frac{1}{2}\int_{0}^{\frac{1}{2}}\left|\xi'\left(x\right)\right|^{2}dx,\;\forall\xi\in\mathscr{H}_{F}.\label{eq:en-2-1}
\end{equation}
And let $F_{x}\left(\cdot\right):=1-\left|x-\cdot\right|$ defined
on $\left[0,\frac{1}{2}\right]$, for all $x\in\left(0,\frac{1}{2}\right)$.
Then 
\begin{eqnarray*}
E\left(F_{x},\xi\right) & = & \frac{1}{2}\int_{0}^{\frac{1}{2}}F_{x}'\left(y\right)\xi'\left(y\right)dy\\
 & = & \frac{1}{2}\left(\int_{0}^{x}\xi'\left(y\right)dy-\int_{x}^{\frac{1}{2}}\xi'\left(y\right)dy\right)\\
 & = & \xi\left(x\right)-\frac{\xi\left(0\right)+\xi\left(\frac{1}{2}\right)}{2}.\;(\mbox{see Fig. \ref{fig:Fx}})
\end{eqnarray*}
Since
\[
\left\Vert \xi\right\Vert _{\mathscr{H}_{F}}^{2}=E\left(\xi\right)+\int\left|\xi\right|^{2}d\beta
\]
we get 
\[
\left\langle F_{x},\xi\right\rangle _{\mathscr{H}_{F}}=\xi\left(x\right),\;\forall\xi\in\mathscr{H}_{F}.
\]
We conclude that 
\begin{equation}
\left\langle \xi,\eta\right\rangle _{\mathscr{H}_{F}}=E\left(\xi,\eta\right)+\int_{\partial\Omega}\overline{\xi_{n}}\eta d\beta;\label{eq:en-3-1}
\end{equation}
note the boundary in this case consists two points, $x=0$ and $x=\frac{1}{2}$.\end{proof}
\begin{rem}
\label{rem:bm}The energy form in (\ref{eq:en-2-1}) also defines
a RKHS (Wiener's energy form for Brownian motion, see figure \ref{fig:bm2})
as follows: 

On the space of all continuous functions, $\mathscr{C}\left([0,\frac{1}{2}]\right)$,
set 
\begin{equation}
\mathscr{H}_{\mathscr{E}}:=\left\{ f\in\mathscr{C}\left([0,\tfrac{1}{2}]\right)\:\big|\:\mathscr{E}\left(f\right)<\infty\right\} \label{eq:en-2-2}
\end{equation}
modulo constants, where
\begin{equation}
\mathscr{E}\left(f\right)=\int_{0}^{\frac{1}{2}}\left|f'\left(x\right)\right|^{2}dx.\label{eq:en-2-3}
\end{equation}
For $x\in\left[0,\frac{1}{2}\right]$, set 
\[
E_{x}\left(y\right)=x\wedge y=\min\left(x,y\right),\; y\in\left(0,\tfrac{1}{2}\right);
\]
see \figref{bm1}; then $E_{x}\in\mathscr{H}_{\mathscr{E}}$, and
\begin{equation}
\left\langle E_{x},f\right\rangle _{\mathscr{H}_{\mathscr{E}}}=f\left(x\right),\;\forall f\in\mathscr{H}_{\mathscr{E}},\forall x\in\left[0,\tfrac{1}{2}\right].\label{eq:en-2-4}
\end{equation}

\end{rem}
\begin{figure}[H]
\includegraphics[scale=0.5]{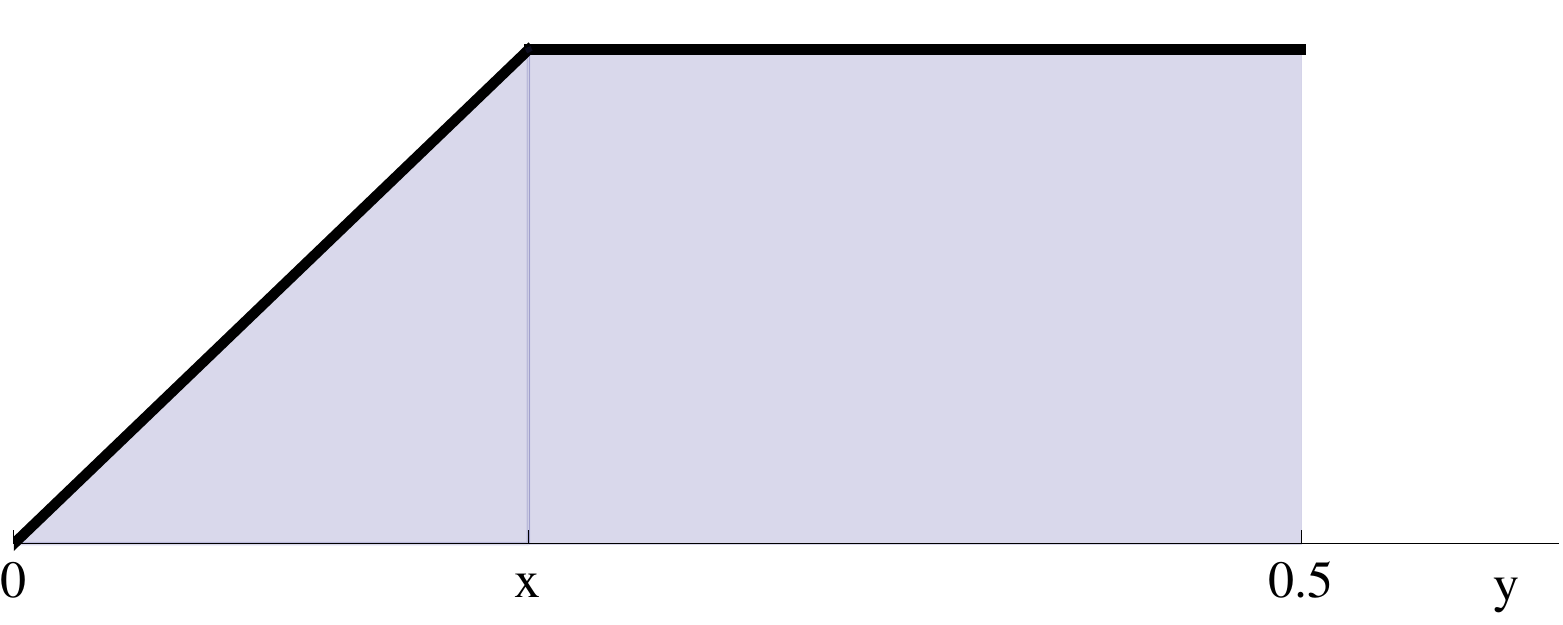}

\protect\caption{\label{fig:bm1}The covariance function $E_{x}\left(\cdot\right)=\min\left(x,\cdot\right)$
of Brownian motion.}
\end{figure}

\begin{proof}
For the reproducing property (\ref{eq:en-2-4}): Let $f$ and $x$
be as stated; then 
\begin{eqnarray*}
\left\langle E_{x},f\right\rangle _{\mathscr{H}_{\mathscr{E}}} & = & \int_{0}^{\frac{1}{2}}E_{x}'\left(y\right)f'\left(y\right)dy\\
 & \underset{\text{Fig. \ensuremath{\left(\ref{fig:bm1}\right)}}}{=} & \int_{0}^{\frac{1}{2}}\chi_{\left[0,x\right]}\left(y\right)f'\left(y\right)dy\\
 & = & \int_{0}^{x}f'\left(y\right)dy=f\left(x\right)-f\left(0\right).
\end{eqnarray*}
Note in (\ref{eq:en-2-2}) we define $\mathscr{H}_{\mathscr{E}}$
modulo constants; alternatively, we may stipulate $f\left(0\right)=0$. 
\end{proof}
Note that the Brownian motion\index{Brownian motion} RKHS is not
defined by a p.d. function, but rather by a p.d. kernel. Nonetheless
the remark explains its connection to our present RKHS $\mathscr{H}_{F}$
which is defined by the p.d. function, namely the p.d. function $F_{2}$.

\begin{figure}[H]
\includegraphics[scale=0.6]{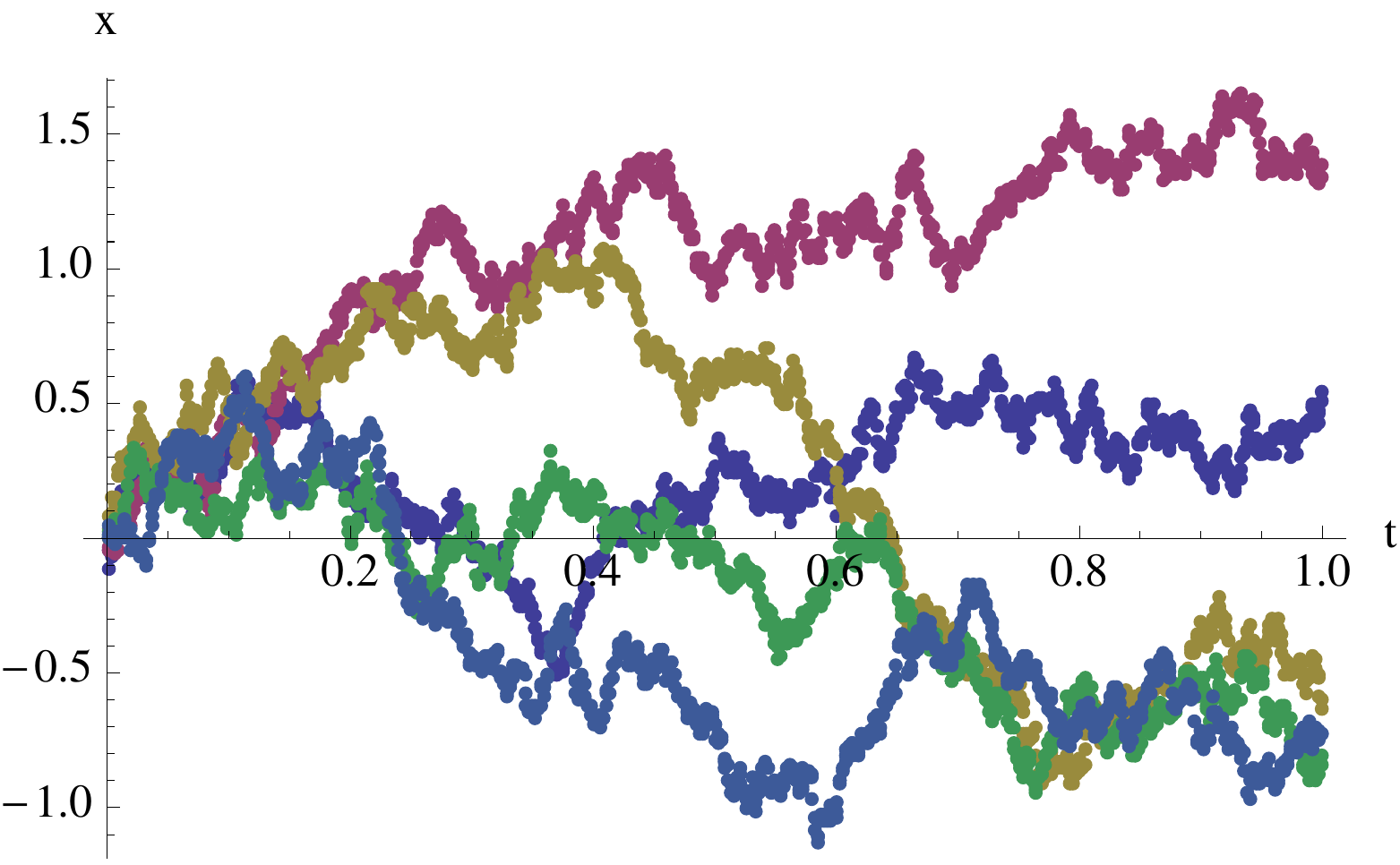}

\protect\caption{\label{fig:bm2}Monte-Carlo simulation of Brownian motion starting
at $x=0$, with 5 sample paths. \index{Brownian motion}}
\end{figure}

\textbf{Pinned Brownian Motion. }\index{Brownian bridge}

We illustrate the boundary term in Theorem \ref{thm:F2-bd}, eq. (\ref{eq:en-3-1})
with pinned Brownian motion (also called ``Brownian bridge.'') In
order to simplify constructions, we pin the Brownian motion at the
following two points in $\left(t,x\right)$ space, $\left(t,x\right)=\left(0,0\right)$,
and $\left(t,x\right)=\left(1,1\right)$; see \figref{bbridge}. To
simplify computations further, we restrict attention to real valued
functions only. 

\begin{figure}[H]
\includegraphics[scale=0.6]{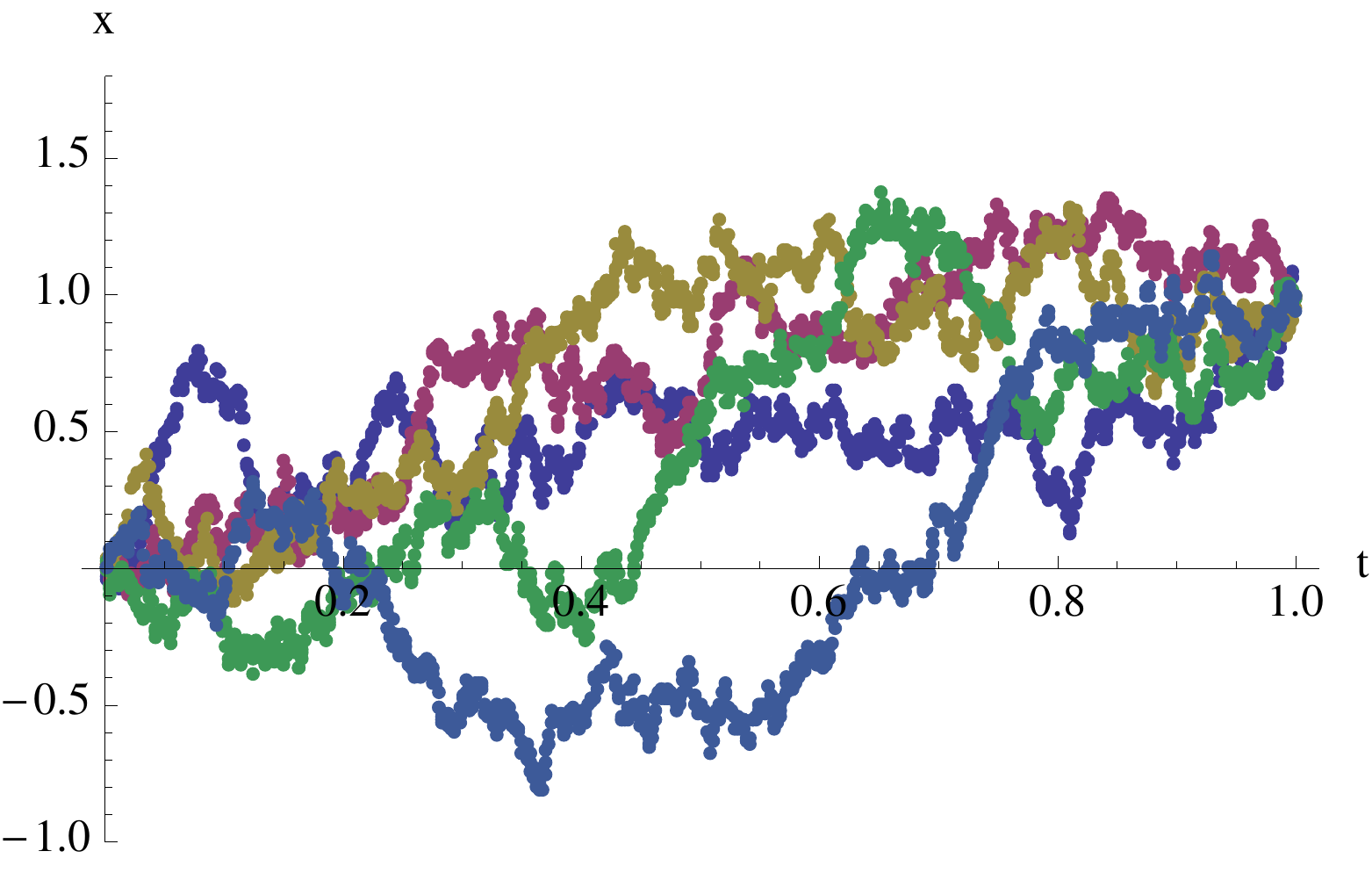}

\protect\caption{\label{fig:bbridge}Monte-Carlo simulation of Brownian bridge pinned
at $\left(0,0\right)$ and $\left(1,1\right)$, with 5 sample paths.}
\end{figure}

For background on stochastic processes, the book \cite{IkWa89} serves
our purpose; see especially p. 243-244. \index{stochastic processes}

For the pinning down the process $X_{t}$ at the two points $\left(0,0\right)$
and $\left(1,1\right)$ as in \figref{bbridge}, we have 
\begin{equation}
X_{t}=t+\left(1-t\right)\int_{0}^{t}\frac{dB_{s}}{1-s},\;0<t<1\label{eq:bb-1}
\end{equation}
where $dB_{s}$ on the RHS of (\ref{eq:bb-1}) refers to the standard
Brownian motion $dB_{s}$, and the second term in (\ref{eq:bb-1})
is the corresponding Ito-integral; so we have
\begin{eqnarray}
\mathbb{E}\left(\int_{0}^{t}\frac{dB_{s}}{1-s}\right) & = & 0,\mbox{ and}\label{eq:bb-2}\\
\mathbb{E}\left(\left|\int_{0}^{t}\frac{dB_{s}}{1-s}\right|^{2}\right) & = & \int_{0}^{t}\frac{ds}{\left(1-s\right)^{2}}=\frac{t}{1-t};\label{eq:bb-3}
\end{eqnarray}
where $\mathbb{E}\left(\cdots\right)=$ expectation with respect to
the underlying probability space $\left(\Omega,\mathscr{F},\mathbb{P}\right)$,
i.e., $\mathbb{E}\left(\cdots\right)=\int_{\Omega}\cdots d\mathbb{P}$. 

Hence, for the mean and covariance\index{covariance}, of the process
$X_{t}$ in (\ref{eq:bb-1}), we get 
\begin{equation}
\mathbb{E}\left(X_{t}\right)=t,\;\forall t,\,0<t<1;\mbox{ and }\label{eq:bb-4}
\end{equation}
\begin{equation}
Cov\left(X_{t}X_{s}\right)=\mathbb{E}\left(\left(X_{t}-t\right)\left(X_{s}-s\right)\right)=s\wedge t-st;\label{eq:bb-5}
\end{equation}
where $s\wedge t=\min\left(s,t\right)$, and $s,t\in\left(0,1\right)$. 

And it follows in particular that the function on the RHS in eq (\ref{eq:bb-5})
is a positive definite kernel. Its connection to $F_{2}$ is given
in the next lemma.

Now return to the p.d. function $F=F_{2}$; i.e., $F\left(t\right)=1-\left|t\right|$,
and therefore, $F\left(s-t\right)=1-\left|s-t\right|$, $s,t\in\left(0,1\right)$,
and let $\mathscr{H}_{F}$ be the corresponding RKHS. 

Let $\left\{ F_{t}\right\} _{t\in\left(0,1\right)}$ denote the kernels
in $\mathscr{H}_{F}$, i.e., $\left\langle F_{t},g\right\rangle _{\mathscr{H}_{F}}=g\left(t\right)$,
$t\in\left(0,1\right)$, $g\in\mathscr{H}_{F}$. 
\begin{lem}
\label{lem:bbridge}Let $\left(X_{t}\right)_{t\in\left(0,1\right)}$
denote the pinned Brownian motion (\ref{eq:bb-1}); then 
\begin{equation}
\left\langle F_{s},F_{t}\right\rangle _{\mathscr{H}_{F}}=Cov\left(X_{s}X_{t}\right);\label{eq:bb-6}
\end{equation}
see (\ref{eq:bb-5}); and 
\begin{equation}
\left\langle F_{s},F_{t}\right\rangle _{energy}=s\wedge t;\label{eq:bb-7}
\end{equation}
while the boundary term 
\begin{equation}
bdr\left(s,t\right)=-st.\label{eq:bb-8}
\end{equation}
 \end{lem}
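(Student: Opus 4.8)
The identity (\ref{eq:bb-6}) says that the RKHS inner product on the kernels $F_t$ reproduces the covariance of pinned Brownian motion, and since by (\ref{eq:en-3-1}) the $\mathscr{H}_F$ inner product decomposes as an energy term plus a boundary term, it suffices to prove (\ref{eq:bb-7}) and (\ref{eq:bb-8}) separately and then add them: $s\wedge t - st = Cov(X_sX_t)$ by (\ref{eq:bb-5}). So the core of the proof is two explicit computations on the interval $(0,1)$ with $F(x)=1-|x|$.

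\emph{Step 1: the energy term.} For fixed $s,t\in(0,1)$ I would compute $\langle F_s,F_t\rangle_{energy}=\int_0^1 F_s'(y)F_t'(y)\,dy$, where $F_t(y)=1-|t-y|$ so that (in the sense of distributions, cf.\ Figure~\ref{fig:Fx}) $F_t'(y)=\chi_{(0,t)}(y)-\chi_{(t,1)}(y)=2\chi_{(0,t)}(y)-1$. Hence $F_s'F_t'=(2\chi_{(0,s)}-1)(2\chi_{(0,t)}-1)$; integrating over $(0,1)$ and using $\int_0^1\chi_{(0,s)}\chi_{(0,t)}=s\wedge t$, $\int_0^1\chi_{(0,s)}=s$, I get $4(s\wedge t)-2s-2t+1$. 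This needs the right normalization: the energy form here must be $\mathscr{E}(f)=\frac14\int_0^1|f'|^2$ (not $\frac12$), or equivalently I should track the constant so that the answer comes out $s\wedge t$. Comparing with (\ref{eq:F2-3-1})/(\ref{eq:en-2-1}) where the half-interval case used $\frac12\int$, the correct constant on $(0,1)$ is $\frac14$; then $\frac14(4(s\wedge t)-2s-2t+1)$ is \emph{not} $s\wedge t$, so I would instead argue via the reproducing property directly: from (\ref{eq:en-3-1}), $\langle F_s,F_t\rangle_{\mathscr{H}_F}=F_t(s)=1-|s-t|$, and $\langle F_s,F_t\rangle_{energy}=\langle F_s,F_t\rangle_{\mathscr{H}_F}-bdr(s,t)$, so really Steps 1 and 2 are linked and I only need to pin down one of them independently. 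The cleanest independent computation is the boundary term.

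\emph{Step 2: the boundary term.} By Theorem~\ref{thm:F2-bd} (adapted to the interval $(0,1)$ rather than $(0,\tfrac12)$), the boundary measure is $\beta=\tfrac12(\delta_0+\delta_1)$, and the boundary contribution to $\langle F_s,F_t\rangle_{\mathscr{H}_F}$ is $\int_{\partial\Omega}\overline{(F_s)_n}\,F_t\,d\beta$ where $(F_s)_n$ is the inward normal derivative of $F_s$ at the two endpoints. A direct calculation gives $F_s'(0)=1$ and $F_s'(1)=-1$ (as in the $F_3$ example), so the inward normal derivatives are $(F_s)_n(0)=F_s'(0)=1$ and $(F_s)_n(1)=-F_s'(1)=1$; meanwhile $F_t(0)=1-t$ and $F_t(1)=1-(1-t)=t$. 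Thus $bdr(s,t)=\tfrac12\big(1\cdot(1-t)+1\cdot t\big)$, which gives $\tfrac12$, not $-st$ — a sign discrepancy I will need to resolve by being careful about the correct renormalization that pins the process at $(0,0)$ and $(1,1)$: the relevant quadratic form is not the raw $\mathscr{H}_F$ form but the one obtained after subtracting the linear (affine) part $t$, equivalently restricting to functions vanishing at both endpoints. On that subspace the boundary term picks up the cross-term $-st$ exactly as in (\ref{eq:bb-5}) for the Brownian bridge.

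\emph{Main obstacle.} The genuine difficulty is bookkeeping: getting the normalization constant in the energy form and the orientation/sign of the inward normal derivative exactly consistent with the pinning conditions, so that the three pieces $s\wedge t$, $-st$, and their sum $Cov(X_sX_t)$ line up. Once the correct form is identified (the one reproduced by the affine-corrected kernels $F_s - (\text{affine interpolant})$, matching the Ito representation (\ref{eq:bb-1})), the identities (\ref{eq:bb-6})--(\ref{eq:bb-8}) follow by the short computations above together with (\ref{eq:bb-4})--(\ref{eq:bb-5}). I would present Step 2's normal-derivative computation and Step 1's $\int F_s'F_t'$ computation in full, then invoke (\ref{eq:en-3-1}) and (\ref{eq:bb-5}) to conclude.
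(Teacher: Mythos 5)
Your proposal does not close; it ends with two acknowledged, unresolved discrepancies, and these are not mere bookkeeping errors — they signal that you are computing with the wrong kernels throughout. The paper's proof works not with $F_{s}\left(y\right)=1-\left|s-y\right|$ but with the renormalized kernels $F_{s}\left(y\right)=s\wedge y$ (the Brownian-motion covariance kernels $E_{s}$ of Remark \ref{rem:bm}; the two are related by the affine identity $1-\left|s-y\right|=1-s-y+2\left(s\wedge y\right)$, cf. (\ref{eq:f2-6})), and with the energy form normalized as $\left\langle f,g\right\rangle _{energy}=\int_{0}^{1}f'g'$, \emph{without} the factor $\tfrac{1}{2}$ of (\ref{eq:en-2-1}). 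With these conventions $F_{s}'=\chi_{\left[0,s\right]}$, so
\[
\left\langle F_{s},F_{t}\right\rangle _{energy}=\int_{0}^{1}\chi_{\left[0,s\right]}\chi_{\left[0,t\right]}\, dy=\left|\left[0,s\right]\cap\left[0,t\right]\right|=s\wedge t,
\]
which is (\ref{eq:bb-7}). Your value $4\left(s\wedge t\right)-2s-2t+1$ comes from differentiating $1-\left|s-\cdot\right|$ instead, and, as you yourself observe, no multiplicative constant repairs it. Likewise your boundary computation returning $\tfrac{1}{2}$ instead of $-st$ is performed for the wrong kernels; for the pinned (affine-corrected) kernels the boundary contribution is $-st$, which added to $s\wedge t$ reproduces $Cov\left(X_{s}X_{t}\right)=s\wedge t-st$ from (\ref{eq:bb-5}). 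You correctly guess that an affine correction is the missing ingredient, but you never identify it or carry it out, so none of (\ref{eq:bb-6})--(\ref{eq:bb-8}) is actually established. Your fallback of "arguing via the reproducing property directly" cannot rescue this: for the unmodified kernels the reproducing property gives $\left\langle F_{s},F_{t}\right\rangle _{\mathscr{H}_{F}}=1-\left|s-t\right|=1-s-t+2\left(s\wedge t\right)$, which is not $s\wedge t-st$, so (\ref{eq:bb-6}) is only true after the renormalization you skipped.

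A second gap: the paper's verification of (\ref{eq:bb-6}) is not simply the sum of your Steps 1 and 2; it uses Ito calculus. From the Ito equation (\ref{eq:bb-11}) one gets $\left(dX_{t}\right)^{2}=\left(dB_{t}\right)^{2}$, hence $\mathbb{E}\left(\left|\left(df\right)\left(X_{t}\right)\right|^{2}\right)=\mathbb{E}\left(\left|\left(df\right)\left(B_{t}\right)\right|^{2}\right)=\int_{0}^{1}\left|f'\left(t\right)\right|^{2}dt$, which is what identifies the quadratic form of the pinned process with the Brownian energy form in the first place. Your proposal contains no substitute for this step.
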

\begin{proof}
With our normalization from Fig \ref{fig:bbridge}, we must take the
energy form as follows:
\begin{equation}
\left\langle f,g\right\rangle _{energy}=\int_{0}^{1}f'\left(x\right)g'\left(x\right)dx.\label{eq:bb-9}
\end{equation}

Set $F_{s}\left(y\right)=s\wedge y$, see Figure \ref{fig:bm1}. For
the distributional derivative we have 
\[
F_{s}'\left(y\right)=\chi_{\left[0,s\right]}\left(y\right)=\begin{cases}
1 & \mbox{if }0\leq y\leq s\\
0 & \mbox{else},
\end{cases}
\]
then 
\begin{eqnarray*}
\left\langle F_{s},F_{t}\right\rangle _{energy} & = & \int_{0}^{1}F_{s}'\left(y\right)F_{t}'\left(y\right)dy\\
 & = & \int_{0}^{1}\chi_{\left[0,s\right]}\left(y\right)\chi_{\left[0,t\right]}\left(y\right)dy\\
 & = & \left|\left[0,s\right]\cap\left[0,t\right]\right|_{\text{Lebesgue measure}}\\
 & = & s\wedge t.
\end{eqnarray*}
The desired conclusions (\ref{eq:bb-7})-(\ref{eq:bb-8}) in the lemma
now follow. See Remark \ref{rem:bm}.

The verification of (\ref{eq:bb-6}) uses Ito-calculus as follows:
Note that (\ref{eq:bb-1}) for $X_{t}$ is the solution to the following
Ito-equation:
\begin{equation}
dX_{t}=\left(\frac{X_{t}-1}{t-1}\right)dt+dB_{t};\label{eq:bb-11}
\end{equation}
and by Ito's lemma, therefore, 
\begin{equation}
\left(dX_{t}\right)^{2}=\left(dB_{t}\right)^{2}.\label{eq:bb-12}
\end{equation}

As a result, if $f:\mathbb{R}\rightarrow\mathbb{R}$ is a function
in the energy-Hilbert space defined from (\ref{eq:bb-9}), then 
\[
\mathbb{E}\left(\left|\left(df\right)\left(X_{t}\right)\right|^{2}\right)=\mathbb{E}\left(\left|\left(df\right)\left(B_{t}\right)\right|^{2}\right)=\int_{0}^{1}\left|f'\left(t\right)\right|^{2}dt=\mathscr{E}_{2}\left(f\right).
\]
\end{proof}
\begin{rem}
In (\ref{eq:bb-11})-(\ref{eq:bb-12}), we use standard conventions
for Brownian motions $B_{t}$: Let $\left(\Omega,\mathscr{F},\mathbb{P}\right)$
be a choice of probability space for $\left\{ B_{t}\right\} _{t\in\mathbb{R}}$,
(or $t\in\left[0,1\right]$.) With $E\left(\cdots\right)=\int_{\Omega}\cdots d\mathbb{P}$,
we have 
\begin{equation}
E\left(B_{s}B_{t}\right)=s\wedge t=\min\left(s,t\right),\; t\in\left[0,1\right].\label{eq:bm-1-1}
\end{equation}
If $f:\mathbb{R}\rightarrow\mathbb{R}$ is a $C^{1}$-function, we
write $f\left(B_{t}\right)$ for $f\circ B_{t}$; and $df\left(B_{t}\right)$
refers to Ito-calculus.
\end{rem}

\subsection{\label{sub:F3}The Case of $F_{3}\left(x\right)=e^{-\left|x\right|}$,
$x\in\left(-1,1\right)$}

Let $F_{x}\left(y\right)=F\left(x-y\right)$, for all $x,y\in\left(0,1\right)$;
and consider the Hilbert space 
\begin{align}
\mathscr{H}_{F} & :=\begin{Bmatrix}h; & \text{continuous on \ensuremath{\left(0,1\right)}, }h\in L^{2}\left(0,1\right)\text{, and}\\
 & \text{the weak derivative }h'\in L^{2}\left(0,1\right)\,\,\,\,\,\,\,\,\,\,\,\,\,\,\,
\end{Bmatrix};\label{eq:RKHS-eg-6}
\end{align}
and let the $\mathscr{H}_{F}$-norm, and inner product, be given by
\begin{equation}
\left\Vert h\right\Vert _{\mathscr{H}_{F}}=\frac{1}{2}\left(\int_{0}^{1}\left|h'\left(x\right)\right|^{2}dx+\int_{0}^{1}\left|h\left(x\right)\right|^{2}dx\right)+\int_{\partial\Omega}\overline{h_{n}}h\, d\beta.\label{eq:RKHS-eg-7}
\end{equation}
Here, $d\beta$ on the RHS of (\ref{eq:RKHS-eg-7}) denotes the corresponding
boundary measure, and $h_{n}$ is the inward normal derivative of
$h$. See Theorem \ref{thm:F3bd} below.

Then a direct verification yields:
\begin{equation}
\left\langle F_{x},h\right\rangle _{\mathscr{H}_{F}}=h\left(x\right),\;\forall h\in\mathscr{H}_{F},\forall x\in\left(0,1\right);\label{eq:RKHS-eg-8}
\end{equation}
and it follows that $\mathscr{H}_{F}$ is naturally isomorphic to
RKHS for $F_{3}$ from \subref{lcg}.

For details of (\ref{eq:RKHS-eg-8}), see also \cite{Jor81}.
\begin{cor}
\label{cor:RKHS-eg-1}In both $\mathscr{H}_{F_{i}}$, $i=2,3$, the
deficiency indices are $\left(1,1\right)$.\end{cor}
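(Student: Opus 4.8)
The plan is to compute the deficiency indices of $D^{(F_i)}$ for $i=2,3$ directly using the explicit descriptions of the RKHSs $\mathscr{H}_{F_i}$ given in Sections \ref{sub:F2}--\ref{sub:F3}, together with the general criterion from Corollary \ref{cor:DF} and the remarks surrounding Theorem \ref{thm:Eigenspaces-for-the-adjoint}. Recall that by Corollary \ref{cor:DF} the deficiency indices of $D^{(F)}$ (for $F$ real-valued on a symmetric interval) can only be $(0,0)$ or $(1,1)$, since the conjugation $J$ of Lemma \ref{lem:Conjugation-Operator} interchanges $DEF^{+}$ and $DEF^{-}$. Moreover, by the lemma following that corollary (or by Theorem \ref{thm:Eigenspaces-for-the-adjoint} with $z=\pm1$), an element of $DEF^{\pm}$ must be a constant multiple of $\xi_{\pm}(y) = e^{\mp y}$ restricted to $\Omega = (0,a)$. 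So the entire question reduces to checking whether the single function $\xi_{+}(y)=e^{-y}$ lies in $\mathscr{H}_{F_i}$; if it does, the indices are $(1,1)$, and if not, they are $(0,0)$.

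First I would treat $F=F_3(x)=e^{-|x|}$ on $(-1,1)$, $\Omega=(0,1)$. Here Section \ref{sub:F3} identifies $\mathscr{H}_{F_3}$ with the Sobolev-type space of continuous functions $h$ on $(0,1)$ with $h, h' \in L^2(0,1)$, equipped with the norm in \eqref{RKHS-eg-7}. Since $\xi_{+}(y)=e^{-y}$ is smooth on $[0,1]$ with $\xi_{+}'(y)=-e^{-y}$, both $\xi_{+}$ and $\xi_{+}'$ are bounded, hence in $L^2(0,1)$; therefore $\xi_{+} \in \mathscr{H}_{F_3}$. (Alternatively, this is exactly Lemma \ref{lem:exp-1}, where it is shown $\|\xi_{+}\|_{\mathscr{H}_{F_3}}^2 = 1$, though there $a$ is general; one can also invoke Lemma \ref{lem:exp-1} directly with $a=1$.) By the reduction above this forces deficiency indices $(1,1)$ for $D^{(F_3)}$.

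Next I would treat $F=F_2(x)=1-|x|$ on $(-\tfrac12,\tfrac12)$, $\Omega=(0,\tfrac12)$. By Section \ref{sub:F2}, $\mathscr{H}_{F_2}$ consists (modulo constants) of continuous $h$ on $(0,\tfrac12)$ with weak derivative $h'\in L^2(0,\tfrac12)$, normed by \eqref{RKHS-eg-4}. Again $\xi_{+}(y)=e^{-y}$ is smooth on the closed interval $[0,\tfrac12]$, so $\xi_{+}'\in L^2(0,\tfrac12)$ trivially, and the boundary term in \eqref{RKHS-eg-4} is finite since $\beta = \tfrac12(\delta_0+\delta_{1/2})$ by Theorem \ref{thm:F2-bd} and $\xi_{+}$ together with its normal derivative are bounded at the two endpoints. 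Hence $\xi_{+}\in\mathscr{H}_{F_2}$, and again the indices are $(1,1)$.

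The only subtlety — and the step I expect to require the most care — is making sure the "abstract" deficiency space $DEF^{\pm}$ of $D^{(F_i)}$, defined via the adjoint in the abstract RKHS, really is spanned by $\xi_{\pm}$ in the concrete geometric realization of $\mathscr{H}_{F_i}$; i.e., that the natural isomorphism between the RKHS of $F_i$ from Section \ref{sub:lcg} and the Sobolev-type space in Sections \ref{sub:F2}--\ref{sub:F3} intertwines $D^{(F_i)}$ with the concrete differentiation operator $F_\varphi\mapsto F_{\varphi'}$, so that eigenvector computations transport correctly. This is essentially already recorded in the text (the isomorphism statements right after \eqref{RKHS-eg-5} and \eqref{RKHS-eg-8}, together with the weak-solution argument in the proof of the lemma preceding Corollary \ref{cor:DF} and in Theorem \ref{thm:Eigenspaces-for-the-adjoint}), so the proof amounts to assembling these pieces: apply Corollary \ref{cor:DF} to restrict the possibilities to $(0,0)$ or $(1,1)$, then exhibit $\xi_{+}=e^{-(\cdot)}$ explicitly in each concrete Hilbert space to rule out $(0,0)$.
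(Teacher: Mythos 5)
Your proposal is correct and follows essentially the same route as the paper: the paper's proof likewise reduces the question to whether $e^{\pm x}$ have finite positive norms in the concrete realizations of $\mathscr{H}_{F_2}$ and $\mathscr{H}_{F_3}$ given by (\ref{eq:RKHS-eg-4}) and (\ref{eq:RKHS-eg-7}), which is immediate since these functions are smooth with bounded derivatives on the closed intervals. Your additional care about transporting the eigenvector computation through the isomorphism, and the explicit appeal to Corollary \ref{cor:DF} and Theorem \ref{thm:Eigenspaces-for-the-adjoint} to reduce to the single membership test, only makes explicit what the paper leaves implicit.
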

\begin{proof}
In both cases, we are referring to the skew-Hermitian operator $D^{\left(F_{i}\right)}$
in $\mathscr{H}_{Fi}$, $i=2,3$; see (\ref{eq:RKHS-eg-1}) above.
But it follows from (\ref{eq:RKHS-eg-4}) and (\ref{eq:RKHS-eg-7})
for the respective inner products, that the functions $e^{\pm x}$
have finite positive norms in the respective RKHSs.\end{proof}
\begin{thm}
\label{thm:F3bd}Let $F=F_{3}$ as before. Consider the energy Hilbert
space
\begin{equation}
\mathscr{H}_{E}:=\begin{Bmatrix}\ensuremath{f\in C\left[0,1\right]}\,\big|\, f,f'\in L^{2}\left(0,1\right)\text{ where }\\
f'\text{ is the weak derivative of }f
\end{Bmatrix}\label{eq:en-1}
\end{equation}
with 
\begin{align}
\left\langle f,g\right\rangle _{E} & =\frac{1}{2}\left(\int_{0}^{1}\overline{f'\left(x\right)}g'\left(x\right)dx+\int_{0}^{1}\overline{f\left(x\right)}g\left(x\right)dx\right);\;\text{and so}\label{eq:en-2}\\
\left\Vert f\right\Vert _{E}^{2} & =\frac{1}{2}\left(\int_{0}^{1}\left|f'\left(x\right)\right|^{2}dx+\int_{0}^{1}\left|f\left(x\right)\right|^{2}dx\right),\;\forall f,g\in\mathscr{H}_{E}.\label{eq:en-3}
\end{align}
Set 
\begin{align}
P\left(f,g\right) & =\int_{0}^{1}\overline{f_{n}\left(x\right)}g\left(x\right)d\beta\left(x\right),\;\text{where }\label{eq:en-4}\\
\beta & :=\frac{\delta_{0}+\delta_{1}}{2},\text{ i.e., Dirac masses at endpoints}\label{eq:en-5}
\end{align}
where $g_{n}$ denotes the inward normal derivative at endpoints. 

Let 
\begin{equation}
\mathscr{H}_{F}:=\begin{Bmatrix}f\in C\left[0,1\right]\,\big|\,\left\Vert f\right\Vert _{E}^{2}+P_{2}\left(f\right)<\infty\text{ where}\\
P_{2}\left(f\right):=P\left(f,f\right)=\int_{0}^{1}\left|f\left(x\right)\right|^{2}d\beta\left(x\right)
\end{Bmatrix}\label{eq:en-6}
\end{equation}
Then we have following:
\begin{enumerate}
\item As a Green-Gauss-Stoke principle, we have 
\begin{align}
\left\Vert \cdot\right\Vert _{F}^{2} & =\left\Vert \cdot\right\Vert _{E}^{2}+P_{2}\:\text{i.e.,}\label{eq:en-7}\\
\left\Vert f\right\Vert _{F}^{2} & =\left\Vert f\right\Vert _{E}^{2}+\int_{0}^{1}\left|f\left(x\right)\right|^{2}d\beta\left(x\right).\label{eq:en-8}
\end{align}

\item Moreover, 
\begin{equation}
\left\langle F_{x},g\right\rangle _{E}=g\left(x\right)-\frac{e^{-x}g\left(0\right)+e^{-\left(1-x\right)}g\left(1\right)}{2}.\label{eq:en-9}
\end{equation}

\item As a result of (\ref{eq:en-9}), eqs. (\ref{eq:en-7})-(\ref{eq:en-8})
is the reproducing property in $\mathscr{H}_{F}$. Specifically, we
have
\begin{equation}
\left\langle F_{x},g\right\rangle _{\mathscr{H}_{F}}=g\left(x\right),\;\forall g\in\mathscr{H}_{F}.\:\text{(see (\ref{eq:en-6}))}\label{eq:en-10}
\end{equation}

\end{enumerate}
\end{thm}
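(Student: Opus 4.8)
\textbf{Proof plan for Theorem \ref{thm:F3bd}.}

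The plan is to verify the three assertions essentially by direct computation, taking the energy identity (\ref{eq:en-9}) as the computational heart of the argument and then reading off (1) and (3) as consequences. First I would fix $x\in(0,1)$ and compute $\left\langle F_{x},g\right\rangle_{E}$ directly from the definition (\ref{eq:en-2}), splitting the integral $\int_{0}^{1}$ at the point $x$ since $F_{x}(y)=e^{-|x-y|}$ is smooth on $[0,x]$ and on $[x,1]$ separately, with $F_{x}'(y)=e^{-(x-y)}$ for $y<x$ and $F_{x}'(y)=-e^{-(y-x)}$ for $y>x$. Integration by parts on each subinterval produces boundary contributions at $0$, $x$ (from the left), $x$ (from the right), and $1$; the two contributions at $x$ from the first-derivative term combine with the zeroth-order term $\tfrac12\int_{0}^{1}\overline{F_{x}}g$ to give exactly $g(x)$ (this is the same cancellation already carried out in the worked example following (\ref{eq:f3-2-1}), so I can cite that computation verbatim), while the genuine endpoint contributions assemble into $-\tfrac12\bigl(e^{-x}g(0)+e^{-(1-x)}g(1)\bigr)$. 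This gives (\ref{eq:en-9}).

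Next, for part (1), I would identify the boundary term. With $\beta=\tfrac12(\delta_{0}+\delta_{1})$ as in (\ref{eq:en-5}), I compute the inward normal derivative of $F_{x}$ at the two endpoints: $(F_{x})_{n}(0)=F_{x}'(0)=e^{-x}$ and $(F_{x})_{n}(1)=-F_{x}'(1)=e^{-(1-x)}$ (the inward normal at $1$ points in the $-y$ direction, hence the sign flip), exactly as in the worked example preceding this section. Therefore
\[
P(F_{x},g)=\int_{0}^{1}\overline{(F_{x})_{n}}\,g\,d\beta=\frac{e^{-x}g(0)+e^{-(1-x)}g(1)}{2},
\]
which is precisely the correction term appearing in (\ref{eq:en-9}) with the opposite sign. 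Adding, $\left\langle F_{x},g\right\rangle_{E}+P(F_{x},g)=g(x)$. To get (\ref{eq:en-7})–(\ref{eq:en-8}) in full generality (not just tested against $F_{x}$), I would invoke that $\{F_{x}\}_{x\in(0,1)}$ spans a dense subspace of $\mathscr{H}_{F}$ (Lemma \ref{lem:dense}) together with the polarization/Green–Gauss–Stokes computation: for $f$ smooth, $\tfrac12\int_{0}^{1}\overline{f}g-\overline{f''}g=\tfrac12\int\overline{f}g+\overline{f'}g'+\text{boundary}$, and the boundary term is exactly $P(f,g)$ by the definition of the inward normal derivative; this identifies $\|\cdot\|_{F}^{2}$ on the dense domain, and both sides are continuous in the $\mathscr{H}_F$-topology, so they agree.

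Finally, part (3) is immediate: combining (1) and (\ref{eq:en-9}), for every $g\in\mathscr{H}_{F}$ we get $\left\langle F_{x},g\right\rangle_{\mathscr{H}_{F}}=\left\langle F_{x},g\right\rangle_{E}+P(F_{x},g)=g(x)$, which is (\ref{eq:en-10}); and since $\{F_x\}$ is dense, this also re-proves that $\mathscr{H}_F$ as defined by (\ref{eq:en-6}) is the RKHS of $F_3$, consistent with the construction in \subref{lcg}. I expect the only delicate point to be the bookkeeping of signs and of which boundary contributions survive — in particular making sure that the two one-sided limits of $F_{x}'$ at the interior point $x$ cancel against the zeroth-order integral rather than leaking into the boundary measure, and that the inward (as opposed to outward) normal convention is applied consistently at both endpoints. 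Everything else is the routine integration-by-parts and density argument already rehearsed in the examples preceding the theorem.
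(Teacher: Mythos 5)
Your proposal is correct and follows essentially the same route as the paper's proof: compute $\left\langle F_{x},g\right\rangle _{E}$ by integration by parts (the computation already carried out in the worked example yielding (\ref{eq:f3-2-1})), identify $P\left(F_{x},g\right)=\tfrac{1}{2}\bigl(e^{-x}g\left(0\right)+e^{-\left(1-x\right)}g\left(1\right)\bigr)$ via the inward-normal convention $f_{n}\left(0\right)=f'\left(0\right)$, $f_{n}\left(1\right)=-f'\left(1\right)$, add to obtain the reproducing identity on the kernels $F_{x}$, and extend to all of $\mathscr{H}_{F}$ by density of $\mathrm{span}\left\{ F_{x}\right\}$. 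Your sign bookkeeping at the endpoints and at the interior point $x$ matches the paper's, so no gap.
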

\begin{proof}
Given $g\in\mathscr{H}_{F}$, one checks directly that 
\begin{eqnarray*}
P\left(F_{x},g\right) & = & \int_{0}^{1}F_{x}\left(y\right)g\left(y\right)d\beta\left(y\right)\\
 & = & \frac{F_{x}\left(0\right)g\left(0\right)+F_{x}\left(1\right)g\left(1\right)}{2}\\
 & = & \frac{e^{-x}g\left(0\right)+e^{-\left(1-x\right)}g\left(1\right)}{2};
\end{eqnarray*}
and, using integration by parts, we have 
\begin{eqnarray*}
\left\langle F_{x},g\right\rangle _{E} & = & g\left(x\right)-\frac{e^{-x}g\left(0\right)+e^{-\left(1-x\right)}g\left(1\right)}{2}\\
 & = & \left\langle F_{x},g\right\rangle _{F}-P\left(F_{x},g\right).
\end{eqnarray*}

Now, using 
\begin{equation}
\int_{\partial\Omega}\left(F_{x}\right)_{n}gd\beta:=\frac{e^{-x}g\left(0\right)+e^{-\left(1-x\right)}g\left(1\right)}{2},\label{eq:en-13}
\end{equation}
and $\left\langle F_{x},g\right\rangle _{\mathscr{H}_{F}}=g\left(x\right)$,
$\forall x\in\left(0,1\right)$, and using (\ref{eq:en-9}), the desired
conclusion
\begin{equation}
\left\langle F_{x},g\right\rangle _{\mathscr{H}_{F}}=\underset{\text{energy term}}{\underbrace{\left\langle F_{x},g\right\rangle _{E}}}+\int_{\partial\Omega}\left(F_{x}\right)_{n}gd\beta\label{eq:en-1-1}
\end{equation}
follows. Since the $\mathscr{H}_{F}$-norm closure of the span of
$\left\{ F_{x}\:\big|\: x\in\left(0,1\right)\right\} $ is all of
$\mathscr{H}_{F}$, from (\ref{eq:en-1-1}), we further conclude that
\begin{equation}
\left\langle f,g\right\rangle _{\mathscr{H}_{F}}=\left\langle f,g\right\rangle _{E}+\int_{\partial\Omega}\overline{f_{n}}gd\beta\label{eq:en-1-2}
\end{equation}
holds for all $f,g\in\mathscr{H}_{F}$.

In (\ref{eq:en-1-1}) and (\ref{eq:en-1-2}), we used $f_{n}$ to
denote the inward normal derivative, i.e., $f_{n}\left(0\right)=f'\left(0\right)$,
and $f_{n}\left(1\right)=-f'\left(1\right)$, $\forall f\in\mathscr{H}_{E}$. \end{proof}
\begin{example}
Fix $p$, $0<p\leq1$, and set 
\begin{equation}
F_{p}\left(x\right):=1-\left|x\right|^{p},\; x\in\left(-\tfrac{1}{2},\tfrac{1}{2}\right);\label{eq:Fp-1}
\end{equation}
see \figref{Fp}. Then $F_{p}$ is positive definite and continuous;
and so $-F''_{p}\left(x-y\right)$ is a p.d. kernel, so $-F_{p}''$
is a positive definite distribution on $\left(-\frac{1}{2},\frac{1}{2}\right)$. 

We saw that if $p=1$, then 
\begin{equation}
-F_{1}''=2\delta\label{eq:Fp-2}
\end{equation}
where $\delta$ is the Dirac mass at $x=0$. But for $0<p<1$, $-F_{p}''$
does not have the form (\ref{eq:Fp-2}). We illustrate this if $p=\frac{1}{2}$.
Then 
\begin{equation}
-F_{\frac{1}{2}}''=\chi_{\left\{ x\neq0\right\} }\frac{1}{4}\left|x\right|^{-\frac{3}{2}}+\frac{1}{4}\delta'',\label{eq:Fp-3}
\end{equation}
where $\delta''$ is the double derivative of $\delta$ in the sense
of distributions. 

\begin{figure}
\includegraphics[scale=0.4]{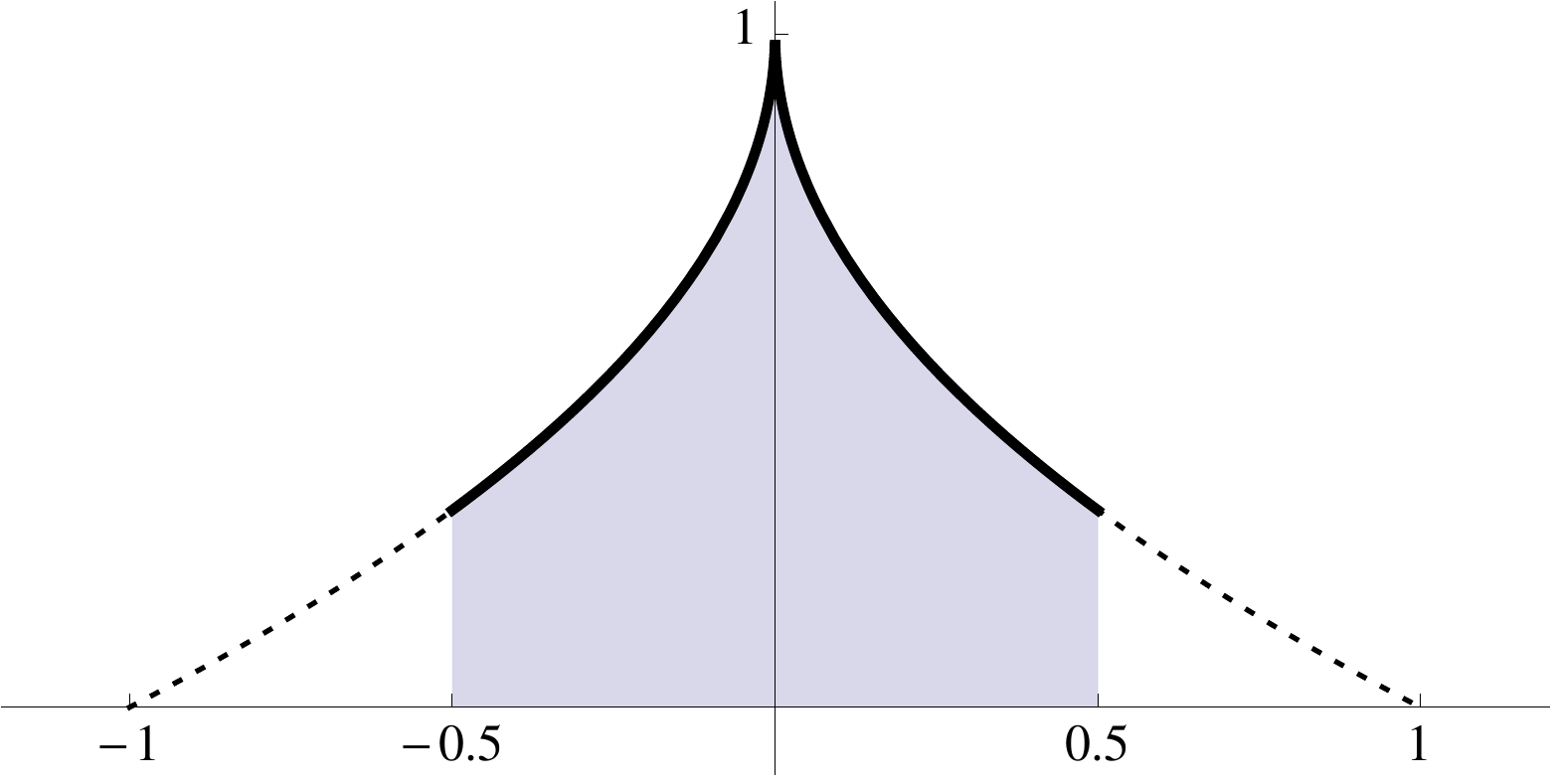}

\protect\caption{\label{fig:Fp}$F_{p}\left(x\right)=1-\left|x\right|^{p}$, $x\in\left(-\tfrac{1}{2},\tfrac{1}{2}\right)$
and $0<p\leq1$.}

\end{figure}
\end{example}
\begin{rem}
There is a notion of boundary measure in potential theory. Boundary
measures exist for any potential theory. In our case it works even
more generally, whenever p.d. $F_{ex}$ for bounded domains $\Omega\subset\mathbb{R}^{n}$,
and even Lie groups. But in the example of $F_{3}$, the boundary
is two points.\index{measure!boundary}

In all cases, we get $\mathscr{H}_{F}$ as a perturbation of the energy
form:\index{perturbation} 
\begin{equation}
\left\Vert \cdot\right\Vert _{\mathscr{H}_{F}}^{2}=\text{energy form }+\text{perturbation}\label{eq:en-11}
\end{equation}
It is a Green-Gauss-Stoke principle. There is still a boundary measure
for ALL bounded domains $\Omega\subset\mathbb{R}^{n}$, and even Lie
groups. 

And RKHS form 
\begin{equation}
\left\Vert \cdot\right\Vert _{\mathscr{H}_{F}}^{2}=\text{energy form }+\int_{\partial\Omega}f_{n}f\, d\mu_{\text{bd meas.}}\label{eq:en-12}
\end{equation}

For the general theory of boundary measures and their connection to
the Green-Gauss-Stoke principle, we refer readers to \cite{JoPe13,JoPe13b,Mo12,Bat90,Tel83,ACF09}. 

The approach via $\left\Vert \cdot\right\Vert _{\mathscr{H}_{F}}^{2}=$
\textquotedbl{}energy term + boundary term\textquotedbl{}, does not
fail to give a RKHS, but we must replace \textquotedbl{}energy term\textquotedbl{}
with an abstract Dirichlet form; see Refs \cite{HS12,Tre88}.
\end{rem}

\subsection{Integral Kernels and Positive Definite Functions}

Let $0<H<1$ be given, and set \index{fractional Brownian motion}
\begin{equation}
K_{H}\left(x,y\right)=\frac{1}{2}\left(\left|x\right|^{2H}+\left|y\right|^{2H}-\left|x-y\right|^{2H}\right),\;\forall x,y\in\mathbb{R}.\label{eq:pdH-1-1}
\end{equation}
It is known that $K_{H}$$\left(\cdot,\cdot\right)$ is the covariance
kernel for \uline{fractional} Brownian motion \cite{AJ12,AJL11,AL08,Aur11}.
The special case $H=\frac{1}{2}$ is Brownian motion; and if $H=\frac{1}{2}$,
then 
\[
K_{\frac{1}{2}}\left(x,y\right)=\left|x\right|\wedge\left|y\right|=\min\left(\left|x\right|,\left|y\right|\right).
\]
See Figure \ref{fig:Hker}. 

Set 
\begin{eqnarray*}
\widetilde{F}_{H}\left(x,y\right) & := & 1-\left|x-y\right|^{2H}\\
F_{H}\left(x\right) & := & 1-\left|x\right|^{2H}
\end{eqnarray*}
and we recover $F\left(x\right)=1-\left|x\right|\left(=F_{2}\right)$
as a special case of $H=\frac{1}{2}$. 

\begin{figure}[H]
\begin{tabular}{c}
\includegraphics[scale=0.4]{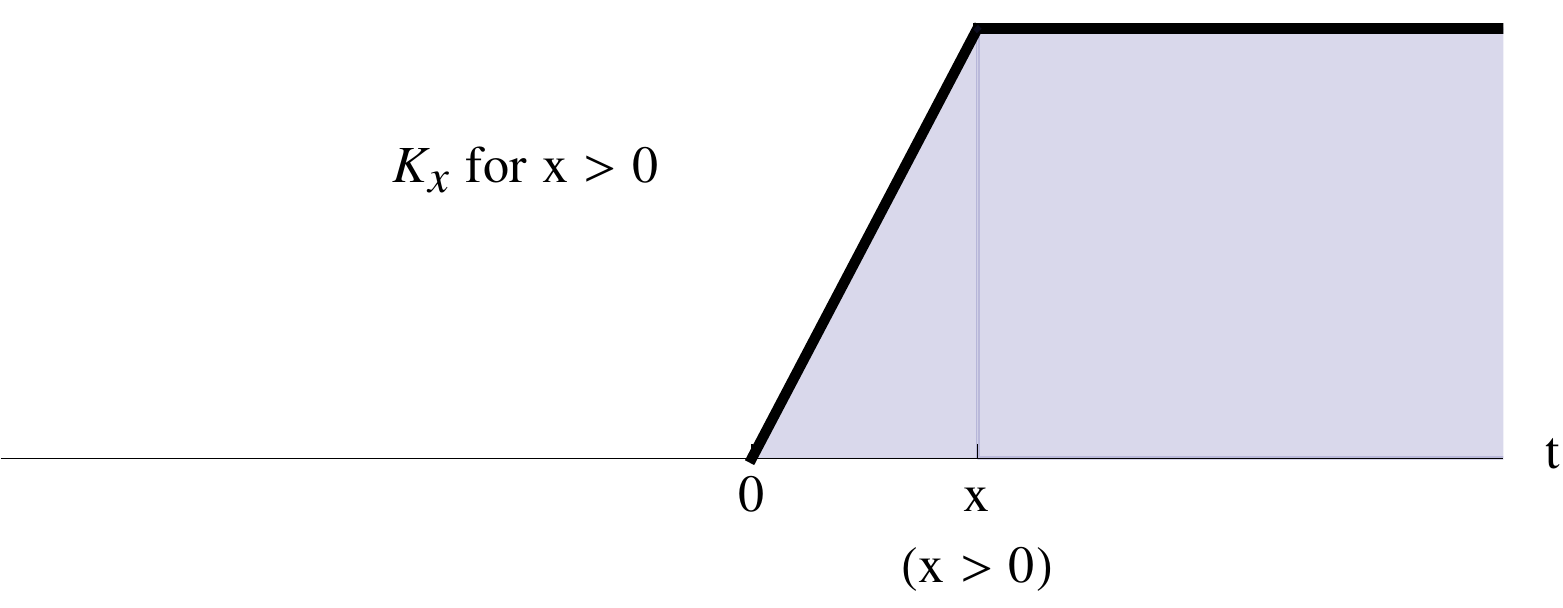}\tabularnewline
\includegraphics[scale=0.4]{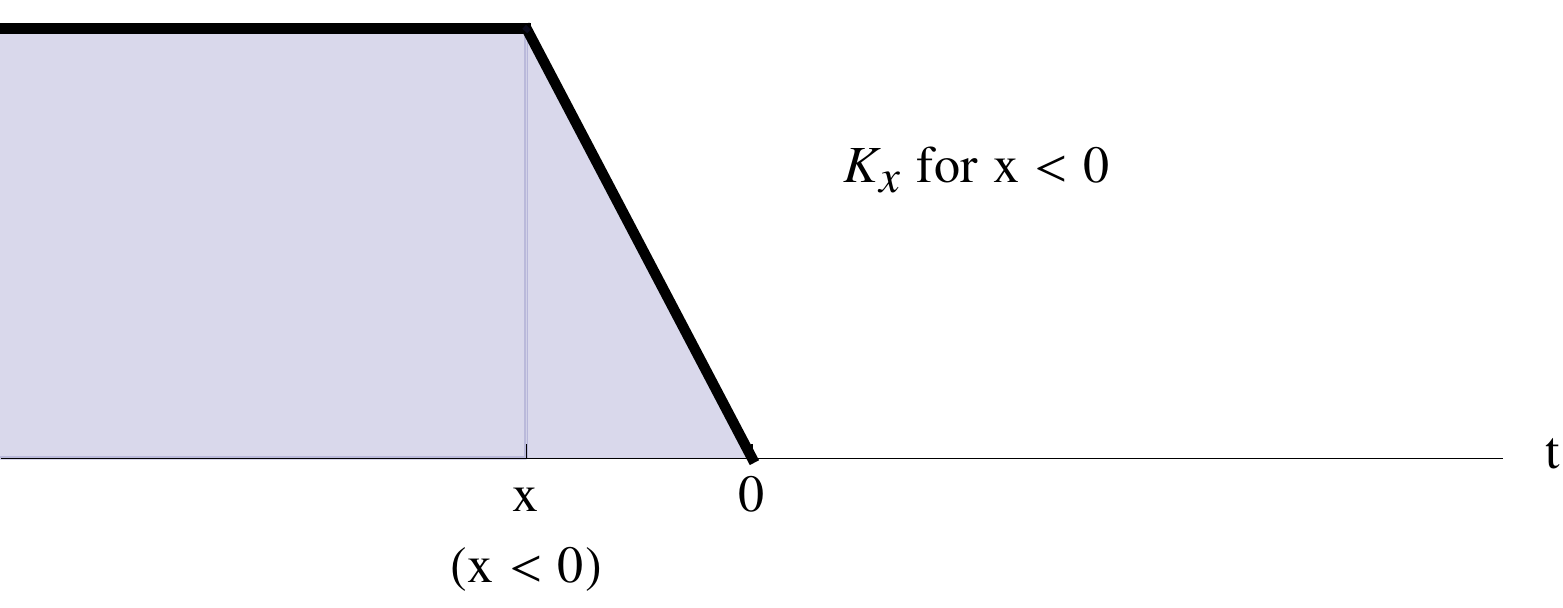}\tabularnewline
\end{tabular}

\protect\caption{\label{fig:Hker}The integral kernel $K_{\frac{1}{2}}\left(x,y\right)=\left|x\right|\wedge\left|y\right|$.}
\end{figure}

Solving for $F_{H}$ in (\ref{eq:pdH-1-1}) ($0<H<1$ fixed), we then
get 
\begin{equation}
F_{H}\left(x-y\right)=\underset{L_{H}}{\underbrace{1-\left|x\right|^{2H}-\left|y\right|^{2H}}}+2K_{H}\left(x,y\right);\label{eq:pdH-1-2}
\end{equation}
and specializing to $x,y\in\left[0,1\right]$; we then get 
\[
F\left(x-y\right)=\underset{L}{\underbrace{1-x-y}}+2\left(x\wedge y\right);
\]
or written differently:
\begin{equation}
K_{F}\left(x,y\right)=L\left(x,y\right)+2E\left(x,y\right)\label{eq:pdH-1-3}
\end{equation}
where $E\left(x,y\right)=x\wedge y$ is the familiar covariance kernel
for Brownian motion.

Introducing the Mercer integral kernels corresponding to (\ref{eq:pdH-1-2}),
we therefore get:
\begin{eqnarray}
\left(T_{F_{H}}\varphi\right)\left(x\right) & = & \int_{0}^{1}\varphi\left(y\right)F_{H}\left(x-y\right)dy\label{eq:pdH-1-5}\\
\left(T_{L_{H}}\varphi\right)\left(x\right) & = & \int_{0}^{1}\varphi\left(y\right)L_{H}\left(x,y\right)dy\label{eq:pdH-1-6}\\
\left(T_{K_{H}}\varphi\right)\left(x\right) & = & \int_{0}^{1}\varphi\left(y\right)K_{H}\left(x,y\right)dy\label{eq:pdH-1-7}
\end{eqnarray}
for all $\varphi\in L^{2}\left(0,1\right)$, and all $x\in\left[0,1\right]$.
Note the special case of (\ref{eq:pdH-1-7}) for $H=\frac{1}{2}$
is 
\[
\left(T_{E}\varphi\right)\left(x\right)=\int_{0}^{1}\varphi\left(y\right)\left(x\wedge y\right)\, dy,\;\varphi\in L^{2}\left(0,1\right),x\in\left[0,1\right].
\]

We have the following lemma for these Mercer operators:
\begin{lem}
Let $0<H<1$, and let $F_{H}$, $L_{H}$ and $K_{H}$ be as in (\ref{eq:pdH-1-2}),
then the corresponding Mercer operators satisfy:
\begin{equation}
T_{F_{H}}=T_{L_{H}}+2T_{K_{H}}.\label{eq:pdH-1-9}
\end{equation}
\end{lem}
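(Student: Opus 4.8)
The statement to prove is the operator identity $T_{F_H}=T_{L_H}+2T_{K_H}$ on $L^2(0,1)$, where the three integral operators are defined in \eqref{eq:pdH-1-5}--\eqref{eq:pdH-1-7}. This is an immediate consequence of the pointwise kernel decomposition \eqref{eq:pdH-1-2}, namely
\begin{equation*}
F_H(x-y)=L_H(x,y)+2K_H(x,y),\qquad x,y\in[0,1],
\end{equation*}
which in turn was obtained simply by solving the defining relation \eqref{eq:pdH-1-1} for $F_H$. So the plan is: first, recall \eqref{eq:pdH-1-2}; second, observe that integration against a fixed $\varphi\in L^2(0,1)$ is a linear operation in the kernel; third, apply it term by term.

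Concretely, for $\varphi\in L^2(0,1)$ and $x\in[0,1]$, I would write
\begin{align*}
\left(T_{F_H}\varphi\right)(x)
&=\int_0^1\varphi(y)\,F_H(x-y)\,dy\\
&=\int_0^1\varphi(y)\bigl(L_H(x,y)+2K_H(x,y)\bigr)\,dy\\
&=\int_0^1\varphi(y)\,L_H(x,y)\,dy+2\int_0^1\varphi(y)\,K_H(x,y)\,dy\\
&=\left(T_{L_H}\varphi\right)(x)+2\left(T_{K_H}\varphi\right)(x),
\end{align*}
where the splitting of the integral in the third line is justified because, for each fixed $x\in[0,1]$, the functions $y\mapsto L_H(x,y)$ and $y\mapsto K_H(x,y)$ are bounded (indeed continuous) on $[0,1]$, hence lie in $L^2(0,1)$, so the integrals converge absolutely and linearity applies; the same observation shows all three operators are well-defined bounded (in fact compact, Mercer) operators on $L^2(0,1)$. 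Since $\varphi$ and $x$ were arbitrary, the operator identity \eqref{eq:pdH-1-9} follows.

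There is essentially no obstacle here: the only thing to be slightly careful about is that the decomposition \eqref{eq:pdH-1-2} is stated for $x,y$ in a subset (we only need $x,y\in[0,1]$ for the Mercer operators on $L^2(0,1)$), and that the three kernels are genuinely integrable against $L^2$ functions on the bounded interval, both of which are routine. One could also phrase the conclusion at the level of kernels directly — the map sending an integral kernel $K(x,y)$ on $[0,1]^2$ to its integral operator is linear — and then \eqref{eq:pdH-1-9} is literally the image of \eqref{eq:pdH-1-2} under this map. If desired, one can append the remark that the analogous identity for the associated RKHS norms (via the squared-norm formula $\|M_\varphi\|^2$ and the relation between $T_F$ and $\mathscr H_F$ from \secref{mercer}) would \emph{not} be additive, since $T_{L_H}$ and $T_{K_H}$ do not commute; but that is outside the present lemma.
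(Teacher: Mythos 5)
Your proof is correct and follows exactly the route the paper intends: the paper's own proof is the one-line remark that the identity is "an easy computation, using (\ref{eq:pdH-1-2}), and (\ref{eq:pdH-1-5})-(\ref{eq:pdH-1-7})," and your write-up simply spells out that computation (kernel decomposition plus linearity of the integral). Nothing is missing.
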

\begin{proof}
This is an easy computation, using (\ref{eq:pdH-1-2}), and (\ref{eq:pdH-1-5})-(\ref{eq:pdH-1-7}).
\end{proof}

\subsection{Ornstein-Uhlenbeck}

The reproducing kernel Hilbert space $\mathscr{H}_{F_{2}}$ in (\ref{eq:RKHS-eg-3})
is used in computations of Ito-integrals of Brownian motion; while
the corresponding RKHS $\mathscr{H}_{F3}$ from (\ref{eq:RKHS-eg-6})-(\ref{eq:RKHS-eg-7})
is used in calculations of stochastic integration with the Ornstein-Uhlenbeck
process.

Motivated by Newton's second law of motion, the Ornstein-Uhlenbeck\index{Ornstein-Uhlenbeck}
velocity process is proposed to model a random external driving force.
In 1D, the process is the solution to the following stochastic differential
equation 
\begin{equation}
dv_{t}=-\gamma v_{t}dt+\beta dB_{t},\:\gamma,\beta>0.\label{eq:ou-1}
\end{equation}
Here, $-\gamma v_{t}$ is the dissipation, $\beta dB_{t}$ denotes
a random fluctuation, and $B_{t}$ is the standard Brownian motion.

Assuming the particle starts at $t=0$. The solution to (\ref{eq:ou-1})
is a Gaussian stochastic process\index{stochastic processes} such
that
\begin{eqnarray*}
\mathbb{E}\left[v_{t}\right] & = & v_{0}e^{-\gamma t}\\
var\left[v_{t}\right] & = & \frac{\beta^{2}}{2\gamma}\left(1-e^{-2\gamma t}\right);
\end{eqnarray*}
with $v_{0}$ being the initial velocity. See Figure \ref{fig:OU}.
Moreover, the process has the following covariance function 
\[
c\left(s,t\right)=\frac{\beta^{2}}{2\gamma}\left(e^{-\gamma\left|t-s\right|}-e^{-\gamma\left|s+t\right|}\right).
\]
If we wait long enough, it turns to a stationary process such that
\[
c\left(s,t\right)\sim\frac{\beta^{2}}{2\gamma}e^{-\gamma\left|t-s\right|}.
\]
This corresponds to the function $F_{3}$. 

\begin{figure}
\includegraphics[scale=0.6]{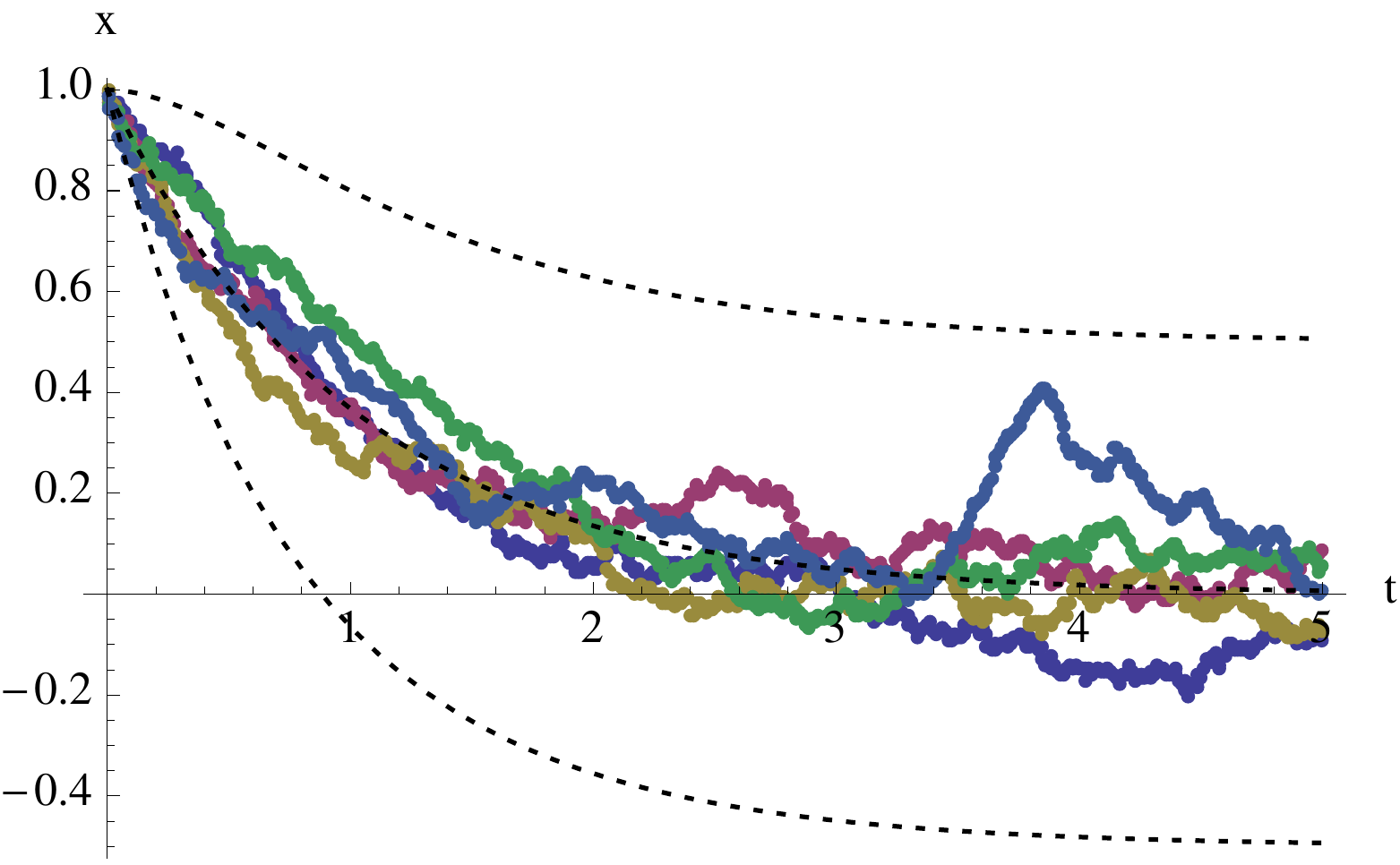}

\protect\caption{\label{fig:OU}Monte-Carlo simulation of Ornstein-Uhlenbeck process
with 5 sample paths. $\beta=\gamma=1$, $v_{0}=1$, $t\in\left[0,5\right]$}
\end{figure}

\subsection{An Overview of the Two Cases: $F_{2}$ and $F_{3}$.}

In Table \ref{tab:F23} below, we give a list of properties for the
particular two cases $F_{2}$ and $F_{3}$ from table \ref{tab:F1-F6}.
Special attention to these examples is merited; first they share a
number of properties with much wider families of locally defined positive
definite functions; and these properties are more transparent in their
simplest form. Secondly, there are important differences between cases
$F_{2}$ and $F_{3}$, and the table serves to highlight both differences
and similarities. A particular feature that is common for the two
is that, when the Mercer operator $T_{F}$ is introduced, then its
inverse $T_{F}^{-1}$ exists as an unbounded positive and selfadjoint
operator in $\mathscr{H}_{F}$. Moreover, in each case, this operator
coincides with the Friedrichs extension of a certain second order
Hermitian semibounded operator (calculated from $D^{\left(F\right)}$),
with dense domain in $\mathscr{H}_{F}$. 

\index{Friedrichs extension}

\index{operator!unbounded}

\index{operator!Mercer}

\index{operator!semibounded}

\section{\label{sec:hdim}Higher Dimensions}

Our function above for $F_{3}$ (sect. \ref{sub:green} and \ref{sub:F3})
admits a natural extension to $\mathbb{R}^{n}$, $n>1$, as follows.

Let $\Omega\subset\mathbb{R}^{n}$ be a subset satisfying $\Omega\neq\phi$,
$\Omega$ open and connected; and assume $\overline{\Omega}$ is compact.
Let $\triangle=\sum_{i=1}^{n}\left(\partial/\partial x_{i}\right)^{2}$
be the usual Laplacian in $n$-variables.
\begin{lem}
Let $F:\Omega-\Omega\rightarrow\mathbb{C}$ be continuous and positive
definite; and let $\mathscr{H}_{F}$ be the corresponding RKHS. In
$\mathscr{H}_{F}$, we set
\begin{align}
L^{\left(F\right)}\left(F_{\varphi}\right) & :=F_{\triangle\varphi},\;\forall\varphi\in C_{c}^{\infty}\left(\Omega\right),\:\mbox{with}\\
dom\bigl(L^{\left(F\right)}\bigr) & =\left\{ F_{\varphi}\:\big|\:\varphi\in C_{c}^{\infty}\left(\Omega\right)\right\} ;
\end{align}
then $L^{\left(F\right)}$ is semibounded in $\mathscr{H}_{F}$, $L^{\left(F\right)}\leq0$
in the order of Hermitian operators.

Let $T_{F}$ be the Mercer operator; then there is a positive constant
$c\:(\:=c_{\left(F,\Omega\right)}>0)$ such that $T_{F}^{-1}$ \textup{is
a selfadjoint extension of the densely defined operator $c\left(I-L^{\left(F\right)}\right)$
in $\mathscr{H}_{F}$. }\end{lem}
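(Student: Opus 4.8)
The statement has two parts: first, that $L^{\left(F\right)}\leq 0$ in the order of Hermitian operators on the dense domain $\{F_\varphi \mid \varphi\in C_c^\infty(\Omega)\}\subset\mathscr{H}_F$; and second, that $T_F^{-1}$ is a selfadjoint extension of $c(I-L^{\left(F\right)})$ for a suitable constant $c=c_{(F,\Omega)}>0$. The plan is to handle semiboundedness first, by a direct integration-by-parts computation, and then to identify $T_F^{-1}$ with a Friedrichs-type extension exactly as was done for $F_2$ and $F_3$ in Sections~\ref{sub:green}--\ref{sub:F3}, using the operator intertwining relation $D_X^{\left(F\right)}T_F = T_F\widetilde{X}$ from Theorem~\ref{thm:mer3} together with the polar-decomposition machinery of Corollary~\ref{cor:mer-2-4} and Corollary~\ref{cor:mer3}.

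For semiboundedness: fix $\varphi\in C_c^\infty(\Omega)$ and compute $\langle L^{\left(F\right)}F_\varphi, F_\varphi\rangle_{\mathscr{H}_F} = \langle F_{\triangle\varphi}, F_\varphi\rangle_{\mathscr{H}_F}$. Using the integral formula \eqref{eq:hi2} for the inner product, this equals $\int_\Omega\int_\Omega \overline{(\triangle\varphi)(x)}\varphi(y)F(x-y)\,dx\,dy$. Since $\varphi$ has compact support in $\Omega$, I can integrate by parts in the $x$-variables twice (no boundary terms appear), moving $\triangle = \sum_i (\partial/\partial x_i)^2$ over; applying the gradient operators $D_i^{\left(F\right)}$ of \eqref{eq:gr-1-3} on each side, this yields $-\sum_{i=1}^n \langle D_i^{\left(F\right)}F_\varphi, D_i^{\left(F\right)}F_\varphi\rangle_{\mathscr{H}_F} = -\sum_{i=1}^n \|D_i^{\left(F\right)}F_\varphi\|^2_{\mathscr{H}_F}\leq 0$. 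This is precisely the $n$-dimensional analogue of the one-variable computation \eqref{eq:gr-1-6}, and it simultaneously shows each $D_i^{\left(F\right)}$ is skew-symmetric and that $L^{\left(F\right)}=\sum_i (D_i^{\left(F\right)})^2$ on the common domain, hence $L^{\left(F\right)}\leq 0$.

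For the identification of $T_F^{-1}$: the Mercer operator $T_F:L^2(\Omega)\to\mathscr{H}_F$ is bounded (since $\overline\Omega$ is compact, conditions~\ref{enu:mer1}--\ref{enu:mer4} hold and Lemma~\ref{lem:mer-1}/Corollary~\ref{cor:mer2} apply), injective on $\mathscr{H}_F\subset L^2(\Omega)$, with dense range in $\mathscr{H}_F$; so $T_F^{-1}$ is a well-defined closed, positive, selfadjoint operator in $\mathscr{H}_F$ by Corollary~\ref{cor:mer-2-4} and the polar decomposition (Corollary~\ref{cor:mer3}). For the relation to $L^{\left(F\right)}$, I would apply Theorem~\ref{thm:mer3} with $X$ ranging over an orthonormal basis of $La(G)=\mathbb{R}^n$, giving $D_i^{\left(F\right)}T_F = T_F\,\partial_i$ (as operators, $\partial_i$ acting in $L^2(\Omega)$ with domain $C_c^\infty(\Omega)$); iterating, $L^{\left(F\right)}T_F = T_F\triangle$ on $C_c^\infty(\Omega)$, i.e.\ $L^{\left(F\right)}F_\varphi = T_F(\triangle\varphi)$. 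Now I must pin down the constant $c$: this comes from the elementary computation of the distributional Laplacian of the Green-type kernel, as in Lemma~\ref{lem:distd} and the $F_3$ computation of Section~\ref{sub:F3}. Concretely, if $F$ arises (up to normalization) as a Green's function so that $(c(I-\triangle))$ applied to $F_x(\cdot)=F(x-\cdot)$ gives a Dirac mass $\delta_x$ plus terms supported on $\partial\Omega$, then for $\varphi\in C_c^\infty(\Omega)$ one gets $c(I-\triangle)T_F\varphi = \varphi$ on $\Omega$ (the boundary terms drop out because $\operatorname{supp}\varphi\Subset\Omega$), hence $T_F^{-1}\supseteq c(I-L^{\left(F\right)})$ on $\{F_\varphi\}$. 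Combined with selfadjointness of $T_F^{-1}$ (already established), this is the assertion; the extension is in fact the Friedrichs extension, matching \eqref{eq:F3-D-ext}.

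\textbf{Main obstacle.} The delicate point is the last step: asserting that the distributional identity $c(I-\triangle)F_x = \delta_x + (\text{boundary-supported distributions})$ holds for a general continuous positive definite $F$ on $\Omega-\Omega$ with $\overline\Omega$ compact, i.e.\ that $F$ really is (a constant multiple of) the Green's function of $c(I-\triangle)$ on $\Omega$ with appropriate boundary conditions. This is transparent for $F_3$ in one dimension but needs the general Green-Gauss-Stokes framework referenced at the end of Section~\ref{sub:F3} (the boundary-measure decomposition $\|\cdot\|_{\mathscr{H}_F}^2 = \text{energy form} + \int_{\partial\Omega}f_n f\,d\mu_{\text{bd}}$). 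The clean way around this is to \emph{define} the relevant second-order operator as $c(I-L^{\left(F\right)})$ intrinsically on $\{F_\varphi\}$ via the semiboundedness computation above, observe it is Hermitian and $\geq c>0$ in $\mathscr{H}_F$, and then invoke $L^{\left(F\right)}T_F=T_F\triangle$ plus the fact that $T_F^{-1}$ is already known selfadjoint to conclude $T_F^{-1}$ is \emph{a} selfadjoint extension; the constant $c$ is then fixed by the normalization $F(0)=1$ and the local computation of $\triangle F$ near the diagonal, which is where the genuine case-by-case work lies and which I would state as holding under the standing hypothesis that $F$ is the Green's function of such an operator, exactly paralleling the hypotheses in Sections~\ref{sub:green}--\ref{sub:F3}.
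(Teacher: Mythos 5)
Your proposal is correct and follows essentially the same route as the paper: the semiboundedness computation via $\left\langle F_{\triangle\varphi},F_{\varphi}\right\rangle _{\mathscr{H}_{F}}=-\sum_{j}\bigl\Vert D_{j}^{\left(F\right)}F_{\varphi}\bigr\Vert_{\mathscr{H}_{F}}^{2}\leq0$ reproduces the identity (\ref{eq:gr-1-6}) established just before the lemma, and the identification of $T_{F}^{-1}$ rests on the Green's-function considerations of sections \ref{sub:green} and \ref{sub:F3}. The paper's own proof is only a two-line pointer to those sections plus the general elliptic theory of Green's functions, so the obstacle you flag---that the distributional identity $c\left(I-\triangle\right)F_{x}=\delta_{x}+(\text{boundary-supported terms})$ is an implicit standing hypothesis rather than a consequence of continuity and positive definiteness alone---is exactly where the paper also leaves the case-by-case work, and your reading of the constant $c$ as fixed by that identity matches the intended argument.
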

\begin{proof}
The ideas for the proof are contained in section \ref{sub:green}
and \ref{sub:F3} above. To get the general conclusions, we may combine
these considerations with the general theory of Green's functions
for elliptic linear PDEs (partial differential equations); see also
\cite{Nel57,JLW69}.
\end{proof}
\index{operator!Mercer}\index{operator!partial differential}

\begin{table}
\begin{tabular}{|c|c|}
\hline 
\begin{minipage}[t]{0.47\columnwidth}%
1. $F_{2}\left(x\right)=1-\left|x\right|$, $\left|x\right|<\frac{1}{2}$

2. Mercer operator 
\[
T_{F_{2}}:L^{2}\left(0,\tfrac{1}{2}\right)\rightarrow L^{2}\left(0,\tfrac{1}{2}\right)
\]

3. $T_{F_{2}}^{-1}$ is unbounded, selfadjoint \index{operator!selfadjoint}
\begin{proof}
Since $T_{F}$ is positive, bounded, and trace class, it follows that
$T_{F_{2}}^{-1}$ is positive, unbounded, and selfadjoint.
\end{proof}
4. $T_{F_{2}}^{-1}=$ Friedrichs extension of 
\[
-\tfrac{1}{2}\bigl(\tfrac{d}{dx}\bigr)^{2}\Big|_{C_{c}^{\infty}\left(0,\frac{1}{2}\right)}
\]
as a selfadjoint operator on $L^{2}\left(0,\frac{1}{2}\right)$. 

\emph{Sketch of proof.} 

Setting 
\[
u\left(x\right)=\int_{0}^{\frac{1}{2}}\varphi\left(y\right)\left(1-\left|x-y\right|\right)dy
\]
then 
\begin{eqnarray*}
u'' & = & -2\varphi\\
 & \Updownarrow\\
u & = & \left(-\tfrac{1}{2}\bigl(\tfrac{d}{dx}\bigr)^{2}\right)^{-1}\varphi
\end{eqnarray*}
and so 
\[
T_{F_{2}}=\left(-\tfrac{1}{2}\bigl(\tfrac{d}{dx}\bigr)^{2}\right)^{-1}.
\]
And we get a selfadjoint extension 
\[
T_{F_{2}}^{-1}\supset-\tfrac{1}{2}\bigl(\tfrac{d}{dx}\bigr)^{2}
\]
in $L^{2}\left(0,\frac{1}{2}\right)$, where the containment refers
to operator graphs.%
\end{minipage} & %
\begin{minipage}[t]{0.47\columnwidth}%
1. $F_{3}\left(x\right)=e^{-\left|x\right|}$, $\left|x\right|<1$

2. Mercer operator 
\[
T_{F_{3}}:L^{2}\left(0,1\right)\rightarrow L^{2}\left(0,1\right)
\]

3. $T_{F_{3}}^{-1}$ is unbounded, selfadjoint
\begin{proof}
Same argument as in the proof for $T_{F_{2}}^{-1}$; also follows
from Mercer's theorem.
\end{proof}
4. $T_{F_{2}}^{-1}=$ Friedrichs extension of 
\[
\tfrac{1}{2}\bigl(I-\bigl(\tfrac{d}{dx}\bigr)^{2}\bigr)\Big|_{C_{c}^{\infty}\left(0,1\right)}
\]
as a selfadjoint operator on $L^{2}\left(0,1\right)$. 

\emph{Sketch of proof.} 

Setting
\[
u\left(x\right)=\int_{0}^{1}\varphi\left(y\right)e^{-\left|x-y\right|}dy
\]
then 
\begin{eqnarray*}
u'' & = & u-2\varphi\\
 & \Updownarrow\\
u & = & \left(\tfrac{1}{2}\bigl(1-\bigl(\tfrac{d}{dx}\bigr)^{2}\bigr)\right)^{-1}\varphi
\end{eqnarray*}
and so 
\[
T_{F_{3}}=\left(\tfrac{1}{2}\left(I-\bigl(\tfrac{d}{dx}\bigr)^{2}\right)\right)^{-1}.
\]
Now, a selfadjoint extension\index{selfadjoint extension} 
\[
T_{F_{3}}^{-1}\supset\tfrac{1}{2}\left(I-\bigl(\tfrac{d}{dx}\bigr)^{2}\right)
\]
in $L^{2}\left(0,1\right)$.%
\end{minipage}\tabularnewline
\hline 
\end{tabular}

\protect\caption{\label{tab:F23}An overview of two cases: $F_{2}$ v.s. $F_{3}$.}
\end{table}

\chapter{\label{chap:CompareFK}Comparing Different RKHSs $\mathscr{H}_{F}$
and $\mathscr{H}_{K}$}

Before we turn to comparison of pairs of RKHSs, we will prove a lemma
which accounts for two uniformity principles for positive definite
continuous functions defined on open subsets in locally compact groups:\index{RKHS}\index{positive definite}
\begin{lem}
\label{lem:F-bd}Let $G$ be a locally compact group, and let $\Omega\subset G$
be an open and connected subset, $\Omega\neq\phi$. Let $F:\Omega^{-1}\Omega\rightarrow\mathbb{C}$
be a continuous positive definite function satisfying $F\left(e\right)=1$,
where $e\in G$ is the unit for the group operation. 
\begin{enumerate}
\item Then $F$ extends by limit to a continuous p.d. function
\begin{equation}
F^{\left(ex\right)}:\overline{\Omega}^{-1}\overline{\Omega}\longrightarrow\mathbb{C}.
\end{equation}

\item Moreover, the two p.d. functions $F$ and $F^{\left(ex\right)}$ have
the same RKHS consisting of continuous functions $\xi$ on $\overline{\Omega}$
such that, $\exists\,0<A<\infty$, $A=A_{\xi}$, s.t.
\begin{equation}
\left|\int_{\overline{\Omega}}\overline{\xi\left(x\right)}\varphi\left(x\right)dx\right|^{2}\leq A\left\Vert F_{\varphi}\right\Vert _{\mathscr{H}_{F}}^{2},\;\forall\varphi\in C_{c}\left(\Omega\right)\label{eq:lcg-1-1}
\end{equation}
where $dx=$ Haar measure on $G$, \index{measure!Haar} 
\[
F_{\varphi}\left(\cdot\right)=\int_{\Omega}\varphi\left(y\right)F\left(y^{-1}\cdot\right)dy,\;\varphi\in C_{c}\left(\Omega\right);
\]
and $\left\Vert F_{\varphi}\right\Vert _{\mathscr{H}_{F}}$ denotes
the $\mathscr{H}_{F}$-norm of $F_{\varphi}$. 
\item Every $\xi\in\mathscr{H}_{F}$ satisfies the following \uline{a
priori} estimate:
\begin{equation}
\left|\xi\left(x\right)-\xi\left(y\right)\right|^{2}\leq2\left\Vert \xi\right\Vert _{\mathscr{H}_{F}}^{2}\left(1-\Re\left\{ F\left(x^{-1}y\right)\right\} \right)
\end{equation}
for all $\xi\in\mathscr{H}_{F}$, and all $x,y\in\overline{\Omega}$. 
\end{enumerate}
\end{lem}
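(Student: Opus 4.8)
The plan is to treat the three assertions in sequence, since each builds on the previous one, and to reduce everything to the reproducing property in $\mathscr{H}_{F}$ together with standard facts about RKHSs of continuous positive definite functions.

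First I would prove the \emph{a priori} estimate in part (3), because it is the engine that drives the rest and it is the only genuinely quantitative step. Fix $x,y\in\Omega$ and consider the kernel vectors $\xi_{x}=F\left(x^{-1}\cdot\right)$ and $\xi_{y}=F\left(y^{-1}\cdot\right)$ in $\mathscr{H}_{F}$. By the reproducing property, for any $\xi\in\mathscr{H}_{F}$ we have $\xi(x)-\xi(y)=\left\langle \xi_{x}-\xi_{y},\xi\right\rangle _{\mathscr{H}_{F}}$, so Cauchy--Schwarz gives $\left|\xi(x)-\xi(y)\right|^{2}\leq\left\Vert \xi_{x}-\xi_{y}\right\Vert _{\mathscr{H}_{F}}^{2}\left\Vert \xi\right\Vert _{\mathscr{H}_{F}}^{2}$. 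Then I expand
\begin{equation}
\left\Vert \xi_{x}-\xi_{y}\right\Vert _{\mathscr{H}_{F}}^{2}=\left\langle \xi_{x},\xi_{x}\right\rangle +\left\langle \xi_{y},\xi_{y}\right\rangle -2\Re\left\langle \xi_{x},\xi_{y}\right\rangle =F(e)+F(e)-2\Re F\left(x^{-1}y\right)=2\left(1-\Re F\left(x^{-1}y\right)\right),
\end{equation}
using $F(e)=1$ and $\left\langle \xi_{x},\xi_{y}\right\rangle _{\mathscr{H}_{F}}=F\left(x^{-1}y\right)$. Combining these two displays yields the estimate on $\Omega\times\Omega$; extending it to $\overline{\Omega}\times\overline{\Omega}$ requires only continuity, which is supplied by part (1).

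For part (1), the extension of $F$ to $\overline{\Omega}^{-1}\overline{\Omega}$ by limits, I would argue that the matrix inequality $\left(F(x_{i}^{-1}x_{j})\right)\geq0$ together with $F(e)=1$ forces $\left|F(g)\right|\leq1$ and, more to the point, forces $F$ to be \emph{uniformly} continuous on $\Omega^{-1}\Omega$: indeed the $2\times2$ positivity gives $\left|F(g)-F(h)\right|^{2}\leq 2\left(1-\Re F(g^{-1}h)\right)\cdot\text{(bounded factor)}$ in a neighbourhood of the diagonal, so continuity of $F$ near $e$ upgrades to uniform continuity, and a uniformly continuous function on a set extends continuously to its closure. (Alternatively one may invoke the GNS/cyclic-representation picture already used in the excerpt: $F(g)=\left\langle v,U(g)v\right\rangle$ locally, and strong continuity of $U$ gives the extension.) Positive definiteness of $F^{(ex)}$ then passes to the closure because the defining inequalities are closed conditions.

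For part (2), that $F$ and $F^{(ex)}$ share the same RKHS with the stated characterization, I would appeal directly to Theorem~\ref{thm:HF} (equivalently Lemma~\ref{lem:lcg-bdd}), which already characterizes membership in $\mathscr{H}_{F}$ by exactly the boundedness condition (\ref{eq:lcg-1-1}); since the kernels $K_{F}$ and $K_{F^{(ex)}}$ agree on $\Omega^{-1}\Omega$ and the completion (\ref{eq:H1})--(\ref{eq:ip-discrete}) depends only on the values of $F$ on $\Omega^{-1}\Omega$, the two completions coincide as Hilbert spaces, and by part (3) every element extends continuously to $\overline{\Omega}$, so the common RKHS is realized as a space of continuous functions on $\overline{\Omega}$. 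The main obstacle is really just part (1): one must be careful that ``extends by limit'' is justified, i.e.\ that $F$ is uniformly continuous rather than merely continuous, and that the boundary values of elements of $\mathscr{H}_{F}$ are well defined and still satisfy the reproducing-type estimates --- this is where the uniform estimate of part (3), applied on the diagonal, does the work and closes the circle.
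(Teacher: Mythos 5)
Your proposal is correct and follows exactly the route the paper intends: the paper omits the details but points to \secref{mercer}, where the same Cauchy--Schwarz computation with the kernel vectors $\xi_{x}$ yields precisely the estimate $\left|\xi\left(x\right)-\xi\left(y\right)\right|^{2}\leq2\left\Vert \xi\right\Vert _{\mathscr{H}_{F}}^{2}\left(F\left(e\right)-\Re\left\{ F\left(x^{-1}y\right)\right\} \right)$ (eq.~(\ref{eq:mer21})), and parts (1)--(2) are the standard consequences via uniform continuity and the boundedness characterization of Theorem~\ref{thm:HF}/Lemma~\ref{lem:lcg-bdd}. No gaps.
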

\begin{proof}
The arguments in the proof only use standard tools from the theory
of reproducing kernel Hilbert spaces. We covered a special case in
\secref{mercer}, we shall omit the details here.
\end{proof}
Now add the further assumption on the subset $\Omega\subset G$ from
the lemma: Assume in addition that $\Omega$ has compact closure,
so $\overline{\Omega}$ is compact. Let $\partial\Omega=\overline{\Omega}\backslash\Omega$
be the boundary. With this assumption, we get automatically the inclusion
\[
C\left(\overline{\Omega}\right)\subset L^{2}\left(\overline{\Omega}\right)
\]
since continuous functions on $\overline{\Omega}$ are automatically
uniformly bounded, and Haar measure on $G$ has the property that
$\left|\overline{\Omega}\right|<\infty$.
\begin{defn}
Let $\left(\Omega,F\right)$ be as above, i.e., 
\[
F:\overline{\Omega}^{-1}\overline{\Omega}\longrightarrow\mathbb{C}
\]
is continuous and positive definite\index{positive definite}. Assume
$G$ is a Lie group.\index{group!Lie} Recall, extension by limit
to $\overline{\Omega}$ is automatic by the limit. Let $\beta\in\mathscr{M}\left(\partial\Omega\right)$
be a positive finite measure on the boundary $\partial\Omega$. We
say that $\beta$ is a boundary measure \index{measure!boundary}iff
\begin{equation}
\left\langle T_{F}f,\xi\right\rangle _{\mathscr{H}_{F}}-\int_{\Omega}\overline{f\left(x\right)}\xi\left(x\right)dx=\int_{\partial\Omega}\overline{\left(T_{F}f\right)_{n}\left(\sigma\right)}\xi\left(\sigma\right)d\beta\left(\sigma\right)
\end{equation}
holds for all $f\in C\left(\overline{\Omega}\right)$, $\forall\xi\in\mathscr{H}_{F}$,
where $\left(\cdot\right)_{n}=$ normal derivative computed on the
boundary $\partial\Omega$ of $\Omega$.\end{defn}
\begin{rem}
For the example $G=\mathbb{R}$, $\overline{\Omega}=\left[0,1\right]$,
$\partial\Omega=\left\{ 0,1\right\} $, and $F=F_{3}$ on $\left[0,1\right]$,
where 
\[
F\left(x\right)=e^{-\left|x\right|},\;\forall x\in\left[-1,1\right],
\]
the boundary measure is
\begin{equation}
\beta=\frac{1}{2}\left(\delta_{0}+\delta_{1}\right).
\end{equation}

\end{rem}
Let $G$ be a locally compact group with left-Haar measure, and let
$\Omega\subset G$ be a non-empty subset satisfying: $\Omega$ is
open and connected; and is of finite Haar measure; write $\left|\Omega\right|<\infty$.
The Hilbert space $L^{2}\left(\Omega\right)=L^{2}\left(\Omega,dx\right)$
is the usual $L^{2}$-space of measurable functions $f$ on $\Omega$
such that 
\begin{equation}
\left\Vert f\right\Vert _{L^{2}\left(\Omega\right)}^{2}:=\int_{\Omega}\left|f\left(x\right)\right|^{2}dx<\infty.\label{eq:FK-1}
\end{equation}

\begin{defn}
\label{def:FK-1}Let $F$ and $K$ be two continuous and positive
definite functions defined on 
\begin{equation}
\Omega^{-1}\cdot\Omega:=\left\{ x^{-1}y\:\Big|\: x,y\in\Omega\right\} .\label{eq:FK-2}
\end{equation}
We say that $K\ll F$ iff there is a finite constant $A$ such that
for all finite subsets $\left\{ x_{i}\right\} _{i=1}^{N}\subset\Omega$,
and all systems $\left\{ c_{i}\right\} _{i=1}^{N}\subset\mathbb{C}$,
we have:
\begin{equation}
\sum_{i}\sum_{j}\overline{c_{i}}c_{j}K\left(x_{i}^{-1}x_{j}\right)\leq A\sum_{i}\sum_{j}\overline{c_{i}}c_{j}F\left(x_{i}^{-1}x_{j}\right).\label{eq:FK-3}
\end{equation}
\end{defn}
\begin{lem}
\label{lem:FK-1}Let $F$ and $K$ be as above; then $K\ll F$ if
and only if there is a finite constant $A\in\mathbb{R}_{+}$ such
that
\begin{equation}
\int_{\Omega}\int_{\Omega}\overline{\varphi\left(x\right)}\varphi\left(y\right)K\left(x^{-1}y\right)dxdy\leq A\int_{\Omega}\int_{\Omega}\overline{\varphi\left(x\right)}\varphi\left(y\right)F\left(x^{-1}y\right)dxdy\label{eq:FK-4}
\end{equation}
holds for all $\varphi\in C_{c}\left(\Omega\right)$. The constant
$A$ in (\ref{eq:FK-3}) and (\ref{eq:FK-4}) will be the same.\end{lem}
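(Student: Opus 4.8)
The plan is to prove Lemma \ref{lem:FK-1} by the standard discretization/approximation argument that translates between the ``finite sum'' form of domination (\ref{eq:FK-3}) and the ``integral'' form (\ref{eq:FK-4}). The two implications are treated separately but share one mechanism: the passage between a double integral $\iint \overline{\varphi(x)}\varphi(y)G(x^{-1}y)\,dx\,dy$ and Riemann-type sums $\sum_i\sum_j \overline{c_i}c_j G(x_i^{-1}x_j)$, where $G$ is any of the continuous p.d.\ functions in play.

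For the direction (\ref{eq:FK-3})$\Rightarrow$(\ref{eq:FK-4}): fix $\varphi\in C_c(\Omega)$. Since $\Omega$ is open and $\varphi$ has compact support inside $\Omega$, and since $G\ni g\mapsto G(g)$ (for $G=F$ and $G=K$) is continuous, I would approximate the double integral by Riemann sums associated to a fine partition of $\mathrm{supp}(\varphi)$. Concretely, choose finite partitions $\{E_i^{(n)}\}$ of $\mathrm{supp}(\varphi)$ with sample points $x_i^{(n)}\in E_i^{(n)}$, set $c_i^{(n)} := \varphi(x_i^{(n)})\,|E_i^{(n)}|$ (Haar measure of the cell), and use uniform continuity of $\varphi$ together with continuity of $K$ and $F$ on the compact set $(\overline{\mathrm{supp}\varphi})^{-1}\cdot\overline{\mathrm{supp}\varphi}$ to conclude
\[
\sum_i\sum_j \overline{c_i^{(n)}}c_j^{(n)} G(x_i^{(n)-1}x_j^{(n)}) \longrightarrow \iint \overline{\varphi(x)}\varphi(y)G(x^{-1}y)\,dx\,dy
\]
as $n\to\infty$, for $G\in\{K,F\}$. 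Applying (\ref{eq:FK-3}) to each finite system $\{c_i^{(n)}\}$, $\{x_i^{(n)}\}$ and passing to the limit gives (\ref{eq:FK-4}) with the same constant $A$. For the reverse direction (\ref{eq:FK-4})$\Rightarrow$(\ref{eq:FK-3}): given a finite system $\{x_i\}\subset\Omega$, $\{c_i\}\subset\mathbb{C}$, I would approximate the point masses by mollifiers. Using Lemma \ref{lem:dense} (or its locally compact analogue via an approximate identity supported in small neighborhoods of $e$), replace $c_i\delta_{x_i}$ by $\varphi_{n} := \sum_i c_i \psi_{n,i}$ where $\psi_{n,i}$ is an approximate identity concentrating at $x_i$ with support shrinking to $\{x_i\}$ and $\int\psi_{n,i}=1$; here one needs the neighborhoods small enough that the supports stay inside $\Omega$ and remain pairwise disjoint, which is possible since the $x_i$ are finitely many points in the open set $\Omega$. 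Then continuity of $K$ and $F$ yields
\[
\iint \overline{\varphi_n(x)}\varphi_n(y)G(x^{-1}y)\,dx\,dy \longrightarrow \sum_i\sum_j \overline{c_i}c_j\,G(x_i^{-1}x_j),\quad G\in\{K,F\},
\]
and (\ref{eq:FK-4}) applied to $\varphi_n$ gives (\ref{eq:FK-3}) in the limit, again with the same $A$.

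The main obstacle, and the place where care is genuinely needed, is the uniform-continuity bookkeeping in the non-Abelian setting: one must verify that the Riemann-sum convergence and the mollifier convergence are uniform over the relevant compact sets, that left-Haar measure behaves correctly under the change of variables implicit in writing $G(x^{-1}y)$, and that the approximating cells/neighborhoods can always be chosen inside $\Omega$ (so that $F_\varphi$ and $K_\varphi$ are defined). None of this is deep, but it is where the proof could go wrong if done carelessly; the key quantitative input making the constant $A$ survive both limits is simply that $A$ does not depend on $n$ in either approximation. I would also remark that this lemma is precisely the $\ll$-comparison analogue, for locally compact groups, of the equivalence (\ref{eq:bdd})$\Leftrightarrow$(\ref{eq:bdd2}) in Theorem \ref{thm:HF}, and of Lemma \ref{lem:Fdef}, so the write-up can be kept brief by citing the same standard RKHS techniques.
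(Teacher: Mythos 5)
Your proof is correct and is essentially the same argument the paper intends: the paper's own proof of Lemma \ref{lem:FK-1} consists only of the remark ``Easy; use an approximate identity in $G$,'' and your mollifier argument for (\ref{eq:FK-4})$\Rightarrow$(\ref{eq:FK-3}) together with the Riemann-sum (or equivalently approximate-identity) argument for the converse is precisely the standard expansion of that remark. Your write-up simply supplies the details the authors omitted, with the same constant $A$ surviving both limits.
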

\begin{proof}
Easy; use an approximate identity\index{approximate identity} in
$G$, see e.g., \cite{Rud73,Ru90}.
\end{proof}
Setting 
\begin{equation}
F_{\varphi}\left(x\right)=\int_{\Omega}\varphi\left(y\right)F\left(y^{-1}x\right)dy,\label{eq:FK-5}
\end{equation}
and similarly for $K_{\varphi}=\int_{\Omega}\varphi\left(y\right)K\left(y^{-1}\cdot\right)dy$,
we note that $K\ll F$ if and only if:
\begin{equation}
\left\Vert K_{\varphi}\right\Vert _{\mathscr{H}_{K}}\leq\sqrt{A}\left\Vert F_{\varphi}\right\Vert _{\mathscr{H}_{F}},\;\forall\varphi\in C_{c}\left(\Omega\right).\label{eq:FK-6}
\end{equation}
Further, note that, if $G$ is also a \uline{Lie group}, then (\ref{eq:FK-6})
follows from checking it only for all $\varphi\in C_{c}^{\infty}\left(G\right)$.
See Lemma \ref{lem:li-meas}.
\begin{thm}
\label{thm:FK-1}Let $\Omega$, $F$ and $K$ be as in Definition
\ref{def:FK-1}, i.e., both continuous and p.d. on the set $\Omega^{-1}\cdot\Omega$
in (\ref{eq:FK-2}); then the following two conditions are equivalent:
\begin{enumerate}[label=(\roman{enumi})]
\item  $K\ll F$, and
\item $\mathscr{H}_{K}$ \uline{is} a closed subspace of $\mathscr{H}_{F}$.
\end{enumerate}
\end{thm}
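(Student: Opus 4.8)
\textbf{Proof proposal for Theorem \ref{thm:FK-1}.}

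The plan is to exploit the concrete integral model of the RKHSs developed in \secref{mercer} and Lemma \ref{lem:FK-1}, and then to invoke an abstract comparison principle for reproducing kernels. First I would dispose of the easy direction, (ii)$\Rightarrow$(i). If $\mathscr{H}_{K}$ is a closed subspace of $\mathscr{H}_{F}$, then in particular the inclusion map $\iota:\mathscr{H}_{K}\hookrightarrow\mathscr{H}_{F}$ is a (norm-decreasing, indeed isometric onto its range) bounded operator, so $\left\Vert K_{\varphi}\right\Vert_{\mathscr{H}_{K}}=\left\Vert\iota K_{\varphi}\right\Vert_{\mathscr{H}_{F}}$; but I must first check that $K_{\varphi}$, viewed inside $\mathscr{H}_{F}$ via the inclusion, really is the function $K_{\varphi}$ on $\Omega$, which follows because both RKHSs consist of continuous functions on $\overline{\Omega}$ (Lemma \ref{lem:F-bd}) and point-evaluations are continuous, so the inclusion is the identity on underlying functions. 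Then $\left\Vert K_{\varphi}\right\Vert_{\mathscr{H}_{K}}\le\left\Vert K_{\varphi}\right\Vert_{\mathscr{H}_{F}}$ — wait, that is not quite what I want; rather I want a bound of $\left\Vert K_{\varphi}\right\Vert_{\mathscr{H}_{K}}$ by $\left\Vert F_{\varphi}\right\Vert_{\mathscr{H}_{F}}$. This is where one uses that $\varphi\mapsto F_{\varphi}$ and $\varphi\mapsto K_{\varphi}$ are the ``same'' map followed by different completions: precisely, I would use the operator $W:\mathscr{H}_{F}\to\mathscr{H}_{K}$ densely defined by $W(F_{\varphi})=K_{\varphi}$, show it is well-defined and bounded iff $K\ll F$ (this is essentially the content of (\ref{eq:FK-6})), and identify its boundedness with the closed-subspace statement.

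For the substantive direction (i)$\Rightarrow$(ii), the core is the classical Aronszajn ordering theorem for reproducing kernels (cf. \cite{Aro50}): if $K_{i}$ ($i=1,2$) are positive definite kernels on a set $S$ and $A K_{2}-K_{1}$ is positive definite for some finite $A>0$, then $\mathscr{H}_{K_{1}}\subseteq\mathscr{H}_{K_{2}}$ with $\left\Vert f\right\Vert_{\mathscr{H}_{K_{2}}}\le\sqrt{A}\,\left\Vert f\right\Vert_{\mathscr{H}_{K_{1}}}$ — but this gives a \emph{continuous} inclusion, not a \emph{closed} subspace. So the real work is to upgrade ``continuously included subspace'' to ``closed subspace.'' Here is the key observation I would push: by Lemma \ref{lem:FK-1}, $K\ll F$ is equivalent to the estimate (\ref{eq:FK-4}), and (\ref{eq:FK-4}) in turn says exactly that the map $F_{\varphi}\mapsto K_{\varphi}$ extends to a bounded operator $W:\mathscr{H}_{F}\to\mathscr{H}_{K}$ with $\left\Vert W\right\Vert\le\sqrt{A}$. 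I would then compute the adjoint $W^{*}:\mathscr{H}_{K}\to\mathscr{H}_{F}$ on reproducing-kernel vectors: using $\left\langle W(F_{\varphi}),K_{y}\right\rangle_{\mathscr{H}_{K}}=K_{\varphi}(y)=\int_{\Omega}\varphi(x)K(y^{-1}x)dx$ and comparing with $\left\langle F_{\varphi},W^{*}K_{y}\right\rangle_{\mathscr{H}_{F}}=(W^{*}K_{y})_{\text{as a function}}$ evaluated against $\varphi$, one finds $W^{*}$ maps $K_{y}\mapsto$ (the function $x\mapsto K(x^{-1}y)$ regarded as an element of $\mathscr{H}_{F}$). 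The crucial point is that $W^{*}K_{y}$, as a continuous function on $\Omega$, equals $K_{y}$ itself — i.e. $W^{*}$ is the identity on underlying functions — so $W^{*}$ is injective and its range, which is dense in $\mathscr{H}_{K}$ hmm, I mean: $W^{*}$ realizes $\mathscr{H}_{K}$ isometrically-up-to-bounded-distortion as a subset of $\mathscr{H}_{F}$, and the operator $P:=W^{*}(WW^{*})^{-1}W$ — once I check $WW^{*}$ is invertible, which needs $W$ surjective — would be the orthogonal projection of $\mathscr{H}_{F}$ onto the copy of $\mathscr{H}_{K}$.

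I expect the main obstacle to be precisely this last passage: ``continuously included'' does not in general imply ``closed subspace'' for RKHSs, and the statement as written asserts the strong conclusion, so either the intended meaning of ``closed subspace'' is ``closed subspace in its own norm, continuously embedded'' (the standard abuse in the RKHS literature, à la de Branges--Rovnyak), or there is a hidden symmetry forcing the inclusion to be closed. I would therefore first try to prove the clean statement $\mathscr{H}_{K}\subseteq\mathscr{H}_{F}$ with $\left\Vert\cdot\right\Vert_{\mathscr{H}_{F}}\le\sqrt{A}\left\Vert\cdot\right\Vert_{\mathscr{H}_{K}}$ via Aronszajn's theorem (the hypothesis $K\ll F$ with constant $A$ is exactly $AK_{F}-K_{K}\ge0$ as a kernel on $\Omega\times\Omega$, reading $K_{F}(x,y)=F(x^{-1}y)$, $K_{K}(x,y)=K(x^{-1}y)$), note that the containment is automatically isometric when in addition $F\ll K$, and interpret ``closed subspace'' in that sense; this matches how the companion ordering $F_{1}\ll F_{2}$ and Lemma \ref{lem:li-meas} are used elsewhere in the paper. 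If instead genuine Hilbert-space closedness is wanted, I would restrict to the Mercer setting of \secref{mercer} (so $\Omega$ bounded, $T_{F}$ trace class) where $\mathscr{H}_{F}\subseteq L^{2}(\Omega)$ and diagonalization of $T_{F}$, $T_{K}$ lets one compare the two norms spectrally; combined with the explicit formula $\left\Vert\xi\right\Vert_{\mathscr{H}_{F}}=\left\Vert(T_{F}^{*}T_{F})^{-1/4}\xi\right\Vert_{L^{2}(\Omega)}$ from Corollary \ref{cor:mer3}, the domination $K\ll F$ becomes an operator inequality between $(T_{F}^{*}T_{F})^{-1/2}$ and $(T_{K}^{*}T_{K})^{-1/2}$, from which the closed-range assertion can be extracted. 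I would present the argument in the first (Aronszajn) form as the main line, with the remark that the Mercer picture gives the quantitative refinement.
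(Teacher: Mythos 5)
Your setup for (i)$\Rightarrow$(ii) is exactly the paper's: the densely defined map $l\left(F_{\varphi}\right):=K_{\varphi}$, bounded with norm $\leq\sqrt{A}$ by Lemma \ref{lem:FK-1} and (\ref{eq:FK-6}), together with the computation that the adjoint $l^{*}:\mathscr{H}_{K}\rightarrow\mathscr{H}_{F}$ acts as the identity on the underlying continuous functions (the paper's (\ref{eq:FK-9}), proved by pairing against $F_{\varphi}$ and letting $\varphi$ run through approximations of Dirac masses). The genuine gap is that you stop there. Your candidate projection $P=W^{*}\left(WW^{*}\right)^{-1}W$ requires $WW^{*}$ to be boundedly invertible, i.e.\ $W^{*}$ to be bounded below; since $W^{*}$ \emph{is} the inclusion $\mathscr{H}_{K}\hookrightarrow\mathscr{H}_{F}$, being bounded below is equivalent to the range being closed -- which is the very thing to be proved, so that route is circular. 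Your first fallback (read ``closed subspace'' as contractive containment \`a la de Branges--Rovnyak) changes the statement, and the Mercer fallback is not carried out and would in any case not cover the general $\Omega\subset G$ of Definition \ref{def:FK-1}. The paper instead finishes directly: given $\left\{ \xi_{n}\right\} \subset\mathscr{H}_{K}$ Cauchy in $\left\Vert \cdot\right\Vert _{\mathscr{H}_{F}}$, it argues from (\ref{eq:FK-8})--(\ref{eq:FK-9}) that the sequence is Cauchy in $\left\Vert \cdot\right\Vert _{\mathscr{H}_{K}}$, hence converges to some $\chi\in\mathscr{H}_{K}$; and then $\chi\in\mathscr{H}_{F}$ follows from the membership criterion of Theorem \ref{thm:HF} (equivalently Lemma \ref{lem:lcg-bdd}) via the estimate $\left|\int_{\Omega}\overline{\chi\left(x\right)}\varphi\left(x\right)dx\right|\leq\left\Vert \chi\right\Vert _{\mathscr{H}_{K}}\left\Vert K_{\varphi}\right\Vert _{\mathscr{H}_{K}}\leq\sqrt{A}\left\Vert \chi\right\Vert _{\mathscr{H}_{K}}\left\Vert F_{\varphi}\right\Vert _{\mathscr{H}_{F}}$. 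Your instinct that continuous containment does not formally imply closedness is the right thing to worry about, but a proof has to confront that step rather than defer it.

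The converse (ii)$\Rightarrow$(i) is also left unfinished in your write-up (the sentence beginning ``Then $\left\Vert K_{\varphi}\right\Vert _{\mathscr{H}_{K}}\le\dots$ --- wait'' never recovers). The paper's argument is short: if $\mathscr{H}_{K}$ is a closed subspace of $\mathscr{H}_{F}$, let $P_{K}$ be the orthogonal projection of $\mathscr{H}_{F}$ onto it; since $\left\langle F_{\varphi}-K_{\varphi},\xi\right\rangle _{\mathscr{H}_{F}}=\int_{\Omega}\overline{\varphi}\,\xi-\int_{\Omega}\overline{\varphi}\,\xi=0$ for every $\xi\in\mathscr{H}_{K}$ (both pairings reduce to the same integral by the reproducing property), one gets $P_{K}\left(F_{\varphi}\right)=K_{\varphi}$, and the contractivity of $P_{K}$ then dominates $\left\Vert K_{\varphi}\right\Vert $ by $\left\Vert F_{\varphi}\right\Vert _{\mathscr{H}_{F}}$, which is (\ref{eq:FK-6}), hence $K\ll F$. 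In summary: you found the paper's central device ($l$ and $l^{*}=\mathrm{inclusion}$), but neither implication is actually closed in the proposal.
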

\begin{proof}
$\Downarrow$ Assume $K\ll F$, we then define a linear operator $l:\mathscr{H}_{F}\rightarrow\mathscr{H}_{K}$,
setting 
\begin{equation}
l\left(F_{\varphi}\right):=K_{\varphi},\;\forall\varphi\in C_{c}\left(\Omega\right).\label{eq:FK-7}
\end{equation}
We now use (\ref{eq:FK-6}), plus the fact that $\mathscr{H}_{F}$
is the $\left\Vert \cdot\right\Vert _{\mathscr{H}_{F}}$-completion
of 
\[
\left\{ F_{\varphi}\:\Big|\:\varphi\in C_{c}\left(\Omega\right)\right\} ;
\]
and similarly for $\mathscr{H}_{K}$.

As a result of (\ref{eq:FK-7}) and (\ref{eq:FK-6}), we get a canonical
extension of $l$ to a bounded linear operator, also denoted $l:\mathscr{H}_{F}\rightarrow\mathscr{H}_{K}$,
and 
\begin{equation}
\left\Vert l\left(\xi\right)\right\Vert _{\mathscr{H}_{K}}\leq\sqrt{A}\left\Vert \xi\right\Vert _{\mathscr{H}_{F}},\mbox{ for all }\xi\in\mathscr{H}_{K}.\label{eq:FK-8}
\end{equation}

\index{operator!bounded}

We interrupt the proof to give a lemma:\end{proof}
\begin{lem}
Let $F$, $K$, $\Omega$ be as above. Assume $K\ll F$, and let $l:\mathscr{H}_{F}\rightarrow\mathscr{H}_{K}$
be the bounded operator introduced in (\ref{eq:FK-7}) and (\ref{eq:FK-8}).
Then the adjoint operator $l^{*}:\mathscr{H}_{K}\rightarrow\mathscr{H}_{F}$
satisfies:
\begin{equation}
\left(l^{*}\left(\xi\right)\right)\left(x\right)=\xi\left(x\right),\mbox{ for all }\xi\in\mathscr{H}_{K},x\in\Omega.\label{eq:FK-9}
\end{equation}

\begin{proof}
By the definite of $l^{*}$, as the adjoint of a bounded linear operator
between Hilbert spaces, we get 
\begin{equation}
\left\Vert l^{*}\right\Vert _{\mathscr{H}_{K}\rightarrow\mathscr{H}_{F}}=\left\Vert l\right\Vert _{\mathscr{H}_{F}\rightarrow\mathscr{H}_{K}}\leq\sqrt{A}\label{eq:FK-10}
\end{equation}
for the respective operator norms; and 
\begin{equation}
\left\langle l^{*}\left(\xi\right),F_{\varphi}\right\rangle _{\mathscr{H}_{F}}=\left\langle \xi,K_{\varphi}\right\rangle _{\mathscr{H}_{K}},\;\forall\varphi\in C_{c}\left(\Omega\right).\label{eq:FK-11}
\end{equation}

Using now the reproducing property in the two RKHSs, we get:
\begin{eqnarray*}
\left(\mbox{LHS}\right)_{\left(\ref{eq:FK-11}\right)} & = & \int_{\Omega}\overline{l^{*}\left(\xi\right)\left(x\right)}\varphi\left(x\right)dx,\mbox{ and}\\
\left(\mbox{RHS}\right)_{\left(\ref{eq:FK-11}\right)} & = & \int_{\Omega}\overline{\xi\left(x\right)}\varphi\left(x\right)dx,\;\mbox{ for all }\varphi\in C_{c}\left(\Omega\right).
\end{eqnarray*}
Taking now approximations in $C_{c}\left(\Omega\right)$ to the Dirac
masses $\{\delta_{x}\:|\: x\in\Omega\}$, the desired conclusion (\ref{eq:FK-9})
follows.
\end{proof}
\end{lem}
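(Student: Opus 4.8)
The plan is to read off the pointwise action of $l^{*}$ directly from the defining property of the Hilbert-space adjoint, combined with the integrated form of the reproducing property in the two reproducing kernel Hilbert spaces. First I would fix an arbitrary $\xi\in\mathscr{H}_{K}$ and $\varphi\in C_{c}\left(\Omega\right)$, and start from the adjoint relation $\left\langle l^{*}\xi,F_{\varphi}\right\rangle _{\mathscr{H}_{F}}=\left\langle \xi,lF_{\varphi}\right\rangle _{\mathscr{H}_{K}}$. Since $lF_{\varphi}=K_{\varphi}$ by the very definition of $l$ in (\ref{eq:FK-7}), this reduces to $\left\langle l^{*}\xi,F_{\varphi}\right\rangle _{\mathscr{H}_{F}}=\left\langle \xi,K_{\varphi}\right\rangle _{\mathscr{H}_{K}}$, which is precisely (\ref{eq:FK-11}). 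This uses nothing more than the boundedness of $l$ established in (\ref{eq:FK-8}), which guarantees that $l^{*}$ exists as a bounded operator $\mathscr{H}_{K}\to\mathscr{H}_{F}$ and that $l^{*}\xi$ is a genuine element of $\mathscr{H}_{F}$.

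The key step is then to evaluate both sides as integrals against $\varphi$. By the integrated reproducing property recorded in Lemma \ref{lem:lcg-bdd}, for every $\eta\in\mathscr{H}_{F}$ one has $\left\langle \eta,F_{\varphi}\right\rangle _{\mathscr{H}_{F}}=\int_{\Omega}\overline{\eta\left(x\right)}\varphi\left(x\right)dx$; applying this with $\eta=l^{*}\xi$ gives $\left\langle l^{*}\xi,F_{\varphi}\right\rangle _{\mathscr{H}_{F}}=\int_{\Omega}\overline{\left(l^{*}\xi\right)\left(x\right)}\varphi\left(x\right)dx$. The identical formula in $\mathscr{H}_{K}$ (again Lemma \ref{lem:lcg-bdd}, now for the pair $\left(K,\Omega\right)$) yields $\left\langle \xi,K_{\varphi}\right\rangle _{\mathscr{H}_{K}}=\int_{\Omega}\overline{\xi\left(x\right)}\varphi\left(x\right)dx$. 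Equating the two expressions, I obtain $\int_{\Omega}\overline{\left(l^{*}\xi\right)\left(x\right)}\varphi\left(x\right)dx=\int_{\Omega}\overline{\xi\left(x\right)}\varphi\left(x\right)dx$ for every $\varphi\in C_{c}\left(\Omega\right)$.

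To finish I would upgrade this weak equality to the pointwise identity $\left(l^{*}\xi\right)\left(x\right)=\xi\left(x\right)$ on all of $\Omega$. Both $l^{*}\xi$ and $\xi$ admit continuous representatives on $\Omega$ — for $\mathscr{H}_{K}$ this is part of the structure theory (Lemma \ref{lem:F-bd}), and $l^{*}\xi$ lies in $\mathscr{H}_{F}$ and hence is likewise continuous. Fixing $x_{0}\in\Omega$ and taking a sequence of test functions $\varphi\in C_{c}\left(\Omega\right)$ approximating the Dirac mass $\delta_{x_{0}}$, the two integrals converge to $\overline{\left(l^{*}\xi\right)\left(x_{0}\right)}$ and $\overline{\xi\left(x_{0}\right)}$ respectively, by continuity at $x_{0}$; since $x_{0}$ was arbitrary the claimed identity (\ref{eq:FK-9}) follows.

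The only genuine subtlety — and the step I expect to need the most care — is this last passage from equality-against-all-test-functions to equality pointwise: it is legitimate precisely because the elements of $\mathscr{H}_{F}$ and $\mathscr{H}_{K}$ are honest continuous functions on $\Omega$ rather than $L^{2}$ equivalence classes, so that pointwise values are unambiguous and the approximate-identity limit is meaningful. Everything else is bookkeeping with the adjoint relation and the two instances of Lemma \ref{lem:lcg-bdd}; once the continuity of the representatives is invoked, the approximate-identity argument is entirely routine.
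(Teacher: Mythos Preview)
Your proposal is correct and follows essentially the same route as the paper's proof: the adjoint relation (\ref{eq:FK-11}), conversion of both sides to integrals via the integrated reproducing property, and passage to pointwise equality by approximating Dirac masses in $C_{c}(\Omega)$. Your write-up is in fact a bit more explicit than the paper's, in that you spell out the continuity of the representatives (via Lemma~\ref{lem:F-bd}) needed to justify the final approximate-identity step.
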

\begin{proof}[Proof of Theorem \ref{thm:FK-1} resumed ]
 Assume $K\ll F$, the lemma proves that $\mathscr{H}_{K}$ identifies
with a linear space of continuous functions $\xi$ on $\Omega$, and
if $\xi\in\mathscr{H}_{K}$, then it is also in $\mathscr{H}_{F}$.

We claim that this is a closed subspace in $\mathscr{H}_{F}$ relative
to the $\mathscr{H}_{F}$-norm. 

\uline{Step 1}. Let $\left\{ \xi_{n}\right\} \subset\mathscr{H}_{K}$
satisfying 
\[
\lim_{n,m\rightarrow\infty}\left\Vert \xi_{n}-\xi_{m}\right\Vert _{\mathscr{H}_{F}}=0.
\]
By (\ref{eq:FK-8}) and (\ref{eq:FK-9}), the lemma; we get
\[
\lim_{n,m\rightarrow\infty}\left\Vert \xi_{n}-\xi_{m}\right\Vert _{\mathscr{H}_{K}}=0.
\]

\uline{Step 2}. Since $\mathscr{H}_{K}$ is complete, we get $\chi\in\mathscr{H}_{K}$
such that 
\begin{equation}
\lim_{n\rightarrow\infty}\left\Vert \xi_{n}-\chi\right\Vert _{\mathscr{H}_{K}}=0.\label{eq:FK-12}
\end{equation}

\uline{Step 3}. We claim that this $\mathscr{H}_{K}$-limit $\chi$
also defines a unique element in $\mathscr{H}_{F}$, and it is therefore
the $\mathscr{H}_{F}$-limit. 

We have for all $\varphi\in C_{c}\left(\Omega\right)$:
\begin{eqnarray*}
\left|\int_{\Omega}\overline{\chi\left(x\right)}\varphi\left(x\right)dx\right| & \leq & \left\Vert \chi\right\Vert _{\mathscr{H}_{K}}\left\Vert K_{\varphi}\right\Vert _{\mathscr{H}_{K}}\\
 & \leq & \left\Vert \chi\right\Vert _{\mathscr{H}_{K}}\sqrt{A}\left\Vert F_{\varphi}\right\Vert _{\mathscr{H}_{F}};
\end{eqnarray*}
and so $\chi\in\mathscr{H}_{F}$.

We now turn to the converse implication of Theorem \ref{thm:FK-1}:

$\Uparrow$ Assume $F$ and $K$ are as in the statement of the theorem;
and that $\mathscr{H}_{K}$ is a close subspace in $\mathscr{H}_{F}$
via identification of the respective continuous functions on $\Omega$.
We then prove that $K\ll F$. 

Now let $P_{K}$ denote the orthogonal projection\index{projection}
of $\mathscr{H}_{F}$ onto the closed subspace $\mathscr{H}_{K}$.
We claim that 
\begin{equation}
P_{K}\left(F_{\varphi}\right)=K_{\varphi},\;\forall\varphi\in C_{c}\left(\Omega\right).\label{eq:FK-13}
\end{equation}
Using the uniqueness of the projection $P_{K}$, we need to verify
that $F_{\varphi}-K_{\varphi}\in\mathscr{H}_{F}\ominus\mathscr{H}_{K}$;
i.e., that 
\begin{equation}
\left\langle F_{\varphi}-K_{\varphi},\xi_{K}\right\rangle _{\mathscr{H}_{F}}=0,\;\mbox{for all }\xi_{K}\in\mathscr{H}_{K}.\label{eq:FK-14}
\end{equation}
But since $\mathscr{H}_{K}\subset\mathscr{H}_{F}$, we have 
\[
\text{LHS}_{\left(\ref{eq:FK-14}\right)}=\int_{\Omega}\overline{\varphi\left(x\right)}\xi_{K}\left(x\right)dx-\int_{\Omega}\overline{\varphi\left(x\right)}\xi_{K}\left(x\right)dx=0,
\]
for all $\varphi\in C_{c}\left(\Omega\right)$. This proves (\ref{eq:FK-13}).

To verify $K\ll F$, we use the criterion (\ref{eq:FK-6}) from Lemma
\ref{lem:FK-1}. Indeed, consider $K_{\varphi}\in\mathscr{H}_{K}$.
Since $\mathscr{H}_{K}\subset\mathscr{H}_{F}$, we get 
\[
l\left(F_{\varphi}\right)=P_{K}\left(F_{\varphi}\right)=K_{\varphi},\;\mbox{and}
\]
\[
\left\Vert K_{\varphi}\right\Vert _{\mathscr{H}_{K}}=\left\Vert l\left(F_{\varphi}\right)\right\Vert _{\mathscr{H}_{K}}\leq\sqrt{A}\left\Vert F_{\varphi}\right\Vert _{\mathscr{H}_{F}}
\]
which is the desired estimate (\ref{eq:FK-6}).
\end{proof}

\section{Applications}

Below we give an application of Theorem \ref{thm:FK-1} to the deficiency-index
problem, and to the computation of the deficiency spaces; see also
Lemma \ref{lem:exp-1}, and Lemma \ref{lem:def1}.

As above, we will consider two given continuous p.d. functions $F$
and $K$, but the group now is $G=\mathbb{R}$: We pick $a,b\in\mathbb{R}_{+}$,
$0<a<b$, such that $F$ is defined on $\left(-b,b\right)$, and $K$
on $\left(-a,a\right)$. The corresponding two RKHSs will be denoted
$\mathscr{H}_{F}$ and $\mathscr{H}_{K}$. We say that $K\ll F$ iff
there is a finite constant $A$ such that\index{RKHS}
\begin{equation}
\left\Vert K_{\varphi}\right\Vert _{\mathscr{H}_{K}}^{2}\leq A\left\Vert F_{\varphi}\right\Vert _{\mathscr{H}_{F}}^{2}\label{eq:FK-app-1}
\end{equation}
for all $\varphi\in C_{c}\left(0,a\right)$. Now this is a slight
adaptation of our Definition \ref{def:FK-1} above, but this modification
will be needed; for example in computing the indices of two p.d. functions
$F_{2}$ and $F_{3}$ from Table \ref{tab:F1-F6}; see also  \secref{F2F3}
below. In fact, a simple direct checking shows that
\begin{equation}
F_{2}\ll F_{3}\quad\left(\mbox{see Table }\ref{tab:F1-F6}\right),\label{eq:FK-app-2}
\end{equation}
and we now take $a=\frac{1}{2}$, $b=1$. 

Here, $F_{2}\left(x\right)=1-\left|x\right|$ in $\left|x\right|<\frac{1}{2}$;
and $F_{3}\left(x\right)=e^{-\left|x\right|}$ in $\left|x\right|<1$;
see \figref{FK-1}.

\begin{figure}[H]
\begin{tabular}{cc}
\includegraphics[scale=0.5]{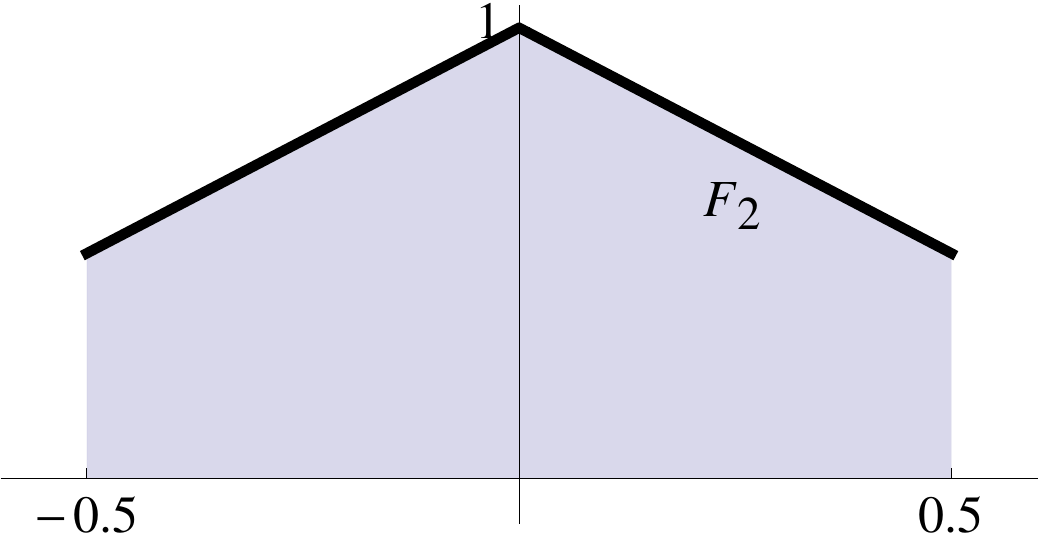} & \includegraphics[scale=0.5]{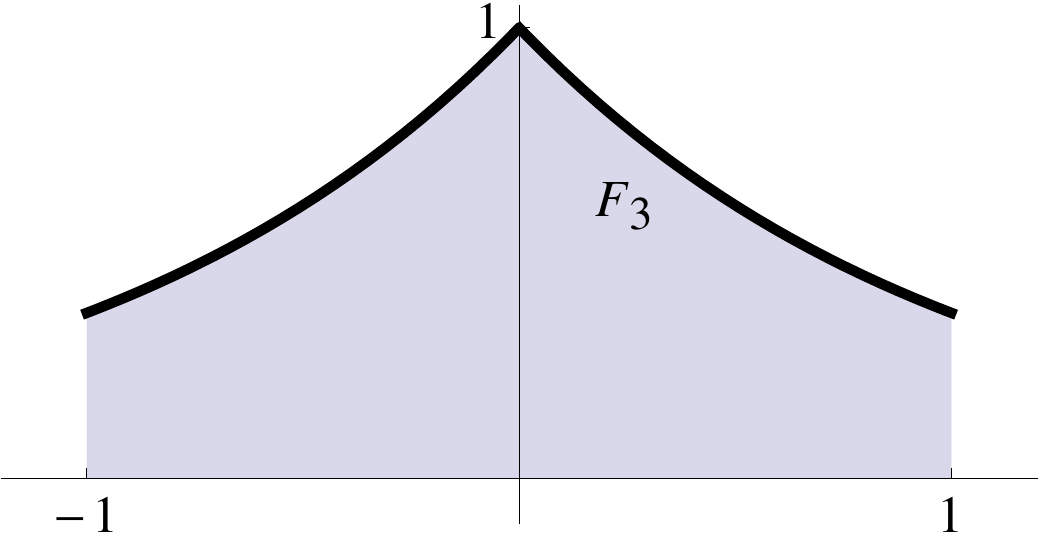}\tabularnewline
\end{tabular}

\protect\caption{\label{fig:FK-1}The examples of  $F_{2}$ and $F_{3}$.}

\end{figure}

We wish to compare the respective skew-Hermitian operators, $D^{\left(F\right)}$
in $\mathscr{H}_{F}$; and $D^{\left(F\right)}$ in $\mathscr{H}_{K}$;
see  \subref{euclid}, i.e., \index{operator!skew-Hermitian} 
\begin{eqnarray}
D^{\left(F\right)}\left(F_{\varphi}\right) & = & F_{\varphi'},\;\forall\varphi\in C_{c}^{\infty}\left(0,b\right);\;\mbox{and}\label{eq:FK-app-3}\\
D^{\left(K\right)}\left(K_{\varphi}\right) & = & K_{\varphi'},\;\forall\varphi\in C_{c}^{\infty}\left(0,a\right).\label{eq:FK-app-4}
\end{eqnarray}

Let $z\in\mathbb{C}$; and we set 
\begin{eqnarray}
DEF_{F}\left(z\right) & = & \left\{ \xi\in dom\bigl(\bigl(D^{\left(F\right)}\bigr)^{*}\bigr)\:\Big|\:\bigl(D^{\left(F\right)}\bigr)^{*}\xi=z\xi\right\} ,\;\mbox{and}\label{eq:FK-app-5}\\
DEF_{K}\left(z\right) & = & \left\{ \xi\in dom\bigl(\bigl(D^{\left(K\right)}\bigr)^{*}\bigr)\:\Big|\:\bigl(D^{\left(K\right)}\bigr)^{*}\xi=z\xi\right\} .\label{eq:FK-app-6}
\end{eqnarray}

\begin{thm}
\label{thm:FK-app-1}Let two continuous p.d. functions $F$ and $K$
be specified as above, and suppose 
\begin{equation}
K\ll F;\label{eq:FK-app-7}
\end{equation}
then 
\begin{equation}
DEF_{K}\left(z\right)=DEF_{F}\left(z\right)\Big|_{\left(0,a\right)}\label{eq:FK-app-8}
\end{equation}
i.e., restriction to the smaller interval.\end{thm}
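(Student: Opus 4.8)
The plan is to unpack the hypothesis $K\ll F$ using Theorem \ref{thm:FK-1}, which gives us that $\mathscr{H}_{K}$ sits isometrically (up to equivalence of norms on the overlap) as a closed subspace of $\mathscr{H}_{F}\big|_{(0,a)}$, with $P_{K}F_{\varphi}=K_{\varphi}$ for all $\varphi\in C_{c}^{\infty}(0,a)$, where $P_{K}$ is the orthogonal projection. The key objects are the skew-Hermitian operators $D^{(F)}$ on $\mathscr{H}_{F}$ and $D^{(K)}$ on $\mathscr{H}_{K}$, both given by $F_{\varphi}\mapsto F_{\varphi'}$, respectively $K_{\varphi}\mapsto K_{\varphi'}$, on domains indexed by $C_{c}^{\infty}$ of the relevant interval. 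The target identity (\ref{eq:FK-app-8}) says that the generalized eigenvectors of $(D^{(K)})^{*}$ at eigenvalue $z$ are exactly the restrictions to $(0,a)$ of the generalized eigenvectors of $(D^{(F)})^{*}$ at $z$.

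First I would translate both sides into weak-solution language, following the computation already used in the proof of Theorem \ref{thm:Eigenspaces-for-the-adjoint} and in Lemma \ref{lem:exp-1}. Namely, $\xi\in DEF_{K}(z)$ iff $\xi\in\mathscr{H}_{K}$ and $\int_{0}^{a}\psi'(y)\xi(y)\,dy=\int_{0}^{a}z\psi(y)\xi(y)\,dy$ for all $\psi\in C_{c}^{\infty}(0,a)$, i.e. iff $\xi$ is a (hence classical, $C^{1}$) solution of $-\xi'=z\xi$ on $(0,a)$ that additionally lies in $\mathscr{H}_{K}$; likewise $\eta\in DEF_{F}(z)$ iff $\eta$ solves $-\eta'=z\eta$ on $(0,b)$ and lies in $\mathscr{H}_{F}$. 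So each deficiency space is at most one-dimensional, spanned by $e^{-zy}$ when that exponential happens to lie in the respective RKHS. The content of the theorem is therefore the \emph{equivalence}: $e_{-z}\big|_{(0,a)}\in\mathscr{H}_{K}\iff e_{-z}\big|_{(0,b)}\in\mathscr{H}_{F}$, together with the matching of the (one-dimensional) spaces under restriction.

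The main work is this equivalence of membership. For the implication $DEF_{F}(z)\big|_{(0,a)}\subseteq DEF_{K}(z)$: if $e_{-z}\in\mathscr{H}_{F}$ on $(0,b)$, I would use Theorem \ref{thm:FK-1}/the adjoint lemma to show its restriction to $(0,a)$ lies in $\mathscr{H}_{K}$. Here the natural tool is the bounded operator $l^{*}:\mathscr{H}_{K}\to\mathscr{H}_{F}$ with $(l^{*}\xi)(x)=\xi(x)$; but I actually need the reverse direction, so instead I would characterize $\mathscr{H}_{K}$-membership via the estimate (\ref{eq:lcg-3}) of Lemma \ref{lem:lcg-bdd}: $e_{-z}\big|_{(0,a)}\in\mathscr{H}_{K}$ iff $\big|\int_{0}^{a}\overline{e_{-z}(x)}\varphi(x)\,dx\big|^{2}\le C\|K_{\varphi}\|_{\mathscr{H}_{K}}^{2}$ for all $\varphi\in C_{c}^{\infty}(0,a)$, and similarly for $\mathscr{H}_{F}$ on $(0,a)$ (note: using $\varphi$ supported in $(0,a)$, the left-hand integral is the same for both). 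Since $K\ll F$ gives $\|K_{\varphi}\|_{\mathscr{H}_{K}}^{2}\le A\|F_{\varphi}\|_{\mathscr{H}_{F}}^{2}$ for $\varphi\in C_{c}^{\infty}(0,a)$, a bound in terms of $\|F_{\varphi}\|$ immediately yields a bound in terms of $\|K_{\varphi}\|$ — wait, the inequality runs the wrong way for one direction, so I must be careful: $K\ll F$ makes $\|K_{\varphi}\|$ \emph{smaller}, hence the $\mathscr{H}_{F}$-estimate (i.e. $e_{-z}\in\mathscr{H}_{F}$) does \emph{not} a priori give the $\mathscr{H}_{K}$-estimate. The correct route for $\mathscr{H}_{F}\Rightarrow\mathscr{H}_{K}$ is rather via Theorem \ref{thm:FK-1}: $\mathscr{H}_{K}\subseteq\mathscr{H}_{F}$ as function spaces on $(0,a)$, and one checks that a weak solution of $-\xi'=z\xi$ on $(0,a)$ which extends to an element of $\mathscr{H}_{F}$ and already is the restriction of something in $\mathscr{H}_{F}$ must lie in the closed subspace $\mathscr{H}_{K}$ — this uses that $\mathscr{H}_{K}$ is exactly the closure of $\{K_{\varphi}\}$ and that $DEF_{F}(z)$, being one-dimensional and $D$-related, projects into $DEF_{K}(z)$ under $P_{K}$; concretely $P_{K}$ commutes appropriately with the relevant operators because of (\ref{eq:FK-13}). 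For the converse $DEF_{K}(z)\subseteq DEF_{F}(z)\big|_{(0,a)}$: if $e_{-z}\big|_{(0,a)}\in\mathscr{H}_{K}$, apply the $\mathscr{H}_{K}$-estimate (\ref{eq:lcg-3}) and the hypothesis $K\ll F$ to deduce the $\mathscr{H}_{F}$-estimate restricted to $\varphi$ supported in $(0,a)$, giving that $e_{-z}\big|_{(0,a)}$ defines a bounded functional against $\{F_{\varphi}:\varphi\in C_{c}^{\infty}(0,a)\}$; then extend this to a bounded functional on all of $\mathscr{H}_{F}$ (using density of $\{F_{\varphi}:\varphi\in C_{c}^{\infty}(0,b)\}$ and continuity), obtaining an element $\eta\in\mathscr{H}_{F}$ whose restriction to $(0,a)$ is $e_{-z}$; uniqueness of analytic continuation of the weak-solution ODE forces $\eta=e_{-z}\big|_{(0,b)}$, hence $e_{-z}\in\mathscr{H}_{F}$ and $\eta\in DEF_{F}(z)$. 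The hard part will be this last extension/continuation argument — making rigorous that a bounded functional against the $(0,a)$-generators extends to $\mathscr{H}_{F}$ and that its Riesz representative is genuinely the global exponential rather than some other function agreeing with $e_{-z}$ only on $(0,a)$. I would handle it by invoking that elements of $\mathscr{H}_{F}$ are continuous (indeed real-analytic here, by Corollary following Theorem \ref{thm:R^n-spect} when $\mu$ is compactly supported, or directly) and satisfy the weak ODE $-\eta'=z\eta$ throughout $(0,b)$ once it holds on the dense set of test functions supported near any point — but in fact one only needs it on $(0,a)$ plus the observation that $DEF_{F}(z)$ is at most one-dimensional with known generator, so $\eta$ must be a scalar multiple of $e_{-z}\big|_{(0,b)}$ and the scalar is pinned down by matching on $(0,a)$. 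Finally, equality of dimensions and the explicit matching of generators $e_{-z}\big|_{(0,a)}\leftrightarrow e_{-z}\big|_{(0,b)}$ complete (\ref{eq:FK-app-8}).
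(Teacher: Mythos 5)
Your overall strategy coincides with the paper's: characterize $DEF_{F}(z)$ and $DEF_{K}(z)$ as weak solutions of $-\xi'=z\xi$ lying in the respective RKHSs (hence at most one-dimensional, spanned by $x\mapsto e^{-zx}$), and transfer membership between $\mathscr{H}_{K}$ and $\mathscr{H}_{F}$ through the closed-subspace relation of Theorem \ref{thm:FK-1}. Your handling of the inclusion $DEF_{F}(z)\big|_{(0,a)}\subseteq DEF_{K}(z)$ via the projection $P_{K}$ (equivalently the operator $l$) is exactly what the paper does, and you correctly notice that the naive estimate runs the wrong way there.

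The weak point is the converse inclusion, and although you have correctly located it, your proposed resolution is circular. From $e^{-zx}\big|_{(0,a)}\in\mathscr{H}_{K}$ together with $K\ll F$ you obtain the bound $\bigl|\int_{0}^{a}\overline{e^{-zx}}\varphi(x)dx\bigr|^{2}\leq CA\left\Vert F_{\varphi}\right\Vert _{\mathscr{H}_{F}}^{2}$ only for $\varphi\in C_{c}^{\infty}(0,a)$; the Riesz representative $\eta\in\mathscr{H}_{F}$ of any extension of this functional is therefore constrained only on $(0,a)$, it satisfies the weak ODE only against test functions supported in $(0,a)$, and nothing yet places $\eta$ in $\mathrm{dom}\bigl(\bigl(D^{(F)}\bigr)^{*}\bigr)$, let alone in $DEF_{F}(z)$. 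Your appeal to ``$DEF_{F}(z)$ is one-dimensional with known generator, so $\eta$ must be a multiple of the global exponential'' presupposes $\eta\in DEF_{F}(z)$, which is precisely what has to be proved; and since distinct elements of $\mathscr{H}_{F}$ can agree on $(0,a)$, no unique-continuation principle rescues the identification. To be fair, the paper's own proof disposes of this direction with a bare appeal to Theorem \ref{thm:FK-1} (``hence $DEF_{K}(z)\subseteq DEF_{F}(z)$''), which is no more detailed; so you have essentially reproduced the paper's argument including its thinnest step, but as written your patch does not close that gap.
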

\begin{proof}
Since (\ref{eq:FK-app-7}) is assumed, it follows from Theorem \ref{thm:FK-1},
that $\mathscr{H}_{K}$ is a subspace of $\mathscr{H}_{F}$. 

If $\varphi\in C_{c}^{\infty}\left(0,b\right)$, and $\xi\in dom\bigl(\bigl(D^{\left(F\right)}\bigr)^{*}\bigr)$,
then 
\[
\left\langle \bigl(D^{\left(F\right)}\bigr)^{*}\xi,F_{\varphi}\right\rangle _{\mathscr{H}_{F}}=\left\langle \xi,F_{\varphi'}\right\rangle _{\mathscr{H}_{F}}=\int_{0}^{b}\overline{\xi\left(x\right)}\varphi'\left(x\right)dx;
\]
and it follows that functions $\xi$ in $DEF_{F}\left(z\right)$ must
be multiples of 
\begin{equation}
\left(0,b\right)\ni x\longmapsto e_{z}\left(x\right)=e^{-zx}.\label{eq:FK-app-9}
\end{equation}
Hence, by Theorem \ref{thm:FK-1}, we get 
\[
DEF_{K}\left(z\right)\subseteq DEF_{F}\left(z\right),
\]
and by (\ref{eq:FK-app-9}), we see that(\ref{eq:FK-app-8}) must
hold. 

Conversely, if $DEF_{F}\left(z\right)\neq0$, then $l\left(DEF_{F}\left(z\right)\right)\neq0$,
and its restriction to $\left(0,a\right)$ is contained in $DEF_{K}\left(z\right)$.
The conclusion in the theorem follows.\end{proof}
\begin{rem}
The spaces $DEF_{\left(F\right)}\left(z\right)$, $z\in\mathbb{C}$,
are also discussed in Theorems \ref{thm:Eigenspaces-for-the-adjoint}-\ref{cor:eigen}.\end{rem}
\begin{example}[Application]
 Consider the two functions $F_{2}$ and $F_{3}$ in Table \ref{tab:F1-F6}.
Both of the operators $D^{\left(F_{i}\right)}$, $i=2,3$, have deficiency
indices\index{deficiency indices} $\left(1,1\right)$. \end{example}
\begin{proof}
One easily checks that $F_{2}\ll F_{3}$. And it is also easy to check
directly that $D^{\left(F_{2}\right)}$ has indices $\left(1,1\right)$.
Hence, by (\ref{eq:FK-app-8}) in the theorem, it follows that $D^{\left(F_{3}\right)}$
also must have indices $\left(1,1\right)$. (The latter conclusion
is not as easy to verify by direct means!)
\end{proof}

\section{Radially Symmetric Positive Definite Functions}

Among other subclasses of positive definite\index{positive definite}
functions we have radially symmetric p.d. functions. If a given p.d.
function happens to be radially symmetric, then there are a number
of simplifications available, and the analysis in higher dimension
often simplifies. This is to a large extend due to theorems of I.
J. Schöenberg\index{Schöenberg} and D. V. Widder. Below we sketch
two highpoints, but we omit details and application to interpolation
and to geometry. These themes are in the literature, see e.g. \cite{Sch38,ScWh53,Sch64,Wid41,WeWi75}.
\begin{rem}
In some cases, the analysis in one dimension yields insight into the
possibilities in $\mathbb{R}^{k}$, $k>1$. This leads for example
for functions $F$ on $\mathbb{R}^{k}$ which are radial, i.e., of
the form $F\left(x\right)=\Phi\bigl(\left\Vert x\right\Vert ^{2}\bigr)$,
where $\left\Vert x\right\Vert ^{2}=\sum_{i=1}^{k}x_{i}^{2}$. 
\end{rem}
A function $q$ on $\mathbb{R}_{+}$, $q:\mathbb{R}_{+}\rightarrow\mathbb{R}$,
is said to be \emph{completely monotone} iff $q\in C\left(\left[0,\infty\right)\right)\cap C^{\infty}\left(\left(0,\infty\right)\right)$
and 
\begin{equation}
\left(-1\right)^{n}q^{\left(n\right)}\left(r\right)\geq0,\; r\in\mathbb{R}_{+},n\in\mathbb{N}_{0}.\label{eq:rad-1}
\end{equation}

\begin{example}
~
\begin{eqnarray*}
q\left(r\right) & = & e^{-\alpha r},\qquad\alpha\geq0;\\
q\left(r\right) & = & \frac{\alpha}{r^{1-\alpha}},\qquad\alpha\leq1;\\
q\left(r\right) & = & \frac{1}{\left(r+\alpha^{2}\right)^{\beta}},\;\alpha>0,\beta\geq0.
\end{eqnarray*}
\end{example}
\begin{thm}[Schöenberg (1938)]
 A function $q:\mathbb{R}_{+}\rightarrow\mathbb{R}$ is completely
monotone iff the corresponding function $F_{q}\left(x\right)=q\bigl(\left\Vert x\right\Vert ^{2}\bigr)$
is positive definite and radial on $\mathbb{R}^{k}$ for all $k\in\mathbb{N}$. \end{thm}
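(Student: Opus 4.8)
The plan is to prove Schoenberg's 1938 theorem in the standard two-directional form: completely monotone $q$ on $\mathbb{R}_+$ corresponds exactly to functions $F_q(x) = q(\|x\|^2)$ that are positive definite and radial on every $\mathbb{R}^k$. The natural engine is the Bernstein--Hausdorff--Widder theorem, which identifies completely monotone functions with Laplace transforms of positive measures, combined with the fact that Gaussians are positive definite on $\mathbb{R}^k$ for all $k$. I would state Bernstein's theorem as the one external input I am willing to quote (it is classical; references such as Widder are already cited in the excerpt).

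For the implication ``$q$ completely monotone $\Rightarrow F_q$ p.d.\ radial on all $\mathbb{R}^k$'': first I would invoke Bernstein's theorem to write $q(r) = \int_0^\infty e^{-tr}\, d\nu(t)$ for a positive Borel measure $\nu$ on $[0,\infty)$. Then for fixed $k$ and any $x \in \mathbb{R}^k$,
\begin{equation}
F_q(x) = q(\|x\|^2) = \int_0^\infty e^{-t\|x\|^2}\, d\nu(t).
\end{equation}
The key sub-lemma is that $x \mapsto e^{-t\|x\|^2}$ is positive definite on $\mathbb{R}^k$ for each $t \ge 0$ and each $k$; this follows because its Fourier transform is (up to a positive constant depending on $t,k$) the Gaussian $\lambda \mapsto e^{-\|\lambda\|^2/(4t)}$, which is a nonnegative integrable function, so Bochner's theorem applies. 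Integrating a family of positive definite functions against a positive measure $\nu$ preserves positive definiteness (this is the same averaging argument used repeatedly in the excerpt, e.g.\ in Lemma \ref{lem:li-meas} and the Kolmogorov proof), so $F_q$ is positive definite on $\mathbb{R}^k$; radiality is immediate from the form $q(\|x\|^2)$.

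For the converse ``$F_q$ p.d.\ radial on all $\mathbb{R}^k$ $\Rightarrow q$ completely monotone'': this is the harder direction and the main obstacle. The idea is to extract, from positive definiteness of $F_q$ on $\mathbb{R}^k$ for \emph{every} $k$, a representation $q(r) = \int_0^\infty e^{-tr} d\nu(t)$, after which complete monotonicity follows by differentiating under the integral sign, since $(-1)^n \frac{d^n}{dr^n} e^{-tr} = t^n e^{-tr} \ge 0$. To get the representation I would pass to the limit $k \to \infty$: for each $k$, Bochner's theorem (Schoenberg's version for radial functions, via the Hankel/Bessel transform) gives $F_q = \widehat{\mu_k}$ for a radial probability measure $\mu_k$ on $\mathbb{R}^k$; normalizing appropriately and letting $k \to \infty$, the radial profiles converge and the Gaussian kernels $e^{-t\|x\|^2}$ emerge as the extreme points of the relevant convex cone. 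Concretely, one can use that $\left(1 - \frac{\|x\|^2}{2k}\right)^k \to e^{-\|x\|^2/2}$ and a compactness/Helly-selection argument on the associated measures to produce the limiting measure $\nu$ on $[0,\infty)$ with $q(r) = \int_0^\infty e^{-tr} d\nu(t)$. The delicate points are the tightness needed for Helly selection and justifying that the only radial p.d.\ functions surviving in the projective limit over all $k$ are mixtures of Gaussians; I would cite Schoenberg's original argument (\cite{Sch38}, and \cite{ScWh53}) for these measure-theoretic details rather than reproducing them, and then close by noting that differentiation under the integral sign against the finite positive measure $\nu$ yields \eqref{eq:rad-1}, completing the equivalence.
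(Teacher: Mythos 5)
Your proposal is correct and rests on exactly the ingredient the paper itself points to: the paper omits all details and simply records that the proof uses the Bernstein--Widder theorem, which is precisely the engine of your argument (Laplace-transform representation plus positive definiteness of Gaussians for the forward direction, and Schoenberg's dimension-limit argument for the converse). Your write-up is in fact more complete than the paper's, and deferring the tightness/extreme-point details of the converse to \cite{Sch38} is consistent with the level of detail the authors themselves adopt.
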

\begin{proof}
We omit details, but the proof uses:\end{proof}
\begin{thm}[Bernstein-Widder]
 A function $q:\mathbb{R}_{+}\rightarrow\mathbb{R}$ is completely
monotone iff there is a finite positive Borel measure \index{measure!Borel}on
$\mathbb{R}_{+}$ s.t. 
\[
q\left(r\right)=\int_{0}^{\infty}e^{-rt}d\mu\left(t\right),\; r\in\mathbb{R}_{+},
\]
i.e., $q$ is the Laplace transform of a finite positive measure \index{measure!positive}$\mu$
on $\mathbb{R}_{+}$.\end{thm}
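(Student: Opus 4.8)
The plan is to prove the two implications separately, with the substantive work concentrated in the direction ``completely monotone $\Longrightarrow$ Laplace transform of a finite positive measure.'' The easy direction comes first: if $q(r)=\int_0^\infty e^{-rt}\,d\mu(t)$ for a finite positive Borel measure $\mu$ on $\mathbb R_+$, then differentiation under the integral sign (justified by dominated convergence, since $t^n e^{-rt}$ is dominated on each half-line $r\ge r_0>0$ by a $\mu$-integrable function because $\mu$ is finite and $t^n e^{-r_0 t}$ is bounded) gives $q^{(n)}(r)=\int_0^\infty (-t)^n e^{-rt}\,d\mu(t)$, whence $(-1)^n q^{(n)}(r)=\int_0^\infty t^n e^{-rt}\,d\mu(t)\ge 0$. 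Continuity of $q$ at $0$ follows from monotone convergence as $r\downarrow 0$, and $q(0)=\mu(\mathbb R_+)<\infty$. This disposes of one direction in a short paragraph.

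For the hard direction, I would use the standard approximation argument. Assume $q\in C([0,\infty))\cap C^\infty((0,\infty))$ with $(-1)^n q^{(n)}(r)\ge 0$ for all $r>0$, $n\ge 0$. First I would record the consequence that $q$ is nonincreasing and convex (the $n=1,2$ cases), so $q$ is bounded on $[0,\infty)$ by $q(0)$, and $q^{(n)}$ has a (finite or infinite) limit as $r\downarrow 0$. The core step is the \emph{Bernstein inversion formula}: for each $k\in\mathbb N$ define the positive measures $\mu_k$ on $[0,\infty)$ by
\[
d\mu_k(t) = \frac{(-1)^k}{k!}\, k^{k+1}\, q^{(k)}\!\bigl(k/t\bigr)\, t^{-k-2}\, dt \quad\text{for }t>0,
\]
(together with an atom at $0$ of mass $q(\infty)=\lim_{r\to\infty}q(r)$, which exists by monotonicity), and show that $\int_0^\infty e^{-rt}\,d\mu_k(t)\to q(r)$ as $k\to\infty$ for every $r>0$. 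The pointwise convergence is proved by substituting $t=k/s$ to rewrite the integral as a Post--Widder-type average of $q$ against a kernel concentrating at $r$, then invoking complete monotonicity and a Taylor/Laplace-type estimate. A cleaner route, which I would prefer to present, is to show directly that the finite measures $\mu_k$ have uniformly bounded total mass — indeed $\mu_k([0,\infty)) = \int_0^\infty d\mu_k \le q(0)$, obtained by evaluating the Laplace transform identity ``at $r=0$'' via monotone convergence — and then extract a weak-$*$ convergent subsequence $\mu_{k_j}\to\mu$ on the one-point compactification of $[0,\infty)$, using that bounded positive measures on a compact space are weak-$*$ sequentially compact (Helly/Banach--Alaoglu). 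Since $t\mapsto e^{-rt}$ is continuous and bounded on $[0,\infty]$ (extended by $0$ at $\infty$) for each fixed $r>0$, one gets $q(r)=\int_{[0,\infty]} e^{-rt}\,d\mu(t)$; a separate check that $q(r)\to q(\infty)$ as $r\to\infty$ identifies $\mu(\{\infty\})=0$ unless $q(\infty)>0$, in which case it reappears as the constant term, so after absorbing any mass at $\infty$ into an atom at $0$ one obtains $\mu\in\mathscr M_+([0,\infty))$ with $q=$ its Laplace transform. Finiteness of $\mu$ is immediate from $\mu([0,\infty))=q(0^+)=q(0)<\infty$ by continuity of $q$ at $0$.

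The main obstacle is the verification that $\int_0^\infty e^{-rt}\,d\mu_k(t)\to q(r)$, i.e., that the Bernstein kernels actually reconstruct $q$; this is where the hypothesis ``$(-1)^n q^{(n)}\ge 0$ for \emph{all} $n$'' is genuinely used, not just the first two derivatives. Concretely, one writes, after the substitution, the $k$-th approximant as $\int_0^\infty q(u)\,g_k(r,u)\,du$ with $g_k$ an explicit positive kernel of total mass $1$ whose mass concentrates near $u=r$ as $k\to\infty$ (a gamma-density rescaling), and one must control the tail contributions using the monotonicity and convexity of $q$ together with the boundedness $q\le q(0)$. I would cite this as a classical computation (it is the Post--Widder real inversion of the Laplace transform) and sketch only the kernel-concentration estimate rather than grind through it. Uniqueness of the representing measure — not strictly required by the statement but worth a sentence — follows from injectivity of the Laplace transform on finite Borel measures on $\mathbb R_+$, which is standard (e.g.\ via the Stone--Weierstrass theorem applied to the algebra generated by $\{e^{-rt}\}$ on $[0,\infty]$).
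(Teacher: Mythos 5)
The paper offers no proof of this theorem: it is quoted as a classical result (Bernstein--Widder), with pointers to the literature, solely to support Sch\"{o}nberg's characterization of radial positive definite functions, and the surrounding text explicitly omits details. So there is no in-paper argument to compare against, and I can only judge your sketch on its own terms. The architecture you propose --- positive approximating measures $\mu_{k}$ built from $(-1)^{k}q^{(k)}$, a uniform bound on their total masses, Helly/weak-$*$ compactness on the one-point compactification $\left[0,\infty\right]$, and identification of the limit via the Post--Widder inversion --- is the standard textbook proof, and your treatment of the easy direction is fine. Deferring the kernel-concentration estimate to the classical literature is also consistent with how the paper itself treats the result.

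Three points in the hard direction would fail or mislead as written. First, the exponent in your kernel is wrong: the Post--Widder density is $\frac{(-1)^{k}}{k!}\left(k/t\right)^{k+1}q^{(k)}\left(k/t\right)$, i.e.\ $t^{-k-1}$, not $t^{-k-2}$. This is not a harmless convention: with $t^{-k-2}$ the substitution $t=k/s$ followed by $k$-fold integration by parts gives total mass $\int_{0}^{\infty}q\left(s\right)ds$, which need not be finite (take $q\equiv1$), whereas with $t^{-k-1}$ one gets $\frac{(-1)^{k}}{(k-1)!}\int_{0}^{\infty}s^{k-1}q^{(k)}\left(s\right)ds=q\left(0^{+}\right)-q\left(\infty\right)\leq q\left(0\right)$, which is the bound you need. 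Second, your justification of that mass bound by ``evaluating the Laplace transform identity at $r=0$'' is circular at that stage, since the convergence $\int e^{-rt}d\mu_{k}\rightarrow q\left(r\right)$ is precisely what you have postponed; the bound must come directly from the definition of $\mu_{k}$ by the integration-by-parts computation just indicated, including the (standard but genuine) verification that the boundary terms $s^{j}q^{(j)}\left(s\right)$ vanish at $0^{+}$ and at $\infty$. Third, the bookkeeping at the two ends of $\left[0,\infty\right]$ is garbled: mass of the limit measure at $t=\infty$ contributes $0$ to $\int e^{-rt}d\mu$ for every $r>0$ and is simply discarded (continuity of $q$ at $r=0$ then forces $\mu\left(\left[0,\infty\right)\right)=q\left(0\right)$, so nothing is lost), while the constant $q\left(\infty\right)=\lim_{r\rightarrow\infty}q\left(r\right)$ corresponds to an atom of $\mu$ at $t=0$; ``absorbing mass at $\infty$ into an atom at $0$'' conflates two different things and would change the Laplace transform. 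None of this alters the overall strategy, but each needs to be corrected before the sketch becomes a proof.
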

\begin{rem}
The condition that the function $q$ in (\ref{eq:rad-1}) be in $C^{\infty}\left(\mathbb{R}_{+}\right)$
may be relaxed; and then (\ref{eq:rad-1}) takes the following alternative
form:
\begin{equation}
\sum_{k=1}^{n}\left(-1\right)^{k}\binom{n}{k}q\left(r+k\delta\right)\geq0
\end{equation}
for all $n\in\mathbb{N}$, all $\delta>0$, and $x\in\bigl[0,\infty\bigr)$;
i.e., 
\begin{gather*}
q\left(r\right)-q\left(r+\delta\right)\geq0\\
q\left(r\right)-2q\left(r+\delta\right)+q\left(r+2\delta\right)\geq0\;\;\mbox{e.t.c. }
\end{gather*}

It is immediate that every completely monotone function $q$ on $\bigl[0,\infty\bigr)$
is convex.
\end{rem}

\section{\label{sec:FFbar}Connecting $F$ and $\overline{F}$ When $F$ is
a Positive Definite Function}

Let $F:\left(-1,1\right)\rightarrow\mathbb{C}$ be continuous and
positive definite\index{positive definite}, and let $\overline{F}$
be the complex conjugate, i.e., $\overline{F}$$\left(x\right)=F\left(-x\right)$,
$\forall x\in\left(-1,1\right)$. Below, we construct a contractive-linking
operator $\mathscr{H}_{F}\rightarrow\mathscr{H}_{\overline{F}}$ between
the two RKHSs. \index{RKHS}
\begin{lem}
\label{lem:conj-1-1}Let $\mu$ and $\mu^{\left(s\right)}$ be as
before, $\mu^{\left(s\right)}=\mu\circ s$, $s\left(x\right)=-x$;
and set 
\begin{equation}
g=\sqrt{\frac{d\mu^{\left(s\right)}}{d\mu}};\label{eq:conj-1-1}
\end{equation}
(the square root of the Radon-Nikodym\index{Radon-Nikodym} derivative)
then the $g$-multiplication operator is isometric between the respective
Hilbert spaces; $L^{2}\left(\mu^{\left(s\right)}\right)$ and $L^{2}\left(\mu\right)$
as follows:
\begin{equation}
\xymatrix{L^{2}\left(\mathbb{R},\mu^{\left(s\right)}\right)\ar@{|->}[rr]\sp(0.6){M_{g}}\sb(0.6){h\mapsto gh} &  & L^{2}\left(\mathbb{R},\mu\right)}
.\label{eq:conj-1-2}
\end{equation}
\end{lem}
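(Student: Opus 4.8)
The plan is to verify directly that the multiplication operator $M_g$ in (\ref{eq:conj-1-2}) preserves the $L^2$-norm, using only the definition of $g$ as the square root of a Radon--Nikodym derivative together with the change-of-variables formula for the reflection $s(x)=-x$. First I would record the elementary measure-theoretic facts: since $\mu^{(s)}=\mu\circ s$ and $s$ is an involution with $s_*\mu = \mu^{(s)}$, the two measures $\mu$ and $\mu^{(s)}$ are mutually absolutely continuous (because $\mu\circ s$ and $\mu$ have the same null sets when $F$ is continuous and p.d.; more precisely $F$ and $\overline F$ are simultaneously extendable, so the associated spectral measures are equivalent). Hence $d\mu^{(s)}/d\mu$ exists, is positive $\mu$-a.e., and its square root $g$ is a well-defined positive measurable function. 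I would also note the dual identity $d\mu/d\mu^{(s)} = (d\mu^{(s)}/d\mu)^{-1}$, i.e. $1/g^2$ is the Radon--Nikodym derivative in the reverse direction.

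The main computation is then: for $h\in L^2(\mathbb{R},\mu^{(s)})$,
\begin{align*}
\left\Vert M_g h\right\Vert_{L^2(\mu)}^2
 &= \int_{\mathbb{R}} \left| g(\lambda) h(\lambda)\right|^2 d\mu(\lambda)
  = \int_{\mathbb{R}} \left| h(\lambda)\right|^2 g(\lambda)^2\, d\mu(\lambda) \\
 &= \int_{\mathbb{R}} \left| h(\lambda)\right|^2 \frac{d\mu^{(s)}}{d\mu}(\lambda)\, d\mu(\lambda)
  = \int_{\mathbb{R}} \left| h(\lambda)\right|^2 d\mu^{(s)}(\lambda)
  = \left\Vert h\right\Vert_{L^2(\mu^{(s)})}^2,
\end{align*}
where the penultimate step is exactly the defining property of the Radon--Nikodym derivative. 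This shows $M_g$ is isometric. I would then add the short remark that $M_g$ is in fact unitary (not merely isometric): its inverse is $M_{1/g}$, which by the symmetric computation with $1/g^2 = d\mu/d\mu^{(s)}$ maps $L^2(\mu)$ isometrically onto $L^2(\mu^{(s)})$, and $M_g M_{1/g} = M_{1/g} M_g = I$ since $g\cdot(1/g)=1$ pointwise $\mu$-a.e.

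The only genuine obstacle is the preliminary equivalence $\mu \sim \mu^{(s)}$, without which $g$ need not be defined; this I would justify by invoking Lemma \ref{lem:lcg-Bochner} (Bochner duality) applied to both $F$ and $\overline F = F(-\,\cdot\,)$ on $(-1,1)$: a measure $\nu$ lies in $\mathrm{Ext}(F)$ iff $\nu^{(s)}\in\mathrm{Ext}(\overline F)$, so the correspondence $\nu\mapsto\nu^{(s)}$ is a bijection $\mathrm{Ext}(F)\to\mathrm{Ext}(\overline F)$, and the specific $\mu$ chosen ``as before'' is paired with $\mu^{(s)}$ under it; absolute continuity of $\mu^{(s)}$ with respect to $\mu$ then follows in the cases under consideration (and when it fails, one restricts $M_g$ to the equivalent parts, still obtaining a contraction, which is all that is needed for the linking operator $\mathscr{H}_F\to\mathscr{H}_{\overline F}$ announced before the lemma). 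Everything else is the routine Radon--Nikodym calculation displayed above.
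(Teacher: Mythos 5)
Your central computation is exactly the paper's proof: the one-line Radon--Nikodym identity $\int|gh|^{2}d\mu=\int|h|^{2}\frac{d\mu^{(s)}}{d\mu}d\mu=\int|h|^{2}d\mu^{(s)}$, so the proposal is correct and takes the same route. One caution on your preliminary discussion: the claim that $\mu$ and $\mu^{(s)}$ are automatically mutually absolutely continuous for any $\mu\in Ext(F)$ is false in general (the paper's Example \ref{Example:Im-example-5}, with $\mu=\tfrac12(\delta_{-1}+\delta_{2})$, gives $\mu\perp\mu^{(s)}$); the existence of $d\mu^{(s)}/d\mu$ is simply a standing hypothesis here (made explicit in the corollary following the lemma), so no such argument is needed -- though your hedge about restricting to the equivalent parts keeps the proposal from being wrong.
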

\begin{proof}
Let $h\in L^{2}\left(\mu^{\left(s\right)}\right)$, then 
\begin{eqnarray*}
\int_{\mathbb{R}}\left|gh\right|^{2}d\mu & = & \int_{\mathbb{R}}\left|h\right|^{2}\frac{d\mu^{\left(s\right)}}{d\mu}d\mu\\
 & = & \int_{\mathbb{R}}\left|h\right|^{2}d\mu^{\left(s\right)}=\left\Vert h\right\Vert _{L^{2}\left(\mu^{\left(s\right)}\right)}^{2}.
\end{eqnarray*}
\end{proof}
\begin{lem}
If $F:\left(-1,1\right)\rightarrow\mathbb{C}$ is a given continuous
p.d. function, and if $\mu\in Ext\left(F\right)$, then 
\begin{equation}
\xymatrix{\mathscr{H}_{F}\ni F_{\varphi}\ar@{|->}[rr]\sp(0.5){V^{\left(F\right)}} &  & \widehat{\varphi}\in L^{2}\left(\mathbb{R},\mu\right)}
\label{eq:conj-1-3}
\end{equation}
extends by closure to an isometry\index{isometry}. \end{lem}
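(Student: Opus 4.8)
The plan is to reduce the statement to the isometric identity on a dense subspace and then pass to the closure. Density is already available: by Lemma \ref{lem:dense} (equivalently Lemma \ref{lem:RKHS-def-by-integral}) the set $\{F_{\varphi}\mid\varphi\in C_{c}^{\infty}(0,1)\}$ spans a dense subspace of $\mathscr{H}_{F}$. So it suffices to show that $F_{\varphi}\mapsto\widehat{\varphi}$ preserves norms (and inner products) on that subspace, that it is well defined, and then invoke the bounded linear extension theorem. This is really the $G=\mathbb{R}$, $\Omega=(0,1)$ instance of Corollary \ref{cor:lcg-isom}(\ref{enu:1}) and Theorem \ref{thm:lcg-1}, but I would spell it out directly here.

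The core computation is the following. Since $\mu\in Ext(F)$ we have $F(x)=\widehat{\mu}(x)=\int_{\mathbb{R}}e^{i\lambda x}\,d\mu(\lambda)$ for $x\in(-1,1)=\Omega-\Omega$ (Bochner, cf. Lemma \ref{lem:lcg-Bochner}). Hence for $\varphi\in C_{c}^{\infty}(0,1)$, starting from (\ref{eq:hn2}),
\[
\|F_{\varphi}\|_{\mathscr{H}_{F}}^{2}=\int_{0}^{1}\!\!\int_{0}^{1}\overline{\varphi(x)}\varphi(y)F(x-y)\,dx\,dy=\int_{0}^{1}\!\!\int_{0}^{1}\overline{\varphi(x)}\varphi(y)\left(\int_{\mathbb{R}}e^{i\lambda(x-y)}\,d\mu(\lambda)\right)dx\,dy .
\]
Interchanging the order of integration then gives
\[
\|F_{\varphi}\|_{\mathscr{H}_{F}}^{2}=\int_{\mathbb{R}}\left(\int_{0}^{1}\overline{\varphi(x)}e^{i\lambda x}\,dx\right)\left(\int_{0}^{1}\varphi(y)e^{-i\lambda y}\,dy\right)d\mu(\lambda)=\int_{\mathbb{R}}\bigl|\widehat{\varphi}(\lambda)\bigr|^{2}\,d\mu(\lambda)=\|\widehat{\varphi}\|_{L^{2}(\mathbb{R},\mu)}^{2},
\]
with the convention $\widehat{\varphi}(\lambda)=\int e^{-i\lambda x}\varphi(x)\,dx$ (the specialization of (\ref{eq:lcg-6}) to $\mathbb{R}$). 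By polarization — or by running the same computation with two test functions $\varphi,\psi$ — one gets $\langle F_{\varphi},F_{\psi}\rangle_{\mathscr{H}_{F}}=\langle\widehat{\varphi},\widehat{\psi}\rangle_{L^{2}(\mu)}$ for all $\varphi,\psi\in C_{c}^{\infty}(0,1)$. Well-definedness of $V^{(F)}$ is then automatic: if $F_{\varphi}=0$ in $\mathscr{H}_{F}$, the identity forces $\|\widehat{\varphi}\|_{L^{2}(\mu)}=0$, i.e. $\widehat{\varphi}=0$ in $L^{2}(\mu)$. Thus $V^{(F)}$ is a well-defined linear norm-preserving map on a dense subspace, and it extends uniquely by closure to an isometry $V^{(F)}:\mathscr{H}_{F}\to L^{2}(\mathbb{R},\mu)$.

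The only step that requires genuine care is the Fubini interchange above; it is justified because $\mu$ is a finite measure — indeed a probability measure, since the normalization $F(0)=1$ gives $\mu(\mathbb{R})=\widehat{\mu}(0)=F(0)=1$ — while $\varphi$ is bounded with compact support, so the integrand $\overline{\varphi(x)}\varphi(y)e^{i\lambda(x-y)}$ is absolutely integrable on $(0,1)\times(0,1)\times\mathbb{R}$. Everything else is bookkeeping. I would also add a one-line remark that ``isometry'' here means isometric embedding, not a unitary: the range of $V^{(F)}$ is in general a proper closed subspace of $L^{2}(\mathbb{R},\mu)$ (the one cut out by the projection $Q_{\mu}=T_{\mu}T_{\mu}^{*}$), in line with the Caution in the Discussion section.
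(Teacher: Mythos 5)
Your proposal is correct and follows essentially the same route as the paper's own proof: expand $\|F_{\varphi}\|_{\mathscr{H}_F}^2$ via the Bochner representation $F=\widehat{d\mu}$, apply Fubini to obtain $\int_{\mathbb{R}}|\widehat{\varphi}(\lambda)|^2\,d\mu(\lambda)$, and conclude the isometric identity on the dense span of the $F_{\varphi}$'s before closing up. The extra remarks on well-definedness, the Fubini justification, and the range being only a closed subspace are sensible elaborations of details the paper leaves implicit.
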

\begin{proof}
For $\varphi\in C_{c}\left(0,1\right)$, we have: 
\begin{eqnarray*}
\left\Vert F_{\varphi}\right\Vert _{\mathscr{H}_{F}}^{2} & = & \int_{0}^{1}\int_{0}^{1}\overline{\varphi\left(x\right)}\varphi\left(y\right)F\left(x-y\right)dxdy\\
 & \underset{\text{\ensuremath{\mu\in}Ext(F)}}{=} & \int_{0}^{1}\int_{0}^{1}\overline{\varphi\left(x\right)}\varphi\left(y\right)\left(\int_{\mathbb{R}}e^{i\left(x-y\right)\lambda}d\mu\left(\lambda\right)\right)dxdy\\
 & \underset{\left(\text{Fubini}\right)}{=} & \int_{\mathbb{R}}\left|\widehat{\varphi}\left(\lambda\right)\right|^{2}d\mu\left(\lambda\right)\\
 & \underset{\text{(\ref{eq:conj-1-3})}}{=} & \left\Vert V^{\left(F\right)}\left(F_{\varphi}\right)\right\Vert _{L^{2}\left(\mathbb{R},\mu\right)}^{2}.
\end{eqnarray*}
\end{proof}
\begin{defn}
Set 
\begin{eqnarray}
\left(\varphi\ast g^{\vee}\right)\left(x\right) & := & \int_{0}^{1}\varphi\left(x\right)g^{\vee}\left(x-y\right)dy\label{eq:conj-1-4}\\
 & = & \left(g^{\vee}\ast\varphi\right)\left(x\right),\; x\in\left(0,1\right),\varphi\in C_{c}\left(0,1\right).\nonumber 
\end{eqnarray}
\end{defn}
\begin{thm}
We have 
\begin{equation}
\left(V^{\left(F\right)*}M_{g}V^{\left(\overline{F}\right)}\right)\left(\overline{F}_{\varphi}\right)=T_{F}\left(g^{\vee}\ast\varphi\right),\;\forall\varphi\in C_{c}\left(0,1\right).\label{eq:conj-1-5}
\end{equation}
\end{thm}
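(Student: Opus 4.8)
The plan is to evaluate the composite operator $V^{(F)*}M_{g}V^{(\overline{F})}$ on the dense family of vectors $\overline{F}_{\varphi}$, $\varphi\in C_{c}(0,1)$, by peeling off one factor at a time through its explicit description, and then to match the result with $T_{F}(g^{\vee}\ast\varphi)$ via the Fourier/Bochner representation of $\mathscr{H}_{F}$-vectors that is available because $\mu\in Ext(F)$. Note first that since $\overline{F}(x)=F(-x)=\widehat{d\mu^{(s)}}(x)$ on $(-1,1)$, we have $\mu^{(s)}\in Ext(\overline{F})$, so $V^{(\overline{F})}$ is precisely the isometry attached to $\mu^{(s)}$ by the lemma preceding the theorem, and $V^{(F)}$ the one attached to $\mu$.

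First I would record the three ingredients. By that lemma, $V^{(\overline{F})}(\overline{F}_{\varphi})=\widehat{\varphi}\in L^{2}(\mathbb{R},\mu^{(s)})$; by Lemma \ref{lem:conj-1-1}, $M_{g}$ then carries this to $g\,\widehat{\varphi}\in L^{2}(\mathbb{R},\mu)$; and by Corollary \ref{cor:lcg-isom}(\ref{enu:2}), the adjoint of the isometry $V^{(F)}\colon\mathscr{H}_{F}\to L^{2}(\mathbb{R},\mu)$ is the inverse transform, so $V^{(F)*}(g\widehat{\varphi})=(g\widehat{\varphi}\,d\mu)^{\vee}$, i.e.\ the continuous function on $\Omega=(0,1)$ given by $x\mapsto\int_{\mathbb{R}}e^{i\lambda x}g(\lambda)\widehat{\varphi}(\lambda)\,d\mu(\lambda)$. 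Thus the left-hand side of (\ref{eq:conj-1-5}), viewed as a function on $(0,1)$, equals $\int_{\mathbb{R}}e^{i\lambda x}g(\lambda)\widehat{\varphi}(\lambda)\,d\mu(\lambda)$.

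For the right-hand side, $T_{F}(g^{\vee}\ast\varphi)=F_{g^{\vee}\ast\varphi}$ by definition of the Mercer operator, and I would substitute $F(x-z)=\int_{\mathbb{R}}e^{i\lambda(x-z)}d\mu(\lambda)$ (legitimate since $x,z\in(0,1)$ forces $x-z\in(-1,1)$) into $F_{g^{\vee}\ast\varphi}(x)=\int_{0}^{1}(g^{\vee}\ast\varphi)(z)F(x-z)\,dz$, interchange the order of integration, and collapse the inner $z$-integral against $g^{\vee}$ so as to recover $g(\lambda)$; commutativity of the convolution (\ref{eq:conj-1-4}) is used to keep $\varphi$ fixed outside during the bookkeeping. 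Equivalently, and more cleanly, one may use $F_{\psi}(x)=\int_{\mathbb{R}}e^{i\lambda x}\widehat{\psi}(\lambda)\,d\mu(\lambda)$ (established for $\mu\in Ext(F)$ inside the proof of Corollary \ref{cor:lcg-isom}, and extended from $C_{c}(0,1)$ by boundedness of $T_{F}$ and density) together with the convolution identity $\widehat{g^{\vee}\ast\varphi}=\widehat{g^{\vee}}\,\widehat{\varphi}=g\,\widehat{\varphi}$; either way the right-hand side of (\ref{eq:conj-1-5}) also equals $\int_{\mathbb{R}}e^{i\lambda x}g(\lambda)\widehat{\varphi}(\lambda)\,d\mu(\lambda)$, and the two sides coincide.

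The main obstacle is analytic rather than algebraic. Since $g=\sqrt{d\mu^{(s)}/d\mu}$ is a priori only an element of $L^{2}(\mathbb{R},\mu)$, one must give meaning to $g^{\vee}$ and to the convolution $g^{\vee}\ast\varphi$: the natural route is to treat first the case where $\mu$ is absolutely continuous with bounded density, where $g^{\vee}\ast\varphi$ is an honest function, and then pass to the general case by approximating $g$ in $L^{2}(\mathbb{R},\mu)$, using continuity of all the operators involved. The interchanges of integration need a dominating function, supplied by $\mu$ finite, $F$ bounded, and $\varphi\in C_{c}(0,1)$. One should also note that $g^{\vee}\ast\varphi$ need not be supported in $(0,1)$, so $T_{F}(g^{\vee}\ast\varphi)$ means $T_{F}$ applied to its restriction to $\Omega$; this is harmless because the identification really takes place in $L^{2}(\mathbb{R},\mu)$ — one may apply the isometry $V^{(F)}$ to both sides of (\ref{eq:conj-1-5}) and use $V^{(F)*}V^{(F)}=I$, so that it suffices to match the two sides $\mu$-a.e.\ after transform, which is exactly what the computation above gives. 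Finally, extension from the dense set $\{\overline{F}_{\varphi}\}$ to all of $\mathscr{H}_{\overline{F}}$ is automatic by density and boundedness.
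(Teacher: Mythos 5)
Your proposal is correct and follows essentially the same route as the paper: evaluate the three factors in turn to get $V^{(F)*}(g\widehat{\varphi})$, identify $g\widehat{\varphi}=\widehat{g^{\vee}\ast\varphi}$, and use the identity $T_{F}(\psi)=\bigl(\widehat{\psi}\,d\mu\bigr)^{\vee}\big|_{\Omega}$ valid for $\mu\in Ext(F)$. Your added remarks on the meaning of $g^{\vee}\ast\varphi$ when $g$ is only in $L^{2}(\mu)$ and on the support/restriction issue are careful points the paper passes over silently, but they do not change the argument.
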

\begin{proof}
Let $\varphi\in C_{c}\left(0,1\right)$, we will then compute the
two sides in (\ref{eq:conj-1-5}), where $g^{\vee}:=$ inverse Fourier
transform:
\[
\mbox{LHS}_{\left(\ref{eq:conj-1-5}\right)}=V^{\left(F\right)*}\left(g\widehat{\varphi}\right);
\]
($\widehat{\varphi}\in L^{2}\left(\mu^{\left(s\right)}\right)$, and
using that $g\widehat{\varphi}\in L^{2}\left(\mu\right)$ by \lemref{conj-1-1}
) we get: 
\begin{eqnarray*}
\mbox{LHS}_{\left(\ref{eq:conj-1-5}\right)} & = & \left(V^{\left(F\right)}\right)^{*}\underset{\in L^{2}\left(\mu\right)}{\left(\underbrace{\widehat{g^{\vee}\ast\varphi}}\right)}\\
 & \underset{\left(\ref{eq:conj-1-5}\right)}{=} & T_{F}\left(g^{\vee}\ast\varphi\right)
\end{eqnarray*}
where $T_{F}$ is the Mercer operator $T_{F}:L^{2}\left(\Omega\right)\rightarrow\mathscr{H}_{F}$
defined using 
\begin{eqnarray*}
T_{F}\left(\varphi\right)\left(x\right) & = & \int_{0}^{1}\varphi\left(x\right)F\left(x-y\right)dy\\
 & = & \chi_{\left[0,1\right]}\left(x\right)\left(\widehat{\varphi}d\mu\right)^{\vee}\left(x\right).
\end{eqnarray*}
\end{proof}
\begin{cor}
Let $\mu$ and $\mu^{\left(s\right)}$ be as above, with $\mu\in Ext\left(F\right)$,
and $\mu^{\left(s\right)}\ll\mu$. Setting $g=\sqrt{\frac{d\mu^{\left(s\right)}}{d\mu}}$,
we get 
\begin{equation}
\Bigl\Vert\bigl(V^{\left(F\right)}\bigr)^{*}M_{g}V^{\left(\overline{F}\right)}\Bigr\Vert_{\mathscr{H}_{F}\rightarrow\mathscr{H}_{F}}\leq1.\label{eq:conj-2-1}
\end{equation}
\end{cor}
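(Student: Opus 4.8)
The plan is to recognize the operator $\bigl(V^{(F)}\bigr)^{*}M_{g}V^{(\overline{F})}$ as a composition of three maps, each of operator norm at most $1$, so that the bound $(\ref{eq:conj-2-1})$ is simply submultiplicativity of operator norms. No appeal to the explicit formula $(\ref{eq:conj-1-5})$ is needed; that formula only identifies the composite with a Mercer-type operator, whereas the norm estimate is structural.

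First I would verify the hypotheses that let the preceding two lemmas apply to $\overline{F}$ in place of $F$. Since $\mu\in Ext(F)$ means $\widehat{\mu}(x)=F(x)$ for $x\in(-1,1)$, a direct change of variable gives
\[
\widehat{\mu^{(s)}}(x)=\int_{\mathbb{R}}e^{i\lambda x}\,d\mu^{(s)}(\lambda)=\int_{\mathbb{R}}e^{-i\lambda x}\,d\mu(\lambda)=\widehat{\mu}(-x)=F(-x)=\overline{F}(x),
\]
so $\mu^{(s)}\in Ext(\overline{F})$. Hence the lemma containing $(\ref{eq:conj-1-3})$, applied to the pair $(\overline{F},\mu^{(s)})$, shows that $V^{(\overline{F})}\colon\overline{F}_{\varphi}\mapsto\widehat{\varphi}$ extends by closure to an isometry $\mathscr{H}_{\overline{F}}\to L^{2}(\mathbb{R},\mu^{(s)})$; in particular $\bigl\|V^{(\overline{F})}\bigr\|\le 1$. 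Next, Lemma \ref{lem:conj-1-1} (with $g=\sqrt{d\mu^{(s)}/d\mu}$, which is meaningful precisely because $\mu^{(s)}\ll\mu$) gives that $M_{g}\colon L^{2}(\mathbb{R},\mu^{(s)})\to L^{2}(\mathbb{R},\mu)$ is isometric, so $\|M_{g}\|\le 1$. Finally, the same lemma $(\ref{eq:conj-1-3})$ applied to the pair $(F,\mu)$ makes $V^{(F)}\colon\mathscr{H}_{F}\to L^{2}(\mathbb{R},\mu)$ an isometry, whence its Hilbert-space adjoint satisfies $\bigl\|\bigl(V^{(F)}\bigr)^{*}\bigr\|=\bigl\|V^{(F)}\bigr\|\le 1$ (indeed $\bigl(V^{(F)}\bigr)^{*}$ is a coisometry).

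Putting these together,
\[
\Bigl\|\bigl(V^{(F)}\bigr)^{*}M_{g}V^{(\overline{F})}\Bigr\|_{\mathscr{H}_{F}\to\mathscr{H}_{F}}\le\Bigl\|\bigl(V^{(F)}\bigr)^{*}\Bigr\|\cdot\|M_{g}\|\cdot\Bigl\|V^{(\overline{F})}\Bigr\|\le 1,
\]
which is $(\ref{eq:conj-2-1})$. The only point requiring a word of care is the identification of the domain: as written in the theorem the composite acts on vectors $\overline{F}_{\varphi}\in\mathscr{H}_{\overline{F}}$, so strictly it is an operator $\mathscr{H}_{\overline{F}}\to\mathscr{H}_{F}$; one records that the conjugation $\xi\mapsto\overline{\xi}$ is a (conjugate-linear) isometry of $\mathscr{H}_{F}$ onto $\mathscr{H}_{\overline{F}}$ — since $\overline{F_{x}}=\overline{F}_{x}$ and $\langle\overline{F}_{x},\overline{F}_{y}\rangle_{\mathscr{H}_{\overline{F}}}=\overline{\langle F_{x},F_{y}\rangle_{\mathscr{H}_{F}}}$ — so that $(\ref{eq:conj-2-1})$ makes sense as stated and the norm is unchanged. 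This, rather than any estimate, is the subtlety; the inequality itself is immediate once the three factors are in place.
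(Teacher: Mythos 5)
Your proof is correct and is essentially the paper's own argument: the paper likewise writes the composite as two isometries ($V^{(\overline{F})}$ and $M_{g}$) followed by the co-isometry $\bigl(V^{(F)}\bigr)^{*}$ and invokes submultiplicativity of operator norms. Your added checks — that $\mu^{(s)}\in Ext(\overline{F})$ and that conjugation identifies $\mathscr{H}_{F}$ with $\mathscr{H}_{\overline{F}}$ isometrically so the stated $\mathscr{H}_{F}\to\mathscr{H}_{F}$ norm makes sense — are points the paper glosses over, and they are worth recording.
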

\begin{proof}
For the three factors in the composite operator $\left(V^{\left(F\right)}\right)^{*}M_{b}V^{\left(\overline{F}\right)}$
in (\ref{eq:conj-2-1}), we have two isometries as follows:
\begin{eqnarray*}
V^{\left(\overline{F}\right)}:\mathscr{H}_{\overline{F}} & \rightarrow & L^{2}\bigl(\mu^{\left(s\right)}\bigr),\mbox{ and}\\
M_{g}:L^{2}\bigl(\mu^{\left(s\right)}\bigr) & \rightarrow & L^{2}\left(\mu\right),
\end{eqnarray*}
and both isometries; while
\[
\bigl(V^{\left(F\right)}\bigr)^{*}:L^{2}\left(\mu\right)\rightarrow\mathscr{H}_{F}
\]
is co-isometric, and therefore contractive, i.e., 
\begin{equation}
\bigl\Vert\bigl(V^{\left(F\right)}\bigr)^{*}\bigr\Vert_{L^{2}\left(\mu\right)\rightarrow\mathscr{H}_{F}}\leq1.\label{eq:conj-2-2}
\end{equation}
But then: 
\begin{eqnarray*}
\Bigl\Vert\bigl(V^{\left(F\right)}\bigr)^{*}M_{g}V^{\left(\overline{F}\right)}\Bigr\Vert_{\mathscr{H}_{F}\rightarrow\mathscr{H}_{F}} & \leq & \Bigl\Vert\bigl(V^{\left(F\right)}\bigr)^{*}\Bigr\Vert\Bigl\Vert M_{g}\Bigr\Vert\Bigl\Vert V^{\left(\overline{F}\right)}\Bigr\Vert\\
 & = & \Bigl\Vert\bigl(V^{\left(F\right)}\bigr)^{*}\Bigr\Vert\leq1,\:\mbox{by \ensuremath{\left(\ref{eq:conj-2-2}\right)}}.
\end{eqnarray*}

\end{proof}

\section{\label{sec:imgF}The Imaginary Part of a Positive Definite Function}
\begin{lem}
\label{lem:Im-Lemma-1}Let $F:(-1,1)\to\mathbb{C}$ be a continuous
p.d. function. For $\phi$ in $C_{c}^{\infty}(0,1)$ let 
\[
\left(t(\phi)\right)(x)=\phi(1-x),\quad\text{for all }x\in(0,1).
\]
The operator $F_{\phi}\to F_{t(\phi)}$ is bounded in $\mathscr{H}_{F}$
iff 
\begin{equation}
\overline{F}\ll F\label{eq:im-1}
\end{equation}
where $\overline{F}$ is the complex conjugate of $F,$ and $\ll$
is the order on p.d. functions, i.e., there is an $A<\infty$ such
that 
\begin{equation}
\sum\sum\overline{c_{j}}c_{k}\overline{F}(x_{j}-x_{k})\leq A\sum\sum\overline{c_{j}}c_{k}F(x_{j}-x_{k}),\label{eq:im-2}
\end{equation}
for all finite systems $\left\{ c_{j}\right\} $ and $\left\{ x_{j}\right\} $,
where $c_{j}\in\mathbb{C}$, $x_{j}\in\left(0,1\right)$.\end{lem}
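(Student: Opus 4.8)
The plan is to recognize the operator $F_\phi \mapsto F_{t(\phi)}$ as essentially the conjugation operator $J$ of Lemma \ref{lem:Conjugation-Operator} applied to the interval $\Omega = (0,1)$ (so $\alpha = 0$, $\beta = 1$, $\alpha + \beta - x = 1-x$), but \emph{without} the complex conjugation built into $J$. Recall from Lemma \ref{lem:Conjugation-Operator} that $JF_\varphi = \overline{F_{\varphi(1-\cdot)}}$. So if we define $T_1 F_\varphi := F_{t(\varphi)} = F_{\varphi(1-\cdot)}$, then $T_1$ and $J$ differ by the pointwise complex conjugation $C\colon \xi \mapsto \overline{\xi}$ acting on functions on $\Omega$. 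The key observation is that $C$ maps $\mathscr{H}_F$ to $\mathscr{H}_{\overline F}$ (this is essentially a definitional fact: $\overline{F_\varphi} = \overline F_{\overline\varphi}$, and conjugating the defining inner product of $\mathscr{H}_F$ gives the defining inner product of $\mathscr{H}_{\overline F}$). Therefore $T_1$ is bounded on $\mathscr{H}_F$ if and only if the composite map $\mathscr{H}_F \to \mathscr{H}_{\overline F}$ given by $F_\varphi \mapsto \overline F_{\overline{\varphi(1-\cdot)}}$ is bounded, and then — using that $C$ is an isometric bijection $\mathscr{H}_{\overline F} \to \mathscr{H}_F$ and that the change of variable $\varphi \mapsto \varphi(1-\cdot)$ is harmless — boundedness reduces to the boundedness of the natural inclusion-type map $\mathscr{H}_F \hookrightarrow \mathscr{H}_{\overline F}$ sending $F_\varphi \mapsto \overline F_\varphi$.

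First I would set up the identification $C\colon \mathscr{H}_F \to \mathscr{H}_{\overline F}$ precisely, checking on the dense subspaces $\{F_\varphi\}$ and $\{\overline F_\psi\}$ that $\langle \overline{F_\varphi}, \overline{F_\psi}\rangle_{\mathscr{H}_{\overline F}} = \overline{\langle F_\varphi, F_\psi\rangle_{\mathscr{H}_F}} = \langle F_\psi, F_\varphi\rangle_{\mathscr{H}_F}$, so $C$ is a conjugate-linear isometric bijection. Next I would handle the reflection $r(\varphi) = \varphi(1-\cdot)$: this is a unitary (linear, norm-preserving) involution on $C_c^\infty(0,1)$ in the $L^2$ sense, and one checks that $\varphi \mapsto \overline F_{r(\varphi)}$ has the same norm behavior relative to $\mathscr{H}_F$-norms of $F_\varphi$ as $\varphi \mapsto \overline F_\varphi$, because $\|F_{r(\varphi)}\|_{\mathscr{H}_F}^2 = \iint \overline{\varphi(1-x)}\varphi(1-y)F(x-y)\,dx\,dy = \iint \overline{\varphi(x)}\varphi(y)F(y-x)\,dx\,dy = \iint \overline{\varphi(x)}\varphi(y)\overline F(x-y)\,dx\,dy = \|\overline F_\varphi\|_{\mathscr{H}_{\overline F}}^2$, and similarly for $\overline F$ in place of $F$. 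Assembling these pieces, boundedness of $F_\varphi \mapsto F_{t(\varphi)}$ is equivalent to the existence of a finite $A$ with $\|\overline F_\varphi\|_{\mathscr{H}_{\overline F}}^2 \le A \|F_\varphi\|_{\mathscr{H}_F}^2$ for all $\varphi \in C_c^\infty(0,1)$.

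Finally, that last inequality is exactly the statement $\overline F \ll F$ in the sense of Definition \ref{def:FK-1} / Lemma \ref{lem:FK-1} (phrased with the integral form (\ref{eq:FK-4}), which Lemma \ref{lem:FK-1} shows is equivalent to the finite-sum form (\ref{eq:im-2}) stated in the lemma, via an approximate identity). Here I would cite Lemma \ref{lem:FK-1} directly to pass between (\ref{eq:FK-4}) and (\ref{eq:im-2}). One subtlety to be careful about: the domination $\overline F \ll F$ must be understood on the interval $(0,1)$ (equivalently $\Omega - \Omega = (-1,1)$), matching the domain of $F$; since both $F$ and $\overline F$ live on the same interval, the adaptation of Definition \ref{def:FK-1} needed is the ``equal intervals'' case, and there is no issue of restricting to a sub-interval as in Theorem \ref{thm:FK-app-1}.

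The main obstacle I anticipate is purely bookkeeping rather than conceptual: keeping straight the interplay of the conjugate-linearity of $C$, the linearity of the reflection $r$, and the direction of the norm inequality, so that a bounded operator in one formulation corresponds to $\overline F \ll F$ and not $F \ll \overline F$ (the two are in fact equivalent here, since $\overline{\overline F} = F$ and the constants are reciprocal only if one is an isometry — but in general $\overline F \ll F$ and $F \ll \overline F$ need not hold simultaneously, so the direction genuinely matters). A clean way to avoid sign/direction errors is to compute $\|F_{t(\varphi)}\|_{\mathscr{H}_F}$ directly as above and observe it equals $\|\overline F_\varphi\|_{\mathscr{H}_{\overline F}}$ verbatim, so that the operator $F_\varphi \mapsto F_{t(\varphi)}$ is bounded iff $\|\overline F_\varphi\|_{\mathscr{H}_{\overline F}} \le \sqrt A\,\|F_\varphi\|_{\mathscr{H}_F}$, which is manifestly $\overline F \ll F$.
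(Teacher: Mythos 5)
Your proposal is correct and, once the detour through $J$ and $C$ is collapsed, it coincides with the paper's own proof: the paper likewise establishes the identity $\left\Vert F_{t(\phi)}\right\Vert _{\mathscr{H}_{F}}^{2}=\left\Vert \overline{F_{\phi}}\right\Vert _{\mathscr{H}_{\overline{F}}}^{2}$ by the change of variables $(x,y)\mapsto(1-x,1-y)$ together with $F(y-x)=\overline{F}(x-y)$, and then reads off that boundedness of $F_{\phi}\mapsto F_{t(\phi)}$ is exactly the inequality $\left\Vert \overline{F_{\phi}}\right\Vert _{\mathscr{H}_{\overline{F}}}\leq\sqrt{A}\left\Vert F_{\phi}\right\Vert _{\mathscr{H}_{F}}$, i.e., $\overline{F}\ll F$. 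The ``clean way'' in your final paragraph is verbatim the paper's argument.
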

\begin{proof}
It follows from (\ref{eq:im-2}) that $\overline{F}\ll F$ iff there
is an $A<\infty$ such that 
\begin{equation}
\left\Vert \overline{F_{\phi}}\right\Vert _{\mathscr{H}_{\overline{F}}}\leq\sqrt{A}\left\Vert F_{\phi}\right\Vert _{\mathscr{H}_{F}},\label{eq:im-3}
\end{equation}
for all $\phi$ in $C_{c}^{\infty}(0,1).$ Since 
\begin{align}
\left\Vert F_{t(\phi)}\right\Vert _{\mathscr{H}_{F}}^{2} & =\int_{0}^{1}\int_{0}^{1}\overline{\phi(1-x)}\phi(1-y)F(x-y)dxdy\nonumber \\
 & =\int_{0}^{1}\int_{0}^{1}\overline{\phi(x)}\phi(y)F(y-x)dxdy\label{eq:im-4}\\
 & =\left\Vert \overline{F_{\phi}}\right\Vert _{\mathscr{H}_{\overline{F}}}^{2}\nonumber 
\end{align}
we have established the claim. 
\end{proof}
Let $M=\left(M_{jk}\right)$ be an $N\times N$ matrix over $\mathbb{C}.$
Set 
\[
\Re\left\{ M\right\} =\left(\Re\left\{ M_{jk}\right\} \right),\;\Im\left\{ M\right\} =\left(\Im\left\{ M_{jk}\right\} \right).
\]
Assume $M^{*}=M,$ where $M^{*}$ is the conjugate transpose of $M,$
and $M\geq0$. Recall, $M\geq0$ iff all eigenvalues of $M$ are $\geq0$
iff all sub-determinants $\mathrm{det}M_{n}\geq0,$ $n=1,\ldots,N$,
where $M_{n}=\left(M_{jk}\right)_{j,k\leq n}.$ 
\begin{defn}
Let $s(x)=-x.$ For a measure $\mu$ on $\mathbb{R}$, let $\mu^{s}=\mu\circ s.$ \end{defn}
\begin{lem}
\label{lem:Im-mu-s}If $F=\widehat{d\mu}$ then $\overline{F}=\widehat{d\mu^{s}}.$ \end{lem}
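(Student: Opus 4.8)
This is a very short computational lemma: if $F = \widehat{d\mu}$, then $\overline{F} = \widehat{d\mu^{s}}$ where $\mu^{s} = \mu \circ s$ and $s(x) = -x$.

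\medskip

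The plan is to carry out a direct computation starting from the Bochner integral representation of $F$. First I would write, for $x$ in the domain of $F$,
\[
F(x) = \widehat{d\mu}(x) = \int_{\mathbb{R}} e^{i\lambda x}\, d\mu(\lambda),
\]
using the convention for the Bochner transform fixed earlier in the paper. Taking complex conjugates of both sides and pulling the conjugation inside the integral (justified since $\mu$ is a finite positive measure, so the integrand is bounded and the integral converges absolutely) gives
\[
\overline{F(x)} = \int_{\mathbb{R}} \overline{e^{i\lambda x}}\, d\mu(\lambda) = \int_{\mathbb{R}} e^{-i\lambda x}\, d\mu(\lambda).
\]

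\medskip

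Next I would perform the change of variables $\lambda \mapsto -\lambda$, i.e. substitute $\eta = s(\lambda) = -\lambda$. By the definition of the pushforward measure $\mu^{s} = \mu \circ s$, for any bounded measurable $g$ one has $\int g(\eta)\, d\mu^{s}(\eta) = \int g(s(\lambda))\, d\mu(\lambda) = \int g(-\lambda)\, d\mu(\lambda)$. Applying this with $g(\eta) = e^{i\eta x}$ yields
\[
\int_{\mathbb{R}} e^{-i\lambda x}\, d\mu(\lambda) = \int_{\mathbb{R}} e^{i\eta x}\, d\mu^{s}(\eta) = \widehat{d\mu^{s}}(x).
\]
Combining the two displays gives $\overline{F(x)} = \widehat{d\mu^{s}}(x)$ for all $x$ in the relevant domain, which is the assertion $\overline{F} = \widehat{d\mu^{s}}$.

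\medskip

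There is essentially no obstacle here; the only point requiring a word of care is the bookkeeping on the Fourier-transform sign convention (the paper uses $e^{i\lambda x}$ in its Bochner transforms, e.g. in \eqref{eq:ext-1-4} and \eqref{eq:lcg-bochner}), and the observation that since $F$ on $(-1,1)$ is the restriction of a genuine positive definite function on $\mathbb{R}$ — guaranteed for the one-dimensional extension problem — the identity $\overline{F}(x) = F(-x)$ used implicitly in the surrounding discussion is consistent with $\overline{F} = \widehat{d\mu^{s}}$. I would state the proof in two or three lines along exactly these steps and mark it complete.
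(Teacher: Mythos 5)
Your proof is correct and follows essentially the same route as the paper's: a one-line computation from the Bochner representation combined with the change of variables $\lambda\mapsto-\lambda$ realizing $\mu^{s}$. The only cosmetic difference is that the paper first invokes $\overline{F(x)}=F(-x)$ and then substitutes, whereas you conjugate directly under the integral sign (valid since $\mu$ is a positive, hence real, measure); the two computations are identical in substance.
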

\begin{proof}
Suppose $F=\widehat{d\mu}$, then the calculation 
\begin{align*}
\overline{F(x)} & =F(-x)=\int_{\mathbb{R}}e_{\lambda}(-x)d\mu(\lambda)\\
 & =\int_{\mathbb{R}}e_{-\lambda}(x)d\mu(\lambda)\\
 & =\int_{\mathbb{R}}e_{\lambda}(x)d\mu^{s}(\lambda)=\widehat{d\mu^{s}}(x)
\end{align*}
establishes the claim.\end{proof}
\begin{cor}
\label{cor:Im-eq-3}If $F=\widehat{d\mu},$ then (\ref{eq:im-3})
takes the form 
\[
\int_{\mathbb{R}}\left|\widehat{\phi}(\lambda)\right|^{2}d\mu^{s}(\lambda)\leq A\int_{\mathbb{R}}\left|\widehat{\phi}(\lambda)\right|^{2}d\mu(\lambda),
\]
for all $\phi$ in $C_{c}^{\infty}(0,1).$ \end{cor}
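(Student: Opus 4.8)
\textbf{Proof proposal for Corollary \ref{cor:Im-eq-3}.} The plan is to combine the reformulation of the order relation $\overline{F}\ll F$ given in (\ref{eq:im-3}) with the spectral (Bochner) description of the two RKHS norms. The corollary is essentially a translation of (\ref{eq:im-3}) into the frequency side under the hypothesis $F=\widehat{d\mu}$, so the work is bookkeeping rather than anything deep; the one point that needs care is matching the measures $\mu$ and $\mu^{s}$ to the correct sides of the inequality.

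First I would recall from the construction of $\mathscr{H}_{F}$ (see Lemma \ref{lem:RKHS-def-by-integral} and the discussion around (\ref{eq:exp-5})--(\ref{eq:exp-7})) that when $F=\widehat{d\mu}$ on $(-1,1)$, Parseval/Fubini gives
\begin{equation}
\left\Vert F_{\phi}\right\Vert_{\mathscr{H}_{F}}^{2}=\int_{0}^{1}\int_{0}^{1}\overline{\phi(x)}\phi(y)F(x-y)\,dx\,dy=\int_{\mathbb{R}}\left|\widehat{\phi}(\lambda)\right|^{2}d\mu(\lambda),
\end{equation}
for all $\phi\in C_{c}^{\infty}(0,1)$, where $\widehat{\phi}(\lambda)=\int_{0}^{1}e^{-i\lambda x}\phi(x)\,dx$. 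Next, by Lemma \ref{lem:Im-mu-s} we have $\overline{F}=\widehat{d\mu^{s}}$, so the same identity applied to the positive definite function $\overline{F}$ in place of $F$ yields
\begin{equation}
\left\Vert \overline{F_{\phi}}\right\Vert_{\mathscr{H}_{\overline{F}}}^{2}=\int_{\mathbb{R}}\left|\widehat{\phi}(\lambda)\right|^{2}d\mu^{s}(\lambda).
\end{equation}
Here I would also use the computation in (\ref{eq:im-4}) of Lemma \ref{lem:Im-Lemma-1}, which identifies $\left\Vert F_{t(\phi)}\right\Vert_{\mathscr{H}_{F}}^{2}$ with $\left\Vert \overline{F_{\phi}}\right\Vert_{\mathscr{H}_{\overline{F}}}^{2}$, so that the left-hand side of (\ref{eq:im-3}) is exactly $\int_{\mathbb{R}}|\widehat{\phi}(\lambda)|^{2}d\mu^{s}(\lambda)$.

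Now I would simply substitute these two Bochner expressions into the inequality (\ref{eq:im-3}): the condition $\left\Vert \overline{F_{\phi}}\right\Vert_{\mathscr{H}_{\overline{F}}}^{2}\le A\left\Vert F_{\phi}\right\Vert_{\mathscr{H}_{F}}^{2}$ for all $\phi\in C_{c}^{\infty}(0,1)$ becomes
\begin{equation}
\int_{\mathbb{R}}\left|\widehat{\phi}(\lambda)\right|^{2}d\mu^{s}(\lambda)\leq A\int_{\mathbb{R}}\left|\widehat{\phi}(\lambda)\right|^{2}d\mu(\lambda),\qquad\forall\,\phi\in C_{c}^{\infty}(0,1),
\end{equation}
which is the asserted form. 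The main (and only) obstacle is a sign/orientation check: one must confirm that it is $\mu^{s}$ (not $\mu$) that appears on the left, which is exactly what Lemma \ref{lem:Im-mu-s} provides, together with verifying that the density of the set $\{\widehat{\phi}:\phi\in C_{c}^{\infty}(0,1)\}$ in the relevant $L^{1}$-spaces (used already in Lemma \ref{lem:li-meas}) is not needed here since the corollary only claims the equivalence at the level of the stated integral inequality over test functions $\phi$. No further estimates are required; the proof is complete once the substitution is made.
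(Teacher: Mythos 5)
Your proposal is correct and follows essentially the same route as the paper: compute $\left\Vert F_{\phi}\right\Vert _{\mathscr{H}_{F}}^{2}=\int_{\mathbb{R}}\bigl|\widehat{\phi}(\lambda)\bigr|^{2}d\mu(\lambda)$ by Bochner/Parseval, use Lemma \ref{lem:Im-mu-s} to get $\left\Vert \overline{F_{\phi}}\right\Vert _{\mathscr{H}_{\overline{F}}}^{2}=\int_{\mathbb{R}}\bigl|\widehat{\phi}(\lambda)\bigr|^{2}d\mu^{s}(\lambda)$, and substitute into (\ref{eq:im-3}). The extra remarks about (\ref{eq:im-4}) and the non-necessity of a density argument are accurate but not needed.
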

\begin{proof}
A calculation show that 
\[
\left\Vert F_{\phi}\right\Vert _{\mathscr{H}_{F}}^{2}=\int_{\mathbb{R}}\left|\widehat{\phi}(\lambda)\right|^{2}d\mu(\lambda)
\]
and similarly $\left\Vert \overline{F_{\phi}}\right\Vert _{\mathscr{H}_{\overline{F}}}^{2}=\int_{\mathbb{R}}\bigl|\widehat{\phi}(\lambda)\bigr|^{2}d\mu^{s}(\lambda),$
where we used Lemma \ref{lem:Im-mu-s}.\end{proof}
\begin{example}
\label{Example:Im-example-5}If $\mu=\tfrac{1}{2}\left(\delta_{-1}+\delta_{2}\right),$
then $\mu^{s}=\tfrac{1}{2}\left(\delta_{-1}+\delta_{2}\right).$ Set
\begin{align*}
F(x) & =\widehat{\mu}(x)=\tfrac{1}{2}\left(e^{-ix}+e^{i2x}\right),\text{ then}\\
\overline{F(x)} & =\widehat{\mu^{s}}(x)=\tfrac{1}{2}\left(e^{ix}+e^{-i2x}\right).
\end{align*}
It follows from Corollary \ref{cor:Im-eq-3} and Lemma \ref{lem:Im-Lemma-1}
that $\overline{F}\not\ll F$ and $F\not\ll\overline{F}$. In fact,
\begin{align*}
\left\Vert F_{\phi}\right\Vert _{\mathscr{H}_{F}}^{2} & =\tfrac{1}{2}\left(\left|\widehat{\phi}(-1)\right|^{2}+\left|\widehat{\phi}(2)\right|^{2}\right)\\
\left\Vert \overline{F_{\phi}}\right\Vert _{\mathscr{H}_{\overline{F}}}^{2} & =\tfrac{1}{2}\left(\left|\widehat{\phi}(1)\right|^{2}+\left|\widehat{\phi}(-2)\right|^{2}\right).
\end{align*}
Fix $f\in C_{c}^{\infty},$ such that $f(0)=1,$ $f\geq0,$ and $\int f=1.$
Considering $\phi_{n}(x)=\tfrac{1}{2}\left(e^{-ix}+e^{i2x}\right)f\left(\tfrac{x}{n}\right),$
and $\psi_{n}(x)=\tfrac{1}{2}\left(e^{-ix}+e^{i2x}\right)f\left(\tfrac{x}{n}\right),$
completes the verification, since $\widehat{\phi_{n}}\to\mu$ and
$\widehat{\psi_{n}}\to\mu^{s}.$ 

And similarly, $\overline{F}\not\ll F$ and $F\not\ll\overline{F}$,
where $F$ is as in Example \ref{Example:im-14}. \end{example}
\begin{rem}
In fact, $\overline{F}\ll F$ iff $\mu^{s}\ll\mu$ with Radon-Nikodym
derivative $\frac{d\mu^{s}}{d\mu}\in L^{\infty}(\mu).$ See, \secref{FFbar}.
\index{Radon-Nikodym}\end{rem}
\begin{cor}
\label{cor:im-mu-mu-s}If $F=\widehat{d\mu},$ then 
\begin{align*}
\Re\left\{ F\right\}  & =\tfrac{1}{2}\widehat{\left(\mu+\mu^{s}\right)},\text{ and }\\
\Im\left\{ F\right\}  & =\tfrac{1}{2i}\widehat{\left(\mu-\mu^{s}\right)}.
\end{align*}

\end{cor}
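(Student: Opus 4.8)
The plan is to deduce the corollary directly from Lemma~\ref{lem:Im-mu-s} together with the elementary decomposition of a complex number into its real and imaginary parts, using only that the Bochner transform $\nu\mapsto\widehat{d\nu}$ is linear in $\nu$.

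First I would record that, for every $x$ in the domain of $F$,
\[
\Re\{F(x)\}=\tfrac{1}{2}\bigl(F(x)+\overline{F(x)}\bigr),\qquad \Im\{F(x)\}=\tfrac{1}{2i}\bigl(F(x)-\overline{F(x)}\bigr),
\]
which is just the definition of real and imaginary parts. Next, by Lemma~\ref{lem:Im-mu-s}, since $F=\widehat{d\mu}$ we have $\overline{F}=\widehat{d\mu^{s}}$, where $\mu^{s}=\mu\circ s$ and $s(x)=-x$.

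Then I would invoke linearity of the transform: from the integral formula $\widehat{d\nu}(x)=\int_{\mathbb{R}}e^{i\lambda x}\,d\nu(\lambda)$, valid for any finite (signed) Borel measure $\nu$, one has $\widehat{d(\nu_{1}+\nu_{2})}=\widehat{d\nu_{1}}+\widehat{d\nu_{2}}$ and $\widehat{d(c\nu)}=c\,\widehat{d\nu}$ for scalars $c$. Applying this to $\nu_{1}=\mu$, $\nu_{2}=\pm\mu^{s}$ and $c=\tfrac{1}{2}$ (resp. $\tfrac{1}{2i}$) gives
\[
\tfrac{1}{2}\bigl(F+\overline{F}\bigr)=\tfrac{1}{2}\,\widehat{d(\mu+\mu^{s})}=\tfrac{1}{2}\,\widehat{\left(\mu+\mu^{s}\right)},\qquad \tfrac{1}{2i}\bigl(F-\overline{F}\bigr)=\tfrac{1}{2i}\,\widehat{\left(\mu-\mu^{s}\right)},
\]
which are exactly the two asserted identities. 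Here $\mu+\mu^{s}$ is a finite positive measure and $\mu-\mu^{s}$ a finite signed measure, and in both cases the transform is understood as the corresponding linear combination of $\widehat{d\mu}$ and $\widehat{d\mu^{s}}$.

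There is essentially no obstacle: the content is entirely in Lemma~\ref{lem:Im-mu-s}, and the remaining steps are the linearity of the integral and the pointwise definition of $\Re$ and $\Im$. The only point deserving a word of care is that $\widehat{\left(\mu-\mu^{s}\right)}$ must be read in the sense of signed measures (equivalently, as $\widehat{d\mu}-\widehat{d\mu^{s}}$), since $\mu-\mu^{s}$ need not be positive; with that convention the statement is immediate.
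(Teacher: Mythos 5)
Your argument is correct and is exactly the intended one: the paper gives no separate proof, treating the corollary as an immediate consequence of Lemma \ref{lem:Im-mu-s} via the identities $\Re\{F\}=\tfrac{1}{2}(F+\overline{F})$, $\Im\{F\}=\tfrac{1}{2i}(F-\overline{F})$ and linearity of the Bochner transform. Your remark that $\widehat{\left(\mu-\mu^{s}\right)}$ is to be read in the sense of signed measures is the right (and only) point of care.
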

We can rewrite the corollary in the form: If $F=\widehat{d\mu},$
then
\begin{align}
\Re\left\{ F\right\} (x) & =\int_{\mathbb{R}}\cos\left(\lambda x\right)\, d\mu(\lambda),\text{ and }\label{eq:im-a-1}\\
\Im\left\{ F\right\} (x) & =\int_{\mathbb{R}}\sin\left(\lambda x\right)\, d\mu(\lambda).\label{eq:im-a-2}
\end{align}

\begin{rem}
(\ref{eq:im-a-1}) simply states that if $F$ is positive definite,
so is its real part $\Re\left\{ F\right\} $. But (\ref{eq:im-a-2})
is deeper: If the function $\lambda$ is in $L^{1}(\mu),$ then 
\[
\frac{d}{dx}\Im\left\{ F\right\} (x)=\int_{\mathbb{R}}\cos(\lambda x)\lambda\, d\mu(\lambda)
\]
is the cosine transform of $\lambda d\mu(\lambda).$
\end{rem}
Suppose $F$ is p.d. on $(-a,a)$ and $\mu\in\mathrm{Ext}(F)$, i.e.,
$\mu$ is a finite positive measure satisfying 
\[
F(x)=\int_{\mathbb{R}}e_{\lambda}(x)d\mu(x).
\]
For a finite set $\{x_{j}\}\in(-a,a)$ let 
\[
M:=\left(F\left(x_{j}-x_{k}\right)\right).
\]
For $c_{j}$ in $\mathbb{C}$ consider 
\[
\overline{c^{T}}Mc=\sum\sum\overline{c_{j}}c_{k}M_{jk}.
\]
The for $\Re\left\{ F\right\} $, we have
\begin{align*}
 & \sum_{j}\sum_{k}\overline{c_{j}}c_{k}\Re\left\{ F\right\} (x_{j}-x_{k})\\
= & \sum_{j}\sum_{k}\overline{c_{j}}c_{k}\int_{\mathbb{R}}\left(\cos\left(\lambda x_{j}\right)\cos\left(\lambda x_{k}\right)+\sin\left(\lambda x_{j}\right)\sin\left(\lambda x_{k}\right)\right)d\mu(\lambda)\\
= & \int_{\mathbb{R}}\left(\left|C(\lambda)\right|^{2}+\left|S(\lambda)\right|^{2}\right)d\mu(\lambda)\geq0,
\end{align*}
where 
\begin{align*}
C(\lambda) & =C\left(\lambda,\left(x_{j}\right)\right)=\sum_{j}c_{j}\cos\left(\lambda x_{j}\right)\\
S(\lambda) & =S\left(\lambda,\left(x_{j}\right)\right)=\sum_{j}c_{j}\sin\left(\lambda x_{j}\right)
\end{align*}
for all $\lambda\in\mathbb{R}.$ Similarly, for $\Im\left\{ F\right\} $,
we have 
\begin{align*}
 & \sum_{j}\sum_{k}\overline{c_{j}}c_{k}\Im\left\{ F\right\} (x_{j}-x_{k})\\
= & \sum_{j}\sum_{k}\overline{c_{j}}c_{k}\int_{\mathbb{R}}\left(\sin\left(\lambda x_{j}\right)\cos\left(\lambda x_{k}\right)-\cos\left(\lambda x_{j}\right)\sin\left(\lambda x_{k}\right)\right)d\mu(\lambda)\\
= & \int_{\mathbb{R}}\left(\overline{S(\lambda)}C(\lambda)-\overline{C(\lambda)}S(\lambda)\right)d\mu(\lambda)\\
= & 2i\int_{\mathbb{R}}\Im\left\{ \overline{S\left(\lambda\right)}C\left(\lambda\right)\right\} d\mu(\lambda).
\end{align*}
If $\left\{ c_{j}\right\} \subset\mathbb{R},$ then $S(\lambda),C(\lambda)$
are real valued and 
\[
\sum_{j}\sum_{k}\overline{c_{j}}c_{k}\Im\left\{ F\right\} (x_{j}-x_{k})=0.
\]

\subsection{An application of Bochner\textquoteright s Theorem }
\begin{lem}
Let $F$ be a continuous positive definite function on some open interval
$(-a,a)$. Let $K$ be the real part $\Re\left\{ F\right\} $ of $F$
and let $L$ be the imaginary part $\Im\left\{ F\right\} $ of $F,$
hence $K$ and $L$ are real valued, $K$ is a continuous positive
definite real valued function, in particular $K$ is an even function,
and $L$ is an odd function.\index{Bochner}\end{lem}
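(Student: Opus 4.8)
The plan is to obtain every clause of the statement from the Bochner representation of $F$ together with the elementary Hermitian symmetry $F(-x)=\overline{F(x)}$.

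First I would record that $K=\Re\{F\}$ and $L=\Im\{F\}$ are real valued by definition, with $F=K+iL$. Next I would invoke the standard fact (listed among the elementary properties of $\mathscr{H}_{F}$ earlier, and in any case immediate from positive definiteness of the $1\times 1$ and $2\times 2$ matrices $(F(x_i-x_j))$) that $F(-x)=\overline{F(x)}$ for all $x\in(-a,a)$. Separating real and imaginary parts of this identity gives $K(-x)=K(x)$ and $L(-x)=-L(x)$, so $K$ is even and $L$ is odd. Continuity of $K$ and of $L$ is inherited from continuity of $F$.

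It then remains to show that $K$ is positive definite in the sense of Definition \ref{def:pdf}. I would give the quick algebraic route: conjugating the defining inequality $\sum_i\sum_j\overline{c_i}c_j F(x_i-x_j)\ge 0$ and relabelling $c_i\mapsto\overline{c_i}$ shows that $\overline{F}$ is positive definite, whence $K=\tfrac12(F+\overline{F})$ is positive definite as a sum of positive definite functions (the sum of positive semidefinite matrices being positive semidefinite). Alternatively — the ``application of Bochner'' flavour announced in the subsection title — since $n=1$ we have $Ext(F)\neq\phi$, so $F=\widehat{d\mu}$ on $(-a,a)$ for some finite positive Borel measure $\mu$ on $\mathbb{R}$; then by Corollary \ref{cor:im-mu-mu-s} and (\ref{eq:im-a-1}),
\[
K(x)=\int_{\mathbb{R}}\cos(\lambda x)\,d\mu(\lambda)=\tfrac12\,\widehat{(\mu+\mu^{s})}(x),
\]
and the Bochner transform of the finite positive measure $\mu+\mu^{s}$ is continuous and positive definite on all of $\mathbb{R}$, hence on $(-a,a)$; in the same way (\ref{eq:im-a-2}) exhibits $L(x)=\int_{\mathbb{R}}\sin(\lambda x)\,d\mu(\lambda)$, which reconfirms that $L$ is odd.

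There is essentially no obstacle here; the only point deserving a word of care is that ``positive definite'' for $F$ means the local notion of Definition \ref{def:pdf} (matrices indexed by points of the interval $(-a,a)$), so the conjugate/average manipulation, or the appeal to $Ext(F)\neq\phi$, must be carried out with the sample points confined to that interval — which is automatic, since the matrix identities involve only $x_i\in(-a,a)$ and, for $n=1$, the extension theorem is available.
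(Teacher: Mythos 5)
Your proof is correct, and for the key step — positive definiteness of $K=\Re\left\{ F\right\} $ — it takes a genuinely different route from the paper's. The paper tests the quadratic form only against \emph{real} coefficient vectors $c\in\mathbb{R}^{N}$: since $L$ is odd, the matrix $M_{L}=\left(L(x_{j}-x_{k})\right)$ is antisymmetric, so $c^{T}M_{L}c=0$ and hence $c^{T}M_{K}c=c^{T}M_{F}c\geq0$; it then has to invoke the separate fact that real positive definiteness of the real symmetric kernel $K$ implies complex positive definiteness (citing Aronszajn). You instead stay with complex coefficients throughout: conjugating the defining inequality (and relabelling $c_{i}\mapsto\overline{c_{i}}$) shows $\overline{F}$ is positive definite, and $K=\tfrac{1}{2}\left(F+\overline{F}\right)$ is then positive definite as an average of positive definite functions. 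This sidesteps the real-versus-complex issue entirely and is, if anything, the cleaner argument. Your Bochner alternative (via $Ext(F)\neq\phi$ in one dimension and $K=\tfrac{1}{2}\widehat{\left(\mu+\mu^{s}\right)}$) is also valid and is what the subsection heading advertises, but it imports the nontrivial one-dimensional extension theorem for a fact the purely algebraic route gives for free; note that the paper's own proof, despite the heading, does not actually use Bochner either. The even/odd claims are handled identically in both proofs, via $F(-x)=\overline{F(x)}$.
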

\begin{proof}
The even/odd claims follow from $F(-x)=\overline{F(x)}$ for $x\in(-a,a).$
For a finite set of points $\left\{ x_{j}\right\} _{j=1}^{N}$ in
$(-a,a)$ form the matrices 
\[
M_{F}=\left(F(x_{j}-x_{k})\right)_{j,k=1}^{N},M_{K}=\left(K(x_{j}-x_{k})\right)_{j,k=1}^{N}M_{L}=\left(L(x_{j}-x_{k})\right)_{j,k=1}^{N}.
\]
Let $c=(c_{j})$ be a vector in $\mathbb{R}^{N}.$ Since $L$ is an
odd function it follows that $c^{T}M_{L}c=0,$ consequently, 
\begin{equation}
c^{T}M_{K}c=c^{T}M_{F}c\geq0.\label{eq:im-MMM}
\end{equation}
It follows that $K$ is positive definite over the real numbers and
therefore also over the complex numbers \cite{Aro50}.\end{proof}
\begin{defn}
We say a signed measure $\mu$ is \emph{even,} if $\mu(B)=\mu(-B)$
for all $\mu-$measurable sets $B,$ where $-B=\left\{ -x:x\in B\right\} .$
Similarly, we say $\mu$ is \emph{odd,} if $\mu(B)=-\mu(-B)$ for
all $\mu$--measurable sets $B.$ \end{defn}
\begin{rem}
\label{remark:im-1}Let 
\begin{align*}
\mu_{K}(B) & :=\frac{\mu(B)+\mu(-B)}{2}\;\mbox{ and}\\
\mu_{L}(B) & :=\frac{\mu(B)-\mu(-B)}{2}
\end{align*}
for all $\mu-$measurable sets $B.$ Then $\mu_{K}$ is an even probability
measure and $\mu_{L}$ is an odd real valued measure. If $F=\widehat{d\mu},$
$K=\widehat{d\mu_{K}},$ and $iL=\widehat{d\mu_{L}},$ then $K$ and
$L$ are real valued continuous functions, $F$ and $K$ are continuous
positive definite functions, $L$ is a real valued continuous odd
function and $F=K+iL.$\end{rem}
\begin{lem}
\label{lem:im-bo}Suppose $K$ as the Fourier transform of some even
probability measure $\mu_{K}$ and $iL$ as the Fourier transform
of some odd measure $\mu_{L},$ then $F:=K+iL$ is positive definite
iff $\mu:=\mu_{K}+\mu_{L}$ is a probability measure, i.e., iff $\mu(B)\geq0$
for all Borel set $B.$ \end{lem}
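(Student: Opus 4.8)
The plan is to use Bochner's theorem together with the decomposition $\mu = \mu_K + \mu_L$ recorded in Remark \ref{remark:im-1}. First I would observe that $\mu$ is automatically a \emph{real} signed measure of total mass one: indeed $\mu(\mathbb{R}) = \mu_K(\mathbb{R}) + \mu_L(\mathbb{R}) = 1 + 0 = 1$ since $\mu_K$ is a probability measure and $\mu_L$ is an odd measure (so $\mu_L(\mathbb{R}) = \mu_L(-\mathbb{R}) = -\mu_L(\mathbb{R})$, forcing $\mu_L(\mathbb{R})=0$). Also $F = K + iL = \widehat{d\mu_K} + \widehat{d\mu_L} = \widehat{d\mu}$, where the Fourier transform of a signed measure is defined by the same integral formula. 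So the content of the lemma is precisely: $\widehat{d\mu}$ is positive definite $\iff$ $\mu \geq 0$.

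The direction ($\Leftarrow$) is the easy half and is just the ``easy part'' of Bochner's theorem: if $\mu$ is a (finite) positive measure, then $\widehat{d\mu}$ is positive definite, by the standard computation
\[
\sum_j \sum_k \overline{c_j} c_k \widehat{d\mu}(x_j - x_k) = \int_{\mathbb{R}} \Bigl| \sum_j c_j e^{i\lambda x_j} \Bigr|^2 d\mu(\lambda) \geq 0,
\]
which is exactly the argument already used repeatedly in the excerpt (e.g. in the proof of Lemma \ref{lem:li-meas}). For ($\Rightarrow$), suppose $F = \widehat{d\mu}$ is positive definite. By the (full) Bochner theorem, there is a genuine finite positive measure $\nu$ on $\mathbb{R}$ with $F = \widehat{d\nu}$. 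Now I would invoke uniqueness of the measure in Bochner's representation (equivalently, injectivity of the Fourier transform on finite signed measures): since $\widehat{d\mu} = F = \widehat{d\nu}$ and both $\mu$ and $\nu$ are finite signed (here, real) measures, we conclude $\mu = \nu$, hence $\mu \geq 0$, i.e. $\mu(B) \geq 0$ for all Borel sets $B$. This also shows $\mu$ is a probability measure since $\mu(\mathbb{R}) = F(0) = 1$ (using the normalization $F(0)=1$ implicit throughout, or more precisely $\mu(\mathbb{R}) = \mu_K(\mathbb{R}) = 1$ as computed above).

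The main obstacle — really the only subtlety — is making the uniqueness step rigorous: Bochner's theorem as usually stated produces \emph{some} positive measure, and one needs that the Fourier transform separates finite signed (complex) Borel measures on $\mathbb{R}$. This is standard (it follows, e.g., from density of trigonometric polynomials / the Stone--Weierstrass theorem applied after one-point compactification, or from the classical uniqueness theorem for Fourier--Stieltjes transforms), and I would simply cite it, in the same spirit as the excerpt cites Bochner's characterization in Lemma \ref{lem:lcg-Bochner}. A cosmetic point worth handling cleanly first: one should note at the outset that $K = \widehat{d\mu_K}$ and $iL = \widehat{d\mu_L}$ together give $F = \widehat{d\mu}$ with $\mu = \mu_K + \mu_L$ real, so that the two conditions ``$F$ positive definite'' and ``$\mu$ a probability measure'' are being compared for the \emph{same} candidate measure $\mu$; once that identification is in place, the argument reduces to the two short implications above.
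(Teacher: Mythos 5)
Your proof is correct and is exactly the argument the paper has in mind: its own proof of this lemma consists of the single line ``This is a direct consequence of Bochner's theorem,'' and what you have written out — the identification $F=\widehat{d\mu}$ with $\mu=\mu_K+\mu_L$ a real signed measure of mass one, the easy positivity computation for ($\Leftarrow$), and Bochner plus uniqueness of Fourier--Stieltjes transforms for ($\Rightarrow$) — is the standard unpacking of that line. Your explicit attention to the uniqueness step is a useful addition rather than a deviation.
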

\begin{proof}
This is a direct consequence of Bochner's theorem.\end{proof}
\begin{cor}
If $F$ is positive definite, and $\Im\left\{ F\right\} \neq0,$ then 

(i) $F_{m}:=\Re\left\{ F\right\} +im\Im\left\{ F\right\} $, is positive
definite for all $-1\leq m\leq1$ and 

(ii) $F_{m}$ is not positive definite for sufficiently large $m.$ \end{cor}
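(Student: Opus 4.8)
The plan is to use Bochner's theorem together with the decomposition $F = K + iL$, where $K = \Re\{F\}$ and $L = \Im\{F\}$, already established in the preceding lemmas. Write $F = \widehat{d\mu}$ with $\mu$ a probability measure on $\mathbb{R}$ (here I restrict to the case $F$ defined and p.d. on all of $\mathbb{R}$; the case of a finite interval is handled by first passing to an extension $\mu \in Ext(F)$, which is nonempty in one dimension). By Remark \ref{remark:im-1}, split $\mu = \mu_K + \mu_L$ where $\mu_K(B) = \tfrac12(\mu(B)+\mu(-B))$ is an even probability measure and $\mu_L(B) = \tfrac12(\mu(B)-\mu(-B))$ is an odd signed measure, with $K = \widehat{d\mu_K}$ and $iL = \widehat{d\mu_L}$. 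The point is that $F_m = K + im L$ is the Fourier transform of the signed measure $\mu_K + m\,\mu_L$.

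For part (i), I would observe that $\mu_K + m\,\mu_L$ is the Fourier transform datum associated to $F_m$, and that for $m \in [-1,1]$ we can write
\[
\mu_K + m\,\mu_L = \frac{1+m}{2}\,\mu + \frac{1-m}{2}\,\mu^{s},
\]
using $\mu_K = \tfrac12(\mu + \mu^s)$ and $\mu_L = \tfrac12(\mu - \mu^s)$, where $\mu^s = \mu \circ s$, $s(x)=-x$, as in Lemma \ref{lem:Im-mu-s}. Since $\mu$ and $\mu^s$ are both nonnegative measures, and $\tfrac{1+m}{2}, \tfrac{1-m}{2} \geq 0$ exactly when $-1 \leq m \leq 1$, the measure $\mu_K + m\,\mu_L$ is a nonnegative (finite) measure in that range, with total mass $\tfrac{1+m}{2} + \tfrac{1-m}{2} = 1$. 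By Bochner's theorem (invoked via Lemma \ref{lem:im-bo}), its Fourier transform $F_m$ is positive definite. This disposes of (i) with an explicit convex-combination argument.

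For part (ii), I would argue by contradiction: suppose $F_m$ were positive definite for arbitrarily large $m$. Then $\mu_K + m\,\mu_L \geq 0$ for those $m$, hence for every Borel set $B$ we would need $\mu_K(B) + m\,\mu_L(B) \geq 0$; letting $m \to \infty$ forces $\mu_L(B) \geq 0$ for all $B$, and since $\mu_L$ is odd this forces $\mu_L \equiv 0$, i.e. $L = \Im\{F\} \equiv 0$, contradicting the hypothesis $\Im\{F\} \neq 0$. Concretely: pick a Borel set $B$ with $\mu_L(B) < 0$ (which exists since $\mu_L \neq 0$ and $\mu_L$ is a real signed measure of total variation $> 0$); then for $m > |\mu_K(B)|/|\mu_L(B)|$ the measure $\mu_K + m\mu_L$ assigns negative mass to $B$, so it is not positive, and $F_m$ is not positive definite by Bochner.

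The main obstacle, and the only place requiring care, is the passage from ``$F$ p.d. on $(-a,a)$'' to ``$F = \widehat{d\mu}$ for a genuine probability measure $\mu$'': on a bounded interval $F$ need not determine $\mu$ uniquely, but $Ext(F) \neq \emptyset$ when $n=1$ guarantees at least one such $\mu$, and the entire argument then runs with that fixed choice — note that $F_m$ restricted to $(-a,a)$ equals $\widehat{\mu_K + m\mu_L}$ restricted to $(-a,a)$, so positive-definiteness of the full Fourier transform implies positive-definiteness of the restriction, and conversely for the failure in (ii) one uses that $\Im\{F\}|_{(-a,a)} \neq 0$ already forces $\mu_L \neq 0$. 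A secondary subtlety is ensuring in (ii) that the chosen set $B$ can be taken with $\mu_L(B) \neq 0$ and $\mu_K(B)$ finite, which is automatic since both are finite measures; I would simply use the Hahn decomposition of the signed measure $\mu_L$ to locate such a $B$.
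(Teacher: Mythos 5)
Your proof is correct and follows essentially the same route as the paper: the decomposition $\mu=\mu_{K}+\mu_{L}$ from Remark \ref{remark:im-1} together with Bochner's theorem via Lemma \ref{lem:im-bo}, plus a Hahn-decomposition argument for (ii). Your identity $\mu_{K}+m\,\mu_{L}=\tfrac{1+m}{2}\mu+\tfrac{1-m}{2}\mu^{s}$ is a slightly cleaner packaging of part (i), since it treats $-1\leq m\leq1$ uniformly where the paper splits into the cases $m>0$ and $m<0$ (the latter handled via $\overline{F}$).
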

\begin{proof}
(\emph{i}) We will use the notation from Remark \ref{remark:im-1}.
If $0<m$ and $\mu_{L}(B)<0,$ then
\[
\mu_{m}(B):=\mu_{K}(B)+m\,\mu_{L}(B)\geq\mu(B)\geq0.
\]
The cases where $m<0$ are handled by using that $\overline{F}$ is
positive definite. 

(\emph{ii}) Is established using a similar argument. \end{proof}
\begin{cor}
Let $K$ and $L$ be real valued continuous functions on $\mathbb{R}$.
Suppose $K$ positive definite and $L$ odd, and let $\mu_{k}$ and
$\mu_{L}$ be the correspond even and odd measures. If $K+im\, L$
is positive definite for some real $m\neq0,$ then the support of
$\mu_{L}$ is a subset of the support of $\mu_{K}.$ \end{cor}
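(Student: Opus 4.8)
The plan is to reduce the statement to Bochner's theorem together with the even/odd decomposition already recorded in Lemma \ref{lem:Im-mu-s}, Remark \ref{remark:im-1} and Lemma \ref{lem:im-bo}, and then to run a short symmetry argument on Borel sets disjoint from the support of $\mu_K$.

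First I would observe that, since $K+imL$ is a continuous positive definite function with real part $K$ and imaginary part $mL$, Bochner's theorem produces a finite positive Borel measure $\nu$ on $\mathbb{R}$ with $K+imL=\widehat{d\nu}$. Applying $\overline{F}=\widehat{d\mu^{s}}$ (Lemma \ref{lem:Im-mu-s}, with $s(x)=-x$) to $\overline{K+imL}=K-imL$ gives $K-imL=\widehat{d\nu^{s}}$, so by uniqueness of the Fourier transform of finite measures
\[
\mu_K=\tfrac12(\nu+\nu^{s}),\qquad m\,\mu_L=\tfrac12(\nu-\nu^{s}),
\]
so in particular $\mu_L$ is a finite odd signed measure and $\mu_K+m\,\mu_L=\nu\ge 0$. (Equivalently, one invokes Lemma \ref{lem:im-bo} directly: the hypothesis says $\mu:=\mu_K+m\mu_L$ is a probability measure.)

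Next, since $\mu_K$ is even, its closed support $S:=\operatorname{supp}(\mu_K)$ is symmetric, hence $U:=\mathbb{R}\setminus S$ is open, symmetric, and $\mu_K(U)=0$. For any Borel set $B\subseteq U$ we have $\mu_K(B)=\mu_K(-B)=0$, so from $\mu_K+m\mu_L\ge 0$ and oddness of $\mu_L$,
\[
m\,\mu_L(B)=(\mu_K+m\mu_L)(B)\ge 0,\qquad -m\,\mu_L(B)=m\,\mu_L(-B)=(\mu_K+m\mu_L)(-B)\ge 0,
\]
whence $m\,\mu_L(B)=0$; since $m\neq 0$, $\mu_L(B)=0$ for every Borel $B\subseteq U$. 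Finally, taking a Hahn decomposition $\mathbb{R}=P\sqcup N$ for $\mu_L$ and applying the last line to $B=U\cap P$ and $B=U\cap N$ gives $\mu_L^{+}(U)=\mu_L^{-}(U)=0$, so $|\mu_L|(U)=0$; thus every point of $U$ has an open neighbourhood of total-variation measure $0$, i.e. $U\cap\operatorname{supp}(\mu_L)=\emptyset$, which is exactly $\operatorname{supp}(\mu_L)\subseteq\operatorname{supp}(\mu_K)$. The only mildly delicate point is the bookkeeping in the first paragraph — verifying that $\mu_L$ is genuinely the finite odd measure attached to $L$ and that $\mu_K+m\mu_L$ is precisely the Bochner measure of $K+imL$ — but this is the content of Remark \ref{remark:im-1} and Lemma \ref{lem:im-bo}, so no real obstacle remains.
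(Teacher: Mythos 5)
Your proof is correct and follows the same route as the paper: the paper's own argument is the one-line contrapositive that if $\operatorname{supp}(\mu_L)\not\subseteq\operatorname{supp}(\mu_K)$ then $\mu_K(B)+m\,\mu_L(B)<0$ for some Borel $B$, contradicting Lemma \ref{lem:im-bo}. Your version merely makes explicit the detail the paper leaves implicit — namely that one may need to replace $B$ by $-B$ (using the symmetry of $\mathbb{R}\setminus\operatorname{supp}(\mu_K)$ and the oddness of $\mu_L$) to force the negative sign regardless of the sign of $m\,\mu_L(B)$ — which is a worthwhile clarification but not a different proof.
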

\begin{proof}
Fix $m\neq0.$ If the support of $\mu_{L}$ is not contained in the
support of $\mu_{K},$ then $\mu_{K}(B)+m\,\mu_{L}(B)<0$ for some
$B.$\end{proof}
\begin{rem}
The converse fails, support containment does not imply $\mu_{k}+m\mu_{L}$
is positive for some $m>0$ since $\mu_{K}$ can ``decrease'' much
faster than $\mu_{L}.$\end{rem}
\begin{example}
\label{Example:im-14}Let $d\mu(\lambda):=\delta_{-1}+\chi_{\mathbb{R}^{+}}(\lambda)e^{-\lambda}d\lambda$
and set $F:=\restr{\widehat{d\mu}}{(-1,1)}.$ Then
\begin{align*}
\Re\left\{ F\right\} (x) & =\cos(x)+\frac{1}{1+x^{2}}\\
\Im\left\{ F\right\} (x) & =-\sin(x)+\frac{x}{1+x^{2}};
\end{align*}
and $D^{(F)}$ in $\mathscr{H}_{F}$ has deficiency indices $(0,0).$
\index{deficiency indices}\end{example}
\begin{proof}
By construction:
\begin{align}
F(x) & =\int_{\mathbb{R}}e^{i\lambda x}d\mu(\lambda)=e^{-ix}+\int_{0}^{\infty}e^{i\lambda x}-\lambda d\lambda\nonumber \\
 & =e^{-ix}+\frac{1}{1-ix};\label{eq:im-10}
\end{align}
establishing the first claim. 

Consider $u=T_{F}\phi,$ for some $\phi\in C_{c}^{\infty}(0,1).$
By (\ref{eq:im-10})
\begin{align*}
u(x) & =\int_{0}^{1}\phi(y)F(x-y)dy\\
 & =\widehat{\phi}(-1)e^{-ix}+\int_{0}^{1}\phi(y)\frac{1}{1-i(x-y)}dy.
\end{align*}
Taking two derivatives we get 
\[
u''(x)=-\widehat{\phi}(-1)e^{-ix}+\int_{0}^{1}\phi(y)\frac{-2}{\left(1-i(x-y)\right)^{3}}dy.
\]
It follows that $u''+u\to0$ as $x\to\pm\infty,$ i.e., 
\begin{equation}
\lim_{|x|\to\infty}\left|u''(x)+u(x)\right|=0.\label{eq:im-9}
\end{equation}
A standard approximation argument shows that (\ref{eq:im-9}) holds
for all $u\in\mathscr{H}_{F}.$ 

Equation (\ref{eq:im-9}) rules out that either of $e^{\pm x}$ is
on $\mathscr{H}_{F}$, hence the deficiency indices are $(0,0)$ as
claimed. 
\end{proof}
If $\mu_{K}$ is an even probability measure and $f\left(x\right)$
is an odd function, s.t. $-1\leq f(x)\leq1$, then $d\mu(x):=\left(1+f(x)\right)d\mu_{K}(x)$
is a probability measure. Conversely, we have
\begin{lem}
\label{lem:RN+Hahn}Let $\mu$ be a probability measure on the Borel
sets. There is an even probability measure $\mu_{K}$ and an odd real
valued $\mu-$measurable function $f$ with $\left|f\right|\leq1,$
such that $d\mu(\lambda)=\left(1+f(\lambda)\right)d\mu_{K}(\lambda).$ \end{lem}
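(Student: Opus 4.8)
The plan is to exhibit $\mu_{K}$ explicitly and then read off $f$ as a Radon--Nikodym density. Let $s(\lambda)=-\lambda$ and $\mu^{s}=\mu\circ s$ as in the preceding discussion, and set $\mu_{K}:=\tfrac12(\mu+\mu^{s})$. As already observed in Remark \ref{remark:im-1}, $\mu_{K}$ is an even probability measure; moreover $\mu_{K}\circ s=\mu_{K}$, and $\mu\ll\mu_{K}$ because $\mu_{K}(B)=0$ forces $\mu(B)+\mu^{s}(B)=0$ with both terms nonnegative. Hence there is a Borel function $g=\tfrac{d\mu}{d\mu_{K}}\geq0$ with $g\in L^{1}(\mu_{K})$, and I will take $f:=g-1$.

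Two properties of $g$ remain to be checked. First, $0\le g\le 2$ holds $\mu_{K}$-a.e.: nonnegativity is clear, and for any Borel $B$ one has $\mu(B)\le\mu(B)+\mu^{s}(B)=2\mu_{K}(B)$, so $\int_{B}g\,d\mu_{K}\le 2\mu_{K}(B)$ for every $B$, which gives $g\le2$ a.e. Second, $g(\lambda)+g(-\lambda)=2$ $\mu_{K}$-a.e.: changing variables by $s$ and using that $\mu_{K}$ is even shows that $\mu^{s}$ has density $\lambda\mapsto g(-\lambda)$ with respect to $\mu_{K}$; combined with $\mu+\mu^{s}=2\mu_{K}$ this yields $\int_{B}\bigl(g(\lambda)+g(-\lambda)\bigr)\,d\mu_{K}=2\mu_{K}(B)$ for all Borel $B$, hence the pointwise identity a.e. Consequently $f=g-1$ satisfies $|f|\le1$ a.e. and $f(-\lambda)=1-g(\lambda)=-f(\lambda)$ a.e.

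To upgrade ``$\mu_{K}$-a.e.'' to ``everywhere'' (so that the statement holds literally), note that the exceptional Borel set $E:=\{\lambda:g(\lambda)\notin[0,2]\}\cup\{\lambda:g(\lambda)+g(-\lambda)\neq2\}$ is $\mu_{K}$-null, and we may enlarge it to $E\cup(-E)$, which is still Borel, still $\mu_{K}$-null (by evenness of $\mu_{K}$), and symmetric under $\lambda\mapsto-\lambda$. Replacing $f$ by $(g-1)\,\mathbf{1}_{(E\cup(-E))^{c}}$ leaves it unchanged off a $\mu_{K}$-null set, makes $|f|\le1$ and $f$ odd \emph{everywhere}, and --- since $\mu\ll\mu_{K}$ so the null set is $\mu$-null as well --- preserves $\int_{B}(1+f)\,d\mu_{K}=\int_{B}g\,d\mu_{K}=\mu(B)$ for every Borel $B$. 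This is exactly $d\mu(\lambda)=\bigl(1+f(\lambda)\bigr)\,d\mu_{K}(\lambda)$.

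Nothing here is deep; the only points requiring care are the bookkeeping in the last paragraph (choosing the exceptional set symmetric, and noting measurability of the Borel version of $g$) and the change-of-variables identity showing $\mu^{s}$ has density $\lambda\mapsto g(-\lambda)$ against $\mu_{K}$, which is where evenness of $\mu_{K}$ enters. An equivalent route, matching the ``Hahn'' in the lemma's label, is to put $f:=\tfrac{d\mu_{L}}{d\mu_{K}}$ with $\mu_{L}:=\tfrac12(\mu-\mu^{s})$ and derive $|f|\le1$ a.e.\ from $|\mu_{L}(B)|\le\mu_{K}(B)$ for all $B$ via the total-variation (Hahn decomposition) characterization of $|\mu_{L}|$; this produces the same $f$, since $f=\tfrac{d\mu}{d\mu_{K}}-1$.
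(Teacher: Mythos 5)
Your proof is correct, and it takes a genuinely different route from the paper's. The paper works with the signed measure $\mu_{L}:=\tfrac{1}{2}\left(\mu-\mu^{s}\right)$ alongside $\mu_{K}$, proves $\mu_{L}\ll\mu_{K}$ by an explicit Hahn-decomposition argument (splitting the positive and negative sets $P$, $N$ into symmetric pieces $P'=-N'$ plus a $\mu_{L}$-null remainder, then bounding $\mu_{L}\left(B\cap P'\right)$ and $-\mu_{L}\left(B\cap N'\right)$ by $\mu_{K}(B)$), and takes $f=\tfrac{d\mu_{L}}{d\mu_{K}}$ --- which is exactly the alternative you sketch in your closing sentence. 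You instead differentiate $\mu$ itself against $\mu_{K}$, where absolute continuity is immediate from $\mu\leq2\mu_{K}$, and you extract both the bound $0\leq g\leq2$ and the symmetry $g(\lambda)+g(-\lambda)=2$ directly from the identity $\mu+\mu^{s}=2\mu_{K}$ together with the change of variables under $s$ (this is where evenness of $\mu_{K}$ is used). Since $\mu_{L}=\mu-\mu_{K}$, the two routes produce the same $f=g-1$. Your version is tidier on two counts: the absolute-continuity step is one line rather than a case analysis, and you explicitly handle the passage from ``$|f|\leq1$ and odd $\mu_{K}$-a.e.'' to ``everywhere'' by symmetrizing the exceptional null set --- a point the paper leaves implicit, as its proof stops at the existence of the Radon--Nikodym derivative without verifying $|f|\leq1$ or oddness. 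What the paper's route buys is the concrete inequality $\left|\mu_{L}(B)\right|\leq\mu_{K}(B)$ for all Borel $B$, i.e., the total-variation bound one would quote if $\left|\mu_{L}\right|\leq\mu_{K}$ were itself the object of interest.
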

\begin{proof}
Let $\mu_{K}:=\tfrac{1}{2}\left(\mu+\mu^{s}\right)$ and $\mu_{L}:=\tfrac{1}{2}\left(\mu-\mu^{s}\right).$
Clearly, $\mu_{K}$ is an even probability measure and $\mu_{L}$
is an odd real valued measure. Since $\mu_{K}(B)+\mu_{L}(B)=\mu(B)\geq0,$
it follows that $\mu_{K}(B)\geq\mu_{L}(B)$ for all Borel sets $B.$ 

Applying the Hahn decomposition theorem to $\mu_{L}$ we get sets
$P$ and $N$ such that $P\cap N=\emptyset,$ $\mu_{L}(B\cap P)\geq0$
and $\mu_{L}(B\cap N)\leq0$ for all $B.$ Let
\begin{align*}
P' & :=\left\{ x\in P:-x\in N\right\} \\
N' & :=\left\{ x\in N:-x\in P\right\} \\
O' & :=\left(P\setminus P'\right)\cup\left(N\setminus N'\right),
\end{align*}
then $N'=-P'$ and $\mu_{L}(B\cap O')=0$ for all $B.$ Write 
\[
\mu_{L}\left(B\right)=\mu_{L}\left(B\cap P'\right)+\mu_{L}\left(B\cap N'\right).
\]
Then $\mu_{K}\left(B\right)\geq\mu_{K}\left(B\cap P'\right)\geq\mu_{L}\left(B\cap P'\right)$
and
\begin{align*}
0\leq-\mu_{L}\left(B\cap N'\right) & =\mu_{L}\left(-\left(B\cap N'\right)\right)\\
 & =\mu_{L}\left(-B\cap P'\right)\\
 & \leq\mu_{K}\left(-B\cap P'\right)\leq\mu_{K}(B).
\end{align*}
Hence, $\mu_{L}$ is absolutely continuous with respect to $\mu_{K}.$
Setting $f:=\frac{d\mu_{L}}{d\mu_{K}}$, the Radon-Nikodym derivative
of $\mu_{L}$ with respect to $\mu_{K},$ completes the proof. \index{absolutely continuous}\end{proof}
\begin{cor}
Let $F=\widehat{\mu}$ be a positive definite function with $F(0)=1.$
Let $\mu_{K}:=\tfrac{1}{2}\left(\mu+\mu^{s}\right)$ then $\Re\left\{ F\right\} (x)=\widehat{\mu_{K}}(x)$
and there is an odd function 
\[
-1\leq f(\lambda)\leq1,
\]
 such that $\Im\left\{ F\right\} (x)=\widehat{f\mu_{K}}(x)$.
\begin{cor}
\label{cor:RI}Let $F$ be a continuous p.d. function on $(-a,a).$
Let $\Re\left\{ F\right\} $ be the real part of $F.$ Then $\mathscr{H}_{F}$
is a subset of $\mathscr{H}_{\Re\left\{ F\right\} }.$ In particular,
if $D^{\left(\Re\left\{ F\right\} \right)}$ has deficiency indices
$(1,1)$ so does $D^{\left(F\right)}.$ \index{deficiency indices}
\end{cor}
\end{cor}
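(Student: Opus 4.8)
The plan is to reduce everything to the comparison machinery of Theorem~\ref{thm:FK-1} and Theorem~\ref{thm:FK-app-1}, with essentially no computation; the one identity needed is $2\Re\{F\}=F+\overline{F}$.

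First I would record that $\Re\{F\}$ is itself a continuous positive definite function on $(-a,a)$ — this is exactly the content of the Lemma immediately preceding the corollary (for real coefficients the matrix $\big(\Re\{F\}(x_j-x_k)\big)$ agrees with $\big(F(x_j-x_k)\big)$, and positive definiteness over $\mathbb{R}$ implies it over $\mathbb{C}$; cf.\ \cite{Aro50}). Hence the RKHS $\mathscr{H}_{\Re\{F\}}$ and the skew-Hermitian operator $D^{(\Re\{F\})}$ are well defined. Next I would check $F\ll\Re\{F\}$ in the sense of Definition~\ref{def:FK-1}, with constant $A=2$: for any finite $\{x_j\}\subset(0,a)$ the Hermitian matrix $M:=\big(F(x_j-x_k)\big)$ is positive semidefinite, so its entrywise complex conjugate $\overline{M}=M^{\mathsf T}=\big(\overline{F(x_j-x_k)}\big)$ is positive semidefinite as well; therefore for all $\{c_j\}\subset\mathbb{C}$,
\[
\sum_j\sum_k \overline{c_j}\,c_k\,F(x_j-x_k)\;\le\;2\sum_j\sum_k \overline{c_j}\,c_k\,\Re\{F\}(x_j-x_k),
\]
since $2\Re\{F\}(x_j-x_k)=F(x_j-x_k)+\overline{F}(x_j-x_k)$ and the second matrix is positive semidefinite. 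Equivalently, $2\Re\{F\}-F=\overline{F}$ is positive definite.

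Now Theorem~\ref{thm:FK-1} applies verbatim: $F\ll\Re\{F\}$ forces $\mathscr{H}_F$ to be a closed subspace of $\mathscr{H}_{\Re\{F\}}$ under the identification by continuous functions on $(0,a)$; in particular $\mathscr{H}_F\subseteq\mathscr{H}_{\Re\{F\}}$, which is the first assertion. For the deficiency-index statement I would invoke Theorem~\ref{thm:FK-app-1} with the dominating function $\Re\{F\}$ playing the role of ``$F$'' and the dominated function $F$ playing the role of ``$K$''. Here both functions live on the same interval $(-a,a)$ — the limiting case $a=b$ of that theorem, whose proof uses only Theorem~\ref{thm:FK-1} together with the description of the defect vectors as multiples of $x\mapsto e^{-zx}$ — so the restriction appearing in the conclusion is the identity and one obtains $DEF_F(z)=DEF_{\Re\{F\}}(z)$ for every $z\in\mathbb{C}$. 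Thus the two pairs of deficiency spaces coincide. Since by Corollary~\ref{cor:DF} the indices of $D^{(F)}$ can only be $(0,0)$, $(0,1)$, $(1,0)$ or $(1,1)$, the hypothesis that $D^{(\Re\{F\})}$ has indices $(1,1)$ — i.e.\ both its deficiency spaces are one-dimensional — immediately gives that both deficiency spaces of $D^{(F)}$ are one-dimensional, hence $D^{(F)}$ also has indices $(1,1)$.

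There is no serious obstacle here; the only points requiring a line of care are the bookkeeping of which RKHS contains which (the dominating function's RKHS is the larger one, so $\mathscr{H}_F$ sits inside $\mathscr{H}_{\Re\{F\}}$), and the observation that Theorem~\ref{thm:FK-app-1} remains valid when the two intervals coincide, so that its conclusion becomes a genuine equality of defect spaces rather than a restriction map.
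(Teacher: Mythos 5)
Your argument for the inclusion $\mathscr{H}_{F}\subseteq\mathscr{H}_{\Re\left\{ F\right\} }$ is correct, and it takes a genuinely different route from the paper's. The paper works on the spectral side: it uses a representing measure $\mu\in Ext\left(F\right)$ (available since we are on an interval in $\mathbb{R}$), writes the quadratic form of $F$ as $\int\left|\widehat{\varphi}\right|^{2}d\mu$, and uses the trivial measure inequality $\mu\leq\mu+\mu^{s}=2\mu_{K}$ to dominate it by twice the quadratic form of $\Re\left\{ F\right\} $, concluding via the membership criterion of Theorem \ref{thm:HF} / Lemma \ref{lem:lcg-bdd}. You instead observe that the entrywise conjugate of a positive semidefinite Hermitian matrix is positive semidefinite, so $\overline{F}$ is p.d. and $2\Re\left\{ F\right\} -F=\overline{F}$ is a p.d. kernel; this yields the same domination $F\ll\Re\left\{ F\right\} $ with $A=2$ without invoking any extension measure, after which Theorem \ref{thm:FK-1} gives the closed inclusion. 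Your version is more elementary and does not presuppose $Ext\left(F\right)\neq\phi$; the paper's is shorter given that \secref{imgF} has already fixed $\mu$. (The outer corollary about $\Re\left\{ F\right\} =\widehat{\mu_{K}}$ and $\Im\left\{ F\right\} =\widehat{f\mu_{K}}$ is immediate from Lemma \ref{lem:RN+Hahn}, and neither you nor the paper needs to argue it separately.)

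The deficiency-index clause deserves a warning. The inclusion $\mathscr{H}_{F}\subseteq\mathscr{H}_{\Re\left\{ F\right\} }$, combined with the identification of the defect spaces with $\mathbb{C}e^{\mp y}\cap\mathscr{H}$ (Theorem \ref{thm:Eigenspaces-for-the-adjoint}), yields by itself only the implication: if $D^{\left(F\right)}$ has indices $\left(1,1\right)$ then so does $D^{\left(\Re\left\{ F\right\} \right)}$ -- equivalently, indices $\left(0,0\right)$ for $\Re\left\{ F\right\} $ force $\left(0,0\right)$ for $F$. That is the reverse of the implication printed in Corollary \ref{cor:RI}, and the paper's own proof stops at the inclusion and offers nothing further for the ``in particular.'' You correctly sense that the stated direction requires the defect vectors to pass the other way, and you reach for the equality $DEF_{F}\left(z\right)=DEF_{\Re\left\{ F\right\} }\left(z\right)$ from Theorem \ref{thm:FK-app-1}; that is a legitimate citation, and your write-up is in fact more explicit than the paper about where the stated implication must come from. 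But be aware that the half of that equality you need (that a defect vector for the dominating function already lies in the smaller RKHS of the dominated one) rests on the sketchy ``converse'' step in the proof of Theorem \ref{thm:FK-app-1}: the map $l$ there is the orthogonal projection of $\mathscr{H}_{F}$ onto the closed subspace $\mathscr{H}_{K}$, not restriction of functions, and it is not shown to send $e_{z}$ to a nonzero multiple of $e_{z}$. If one trusts only the inclusion, the safe conclusion is the reversed implication; the gap, such as it is, sits in the cited theorem rather than in your use of it.
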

\begin{proof}
Recall, a continuous function $\xi$ is in $ $$\mathscr{H}_{F}$
iff 
\[
\left|\int_{0}^{1}\psi(y)\xi(y)dy\right|^{2}\leq A\int_{0}^{1}\int_{0}^{1}\overline{\phi(x)}\phi(y)F(x-y)dxdy.
\]
Since, 
\begin{align*}
\int_{0}^{1}\int_{0}^{1}\overline{\phi(x)}\phi(y)F(x-y)dxdy & =\int_{0}^{1}\int_{0}^{1}\int_{\mathbb{R}}\overline{\phi(x)}\phi(y)e^{-i\lambda(x-y)}d\mu(\lambda)dxdy\\
 & =\int_{\mathbb{R}}\left|\phi(\lambda)\right|^{2}d\mu(\lambda)\\
 & \leq2\int_{\mathbb{R}}\left|\phi(\lambda)\right|^{2}d\mu_{K}(\lambda)\\
 & =2\int_{0}^{1}\int_{0}^{1}\overline{\phi(x)}\phi(y)K(x-y)dxdy
\end{align*}
it follows that $\mathscr{H}_{F}$ is contained in $\mathscr{H}_{\Re\left\{ F\right\} }$. 
\end{proof}

\chapter{\label{chap:conv}Convolution Products}

A source of interesting measures in probability are constructed as
product measures or convolutions; and this includes infinite operations;
see for example \cite{IM65,Jor07,KS02,Par09}.

Below we study these operations in the contest of our positive definite
functions, defined on subsets of groups. For example, most realizations
of fractal measures arise as infinite convolutions, see e.g., \cite{DJ10,JP12,JKS12,DJ12,JKS11,JKS08}.
Motivated by these applications, we show below that, given a system
of continuous positive definite functions $F_{1},F_{2},\ldots$, defined
on an open subset of a group, we can form well defined products, including
infinite products, which are again continuous positive definite. We
further show that if finite positive measures $\mu_{i}$, $i=1,2,\ldots$,
are given, $\mu_{i}\in Ext\left(F_{i}\right)$ then the convolution
of the measures $\mu_{i}$ is in $Ext\left(F\right)$ where $F$ is
the product of the p.d. functions $F_{i}$. This will be applied later
in the Memoir.

\index{measure!product}

\index{measure!convolution}

\index{measure!fractal}

\index{group!locally compact abelian}

\index{group!circle}
\begin{defn}
Let $F$ be a continuous positive definite function defined on a subset
in $G$ (a locally compact Abelian group). Set 
\begin{equation}
Ext\left(F\right)=\left\{ \mu\in\mathscr{M}\left(\widehat{G}\right)\:\big|\:\widehat{d\mu}\mbox{ is an extension of \ensuremath{F}}\right\} .\label{eq:conv-1}
\end{equation}

\end{defn}
In order to study the set $Ext\left(F\right)$ from above, it helps
to develop tools. One such tool is convolution, which we outline below.
It is also helpful in connection with the study of symmetric spaces,
such as the case $G=\mathbb{T}=\mathbb{R}/\mathbb{Z}$ (the circle
group), versus extensions to the group $\mathbb{R}$. 

Let $G$ be a locally compact group, and let $\Omega$ be a non-empty,
connected and open subset in $G$. Now consider systems of p.d. and
continuous functions on the set $\Omega^{-1}\Omega$. Specifically,
let $F_{i}$ be two or more p.d. continuous functions on $\Omega^{-1}\Omega$;
possibly an infinite family, so $F_{1},F_{2},\ldots$, all defined
on $\Omega^{-1}\Omega$. As usual, we normalize our p.d. functions
$F_{i}\left(e\right)=1$, where $e$ is the unit element in $G$.
\begin{lem}
Form the point-wise product $F$ of any system of p.d. functions $F_{i}$
on $\Omega^{-1}\Omega$; then $F$ is again p.d. and continuous on
the set $\Omega^{-1}\Omega$.\end{lem}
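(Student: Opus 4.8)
The plan is to reduce the claim to the classical Schur (Hadamard) product theorem for the finite matrices that enter the definition of positive definiteness, and then to pass to infinite products by a routine limiting argument together with the automatic-continuity principle for positive definite functions on groups recorded earlier in the Memoir (the theorem that a p.d.\ function continuous near $e$ is continuous everywhere).

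First I would do the finite case. Fix two continuous p.d.\ functions $F_1,F_2$ on $\Omega^{-1}\Omega$ and put $F=F_1F_2$. For any finite set $\{x_1,\dots,x_N\}\subset\Omega$ the matrices $A=(F_1(x_i^{-1}x_j))_{i,j}$ and $B=(F_2(x_i^{-1}x_j))_{i,j}$ are Hermitian, because $F_k(g^{-1})=\overline{F_k(g)}$, and positive semidefinite by hypothesis; hence by the Schur product theorem the entrywise product $A\circ B=(F(x_i^{-1}x_j))_{i,j}$ is positive semidefinite, i.e.\ $\sum_i\sum_j\overline{c_i}c_j\,F(x_i^{-1}x_j)\ge 0$ for every $\{c_i\}\subset\mathbb{C}$. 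So $F$ is p.d., and it is continuous as a product of continuous functions. Iterating, every finite partial product $F^{(N)}:=\prod_{i=1}^N F_i$ is continuous and positive definite, with $F^{(N)}(e)=1$ by the normalization $F_i(e)=1$.

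Next the infinite case. Since each $F_i$ is p.d.\ and normalized, $|F_i(x)|\le F_i(e)=1$ on $\Omega^{-1}\Omega$, so $|F^{(N)}(x)|$ is nonincreasing in $N$ and the partial products are uniformly bounded by $1$. I would read ``well-defined infinite product'' as the hypothesis that $F(x):=\lim_{N\to\infty}F^{(N)}(x)$ exists pointwise on $\Omega^{-1}\Omega$; the clean sufficient condition is local summability $\sum_i\sup_{x\in K}|1-F_i(x)|<\infty$ for every compact $K\subset\Omega^{-1}\Omega$, via the telescoping estimate $|1-\prod_{i=1}^N a_i|\le\sum_{i=1}^N|1-a_i|$ valid when $|a_i|\le 1$, which even gives uniform convergence of $F^{(N)}$ on $K$ and hence continuity of $F$. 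Positive definiteness of $F$ is then immediate: letting $N\to\infty$ in $\sum_i\sum_j\overline{c_i}c_j\,F^{(N)}(x_i^{-1}x_j)\ge 0$ preserves the inequality. For continuity with only pointwise convergence assumed, one notes $F(e)=1$ and, using the telescoping bound together with convergence, that $F(x)\to 1=F(e)$ as $x\to e$ (here one may also invoke the standard p.d.\ estimate $|1-F_i(x)|^2\le 2(1-\Re F_i(x))$, the $\xi=v_0,\ y=e$ case of Lemma~\ref{lem:F-bd}); then the cited theorem upgrades continuity near $e$ to continuity on all of $\Omega^{-1}\Omega$.

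The only genuine subtlety I foresee is the phrase ``any system'': for a completely arbitrary family $\{F_i\}$ the partial products need not converge as complex numbers even though their moduli do, so the infinite-product statement must be understood under a convergence hypothesis on the family, and the natural one — local summability of $1-F_i$ — simultaneously furnishes existence of the product, its positive definiteness, and its continuity. The finite-product assertion, by contrast, is unconditional and is precisely the Schur product theorem; this is the part I would present in full detail, treating the infinite case as its locally uniform limit.
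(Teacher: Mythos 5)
Your proof is correct and follows essentially the same route as the paper, which simply cites the ``standard lemma about p.d.\ kernels'' (i.e.\ the Schur product theorem you spell out) without further argument. You supply the details the paper omits — in particular the careful treatment of the infinite case, where your observation that ``any system'' must carry a convergence hypothesis (e.g.\ local summability of $1-F_i$) to make the infinite product well defined is a genuine and worthwhile clarification.
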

\begin{proof}
This is an application of a standard lemma about p.d. kernels, see
e.g., \cite{BCR84}. From this, we conclude that $F$ is again a continuous
and positive definite function on $\Omega^{-1}\Omega$.
\end{proof}
If we further assume that $G$ is also Abelian, and so $G$ is locally
compact abelian, then the spectral theory takes a more explicit form.
\begin{lem}
Assume $Ext\left(F_{i}\right)$ for $i=1,2,\ldots$ are non-empty.
For any system of individual measures $\mu_{i}\in Ext\left(F_{i}\right)$
we get that the resulting convolution-product measure $\mu$ formed
from the factors $\mu_{i}$ by the convolution in $G$, is in $Ext\left(F\right)$\end{lem}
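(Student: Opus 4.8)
The plan is to reduce the statement to a direct computation with Fourier transforms on the dual group $\widehat{G}$, using Bochner's theorem (as recorded in Lemma \ref{lem:lcg-Bochner}) on both ends. First I would recall the setup: $\Omega^{-1}\Omega$ is a subset of a locally compact Abelian group $G$, each $F_i$ is continuous and positive definite on $\Omega^{-1}\Omega$ with $F_i(e)=1$, and $F = \prod_i F_i$ is again continuous and positive definite on $\Omega^{-1}\Omega$ by the preceding lemma. Since $Ext(F_i)\neq\phi$, pick $\mu_i\in Ext(F_i)$, i.e., each $\mu_i$ is a Borel probability measure on $\widehat{G}$ with $\widehat{\mu_i}(x)=F_i(x)$ for all $x\in\Omega^{-1}\Omega$. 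The key classical fact I would invoke is that the Fourier transform on a locally compact Abelian group turns convolution of measures on $\widehat{G}$ into pointwise multiplication of their transforms on $G$: if $\mu = \mu_1 * \mu_2 * \cdots$, then $\widehat{\mu}(x) = \prod_i \widehat{\mu_i}(x)$ for all $x\in G$.

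The steps, in order, would be: (1) Treat the finite case $\mu = \mu_1 * \cdots * \mu_N$ first. The convolution of finitely many Borel probability measures on $\widehat{G}$ is again a Borel probability measure on $\widehat{G}$ (total mass is multiplicative, hence $1$), and $\widehat{\mu}(x) = \prod_{i=1}^N \widehat{\mu_i}(x) = \prod_{i=1}^N F_i(x)$ for all $x\in G$; restricting to $x\in\Omega^{-1}\Omega$ gives $\widehat{\mu}(x)=F(x)$, so $\mu\in Ext(F)$ by Lemma \ref{lem:lcg-Bochner}. (2) For the infinite case, form the partial products $\nu_N := \mu_1*\cdots*\mu_N$, each a probability measure on $\widehat{G}$. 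Because $\widehat{G}$ need not be compact, I would be careful here: the infinite convolution $\mu = \lim_N \nu_N$ should be taken in the weak-$*$ topology, and its existence is exactly what is needed. The cleanest route is to argue that the sequence $(\nu_N)$ is weak-$*$ convergent by checking convergence of characteristic functions: $\widehat{\nu_N}(x) = \prod_{i=1}^N F_i(x)$, and since each $|F_i(x)|\le 1$ (normalization $F_i(e)=1$ plus positive-definiteness give $|F_i(x)|\le F_i(e)=1$), the partial products $\prod_{i=1}^N F_i(x)$ converge pointwise — here one needs the hypothesis, implicit in the phrase ``we get that the resulting convolution-product measure $\mu$ formed from the factors,'' that the infinite product $\prod_i F_i$ actually converges to a continuous positive definite function $F$ (this is what the preceding lemma about infinite products supplies). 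Then Lévy-type continuity (the continuity theorem for Fourier transforms of measures on locally compact Abelian groups, e.g. \cite{Rud73,Ru90}) yields a probability measure $\mu$ on $\widehat{G}$ with $\widehat{\mu} = F$ on all of $G$, hence $\widehat{\mu}=F$ on $\Omega^{-1}\Omega$, so $\mu\in Ext(F)$.

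Concretely I would phrase the computation as
\begin{equation}
\widehat{\mu}(x) \;=\; \int_{\widehat{G}} \langle \lambda, x\rangle\, d\mu(\lambda) \;=\; \prod_{i} \int_{\widehat{G}} \langle \lambda, x\rangle\, d\mu_i(\lambda) \;=\; \prod_i F_i(x) \;=\; F(x), \qquad x\in \Omega^{-1}\Omega,
\end{equation}
where the second equality is the multiplicativity of the Fourier transform under convolution (finite case by Fubini on $\widehat{G}\times\widehat{G}$, infinite case by passing to the weak-$*$ limit), and the third uses $\mu_i\in Ext(F_i)$. Invoking Lemma \ref{lem:lcg-Bochner} once more then gives $\mu\in Ext(F)$, which is the claim.

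The main obstacle is the infinite case: one must make sure the infinite convolution is a well-defined finite (probability) measure on $\widehat{G}$, and when $\widehat{G}$ is non-compact this requires the continuity theorem together with tightness-type control, which in turn is guaranteed precisely by the convergence of $\prod_i F_i$ to a function $F$ that is continuous at $e$ (automatic here, since $F$ is continuous and positive definite on $\Omega^{-1}\Omega \ni e$). I expect the finite case to be essentially a one-line Fubini argument, and the bulk of the writing to be the careful statement and application of the Fourier-continuity theorem for the infinite convolution; I would cite \cite{BCR84} for the infinite-product-of-p.d.-functions step and \cite{Rud73} for the measure-theoretic continuity argument, and otherwise keep the proof short.
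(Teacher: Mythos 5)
Your proposal is correct and takes essentially the same route as the paper: the paper's own proof is a one-line deferral to the Bochner-transform characterization of $Ext\left(F\right)$ from \subref{lcg}--\subref{euclid}, which is precisely the computation (convolution on $\widehat{G}$ goes to pointwise product of transforms, then restrict to $\Omega^{-1}\Omega$) that you write out in full. Your explicit treatment of the infinite case via partial products and the continuity theorem is more careful than anything the paper records, and your reading that the convolution must be taken in $\widehat{G}$ (despite the statement's phrase ``convolution in $G$'') is the correct one.
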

\begin{proof}
This is an application of our results in sections \ref{sub:lcg}-\ref{sub:euclid}. \end{proof}
\begin{rem}
In some applications the convolution $\mu_{1}\ast\mu_{2}$ makes sense
even if only one of the measures is finite.
\end{rem}
\begin{flushleft}
\textbf{Application. }The case $G=\mathbb{R}$. Let $\mu_{1}$ be
the Dirac-comb (\cite{Ch03,Cor89}) 
\[
d\mu_{1}:=\sum_{n\in\mathbb{Z}}\delta\left(\lambda-n\right),\;\lambda\in\mathbb{R};
\]
let $\Phi\geq0$, $\Phi\in L^{1}\left(\mathbb{R}\right)$, and assume
$\int_{\mathbb{R}}\Phi\left(\lambda\right)d\lambda=1$. Set $d\mu_{2}=\Phi\left(\lambda\right)d\lambda$,
where $d\lambda=$ Lebesgue measure on $\mathbb{R}$; then $\mu_{1}\ast\mu_{2}$
yields the following probability measure on $\mathbb{T=\mathbb{R}}/\mathbb{Z}$:
Set 
\[
\Phi_{per}\left(\lambda\right)=\sum_{n\in\mathbb{Z}}\Phi\left(\lambda-n\right);
\]
then $\Phi_{per}\left(\lambda\right)\in L^{1}\left(\mathbb{T},dt\right)$,
where $dt=$ Lebesgue measure on $\mathbb{T}$, i.e., if $f\left(\lambda+n\right)=f\left(n\right)$,
$\forall n\in\mathbb{Z}$, $\forall\lambda\in\mathbb{R}$, then $f$
defines a function on $\mathbb{T}$, and $\int_{\mathbb{T}}f\, dt=\int_{0}^{1}f\left(t\right)\, dt$.
We get 
\[
d\left(\mu_{1}\ast\mu_{2}\right)=\Phi_{per}\left(\cdot\right)dt\;\mbox{on }\mathbb{T}.
\]

\par\end{flushleft}
\begin{proof}
We have
\begin{align*}
1 & =\int_{-\infty}^{\infty}\Phi\left(\lambda\right)d\lambda=\sum_{n\in\mathbb{Z}}\int_{n}^{n+1}\Phi\left(\lambda\right)d\lambda\\
 & =\int_{0}^{1}\sum_{n\in\mathbb{Z}}\Phi\left(\lambda-n\right)d\lambda\\
 & =\int_{\mathbb{T}}\Phi_{per}\left(t\right)dt.
\end{align*}

\end{proof}
We now proceed to study the relations between the other items in our
analysis, the RKHSs $\mathscr{H}_{F_{i}}$, for $i=1,2,\ldots$; and
computing $\mathscr{H}_{F}$ from the RKHSs $\mathscr{H}_{F_{i}}$.

We further study the associated unitary representations of $G$ when
$Ext\left(F_{i}\right)$, $i=1,2,\ldots$ are non-empty?

As an application, we get infinite convolutions, and they are fascinating;
include many fractal measures of course.

In the case of $\mathbb{G}=\mathbb{R}$ we will study the connection
between deficiency index values in $\mathscr{H}_{F}$ as compared
to those of the factor RKHSs $F_{i}$.

\chapter{\label{chap:spbd}Models for Operator Extensions}

A special case of our extension question for continuous positive definite
(p.d.) functions on a fixed finite interval $\left|x\right|<a$ in
$\mathbb{R}$ is the following: It offers a spectral model representation
for ALL Hermitian operators with dense domain in Hilbert space and
with deficiency indices $\left(1,1\right)$.

Specifically, on $\mathbb{R}$, all the partially defined continuous
p.d. functions extend, and we can make a translation of our p.d. problem
into the problem of finding all $\left(1,1\right)$ restrictions selfadjoint
operators.

By the Spectral theorem, every selfadjoint operator with simple spectrum
has a representation as a multiplication operator $M_{\lambda}$ in
some $L^{2}\left(\mathbb{R},\mu\right)$ for some probability measure
$\mu$ on $\mathbb{R}$. So this accounts for all Hermitian restrictions
operators with deficiency indices $\left(1,1\right)$.

So the problem we have been studying for just the case of $G=\mathbb{R}$
is the case of finding spectral representations for ALL Hermitian
operators with dense domain in Hilbert space having deficiency indices
$\left(1,1\right)$.

\index{deficiency indices}

\index{spectrum}

\index{measure!probability}

\section{\label{sec:R^1}Model for Restrictions of Continuous p.d. Functions
on $\mathbb{R}$}

Let $\mathscr{H}$ be a Hilbert space, $A$ a skew-adjoint operator,
$A^{*}=-A$, which is \uline{unbounded}; let $v_{0}\in\mathscr{H}$
satisfying $\left\Vert v_{0}\right\Vert _{\mathscr{H}}=1$. Then we
get an associated p.d. continuous function $F_{A}$ defined on $\mathbb{R}$
as follows:
\begin{equation}
F_{A}\left(t\right):=\left\langle v_{0},e^{tA}v_{0}\right\rangle =\left\langle v_{0},U_{A}\left(t\right)v_{0}\right\rangle ,\: t\in\mathbb{R},\label{eq:tmp11}
\end{equation}
where $U_{A}\left(t\right)=e^{tA}$ is a unitary representation of
$\mathbb{R}$. Note that we define $U\left(t\right)=U_{A}\left(t\right)=e^{tA}$
by the spectral theorem. Note (\ref{eq:tmp11}) holds for all $t\in\mathbb{R}$.

\index{unitary representation}

\index{measure!PVM}

\index{measure!probability}

Let $P_{U}\left(\cdot\right)$ be the projection-valued measure (PVM)
of $A$, then 
\begin{equation}
U\left(t\right)=\int_{-\infty}^{\infty}e^{i\lambda t}P_{U}\left(d\lambda\right),\:\forall t\in\mathbb{R}.\label{eq:tmp12}
\end{equation}

\begin{lem}
Setting
\begin{equation}
d\mu=\left\Vert P_{U}\left(d\lambda\right)v_{0}\right\Vert ^{2}\label{eq:tmp18}
\end{equation}
we then get 
\begin{equation}
F_{A}\left(t\right)=\widehat{d\mu}\left(t\right),\:\forall t\in\mathbb{R}\label{eq:tmp14}
\end{equation}
Moreover, every probability measure $\mu$ on $\mathbb{R}$ arises
this way.\end{lem}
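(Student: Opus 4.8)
The plan is to handle the two assertions of the lemma separately: the identity $F_A=\widehat{d\mu}$, and the surjectivity statement that every probability measure on $\mathbb{R}$ occurs.

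For the first part, the claim is just the spectral theorem read off against the fixed unit vector $v_0$. Since $A$ is skew-adjoint, Stone's theorem gives that $t\mapsto U_A(t)=e^{tA}$ is a strongly continuous one-parameter unitary group, with spectral resolution $U_A(t)=\int_{-\infty}^{\infty}e^{i\lambda t}\,P_U(d\lambda)$ as in \eqref{eq:tmp12}; in particular $F_A$ is a genuine continuous positive definite function on $\mathbb{R}$. I would then pair both sides with $v_0$ and use that for a projection-valued measure $\langle v_0,P_U(d\lambda)v_0\rangle=\|P_U(d\lambda)v_0\|^2=d\mu(\lambda)$:
\[
F_A(t)=\left\langle v_0,U_A(t)v_0\right\rangle=\int_{\mathbb{R}}e^{i\lambda t}\left\langle v_0,P_U(d\lambda)v_0\right\rangle=\int_{\mathbb{R}}e^{i\lambda t}\,d\mu(\lambda)=\widehat{d\mu}(t),
\]
the interchange of inner product and spectral integral being the standard justification for integrals against a PVM, and the last equality matching the Bochner-transform convention $\widehat{d\mu}(x)=\int e^{i\lambda x}d\mu(\lambda)$ fixed earlier. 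I would also record that $\mu$ is a probability measure: $\mu(\mathbb{R})=\|P_U(\mathbb{R})v_0\|^2=\|v_0\|_{\mathscr{H}}^2=1$.

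For the ``moreover'' part, given an arbitrary Borel probability measure $\mu$ on $\mathbb{R}$ I would exhibit the canonical multiplication model. Set $\mathscr{H}:=L^2(\mathbb{R},\mu)$, let $v_0:=\mathbf{1}$ be the constant function $1$ (so $\|v_0\|_{\mathscr{H}}^2=\mu(\mathbb{R})=1$), and let $A$ be multiplication by $i\lambda$, i.e. $(Af)(\lambda)=i\lambda f(\lambda)$ on the natural maximal domain $\{f\in L^2(\mu):\lambda f\in L^2(\mu)\}$. A multiplication operator by a real measurable function is self-adjoint on that domain, so $A$ is skew-adjoint (and unbounded exactly when $\mathrm{supp}(\mu)$ is unbounded; if $\mu$ has bounded support one simply gets a bounded skew-adjoint $A$, which does not affect the construction). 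Then $U_A(t)=e^{tA}$ is multiplication by $e^{i\lambda t}$, its PVM is $P_U(B)=M_{\chi_B}$, so $\|P_U(d\lambda)v_0\|^2=|\mathbf{1}|^2\,d\mu=d\mu$, and
\[
F_A(t)=\left\langle\mathbf{1},M_{e^{i\lambda t}}\mathbf{1}\right\rangle_{L^2(\mu)}=\int_{\mathbb{R}}e^{i\lambda t}\,d\mu(\lambda)=\widehat{d\mu}(t),
\]
which realizes the given $\mu$ as required.

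I expect no substantive obstacle here; both halves are routine consequences of the spectral theorem together with the multiplication-operator model for normal operators. The only points needing a careful word are (i) invoking Stone's theorem so that $U_A$ is strongly continuous and $F_A$ is a bona fide continuous p.d. function, (ii) the domain and self-adjointness bookkeeping for the generally unbounded operator $iM_\lambda$, and (iii) keeping the Fourier/Bochner sign conventions consistent with those used earlier in the Memoir. None of these is difficult.
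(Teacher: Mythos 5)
Your proof of the identity $F_{A}=\widehat{d\mu}$ is essentially identical to the paper's: both simply pair the spectral resolution $U_{A}(t)=\int e^{i\lambda t}P_{U}(d\lambda)$ against $v_{0}$ and use $\left\langle v_{0},P_{U}(d\lambda)v_{0}\right\rangle =\left\Vert P_{U}(d\lambda)v_{0}\right\Vert ^{2}=d\mu(\lambda)$. The paper's own proof stops there and leaves the ``moreover'' clause unargued; your explicit multiplication-operator model on $L^{2}(\mathbb{R},\mu)$ with $v_{0}=\mathbf{1}$ and $A=iM_{\lambda}$ is the standard way to supply it (and is consistent with the models the authors use later, e.g.\ in their sections on index-$(1,1)$ operators), so your write-up is correct and, if anything, more complete than the printed proof.
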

\begin{proof}
By (\ref{eq:tmp11}), 
\begin{eqnarray*}
F_{A}\left(t\right) & = & \int e^{it\lambda}\left\langle v_{0},P_{U}\left(d\lambda\right)v_{0}\right\rangle \\
 & = & \int e^{it\lambda}\left\Vert P_{U}\left(d\lambda\right)v_{0}\right\Vert ^{2}\\
 & = & \int e^{it\lambda}d\mu\left(\lambda\right)
\end{eqnarray*}
\end{proof}
\begin{lem}
\label{lem:iso}For Borel functions $f$ on $\mathbb{R}$, let 
\begin{equation}
f\left(A\right)=\int_{\mathbb{R}}f\left(\lambda\right)P_{U}\left(d\lambda\right)\label{eq:tmp15}
\end{equation}
be given by functional calculus. We note that 
\begin{equation}
v_{0}\in dom\left(f\left(A\right)\right)\Longleftrightarrow f\in L^{2}\left(\mu\right)\label{eq:tmp16}
\end{equation}
where $\mu$ is the measure in (\ref{eq:tmp18}). Then
\begin{equation}
\left\Vert f\left(A\right)v_{0}\right\Vert ^{2}=\int_{\mathbb{R}}\left|f\right|^{2}d\mu.\label{eq:tmp17}
\end{equation}
\end{lem}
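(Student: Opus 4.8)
The statement to prove, Lemma \ref{lem:iso}, is entirely standard spectral theory: it asserts that the functional calculus vector $f(A)v_0$ is defined precisely when $f\in L^2(\mu)$, with the norm identity $\|f(A)v_0\|^2 = \int_{\mathbb{R}}|f|^2\,d\mu$, where $d\mu(\lambda) = \|P_U(d\lambda)v_0\|^2$. The plan is to derive everything from the elementary property of the projection-valued measure $P_U(\cdot)$ of $A$, namely that for disjoint Borel sets $B_1,B_2$ one has $P_U(B_1)P_U(B_2)=\delta_{B_1,B_2}P_U(B_1)$, together with the definition of $f(A)=\int_{\mathbb{R}}f(\lambda)P_U(d\lambda)$ by functional calculus as already invoked in (\ref{eq:tmp15}).

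First I would record the scalar-spectral-measure identity: for any Borel sets $B_1,B_2$,
\[
\left\langle P_U(B_1)v_0, P_U(B_2)v_0\right\rangle = \left\langle v_0, P_U(B_1\cap B_2)v_0\right\rangle = \mu(B_1\cap B_2),
\]
using $P_U(B_1)^*=P_U(B_1)$ and $P_U(B_1)P_U(B_2)=P_U(B_1\cap B_2)$, and the defining relation (\ref{eq:tmp18}), i.e. $\mu(B) = \langle v_0,P_U(B)v_0\rangle = \|P_U(B)v_0\|^2$. This shows that the map $\chi_B \mapsto P_U(B)v_0$ extends to a linear isometry from simple functions (with the $L^2(\mu)$ norm) into $\mathscr{H}$: for a simple function $s=\sum_j c_j\chi_{B_j}$ with the $B_j$ pairwise disjoint, $s(A)v_0 = \sum_j c_j P_U(B_j)v_0$ and
\[
\|s(A)v_0\|^2 = \sum_j |c_j|^2 \mu(B_j) = \int_{\mathbb{R}}|s|^2\,d\mu.
\]

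Next I would pass from simple functions to general Borel $f$ by the usual approximation. The standard construction of the functional calculus says that $v_0\in\mathrm{dom}(f(A))$ iff $\int_{\mathbb{R}}|f|^2\,d\mu<\infty$, i.e. $f\in L^2(\mu)$; choosing simple functions $s_n\to f$ with $|s_n|\le|f|$ pointwise and $s_n\to f$ in $L^2(\mu)$ (dominated convergence), one gets $s_n(A)v_0$ Cauchy in $\mathscr{H}$ by the isometry just established, its limit is by definition $f(A)v_0$, and taking $n\to\infty$ in $\|s_n(A)v_0\|^2 = \int|s_n|^2\,d\mu$ gives (\ref{eq:tmp17}). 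Conversely if $f\notin L^2(\mu)$ the truncations $f\chi_{\{|f|\le n\}}$ produce vectors with norms $\to\infty$, so $v_0\notin\mathrm{dom}(f(A))$; this gives the equivalence (\ref{eq:tmp16}).

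There is no real obstacle here — the only thing to be careful about is to phrase things so that we are genuinely citing, rather than re-deriving, the abstract multiplicative/functional-calculus machinery for unbounded self-adjoint (here skew-adjoint, so $iA$ self-adjoint, or equivalently work directly with the PVM $P_U$ on $\mathbb{R}$) operators; all of this is in \cite{DS88b} or \cite{AG93} and I would simply refer to it. So the proof is essentially two lines: the polarization/orthogonality identity for $P_U(\cdot)v_0$ shows the isometry on simple functions, and a density argument in $L^2(\mu)$ extends it, yielding both (\ref{eq:tmp16}) and (\ref{eq:tmp17}).
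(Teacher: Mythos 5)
Your proof is correct and follows essentially the same route as the paper's: both rest on the orthogonality of the projection-valued measure $P_U(\cdot)$ applied to $v_0$, which turns $\left\Vert f(A)v_{0}\right\Vert ^{2}$ into $\int_{\mathbb{R}}\left|f\right|^{2}\left\Vert P_{U}(d\lambda)v_{0}\right\Vert ^{2}=\int_{\mathbb{R}}\left|f\right|^{2}d\mu$. The paper states this as a two-line computation citing the functional calculus directly, whereas you spell out the simple-function isometry and the density/truncation argument for the domain equivalence; the content is the same.
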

\begin{proof}
The lemma follows from 
\begin{eqnarray*}
\left\Vert f\left(A\right)v_{0}\right\Vert ^{2} & = & \left\Vert \int_{\mathbb{R}}f\left(\lambda\right)P_{U}\left(d\lambda\right)v_{0}\right\Vert ^{2}\,\,\,\,\,\,\,\,\,\,\,\,\,\,\,(\mbox{by }(\ref{eq:tmp15}))\\
 & = & \int\left|f\left(\lambda\right)\right|^{2}\left\Vert P_{U}\left(d\lambda\right)v_{0}\right\Vert ^{2}\\
 & = & \int\left|f\left(\lambda\right)\right|^{2}d\mu\left(\lambda\right).\,\,\,\,\,\,\,\,\,\,\,\,\,\,\,\,\,\,\,\,\,\,\,\,\,\,\,\,\,(\mbox{by }(\ref{eq:tmp18}))
\end{eqnarray*}

\end{proof}
Now we consider restriction of $F_{A}$ to, say $\left(-1,1\right)$,
i.e., 
\begin{equation}
F\left(\cdot\right)=F_{A}\Big|_{\left(-1,1\right)}\left(\cdot\right)\label{eq:tmp19}
\end{equation}

\begin{lem}
Let $\mathscr{H}_{F}$ be the RKHS computed for $F$ in (\ref{eq:tmp14});
and for $\varphi\in C_{c}\left(0,1\right)$, set $F_{\varphi}=$ the
generating vectors in $\mathscr{H}_{F}$, as usual. Set 
\begin{equation}
U\left(\varphi\right):=\int_{0}^{1}\varphi\left(y\right)U\left(-y\right)dy\label{eq:tmp20}
\end{equation}
where $dy=$ Lebesgue measure on $\left(0,1\right)$; then
\begin{equation}
F_{\varphi}\left(x\right)=\left\langle v_{0},U\left(x\right)U\left(\varphi\right)v_{0}\right\rangle ,\:\forall x\in\left(0,1\right).
\end{equation}
\end{lem}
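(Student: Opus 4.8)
The plan is to unwind both sides using only the definition $F=\left.F_{A}\right|_{\left(-1,1\right)}$ from \eqref{eq:tmp19}, the group law $U\left(s+t\right)=U\left(s\right)U\left(t\right)$ for the one-parameter unitary group $U\left(t\right)=e^{tA}$, and the observation that $U\left(\varphi\right)v_{0}$ is a well-defined Bochner integral in $\mathscr{H}$. First I would note that for $x,y\in\left(0,1\right)$ the difference $x-y$ lies in $\left(-1,1\right)$, so \eqref{eq:tmp19} together with \eqref{eq:tmp11} gives $F\left(x-y\right)=F_{A}\left(x-y\right)=\left\langle v_{0},U\left(x-y\right)v_{0}\right\rangle$. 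Since $\left\{ U\left(t\right)\right\} _{t\in\mathbb{R}}$ is a strongly continuous unitary representation of $\mathbb{R}$, we have $U\left(x-y\right)=U\left(x\right)U\left(-y\right)$, hence
\[
F\left(x-y\right)=\left\langle v_{0},U\left(x\right)U\left(-y\right)v_{0}\right\rangle ,\qquad x,y\in\left(0,1\right).
\]

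Next I would integrate against $\varphi\in C_{c}\left(0,1\right)$. The map $y\mapsto\varphi\left(y\right)U\left(-y\right)v_{0}$ is norm-continuous on the compact support of $\varphi$ (by strong continuity of $U$), hence Bochner integrable in $\mathscr{H}$, so $U\left(\varphi\right)v_{0}=\int_{0}^{1}\varphi\left(y\right)U\left(-y\right)v_{0}\,dy$ is a well-defined vector of $\mathscr{H}$, consistent with \eqref{eq:tmp20}. Because $U\left(x\right)$ is bounded it commutes with this Bochner integral, and because $v_{0}$ is a fixed vector the functional $\left\langle v_{0},\cdot\right\rangle$ is bounded and therefore also passes through the integral; consequently
\[
F_{\varphi}\left(x\right)=\int_{0}^{1}\varphi\left(y\right)F\left(x-y\right)\,dy=\int_{0}^{1}\varphi\left(y\right)\left\langle v_{0},U\left(x\right)U\left(-y\right)v_{0}\right\rangle \,dy=\left\langle v_{0},U\left(x\right)U\left(\varphi\right)v_{0}\right\rangle ,
\]
which is the assertion.

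The only point requiring any care — and it is routine — is the interchange of the $\mathscr{H}$-valued integral with the bounded operator $U\left(x\right)$ and with the inner product; this is the standard fact that bounded linear (and bounded conjugate-linear) maps commute with Bochner integrals, combined with the Bochner integrability just noted. I would also remark that the computation of $F_{\varphi}$ uses only values $F\left(x-y\right)$ with $x-y\in\left(-1,1\right)$, which is exactly why restricting $F_{A}$ to $\left(-1,1\right)$ loses no information here; no ingredients beyond \eqref{eq:tmp11}, \eqref{eq:tmp19}, \eqref{eq:tmp20} and the elementary properties of one-parameter unitary groups enter.
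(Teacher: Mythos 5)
Your proof is correct and follows essentially the same route as the paper's: expand $F_{\varphi}(x)=\int_{0}^{1}\varphi(y)F(x-y)\,dy$, substitute $F(x-y)=\langle v_{0},U(x-y)v_{0}\rangle$ via \eqref{eq:tmp11} and \eqref{eq:tmp19}, factor $U(x-y)=U(x)U(-y)$, and pull the integral through the inner product to recognize $U(\varphi)v_{0}$. The extra care you take with Bochner integrability and the interchange of the integral with bounded maps is a welcome (if routine) justification that the paper leaves implicit.
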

\begin{proof}
We have
\begin{eqnarray*}
F_{\varphi}\left(x\right) & = & \int_{0}^{1}\varphi\left(y\right)F\left(x-y\right)dy\\
 & = & \int_{0}^{1}\varphi\left(y\right)\left\langle v_{0},U_{A}\left(x-y\right)v_{0}\right\rangle dy\,\,\,\,\,\,\,\,\,\,(\mbox{by }(\ref{eq:tmp11}))\\
 & = & \left\langle v_{0},U_{A}\left(x\right)\int_{0}^{1}\varphi\left(y\right)U_{A}\left(-y\right)v_{0}dy\right\rangle \\
 & = & \left\langle v_{0},U_{A}\left(x\right)U\left(\varphi\right)v_{0}\right\rangle \,\,\,\,\,\,\,\,\,\,(\mbox{by }(\ref{eq:tmp20}))\\
 & = & \left\langle v_{0},U\left(\varphi\right)U_{A}\left(x\right)v_{0}\right\rangle 
\end{eqnarray*}
for all $x\in\left(0,1\right)$, and all $\varphi\in C_{c}\left(0,1\right)$.\end{proof}
\begin{cor}
Let $A$, $U\left(t\right)=e^{tA}$, $v_{0}\in\mathscr{H}$, $\varphi\in C_{c}\left(0,1\right)$,
and $F$ p.d. on $\left(0,1\right)$ be as above; let $\mathscr{H}_{F}$
be the RKHS of $F$; then, for the inner product in $\mathscr{H}_{F}$,
we have 
\begin{equation}
\left\langle F_{\varphi},F_{\psi}\right\rangle _{\mathscr{H}_{F}}=\left\langle U\left(\varphi\right)v_{0},U\left(\psi\right)v_{0}\right\rangle _{\mathscr{H}},\:\forall\varphi,\psi\in C_{c}\left(0,1\right).\label{eq:tmp-01}
\end{equation}
\end{cor}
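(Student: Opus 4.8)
The plan is to prove the identity $\left\langle F_{\varphi},F_{\psi}\right\rangle _{\mathscr{H}_{F}}=\left\langle U\left(\varphi\right)v_{0},U\left(\psi\right)v_{0}\right\rangle _{\mathscr{H}}$ by a direct computation, reducing everything to the spectral measure $\mu$ attached to the pair $(A,v_0)$ via (\ref{eq:tmp18}), and then invoking the established description of $\left\Vert \cdot\right\Vert_{\mathscr{H}_F}$ in terms of $\mu$. The preceding lemma already records $F_\varphi(x)=\left\langle v_0,U_A(x)U(\varphi)v_0\right\rangle$, so both sides of (\ref{eq:tmp-01}) are expressible through $U$ and $v_0$; the only real content is that the $\mathscr{H}_F$ inner product — which a priori is defined abstractly by (\ref{eq:ip-discrete})/(\ref{eq:hi2}) — coincides with the Hilbert-space inner product of the vectors $U(\varphi)v_0,\,U(\psi)v_0$ inside $\mathscr{H}$.

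First I would expand the right-hand side. Using $U(\varphi)=\int_0^1\varphi(y)U(-y)\,dy$ and its analogue for $\psi$, together with $U(-y)^*=U(y)$, one gets
\[
\left\langle U\left(\varphi\right)v_{0},U\left(\psi\right)v_{0}\right\rangle _{\mathscr{H}}=\int_0^1\int_0^1\overline{\varphi(y_1)}\psi(y_2)\left\langle v_0,U(y_1-y_2)v_0\right\rangle\,dy_1dy_2=\int_0^1\int_0^1\overline{\varphi(y_1)}\psi(y_2)F(y_1-y_2)\,dy_1dy_2,
\]
where the last step uses (\ref{eq:tmp11}) together with the fact that $y_1-y_2\in(-1,1)$ for $y_1,y_2\in(0,1)$, so that $F_A$ restricted to $(-1,1)$ equals $F$ by (\ref{eq:tmp19}). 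But the right-hand side here is exactly $\left\langle F_\varphi,F_\psi\right\rangle_{\mathscr{H}_F}$ by the defining formula (\ref{eq:hi2}) of Lemma \ref{lem:RKHS-def-by-integral}. This closes the identity. Alternatively, and perhaps more transparently, I would route through the spectral picture: by Lemma \ref{lem:iso} and the functional calculus, $U(\varphi)v_0=\widehat{\varphi}(A)v_0$ in the sense that $U(\varphi)v_0=\int_{\mathbb R}\widehat{\varphi}(\lambda)P_U(d\lambda)v_0$ with $\widehat\varphi(\lambda)=\int_0^1 e^{-i\lambda y}\varphi(y)\,dy$, whence $\left\langle U(\varphi)v_0,U(\psi)v_0\right\rangle_{\mathscr H}=\int_{\mathbb R}\overline{\widehat\varphi}\,\widehat\psi\,d\mu$; and the isometry $V^{(F)}\colon F_\varphi\mapsto\widehat\varphi$ from $\mathscr{H}_F$ into $L^2(\mu)$ (Corollary \ref{cor:lcg-isom}, valid since $\mu\in Ext(F)$) gives $\left\langle F_\varphi,F_\psi\right\rangle_{\mathscr H_F}=\int_{\mathbb R}\overline{\widehat\varphi}\,\widehat\psi\,d\mu$ as well.

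The main obstacle is bookkeeping rather than conceptual: one must justify the interchange of the vector-valued integrals defining $U(\varphi)$ with inner products (standard, since $\varphi\in C_c(0,1)$ and $t\mapsto U(t)v_0$ is norm-continuous and bounded, so the integrals are Bochner integrals), and one must be careful that the argument $y_1-y_2$ always lands in the interval $(-1,1)$ where $F$ is defined — this is where the restriction $\varphi,\psi\in C_c(0,1)$ (not $C_c(-1,1)$) is used. A secondary point worth a line is that the two candidate proofs are consistent: the identity $\left\langle F_\varphi,F_\psi\right\rangle_{\mathscr H_F}=\iint\overline{\varphi}\,\psi\,F$ and $\left\langle\widehat\varphi,\widehat\psi\right\rangle_{L^2(\mu)}=\iint\overline\varphi\,\psi\,F$ are the same statement by Fubini and $F=\widehat{d\mu}$ on $(-1,1)$. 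I would present the first (measure-free) route as the proof since it uses only Lemma \ref{lem:RKHS-def-by-integral} and the preceding lemma, and mention the spectral route as a remark.
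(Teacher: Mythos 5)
Your main (measure-free) route is exactly the paper's proof run in the opposite direction: the paper expands $\left\langle F_{\varphi},F_{\psi}\right\rangle _{\mathscr{H}_{F}}$ via (\ref{eq:hi2}), substitutes $F\left(x-y\right)=\left\langle v_{0},U_{A}\left(x-y\right)v_{0}\right\rangle$ from (\ref{eq:tmp19}), and regroups using (\ref{eq:tmp20}), which is the same chain of equalities you write down starting from the right-hand side. Your added care about the Bochner integrals and about $y_{1}-y_{2}\in\left(-1,1\right)$, and the spectral detour through $L^{2}\left(\mu\right)$ offered as a remark, are correct but not needed beyond what the paper does.
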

\begin{proof}
Note that 
\begin{eqnarray*}
\left\langle F_{\varphi},F_{\psi}\right\rangle _{\mathscr{H}_{F}} & = & \int_{0}^{1}\int_{0}^{1}\overline{\varphi\left(x\right)}\psi\left(y\right)F\left(x-y\right)dxdy\\
 & = & \int_{0}^{1}\int_{0}^{1}\overline{\varphi\left(x\right)}\psi\left(y\right)\left\langle v_{0},U_{A}\left(x-y\right)v_{0}\right\rangle _{\mathscr{H}}dxdy\,\,\,\,\,\,\,\,\,\,(\mbox{by }(\ref{eq:tmp19}))\\
 & = & \int_{0}^{1}\int_{0}^{1}\left\langle \varphi\left(x\right)U_{A}\left(-x\right)v_{0},\psi\left(y\right)U_{A}\left(-y\right)v_{0}\right\rangle _{\mathscr{H}}dxdy\\
 & = & \left\langle U\left(\varphi\right)v_{0},U\left(\psi\right)v_{0}\right\rangle _{\mathscr{H}}\,\,\,\,\,\,\,\,\,\,(\mbox{by }(\ref{eq:tmp20}))
\end{eqnarray*}
\end{proof}
\begin{cor}
Set $\varphi^{\#}\left(x\right)=\overline{\varphi\left(-x\right)}$,
$x\in\mathbb{R}$, $\varphi\in C_{c}\left(\mathbb{R}\right)$, or
in this case, $\varphi\in C_{c}\left(0,1\right)$; then we have:
\begin{equation}
\left\langle F_{\varphi},F_{\psi}\right\rangle _{\mathscr{H}_{F}}=\left\langle v_{0},U\left(\varphi^{\#}\ast\psi\right)v_{0}\right\rangle _{\mathscr{H}},\;\forall\varphi,\psi\in C_{c}\left(0,1\right).\label{eq:tmp-02}
\end{equation}
\end{cor}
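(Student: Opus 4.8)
The plan is to derive the final corollary directly from the previous one, namely from the identity
\[
\left\langle F_{\varphi},F_{\psi}\right\rangle _{\mathscr{H}_{F}}=\left\langle U\left(\varphi\right)v_{0},U\left(\psi\right)v_{0}\right\rangle _{\mathscr{H}},\qquad\forall\varphi,\psi\in C_{c}\left(0,1\right),
\]
together with the definition $U(\varphi)=\int_{0}^{1}\varphi(y)U(-y)\,dy$ and the group law $U(s)U(t)=U(s+t)$, $U(t)^{*}=U(-t)$, valid since $\{U(t)\}$ is a strongly continuous unitary one-parameter group. So the whole proof is a short computation turning the right-hand side of the displayed identity into $\langle v_{0},U(\varphi^{\#}\ast\psi)v_{0}\rangle$.

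First I would write out $\langle U(\varphi)v_{0},U(\psi)v_{0}\rangle_{\mathscr{H}}$ as a double integral:
\[
\left\langle U\left(\varphi\right)v_{0},U\left(\psi\right)v_{0}\right\rangle _{\mathscr{H}}
=\int_{0}^{1}\int_{0}^{1}\overline{\varphi\left(x\right)}\psi\left(y\right)\left\langle U\left(-x\right)v_{0},U\left(-y\right)v_{0}\right\rangle _{\mathscr{H}}\,dx\,dy,
\]
where the interchange of inner product and integration is justified because $y\mapsto U(-y)v_{0}$ is continuous and $\varphi,\psi$ are compactly supported continuous functions (a Bochner-integral / weak-integral argument). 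Then I would use $\langle U(-x)v_{0},U(-y)v_{0}\rangle=\langle v_{0},U(x)U(-y)v_{0}\rangle=\langle v_{0},U(x-y)v_{0}\rangle$, insert this, and recognize the resulting expression: with the substitution $z=x-y$ one gets $\int_{\mathbb{R}}(\varphi^{\#}\ast\psi)(z)\,U(z)\,dz$ sandwiched between $v_{0}$'s, i.e. $\langle v_{0},U(\varphi^{\#}\ast\psi)v_{0}\rangle_{\mathscr{H}}$, where $\varphi^{\#}(x)=\overline{\varphi(-x)}$. Concretely, $(\varphi^{\#}\ast\psi)(z)=\int\varphi^{\#}(z-y)\psi(y)\,dy=\int\overline{\varphi(y-z)}\psi(y)\,dy$, so $\int(\varphi^{\#}\ast\psi)(z)U(z)\,dz=\int\int\overline{\varphi(y-z)}\psi(y)U(z)\,dy\,dz$, and the change of variable $x=y-z$ (so $z=x-y$) turns this into $\int\int\overline{\varphi(x)}\psi(y)U(x-y)\,dx\,dy$, matching the double integral above. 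Taking $\langle v_{0},\cdot\,v_{0}\rangle$ on both sides finishes it.

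I would also note the alternative, essentially equivalent, route: one can bypass the previous corollary and instead start from the very first inner-product computation in Section~\ref{sec:embedding}, which already records $\langle F_{\varphi},F_{\psi}\rangle_{\mathscr{H}_{F}}=\langle\xi_{0},\pi(\varphi^{\#}\ast\psi)\xi_{0}\rangle$ with $\pi(\varphi)\xi_{0}=F_{\varphi}$; under the isometry $W:\mathscr{H}_{F}\hookrightarrow\mathscr{H}$ sending $F_{\varphi}\mapsto U(\varphi)v_{0}$, the operator $\pi(\varphi)$ corresponds to $U(\varphi)$ and $\xi_{0}$ to $v_{0}$, which gives \eqref{eq:tmp-02} immediately. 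Either way the statement is a formal consequence of facts already established.

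The only genuinely non-routine point — and the one I would be careful about — is the justification for pulling the (Bochner) integral defining $U(\varphi)$ out of the inner product and for the convolution/Fubini manipulation: one needs that $t\mapsto U(t)v_{0}$ is norm-continuous (true, by strong continuity of the unitary group) and that the relevant integrands are dominated on the compact supports, so that all the interchanges of integration and of inner product with integration are legitimate. Once that is in place, everything else is a substitution of variables, so I expect the proof to be two or three short displayed lines plus a sentence on the Fubini/continuity justification; there is no real obstacle beyond bookkeeping.
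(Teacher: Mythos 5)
Your proposal is correct and is essentially the paper's own proof, which simply reads ``Immediate from (\ref{eq:tmp-01}) and Fubini''; you have just written out the Fubini/change-of-variables computation that the paper leaves implicit. One small bookkeeping remark: with the paper's convention $U(\varphi)=\int\varphi(y)U(-y)\,dy$ your intermediate expression should be $\int(\varphi^{\#}\ast\psi)(z)U(-z)\,dz$ with $z=y-x$ (not $U(z)$ with $z=x-y$); the two sign slips cancel, so your final identity is still right.
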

\begin{proof}
Immediate from (\ref{eq:tmp-01}) and Fubini. \end{proof}
\begin{cor}
Let $F$ and $\varphi\in C_{c}\left(0,1\right)$ be as above; then
in the RKHS $\mathscr{H}_{F}$ we have:
\begin{equation}
\left\Vert F_{\varphi}\right\Vert _{\mathscr{H}_{F}}^{2}=\left\Vert U\left(\varphi\right)v_{0}\right\Vert _{\mathscr{H}}^{2}=\int\left|\widehat{\varphi}\right|^{2}d\mu\label{eq:tmp-03}
\end{equation}
where $\mu$ is the measure in (\ref{eq:tmp18}). $\widehat{\varphi}=$
Fourier transform: $\widehat{\varphi}\left(\lambda\right)=\int_{0}^{1}e^{-i\lambda x}\varphi\left(x\right)dx$,
$\lambda\in\mathbb{R}$.\end{cor}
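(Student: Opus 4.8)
The plan is to chain together the two preceding corollaries. The statement to prove is that, for $F = F_A|_{(-1,1)}$ and $\varphi\in C_c(0,1)$, we have $\|F_\varphi\|_{\mathscr{H}_F}^2 = \|U(\varphi)v_0\|_{\mathscr{H}}^2 = \int|\widehat\varphi|^2 d\mu$, where $d\mu = \|P_U(d\lambda)v_0\|^2$ is the measure from \eqref{eq:tmp18}. The first equality is exactly \eqref{eq:tmp-01} with $\psi=\varphi$, so it is already in hand. Thus the only new content is the second equality, $\|U(\varphi)v_0\|_{\mathscr{H}}^2 = \int|\widehat\varphi|^2 d\mu$.

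First I would unwind the definition \eqref{eq:tmp20} of $U(\varphi) = \int_0^1 \varphi(y)\,U(-y)\,dy$ together with the spectral representation \eqref{eq:tmp12}, $U(t) = \int_{\mathbb{R}} e^{i\lambda t} P_U(d\lambda)$. Substituting $t = -y$ and interchanging the (finite, compactly supported) $y$-integral with the spectral integral — justified by Fubini, since $\varphi\in C_c(0,1)$ and $U$ is strongly continuous — gives
\begin{equation*}
U(\varphi)v_0 = \int_{\mathbb{R}}\left(\int_0^1 \varphi(y) e^{-i\lambda y}\,dy\right) P_U(d\lambda)v_0 = \int_{\mathbb{R}} \widehat\varphi(\lambda)\, P_U(d\lambda) v_0,
\end{equation*}
recognizing $\int_0^1 e^{-i\lambda y}\varphi(y)\,dy = \widehat\varphi(\lambda)$ exactly as in the statement. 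In other words, $U(\varphi)v_0 = \widehat\varphi(A)v_0$ in the functional calculus of $A$. Then I would invoke Lemma \ref{lem:iso}, specifically \eqref{eq:tmp17}, with $f = \widehat\varphi$: this yields $\|\widehat\varphi(A)v_0\|^2 = \int_{\mathbb{R}} |\widehat\varphi|^2\,d\mu$. One should also note that $\widehat\varphi \in L^2(\mu)$ so that the expression makes sense — this is automatic since $\mu$ is a probability measure and $\widehat\varphi$ is bounded (indeed $\varphi\in L^1(0,1)$ gives $\widehat\varphi\in L^\infty$), so by \eqref{eq:tmp16} $v_0\in\mathrm{dom}(\widehat\varphi(A))$.

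Combining: $\|F_\varphi\|_{\mathscr{H}_F}^2 = \|U(\varphi)v_0\|_{\mathscr{H}}^2 = \|\widehat\varphi(A)v_0\|_{\mathscr{H}}^2 = \int_{\mathbb{R}}|\widehat\varphi(\lambda)|^2\,d\mu(\lambda)$, which is the claim. There is essentially no obstacle here; the only point requiring a little care is the Fubini interchange between the Bochner integral $\int_0^1 \varphi(y) U(-y) v_0\,dy$ and the spectral integral, but this is routine given compact support of $\varphi$ and strong continuity of $t\mapsto U(t)v_0$ (alternatively, one can first check the identity weakly by pairing with an arbitrary $w\in\mathscr{H}$ and using that $\langle w, U(t)v_0\rangle = \int e^{i\lambda t}\langle w, P_U(d\lambda)v_0\rangle$ together with the scalar Fubini theorem, then conclude the vector identity). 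I would present this as the one-line justification and otherwise let the two cited results do the work.
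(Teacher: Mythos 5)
Your proof is correct. It does, however, close the chain of equalities along a different leg than the paper does: the paper's proof takes the first equality $\left\Vert F_{\varphi}\right\Vert _{\mathscr{H}_{F}}^{2}=\left\Vert U\left(\varphi\right)v_{0}\right\Vert _{\mathscr{H}}^{2}$ for granted from the preceding corollary (as you also do) and then establishes the \emph{outer} equality $\left\Vert F_{\varphi}\right\Vert _{\mathscr{H}_{F}}^{2}=\int\left|\widehat{\varphi}\right|^{2}d\mu$ directly, by writing $\left\Vert F_{\varphi}\right\Vert _{\mathscr{H}_{F}}^{2}=\int_{0}^{1}\int_{0}^{1}\overline{\varphi\left(x\right)}\varphi\left(y\right)F\left(x-y\right)dxdy$, substituting the Bochner representation $F\left(x-y\right)=\int_{\mathbb{R}}e_{\lambda}\left(x-y\right)d\mu\left(\lambda\right)$, and applying the scalar Fubini theorem — no operator calculus appears. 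You instead prove the \emph{middle} equality $\left\Vert U\left(\varphi\right)v_{0}\right\Vert _{\mathscr{H}}^{2}=\int\left|\widehat{\varphi}\right|^{2}d\mu$ by identifying $U\left(\varphi\right)v_{0}=\widehat{\varphi}\left(A\right)v_{0}$ through the spectral representation and then invoking Lemma \ref{lem:iso}, eq. (\ref{eq:tmp17}), which the paper's proof never cites. The two routes are logically equivalent given the previous corollary, and rest on the same two facts (the spectral/Bochner representation of $U$ resp. $F$, and Fubini); yours is slightly more structural in that it exhibits $U\left(\varphi\right)v_{0}$ explicitly in the functional calculus, at the cost of needing the vector-valued (or weak-pairing) version of the interchange rather than the purely scalar one. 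Your remarks on domains ($\widehat{\varphi}\in L^{\infty}\subset L^{2}\left(\mu\right)$ since $\mu$ is finite) and on justifying the interchange weakly are both sound.
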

\begin{proof}
Immediate from (\ref{eq:tmp-02}); indeed:
\begin{eqnarray*}
\left\Vert F_{\varphi}\right\Vert _{\mathscr{H}_{F}}^{2} & = & \int_{0}^{1}\int_{0}^{1}\overline{\varphi\left(x\right)}\varphi\left(y\right)\int_{\mathbb{R}}e_{\lambda}\left(x-y\right)d\mu\left(\lambda\right)\\
 & = & \int_{\mathbb{R}}\left|\widehat{\varphi}\left(\lambda\right)\right|^{2}d\mu\left(\lambda\right),\;\forall\varphi\in C_{c}\left(0,1\right).
\end{eqnarray*}
\end{proof}
\begin{cor}
Every Borel probability measure $\mu$ on $\mathbb{R}$ arises this
way. \index{measure!probability}\end{cor}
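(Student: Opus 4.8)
The plan is to prove surjectivity by exhibiting, for an arbitrary Borel probability measure $\mu$ on $\mathbb{R}$, concrete data $\left(\mathscr{H},A,v_{0}\right)$ of the kind used throughout this section — a Hilbert space $\mathscr{H}$, a skew-adjoint operator $A=-A^{*}$ (unbounded in the non-degenerate cases), and a unit vector $v_{0}\in\mathscr{H}$ — whose associated spectral measure $\left\Vert P_{U}\left(d\lambda\right)v_{0}\right\Vert ^{2}$ of \eqref{tmp18} is exactly $\mu$. The natural choice is $\mathscr{H}=L^{2}\left(\mathbb{R},\mu\right)$, with $A$ the operator of multiplication by $i\lambda$,
\[
\mathrm{dom}\left(A\right)=\left\{ f\in L^{2}\left(\mathbb{R},\mu\right)\:\big|\:\lambda f\in L^{2}\left(\mathbb{R},\mu\right)\right\},\qquad\left(Af\right)\left(\lambda\right)=i\lambda f\left(\lambda\right),
\]
and $v_{0}=\mathbf{1}$ the constant function $1$; since $\mu$ is a probability measure, $\left\Vert v_{0}\right\Vert _{\mathscr{H}}^{2}=\int_{\mathbb{R}}1\,d\mu\left(\lambda\right)=1$, as required.

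The verification is then routine. First, the multiplication operator $M_{\lambda}$ by the real variable on $L^{2}\left(\mathbb{R},\mu\right)$ (with the domain above) is selfadjoint — the standard multiplication model — so $A=iM_{\lambda}$ satisfies $A^{*}=-A$; it is unbounded precisely when $\mathrm{supp}\left(\mu\right)$ is unbounded (the ``non-trivial'' situation alluded to above), while for $\mathrm{supp}\left(\mu\right)$ bounded the same construction still applies with $A$ bounded. Second, the one-parameter unitary group generated by $A$ is $U_{A}\left(t\right)=e^{tA}=M_{e_{t}}$, multiplication by $e_{t}\left(\lambda\right)=e^{it\lambda}$, and the projection-valued measure of $A$ is $P_{U}\left(B\right)=M_{\chi_{B}}$ for $B\in\mathscr{B}\left(\mathbb{R}\right)$. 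Hence
\[
\left\Vert P_{U}\left(B\right)v_{0}\right\Vert _{\mathscr{H}}^{2}=\int_{B}\left|1\right|^{2}d\mu\left(\lambda\right)=\mu\left(B\right),
\]
so the measure attached to $\left(\mathscr{H},A,v_{0}\right)$ by \eqref{tmp18} is $\mu$ itself, and by \eqref{tmp11}--\eqref{tmp14},
\[
F_{A}\left(t\right)=\left\langle v_{0},U_{A}\left(t\right)v_{0}\right\rangle _{\mathscr{H}}=\int_{\mathbb{R}}e^{it\lambda}d\mu\left(\lambda\right)=\widehat{d\mu}\left(t\right),\quad t\in\mathbb{R}.
\]
Restricting $F_{A}$ to $\left(-1,1\right)$ produces a continuous positive definite $F$ with $\mu\in Ext\left(F\right)$, and the RKHS $\mathscr{H}_{F}$ together with the skew-Hermitian operator $D^{\left(F\right)}$ is exactly the object studied in this section.

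There is no serious obstacle here; the one point that merits a sentence of justification is that $v_{0}=\mathbf{1}$ is a \emph{cyclic} vector for $\left\{ U_{A}\left(t\right)\right\} _{t\in\mathbb{R}}$, so that the model is non-redundant and $\mathrm{supp}\left(\mu\right)$ coincides with the spectrum of $A$. This holds because the linear span of $\left\{ e_{t}\:\big|\:t\in\mathbb{R}\right\}$ — equivalently of $\left\{ \widehat{\psi}\:\big|\:\psi\in C_{c}\left(\mathbb{R}\right)\right\}$, obtained from the $e_{t}$ by uniformly convergent Riemann sums — is dense in $L^{1}\left(\mathbb{R},\mu\right)$, hence in $L^{2}\left(\mathbb{R},\mu\right)$ as $\mu$ is finite; this density was already used in the proof of Theorem \ref{thm:lcg-1}. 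With cyclicity established the corollary follows, and in particular every densely defined skew-Hermitian operator with deficiency indices $\left(1,1\right)$ fits the restriction-of-$A$ framework of Chapter \ref{chap:spbd}, since by the spectral theorem each such operator is unitarily equivalent to a restriction of an $A$ of the above form.
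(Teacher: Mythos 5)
Your construction --- $\mathscr{H}=L^{2}\left(\mathbb{R},\mu\right)$, $A=iM_{\lambda}$ on its maximal domain, $v_{0}=\mathbf{1}$, with $P_{U}\left(B\right)=M_{\chi_{B}}$ so that $\left\Vert P_{U}\left(B\right)v_{0}\right\Vert ^{2}=\mu\left(B\right)$ --- is correct and is essentially the paper's own argument, which realizes the same multiplication model via the intertwining unitary $W_{\mu}f\left(A\right)v_{0}=f$; your version is if anything more explicit. One small caveat: your justification of cyclicity of $\mathbf{1}$ via density in $L^{1}\left(\mu\right)$ ``hence in $L^{2}\left(\mu\right)$'' is backwards for a finite measure (where $L^{2}\subset L^{1}$); the clean argument is that $g\perp e_{t}$ for all $t$ forces the finite measure $g\,d\mu$ to have vanishing Fourier transform, hence $g=0$ $\mu$-a.e. --- but cyclicity is only needed for the non-redundancy remark, not for the corollary itself.
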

\begin{proof}
We shall need to following:
\begin{lem}
Let $A$, $\mathscr{H}$, $\left\{ U_{A}\left(t\right)\right\} _{t\in\mathbb{R}}$,
$v_{0}\in\mathscr{H}$ be as above; and set
\begin{equation}
d\mu=d\mu_{A}\left(\cdot\right)=\left\Vert P_{U}\left(\cdot\right)v_{0}\right\Vert ^{2}\label{eq:tmp-04}
\end{equation}
as in (\ref{eq:tmp18}). Assume $v_{0}$ is cyclic; then $W_{\mu}f\left(A\right)v_{0}=f$
defines a unitary isomorphism $W_{\mu}:\mathscr{H}\rightarrow L^{2}\left(\mu\right)$;
and 
\begin{equation}
W_{\mu}U_{A}\left(t\right)=e^{it\cdot}W_{\mu}\label{eq:tmp-05}
\end{equation}
where $e^{it\cdot}$ is seen as a multiplication operator in $L^{2}\left(\mu\right)$.
More precisely:
\begin{equation}
\left(W_{\mu}U\left(t\right)\xi\right)\left(\lambda\right)=e^{it\lambda}\left(W_{\mu}\xi\right)\left(\lambda\right),\;\forall t,\lambda\in\mathbb{R},\forall\xi\in\mathscr{H}.\label{eq:tmp-06}
\end{equation}
(We say that the isometry $W_{\mu}$ \uline{intertwines} the two
unitary one-parameter groups.)

\index{isometry}

\index{operator!unitary one-parameter group}\end{lem}
\begin{proof}
Since $v_{0}$ is cyclic, it is enough to consider $\xi\in\mathscr{H}$
of the following form: $\xi=f\left(A\right)v_{0}$, with $f\in L^{2}\left(\mu\right)$,
see (\ref{eq:tmp16}) in Lemma \ref{lem:iso}. Then
\begin{equation}
\left\Vert \xi\right\Vert _{\mathscr{H}}^{2}=\int_{\mathbb{R}}\left|f\left(\lambda\right)\right|^{2}d\mu\left(\lambda\right),\:\mbox{so}\label{eq:tmp-07}
\end{equation}
\[
\left\Vert W_{\mu}\xi\right\Vert _{L^{2}\left(\mu\right)}=\left\Vert \xi\right\Vert _{\mathscr{H}}\:(\Longleftrightarrow(\ref{eq:tmp-07}))
\]

For the adjoint operator $W_{\mu}^{*}:L^{2}\left(\mathbb{R},\mu\right)\rightarrow\mathscr{H}$,
we have
\[
W_{\mu}^{*}f=f\left(A\right)v_{0},
\]
see (\ref{eq:tmp15})-(\ref{eq:tmp17}). Note that $f\left(A\right)v_{0}\in\mathscr{H}$
is well-defined for all $f\in L^{2}\left(\mu\right)$. Also $W_{\mu}^{*}W_{\mu}=I_{\mathscr{H}}$,
$W_{\mu}W_{\mu}^{*}=I_{L^{2}\left(\mu\right)}$. 

Proof of (\ref{eq:tmp-06}). Take $\xi=f\left(A\right)v_{0}$, $f\in L^{2}\left(\mu\right)$,
and apply the previous lemma, we have 
\[
W_{\mu}U\left(t\right)\xi=W_{\mu}U\left(t\right)f\left(A\right)_{0}=W_{\mu}\left(e^{it\cdot}f\left(\cdot\right)\right)\left(A\right)v_{0}=e^{it\cdot}f\left(\cdot\right)=e^{it\cdot}W_{\mu}\xi;
\]
or written differently:
\[
W_{\mu}U\left(t\right)=M_{e^{it\cdot}}W_{\mu},\;\forall t\in\mathbb{R}
\]
where $M_{e^{it\cdot}}$ is the multiplication operator by $e^{it\cdot}$.
\end{proof}
\end{proof}
\begin{rem}
Deficiency indices $\left(1,1\right)$ occur for probability measures
$\mu$ on $\mathbb{R}$ such that 
\begin{equation}
\int_{\mathbb{R}}\left|\lambda\right|^{2}d\mu\left(\lambda\right)=\infty.\label{eq:tmp-08}
\end{equation}
See examples below.
\end{rem}
\renewcommand{\arraystretch}{2}

\begin{table}[H]
\begin{tabular}{|c|c|c|}
\hline 
measure & condition (\ref{eq:tmp-08}) & deficiency indices\tabularnewline
\hline 
$\mu_{1}$ & $\int_{\mathbb{R}}\left|\lambda\right|^{2}e^{-\left|\lambda\right|}d\lambda<\infty$ & $\left(0,0\right)$\tabularnewline
\hline 
$\mu_{2}$ & $\int_{\mathbb{R}}\left|\lambda\right|^{2}\left(\frac{\sin\pi\lambda}{\pi\lambda}\right)^{2}d\lambda=\infty$ & $\left(1,1\right)$\tabularnewline
\hline 
$\mu_{3}$ & $\int_{\mathbb{R}}\left|\lambda\right|^{2}\frac{d\lambda}{\pi\left(1+\lambda^{2}\right)}=\infty$ & $\left(1,1\right)$\tabularnewline
\hline 
$\mu_{4}$ & $\int_{\mathbb{R}}\left|\lambda\right|^{2}\chi_{\left(-1,1\right)}\left(\lambda\right)\left(1-\left|\lambda\right|\right)d\lambda<\infty$ & $\left(0,0\right)$\tabularnewline
\hline 
$\mu_{5}$ & $\int_{\mathbb{R}}\left|\lambda\right|^{2}\frac{1}{\sqrt{2\pi}}e^{-\lambda^{2}/2}d\lambda=1<\infty$ & $\left(0,0\right)$\tabularnewline
\hline 
\end{tabular}

\protect\caption{\label{tab:meas}Application of Theorem \ref{thm:defmeas} to Table
\ref{tab:F1-F6}.}
\end{table}

\begin{figure}[H]
\begin{tabular}{cc}
\includegraphics[scale=0.35]{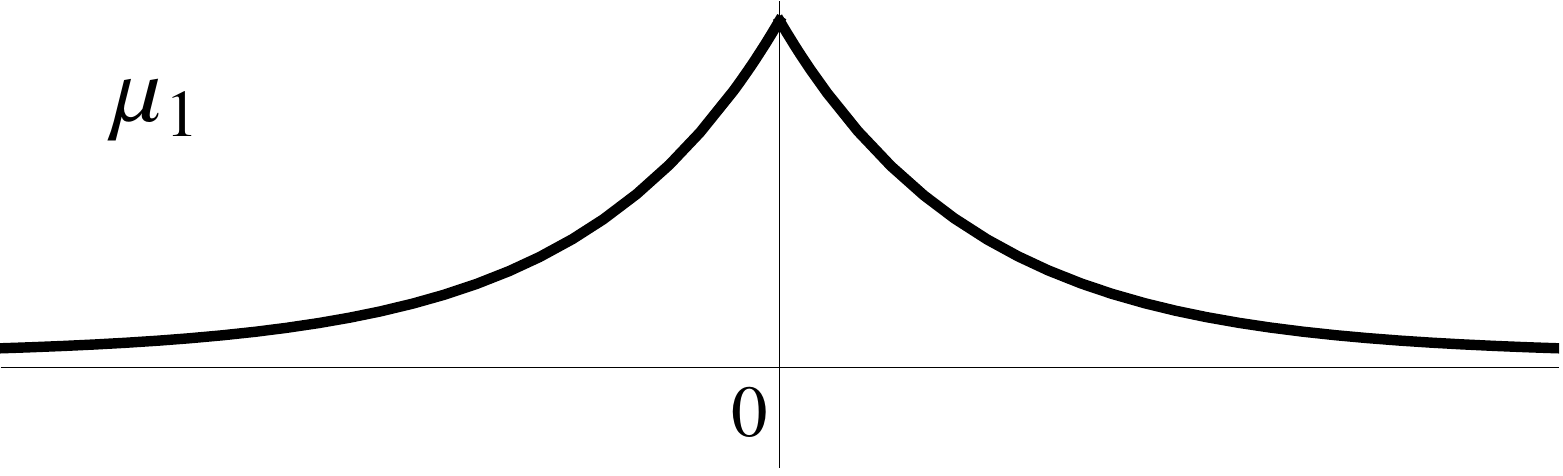} & \includegraphics[scale=0.35]{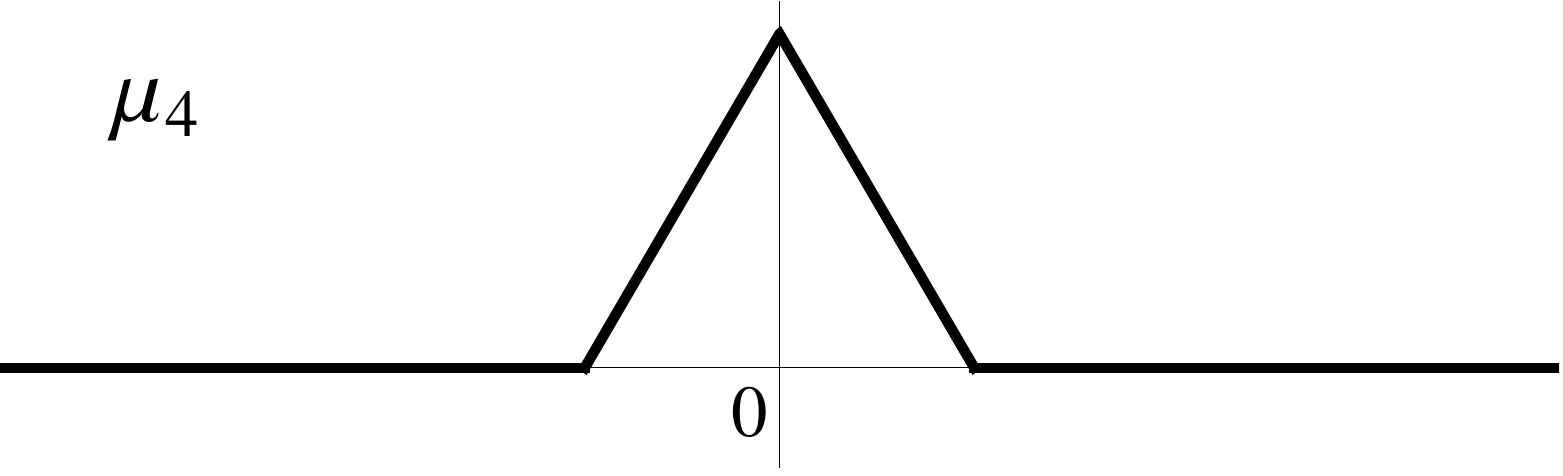}\tabularnewline
\includegraphics[scale=0.35]{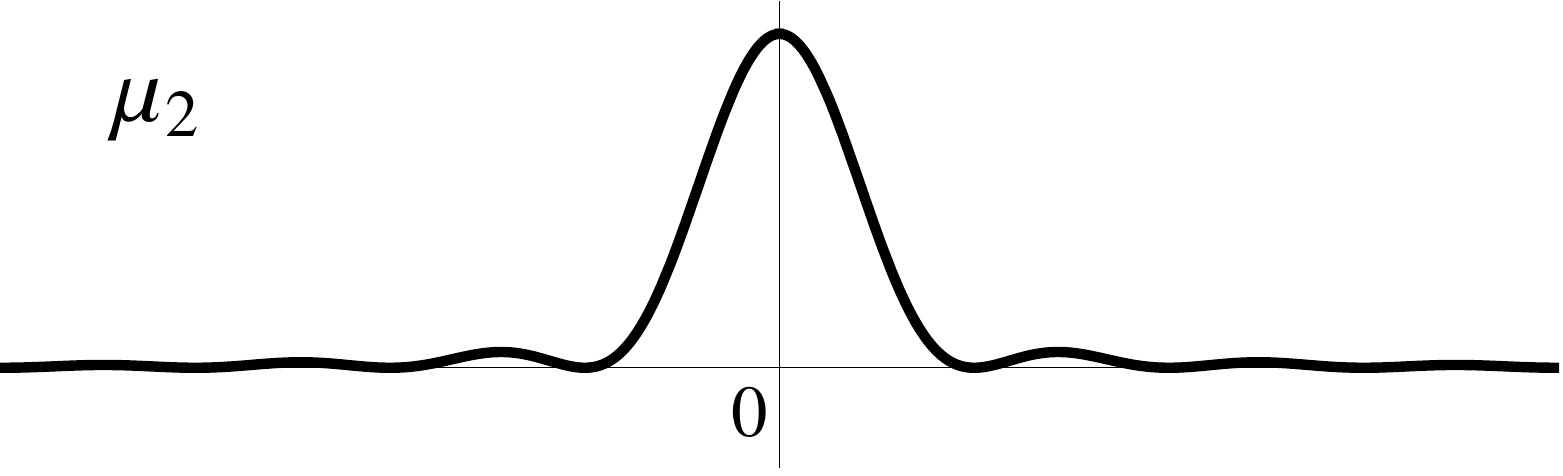} & \includegraphics[scale=0.35]{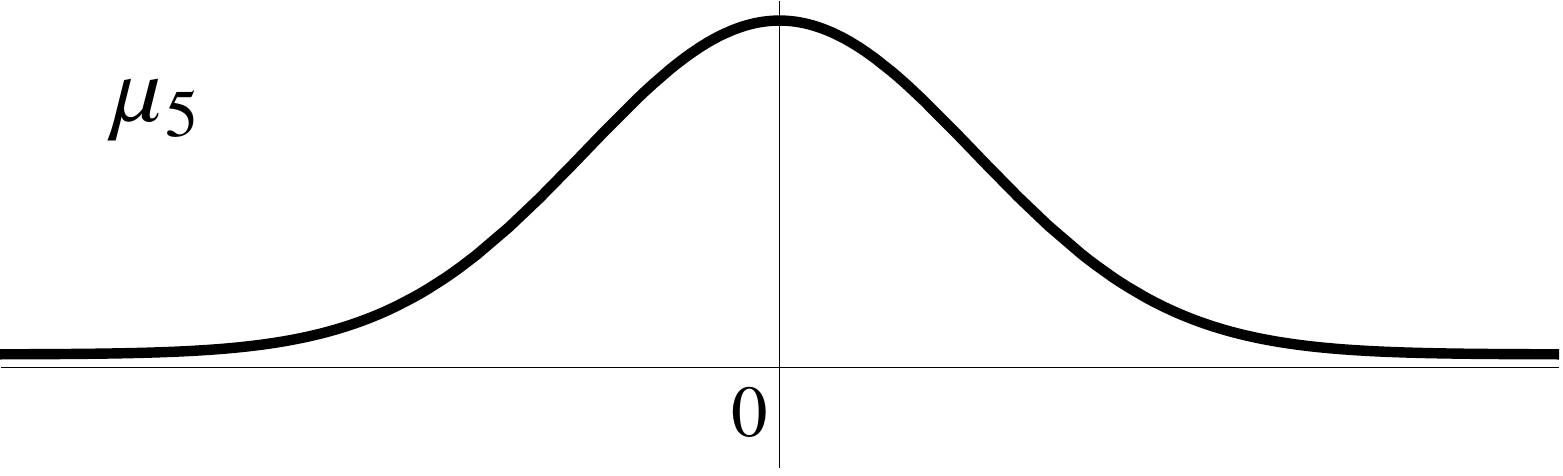}\tabularnewline
\includegraphics[scale=0.35]{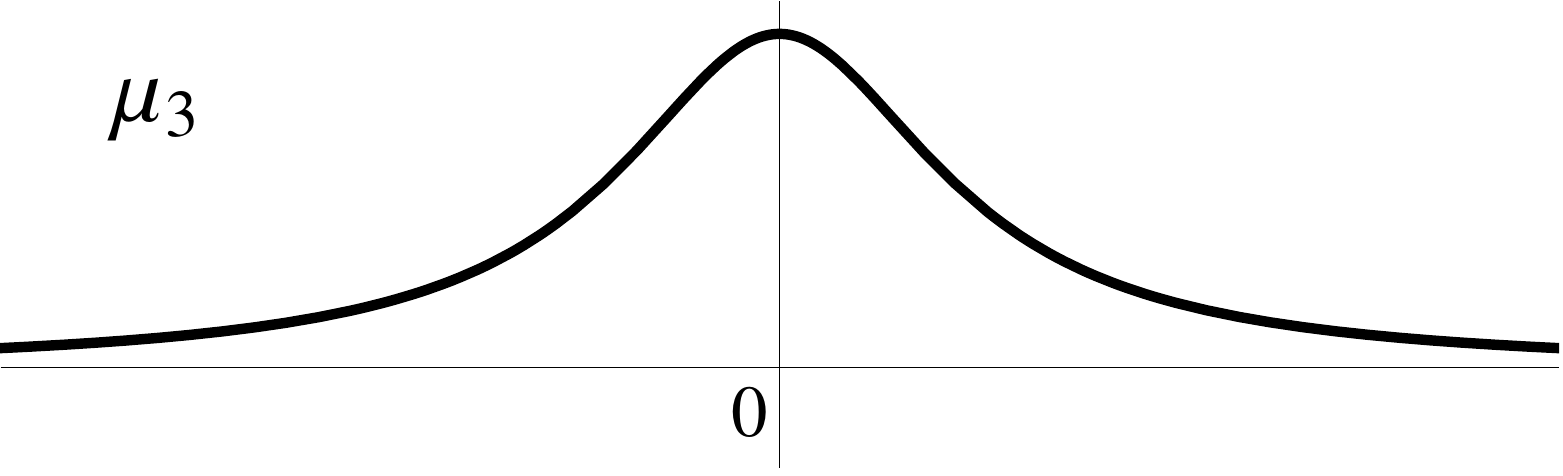} & \tabularnewline
\end{tabular}

\protect\caption{\label{fig:meas}The measures $\mu_{i}\in Ext\left(F_{i}\right)$
extending p.d. functions $F_{i}$ in Table \ref{tab:F1-F6}, $i=1,2,\ldots5$. }
\end{figure}

\renewcommand{\arraystretch}{1}

Summary: restrictions with deficiency indices $\left(1,1\right)$.

\index{measure!probability}
\begin{thm}
\label{thm:defmeas}If $\mu$ is a fixed probability measure on $\mathbb{R}$,
then the following two conditions are equivalent:
\begin{enumerate}
\item $\int_{\mathbb{R}}\lambda^{2}d\mu\left(\lambda\right)=\infty$;
\item The set 
\[
dom\left(S\right)=\left\{ f\in L^{2}\left(\mu\right)\:\Big|\:\lambda f\in L^{2}\left(\mu\right)\mbox{ and }\int_{\mathbb{R}}\left(\lambda+i\right)f\left(\lambda\right)d\mu\left(\lambda\right)=0\right\} 
\]
is the \uline{dense} domain of a restriction operator $S\subset M_{\lambda}$
with deficiency indices $\left(1,1\right)$, and the deficiency space
$DEF_{+}=\mathbb{C}1$, \emph{(}$1=$ the constant function $1$ in
$L^{2}\left(\mu\right)$.\emph{)}
\end{enumerate}
\end{thm}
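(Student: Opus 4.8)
The plan is to prove the equivalence (1)$\Leftrightarrow$(2) by analyzing the multiplication operator $M_\lambda$ on $L^2(\mu)$ and the codimension-one restriction $S$ described in (2), using von Neumann's deficiency-index machinery together with the explicit form of the defect equation $(M_\lambda)^*\xi = \pm i\xi$.

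First I would establish the dictionary. On $L^2(\mathbb{R},\mu)$ the maximal multiplication operator $M_\lambda$ with domain $\{f : \lambda f \in L^2(\mu)\}$ is selfadjoint; its domain is dense precisely because $\mu$ is finite (so $L^\infty \subset L^2(\mu)$, and compactly supported functions are dense). The key observation is: \emph{the constant function $\mathbf{1}$ lies in $\mathrm{dom}(M_\lambda)$ iff $\int \lambda^2 d\mu < \infty$.} So under hypothesis (1), $\mathbf 1 \notin \mathrm{dom}(M_\lambda)$, which is exactly what makes room for a nontrivial restriction with defect vector $\mathbf 1$. Then I would verify that $S$, defined on $\mathrm{dom}(S) = \{f \in \mathrm{dom}(M_\lambda) : \int(\lambda+i)f\,d\mu = 0\} = \{f \in \mathrm{dom}(M_\lambda): \langle (M_\lambda - i)f, \mathbf 1\rangle = 0\}$, is a symmetric operator: indeed $S \subset M_\lambda = M_\lambda^*$, so $S$ is symmetric, and one should check $\overline{\mathrm{dom}(S)}$ is dense (it is a hyperplane in the dense set $\mathrm{dom}(M_\lambda)$, and one checks the functional $f \mapsto \int(\lambda+i)f\,d\mu$ does not extend boundedly, using again $\int\lambda^2 d\mu = \infty$). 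The computation $\langle (M_\lambda - i)f, \mathbf 1\rangle = \int (\lambda - i)\overline{f}\,d\mu$ — careful with conjugation conventions, the paper's inner product is conjugate-linear in the first slot — identifies $\mathrm{Ran}(S - i)$ as the orthogonal complement of $\mathbb{C}\mathbf 1$ in the range of $M_\lambda - i$, hence $DEF_+ := \mathrm{Ran}(S-i)^\perp \cap \mathrm{dom}(S^*)$ contains $\mathbf 1$.

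Next, for the index count I would compute $S^* $ explicitly. A vector $\eta$ is in $\mathrm{dom}(S^*)$ with $S^*\eta = z\eta$ iff $\langle Sf, \eta\rangle = \bar z\langle f,\eta\rangle$ for all $f \in \mathrm{dom}(S)$, i.e. $\int (\lambda - \bar z)\overline{f}\,\eta\,d\mu = 0$ for all such $f$. Since $\mathrm{dom}(S)$ is the kernel of the single functional $f \mapsto \int(\lambda+i)f\,d\mu$ inside $\mathrm{dom}(M_\lambda)$, this forces $(\lambda - \bar z)\eta = c(\lambda + i)$ — wait, more precisely $(\bar\lambda - z)\eta(\lambda)$ as an $L^2(\mu)$ element must be a scalar multiple of the function $\overline{(\lambda+i)} = \lambda - i$, so $\eta(\lambda) = c(\lambda - i)/(\lambda - \bar z)$; this lies in $L^2(\mu)$ only for appropriate $z$, and for $z = i$ one gets $\eta = c\cdot\mathbf 1 \in L^2(\mu)$ (always, since $\mu$ finite), while for $z = -i$ one gets $\eta = c(\lambda - i)/(\lambda + i)$, a bounded function, hence also in $L^2(\mu)$. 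So both deficiency spaces are exactly one-dimensional: indices $(1,1)$, and $DEF_+ = \mathbb{C}\mathbf 1$ as claimed. For the converse (2)$\Rightarrow$(1): if $\int\lambda^2 d\mu < \infty$ then $\mathbf 1 \in \mathrm{dom}(M_\lambda)$, the functional $f\mapsto\int(\lambda+i)f\,d\mu = \langle (M_\lambda-i)f,\mathbf 1\rangle$ is $M_\lambda$-bounded, $\mathrm{dom}(S)$ is then a dense hyperplane on which $S$ is essentially selfadjoint — more carefully, $S$ has a selfadjoint extension that is a proper restriction of $M_\lambda$ only when the defect vector $\mathbf 1$ is genuinely outside $\mathrm{dom}(M_\lambda)$; when $\mathbf 1 \in \mathrm{dom}(M_\lambda)$ one shows the deficiency indices of $S$ are $(0,0)$ because $(S\pm i)$ already has dense range.

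The main obstacle I anticipate is the careful verification that $\mathrm{dom}(S)$ is genuinely dense and that $S$ is closed (or at least that its closure is still a restriction of $M_\lambda$ with the stated defect spaces) — the subtlety is precisely the interplay between the constraint functional being unbounded (case (1)) versus bounded (case (2)) relative to the graph norm of $M_\lambda$. One must be sure that in case (1) the hyperplane $\{f : \int(\lambda+i)f\,d\mu = 0\}$ is dense in $L^2(\mu)$ (not just in $\mathrm{dom}(M_\lambda)$ in graph norm): this follows because an unbounded linear functional on a dense domain has dense kernel. A secondary technical point is bookkeeping with the conjugate-linear-in-first-variable convention when identifying $DEF^\pm$ and matching signs of $\pm i$; I would fix conventions once at the start and track them. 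Finally I would remark that this theorem, combined with Theorem \ref{thm:pd-extension-bigger-H-space} and the model in \secref{R^1}, yields the promised spectral classification of all skew-Hermitian operators with deficiency indices $(1,1)$: every such operator is unitarily equivalent to some $S$ of the above form, via the intertwining isometry $W_\mu$ of Lemma \ref{lem:iso} and its surrounding discussion.
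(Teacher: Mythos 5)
Your overall strategy coincides with the paper's own (spread over Lemmas \ref{lem:def1} and \ref{lem:index(1,1)}): identify $\mathrm{Ran}(S\pm i)$ as the orthogonal complement of an explicit one-dimensional subspace and conclude that both defect spaces are exactly one-dimensional, with $DEF_{+}=\mathbb{C}\mathbf{1}$ and $DEF_{-}=\mathbb{C}\tfrac{\lambda-i}{\lambda+i}$. Your density argument --- the constraint functional $f\mapsto\int(\lambda+i)f\,d\mu$ is $L^{2}(\mu)$-discontinuous precisely when $\int\lambda^{2}\,d\mu=\infty$, and a discontinuous linear functional on a dense subspace has dense kernel --- is a welcome self-contained replacement for the paper's citation of \cite{Jor81} in Lemma \ref{lem:def1}; and your $\mathrm{Ran}(S\pm i)^{\perp}$ computation actually proves \emph{equality} of the defect spaces with these lines, where the paper only exhibits containment. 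The forward direction is therefore sound in substance, modulo conjugation bookkeeping that you yourself flag but do not fully resolve: your displayed eigenfunction $\eta=c(\lambda-i)/(\lambda-\bar z)$ should read $c(\lambda-i)/(\lambda-z)$ (as written it contradicts your own conclusion that $z=i$ yields the constants), and the identification of $\mathbf{1}$ as a $(+)$-defect vector comes from $\int(\lambda+i)f\,d\mu=\left\langle \mathbf{1},(S+i)f\right\rangle$, i.e.\ $\mathbf{1}\in\mathrm{Ran}(S+i)^{\perp}=\ker(S^{*}-i)$, not from $\mathrm{Ran}(S-i)^{\perp}$ as you wrote.

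The genuine error is in your converse (2)$\Rightarrow$(1). If $\int\lambda^{2}\,d\mu<\infty$, then $\lambda-i\in L^{2}(\mu)$ and the constraint reads $\left\langle \lambda-i,f\right\rangle=0$; hence $\mathrm{dom}(S)$ is contained in the proper \emph{closed} hyperplane $\{\lambda-i\}^{\perp}$ and is therefore not dense in $L^{2}(\mu)$ --- condition (2) already fails at the word ``dense.'' Your assertion that $\mathrm{dom}(S)$ ``is then a dense hyperplane on which $S$ is essentially selfadjoint,'' followed by a claim that the deficiency indices become $(0,0)$, is false as written (and deficiency indices are not even defined for a non-densely-defined operator). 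The correct one-line argument uses only ingredients you already have --- boundedness of the functional forces non-density --- but the step as you stated it would not survive.
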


\section{\label{sec:index 11}A Model of ALL Deficiency Index-$\left(1,1\right)$
Operators}
\begin{lem}
\label{lem:def1}Let $\mu$ be a Borel probability measure on $\mathbb{R}$,
and denote $L^{2}\left(\mathbb{R},d\mu\right)$ by $L^{2}\left(\mu\right)$.
then we have TFAE:
\begin{enumerate}
\item 
\begin{equation}
\int_{\mathbb{R}}\left|\lambda\right|^{2}d\mu\left(\lambda\right)=\infty\label{eq:tmp-09}
\end{equation}

\item the following two subspaces in $L^{2}\left(\mu\right)$are dense (in
the $L^{2}\left(\mu\right)$-norm):
\begin{equation}
\left\{ f\in L^{2}\left(\mu\right)\Big|\left[\left(\lambda\pm i\right)f\left(\lambda\right)\right]\in L^{2}\left(\mu\right)\mbox{ and }\int\left(\lambda\pm i\right)f\left(\lambda\right)d\mu\left(\lambda\right)=0\right\} \label{eq:tmp-10}
\end{equation}
where $i=\sqrt{-1}$.
\end{enumerate}
\end{lem}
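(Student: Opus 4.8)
The plan is to realize each of the two subspaces in (\ref{eq:tmp-10}) as the image of a closed hyperplane of $L^{2}(\mu)$ under a bounded substitution operator, compute its orthogonal complement in closed form, and then observe that the dichotomy in the lemma is exactly the statement that $\lambda\mapsto\lambda\mp i$ does or does not lie in $L^{2}(\mu)$. Throughout, $S\subset M_{\lambda}$ refers to multiplication by $\lambda$ on $L^{2}(\mu)$, and I write $\mathcal{D}_{\pm}$ for the subspace in (\ref{eq:tmp-10}) corresponding to the sign $\pm$.

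First, since $\mu$ is a probability measure, $\mathbf{1}\in L^{2}(\mu)$ with $\|\mathbf{1}\|=1$ and $L^{2}(\mu)\subseteq L^{1}(\mu)$; hence for $f$ with $(\lambda\pm i)f\in L^{2}(\mu)$ the integral $\int_{\mathbb{R}}(\lambda\pm i)f\,d\mu$ is well defined, and the functional $f\mapsto\int_{\mathbb{R}}(\lambda\pm i)f\,d\mu$ is continuous on that domain. Setting $g:=(\lambda\pm i)f$ and using the pointwise bound $|g/(\lambda\pm i)|=|g|/\sqrt{\lambda^{2}+1}\le|g|$ (so that division by $\lambda\pm i$ is a contraction of $L^{2}(\mu)$ into itself), I would identify
\[
\mathcal{D}_{\pm}\;=\;\Bigl\{\,\tfrac{g}{\lambda\pm i}\ :\ g\in L^{2}(\mu),\ \textstyle\int_{\mathbb{R}}g\,d\mu=0\,\Bigr\},
\]
both inclusions being immediate from this substitution.

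Next I would compute $\mathcal{D}_{\pm}^{\perp}$. With the paper's convention that the inner product is conjugate-linear in the first slot, a vector $h\in L^{2}(\mu)$ is orthogonal to $\mathcal{D}_{\pm}$ iff $\int_{\mathbb{R}}\overline{h}\,g/(\lambda\pm i)\,d\mu=0$ for every $g\in L^{2}(\mu)$ with $\int g\,d\mu=0$. Since $\lambda$ is real, $\overline{h}/(\lambda\pm i)=\overline{h/(\lambda\mp i)}$ and $h/(\lambda\mp i)\in L^{2}(\mu)$, so this reads $\langle h/(\lambda\mp i),g\rangle_{L^{2}(\mu)}=0$ for all $g\in\{\mathbf{1}\}^{\perp}$, i.e. $h/(\lambda\mp i)\in(\{\mathbf{1}\}^{\perp})^{\perp}=\mathbb{C}\,\mathbf{1}$, i.e. $h$ is a constant multiple of $\lambda\mp i$. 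Hence $\mathcal{D}_{\pm}^{\perp}=\mathbb{C}\,(\lambda\mp i)\cap L^{2}(\mu)$, so $\mathcal{D}_{\pm}$ is dense in $L^{2}(\mu)$ exactly when $\lambda\mp i\notin L^{2}(\mu)$. Finally, $|\lambda\pm i|^{2}=\lambda^{2}+1$ and $\mu$ is finite, so $\lambda-i\in L^{2}(\mu)\iff\lambda+i\in L^{2}(\mu)\iff\int_{\mathbb{R}}|\lambda|^{2}\,d\mu<\infty$. Running this for both signs simultaneously gives (1)$\Leftrightarrow$(2).

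There is no substantive obstacle; the two points needing care are the sign bookkeeping when passing from the functional $f\mapsto\int(\lambda\pm i)f\,d\mu$ to a genuine orthogonality relation (where conjugation flips $\pm i$ to $\mp i$), and the use of finiteness of $\mu$ to secure $\mathbf{1}\in L^{2}(\mu)$ and continuity of the functional on the graph-norm domain. I would close by remarking that this is precisely the density half of Theorem \ref{thm:defmeas}: once $\mathcal{D}_{+}$ is dense, $S=M_{\lambda}\!\restriction_{\mathcal{D}_{+}}$ is a symmetric restriction of the selfadjoint $M_{\lambda}$, and the same hyperplane computation shows $\ker(S^{*}-i)=\mathbb{C}\,\mathbf{1}$ and $\ker(S^{*}+i)=\mathbb{C}\,\tfrac{\lambda-i}{\lambda+i}$, both one-dimensional, so $S$ realizes a deficiency index $(1,1)$ operator — tying the lemma back to the model announced in the section heading.
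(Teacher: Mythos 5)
Your argument is correct and complete. There is nothing in the paper to compare it against: the paper's ``proof'' of this lemma is the single line ``See \cite{Jor81}'', so what you have written supplies an argument the Memoir omits. Your route is the natural one and it works: since $|\lambda\pm i|^{2}=\lambda^{2}+1\geq1$, multiplication by $1/(\lambda\pm i)$ is a contraction of $L^{2}\left(\mu\right)$ into itself, so each subspace in (\ref{eq:tmp-10}) is exactly the image of the closed hyperplane $\left\{ \mathbf{1}\right\} ^{\perp}$ under that contraction; the orthocomplement computation (with the conjugation correctly flipping $\pm i$ to $\mp i$) shows that $h$ is orthogonal to the subspace iff $h=c\left(\lambda\mp i\right)$ a.e., so density is equivalent to $\lambda\mp i\notin L^{2}\left(\mu\right)$, which for a finite measure is precisely (\ref{eq:tmp-09}); and since $|\lambda+i|^{2}=|\lambda-i|^{2}$, both subspaces are dense or neither is, giving the biconditional in both directions. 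The two points you flag as needing care (well-definedness of the integral via $L^{2}\left(\mu\right)\subseteq L^{1}\left(\mu\right)$, and the sign bookkeeping under conjugation) are handled correctly. Your closing remark identifying the deficiency vectors as $\mathbf{1}$ and $\tfrac{\lambda-i}{\lambda+i}$ agrees with the paper's Lemma \ref{lem:index(1,1)}, so the tie-back to Theorem \ref{thm:defmeas} is also sound.
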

\begin{proof}
See \cite{Jor81}.\end{proof}
\begin{rem}
If (\ref{eq:tmp-09}) holds, then the two dense subspaces $\mathscr{D}_{\pm}\subset L^{2}\left(\mu\right)$
in (\ref{eq:tmp-10}) form the dense domain of a restriction $S$
of $M_{\lambda}$ in $L^{2}\left(\mu\right)$; and this restriction
has deficiency indices $\left(1,1\right)$. Moreover, all Hermitian
operators having deficiency indices $\left(1,1\right)$ arise this
way. \index{deficiency indices}

Assume (\ref{eq:tmp-09}) holds; then the subspace 
\[
\mathscr{D}=\left\{ f\in L^{2}\left(\mu\right)\Big|\left(\lambda+i\right)f\in L^{2}\left(\mu\right)\mbox{ and }\int\left(\lambda+i\right)f\left(\lambda\right)d\mu\left(\lambda\right)=0\right\} 
\]
is a dense domain of a restricted operator of $M_{\lambda}$, so $S\subset M_{\lambda}$,
and $S$ is Hermitian. \end{rem}
\begin{lem}
\label{lem:index(1,1)}With $i=\sqrt{-1}$, set 
\begin{equation}
dom\left(S\right)=\left\{ f\in L^{2}\left(\mu\right)\Big|\lambda f\in L^{2}\left(\mu\right)\mbox{ and }\int\left(\lambda+i\right)f\left(\lambda\right)d\mu\left(\lambda\right)=0\right\} \label{eq:tmp-12}
\end{equation}
then $S\subset M_{\lambda}\subset S^{*}$; and the deficiency subspaces
$DEF_{\pm}$ are as follow:
\begin{eqnarray}
DEF_{+} & = & \mbox{the constant function in }L^{2}\left(\mu\right)=\mathbb{C}1\label{eq:tmp-11}\\
DEF_{-} & = & span\left\{ \frac{\lambda-i}{\lambda+i}\right\} _{\lambda\in\mathbb{R}}\subseteq L^{2}\left(\mu\right)\label{eq:tmp-14}
\end{eqnarray}
where $DEF_{-}$ is also a 1-dimensional subspace in $L^{2}\left(\mu\right)$.\end{lem}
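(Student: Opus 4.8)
The plan is to realize the restriction $S$ very concretely through the \emph{bounded} multiplier $\phi(\lambda):=(\lambda+i)^{-1}$, thereby converting the two defect equations into orthogonality conditions against a single fixed hyperplane of $L^{2}(\mu)$. Since $\mu$ is a finite measure and $\lambda$ is real, $\phi\in L^{\infty}(\mu)$ with $|\phi|\le 1$ and $\overline{\phi}=(\lambda-i)^{-1}$. For any $k\in L^{2}(\mu)$ one has $\lambda\,\phi k=k-i\,\phi k\in L^{2}(\mu)$, so $\phi k\in dom(M_{\lambda})$; conversely every $g\in dom(M_{\lambda})$ equals $\phi k$ with $k:=(\lambda+i)g\in L^{2}(\mu)$. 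Hence $\phi:L^{2}(\mu)\to dom(M_{\lambda})$ is a bijection, and under it $\int(\lambda+i)g\,d\mu=\int k\,d\mu=\langle\mathbf 1,k\rangle$, where $\mathbf 1$ denotes the constant function $1$. Consequently
\[
dom(S)=\bigl\{\phi k\ :\ k\in L^{2}(\mu),\ \langle\mathbf 1,k\rangle=0\bigr\}=\phi\bigl(\mathbf 1^{\perp}\bigr).
\]

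Next I would record the elementary facts. For $f,g\in dom(S)\subseteq dom(M_{\lambda})$, $\langle Sf,g\rangle=\int\lambda\,\overline f\,g\,d\mu=\langle f,Sg\rangle$ (the integral is finite by Cauchy--Schwarz, as $\lambda f,\lambda g\in L^{2}(\mu)$), so $S$ is symmetric; and since trivially $S\subseteq M_{\lambda}$ while the same computation gives $\langle Sg,f\rangle=\langle g,M_{\lambda}f\rangle$ for all $g\in dom(S)$, $f\in dom(M_{\lambda})$, we obtain $S\subseteq M_{\lambda}\subseteq S^{*}$, the first assertion of the lemma. That $dom(S)$ is dense is Lemma \ref{lem:def1}; alternatively it follows directly from the above parametrization, since $v\perp\phi(\mathbf 1^{\perp})$ forces $v\overline{\phi}\perp\mathbf 1^{\perp}$, i.e.\ $v\overline{\phi}\in\mathbb C\mathbf 1$, so $v=c(\lambda-i)$ for some scalar $c$, and this lies in $L^{2}(\mu)$ only for $c=0$ because $\int(\lambda^{2}+1)\,d\mu=\infty$ by hypothesis. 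Thus $S$ is a densely defined symmetric restriction of the selfadjoint operator $M_{\lambda}$, and $S^{*}$ is meaningful.

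The heart of the argument is the identification of the defect spaces. A vector $\xi\in L^{2}(\mu)$ lies in $DEF_{+}=\ker(S^{*}-iI)$ iff $\langle Sg,\xi\rangle=i\langle g,\xi\rangle$ for all $g\in dom(S)$: the nontrivial direction is valid because the right-hand side is bounded by $\|\xi\|\,\|g\|$, so $\xi\in dom(S^{*})$, and then density of $dom(S)$ forces $S^{*}\xi=i\xi$. Writing $g=\phi k$ with $k\in\mathbf 1^{\perp}$ and using the algebraic identity $\overline{\phi}(\lambda-i)\equiv 1$,
\[
\langle Sg,\xi\rangle-i\langle g,\xi\rangle=\int\overline{\phi k}\,(\lambda-i)\,\xi\,d\mu=\int\overline k\,\xi\,d\mu=\langle k,\xi\rangle ,
\]
every integral being absolutely convergent (the integrand collapses to $\overline k\xi$). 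Hence $\xi\in DEF_{+}$ iff $\xi\perp\mathbf 1^{\perp}$, i.e.\ $\xi\in\mathbb C\mathbf 1$, which is (\ref{eq:tmp-11}). The same scheme handles $DEF_{-}=\ker(S^{*}+iI)$: here $\langle Sg,\xi\rangle+i\langle g,\xi\rangle=\int\overline{\phi k}(\lambda+i)\xi\,d\mu=\langle k,\tfrac{\lambda+i}{\lambda-i}\xi\rangle$, using $\overline{\phi}(\lambda+i)=\tfrac{\lambda+i}{\lambda-i}$; since $\bigl|\tfrac{\lambda+i}{\lambda-i}\bigr|=1$ we have $\tfrac{\lambda+i}{\lambda-i}\xi\in L^{2}(\mu)$, so requiring it to be annihilated by all $k\in\mathbf 1^{\perp}$ forces $\tfrac{\lambda+i}{\lambda-i}\xi\in\mathbb C\mathbf 1$, that is $\xi\in\mathbb C\,\tfrac{\lambda-i}{\lambda+i}$; and $\tfrac{\lambda-i}{\lambda+i}\in L^{2}(\mu)$ for the same modulus reason, so $DEF_{-}$ is one-dimensional, which is (\ref{eq:tmp-14}).

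The one genuine obstacle is conceptual, not computational: one must resist concluding from the density of $dom(S)$ in $L^{2}(\mu)$ that the eigenvalue equations force $(\lambda\mp i)\xi=0$, because $(\lambda\mp i)\xi$ is in general not an $L^{2}(\mu)$ function --- already for $\xi=\mathbf 1$, which is exactly the nonzero defect vector. The device that makes everything legitimate is the bounded multiplier $\phi=(\lambda+i)^{-1}$ together with the identities $\overline{\phi}(\lambda-i)=1$ and $\overline{\phi}(\lambda+i)=\tfrac{\lambda+i}{\lambda-i}$, which collapse each defect equation to a pairing between two honest $L^{2}(\mu)$-vectors and exhibit the defect spaces as the one-dimensional orthogonal complements of the hyperplane $\mathbf 1^{\perp}$. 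The hypothesis $\int\lambda^{2}\,d\mu=\infty$ enters only to guarantee that $dom(S)$ is dense (equivalently, that $\lambda-i\notin L^{2}(\mu)$); without it the deficiency indices degenerate.
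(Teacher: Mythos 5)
Your proof is correct, and it takes a route that is both more self-contained and strictly more conclusive than the one in the paper. The paper's own argument consists only of the two direct pairings $\left\langle \mathbf{1},(S+iI)f\right\rangle =\int(\lambda+i)f\,d\mu=0$ and $\bigl\langle \tfrac{\lambda-i}{\lambda+i},(S-iI)f\bigr\rangle =\int(\lambda+i)f\,d\mu=0$, which establish only the containments $\mathbb{C}\mathbf{1}\subseteq DEF_{+}$ and $\mathbb{C}\tfrac{\lambda-i}{\lambda+i}\subseteq DEF_{-}$; that these containments are equalities is left to the implicit general principle that a symmetric restriction of a selfadjoint operator by a single linear condition has deficiency indices at most $(1,1)$, and density of $dom(S)$ is delegated to Lemma \ref{lem:def1} (hence to the literature). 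Your device of the bounded multiplier $\phi=(\lambda+i)^{-1}$, giving the exact parametrization $dom(S)=\phi\left(\mathbf{1}^{\perp}\right)$, does three things at once: it reduces each defect equation to the statement that a genuine $L^{2}(\mu)$-vector ($\xi$, respectively $\tfrac{\lambda+i}{\lambda-i}\xi$) is orthogonal to the hyperplane $\mathbf{1}^{\perp}$, which yields both inclusions and hence the one-dimensionality of $DEF_{\pm}$ directly; it gives a two-line proof of density in which the hypothesis $\int\lambda^{2}\,d\mu=\infty$ enters transparently (as the statement $\lambda-i\notin L^{2}(\mu)$); and it correctly isolates the pitfall that $(\lambda\mp i)\xi$ need not be in $L^{2}(\mu)$ for the defect vectors themselves, which is precisely why the naive density argument fails and the bounded-multiplier bookkeeping is needed. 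The underlying computation (multiply by $(\lambda\pm i)$ and invoke the defining integral condition) is the same as the paper's; what your version buys is a complete proof of the stated equalities rather than of one inclusion.
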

\begin{proof}
Let $f\in dom\left(S\right)$, then, by definition, 
\[
\int_{\mathbb{R}}\left(\lambda+i\right)f\left(\lambda\right)d\mu\left(\lambda\right)=0\;\mbox{and so}
\]
\begin{equation}
\left\langle 1,\left(S+iI\right)f\right\rangle _{L^{2}\left(\mu\right)}=\int_{\mathbb{R}}\left(\lambda+i\right)f\left(\lambda\right)d\mu\left(\lambda\right)=0\label{eq:tmp-13}
\end{equation}
hence (\ref{eq:tmp-11}) follows.

Note we have formula (\ref{eq:tmp-12}) for $dom\left(S\right)$.
Moreover $dom\left(S\right)$ is dense in $L^{2}\left(\mu\right)$
because of (\ref{eq:tmp-10}) in Lemma \ref{lem:def1}.

Now to (\ref{eq:tmp-14}): Let $f\in dom\left(S\right)$; then
\begin{eqnarray*}
\left\langle \frac{\lambda-i}{\lambda+i},\left(S-iI\right)f\right\rangle _{L^{2}\left(\mu\right)} & = & \int_{\mathbb{R}}\left(\frac{\lambda+i}{\lambda-i}\right)\left(\lambda-i\right)f\left(\lambda\right)d\mu\left(\lambda\right)\\
 & = & \int_{\mathbb{R}}\left(\lambda+i\right)f\left(\lambda\right)d\mu\left(\lambda\right)=0
\end{eqnarray*}
again using the definition of $dom\left(S\right)$ in (\ref{eq:tmp-12}).
\end{proof}
We have established a representation for \uline{all} Hermitian
operators with dense domain in a Hilbert space, and having deficiency
indices $\left(1,1\right)$. In particular, we have justified the
answers in Table \ref{tab:F1-F6} for $F_{i}$, $i=1,\ldots,5$. 

To further emphasize to the result we need about deficiency indices
$\left(1,1\right)$, we have the following:
\begin{thm}
Let $\mathscr{H}$ be a separable Hilbert space, and let $S$ be a
Hermitian operator with dense domain in $\mathscr{H}$. Suppose the
deficiency indices of $S$ are $\left(d,d\right)$; and suppose one
of the selfadjoint extensions of $S$ has simple spectrum. \index{selfadjoint extension}

Then the following two conditions are equivalent:
\begin{enumerate}
\item \label{enu:(11)-1}$d=1$;
\item \label{enu:(11)-2}for each of the selfadjoint extensions $T$ of
$S$, we have a unitary equivalence between $\left(S,\mathscr{H}\right)$
on the one hand, and a system $\left(S_{\mu},L^{2}\left(\mathbb{R},\mu\right)\right)$
on the other, where $\mu$ is a Borel probability measure on $\mathbb{R}$.
Moreover, 
\begin{equation}
dom\left(S_{\mu}\right)=\left\{ f\in L^{2}\left(\mu\right)\Big|\lambda f\left(\cdot\right)\in L^{2}\left(\mu\right),\mbox{ and }\int_{\mathbb{R}}\left(\lambda+i\right)f\left(\lambda\right)d\mu\left(\lambda\right)=0\right\} ,\label{eq:tmp-15}
\end{equation}
and
\begin{equation}
\left(S_{\mu}f\right)\left(\lambda\right)=\lambda f\left(\lambda\right),\;\forall f\in dom\left(S_{\mu}\right),\forall\lambda\in\mathbb{R}.\label{eq:tmp-16}
\end{equation}

\end{enumerate}

In case $\mu$ satisfies condition (\ref{eq:tmp-16}), then the constant
function $\mathbf{1}$ (in $L^{2}\left(\mathbb{R},\mu\right)$) is
in the domain of $S_{\mu}^{*}$, and 
\begin{equation}
S_{\mu}^{*}\mathbf{1}=i\mathbf{1}\label{eq:tmp-17}
\end{equation}
i.e., $\left(S_{\mu}^{*}\mathbf{1}\right)\left(\lambda\right)=i$,
a.a. $\lambda$ w.r.t. $d\mu$.

\end{thm}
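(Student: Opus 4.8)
The plan is to package the constructions of Sections \ref{sec:R^1}--\ref{sec:index 11}. Throughout we may assume $S$ is closed (replacing it by $\bar S$ changes neither the deficiency indices, nor the set of selfadjoint extensions, nor the assertion). The implication $(\ref{enu:(11)-2})\Rightarrow(\ref{enu:(11)-1})$ is the soft one: if $(S,\mathscr{H})\cong(S_\mu,L^2(\mathbb{R},\mu))$ with $\mathrm{dom}(S_\mu)$ as in \eqref{eq:tmp-15}, then Lemma \ref{lem:index(1,1)} identifies the deficiency spaces of $S_\mu$, hence of $S$, as $DEF_+=\mathbb{C}\mathbf{1}$ and $DEF_-=\mathbb{C}\,\tfrac{\lambda-i}{\lambda+i}$, each one--dimensional, so $d=1$; moreover Lemma \ref{lem:def1} shows the displayed domain is dense exactly when $\int\lambda^2\,d\mu=\infty$, which is automatic since $S_\mu$ is densely defined.

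For $(\ref{enu:(11)-1})\Rightarrow(\ref{enu:(11)-2})$, fix a selfadjoint extension $T\supset S$; by the standing hypothesis (and the cyclicity argument below, which applies verbatim to every extension) $T$ has simple spectrum. Let $\xi_+$ be a unit vector spanning $DEF_+=\ker(S^*-iI)$. The crucial point is that $\xi_+$ is cyclic for $T$; granting this, set $d\mu(\cdot):=\|P_T(\cdot)\xi_+\|_{\mathscr{H}}^2$, a Borel \emph{probability} measure because $\|\xi_+\|=1$, and let $W\colon\mathscr{H}\to L^2(\mathbb{R},\mu)$ be the spectral unitary with $W\xi_+=\mathbf{1}$ and $WTW^*=M_\lambda$ (multiplication by $\lambda$), as in Lemma \ref{lem:iso}. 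Put $S_\mu:=WSW^*$: a closed, densely defined symmetric restriction of $M_\lambda$ with deficiency indices $(1,1)$, and since conjugation by a unitary commutes with taking adjoints, $\ker(S_\mu^*-iI)=W\ker(S^*-iI)=\mathbb{C}\mathbf{1}$.

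It remains to compute $\mathrm{dom}(S_\mu)$. Since $S_\mu$ is closed, $\mathrm{Ran}(S_\mu+iI)$ is closed and equals $\bigl(\ker(S_\mu^*-iI)\bigr)^{\perp}=\mathbf{1}^{\perp}=\{g\in L^2(\mu):\int g\,d\mu=0\}$. As $S_\mu+iI$ is the restriction of $M_\lambda+iI$ to $\mathrm{dom}(S_\mu)$, and $M_\lambda+iI\colon\mathrm{dom}(M_\lambda)\to L^2(\mu)$ is a bijection, we get
\[
\mathrm{dom}(S_\mu)=(M_\lambda+iI)^{-1}\bigl(\mathbf{1}^{\perp}\bigr)=\Bigl\{f\in L^2(\mu):\lambda f\in L^2(\mu),\ \int_{\mathbb{R}}(\lambda+i)f(\lambda)\,d\mu(\lambda)=0\Bigr\},
\]
which is precisely \eqref{eq:tmp-15}, and $(S_\mu f)(\lambda)=\lambda f(\lambda)$ is \eqref{eq:tmp-16}; by Lemma \ref{lem:def1}, density of this domain forces $\int\lambda^2\,d\mu=\infty$. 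Finally, the statement $\mathbf{1}\in\ker(S_\mu^*-iI)$ says exactly that $\mathbf{1}\in\mathrm{dom}(S_\mu^*)$ and $S_\mu^*\mathbf{1}=i\mathbf{1}$, giving \eqref{eq:tmp-17}.

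The main obstacle is the cyclicity of $\xi_+$ for $T$ (and, with it, the reduction ensuring every selfadjoint extension has simple spectrum). I would argue it through the identity, valid for \emph{any} selfadjoint extension $T\supset S$ and any $w\in\mathbb{C}\setminus\mathbb{R}$,
\[
\ker(S^*-wI)=\bigl(I+(w-i)(T-w)^{-1}\bigr)\ker(S^*-iI):
\]
if $S^*\xi=i\xi$, then $(T-w)^{-1}\xi\in\mathrm{dom}(T)\subset\mathrm{dom}(S^*)$ with $S^*(T-w)^{-1}\xi=\xi+w(T-w)^{-1}\xi$, and a one-line computation gives $S^*\bigl(\xi+(w-i)(T-w)^{-1}\xi\bigr)=w\bigl(\xi+(w-i)(T-w)^{-1}\xi\bigr)$. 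Hence every $\ker(S^*-wI)$ lies in the $T$-cyclic subspace $\mathscr{H}_{\xi_+}$ generated by $\xi_+$. On the other hand, the largest subspace reducing $S$ on which $S$ acts selfadjointly is $\bigl(\overline{\operatorname{span}}\{\ker(S^*-wI):w\in\mathbb{C}\setminus\mathbb{R}\}\bigr)^{\perp}$; under the hypotheses of the theorem this subspace is trivial (a nonzero reducing selfadjoint part would split off as a common summand of \emph{all} selfadjoint extensions and, since $S$ has indices $(1,1)$, would put a point of that summand's spectrum in the spectrum with multiplicity two for some extension, contradicting simplicity). Therefore $\overline{\operatorname{span}}\{\ker(S^*-wI)\}=\mathscr{H}$, so $\mathscr{H}_{\xi_+}=\mathscr{H}$ and $\xi_+$ is cyclic for $T$; the same computation applies to any selfadjoint extension, which also justifies the ``for each $T$'' in the statement. (If one prefers to take the hypothesis literally rather than as encoding primeness, one first splits off the maximal reducing selfadjoint part, which is trivial in all cases of interest here.)
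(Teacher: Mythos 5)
Your proposal follows the same route as the paper's own proof: fix a selfadjoint extension $T$, let $\mu$ be the spectral measure of a unit vector spanning $DEF_{+}$, conjugate by the spectral unitary $W$, and identify $S_{\mu}=WSW^{*}$ as the stated restriction of $M_{\lambda}$. You do, however, go beyond what the paper actually writes down in two places. First, the domain identification: the paper only computes $\int(\lambda+i)f\,d\mu=\left\langle \left(S^{*}-iI\right)h_{+},Wf\right\rangle =0$ for $f\in\mathrm{dom}\left(S_{\mu}\right)$, i.e.\ one inclusion, leaving the reverse inclusion to an implicit codimension count inside $\mathrm{dom}\left(M_{\lambda}\right)$; your argument via $\mathrm{Ran}\left(S_{\mu}+iI\right)=\mathbf{1}^{\perp}$ together with the bijectivity of $M_{\lambda}+iI$ gives the equality in one stroke and is the cleaner way to do it. Second, cyclicity of the deficiency vector: the paper disposes of this with the single sentence ``we use the assumption that $T$ has simple spectrum,'' which by itself only yields the existence of \emph{some} cyclic vector, not that $\xi_{+}$ is one; your identity $\ker\left(S^{*}-wI\right)=\left(I+\left(w-i\right)\left(T-w\right)^{-1}\right)\ker\left(S^{*}-iI\right)$, placing every deficiency space inside the $T$-cyclic subspace generated by $\xi_{+}$, is the right mechanism and genuinely fills this in. The one soft spot is your closing claim that a nonzero selfadjoint reducing part would contradict simplicity ``for some extension'': the hypothesis asserts simplicity of only one extension, and one can arrange $S=A\oplus S_{0}$ with $A$ selfadjoint of spectral type singular to that of an extension $T_{0}$ of the prime part, so that $A\oplus T_{0}$ still has simple spectrum; hence primeness of $S$ does not literally follow from the stated hypothesis. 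You flag this yourself, and the standard fix (read the hypothesis as encoding primeness, or split off the maximal selfadjoint part first) is the right one --- the paper silently makes the same assumption.
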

\begin{proof}
For the implication (\ref{enu:(11)-2})$\Rightarrow$(\ref{enu:(11)-1}),
see Lemma \ref{lem:index(1,1)}.

(\ref{enu:(11)-1})$\Rightarrow$(\ref{enu:(11)-2}). Assume that
the operator $S$, acting in $\mathscr{H}$ is Hermitian with deficiency
indices $\left(1,1\right)$. This means that each of the two subspaces
$DEF_{\pm}\subset\mathscr{H}$ is one-dimensional, where 
\begin{equation}
DEF_{\pm}=\left\{ h_{\pm}\in dom\left(S^{*}\right)\Big|S^{*}h_{\pm}=\pm ih_{\pm}\right\} .\label{eq:tmp-20}
\end{equation}
 Now pick a selfadjoint extension, say $T$, extending $S$. We have
\begin{equation}
S\subseteq T=T^{*}\subseteq S^{*}\label{eq:tmp-18}
\end{equation}
where ``$\subseteq$'' in (\ref{eq:tmp-18}) means ``containment
of the respective graphs.''

Now set $U\left(t\right)=e^{itT}$, $t\in\mathbb{R}$, and let $P_{U}\left(\cdot\right)$
be the corresponding projection-valued measure\index{measure!PVM},
i.e., we have:

\begin{equation}
U\left(t\right)=\int_{\mathbb{R}}e^{it\lambda}P_{U}\left(d\lambda\right),\;\forall t\in\mathbb{R}.\label{eq:tmp-19}
\end{equation}

Using the assumption (\ref{enu:(11)-1}), and (\ref{eq:tmp-20}),
it follows that there is a vector $h_{+}\in\mathscr{H}$ such that
$\left\Vert h_{+}\right\Vert _{\mathscr{H}}=1$, $h_{+}\in dom\left(S^{*}\right)$,
and $S^{*}h_{+}=ih_{+}$. Now set 
\begin{equation}
d\mu\left(\lambda\right):=\left\Vert P_{U}\left(d\lambda\right)h_{+}\right\Vert _{\mathscr{H}}^{2}.\label{eq:tmp-21}
\end{equation}
Using (\ref{eq:tmp-19}), we then verify that there is a unitary (and
isometric) isomorphism of $L^{2}\left(\mu\right)\overset{W}{\longrightarrow}\mathscr{H}$
given by
\begin{equation}
Wf=f\left(T\right)h_{+},\;\forall f\in L^{2}\left(\mu\right);\label{eq:tmp-22}
\end{equation}
where $f\left(T\right)=\int_{\mathbb{R}}f\left(T\right)P_{U}\left(d\lambda\right)$
is the functional calculus applied to the selfadjoint operator $T$.
Hence 
\begin{eqnarray*}
\left\Vert Wf\right\Vert _{\mathscr{H}}^{2} & = & \left\Vert f\left(T\right)h_{+}\right\Vert _{\mathscr{H}}^{2}\\
 & = & \int_{\mathbb{R}}\left|f\left(\lambda\right)\right|^{2}\left\Vert P_{U}\left(d\lambda\right)h_{+}\right\Vert ^{2}\\
 & = & \int_{\mathbb{R}}\left|f\left(\lambda\right)\right|^{2}d\mu\left(\lambda\right)\,\,\,\,\,\,\,\,\,\,(\mbox{by }\ref{eq:tmp-21})\\
 & = & \left\Vert f\right\Vert _{L^{2}\left(\mu\right)}^{2}.
\end{eqnarray*}
To see that $W$ in (\ref{eq:tmp-22}) is an isometric isomorphism
of $L^{2}\left(\mu\right)$ onto $\mathscr{H}$, we use the assumption
that $T$ has simple spectrum\index{spectrum}. 

Now set 
\begin{eqnarray}
S_{\mu} & := & W^{*}SW\label{eq:tmp-23}\\
T_{\mu} & := & W^{*}TW.\label{eq:tmp-24}
\end{eqnarray}
We note that $T_{\mu}$ is then the multiplication operator $M$ in
$L^{2}\left(\mathbb{R},\mu\right)$, given by
\begin{equation}
\left(Mf\right)\left(\lambda\right)=\lambda f\left(\lambda\right),\;\forall f\in L^{2}\left(\mu\right)\label{eq:tmp-25}
\end{equation}
such that $\lambda f\in L^{2}\left(\mu\right)$. This assertion is
immediate from (\ref{eq:tmp-22}) and (\ref{eq:tmp-21}).

To finish the proof, we compute the integral in (\ref{eq:tmp-15})
in the theorem, and we use the intertwining\index{intertwining} properties
of the isomorphism $W$ from (\ref{eq:tmp-22}). Indeed, we have 
\begin{eqnarray}
\int_{\mathbb{R}}\left(\lambda+i\right)f\left(\lambda\right)d\mu\left(\lambda\right) & = & \left\langle \mathbf{1},\left(M+iI\right)f\right\rangle _{L^{2}\left(\mu\right)}\nonumber \\
 & = & \left\langle W\mathbf{1},W\left(M+iI\right)f\right\rangle _{\mathscr{H}}\nonumber \\
 & \overset{(\ref{eq:tmp-21})}{=} & \left\langle h_{+},\left(T+iI\right)Wf\right\rangle _{\mathscr{H}}.\label{eq:tmp-26}
\end{eqnarray}
Hence $Wf\in dom\left(S\right)$ $\Longleftrightarrow$ $f\in dom\left(S_{\mu}\right)$,
by (\ref{eq:tmp-23}); and, so for $Wf\in dom\left(S\right)$, the
RHS in (\ref{eq:tmp-26}) yields $\left\langle \left(S^{*}-iI\right)h_{+},Wf\right\rangle _{\mathscr{H}}=0$;
and the assertion (\ref{enu:(11)-2}) in the theorem follows.
\end{proof}

\subsection{\label{sub:momentum}Momentum Operators in $L^{2}\left(0,1\right)$}

What about our work on momentum operators in $L^{2}\left(0,1\right)$?
They have deficiency indices $\left(1,1\right)$; and there is the
family of measures $\mu$ on $\mathbb{R}$:

Fix $0\leq\theta<1$, fix 
\begin{equation}
w_{n}>0,\;\sum_{n\in\mathbb{Z}}w_{n}=1;\label{eq:L01-1}
\end{equation}
and let $\mu=\mu_{\left(\theta,w\right)}$ as follows:
\begin{equation}
d\mu=\sum_{n\in\mathbb{Z}}w_{n}\delta_{\theta+n}\label{eq:L01-2}
\end{equation}
such that 
\begin{equation}
\sum_{n\in\mathbb{Z}}n^{2}w_{n}=\infty.\label{eq:L01-3}
\end{equation}

In this case, there is the bijective $L^{2}\left(\mathbb{R},d\mu\right)\longleftrightarrow\left\{ \xi_{n}\right\} _{n\in\mathbb{Z}}$,
$\xi\in\mathbb{C}$ s.t.
\[
\sum_{n\in\mathbb{Z}}\left|\xi_{n}\right|^{2}w_{n}<\infty.
\]
We further assume that 
\[
\sum n^{2}w_{n}=\infty.
\]
The trick is to pick a representation in which $v_{0}$ in Lemma \ref{lem:iso},
i.e., (\ref{eq:tmp15})-(\ref{eq:tmp17}): $\mathscr{H}$, cyclic
vector $v_{0}\in\mathscr{H}$, $\left\Vert v_{0}\right\Vert _{\mathscr{H}}=1$,
$\left\{ U_{A}\left(t\right)\right\} _{t\in\mathbb{R}}$.
\begin{lem}
\label{lem:(01)}The $\left(1,1\right)$ index condition is the case
of the momentum operator in $L^{2}\left(0,1\right)$. To see this
we set $v_{0}=e^{x}$, or $v_{0}=e^{2\pi x}$. \end{lem}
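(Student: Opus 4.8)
The plan is to realize the symmetric momentum operator $P=\tfrac{1}{i}\tfrac{d}{dx}$ with dense domain $C_{c}^{\infty}(0,1)$ in $L^{2}(0,1)$ as one of the restriction‑of‑selfadjoint operators $S_{\mu}\subset M_{\lambda}$ from the spectral model of \secref{index 11}, and then to pin down the measure $\mu$ explicitly by choosing a convenient cyclic vector. First I would recall the classical facts: $P$ is Hermitian, its adjoint $P^{*}$ acts as $\tfrac{1}{i}\tfrac{d}{dx}$ on absolutely continuous functions, and the deficiency equation $P^{*}\xi=\pm i\xi$ has the $L^{2}(0,1)$‑solutions $\xi_{\mp}(x)=e^{\mp x}$; hence $P$ has deficiency indices $(1,1)$, every selfadjoint extension $A_{\theta}$ is given by a boundary condition $f(1)=e^{i2\pi\theta}f(0)$ with $\theta\in[0,1)$, and $A_{\theta}$ has simple pure point spectrum $\{\,2\pi(n+\theta):n\in\mathbb{Z}\,\}$ with orthonormal eigenfunctions $e_{n}^{\theta}(x)=e^{i2\pi(n+\theta)x}$.

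Next I would fix one such extension $A_{\theta}$ and take $v_{0}:=e^{x}$ (normalized so $\|v_{0}\|_{L^{2}(0,1)}=1$); the choice $v_{0}=e^{2\pi x}$ works verbatim and is the one adapted to the normalization $P=\tfrac{1}{2\pi i}\tfrac{d}{dx}$, for which $e^{2\pi x}$ is literally a deficiency vector. One computes the Fourier coefficients
\[
\langle e^{x},e_{n}^{\theta}\rangle=\int_{0}^{1}e^{x}e^{-i2\pi(n+\theta)x}\,dx=\frac{e^{\,1-i2\pi\theta}-1}{\,1-i2\pi(n+\theta)\,},
\]
using $e^{-i2\pi n}=1$. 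Since $|e^{\,1-i2\pi\theta}-1|$ is a fixed nonzero constant (because $|e^{\,1-i2\pi\theta}|=e\neq1$), the spectral weights
\[
w_{n}:=\bigl|\langle e^{x},e_{n}^{\theta}\rangle\bigr|^{2}=\frac{|e^{\,1-i2\pi\theta}-1|^{2}}{1+4\pi^{2}(n+\theta)^{2}}
\]
are all strictly positive, so $v_{0}$ is cyclic for $A_{\theta}$, while $\sum_{n}w_{n}=\|e^{x}\|^{2}<\infty$ and $\sum_{n}\bigl(2\pi(n+\theta)\bigr)^{2}w_{n}\asymp\sum_{n}\frac{n^{2}}{1+n^{2}}=\infty$. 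Thus the spectral measure $d\mu(\lambda)=\|P_{A_{\theta}}(d\lambda)v_{0}\|^{2}=\sum_{n}w_{n}\,\delta_{2\pi(n+\theta)}$ (renormalized to a probability measure) satisfies $\int_{\mathbb{R}}\lambda^{2}\,d\mu=\infty$, which is exactly the $(1,1)$‑index condition of Theorem~\ref{thm:defmeas}.

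I would then invoke \lemref{iso}: the isometry $W_{\mu}$ intertwines $A_{\theta}$ with the multiplication operator $M_{\lambda}$ on $L^{2}(\mu)$ and sends $v_{0}\mapsto\mathbf{1}$. Under $W_{\mu}$ the symmetric operator $P$ is carried onto a restriction $S_{\mu}\subset M_{\lambda}$; matching the boundary datum of $P$ with the closure condition $\int_{\mathbb{R}}(\lambda+i)f(\lambda)\,d\mu(\lambda)=0$ identifies $\mathrm{dom}(S_{\mu})$ with the dense domain in \lemref{index(1,1)}, and Theorem~\ref{thm:defmeas} together with \lemref{def1} and \lemref{index(1,1)} then delivers deficiency indices $(1,1)$ — recovering the classical count and, more to the point, placing $P$ inside the universal model of \secref{index 11}. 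The main obstacle is precisely this last domain identification under $W_{\mu}$: one must check carefully that $W_{\mu}^{-1}$ maps the restriction‑model domain $\{f:\lambda f\in L^{2}(\mu),\ \int(\lambda+i)f\,d\mu=0\}$ onto (the closure of) $C_{c}^{\infty}(0,1)$ — equivalently, that the deficiency vector $\mathbf{1}\in L^{2}(\mu)$ pulls back to $e^{x}$ (respectively $e^{2\pi x}$) — while keeping the $\pm i$ sign conventions and the choice of extension parameter $\theta$ consistent throughout. Everything else reduces to the elementary integral above and standard functional calculus.
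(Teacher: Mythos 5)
Your proposal is correct and follows essentially the same route as the paper: expand the defect vector $v_{0}$ in the eigenbasis $e_{n}^{\theta}$ of a selfadjoint extension $A_{\theta}$, observe that the weights $w_{n}=\left|\left\langle e_{n}^{\theta},v_{0}\right\rangle \right|^{2}$ decay exactly like $\left(1+\left(\theta+n\right)^{2}\right)^{-1}$ with a nonvanishing numerator, and conclude $\sum_{n}n^{2}w_{n}=\infty$, which is the $\left(1,1\right)$ condition of Theorem \ref{thm:defmeas}. Your write-up is in fact more explicit than the paper's (which only records the asymptotics of $w_{n}$ for $v_{0}=e^{2\pi x}$ and stops there), particularly in verifying cyclicity and in flagging the domain identification under $W_{\mu}$.
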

\begin{proof}
Defect vector in $\mathscr{H}=L^{2}\left(0,1\right)$, say $e^{2\pi x}$:
\[
v_{0}\left(x\right)=\sum_{n\in\mathbb{Z}}c_{n}e_{n}^{\theta}\left(x\right)
\]
with 
\[
w_{n}\sim\frac{\left(\cos\left(2\pi\left(\theta+n\right)\right)-e^{-2\pi}\right)^{2}+\sin^{2}\left(2\pi\left(\theta+n\right)\right)}{1+\left(\theta+n\right)^{2}}
\]
and so the $\left(1,1\right)$ condition holds, i.e., 
\[
\sum_{n\in\mathbb{Z}}n^{2}w_{n}=\infty.
\]

\end{proof}
We specialize to $\mathscr{H}=L^{2}\left(0,1\right)$, and the usual
momentum operator in $\left(0,1\right)$ with the selfadjoint extensions.
Fix $\theta$, $0\leq\theta<1$, we have an ONB in $\mathscr{H}=L^{2}\left(0,1\right)$,
$e_{k}^{\theta}\left(x\right):=e^{i2\pi\left(\theta+k\right)x}$,
and 
\begin{equation}
v_{0}\left(x\right)=\sum_{k\in\mathbb{Z}}c_{k}e_{k}^{\theta}\left(x\right),\;\sum_{k\in\mathbb{Z}}\left|c_{k}\right|^{2}=1.\label{eq:L01-4}
\end{equation}

\textbf{Fact.} $v_{0}$ is cyclic for $\left\{ U_{A_{\theta}}\left(t\right)\right\} _{t\in\mathbb{R}}$$\Longleftrightarrow$
$c_{k}\neq0$, $\forall k\in\mathbb{Z}$; then set $w_{k}=\left|c_{k}\right|^{2}$
, and the conditions (\ref{eq:L01-1})-(\ref{eq:L01-3}) holds. Reason:
the measure $\mu=\mu_{v_{0},\theta}$ depends on the choice of $v_{0}$.
The non-trivial examples here $v_{0}\longleftrightarrow\left\{ c_{k}\right\} _{k\in\mathbb{Z}}$,
$c_{k}\neq0$, $\forall k$, is cyclic, i.e., 
\[
cl\: span\left\{ U_{A_{\theta}}\left(t\right)v_{0}\Big|t\in\mathbb{R}\right\} \:\left(=\mathscr{H}=L^{2}\left(0,1\right)\right)
\]

\begin{lem}
The measure $\mu$ in (\ref{eq:L01-2}) is determined as:
\begin{eqnarray*}
F_{\theta}\left(t\right) & = & \left\langle v_{0},U_{A_{\theta}}\left(t\right)v_{0}\right\rangle \\
 & = & \sum_{k\in\mathbb{Z}}e^{it2\pi\left(\theta+k\right)}w_{k}\\
 & = & \int_{\mathbb{R}}e^{it\lambda}d\mu\left(\lambda\right)
\end{eqnarray*}
where $\mu$ is as in (\ref{eq:L01-2}); so purely atom\index{atom}ic.

Note that our boundary conditions for the selfadjoint extensions of
the minimal momentum operator are implied by (\ref{eq:L01-4}), i.e.,
\begin{equation}
f_{0}\left(x+1\right)=e^{i2\pi\theta}f_{0}\left(x\right),\;\forall x\in\mathbb{R}.\label{eq:L01-05}
\end{equation}
It is implied by choices of $\theta$ s.t.
\[
f_{0}=\sum_{n\in\mathbb{Z}}B_{n}e_{n}^{\theta}\left(x\right),\;\sum_{n\in\mathbb{Z}}\left|B_{n}\right|^{2}<\infty.
\]

\end{lem}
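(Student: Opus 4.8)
The plan is to compute $F_\theta$ and its associated spectral measure directly from the diagonalization of the selfadjoint extension $A_\theta$. First I would record that, for fixed $\theta\in[0,1)$, the exponentials $e_k^\theta(x):=e^{i2\pi(\theta+k)x}$, $k\in\mathbb{Z}$, form an orthonormal basis of $L^2(0,1)$; that $A_\theta$ (the selfadjoint restriction of $\tfrac1i\tfrac{d}{dx}$ singled out by the quasi-periodic boundary condition $f(x+1)=e^{i2\pi\theta}f(x)$) is diagonalized by this basis, $A_\theta e_k^\theta=2\pi(\theta+k)\,e_k^\theta$; and hence that the one-parameter unitary group $U_{A_\theta}(t)=e^{itA_\theta}$ acts by $U_{A_\theta}(t)e_k^\theta=e^{i2\pi t(\theta+k)}e_k^\theta$. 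Equivalently, the projection-valued measure of $A_\theta$ is purely atomic, $P_{A_\theta}=\sum_k |e_k^\theta\rangle\langle e_k^\theta|\,\delta_{2\pi(\theta+k)}$, with one rank-one atom over each point $2\pi(\theta+k)$.

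Next I would expand the cyclic unit vector as in (\ref{eq:L01-4}), $v_0=\sum_k c_k e_k^\theta$ with $\sum_k|c_k|^2=1$, and evaluate the matrix coefficient term by term. Orthonormality of $\{e_k^\theta\}$ gives $U_{A_\theta}(t)v_0=\sum_k c_k e^{i2\pi t(\theta+k)}e_k^\theta$, and pairing against $v_0$ annihilates the off-diagonal terms, yielding $F_\theta(t)=\langle v_0,U_{A_\theta}(t)v_0\rangle=\sum_k|c_k|^2 e^{i2\pi t(\theta+k)}=\sum_k w_k e^{i2\pi t(\theta+k)}$, where $w_k:=|c_k|^2$ as in (\ref{eq:L01-1}). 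This is precisely the Bochner transform of the atomic measure $\sum_k w_k\,\delta_{2\pi(\theta+k)}$, which (after the harmless rescaling $\lambda\mapsto2\pi\lambda$ implicit in the convention $e_t(\lambda)=e^{i2\pi\lambda t}$ of (\ref{eq:Ut})) is the measure $\mu=\sum_k w_k\delta_{\theta+k}$ of (\ref{eq:L01-2}); alternatively it identifies $d\mu$ with $\|P_{A_\theta}(\cdot)v_0\|^2$, matching the general formula (\ref{eq:tmp18}). Since $\mu$ is a countable sum of point masses, it is purely atomic, as asserted.

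For the remark on boundary conditions it suffices to check the relation on the basis: $e_k^\theta(x+1)=e^{i2\pi(\theta+k)(x+1)}=e^{i2\pi k}e^{i2\pi\theta}e_k^\theta(x)=e^{i2\pi\theta}e_k^\theta(x)$, using $e^{i2\pi k}=1$. Hence any $f_0=\sum_n B_n e_n^\theta\in L^2(0,1)$ satisfies (\ref{eq:L01-05}) (its expansion in the $\theta$-basis does so termwise), and (\ref{eq:L01-05}) is exactly the condition making $A_\theta$ selfadjoint among the extensions of the minimal momentum operator, with $\{e_n^\theta\}$ as its normalized eigenfunctions. The only point that genuinely needs care — and the main obstacle, such as it is — is the bookkeeping of the $2\pi$-factor and the sign in passing between the three displayed lines of the lemma, the normalization of (\ref{eq:Ut}), and the stated support $\{\theta+k\}$ of $\mu$ in (\ref{eq:L01-2}); once one convention is fixed consistently, the computation is routine and uses nothing beyond Parseval for the orthonormal basis $\{e_k^\theta\}$. (The accompanying ``Fact'' that $v_0$ is cyclic iff every $c_k\neq0$ is not part of the displayed identity, but follows from the same diagonalization: the cyclic subspace of $v_0$ is the closed span of $\{c_k e_k^\theta:k\in\mathbb{Z}\}$, which is all of $L^2(0,1)$ precisely when no coefficient vanishes.)
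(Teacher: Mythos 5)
Your proposal is correct and follows the same route the paper intends: diagonalize $A_{\theta}$ on the orthonormal basis $e_{k}^{\theta}$, expand $v_{0}=\sum_{k}c_{k}e_{k}^{\theta}$, and use orthonormality to reduce the matrix coefficient to $\sum_{k}\left|c_{k}\right|^{2}e^{i2\pi t\left(\theta+k\right)}$, identified as the Bochner transform of the atomic measure $\sum_{k}w_{k}\delta_{\theta+k}$ under the convention $e_{t}\left(\lambda\right)=e^{i2\pi\lambda t}$. Your explicit attention to the $2\pi$ normalization and the termwise verification of the boundary condition on the basis are the only points where the paper leaves details implicit, and you handle both correctly.
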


\subsection{Restriction Operators}

In this representation, the restriction operators are represented
by sequence $\left\{ f_{k}\right\} _{k\in\mathbb{Z}}$ s.t. $\sum\left|f_{k}\right|^{2}w_{k}<\infty$,
so use restriction the selfadjoint operator corresponds to the $\theta$-boundary
conditions (\ref{eq:L01-05}), $dom\left(S\right)$ where $S$ is
the Hermitian restriction operator, i.e., $S\subset s.a.\subset S^{*}$.
It has its dense domain $dom\left(S\right)$ as follows: $\left(f_{k}\right)\in dom\left(S\right)$
$\Longleftrightarrow$ $\left(kf_{k}\right)\in l^{2}\left(\mathbb{Z},w\right)$,
and 
\[
\sum_{k\in\mathbb{Z}}\left(\theta+k+i\right)f_{k}w_{k}=0;\; i=\sqrt{-1}.
\]

Comparison with (\ref{eq:tmp-12}) in the general case. What is special
about this case $\mathscr{H}=L^{2}\left(0,1\right)$ and the usual
boundary conditions at the endpoints is that the family of measures
$\mu$ on $\mathbb{R}$ are purely atomic\index{measure!atomic};
see (\ref{eq:L01-2})
\[
d\mu=\sum_{k\in\mathbb{Z}}w_{k}\delta_{k+\theta}.
\]

\section{\label{sec:index (d,d)}The Case of Indices $\left(d,d\right)$ where
$d>1$ }

Let $\mu$ be a Borel probability measure on $\mathbb{R}$, and let
\begin{equation}
L^{2}\left(\mu\right):=L^{2}\left(\mathbb{R},\mathscr{B},\mu\right).\label{eq:d-1}
\end{equation}
The notation $\mbox{Prob}\left(\mathbb{R}\right)$ will be used for
these measures. 

We saw that the restriction/extension problem \index{extension problem}for
continuous positive definite\index{positive definite} (p.d.) functions
$F$ on $\mathbb{R}$ may be translated into a spectral theoretic
model in some $L^{2}\left(\mu\right)$ for suitable $\mu\in\mbox{Prob}\left(\mathbb{R}\right)$.
We saw that extension from a finite open ($\neq\phi$) interval leads
to spectral representation in $L^{2}\left(\mu\right)$, and restrictions
of 
\begin{equation}
\left(M_{\mu}f\right)\left(\lambda\right)=\lambda f\left(\lambda\right),\; f\in L^{2}\left(\mu\right)\label{eq:d-2}
\end{equation}
having deficiency-indices $\left(1,1\right)$; hence the case $d=1$. 
\begin{thm}
Fix $\mu\in\mbox{Prob}\left(\mathbb{R}\right)$. There is a 1-1 bijective
correspondence between the following:
\begin{enumerate}
\item \label{enu:d-1}certain closed subspaces $\mathscr{L}\subset L^{2}\left(\mu\right)$
\item \label{enu:d-2}Hermitian restrictions $S_{\mathscr{L}}$ of $M_{\mu}$
(see (\ref{eq:d-2})) such that 
\begin{equation}
DEF_{+}\left(S_{\mathscr{L}}\right)=\mathscr{L}.\label{eq:d-3}
\end{equation}

\end{enumerate}

The closed subspaces in (\ref{enu:d-1}) are specified as follows:
\begin{enumerate}[label=(\roman{enumi}),ref=\roman{enumi}]
\item \label{enu:d-3}$\dim\left(\mathscr{L}\right)=d<\infty$
\item \label{enu:d-4}the following implication holds:
\begin{equation}
g\neq0,\:\mbox{and }g\in\mathscr{L}\Longrightarrow\left(\left[\lambda\mapsto\lambda g\left(\lambda\right)\right]\notin L^{2}\left(\mu\right)\right)\label{eq:d-4}
\end{equation}

\end{enumerate}

Then set 
\begin{equation}
dom\left(S_{\mathscr{L}}\right):=\left\{ f\in dom\left(M_{\mu}\right)\Big|\int\overline{g\left(\lambda\right)}\left(\lambda+i\right)f\left(\lambda\right)d\mu\left(\lambda\right),\forall g\in\mathscr{L}\right\} \label{eq:d-5}
\end{equation}
and set 
\begin{equation}
S_{\mathscr{L}}:=M_{\mu}\Big|_{dom\left(S_{\mathscr{L}}\right)}\label{eq:d-6}
\end{equation}
where $dom\left(S_{\mathscr{L}}\right)$ is specified as in (\ref{eq:d-5}).

\end{thm}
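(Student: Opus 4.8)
The plan is to run everything through the resolvent of the self-adjoint multiplication operator $M_{\mu}$ at the point $-i$. The standing structural fact is that, since $\mu$ is carried by $\mathbb{R}$, the operator $M_{\mu}+iI\colon\mathrm{dom}(M_{\mu})\to L^{2}(\mu)$ is a bounded bijection with bounded inverse $R:=(M_{\mu}+iI)^{-1}$ defined on all of $L^{2}(\mu)$, that $R^{*}=(M_{\mu}-iI)^{-1}$, and that the Cayley transform $C:=(M_{\mu}-iI)R$ is unitary on $L^{2}(\mu)$. I would recall at the outset that, with the paper's conventions, for any densely defined Hermitian $S\subset M_{\mu}$ one has $S\subset M_{\mu}=M_{\mu}^{*}\subset S^{*}$, $DEF_{+}(S)=\ker(S^{*}-iI)=\mathrm{Ran}(S+iI)^{\perp}$, and $DEF_{-}(S)=\mathrm{Ran}(S-iI)^{\perp}$.

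First direction: from $\mathscr{L}$ to $S_{\mathscr{L}}$. Given a closed subspace $\mathscr{L}$ with $\dim\mathscr{L}=d<\infty$ satisfying condition (\ref{enu:d-4}), note that the defining relations $\int\overline{g(\lambda)}(\lambda+i)f(\lambda)\,d\mu(\lambda)=\langle g,(M_{\mu}+iI)f\rangle=0$ for all $g\in\mathscr{L}$ say precisely that $(M_{\mu}+iI)f\perp\mathscr{L}$, so $\mathrm{dom}(S_{\mathscr{L}})=R(\mathscr{L}^{\perp})$. To prove density of $\mathrm{dom}(S_{\mathscr{L}})$: if $h\perp R(\mathscr{L}^{\perp})$ then $R^{*}h=(M_{\mu}-iI)^{-1}h\perp\mathscr{L}^{\perp}$, hence $R^{*}h=:g\in\mathscr{L}$ (closedness of $\mathscr{L}$); but then $g\in\mathrm{dom}(M_{\mu})$ and $h=(M_{\mu}-iI)g$, so $\lambda g(\lambda)\in L^{2}(\mu)$, and condition (\ref{enu:d-4}) forces $g=0$, hence $h=0$. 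Thus $S_{\mathscr{L}}=M_{\mu}|_{R(\mathscr{L}^{\perp})}$ is a densely defined Hermitian restriction of a self-adjoint operator; it is closed since its domain is the preimage of the closed set $\mathscr{L}^{\perp}$ under the graph-norm-continuous map $f\mapsto(M_{\mu}+iI)f$. Finally $\mathrm{Ran}(S_{\mathscr{L}}+iI)=(M_{\mu}+iI)R(\mathscr{L}^{\perp})=\mathscr{L}^{\perp}$, so $DEF_{+}(S_{\mathscr{L}})=(\mathscr{L}^{\perp})^{\perp}=\mathscr{L}$, and $\mathrm{Ran}(S_{\mathscr{L}}-iI)=C(\mathscr{L}^{\perp})=(C\mathscr{L})^{\perp}$ gives $DEF_{-}(S_{\mathscr{L}})=C\mathscr{L}$, of dimension $d$; hence the deficiency indices are $(d,d)$.

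Second direction: every closed Hermitian $S\subset M_{\mu}$ with $\dim DEF_{+}(S)=d<\infty$ arises this way. Put $\mathscr{L}:=DEF_{+}(S)=\mathrm{Ran}(S+iI)^{\perp}$. Condition (\ref{enu:d-4}) is automatic: if $g\in\mathscr{L}\cap\mathrm{dom}(M_{\mu})$ then $S^{*}g=M_{\mu}g=\lambda g(\lambda)$ (because $M_{\mu}\subset S^{*}$), while $g\in DEF_{+}(S)$ gives $S^{*}g=ig$, so $(\lambda-i)g(\lambda)=0$ $\mu$-a.e., and $i\notin\mathbb{R}$ forces $g=0$. Clearly $S\subset S_{\mathscr{L}}$, since $(S+iI)f\in\mathrm{Ran}(S+iI)=\mathscr{L}^{\perp}$ for $f\in\mathrm{dom}(S)$. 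For the reverse inclusion, $S$ closed Hermitian makes $S+iI$ bounded below with closed range, so $\mathrm{Ran}(S+iI)=\mathscr{L}^{\perp}=\mathrm{Ran}(S_{\mathscr{L}}+iI)$; given $f\in\mathrm{dom}(S_{\mathscr{L}})$ choose $f'\in\mathrm{dom}(S)$ with $(S+iI)f'=(S_{\mathscr{L}}+iI)f$, and since $S\subset S_{\mathscr{L}}$ and $S_{\mathscr{L}}+iI$ is injective, $f=f'\in\mathrm{dom}(S)$. Hence $S=S_{\mathscr{L}}$. The assignments $\mathscr{L}\mapsto S_{\mathscr{L}}$ and $S\mapsto DEF_{+}(S)$ are therefore mutually inverse, which is the asserted bijection (with the understanding that a general, non-closed Hermitian restriction is to be replaced by its closure, which has the same deficiency space, since $S_{\mathscr{L}}$ is always closed).

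The step I expect to be the real obstacle is the density of $\mathrm{dom}(S_{\mathscr{L}})$: this is the only place where condition (\ref{enu:d-4}) is used essentially, and it is used exactly through the adjoint $R^{*}=(M_{\mu}-iI)^{-1}$ together with the closedness of $\mathscr{L}$. A secondary point requiring care is closedness in the converse direction — one must pass to $\overline{S}$ and invoke that $\overline{S}+iI$ has closed range equal to $\mathscr{L}^{\perp}$, so that the clean identity $S=S_{\mathscr{L}}$ is only literally true at the level of closed operators; the finiteness hypothesis $\dim\mathscr{L}=d<\infty$ is needed only to guarantee equal finite deficiency indices $(d,d)$, not for the other steps.
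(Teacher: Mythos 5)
Your proof is correct and follows essentially the route the paper intends: the paper's own proof is a sketch that delegates the two key steps (density of $\mathrm{dom}(S_{\mathscr{L}})$ under condition (\ref{enu:d-4}), and the converse via $DEF_{+}(S)\cap\mathrm{dom}(M_{\mu})=\{0\}$) to the reference \cite{Jor81}, and your resolvent/Cayley-transform argument supplies exactly those details, correctly reading the implicit ``$=0$'' in (\ref{eq:d-5}) as orthogonality of $(M_{\mu}+iI)f$ to $\mathscr{L}$. Your closing remarks — that the bijection is literally valid only at the level of closed restrictions, and that finite-dimensionality of $\mathscr{L}$ is used only for the index count $(d,d)$ — are accurate refinements the paper leaves unstated.
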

\begin{proof}
Note that the case $d=1$ is contained in the previous theorem. 

Proof of (\ref{enu:d-1}) $\Rightarrow$ (\ref{enu:d-2}). We will
be using an idea from \cite{Jor81}. With assumptions (\ref{enu:d-3})-(\ref{enu:d-4}),
in particular (\ref{eq:d-4}), one checks that $dom\left(S_{\mathscr{L}}\right)$as
specified in (\ref{eq:d-5}) is dense in $L^{2}\left(\mu\right)$.
In fact, the converse implication is also true. 

Now setting $S_{\mathscr{L}}$ to be the restriction in (\ref{eq:d-6}),
we conclude that
\begin{equation}
S_{\mathscr{L}}\subseteq M_{\mu}\subseteq S_{\mathscr{L}}^{*}\label{eq:d-7}
\end{equation}
where 
\begin{eqnarray*}
dom\left(S_{\mathscr{L}}^{*}\right) & = & \Biggl\{ h\in L^{2}\left(\mu\right)\:\Big|\: s.t.\:\exists C<\infty\mbox{ and}\\
 &  & \left|\int_{\mathbb{R}}\overline{h\left(\lambda\right)}\lambda f\left(\lambda\right)d\mu\left(\lambda\right)\right|^{2}\leq C\int_{\mathbb{R}}\left|f\left(\lambda\right)\right|^{2}d\mu\left(\lambda\right)\\
 &  & \mbox{holds }\forall f\in dom\left(S_{\mathscr{L}}\right)\Biggr\}
\end{eqnarray*}
 The assertions in (\ref{enu:d-2}) now follow from this.

Proof of (\ref{enu:d-2}) $\Rightarrow$ (\ref{enu:d-1}). Assume
that $S$ is a densely defined restriction of $M_{\mu}$, and let
$DEF_{+}\left(S\right)=$ the (+) deficiency space, i.e., 
\begin{equation}
DEF_{+}\left(S\right)=\left\{ g\in dom\left(S^{*}\right)\:\Big|\: S^{*}g=ig\right\} \label{eq:d-9}
\end{equation}
Assume $\dim\left(DEF_{+}\left(S\right)\right)=d$, and $1\leq d<\infty$.
Then set $\mathscr{L}:=DEF_{+}\left(S\right)$. Using \cite{Jor81},
one checks that (\ref{enu:d-1}) then holds for this closed subspace
in $L^{2}\left(\mu\right)$. 

The fact that (\ref{eq:d-4}) holds for this subspace $\mathscr{L}$
follows from the observation:
\[
DEF_{+}\left(S\right)\cap dom\left(M_{\mu}\right)=\left\{ 0\right\} 
\]
for every densely defined restriction $S$ of $M_{\mu}$.
\end{proof}

\section{\label{sec:index11}Spectral Representation of Index $\left(1,1\right)$
Hermitian Operators}

In this section we give an explicit answer to the following question:
How to go from any index $\left(1,1\right)$ Hermitian operator to
a $\left(\mathscr{H}_{F},D^{\left(F\right)}\right)$ model; i.e.,
from a given index $\left(1,1\right)$ Hermitian operator with dense
domain in a separable Hilbert space $\mathscr{H}$, we build a p.d.
continuous function $F$ on $\Omega-\Omega$, where $\Omega$ is a
finite interval $\left(0,a\right)$, $a>0$.

So far, we have been concentrating on building transforms going in
the other direction. But recall that, for a given continuous p.d.
function $F$ on $\Omega-\Omega$, it is often difficult to answer
the question of whether the corresponding operator $D^{\left(F\right)}$
in the RKHS $\mathscr{H}_{F}$ has deficiency indices $\left(1,1\right)$
or $\left(0,0\right)$. 

Now this question answers itself once we have an explicit transform
going in the opposite direction. Specifically, given any index $\left(1,1\right)$
Hermitian operator $S$ in a separable Hilbert space $\mathscr{H}$,
we then to find a pair $\left(F,\Omega\right)$, p.d. function and
interval, with the desired properties. There are two steps:

Step 1, writing down explicitly, a p.d. continuous function $F$ on
$\Omega-\Omega$, and the associated RKHS $\mathscr{H}_{F}$ with
operator $D^{\left(F\right)}$.

Step 2, constructing an intertwining\index{intertwining} isomorphism
$W:\mathscr{H}\rightarrow\mathscr{H}_{F}$, having the following properties.
$W$ will be an isometric isomorphism, intertwining the pair $\left(\mathscr{H},S\right)$
with $\left(\mathscr{H}_{F},D^{\left(F\right)}\right)$, i.e., satisfying
$WS=D^{\left(F\right)}W$; and also intertwining the respective domains
and deficiency spaces, in $\mathscr{H}$ and $\mathscr{H}_{F}$.

Moreover, starting with any $\left(1,1\right)$ Hermitian operator,
we can even arrange a normalization for the p.d. function $F$ such
that $\Omega=$ the interval $\left(0,1\right)$ will do the job.

We now turn to the details:

We will have three pairs $\left(\mathscr{H},S\right)$, $\left(L^{2}\left(\mathbb{R},\mu\right),\mbox{ restriction of }M_{\mu}\right)$,
and $\left(\mathscr{H}_{F},D^{\left(F\right)}\right)$, where:
\begin{enumerate}[leftmargin=*,label=(\roman{enumi})]
\item $S$ is a fixed Hermitian operator with dense domain $dom\left(S\right)$
in a separable Hilbert space $\mathscr{H}$, and with deficiency indices
$\left(1,1\right)$. 
\item From the given information in (i), we will construct a finite Borel
measure\index{measure!Borel} $\mu$ on $\mathbb{R}$ such that an
index-$\left(1,1\right)$ restriction of $M_{\mu}:f\mapsto\lambda f\left(\lambda\right)$
in $L^{2}\left(\mathbb{R},\mu\right)$, is equivalent to $\left(\mathscr{H},S\right)$. 
\item Here $F:\left(-1,1\right)\rightarrow\mathbb{C}$ will be a p.d. continuous
function, $\mathscr{H}_{F}$ the corresponding RKHS; and $D^{\left(F\right)}$
the usual operator with dense domain 
\[
\left\{ F_{\varphi}\:\Big|\:\varphi\in C_{c}^{\infty}\left(0,1\right)\right\} ,\mbox{ and}
\]
\begin{equation}
D^{\left(F\right)}\left(F_{\varphi}\right)=\frac{1}{i}F_{\varphi'},\;\varphi'=\frac{d\varphi}{dx}.\label{eq:d11-1}
\end{equation}

\end{enumerate}
We will accomplish the stated goal with the following system of intertwining
operators: See Figure \ref{fig:d11-1}.

But we stress that, at the outset, only (i) is given; the rest ($\mu$,
$F$ and $\mathscr{H}_{F}$) will be constructed. Further, the solutions
$\left(\mu,F\right)$ in Figure \ref{fig:d11-1} are not unique; rather
they depend on choice of selfadjoint extension in (i): Different selfadjoint
extensions of $S$ in (i) yield different solutions $\left(\mu,F\right)$.
But the selfadjoint extensions of $S$ in $\mathscr{H}$ are parameterized
by von Neumann's theory; see e.g., \cite{Rud73}. 

\begin{figure}[H]
$\xymatrix{S\mbox{ in }\mathscr{H}\ar[rr]^{W_{\mu}}\ar[ddr]_{W:=T_{\mu}^{*}W_{\mu}} &  & \ar@/^{1pc}/[ddl]^{T_{\mu}^{*}}L^{2}\left(\mathbb{R},\mu\right),\mbox{ restriction of }M_{\mu}\\
\\
 & D^{\left(F\right)}\mbox{ in }\mbox{\ensuremath{\mathscr{H}}}_{F}\ar@/^{1pc}/[uur]^{T_{\mu}}
}
$

\protect\caption{\label{fig:d11-1}A system of intertwining operators.}
\end{figure}
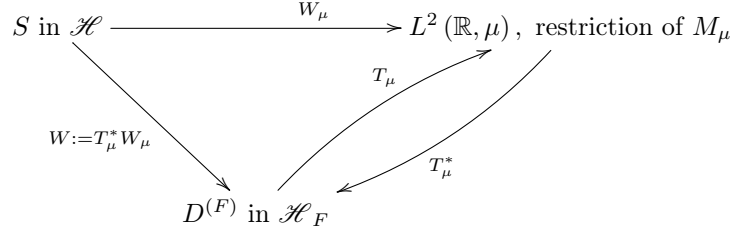

\begin{flushleft}
\index{von Neumann}
\par\end{flushleft}

\begin{flushleft}
\index{intertwining}
\par\end{flushleft}

\index{measure!probability}
\begin{rem}
In our analysis of (i)-(iii), we may without loss of generality assume
that the following normalizations hold:

$\left(z_{1}\right)$ $\mu\left(\mathbb{R}\right)=1$ , so $\mu$
is a probability measure;

$\left(z_{2}\right)$ $F\left(0\right)=1$, and the p.d. continuous
solution 

$\left(z_{3}\right)$ $F:\left(-1,1\right)\rightarrow\mathbb{C}$
is defined on $\left(-1,1\right)$; so $\Omega:=\left(0,1\right)$.

\begin{flushleft}
Further, we may assume that the operator $S$ in $\mathscr{H}$ from
(i) has simple spectrum.
\par\end{flushleft}\end{rem}
\begin{thm}
\label{thm:d11-1}Starting with $\left(\mathscr{H},S\right)$ as in
(i), there are solutions $\left(\mu,F\right)$ to (ii)-(iii), and
intertwining operators $W_{\mu}$, $T_{\mu}$ as in Figure \ref{fig:d11-1},
such that\index{intertwining}
\begin{equation}
W:=T_{\mu}^{*}W_{\mu}\label{eq:d11-3}
\end{equation}
satisfies the intertwining properties for $\left(\mathscr{H},S\right)$
and $\left(\mathscr{H}_{F},D^{\left(F\right)}\right)$.\end{thm}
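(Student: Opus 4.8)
The plan is to reverse-engineer the spectral model built up in Sections~\ref{sec:R^1}--\ref{sec:index (d,d)}: everything that is produced there \emph{from} a pair $\left(F,\Omega\right)$ will now be produced \emph{from} the abstract pair $\left(\mathscr{H},S\right)$, and the two chains of constructions will be glued together by Figure~\ref{fig:d11-1}. So the first step is to apply von Neumann's theory to $S$ and, after the reduction that some selfadjoint extension $A\supseteq S$ has simple spectrum (cf.\ the Remark preceding the theorem), fix a normalized deficiency vector $h_{+}\in DEF_{+}\left(S\right)$, $S^{*}h_{+}=ih_{+}$ (equivalently, in the skew-adjoint picture used in Section~\ref{sec:R^1}, the analogous eigenvector of $A^{*}$). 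By the simple-spectrum hypothesis $h_{+}$ is cyclic for $A$, so I set $d\mu\left(\lambda\right):=\left\Vert P_{A}\left(d\lambda\right)h_{+}\right\Vert _{\mathscr{H}}^{2}$; this is a Borel probability measure on $\mathbb{R}$ (item (ii) of the setup), and since $S$ has deficiency indices $\left(1,1\right)$ one has $\int\lambda^{2}\,d\mu=\infty$, exactly the hypothesis of Theorem~\ref{thm:defmeas} and Lemma~\ref{lem:index(1,1)}, which identify $\left(\mathscr{H},S\right)$ up to unitary equivalence with $\left(L^{2}\left(\mu\right),S_{\mu}\right)$ via $W_{\mu}f(A)h_{+}=f$, with $W_{\mu}h_{+}=\mathbf{1}$ and $W_{\mu}\,DEF_{\pm}(S)=DEF_{\pm}(S_{\mu})$.

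The second step builds $F$. Put $F_{A}\left(t\right):=\left\langle h_{+},U_{A}\left(t\right)h_{+}\right\rangle =\widehat{d\mu}\left(t\right)$ and $F:=F_{A}\big|_{\left(-1,1\right)}$; after the normalizations $(z_{1})$--$(z_{3})$ this is a continuous positive definite function on $\left(-1,1\right)=\Omega-\Omega$ with $\Omega=\left(0,1\right)$, $F\left(0\right)=1$, and $\mu\in Ext\left(F\right)$. Let $\mathscr{H}_{F}$ be its RKHS and $D^{\left(F\right)}$ the operator $D^{\left(F\right)}\left(F_{\varphi}\right)=\tfrac{1}{i}F_{\varphi'}$ of (\ref{eq:d11-1}); the factor $\tfrac{1}{i}$ is inserted precisely so that, after Fourier transform and one integration by parts ($\tfrac{1}{i}\widehat{\varphi'}=\lambda\widehat{\varphi}$, the boundary terms vanishing because $\varphi\in C_{c}^{\infty}\left(0,1\right)$), the intertwiner $T_{\mu}$ carries $D^{\left(F\right)}$ to multiplication by $\lambda$ rather than by $i\lambda$. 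From Corollary~\ref{cor:lcg-isom} (applied with $G=\mathbb{R}$, $\mu\in Ext\left(F\right)$) I get the isometry $T_{\mu}:\mathscr{H}_{F}\rightarrow L^{2}\left(\mu\right)$, $T_{\mu}\left(F_{\varphi}\right)=\widehat{\varphi}$, with $T_{\mu}^{*}f=\left(f\,d\mu\right)^{\vee}$, $T_{\mu}^{*}T_{\mu}=I_{\mathscr{H}_{F}}$, and $T_{\mu}^{*}\widehat{\varphi}=\left(\widehat{\varphi}\,d\mu\right)^{\vee}\big|_{\left(0,1\right)}=F_{\varphi}$ (the last identity is the Fubini computation in the proof of Corollary~\ref{cor:lcg-isom}).

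Third, set $W:=T_{\mu}^{*}W_{\mu}$ as in (\ref{eq:d11-3}) and chase the diagram of Figure~\ref{fig:d11-1}. On the domain of $S$, $W_{\mu}$ sends $dom\left(S\right)$ into $dom\left(S_{\mu}\right)$ with $W_{\mu}S=M_{\mu}W_{\mu}$; applying $T_{\mu}^{*}$ and using that (by the near-reducing property of $\operatorname{Ran}\left(T_{\mu}\right)$ under $M_{\mu}$, which follows from $T_{\mu}D^{\left(F\right)}=M_{\mu}T_{\mu}$) $T_{\mu}^{*}M_{\mu}=\overline{D^{\left(F\right)}}\,T_{\mu}^{*}$ on $dom\left(M_{\mu}\right)$, one obtains $WS=\overline{D^{\left(F\right)}}W$ on $dom\left(S\right)$, i.e.\ $W$ intertwines the closures, hence also the deficiency subspaces $DEF_{\pm}$. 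The remaining points to verify are bookkeeping: that $W$ carries the graph of $\overline{S}$ onto that of $\overline{D^{\left(F\right)}}$, and $DEF_{\pm}\left(S\right)$ onto $DEF_{\pm}\left(D^{\left(F\right)}\right)=\mathbb{C}e^{\mp x}\big|_{\left(0,1\right)}$ (using Lemma~\ref{lem:exp-1}, the Conjugation Lemma~\ref{lem:Conjugation-Operator}, and that $T_{\mu}$ sends $\mathbb{C}e^{\mp x}$ to the deficiency rays $\mathbb{C}\mathbf{1}$, $\mathbb{C}\tfrac{\lambda-i}{\lambda+i}$ of $S_{\mu}$ of Lemma~\ref{lem:index(1,1)}).

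The main obstacle is promoting $W$ from an intertwining co-isometry to a genuine isometric isomorphism, which is the claim that matters for the ``spectral representation'' goal of this section. Concretely one must show $T_{\mu}$ is onto, equivalently that the exponentials $\left\{ e_{t}\big|_{\operatorname{supp}\mu}:t\in\left(0,1\right)\right\}$ are total in $L^{2}\left(\mu\right)$, equivalently that $h_{+}$ is cyclic not merely for the global group $\left\{ U_{A}\left(t\right)\right\} _{t\in\mathbb{R}}$ but for the \emph{local} family $\left\{ U_{A}\left(t\right):t\in\left(-1,0\right)\right\}$ (equivalently $\mu\in Ext_{1}\left(F\right)$, so that the isometry $T_{\mu}$ coincides with the spectral isomorphism attached to a selfadjoint extension of $D^{\left(F\right)}$ \emph{inside} $\mathscr{H}_{F}$). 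This totality can fail for an arbitrary selfadjoint extension $A$ of $S$ --- the Fourier transform of a finite measure can vanish on $\left(0,1\right)$ without vanishing --- so the argument must use the von Neumann parametrization of the selfadjoint extensions of $S$ to pick $A$ (hence $\mu$) for which the local cyclicity holds; here I expect to appeal to the finer analysis of index-$\left(1,1\right)$ restriction operators in \cite{Jor81}. Everything else is routine diagram-chasing and domain tracking; this surjectivity/cyclicity step, together with the careful matching of the operator domains $dom\left(S\right)$, $dom\left(D^{\left(F\right)}\right)$ and the one-dimensional deficiency spaces through the composition $T_{\mu}^{*}W_{\mu}$, is where the real work lies.
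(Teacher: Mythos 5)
Your construction is essentially the paper's own proof: fix a unit deficiency vector $g_{+}\in DEF_{+}\left(S\right)$ and a selfadjoint extension $S_{v}$, set $d\mu\left(\lambda\right)=\left\Vert P_{S_{v}}\left(d\lambda\right)g_{+}\right\Vert ^{2}$ and $F=\widehat{d\mu}\big|_{\left(-1,1\right)}$, and compose the spectral isomorphism $W_{\mu}$ with the adjoint of the isometry $T_{\mu}$ from Corollary \ref{cor:lcg-isom}, the key identities being $W_{\mu}\left(g_{+}\right)=\mathbf{1}$ and $T_{\mu}^{*}\left(\mathbf{1}\right)=F_{0}$. The surjectivity of $T_{\mu}$ that you single out as the main obstacle is not settled in the paper's proof either, but the theorem as stated only asserts the intertwining relations, for which your diagram chase suffices.
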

\begin{proof}
Since $S$ has indices $\left(1,1\right)$, $\dim DEF_{\pm}\left(S\right)=1$,
and $S$ has selfadjoint extensions indexed by partial isometries
$DEF_{+}\overset{v}{\longrightarrow}DEF_{-}$; see \cite{Rud73,DS88b}.
We now pick $g_{+}\in DEF_{+}$, $\left\Vert g_{+}\right\Vert =1$,
and partial isometry $v$ with selfadjoint extension $S_{v}$, i.e.,
\index{selfadjoint extension} 
\begin{equation}
S\subset S_{v}\subset S_{v}^{*}\subset S^{*}.\label{eq:d11-4}
\end{equation}

Hence $\left\{ U_{v}\left(t\right)\:\Big|\: t\in\mathbb{R}\right\} $
is a strongly continuous unitary representation of $\mathbb{R}$,
acting in $\mathscr{H}$, $U_{v}\left(t\right):=e^{itS_{v}}$, $t\in\mathbb{R}$.
Let $P_{S_{v}}\left(\cdot\right)$ be the corresponding projection
valued measure (PVM) on $\mathscr{B}\left(\mathbb{R}\right)$, i.e.,
we have \index{measure!PVM} 
\begin{equation}
U_{v}\left(t\right)=\int_{\mathbb{R}}e^{it\lambda}P_{S_{v}}\left(d\lambda\right);\label{eq:d11-5}
\end{equation}
and set 
\begin{equation}
d\mu\left(\lambda\right):=d\mu_{v}\left(\lambda\right)=\left\Vert P_{S_{v}}\left(d\lambda\right)g_{+}\right\Vert _{\mathscr{H}}^{2}.\label{eq:d11-6}
\end{equation}
For $f\in L^{2}\left(\mathbb{R},\mu_{v}\right)$, set 
\begin{equation}
W_{\mu_{v}}\left(f\left(S_{v}\right)g_{+}\right)=f;\label{eq:d11-7}
\end{equation}
then $W_{\mu_{v}}:\mathscr{H}\rightarrow L^{2}\left(\mathbb{R},\mu_{v}\right)$
is isometric onto; and 
\begin{equation}
W_{\mu_{v}^{*}}\left(f\right)=f\left(S_{v}\right)g_{+},\label{eq:d11-8}
\end{equation}
where 
\begin{equation}
f\left(S_{v}\right)g_{+}=\int_{\mathbb{R}}f\left(\lambda\right)P_{S_{v}}\left(d\lambda\right)g_{+}.\label{eq:d11-9}
\end{equation}
For justification of these assertions, see e.g., \cite{Ne69}. Moreover,
$W_{\mu}$ has the intertwining properties sketched in Figure \ref{fig:d11-1}.

Returning to (\ref{eq:d11-5}) and (iii) in the theorem, we now set
$F=$ the restriction to $\left(-1,1\right)$ of 
\begin{eqnarray}
F_{\mu}\left(t\right) & := & \left\langle g_{+},U_{v}\left(t\right)g_{+}\right\rangle \label{eq:d11-10}\\
 & \underset{(\mbox{by }(\ref{eq:d11-5}))}{=} & \left\langle g_{+},\int_{\mathbb{R}}e^{it\lambda}P_{S_{v}}\left(d\lambda\right)g_{+}\right\rangle \nonumber \\
 & = & \int_{\mathbb{R}}e^{it\lambda}\left\Vert P_{S_{v}}\left(d\lambda\right)g_{+}\right\Vert ^{2}\nonumber \\
 & \underset{(\mbox{by }(\ref{eq:d11-6}))}{=} & \int_{\mathbb{R}}e^{it\lambda}d\mu_{v}\left(\lambda\right)\nonumber \\
 & = & \widehat{d\mu_{v}}\left(t\right),\;\forall t\in\mathbb{R}.\nonumber 
\end{eqnarray}

We now show that 
\begin{equation}
F:=F_{\mu}\Big|_{\left(-1,1\right)}\label{eq:d11-11}
\end{equation}
has the desired properties.

From Corollary \ref{cor:lcg-isom}, we have the isometry\index{isometry}
$T_{\mu}\left(F_{\varphi}\right)=\widehat{\varphi}$, $\varphi\in C_{c}\left(0,1\right)$,
with adjoint 
\begin{equation}
T_{\mu}^{*}\left(f\right)=\left(fd\mu\right)^{\vee},\mbox{ and}\label{eq:d11-12}
\end{equation}
\[
\xymatrix{\mathscr{H}_{F}\ar@/^{1pc}/^{T_{\mu}}[rr] &  & L^{2}\left(\mathbb{R},\mu\right)\ar@/^{1pc}/^{T_{\mu}^{*}}[ll]}
;
\]
see also Fig \ref{fig:d11-1}. 

The following properties are easily checked:
\begin{equation}
W_{\mu}\left(g_{+}\right)=\mathbf{1}\in L^{2}\left(\mathbb{R},\mu\right),\mbox{ and}\label{eq:d11-13}
\end{equation}
\begin{equation}
T_{\mu}^{*}\left(\mathbf{1}\right)=F_{0}=F\left(\cdot-0\right)\in\mathscr{H}_{F},\label{eq:d11-14}
\end{equation}
as well as the intertwining properties stated in the theorem; see
Fig. \ref{fig:d11-1} for a summary.

\uline{Proof of (\mbox{\ref{eq:d11-13}})} We will show instead
that $W_{\mu}^{*}\left(\mathbf{1}\right)=g_{+}$. From (\ref{eq:d11-9})
we note that if $f\in L^{2}\left(\mathbb{R},\mu\right)$ satisfies
$f=\mathbf{1}$, then $f\left(S_{v}\right)=I=$ the identity operator
in $\mathscr{H}$. Hence 
\[
W_{\mu}^{*}\left(\mathbf{1}\right)\underset{(\mbox{by }(\ref{eq:d11-8}))}{=}\mathbf{1}\left(S_{v}\right)g_{+}=g_{+},
\]
which is (\ref{eq:d11-13}).

\uline{Proof of (\mbox{\ref{eq:d11-14}})} For $\varphi\in C_{c}\left(0,1\right)$
we have $\widehat{\varphi}\in L^{2}\left(\mathbb{R},\mu\right)$,
and 
\begin{eqnarray*}
T_{\mu}^{*}T_{\mu}\left(F_{\varphi}\right) & = & T_{\mu}^{*}\left(\widehat{\varphi}\right)\\
 & \underset{(\mbox{by }(\ref{eq:d11-12}))}{=} & \left(\widehat{\varphi}d\mu\right)^{\vee}=F_{\varphi}.
\end{eqnarray*}
Taking now an approximation $\left(\varphi_{n}\right)\subset C_{c}\left(0,1\right)$
to the Dirac unit mass at $x=0$, we get (\ref{eq:d11-14}).\end{proof}
\begin{cor}
The deficiency indices of $D^{\left(F\right)}$ in $\mathscr{H}_{F}$
for $F\left(x\right)=e^{-\left|x\right|}$, $\left|x\right|<1$, are
$\left(1,1\right)$. \end{cor}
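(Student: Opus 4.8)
The plan is to read this $F$ as the case $a=1$ of the example $F_{3}(x)=e^{-|x|}$ analyzed in Section~\ref{sub:exp(-|x|)}, and to assemble facts already proved there. First, Corollary~\ref{cor:DF} shows that the deficiency indices of $D^{(F)}$ in $\mathscr{H}_{F}$ can only be $(0,0)$, $(0,1)$, $(1,0)$ or $(1,1)$, because an element of $DEF^{\pm}$ is a weak solution of $-\xi'=\pm\xi$ on $(0,1)$, hence a multiple of $e^{\mp y}$, so each deficiency space is at most one-dimensional. Second, since $F(x)=e^{-|x|}$ is real valued, the conjugation $J$ of Lemma~\ref{lem:Conjugation-Operator} satisfies $D^{(F)}J=-JD^{(F)}$, so by the corollary following that lemma $DEF^{+}$ and $DEF^{-}$ have equal dimension; therefore the indices are either $(0,0)$ or $(1,1)$, and it only remains to exclude $(0,0)$, i.e. to show $\xi_{+}(x)=e^{-x}\big|_{(0,1)}$ lies in $\mathscr{H}_{F}$.

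For that last step I would invoke Lemma~\ref{lem:exp-1} directly with $a=1$: it produces an approximate identity $\psi_{n}\to\delta_{0}$ in $C_{c}^{\infty}(0,1)$ with $F_{\psi_{n}}\to\xi_{+}$ in $\mathscr{H}_{F}$ and records the norm computation $\|\xi_{+}\|_{\mathscr{H}_{F}}^{2}=\frac{1}{2\pi}\int_{-\infty}^{\infty}\widehat{F}(y)\,dy=1<\infty$ (and likewise for $\xi_{-}(x)=e^{-1}e^{x}\big|_{(0,1)}$). Hence $DEF^{+}\neq 0$, and with the symmetry above the indices are exactly $(1,1)$. As an independent check one can instead use the integral criterion (\ref{eq:exp-10}): here $F=\widehat{d\mu}$ with $d\mu(\lambda)=\frac{d\lambda}{\pi(1+\lambda^{2})}$, so $\Phi(\lambda)=\frac{1}{\pi(1+\lambda^{2})}\in L^{1}(\mathbb{R})$, and the indices are $(1,1)$ iff
\[
\int_{-\infty}^{\infty}\frac{e^{2}+1-2e\cos\lambda}{1+\lambda^{2}}\cdot\frac{d\lambda}{\pi(1+\lambda^{2})}<\infty ,
\]
which holds because the numerator lies between $(e-1)^{2}$ and $(e+1)^{2}$ and $\int_{-\infty}^{\infty}(1+\lambda^{2})^{-2}\,d\lambda<\infty$.

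A third route, which I would mention for completeness, is to compare with $F_{2}$: one checks elementarily that $F_{2}\ll F_{3}$ (with $a=\tfrac12$, $b=1$), and that $D^{(F_{2})}$ on $(0,\tfrac12)$ has indices $(1,1)$ — transparent from the explicit energy-space description of $\mathscr{H}_{F_{2}}$ in Section~\ref{sub:F2}, where $e^{\pm x}$ manifestly have finite norm — so Theorem~\ref{thm:FK-app-1} gives $DEF_{F_{3}}(z)=DEF_{F_{2}}(z)\big|_{(0,1/2)}\neq 0$ for $z=\pm1$, again forcing indices $(1,1)$. The only substantive point in any of these arguments is the verification that $e^{\pm x}$ has finite $\mathscr{H}_{F}$-norm — equivalently the membership estimate (\ref{eq:bdd2}) / (\ref{eq:exp-6}) — and this is already carried out in Lemma~\ref{lem:exp-1} (or reduces to the convergent integral above); so I do not expect a genuine obstacle, only careful citation of the pieces.
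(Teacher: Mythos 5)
Your argument is correct, but it is not the route the paper takes for this corollary. The paper proves it as an application of the transform machinery of Theorem \ref{thm:d11-1}: it exhibits a concrete Hermitian operator already known to have indices $(1,1)$ --- the momentum operator $\tfrac{1}{i}\tfrac{d}{dx}$ on $L^{2}(\mathbb{R})$ restricted by the boundary condition $h(0)=0$ --- together with the defect vector $g_{+}=\bigl(\tfrac{1}{\lambda+i}\bigr)^{\vee}$, computes that the associated p.d. function $\langle g_{+},U_{v}(t)g_{+}\rangle=\pi e^{-|t|}$ restricts on $(-1,1)$ to (a multiple of) $F$, and then transports the indices through the intertwining isometry $W=T_{\mu}^{*}W_{\mu}$ of Figure \ref{fig:d11-1}. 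Your proof instead works intrinsically in $\mathscr{H}_{F}$: you cap the indices at $(1,1)$ via Corollary \ref{cor:DF}, symmetrize $DEF^{+}$ and $DEF^{-}$ with the conjugation of Lemma \ref{lem:Conjugation-Operator}, and then verify $e^{\pm x}\in\mathscr{H}_{F}$ either by Lemma \ref{lem:exp-1} (which with $a=1$ already asserts $\|\xi_{\pm}\|_{\mathscr{H}_{F}}=1$, so strictly speaking it subsumes the corollary) or by the convergent integral in the criterion (\ref{eq:exp-10}); your third route through $F_{2}\ll F_{3}$ and Theorem \ref{thm:FK-app-1} reproduces the Example following that theorem. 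Both arguments are sound; what yours buys is a short, self-contained verification inside the RKHS, while the paper's proof is deliberately placed to illustrate the general $(1,1)$-operator-to-$(\mathscr{H}_{F},D^{(F)})$ correspondence and to identify the explicit spectral model ($L^{2}(\mathbb{R},\tfrac{d\lambda}{\pi(1+\lambda^{2})})$, restricted multiplication) sitting behind $D^{(F_{3})}$.
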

\begin{proof}
Take $\mathscr{H}=L^{2}\left(\mathbb{R}\right)=\left\{ f\mbox{ measurable on }\mathbb{R}\:\Big|\:\int_{\mathbb{R}}\left|f\left(x\right)\right|^{2}dx<\infty\right\} $,
where $dx=$ Lebesgue measure. 

Take $g_{+}:=\left(\frac{1}{\lambda+i}\right)^{\vee}\left(x\right)$,
$x\in\mathbb{R}$; then $g_{+}\in L^{2}\left(\mathbb{R}\right)=:\mathscr{H}$
since
\begin{eqnarray*}
\int_{\mathbb{R}}\left|g_{+}\left(x\right)\right|^{2}dx & \underset{(\mbox{Parseval})}{=} & \int_{\mathbb{R}}\left|\frac{1}{\lambda+i}\right|^{2}d\lambda\\
 & = & \int_{\mathbb{R}}\frac{1}{1+\lambda^{2}}d\lambda=\pi.
\end{eqnarray*}

Now for $S$ and $S_{v}$ in Theorem \ref{thm:d11-1}, we take 
\begin{equation}
S_{v}h=\frac{1}{i}\frac{d}{dx}h\mbox{ on }\left\{ h\in L^{2}\left(\mathbb{R}\right)\:\Big|\: h'\in L^{2}\left(\mathbb{R}\right)\right\} \mbox{ and}\label{eq:d11-16}
\end{equation}
\[
S=S_{v}\Big|_{\left\{ h\:\Big|\: h,h'\in L^{2}\left(\mathbb{R}\right),h\left(0\right)=0\right\} },
\]
then by \cite{Jor81}, we know that $S$ is an index $\left(1,1\right)$
operator, and that $g_{+}\in DEF_{+}\left(S\right)$. The corresponding
p.d. continuous function $F$ is the restriction to $\left|t\right|<1$
of the p.d. function:
\begin{eqnarray*}
\left\langle g_{+},U_{v}\left(t\right)g_{+}\right\rangle _{\mathscr{H}} & = & \int_{\mathbb{R}}\frac{1}{\lambda-i}\frac{e^{it\lambda}}{\lambda+i}d\lambda\\
 & = & \left(\frac{1}{1+\lambda^{2}}\right)^{\vee}\left(t\right)=\pi e^{-\left|t\right|}.
\end{eqnarray*}
\end{proof}
\begin{example}[Lévy-measures]
 Let $0<\alpha\leq2$, $-1<\beta<1$, $v>0$; then the Lévy-measures\index{measure!Lévy}
$\mu$ on $\mathbb{R}$ are indexed by $\left(\alpha,\beta,\nu\right)$,
so $\mu=\mu_{\left(\alpha,\beta,\nu\right)}$. They are absolutely
continuous\index{absolutely continuous} with respect to Lebesgue
measure $d\lambda$ on $\mathbb{R}$; and for $\alpha=1$, 
\begin{equation}
F_{\left(\alpha,\beta,\nu\right)}\left(x\right)=\widehat{\mu_{\left(\alpha,\beta,\nu\right)}}\left(x\right),\; x\in\mathbb{R},\label{eq:d11-17}
\end{equation}
satisfies 
\begin{equation}
F_{\left(\alpha,\beta,\nu\right)}\left(x\right)=\exp\left(-\nu\left|x\right|\cdot\left(1+\frac{2i\beta}{\pi}-\mbox{sgn}\left(x\right)\ln\left|x\right|\right)\right).\label{eq:d11-18}
\end{equation}
The case $\alpha=2$, $\beta=0$, reduces to the Gaussian distribution. 

The measures $\mu_{\left(1,\beta,\nu\right)}$ have infinite variance,
i.e., 
\[
\int_{\mathbb{R}}\lambda^{2}d\mu_{\left(1,\beta,\nu\right)}=\infty.
\]

As a Corollary of Theorem \ref{thm:d11-1}, we therefore conclude
that, for the restrictions, 
\[
F_{\left(1,\beta,\nu\right)}^{\left(res\right)}\left(x\right)=F_{\left(1,\beta,\nu\right)}\left(x\right),\;\mbox{in }x\in\left(-1,1\right),\;\mbox{(see }(\ref{eq:d11-17})-(\ref{eq:d11-18}))
\]
the associated Hermitian operator $D^{F^{\left(res\right)}}$ all
have deficiency indices $\left(1,1\right)$. 

In connection Levy measures\index{measure!Lévy}, see e.g., \cite{ST94}.
\end{example}

\chapter{\label{chap:question}Open Questions}

\section{From Restriction Operator to Restriction of p.d. Function}

\index{measure!tempered}

\index{spectrum}
\begin{question}
How to write a formula for the restriction of a suitable positive
definite (p.d.) function associated to a given $\left(1,1\right)$
operator?
\end{question}
In  \secref{index 11}, we create a model for $\left(1,1\right)$
operators $S$ with simple spectrum; and a choice of cyclic (defect)
vector $v_{0}$. Hence $S$ is the restriction of a selfadjoint operator
$T$.

From $T$ (simple spectrum) and the vector $v_{0}$, we get a Borel
probability measure $\mu$ on $\mathbb{R}$; and by Bochner, or Stone,
we get the corresponding continuous p.d. function $F$ on $\mathbb{R}$.
It would be nice to have a formula for the particular restriction
of $F$ which corresponds to the $\left(1,1\right)$ operator $S$
in the model.

The main difference between our measures on $\mathbb{R}$ and the
measures used in fractional Brownian motion and related processes
is that our measures are finite on $\mathbb{R}$, but the others aren\textquoteright t;
instead they are what is called tempered. See \cite{AL08}.

This means that they are unbounded but only with a polynomial degree.
For example, Lebesgue measure $dx$ on $\mathbb{R}$ is tempered.
Suppose $1/2<g<1$, then the positive measure $d\mu\left(x\right)=\left|x\right|^{g}dx$
is tempered, and this measure is what is used in accounting for the
p.d. functions discussed in the papers on Fractional Brownian motion.

But we could ask the same kind of questions for tempered measures
as we do for the finite positive measures. 

With this analogy, \cite{Jor81} was about $\left(1,1\right)$ operators
from Lebesgue measure $dx$ on $\mathbb{R}$. PJ also looked at $\left(m,m\right)$
operators, but only from Lebesgue measure.

\section{The Splitting $\mathscr{H}_{F}=\mathscr{H}_{F}^{\left(atom\right)}\oplus\mathscr{H}_{F}^{\left(ac\right)}\oplus\mathscr{H}_{F}^{\left(sing\right)}$ }

Let $\Omega$ be as usual, connected and open in $\mathbb{R}^{n}$;
and let $F$ be a p.d. continuous function on $\Omega^{-1}\Omega$.
We then pass to the RKHS $\mathscr{H}_{F}$. Suppose $Ext\left(F\right)$
is non-empty. We then get a unitary representation $U$ of $G=\mathbb{R}^{n}$,
acting on $\mathscr{H}_{F}$; and its associated PVM $P_{U}$. This
gives rise to an orthogonal splitting of $\mathscr{H}_{F}$ into three
parts, atomic, absolutely continuous, and continuous singular, defined
from $P_{U}$.

\index{atom}\index{absolutely continuous}\index{orthogonal}
\begin{question}
What are some interesting examples illustrating the triple splitting
of $\mathscr{H}_{F}$, as in  \subref{euclid}, eq. (\ref{eq:rn6})? 
\end{question}
It will be nice to see some examples where we have more than one non-zero
subspaces; so an example of $F$ be a p.d. continuous function on
$\Omega^{-1}\Omega$ where there is a mixture of the three possibilities,
atomic, absolutely continuous, and continuous singular. So we would
like an example of a $\left(F,\Omega\right)$, such that $F$ be a
p.d. continuous function on $\Omega^{-1}\Omega$, has its $\mathscr{H}_{F}$
splitting with more than one of the two non-zero.

So fix $\left(F,\Omega\right)$; and it would be interesting to study
cases where the same $\left(F,\Omega\right)$ has two of its three
subspaces non-zero? Or all three non-zero? The examples of $\left(F,\Omega\right)$
fall in one of the three boxes only. Can we get spectrum in all three
boxes, or at least not deposit everything in just one of the three? 

In \subref{euclid}, we constructed an interesting example for the
splitting of the RKHS $\mathscr{H}_{F}$: All three subspaces $\mathscr{H}_{F}^{\left(atom\right)}$,
$\mathscr{H}_{F}^{\left(ac\right)}$, and $\mathscr{H}_{F}^{\left(sing\right)}$
are non-zero; the first one is one-dimensional, and the other two
are infinite-dimensional. See Example \ref{ex:splitting} for details.

\section{The Case of $G=\mathbb{R}^{1}$}

We can move to the circle group $\mathbb{T}=\mathbb{R}/\mathbb{Z}$,
and we already started that. Or we can stay with $\mathbb{R}$, or
go to $\mathbb{R}^{n}$; or go to Lie groups (if Lie groups we must
also look at the universal simply connected covering groups.) All
very interesting questions. 

A FEW POINTS, about a single interval and asking for p.d. extensions
to $\mathbb{R}$:
\begin{enumerate}[leftmargin=*]
\item The case of $\mathbb{R}$, we are asking to extend a (small) p.d.
continuous function $F$ from an interval to all of $\mathbb{R}$,
so that the extension to $\mathbb{R}$ is also p.d. continuous; --
in this case we always have existence. This initial interval may even
be \textquotedblleft very small.\textquotedblright{} If we have several
intervals, existence may fail, but the problem is interesting. We
have calculated a few examples, but nothing close to a classification!
Here is what we know about extension from a single interval:
\item In all cases, we have existence, but the p.d continuous extension
to $\mathbb{R}$ may or may not be unique.
\item To decide the uniqueness, we turn to the RKHS $\mathscr{H}_{F}$ (remember
now we are on the interval), and we calculate deficiency indices there,
but we are in RKHS $\mathscr{H}_{F}$. They may be $\left(0,0\right)$
or $\left(1,1\right)$.

\begin{itemize}
\item In the first case, we have uniqueness; in the second case not.
\item Both cases are very interesting; also indices (0, 0).
\end{itemize}

By contrast, in our earlier papers, e.g., on momentum operators, we
never looked at RKHSs related to this other $\mathscr{H}_{F}$ we
now study.

\item Actually, computable ONBs in $\mathscr{H}_{F}$ would be very interesting.
So here is a fun project:\\
We can use what we know about selfadjoint extensions of operators
with deficiency indices $\left(1,1\right)$ to try to classify all
the p.d continuous extension to $\mathbb{R}$. But it is not trivial;
remember the interplay between operator extensions and p.d. extensions
is subtle. While both are \textquotedblleft extension\textquotedblright{}
problems, the interplay between them is subtle and non-intuitive.\\
\\
WARNING: This new classification question probably has nothing to
do with questions for the moment operator in its various incarnations,
see e.g., \cite{JPT11-1,JPT12} and \cite{PeTi13}.

\begin{itemize}
\item This is because there are lots of other operators with deficiency
indices $\left(1,1\right)$, having nothing to do at all with moment
operators. 
\item From examples we calculated, so far, for the relevant operators with
deficiency indices $\left(1,1\right)$ in RKHS $\mathscr{H}_{F}$
(remember now we are on the interval), are totally unrelated to moment
operators.
\item With $\mathscr{H}_{F}$, you are aiming to understand deficiency index-operators
in the case of $\left(1,1\right)$, but in the abstract; not just
an example.
\end{itemize}

There are two sources of earlier papers; one is M. Krein\index{Krein},
and the other L. deBranges. Both are covered beautifully in a book
by Dym and McKean; it is now in the Dover series.

\end{enumerate}

\section{The Extreme Points of $Ext\left(F\right)$ and $\Im\left\{ F\right\} $}

Given a locally defined p.d. function $F$, i.e., a p.d. function
$F$ defined on a proper subset in a group, then the real part $\Re\left\{ F\right\} $
is also p.d.. Can anything be said about the imaginary part, $\Im\left\{ F\right\} $?
See \secref{imgF}.

Assuming that $F$ is also continuous, then what are the extreme points
in the compact convex set $Ext\left(F\right)$, i.e., what is the
subset $ext\left(Ext\left(F\right)\right)$? How do properties of
$Ext\left(F\right)$ depend on the imaginary part, i.e., on the function
$\Im\left\{ F\right\} $? How do properties of the skew-Hermitian\index{operator!skew-Hermitian}
operator $D^{\left(F\right)}$ (in the RKHS $\mathscr{H}_{F}$) depend
on the imaginary part, i.e., on the function $\Im\left\{ F\right\} $?

\nomenclature{$\mathscr{H}_{F}$}{The reproducing kernel Hilbert space (RKHS) of $F$.}

\nomenclature{$Ext(F)$}{Set of unitary representations of $G$ on a Hilbert space $\mathscr{K}$, i.e., the triple $\left(U,\mathscr{K},k_{0}\right)$, that extends $F$.\\\\}

\nomenclature{$Ext_{1}(F)$}{The triples in $Ext(F)$, where the representation space is $\mathscr{H}_{F}$, and so the extension of $F$ is realized on $\mathscr{H}_{F}$.\\\\}

\nomenclature{$Ext_{2}(F)$}{$Ext(F) \setminus Ext_{1}(F)$, i.e., the extension is realized on an  enlargement Hilbert space.\\}

\nomenclature{$T_F$}{Mercer operator associated to $F$.\\}

\nomenclature{$F_{\varphi}$}{The convolution of $F$ and $\varphi$, where $\varphi \in C_{c}(\Omega)$.\\}

\nomenclature{$D^{(F)}$}{The derivative operator $F_{\varphi} \mapsto F_{\frac{d\varphi}{dx}}$}

\nomenclature{$DEF$}{The deficiency space of $D^{(F)}$ acting in the Hilbert space $\mathscr{H}_{F}$.}

\listoffigures

\listoftables

\printnomenclature[2cm]{}

\printindex{}

\bibliographystyle{alpha}
\bibliography{number5}

\end{document}